\numberwithin{equation}{section} 
\renewcommand\part{%
   \if@noskipsec \leavevmode \fi
   \par
   \addvspace{4ex}%
   \@afterindentfalse
   \secdef\@part\@spart}
\def\@part[#1]#2{%
    \ifnum \c@secnumdepth >\m@ne
      \refstepcounter{part}%
      \addcontentsline{toc}{part}{Part \thepart.\hspace{1em}#1}%
    \else
      \addcontentsline{toc}{part}{#1}%
    \fi
    {\parindent \z@ \raggedright
     \interlinepenalty \@M
     \normalfont
     \ifnum \c@secnumdepth >\m@ne
     \centering 
       \Large\bfseries \partname\nobreakspace\thepart
       \nobreak. 
     \fi
     \Large \bfseries { #2}%
     \par}%
    \nobreak
    \vskip 3ex
    \@afterheading}
\def\@spart#1{%
    {\parindent \z@ \raggedright
     \interlinepenalty \@M
     \normalfont
     \huge \bfseries #1\par}%
     \nobreak
     \vskip 3ex
     \@afterheading}
\renewcommand{\thepart}{\Roman{part}} 
\theoremstyle{plain}
\newtheorem{theorem}{Theorem}[section]
\newtheorem{corollary}[theorem]{Corollary}
\newtheorem{lemma}[theorem]{Lemma}
\newtheorem{proposition}[theorem]{Proposition}
\theoremstyle{definition}
\newtheorem{definition}[theorem]{Definition}
\newtheorem{remark}[theorem]{Remark}
\newtheorem{setup}[theorem]{Setup}
\newtheorem{example}[theorem]{Example}
\setlist[itemize]{leftmargin=*, itemsep={2pt}}
\setlist[enumerate]{leftmargin=*, itemsep={2pt}}
\newcommand{\st}{\mid} 
\newcommand{\set}[1]{\left\{ \, #1 \, \right\}}
\DeclareMathOperator{\G}{Gr}
\newcommand{\svee}{\scriptscriptstyle\vee}
\newcommand{\bPv}{\bP^{\svee}}
\newcommand{\bGv}{\bG^{\svee}}
\newcommand{\bLv}{\bL^{\svee}}
\newcommand{\Fl}[2]{\mathbf{Fl}_{#1,#2}}
\newcommand{\Flv}[2]{\mathbf{Fl}^{\svee}_{#1,#2}}
\newcommand{\bLp}{\bL^{+}}
\newcommand{\bLm}{\bL^{-}}
\newcommand{\bLvp}{\bL^{\svee,+}}
\newcommand{\bLvm}{\bL^{\svee,-}}
\newcommand{\bLL}{\mathbf{LL}}
\newcommand{\bMp}{\bM^{+}} 
\newcommand{\bMm}{\bM^{-}} 
\newcommand{\tbM}{\widetilde{\bM}} 
\newcommand{\Vd}{V^{\svee}}
\newcommand{\Spec}{\mathrm{Spec}}
\newcommand{\id}{\mathrm{id}}
\newcommand{\pr}{\mathrm{pr}}
\newcommand{\gp}{g^{+}}
\newcommand{\gm}{g^{-}}
\newcommand{\fp}{f^{+}}
\newcommand{\fm}{f^{-}}
\newcommand{\piv}{\pi^{\svee}}
\newcommand{\pip}{\pi^{+}}
\newcommand{\pivp}{\pi^{\svee,+}}
\newcommand{\pim}{\pi^{-}}
\newcommand{\pivm}{\pi^{\svee,-}}
\newcommand{\pminus}{p^{-}}
\newcommand{\pplus}{p^{+}}
\newcommand{\qminus}{q^{-}}
\newcommand{\qplus}{q^{+}}
\newcommand{\tzeta}{\widetilde{\zeta}}
\newcommand{\zetap}{\zeta^{+}}
\newcommand{\zetam}{\zeta^{-}}
\newcommand{\deltam}{\delta^{-}}
\newcommand{\deltap}{\delta^{+}}
\newcommand{\tcE}{\widetilde{\cE}}
\newcommand{\cEp}{\cE^{+}}
\newcommand{\cEm}{\cE^{-}}
\newcommand{\Phip}{\Phi^{+}}
\newcommand{\Phim}{\Phi^{-}}
\DeclareMathOperator{\Hom}{Hom}
\newcommand{\cHom}{\mathcal{H}\!{\it om}}
\DeclareMathOperator{\Map}{Map}
\newcommand{\cHomS}{\mathcal{H}\!{\it om}_S}
\DeclareMathOperator{\im}{im}
\newcommand{\Tor}{\mathrm{Tor}}
\newcommand{\rank}{\mathrm{rank}}
\DeclareMathOperator{\length}{\mathrm{length}}
\newcommand{\CatSt}{\mathrm{Cat_{st}}}
\newcommand{\PrCatSt}{\mathrm{PrCat_{st}}}
\newcommand{\PrCatStcg}{\mathrm{PrCat^{\omega}_{st}}}
\newcommand{\Cat}{\mathrm{Cat}}
\newcommand{\PrCat}{\mathrm{PrCat}}
\newcommand{\Ind}{\mathrm{Ind}}
\newcommand{\Perf}{\mathrm{Perf}}
\newcommand{\QCoh}{\mathrm{D_{qc}}}
\newcommand{\Db}{\mathrm{D^b_{coh}}}
\newcommand{\Fun}{\mathrm{Fun}}
\newcommand{\Funex}{\mathrm{Fun^{ex}}}
\newcommand{\FunL}{\mathrm{Fun^{L}}}
\newcommand{\FunLcg}{\mathrm{Fun^{L,\omega}}}
\newcommand{\Spaces}{\mathcal{S}}
\newcommand{\op}{\mathrm{op}}
\newcommand{\sotimes}{\otimes} 
\newcommand{\stimes}{\times}
\newcommand{\FM}[7]{\mathrm{FM}{\left(#1/#2, #3/#4, {#6} \times_{#7} {#5} \right)}}
\newcommand{\Func}[3]{\Fun_{#1}{\left(#2 ,#3 \right)}}
\newcommand{\Coh}{\mathrm{coh}}
\newcommand{\Crit}{\mathrm{Crit}}
\newcommand{\CPD}{\mathrm{CPD}}
\newcommand{\Ku}{\mathcal{K}}
\newcommand{\ev}{\mathrm{ev}}
\newcommand{\coev}{\mathrm{coev}}
\newcommand{\bone}{\mathbf{1}}
\newcommand{\Ho}{\mathrm{Ho}}
\newcommand{\tr}{\mathrm{tr}}
\newcommand{\halpha}{\widehat{\alpha}}
\newcommand{\hpr}{\widehat{\pr}}
\newcommand{\hpd}{\natural}
\newcommand{\cA}{\mathcal{A}}
\newcommand{\cAd}{\mathcal{A}^{\natural}}
\newcommand{\dcA}{{^{\natural}}\mathcal{A}}
\newcommand{\cB}{\mathcal{B}}
\newcommand{\cC}{\mathcal{C}}
\newcommand{\cD}{\mathcal{D}}
\newcommand{\cE}{\mathcal{E}}
\newcommand{\cF}{\mathcal{F}}
\newcommand{\cK}{\mathcal{K}}
\newcommand{\cO}{\mathcal{O}}
\newcommand{\cQ}{\mathcal{Q}}
\newcommand{\cR}{\mathcal{R}}
\newcommand{\cS}{\mathcal{S}}
\newcommand{\cU}{\mathcal{U}}
\newcommand{\cW}{\mathcal{W}}
\newcommand{\bG}{\mathbf{G}}
\newcommand{\bH}{\mathbf{H}}
\newcommand{\bL}{\mathbf{L}}
\newcommand{\bM}{\mathbf{M}}
\newcommand{\bP}{\mathbf{P}}
\newcommand{\bZ}{\mathbf{Z}}
\newcommand{\rD}{\mathrm{D}}
\newcommand{\rL}{\mathrm{L}}
\newcommand{\rT}{\mathrm{T}}
\newcommand{\rR}{\mathrm{R}}
\newcommand{\rS}{\mathrm{S}}
\newcommand{\fa}{\mathfrak{a}}
\newcommand{\fatw}{\mathfrak{a}'}
\newcommand{\fad}{\mathfrak{a}^{\hpd}}
\newcommand{\dfa}{{^{\hpd}}\mathfrak{a}}
\newcommand{\llangle}{\left \langle}
\newcommand{\rrangle}{\right \rangle}
\begin{document}

\title{Noncommutative homological projective duality}

\author{Alexander Perry}
\address{Department of Mathematics, Columbia University, New York, NY 10027 \smallskip}
\email{aperry@math.columbia.edu}

\thanks{This work was partially supported by an NSF postdoctoral fellowship, DMS-1606460.}

\begin{abstract}
We generalize Kuznetsov's theory of homological projective duality to 
the setting of noncommutative algebraic geometry. 
Simultaneously, we develop the theory over general base schemes, 
and remove the usual smoothness, properness, and transversality hypotheses. 
\end{abstract}

\maketitle

\tableofcontents


\section{Introduction}
\label{section-intro} 

The purpose of this paper is twofold. 
The first goal is to set up a robust framework for noncommutative algebraic geometry, 
using recent advances in higher category theory. 
The second and main goal, which builds on the first, 
is to generalize Kuznetsov's theory~\cite{kuznetsov-HPD} of 
homological projective duality (HPD) to this setting. 
These results are crucial in our work~\cite{joins} with Kuznetsov on categorical 
joins, which was one of the main motivations for this paper. 

In this introduction, we focus on the HPD part of our work. 

\subsection{Kuznetsov's HPD} 
\label{subsection-kuznetsov-HPD}
The input for HPD is a smooth projective \emph{Lefschetz variety}, which consists of 
the following data: 
\begin{itemize}
\item[--] A smooth projective variety $X$ over an algebraically closed 
field of characteristic $0$, with a morphism $X \to \bP(V)$ to 
a projective space. 
\item[--] A right Lefschetz decomposition of 
the category $\Perf(X)$ of perfect complexes.  
\end{itemize}
Let $\cO_X(H)$ denote the pullback of $\cO_{\bP(V)}(1)$. 
A right Lefschetz decomposition of $\Perf(X)$ is a semiorthogonal decomposition of 
the form 
\begin{equation*}
\Perf(X) = \llangle \cA_0, \cA_1(H), \dots, \cA_{m-1}((m-1)H) \rrangle , 
\end{equation*}
where $\cA_0 \supset \cA_1 \supset \cdots \supset \cA_{m-1}$ is a 
descending chain of categories and $\cA_i(iH)$ denotes the image of 
$\cA_i$ under the autoequivalence of $\Perf(X)$ given by tensoring with $\cO_X(iH)$.  
Many varieties admit interesting decompositions of this form, see \cite{kuznetsov-icm} for a survey. 

The key property of a Lefschetz decomposition --- from which its name derives, by 
analogy with the Lefschetz hyperplane theorem ---
is that it induces 
semiorthogonal decompositions of the base changes of $X$ along 
linear subspaces $L \subset V$. More precisely, let $s$ denote the 
codimension of $L$ in $V$. Then if $X \times_{\bP(V)} \bP(L)$ has 
the expected dimension $\dim(X) - s$, pullback along $X \times_{\bP(V)} \bP(L) \to X$ 
embeds the categories $\cA_i(iH)$ into $\Perf (X \times_{\bP(V)} \bP(L) )$ for 
$i \geq s$, and there is a semiorthogonal decomposition 
\begin{equation}
\label{sod-perf-XL}
\Perf(X \times_{\bP(V)} \bP(L))  = \llangle \Ku_L(X), 
\cA_s(H) , \cA_{s+1}(2H), 
\dots, 
\cA_{m-1}((m-s)H)  \rrangle . 
\end{equation}
The category $\Ku_L(X)$ should be thought of as the ``interesting component'' of the category of 
perfect complexes on $X \times_{\bP(V)} \bP(L)$ --- it is what is left after removing the pieces coming from the 
ambient variety $X$. 

Given the above data, Kuznetsov constructs the \emph{HPD category} $\Perf(X)^{\hpd}$, 
which is a $\bP(V^{\svee})$-linear category that can be thought of as a total space for the 
categories $\Ku_{L}(X)$ as $L$ ranges over the hyperplanes in $V$. 
The main theorem of~\cite{kuznetsov-HPD} shows that if 
$\Perf(X)^{\hpd}$ is geometric, i.e. if there exists a variety $Y$ together 
with a morphism $Y \to \bP(V^{\svee})$ and a $\bP(V^{\svee})$-linear 
equivalence $\Perf(Y) \simeq \Perf(X)^{\hpd}$, then $\Perf(Y)$ has a 
natural Lefschetz decomposition such that the ``interesting components'' 
of orthogonal linear sections of $X$ and $Y$ are equivalent.   
To be precise, we must also assume that the Lefschetz decomposition 
of $X$ is \emph{moderate}, in the sense that its length satisfies 
$m$ satisfies $m < \dim(V)$. 
This condition is quite mild and essentially always satisfied in practice, see Remark~\ref{remark-moderate}. 
Then if $\cO_Y(H')$ denotes the pullback of $\cO_{\bP(V^{\svee})}(1)$, the theorem 
is as follows. 

\begin{theorem}[\protect{\cite{kuznetsov-HPD}}]
\label{theorem-kuznetsov-HPD}
The category $\Perf(Y)$ admits a natural left Lefschetz decomposition, 
i.e. a semiorthogonal decomposition 
\begin{equation*}
\Perf(Y) = \llangle \cB_{1-n}((1-n)H'), \dots, \cB_{-1}(-H'), \cB_{0} \rrangle 
\end{equation*}
where $\cB_{1-n} \subset \cdots \subset \cB_{-1} \subset \cB_0$, 
such that the following holds. 
Let $L \subset V$ be a linear subspace, let $L^{\perp} = \ker(V^{\svee} \to L^{\svee})$ 
be its orthogonal, and let $r = \dim(L)$ and $s = \dim(L^{\perp})$. 
Assume the fiber products $X \times_{\bP(V)} \bP(L)$ and 
$Y \times_{\bP(V^{\svee})} \bP(L^{\perp})$ are of expected dimension. 
Let 
\begin{equation} 
\label{sod-perf-YLperp}
\Perf{(Y \times_{\bP(\Vd)} \bP(L^{\perp}))}  = 
\llangle 
\cB_{1-n}((r-n)H'),
\dots , \cB_{-r-1}(-2H'), \cB_{-r}(-H'), 
\Ku'_{L^{\perp}}(Y) \rrangle 
\end{equation}
be the induced semiorthogonal decomposition  
analogous to~\eqref{sod-perf-XL}. 
Then there is an equivalence of categories 
\begin{equation*}
\Ku_L(X) \simeq \Ku'_{L^{\perp}}(Y). 
\end{equation*} 
\end{theorem}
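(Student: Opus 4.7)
The plan is to compare two natural semiorthogonal decompositions of a single auxiliary category built jointly from $X$ and $Y$, and then cancel matching pieces via a mutation / ``chess game'' argument of the kind familiar from Beilinson's resolution on a projective space.

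First I would produce the left Lefschetz decomposition of $\Perf(Y)$. Since $\Perf(Y) \simeq \Perf(X)^{\hpd}$ as a $\bP(\Vd)$-linear category, and $\Perf(X)^{\hpd}$ sits by definition as a distinguished piece of a semiorthogonal decomposition of $\Perf(\cH(X))$, where $\cH(X) = X \times_{\bP(V)} Q$ and $Q \subset \bP(V) \times \bP(\Vd)$ is the incidence divisor, the subcategories $\cB_{-j}$ can be extracted from this description by identifying them with the images of the $\cA_i$ inside $\Perf(\cH(X))$ after twisting by $\cO(-jH')$ and projecting away from the Lefschetz pieces. The moderateness hypothesis $m < \dim(V)$ is precisely what is needed to guarantee that the inductive construction of the $\cB_{-j}$ terminates with a nonempty $\cB_0$ and total length $n = \dim(V) - 1 + \text{(something)}$.

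Second, for orthogonal linear sections I would form the fiber product
\begin{equation*}
\bM = \bigl(X \times_{\bP(V)} \bP(L)\bigr) \times \bP(L^{\perp}) \;\subset\; \cH(X) \times_{\bP(V)} \bP(L),
\end{equation*}
using that $\bP(L) \times \bP(L^{\perp}) \subset Q$. Base changing the universal HPD decomposition of $\cH(X)$ along $\bP(L) \times \bP(L^{\perp}) \to \bP(V) \times \bP(\Vd)$ produces a $\bP(L) \times \bP(L^{\perp})$-linear semiorthogonal decomposition of $\Perf(\bM)$ whose distinguished component is $\Perf(Y \times_{\bP(\Vd)} \bP(L^{\perp}))$ tensored with $\Perf(\bP(L))$, and whose other components are Lefschetz pieces of the form $\cA_i(iH) \boxtimes \Perf(\bP(L^{\perp}))$. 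Symmetrically, applying the same construction after swapping the roles of $X$ and $Y$ (using the dual Lefschetz decomposition from the first step) yields a second semiorthogonal decomposition of $\Perf(\bM)$ whose distinguished component is $\Perf(X \times_{\bP(V)} \bP(L))$ tensored with $\Perf(\bP(L^{\perp}))$, with Lefschetz pieces $\cB_j(jH') \boxtimes \Perf(\bP(L))$.

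Third, I would combine these two pictures with the intrinsic decompositions \eqref{sod-perf-XL} for $\Perf(X_L)$ and \eqref{sod-perf-YLperp} for $\Perf(Y_{L^{\perp}})$. After tensoring each with $\Perf(\bP(L^{\perp}))$ or $\Perf(\bP(L))$ respectively, the Beilinson resolution of the diagonal on $\bP(L)$ and on $\bP(L^{\perp})$ lets me mutate the twisted Lefschetz blocks past one another in a staircase pattern. The off-diagonal Hom vanishings of a right/left Lefschetz decomposition force almost all blocks to cancel in pairs, and what remains on both sides are precisely $\Ku_L(X)$ and $\Ku'_{L^{\perp}}(Y)$, identified up to the appropriate twist. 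The equivalence is then read off from this identification.

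The main obstacle is the chess game itself: one must keep fine combinatorial control of exactly which Lefschetz blocks survive at each step of the mutation, and verify that the staircase actually converges — this is where the moderateness hypothesis $m < \dim(V)$ is essential, since without it the twists of $\cA_i$ and $\cB_j$ would overlap and the cancellation would fail. A secondary difficulty is the nontransverse and derived base change: the equivalence must be established for the expected-dimension fiber products, so one must work with derived fiber products throughout and only at the very end reduce to the classical truncation using the dimension hypothesis on $L$.
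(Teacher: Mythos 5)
Your proposal is a high-level sketch of the ``chess game'' proof strategy due to Thomas (\cite{thomas-HPD}), which proceeds by forming a double universal space, producing two semiorthogonal decompositions of its derived category from the $X$-side and the $Y$-side Lefschetz structures, and then cancelling matching pieces via Beilinson-type mutations. That is a legitimate route to Theorem~\ref{theorem-kuznetsov-HPD}, but it is genuinely different from the one taken here. This paper follows Kuznetsov's original strategy of working with the \emph{universal families} of linear sections $\bL_r(\cA)$ and $\bLv_s(\cB)$ over the Grassmannians $\bG_r$ simultaneously for all $r$; it constructs $\bG_r$-linear Fourier--Mukai kernel functors $\Phi_r \colon \bLv_s(\cB) \to \bL_r(\cA)$ and proves by descending induction on $r$ (passing through the flag varieties $\Fl{r-1}{r}$, Propositions~\ref{proposition-Phi-relations} and~\ref{proposition-Phi-splitting}) that each $\Phi_r$ is a left splitting functor, and then proves the required generation statements (Propositions~\ref{proposition-LvsD-generation} and~\ref{proposition-LrC-generation}). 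The fixed section $L$ is only extracted at the very end by base change along $S \to \bG_r$. The relative approach is what makes the argument portable to the noncommutative and base-scheme-relative setting of this paper, which is why it was chosen; your chess-game approach is arguably more direct for a fixed $L$ but depends heavily on having actual schemes and resolutions of the diagonal to mutate against, and would be difficult to run when $\cA$ has no geometric model.

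Two smaller warnings about your sketch. First, the auxiliary space needs more care: your $\bM = (X \times_{\bP(V)} \bP(L)) \times \bP(L^{\perp})$ does carry a decomposition coming from the $X$-side Lefschetz structure of $X_L$, but it is not literally a fiber of the universal hyperplane family of $Y$, so the claimed decomposition with distinguished component $\Perf(Y_{L^{\perp}}) \otimes \Perf(\bP(L))$ does not come for free; one has to first pass through the larger space $\bH(X) \times_{\bP(\Vd)} \bP(L^{\perp})$ and then restrict to $\bM$, and the restriction step is exactly where the nontrivial mutation content lives. Second, your explanation of the role of moderateness (``needed so that $\cB_0$ is nonempty'') is not quite right: moderateness $m < N$ is used in the paper to ensure that the key vanishing Lemma~\ref{lemma-lls-Cd} holds (so that $\gamma^* p^*$ is fully faithful on $\cA_0$ and the Lefschetz chain in $\cAd$ is genuinely semiorthogonal), and the length of the dual decomposition is $n = N - \#\{i \geq 0 : \cA_i = \cA_0\}$, which is always $< N$ but has nothing to do with $\cB_0$ being nonempty.
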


\begin{remark}
\label{remark-Db-HPD}
Kuznetsov also proves a version of the theorem for the 
bounded derived categories of coherent sheaves on $X \times_{\bP(V)} \bP(L)$ and $Y \times_{\bP(V^{\svee})} \bP(L^{\perp})$. 
Namely, these categories have semiorthogonal decompositions analogous to~\eqref{sod-perf-XL} 
and~\eqref{sod-perf-YLperp}, 
with equivalent ``interesting components''. 
\end{remark}

Theorem~\ref{theorem-kuznetsov-HPD} is the source of many semiorthogonal 
decompositions and derived equivalences in algebraic geometry, see~\cite{kuznetsov-icm, thomas-HPD} 
for surveys. 
 
Finally, as the terminology suggests, HPD is closely related to classical projective duality. 
Namely, define the projective dual $X^{\svee} \subset \bP(V^{\svee})$ 
of $X \to \bP(V)$ to be the locus of hyperplanes 
$H \in \bP(V^{\svee})$ such that the base change $X \times_{\bP} H$ is singular; 
this reduces to the usual notion when $X \to \bP(V^{\svee})$ is a closed 
immersion. 
Then Kuznetsov 
shows~\cite[Theorem~7.9]{kuznetsov-HPD} that for $Y$ as above, 
the locus of critical values of the morphism $Y \to \bP(V^{\svee})$ 
coincides with $X^{\svee}$. 

\subsection{Noncommutative HPD}
In practice, the HPD category $\Perf(X)^{\hpd}$ is 
often \emph{not} geometric in the above sense. 
Instead, usually the best one can hope for is that $\Perf(X)^{\hpd}$ is 
``close to geometric'',  e.g. equivalent to the bounded derived 
category of coherent $\cR$-modules $\Db(Y, \cR)$ on a variety 
$Y$ equipped with a sheaf of finite (noncommutative) $\cO_Y$-algebras $\cR$ 
(see \cite{kuznetsov-icm} for examples), 
or to the derived category of a gauged Landau-Ginzburg 
model (see \cite{katzarkov-GIT} for examples). 

In general, the HPD category $\Perf(X)^{\hpd}$ only has the 
structure of a ``noncommutative scheme'' over $\bP(V^{\svee})$,  
in the sense of Kontsevich. 
More precisely, using Lurie's foundational work~\cite{lurie-HA}, we define a 
``noncommutative scheme'' over a scheme $S$ to be a small 
idempotent-complete stable $\infty$-category $\cC$, equipped 
with a $\Perf(S)$-module structure; for short, we say $\cC$ is an \emph{$S$-linear category}. 
To orient the reader, we note that the category of perfect complexes 
on an $S$-scheme $X$ is an $S$-linear category, where $\Perf(S)$ acts on 
$\Perf(X)$ via pullback followed by tensor product. 

The first goal of this paper is to develop some foundational material about linear categories. 
We leave the detailed discussion of our results in this direction to the main text. 

The second and main goal, which builds on the first, is to generalize HPD to the case of categories linear 
over a fixed (quasi-compact and separated) base scheme $S$. 
Namely, for a vector bundle $V$ on $S$, we develop a theory of HPD 
where:
\begin{enumerate}
\item The input variety $X$ itself is replaced with a $\bP(V)$-linear category. 
\item There are no geometricity assumptions on the $\bP(V^{\svee})$-linear HPD category. 
\end{enumerate}
The existence of such a version of HPD is essential for our work~\cite{joins} with Kuznetsov, 
which was one of the main motivations for this paper. 

The input for our theory is a \emph{Lefschetz category over $\bP(V)$}, 
which is a $\bP(V)$-linear category $\cA$ equipped with a Lefschetz structure. 
By definition, a Lefschetz structure consists of a pair of right and left Lefschetz 
decompositions 
\begin{align*}
\cA & = \langle \cA_0, \cA_1(H), \dots, \cA_{m-1}((m-1)H) \rangle, \\ 
\cA & = \langle \cA_{1-m}((1-m)H), \dots, \cA_{-1}(-H), \cA_0 \rangle, 
\end{align*}
where 
\begin{equation*}
0 \subset \cA_{1-m} \subset \dots \subset \cA_{-1} \subset \cA_0 \supset \cA_1 \supset \dots \supset \cA_{m-1} \supset 0. 
\end{equation*} 

\begin{remark}
Technically, we also require the subcategory $\cA_i \subset \cA$ to be right admissible for $i \geq 0$ and 
left admissible for $i \leq 0$, see Definitions~\ref{definition-lefschetz-center} and \ref{definition-lc}. 
If $\cA = \Perf(X)$ for a smooth projective variety $X$ over $\bP(V)$, the above data is in fact equivalent to giving only one of 
the above Lefschetz decompositions, so a Lefschetz variety in the sense of \S\ref{subsection-kuznetsov-HPD} 
gives rise to a Lefschetz category. 
The two Lefschetz decompositions in the data of a Lefschetz structure 
give rise to ``right'' and ``left'' versions of HPD. We focus on the ``right'' one below. 
\end{remark}

Given any subbundle $L \subset V$, 
Lurie's work~\cite{lurie-HTT, lurie-HA} can be used to make sense of 
the base change category 
\begin{equation*}
\cA \otimes_{\Perf(\bP(V))} \Perf(\bP(L)) , 
\end{equation*}
which plays the role of the linear section of $\cA$ by $\bP(L)$.  
Indeed, the results of~\cite{bzfn} imply that if there exists a 
morphism of schemes $X \to \bP(V)$ and a $\bP(V)$-linear 
equivalence $\Perf(X) \simeq \cA$, 
then the above base changed category recovers $\Perf{(X \times_{\bP(V)} \bP(L))}$.\footnote{Here, 
$X \times_{\bP(V)} \bP(L)$ denotes the derived fiber product, which agrees with the 
usual scheme-theoretic fiber product if $X$ and $\bP(L)$ are $\Tor$-independent over $\bP(V)$.}
Further, there is an induced semiorthogonal decomposition 
\begin{equation}
\label{sod-CL}
\cA \otimes_{\Perf(\bP(V))} \Perf(\bP(L)) = 
\llangle \Ku_L(\cA), 
\cA_s(H) , \cA_{s+1}(2H), 
\dots, 
\cA_{m-1}((m-s)H)  \rrangle 
\end{equation}
where $s$ denotes the corank of $L \subset V$. 

We construct a $\bP(V^{\svee})$-linear HPD category $\cAd$ in the above setup, 
and prove the following version of Theorem~\ref{theorem-kuznetsov-HPD}. 
The ``right strong'' and ``left strong'' conditions appearing in the theorem are mild assumptions on 
the existence of certain adjoints (see Definitions~\ref{definition-strong} and~\ref{definition-lc}), 
which are automatic if $\cA = \Perf(X)$ for a smooth projective variety over $\bP(V)$. 
As in \S\ref{subsection-kuznetsov-HPD}, we say $\cA$ is moderate if its Lefschetz decompositions have 
length less than $\rank(V)$. 

\begin{theorem}
\label{theorem-intro-HPD}
Assume $\cA$ is right strong and moderate. 
Then the category $\cAd$ admits a left strong, moderate Lefschetz structure, such that if 
\begin{equation*}
\cAd = \llangle \cAd_{1-n}((1-n)H'), \dots, \cAd_{-1}(-H'), \cAd_{0} \rrangle 
\end{equation*}
is its left Lefschetz decomposition, then the following holds. 
Let $L \subset V$ be a subbundle, 
let $L^{\perp} = \ker(V^{\svee} \to L^{\svee})$ 
be its orthogonal, and 
let $r = \rank(L)$ and $s = \rank(L^{\perp})$. 
Let 
\begin{equation*} 
\cAd \otimes_{\Perf(\bP(V^{\svee}))} \Perf(\bP(L^{\perp})) = 
\llangle 
\cAd_{1-n}((r-n)H'), \dots, \cAd_{-r-1}(-2H'), \cAd_{-r}(-H'), 
\Ku'_{L^{\perp}}(\cAd) 
\rrangle 
\end{equation*}
be the induced semiorthogonal decomposition analogous to~\eqref{sod-CL}. 
Then there is an equivalence of categories 
\begin{equation*}
\cK_L(\cA) \simeq \cK'_{L^{\perp}}(\cAd) . 
\end{equation*}
\end{theorem}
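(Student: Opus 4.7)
The plan is to adapt Kuznetsov's argument to the present $\infty$-categorical, base-changed setting. The central object is the universal hyperplane category
$$\cH(\cA) = \cA \otimes_{\Perf(\bP(V))} \Perf(\bH(V)),$$
where $\bH(V) \subset \bP(V) \times_S \bP(\Vd)$ denotes the incidence divisor. This category carries commuting linear structures over $\bP(V)$ and $\bP(\Vd)$, and by definition $\cAd$ embeds into $\cH(\cA)$ as the $\bP(\Vd)$-linear orthogonal to the images of $\cA_1(H), \cA_2(2H), \dots, \cA_{m-1}((m-1)H)$ pulled up from $\cA$.

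First I would construct the left Lefschetz structure on $\cAd$. The input is two decompositions: the right Lefschetz decomposition of $\cA$, and the Beilinson-type projective bundle decomposition of $\Perf(\bH(V))$ as a $\bP^{\rank(V)-2}$-bundle over $\bP(\Vd)$. Combining these via the $\Perf(\bH(V))$-linear structure yields a large semiorthogonal decomposition of $\cH(\cA)$ into twisted copies of the $\cA_i$. A carefully ordered sequence of mutations reorganizes the pieces so that all components involving $\cA_i$ with $i \geq 1$ peel off to one side, while the pieces built from $\cA_0$ cascade into a decreasing chain $\cAd_0 \supset \cAd_{-1} \supset \cdots \supset \cAd_{1-n}$ assembled into the asserted left Lefschetz decomposition. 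The right strong hypothesis on $\cA$ supplies the adjoints required for each mutation step and transfers to left strongness of $\cAd$.

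For the equivalence, the key geometric fact is that every point of $\bP(L)$ is incident with every hyperplane parametrized by $\bP(L^{\perp})$, so
$$\bP(L) \times_{\bP(V)} \bH(V) \times_{\bP(\Vd)} \bP(L^{\perp}) \simeq \bP(L) \times_S \bP(L^{\perp}).$$
Consequently the doubly base-changed category
$$\cC = \cH(\cA) \otimes_{\Perf(\bP(V))} \Perf(\bP(L)) \otimes_{\Perf(\bP(\Vd))} \Perf(\bP(L^{\perp}))$$
identifies canonically with $\cA \otimes_{\Perf(\bP(V))} \Perf(\bP(L) \times_S \bP(L^{\perp}))$. This category carries two natural semiorthogonal decompositions: one obtained by base changing~\eqref{sod-CL} along $\bP(L^{\perp}) \to S$ together with the Beilinson decomposition of $\Perf(\bP(L^{\perp}))$, and the symmetric one obtained from the analogous decomposition of $\cAd \otimes_{\Perf(\bP(\Vd))} \Perf(\bP(L^{\perp}))$ base changed along $\bP(L) \to S$. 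After mutating away the two systems of ``standard'' Lefschetz pieces, both $\Ku_L(\cA)$ and $\Ku'_{L^{\perp}}(\cAd)$ appear as the \emph{same} residual subcategory of $\cC$, giving the desired equivalence.

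The main obstacle is the mutation combinatorics. One must track in detail how the Lefschetz pieces $\cA_i(iH)$ interleave with the twists supplied by the two projective bundles inside $\cC$, and exhibit an explicit sequence of mutation functors connecting the two semiorthogonal pictures. The moderate hypothesis $m < \rank(V)$ enters here crucially: it is precisely what guarantees that the relevant twisted pieces remain semiorthogonal throughout the mutation process, so that the mutation functors are well-defined equivalences and the pieces do not collide. An additional subtlety compared with Kuznetsov's argument is that, lacking geometric tools such as canonical bundles or Serre functors on smooth varieties, each step must be verified purely $\infty$-categorically using the $\Perf$-module structures and the base change formalism developed earlier in the paper.
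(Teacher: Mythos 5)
Your proposal takes a different route from the paper, and the route has a genuine gap at its geometric core. The paper does not work with a single fiber product category containing both $\cK_L(\cA)$ and $\cK'_{L^\perp}(\cAd)$; instead it proves a \emph{families} version of the statement (Theorem~\ref{key-theorem}), establishing an equivalence $\cK'_s(\cB) \simeq \cK_r(\cA)$ of $\mathbf{G}(r,V)$-linear categories by a descending induction on $r$, carried out entirely in the Fourier--Mukai kernel formalism of \S\ref{section-kernels}, and then deduces Theorem~\ref{theorem-intro-HPD} by base change along $S \to \mathbf{G}(r,V)$. Your construction of the Lefschetz structure on $\cAd$ by mutations out of $\bH(\cA)$ is reasonable in spirit and is close to what Lemma~\ref{lemma-twisted-sod-A0}, Lemma~\ref{lemma-lls-Cd} and Proposition~\ref{proposition-Ad-lefschetz-sequence} do; but note that these only produce a left Lefschetz \emph{chain} in $\cAd$, and the fullness of that chain (that it generates $\cAd$) is itself only proved as Corollary~\ref{corollary-B-lc-full}, which already depends on the whole inductive machinery of \S\ref{section-main-theorem}. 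So ``the right strong hypothesis transfers to left strongness of $\cAd$'' is asserting a conclusion, not supplying an argument.

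The decisive problem is your claimed identification
\[
\bP(L) \times_{\bP(V)} \bH(V) \times_{\bP(\Vd)} \bP(L^\perp) \;\simeq\; \bP(L) \times_S \bP(L^\perp),
\]
which is false in the derived sense in which all fiber products in this paper are taken. The incidence divisor $\bH(V)$ is cut out in $\bP(V) \times_S \bP(\Vd)$ by a section of $\cO(H+H')$, and that section vanishes \emph{identically} on $\bP(L) \times_S \bP(L^\perp)$. Pulling back a Cartier divisor along a map that lands in it does not give the source: the derived fiber product is $\bP(L) \times_S \bP(L^\perp)$ equipped with the nontrivial derived structure whose structure sheaf is $\cO \oplus \cO(-H-H')[1]$, and $\Perf$ of this derived scheme is not $\Perf(\bP(L) \times_S \bP(L^\perp))$. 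Consequently your ambient category $\cC$ is not what you say it is, and the symmetry you invoke between the two semiorthogonal decompositions is not present. This non-transversality is precisely the phenomenon the paper's proof is built around: it is handled in \S\ref{section-main-theorem} by passing to the universal families $\bL_r$, $\bL^{\svee}_s$ over the Grassmannians (where the relevant intersections \emph{are} transverse), by the scheme-theoretic union statement in Lemma~\ref{lemma-M-union}, and by the resulting exact triangle of kernels in Lemma~\ref{lemma-kernel-relation}. Finally, your closing sentence that ``both $\Ku_L(\cA)$ and $\Ku'_{L^\perp}(\cAd)$ appear as the same residual subcategory'' is a restatement of the theorem; the content of Propositions~\ref{proposition-Phi-splitting}, \ref{proposition-LvsD-generation} and \ref{proposition-LrC-generation}, together with the base-change results for Fourier--Mukai kernels (Propositions~\ref{proposition-base-change-ff}, \ref{proposition-base-change-splitting-functor}), is exactly what is needed to turn that restatement into a proof, and none of that is supplied by ``a carefully ordered sequence of mutations.''
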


If the categories $\cA$ and $\cAd$ are geometric, i.e. if $\cA \simeq \Perf(X)$ for 
a $\bP(V)$-scheme $X$ and $\cAd \simeq \Perf(Y)$ for a $\bP(V^{\svee})$-scheme $Y$, 
then Theorem~\ref{theorem-intro-HPD} recovers Theorem~\ref{theorem-kuznetsov-HPD}. 
In fact, even in this situation, our result is more general than Theorem~\ref{theorem-kuznetsov-HPD} 
in several respects. 
First, we work over a general base $S$. 
Second, we do not need any smoothness or projectivity hypotheses on~$X$. 
Third, using~\cite{bzfn}, 
Theorem~\ref{theorem-intro-HPD} 
implies Theorem~\ref{theorem-kuznetsov-HPD} holds without 
any expected dimension assumptions, provided the fiber products 
$X \times_{\bP(V)} \bP(L)$ and $Y \times_{\bP(V^{\svee})} \bP(L^{\perp})$ are taken 
in the derived sense; this answers a question left open in~\cite{kuznetsov-HPD}.
Further, using~\cite{bznp} we show that when the base scheme $S$ is noetherian 
and defined over a field of characteristic $0$, 
Theorem~\ref{theorem-intro-HPD} also implies 
a similar result for bounded derived categories of coherent sheaves, 
recovering the result mentioned in Remark~\ref{remark-Db-HPD}. 

As in~\cite{kuznetsov-HPD}, 
we deduce Theorem~\ref{theorem-intro-HPD} from a  
stronger result (Theorem~\ref{main-theorem}), which roughly says 
Theorem~\ref{theorem-intro-HPD} holds in families 
as $L$ varies in $\G(r, V)$. 
As a consequence, we also prove that HPD is a duality. 

Our method of proof is closely modeled on~\cite{kuznetsov-HPD}, but 
there are several difficulties to overcome in our setting. 
First, it takes some work to even formulate the objects appearing 
in the proof. 
For instance, in the framework of~\cite{kuznetsov-HPD} where 
$\cA \simeq \Perf(X)$ is geometric, a key role is played by the 
spaces of universal linear sections of $X \to \bP(V)$. 
Our basic observation here is that all of the necessary constructions can be 
made in the case where $\cA = \Perf(\bP(V))$, and then transported to 
general $\cA$ by base change. 
A more serious obstacle is that Kuznetsov uses Fourier--Mukai 
kernels in an essential way. 
Namely, all of the functors arising in his proof are of the form 
\mbox{$\Phi_{\cE} \colon \Perf(Z_1) \to \Perf(Z_2)$}, where $Z_1$ and $Z_2$ are 
schemes over a base scheme $T$ and $\Phi_{\cE}$ is the functor given   
by a Fourier--Mukai kernel $\cE \in \Perf(Z_2 \times_T Z_1)$. 
There is a precise relationship between various operations on the kernel $\cE$ 
(e.g. pullback along a morphism $f \times \id \colon Z'_2 \times_T Z_1 \to Z_2 \times_T Z_1$ 
where $f \colon Z'_2 \to Z_2$ is a morphism)  
and operations on the associated functor $\Phi_{\cE}$ 
(e.g. composition with pullback along a morphism $f \colon Z'_2 \to Z_2$). 
Kuznetsov uses this to deduce facts about functors $\Phi_{\cE}$ via the 
geometry of the kernel spaces $Z_2 \times_T Z_1$. 
To import these ideas into our setting, we develop a robust formalism of 
Fourier--Mukai kernels for certain categories arising from base change. 
Throughout, we crucially use recent advances in higher category 
theory and derived algebraic geometry, especially~\cite{lurie-HA, bzfn}.

Finally, we show that our version of HPD is closely related to classical 
projective duality. 
Namely, for any category $\cC$ linear over a base $T$, we introduce 
the notion of its \emph{critical locus} $\Crit_T(\cC)$, which is   
the locus of points in $T$ parameterizing the singular fibers of $\cC$. 
For any $\bP(V)$-linear category $\cA$ which is smooth and proper over $S$, 
we use this notion to define the \emph{classical projective dual} 
$\CPD(\cA) \subset \bP(\Vd)$. 
If $\cA$ is a Lefschetz category which is smooth and proper, we prove 
$\CPD(\cA) = \Crit_{\bP(\Vd)}(\cAd)$. 

\subsection{Further directions} 
We regard our results as part of a larger program of \emph{homological projective geometry},  
whose goal is to find categorical analogs of results from classical 
projective \mbox{geometry}, 
and to bring them to bear on the structure of derived categories. 
In this theory, Lefschetz categories over $\bP(V)$ play the role of 
projective varieties embedded in $\bP(V)$. 
The prototype of homological projective geometry is HPD. 
In~\cite{joins} we show 
the classical notion of the join of projective varieties also fits into this framework. 
Namely, we introduce categorical joins, and show they  
are related to HPD in the same way classical joins are related to classical 
projective duality. 
An interesting feature of homological projective geometry is that all 
of the operations which are known so far (i.e. HPD and categorical joins) 
preserve smoothness of the objects involved, 
whereas in classical projective geometry this is far from true 
(i.e. projective duals and joins of smooth varieties are usually singular). 

In this paper, we show that HPD can be formulated over quite general 
base schemes, which need not be defined over a field. 
Working over a field, there are a number of examples where the HPD 
category admits a close-to-geometric model~\cite{kuznetsov-icm}; 
we believe many of these results should extend to more general base schemes. 

Finally, we note that the theory developed here may hold over even more 
general bases, namely over spectral schemes in the sense of~\cite{lurie-SAG}, 
e.g. over the sphere spectrum. 

\subsection{Related work}
In his unpublished habilitation thesis~\cite{kuznetsov-hab}, 
Kuznetsov developed a version of HPD which works for input categories 
$\cC$ that are realizable (compatibly with their $\bP(V)$-linear structure) 
as an admissible subcategory of the derived category of a variety. 
Due to my inadequate Russian I was not able to read~\cite{kuznetsov-hab}, 
but its existence served as an inspiration for this paper. 

More recently, HPD has been revisited from several points of view. 
In~\cite{katzarkov-GIT}, Ballard--Deliu--Favero--Isik--Katzarkov 
use variation of GIT to realize in certain cases the HPD category 
as the derived category of a gauged Landau--Ginzburg model, 
and to give a new proof of Theorem~\ref{theorem-kuznetsov-HPD} 
in these cases. 

In~\cite{thomas-HPD}, Thomas reproved Theorem~\ref{theorem-kuznetsov-HPD} using a reinterpretation of Kuznetsov's original proof. 
He handles the case where $\Perf(X)^{\hpd}$ need 
not be geometric, but $X$ is required to be a genuine variety (not ``noncommutative'') and he works 
with a special class of Lefschetz decompositions (the ``rectangular'' ones).  

During the preparation of this paper, two other works on HPD appeared. 
In~\cite{jiang}, Jiang--Leung--Xie build on the argument 
from~\cite{thomas-HPD} to prove a generalization of 
Theorem~\ref{theorem-kuznetsov-HPD}, where the the pair $(\bP(L), \bP(L^{\perp}))$ 
is replaced by any HPD pair.  
We independently discovered (a more general form of) this result, 
which appears as an application in~\cite{joins}. 

Finally, in~\cite{jorgen} Rennemo builds on~\cite{katzarkov-GIT} to 
develop a version of HPD for categories $\cC$ that are 
admissible subcategories in the derived category of a smooth quotient 
stack. 

Our results are stronger than the above in several respects: 
we handle linear categories without any geometricity assumptions; 
we work over general base schemes and make no smoothness or properness assumptions on our categories; and 
we work at the enhanced level of stable $\infty$-categories 
(which allows us to give natural derived algebraic geometry 
interpretations in non-transverse situations). 

\subsection{Organization of the paper} 
Part~\ref{part-I} of this paper is dedicated to foundational material on 
noncommutative algebraic geometry. 
We begin in \S\ref{section-linear-categories} by introducing our framework 
for linear categories. 
In \S\ref{section-sod} we discuss semiorthogonal decompositions of such categories. 
In \S\ref{section-smooth-proper} we define the notions of 
smoothness and properness of a linear category, and among other things discuss their 
behavior under base change and semiorthogonal decompositions. 
We also introduce the notion of the critical locus of a linear category. 
In \S\ref{section-kernels} we develop our formalism of Fourier--Mukai kernels.

In Part~\ref{part-II} of the paper, we use the material from Part~\ref{part-I} to 
formulate and prove our results on HPD. 
First, in \S\ref{section-lefschetz-categories} we define 
Lefschetz categories and describe their behavior under 
passage to linear sections.  
In \S\ref{section-HPD-category} we define the HPD category, 
show that it has a natural Lefschetz structure modulo a generation statement, 
and explain its relation to classical projective duality.  
Finally, in \S\ref{section-main-theorem} 
we prove the main theorem of HPD, i.e. Theorem~\ref{theorem-intro-HPD} from above.

\subsection{Conventions} 
\label{subsection-conventions} 
All schemes are assumed quasi-compact and separated, 
and $S$ denotes a base scheme which is fixed throughout the paper. 
A vector bundle on a scheme $X$ is a finite locally free $\cO_X$-module. 
Given a vector bundle $V$ on $X$, we denote by 
\begin{equation*}
\G(r, V) \to X 
\end{equation*} 
the Grassmannian parameterizing rank $r$ \emph{subbundles} of $V$, 
and we set $\bP(V) = \G(1, V)$. In particular, 
the pushforward of $\cO_{\bP(V)}(1)$ to $X$ is $V^{\svee}$. 
The categorical conventions used in this paper are discussed in 
\S\ref{subsection-DAG}-\ref{subsection-linear-category}. 
In particular, we note that we always consider derived functors 
(pullbacks, pushforward, tensor products, etc.), but we write them with 
underived notation. 

A remark on notation: we tend to denote general linear categories by 
the letters $\cC$ or $\cD$, and Lefschetz categories or categories which are 
linear over a projective bundle by the letters $\cA$ or $\cB$. 

\subsection{Acknowledgements} 
I thank Sasha Kuznetsov for introducing me to the theory of homological projective duality, 
for many useful conversations surrounding this topic, 
and for comments on this paper. 
Part~\ref{part-II} of this paper owes a great intellectual debt to his work~\cite{kuznetsov-HPD}. 
I am also grateful to Daniel Halpern-Leistner for helpful discussions 
about $\infty$-categories and derived algebraic geometry. 

 
\newpage 
\part{Noncommutative algebraic geometry} 
\label{part-I}


\section{Linear categories}
\label{section-linear-categories} 

In this section, we introduce the notion of an $S$-linear category. 
These categories are discussed in \S\ref{subsection-linear-category}, 
after some remarks on derived algebraic geometry in 
\S\ref{subsection-DAG} and a review of some relevant aspects 
of the theory of stable $\infty$-categories in \S\ref{subsection-stable-category}. 
We also discuss ``large'' (i.e. presentable) versions of $S$-linear categories, 
which are useful for technical reasons.

\subsection{Derived algebraic geometry}
\label{subsection-DAG}
We work in the setting of derived algebraic geometry, see~\cite{lurie-SAG, gaitsgory-DAG}. 
This means we regard schemes as objects of the $\infty$-category of derived schemes.  
In particular given morphisms of schemes $X \to S$ and $Y \to S$, the symbol 
\begin{equation*}
X \times_S Y 
\end{equation*}
denotes their \emph{derived} fiber product. 
We note that this agrees with the usual fiber product of schemes whenever the 
morphisms $X \to S$ and $Y \to S$ are $\Tor$-independent over $S$. 
In fact, the only time we need to leave the category of ordinary schemes  
is in \S\ref{section-kernels} and Remark~\ref{remark-geometric-linear-section}, 
and even there derived schemes can be avoided at the cost of requiring $\Tor$-independence 
of all fiber products.

\subsection{Stable $\infty$-categories} 
\label{subsection-stable-category}
We work with stable $\infty$-categories, as developed in~\cite{lurie-HA}. 
Here we briefly review some key facts. 
We consider several classes of stable $\infty$-categories:  
\begin{itemize}
\item[--] The $\infty$-category $\CatSt$ of small idempotent-complete 
stable $\infty$-categories, with morphisms the exact functors. 
For $\cC , \cD \in \CatSt$, we denote by $\Funex(\cC, \cD)$ the 
$\infty$-category of exact functors from $\cC$ to $\cD$. 

\item[--] The $\infty$-category $\PrCatSt$ of presentable 
stable $\infty$-categories, with morphisms 
the cocontinuous functors (i.e. the functors that preserve small colimits). 
For $\cC , \cD \in \PrCatSt$, we denote by $\FunL(\cC, \cD)$ the 
$\infty$-category of 
cocontinuous 
functors from $\cC$ to $\cD$. 

\item[--] The $\infty$-category $\PrCatStcg$ of compactly generated presentable 
stable $\infty$-categories, with morphisms the cocontinuous functors that preserve 
compact objects. 
For $\cC , \cD \in \PrCatSt$, we denote by $\FunLcg(\cC, \cD)$ the 
$\infty$-category of cocontinuous functors from $\cC$ to $\cD$ 
that preserve compact objects. 
\end{itemize}

\begin{remark}
By definition, $\PrCatStcg$ is a (non-full) subcategory of $\PrCatSt$. 
\end{remark}

We are primarily interested in $\CatSt$, but for technical reasons 
it is often convenient to work with $\PrCatSt$; the category 
$\PrCatStcg$ mediates between these two. 
Namely, there is an \emph{Ind-completion} functor \cite[Section 5.5.7]{lurie-HTT}
\begin{equation*}
\Ind \colon \CatSt \to \PrCatSt 
\end{equation*}
which by construction factors through 
the inclusion $\PrCatStcg \to \PrCatSt$, and induces an equivalence 
\begin{equation*}
\Ind \colon \CatSt \to \PrCatStcg 
\end{equation*}
with inverse the functor 
\begin{equation*}
(-)^c \colon \PrCatStcg \to \CatSt 
\end{equation*}
taking $\cC \in \PrCatStcg$ to its subcategory $\cC^c \subset \cC$ of 
compact objects. 

There is a natural way to form the ``tensor product'' of 
categories in $\CatSt, \PrCatSt,$ or $\PrCatStcg$~\cite[Sections 4.8.1-4.8.2]{lurie-HA}. 
More precisely, $\PrCatSt$ is a closed symmetric monoidal $\infty$-category, 
with product denoted $\cC \otimes \cD$ and 
internal mapping objects given by $\FunL(\cC, \cD)$. 
The tensor product $\cC \otimes \cD$ is characterized by the universal property 
\begin{equation*}
\FunL(\cC \otimes \cD, \cE) \simeq \Fun'(\cC \times \cD, \cE), 
\end{equation*}
where $\Fun'(\cC \times \cD, \cE)$ is the full subcategory of 
$\Fun(\cC \times \cD, \cE)$ (the category of all functors) spanned by  
functors which preserve small colimits separately in each variable. 
The category $\PrCatStcg$ inherits a closed 
symmetric monoidal structure from $\PrCatSt$, with internal mapping objects 
given by $\FunLcg(\cC, \cD)$. 
Via the equivalence $\CatSt \simeq \PrCatSt$, we use this to equip $\CatSt$ with 
a closed symmetric monoidal structure, with internal mapping objects given by 
$\Funex(\cC, \cD)$. 
Explicitly, for $\cC , \cD \in \CatSt$, the tensor product is given by the formula 
\begin{equation*}
\cC \otimes \cD \simeq (\Ind(\cC) \otimes \Ind(\cD))^c 
\end{equation*}
and is characterized by the universal property 
\begin{equation*}
\Funex(\cC \otimes \cD, \cE) \simeq \Fun'(\cC \times \cD, \cE), 
\end{equation*}
where $\Fun'(\cC \times \cD, \cE)$ is the full subcategory of 
$\Fun(\cC \times \cD , \cE)$ spanned by functors that are exact 
separately in each variable. 

\subsubsection{Derived categories of schemes} 
Derived categories of derived schemes give examples of categories 
in $\PrCatSt$ and $\CatSt$. 
Namely, for $X$ a derived scheme, there is a category $\QCoh(X) \in \PrCatSt$ 
of unbounded complexes of quasi-coherent sheaves, and a full subcategory $\Perf(X) \in \CatSt$ 
of perfect complexes (see~\cite{bzfn}); 
if $X$ is a classical scheme, 
the homotopy categories of $\QCoh(X)$ and 
$\Perf(X)$ agree with their classical triangulated versions.
Further, $\QCoh(X)$ and $\Perf(X)$ have natural symmetric monoidal 
structures corresponding to tensor products of sheaves, by which 
$\QCoh(X)$ has the structure of a commutative algebra object in $\PrCatSt$ 
and $\Perf(X)$ has the structure of a commutative algebra object in $\CatSt$. 
We note that if $X$ is quasi-compact and separated, 
then by \cite[Proposition 3.19]{bzfn} the category $\QCoh(X)$ is compactly generated 
and there are equivalences  
\begin{equation*}
\Perf(X) \simeq \QCoh(X)^c \qquad \text{and} \qquad \Ind(\Perf(X)) \simeq \QCoh(X), 
\end{equation*} 
induced by the inclusion $\Perf(X) \subset \QCoh(X)$. 

By convention, all functors between derived categories 
(e.g. pushforward, pullback, tensor product) are derived, but will be written with 
underived notation. 
For example, for a morphism of schemes~$f \colon X \to Y$ 
we write $f^* \colon \Perf(Y) \to \Perf(X)$ for the derived pullback functor, and similarly for the functors $f_*$ and $\otimes$. 
We always work with functors defined between categories of perfect complexes. 
Note that in general, $f_*$ may not preserve perfect complexes, but it does if 
$f \colon X \to Y $ is a perfect (i.e. pseudo-coherent of finite $\Tor$-dimension) proper morphism \cite[Example 2.2(a)]{lipman}. 
This assumption will be satisfied in all of the cases where we use $f_*$ in Part~\ref{part-II} of the paper. 

Recall that $f_*$ is right adjoint to $f^*$. 
Sometimes, we need other adjoint functors as well.
Provided they exist, we denote by $f^!$ the right adjoint of $f_* \colon \Perf(X) \to \Perf(Y)$ and by 
$f_!$ the left adjoint of $f^* \colon \Perf(Y) \to \Perf(X)$, so that $(f_!,f^*,f_*,f^!)$ is an adjoint sequence. 
For instance, if $f \colon X \to Y$ is a perfect proper morphism and 
a relative dualizing complex $\omega_f$ exists and is a perfect complex on $X$, 
then $f^!$ and $f_!$ exist and are given by 
\begin{equation}
\label{eq:shriek-adjoints}
f^!(-) \simeq f^*(-) \otimes \omega_f 
\qquad \text{and} \qquad 
f_!(-) \simeq f_*(- \otimes \omega_f). 
\end{equation}
Indeed, the functor $f_* \colon \QCoh(X) \to \QCoh(Y)$ between the unbounded 
derived categories of quasi-coherent sheaves admits a right adjoint 
$f^! \colon \QCoh(Y) \to \QCoh(X)$, the relative dualizing complex of $f$ is by definition 
$\omega_f = f^!(\cO_Y)$, and the above formulas for $f^!$ and $f_!$ hold on quasi-coherent 
categories (the first holds by \cite[Proposition 2.1]{lipman} and implies the second). 
Hence if $f$ is a perfect proper morphism and $\omega_f$ is perfect, it 
follows that all of these functors and adjunctions restrict to categories of perfect complexes. 
In all of the cases where we need $f^!$ and $f_!$ in the paper, the following stronger 
assumptions will be satisfied. 

\begin{remark}
\label{remark:good-morphism}
Suppose $f \colon X \to Y$ is a morphism between schemes which 
are smooth, projective, and of constant relative dimension over $S$. 
Then $f$ is perfect, projective, and has a relative dualizing complex, which is a shift of a line bundle: 
\begin{equation}
\label{eq:intro-omega-f}
\omega_{f} = \omega_{X/S} \otimes f^*(\omega_{Y/S})^{\svee} , 
\end{equation}
where $\omega_{X/S}$ and $\omega_{Y/S}$ denote the relative dualizing \emph{complexes}. 
In particular, for such an $f$, all of the functors $f_!$, $f^*$, $f_*$, $f^!$ are defined 
and adjoint between categories of perfect complexes.
\end{remark}

\subsection{Categories linear over a base}
\label{subsection-linear-category}
In this paper, the main objects of study are categories which 
are linear over a base scheme $S$ in the following sense.  

\begin{definition}
Let $\Cat_S$ be the $\infty$-category of 
modules over the commutative algebra object $\Perf(S) \in \CatSt$, i.e 
\begin{equation*}
\Cat_S = \textrm{Mod}_{\Perf(S)}(\CatSt) 
\end{equation*}
in the notation of \cite[Section 4.5]{lurie-HA}. 
An object $\cC \in \Cat_S$ is called an \emph{$S$-linear category}. 
A morphism $\cC \to \cD$ in $\Cat_S$ is an \emph{$S$-linear functor}; 
these morphisms form the objects of an $\infty$-category denoted  
$\Fun_{\Perf(S)}(\cC, \cD)$. 
\end{definition}

Given an $S$-linear category, we use the notation  
\begin{align*}
\cC \times \Perf(S) & \to \cC  \\ 
(C, F) & \mapsto C \otimes F
\end{align*}
for the action functor of $\Perf(S)$ on $C$. 

\begin{remark}
If $\pi \colon X \to S$ is a morphism of schemes, 
then $\cC = \Perf(X)$ is naturally an $S$-linear category, 
with action functor given by $(C, F) \mapsto C \otimes \pi^*(F)$. 
This is the primordial example of an $S$-linear category. 
The philosophy of noncommutative algebraic geometry is to think of 
an arbitrary $S$-linear category $\cC$ as a ``noncommutative 
scheme with a morphism to $S$'' (although we will not use that terminology).  
Throughout the paper, 
we will see that a number of operations associated to $X \to S$ 
have analogues for an arbitrary ``noncommutative scheme'' $\cC$. 
\end{remark}

As above, we can also consider presentable versions of $S$-linear categories.   
\begin{definition}
Let $\PrCat_S$ be the category of modules over the commutative 
algebra object $\QCoh(S) \in \PrCatSt$, i.e. 
\begin{equation*}
\PrCat_S = \textrm{Mod}_{\QCoh(S)}(\PrCatSt) . 
\end{equation*}
An object $\cC \in \PrCat_S$ is called a \emph{presentable $S$-linear category}. 
A morphism $\cC \to \cD$ in $\PrCat_S$ is a \emph{cocontinuous $S$-linear functor}; 
these morphisms form the objects of an $\infty$-category denoted $\Fun_{\QCoh(S)}(\cC, \cD)$.  
\end{definition}

Recall that $\QCoh(S)$ is compactly generated because $S$ is quasi-compact 
and separated by our standing assumptions. Hence we can also make the following definition.  
\begin{definition}
Let $\PrCat_S^{\omega}$ be the category of modules over the commutative 
algebra object $\QCoh(S) \in \PrCatStcg$, i.e. 
\begin{equation*}
\PrCat_S^{\omega} = \textrm{Mod}_{\QCoh(S)}(\PrCatStcg) . 
\end{equation*}
An object $\cC \in \PrCat_S^{\omega}$ is called a \emph{compactly generated 
presentable $S$-linear category}. 
A morphism $\cC \to \cD$ in $\PrCat_S^{\omega}$ is a 
\emph{cocontinuous $S$-linear functor preserving compact objects}; 
these morphisms form the objects of an $\infty$-category denoted 
$\Fun^{\omega}_{\QCoh(S)}(\cC, \cD)$. 
\end{definition}

The categories $\Cat_S, \PrCat_S$, and $\PrCat_S^{\omega}$ are related in the 
same way as the categories $\CatSt, \PrCatSt,$ and $\PrCatStcg$. 
Namely, $\PrCat_S^{\omega}$ is a non-full subcategory of $\PrCat_S$, 
and Ind-completion induces an equivalence 
\begin{equation*}
\Ind \colon \Cat_{S} \to \PrCat_S^{\omega} 
\end{equation*}
with inverse the functor 
\begin{equation*}
(-)^c \colon \PrCat_S^{\omega} \to \Cat_{S} 
\end{equation*}
induced by taking compact objects. 

Further, the categories $\Cat_S, \PrCat_S,$ and $\PrCat_S^{\omega}$ 
admit symmetric monoidal structures (see \cite[Theorem 4.5.2.1]{lurie-HA}), 
with units respectively given by $\Perf(S)$, $\QCoh(S)$, and $\QCoh(S)$. 
These monoidal structures are closed with internal mapping objects 
given by the functor categories introduced above. 
For $\cC, \cD \in \PrCat_S$, we denote their tensor product by  
\begin{equation*}
\cC \otimes_{\QCoh(S)} \cD \in \PrCat_S. 
\end{equation*}
For $\cC, \cD \in \Cat_S$, we denote their tensor product by 
\begin{equation*}
\cC \otimes_{\Perf(S)} \cD \in \Cat_S, 
\end{equation*}
which can be described by the formula 
\begin{equation*}
\cC \otimes_{\Perf(S)} \cD \simeq (\Ind(\cC) \otimes_{\QCoh(S)} \Ind(\cD))^c. 
\end{equation*}
The tensor product of categories in $\Cat_S$ or $\PrCat_S$ can be 
characterized by a universal property.  
Namely, if $\cC, \cD, \cE \in \Cat_S$, then 
$S$-linear functors $\cC \otimes_{\Perf(S)} \cD \to \cE$ classify 
the bilinear maps $\cC \times \cD \to \cE$ (as defined in~\cite[Section 4.4.1]{lurie-HA}); 
a similar statement holds for $\PrCat_S$. 
In particular, there is a canonical functor 
\begin{equation*}
\cC \times \cD \to \cC \otimes_{\Perf(S)} \cD. 
\end{equation*}
Given objects $C \in \cC$ and $D \in \cD$, we denote by 
$C \boxtimes D \in \cC \otimes_{\Perf(S)} \cD$ their image under 
this functor. 

The following result gives generating objects for a tensor product of 
$S$-linear categories. 
We will use the following terminology. 
A \emph{stable subcategory} of a stable $\infty$-category is a 
full subcategory which is also stable. 
If $\cC$ is a category in $\CatSt$ and $\Sigma$ is a set of objects, 
then we say $\Sigma$ \emph{thickly generates} $\cC$ if the smallest 
idempotent-complete stable subcategory containing $\Sigma$ is $\cC$ itself.  

\begin{lemma}
\label{lemma-box-tensor-generation} 
Let $\cC$ and $\cD$ be $S$-linear categories. 
Then $\cC \otimes_{\Perf(S)} \cD$ is thickly generated 
by objects of the form $C \boxtimes D$ for $C \in \cC$ and $D \in \cD$. 
\end{lemma}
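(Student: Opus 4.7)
The plan is to transfer the problem to the presentable setting and then exploit the universal property of the tensor product. Using the equivalence
\begin{equation*}
\cC \otimes_{\Perf(S)} \cD \simeq \bigl( \Ind(\cC) \otimes_{\QCoh(S)} \Ind(\cD) \bigr)^c
\end{equation*}
recorded in \S\ref{subsection-stable-category}, and the standard fact that in a compactly generated stable $\infty$-category the subcategory of compact objects is the thick closure of any compact generating set, the lemma reduces to showing that $\Ind(\cC) \otimes_{\QCoh(S)} \Ind(\cD)$ is compactly generated by the objects $C \boxtimes D$ for $C \in \cC$ and $D \in \cD$. This requires two steps: verifying (a) that such $C \boxtimes D$ are compact, and (b) that they generate under small colimits.

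For compactness, I would use the interpretation of $\PrCat_S^{\omega}$ as a non-full symmetric monoidal subcategory of $\PrCat_S$ whose tensor product agrees with that of $\PrCat_S$ on compactly generated objects. Since $\cC \subset \Ind(\cC)$ and $\cD \subset \Ind(\cD)$ are the full subcategories of compact objects, it follows formally from the universal property of the tensor product (applied to the bilinear functor $(C,D) \mapsto C \boxtimes D$) that the image in $\Ind(\cC) \otimes_{\QCoh(S)} \Ind(\cD)$ lies in the subcategory of compact objects, so $C \boxtimes D$ is compact.

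For generation, let $\cT \subset \Ind(\cC) \otimes_{\QCoh(S)} \Ind(\cD)$ denote the smallest full subcategory containing every $C \boxtimes D$ with $C \in \cC$, $D \in \cD$ and closed under small colimits. The Bousfield localization $\Ind(\cC) \otimes_{\QCoh(S)} \Ind(\cD) \to \bigl( \Ind(\cC) \otimes_{\QCoh(S)} \Ind(\cD) \bigr)/\cT$ is a cocontinuous $\QCoh(S)$-linear functor, so precomposing with the canonical bilinear map from $\Ind(\cC) \times \Ind(\cD)$ yields a cocontinuous $\QCoh(S)$-bilinear functor $F \colon \Ind(\cC) \times \Ind(\cD) \to \bigl(\Ind(\cC) \otimes_{\QCoh(S)} \Ind(\cD)\bigr)/\cT$. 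By construction, $F(C, D) = 0$ for all compact generators $C, D$. Since every object of $\Ind(\cC)$ and $\Ind(\cD)$ is a small colimit of compact objects and $F$ preserves colimits separately in each variable, $F$ is identically zero; hence by the universal property of the tensor product, the quotient functor vanishes, giving $\cT = \Ind(\cC) \otimes_{\QCoh(S)} \Ind(\cD)$.

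The main obstacle is handling the relative tensor product over $\QCoh(S)$ as opposed to the absolute one; in particular, one must check that the universal property used in the generation step really applies at the level of $\QCoh(S)$-bilinear functors, and that compactness is preserved by the $\QCoh(S)$-relative product. Both points follow from the compatibility between the module-theoretic tensor product on $\textrm{Mod}_{\QCoh(S)}(\PrCatSt)$ and the absolute tensor product on $\PrCatSt$, specifically from the fact that $\QCoh(S)$ is itself compactly generated so that $\PrCat_S^{\omega}$ inherits a symmetric monoidal structure from $\PrCat_S$ via the forgetful functor.
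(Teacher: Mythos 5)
Your proof takes exactly the same route as the paper: reduce to showing that $\Ind(\cC) \otimes_{\QCoh(S)} \Ind(\cD)$ is compactly generated by the objects $C \boxtimes D$, and then verify compactness and generation of that set. The paper simply cites \cite[Chapter I.1, Proposition 7.4.2]{gaitsgory-DAG} for the absolute case and notes the adaptation, and your generation argument via a Bousfield quotient (with the check that the localizing subcategory generated by the $C \boxtimes D$ is a $\QCoh(S)$-submodule, so the quotient and the universal property remain in the $\QCoh(S)$-linear world) is precisely the content of that reference transported to the $\QCoh(S)$-relative setting.
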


\begin{proof}
Equivalently,  
$\Ind(\cC) \otimes_{\QCoh(S)} \Ind(\cD)$ is compactly generated by objects of 
the form $C \boxtimes D$ for $C \in \cC$ and $D \in \cD$. 
This can be proved as in~\cite[Chapter I.1, Proposition 7.4.2]{gaitsgory-DAG}, which 
treats the analogous statement for stable $\infty$-categories without an 
$S$-linear structure. 
\end{proof}

Finally, we observe that by using tensor products, 
we can make sense of base changes of linear categories. 
Namely, if $\cC$ is an $S$-linear category and $S' \to S$ is a morphism of schemes, 
then the \emph{base change of $\cC$ along $S' \to S$} is the $S'$-linear 
category
\begin{equation*}
\cC \otimes_{\Perf(S)} \Perf(S') \in \Cat_{S'}. 
\end{equation*}  

\subsubsection{Mapping objects} 
For objects $C,D \in \cC$ of an $\infty$-category $\cC$, we 
denote by $\Map_{\cC}(C,D)$ the space of maps from $C$ to $D$. 
If $\cC$ is a presentable $S$-linear category, 
then there is a mapping object 
\begin{equation*}
\cHomS(C,D) \in \QCoh(S)
\end{equation*} 
characterized by equivalences 
\begin{equation*}
\Map_{\QCoh(S)}(F, \cHomS(C,D)) \simeq 
\Map_{\cC}(C \otimes F, D) 
\end{equation*}
for $F \in \QCoh(S)$. More precisely, the functor 
\begin{equation*}
\Map_{\cC}(C \otimes (-), D) \colon \QCoh(S)^{\op} \to \Spaces, 
\end{equation*}
where $\Spaces$ denotes the $\infty$-category of spaces, 
is representable by~\cite[Proposition 5.5.2.2]{lurie-HTT}, and 
by definition $\cHom_S(C, D)$ is the representing object. 
If $\cC$ is an $S$-linear category, then $\cC$ is a full subcategory 
of the presentable $S$-linear category $\Ind(\cC)$; 
for objects $C, D \in \cC$ we denote by $\cHom_S(C,D) \in \QCoh(S)$ 
the mapping object between $C$ and $D$ regarded as objects of $\Ind(\cC)$. 

\begin{remark}
\label{remark-mapping-object-scheme} 
Let $X$ be a scheme 
with a morphism $\pi \colon X \to S$, so that $\Perf(X)$ is $S$-linear. 
Then for $C, D \in \Perf(X)$, we have 
\begin{equation*}
\cHomS(C,D) \simeq \pi_* \cHom_X(C,D),
\end{equation*}
where $\cHom_X(C,D)$ denotes the derived sheaf $\Hom$ on $X$. 
\end{remark}

We have the following K\"{u}nneth formula for mapping objects in 
tensor products of categories. 
\begin{lemma}
\label{lemma-kunneth}
Let $\cC$ and $\cD$ be $S$-linear categories. 
If $C_1, C_2 \in \cC$ and $D_1, D_2 \in \cD$, 
then the $\QCoh(S)$-valued mapping 
object between $C_1 \boxtimes D_1$ and $C_2 \boxtimes D_2$ in $\cC \otimes_{\Perf(S)} \cD$ satisfies
\begin{equation*}
\cHom_S(C_1 \boxtimes D_1, C_2 \boxtimes D_2) 
\simeq 
\cHom_S(C_1,C_2) \otimes \cHom_S(D_1, D_2) . 
\end{equation*}
\end{lemma}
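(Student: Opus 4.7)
The plan is to construct a canonical Künneth comparison map and prove it is an equivalence via a sequence of adjunctions after passing to Ind-completions, where $C_1, C_2, D_1, D_2$ become compact and the tensor product $\cC \otimes_{\Perf(S)} \cD$ corresponds to $\Ind(\cC) \otimes_{\QCoh(S)} \Ind(\cD)$. Using the counit maps $C_1 \otimes \cHom_S(C_1,C_2) \to C_2$ and $D_1 \otimes \cHom_S(D_1,D_2) \to D_2$, together with the $\QCoh(S)$-bilinearity identity $(A \otimes F) \boxtimes (B \otimes G) \simeq (A \boxtimes B) \otimes (F \otimes G)$ (immediate from the $\QCoh(S)$-linearity of $\boxtimes$ in each variable), one obtains an evaluation
\[
(C_1 \boxtimes D_1) \otimes \cHom_S(C_1,C_2) \otimes \cHom_S(D_1,D_2) \to C_2 \boxtimes D_2,
\]
which transposes via the defining adjunction of $\cHom_S$ to the desired map
\[
\mu \colon \cHom_S(C_1,C_2) \otimes \cHom_S(D_1,D_2) \to \cHom_S(C_1 \boxtimes D_1, C_2 \boxtimes D_2).
\]

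To show $\mu$ is an equivalence, I would identify its target as a composition of right adjoints. The cocontinuous $\QCoh(S)$-linear functor $- \boxtimes D_1 \colon \Ind(\cC) \to \Ind(\cC) \otimes_{\QCoh(S)} \Ind(\cD)$ arises, under the canonical identification $\Ind(\cC) \simeq \Ind(\cC) \otimes_{\QCoh(S)} \QCoh(S)$, as $\id_{\Ind(\cC)} \otimes L$, where $L \colon \QCoh(S) \to \Ind(\cD)$ sends $F \mapsto F \otimes D_1$. The compactness of $D_1$ ensures that $L$ preserves compact objects, so its right adjoint $\cHom_S(D_1,-)$ is cocontinuous and $\QCoh(S)$-linear. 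Standard compatibility results for adjoints under relative tensor products of presentable linear categories (see~\cite{lurie-HA}) then identify the right adjoint $R_{D_1}$ of $- \boxtimes D_1$ with $\id_{\Ind(\cC)} \otimes \cHom_S(D_1,-)$, so in particular $R_{D_1}(C_2 \boxtimes D_2) \simeq C_2 \otimes \cHom_S(D_1,D_2)$.

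The functor $F \mapsto (C_1 \boxtimes D_1) \otimes F$ factors as the composite
\[
\QCoh(S) \xrightarrow{\ - \otimes C_1\ } \Ind(\cC) \xrightarrow{\ - \boxtimes D_1\ } \Ind(\cC) \otimes_{\QCoh(S)} \Ind(\cD),
\]
by virtue of $(F \otimes C_1) \boxtimes D_1 \simeq (C_1 \boxtimes D_1) \otimes F$. Passing to right adjoints yields $\cHom_S(C_1 \boxtimes D_1, -) \simeq \cHom_S(C_1,-) \circ R_{D_1}$; evaluating at $C_2 \boxtimes D_2$ and applying the previous step gives $\cHom_S(C_1 \boxtimes D_1, C_2 \boxtimes D_2) \simeq \cHom_S(C_1, C_2 \otimes \cHom_S(D_1,D_2))$. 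Since $C_1$ is compact, $\cHom_S(C_1,-)$ is cocontinuous and $\QCoh(S)$-linear, hence commutes with tensoring by $\cHom_S(D_1,D_2) \in \QCoh(S)$, yielding the target $\cHom_S(C_1,C_2) \otimes \cHom_S(D_1,D_2)$; tracing through the adjunctions, this composite equivalence coincides with $\mu$. The main obstacle is the identification of $R_{D_1}$ in the second paragraph: while formal, it rests on the higher-categorical fact that tensoring a cocontinuous adjoint pair between presentable $\QCoh(S)$-linear categories with the identity adjunction produces an adjoint pair whose right adjoint is the tensor product of the right adjoints, a statement requiring the latter to be cocontinuous---a condition guaranteed here by the compactness of $D_1$.
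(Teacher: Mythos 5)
The paper states this lemma without proof, treating it as a standard Künneth fact, so there is no in-text argument to compare against. Your proof is correct and supplies an honest unwinding of the adjunctions: you build the canonical comparison map from the evaluation maps and the bilinearity of $\boxtimes$, and then verify it is an equivalence by factoring $\cHom_S(C_1 \boxtimes D_1, -)$ through the right adjoint $R_{D_1}$ of $- \boxtimes D_1$ followed by $\cHom_S(C_1,-)$. You correctly locate the two places where compactness is essential: (i) $D_1$ compact forces $- \otimes D_1 \colon \QCoh(S) \to \Ind(\cD)$ to preserve compacts, so its right adjoint $\cHom_S(D_1,-)$ is cocontinuous, and hence (as a cocontinuous right adjoint of a $\QCoh(S)$-linear functor) strongly $\QCoh(S)$-linear, so that $\id_{\Ind(\cC)} \otimes \cHom_S(D_1,-)$ is a legitimate morphism in $\PrCat_S$ and the tensored adjunction identifies it with $R_{D_1}$; (ii) $C_1$ compact makes $\cHom_S(C_1,-)$ cocontinuous and strongly $\QCoh(S)$-linear, so it commutes with the final tensor by $\cHom_S(D_1,D_2) \in \QCoh(S)$. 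These are precisely the two points a careless argument would elide, and you flag them.

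A somewhat slicker route, more in the spirit of the surrounding text, is via uniqueness of duality data. The evaluation functor $\Ind(\cC^{\op}) \otimes_{\QCoh(S)} \Ind(\cC) \to \QCoh(S)$ for the dualizable object $\Ind(\cC) \in \PrCat_S$ (Lemma~\ref{lemma-presentable-category-dualizable}) restricts on compacts to $\cHom_S(-,-)$, and the tensor product of dualizable objects is dualizable with evaluation given (up to reshuffling factors) by the tensor product of the two evaluations composed with multiplication $\QCoh(S) \otimes_{\QCoh(S)} \QCoh(S) \to \QCoh(S)$. Comparing the two descriptions of $\ev$ for $\Ind(\cC) \otimes_{\QCoh(S)} \Ind(\cD)$ on the generators $C_1^{\op} \boxtimes D_1^{\op}$ and $C_2 \boxtimes D_2$ yields the formula directly. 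Both arguments are correct; yours is more explicit about where each hypothesis enters, while the duality argument is shorter but leans on the uniqueness of duality data.
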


Given an $S$-linear category $\cC$, its base change along a morphism 
$S' \to S$ is an $S'$-linear category, and hence has $\QCoh(S')$-valued 
mapping objects. They satisfy the following K\"{u}nneth 
formula. 
\begin{lemma}
\label{lemma-homs-base-change}
Let $\cC$ be an $S$-linear category. 
Let $S' \to S$ be a morphism of schemes.  
If $C_1, C_2 \in \cC$ and $F_1,F_2 \in \Perf(S')$, 
then the $\QCoh(S')$-valued mapping 
object between $C_1 \boxtimes F_1$ and $C_2 \boxtimes F_2$ 
in $\cC \otimes_{\Perf(S)} \Perf(S')$ satisfies 
\begin{equation*}
\cHom_{S'}(C_1 \boxtimes F_1, C_2 \boxtimes F_2) \simeq \cHom_{S}(C_1, C_2) \otimes \cHom_{S'}(F_1,F_2) 
\in \QCoh(S'), 
\end{equation*}
where the product on the right is taken with respect to the $\QCoh(S)$-module 
structure on $\QCoh(S')$ induced by pullback. 
\end{lemma}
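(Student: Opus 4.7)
The plan is to verify the claimed identity via representability: I will compute $\Map_{\QCoh(S')}(G, -)$ applied to both sides for varying $G \in \QCoh(S')$ and check they agree naturally in $G$, reducing everything to the $\QCoh(S)$-valued K\"unneth formula of Lemma~\ref{lemma-kunneth}.

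Let $f \colon S' \to S$ denote the structure morphism and set $\cD = \cC \otimes_{\Perf(S)} \Perf(S')$, equipped with its natural $S'$-linear structure coming from the $\Perf(S')$-action on the second tensor factor. A first observation I will need is that under this action, $G \otimes (C_1 \boxtimes F_1) \simeq C_1 \boxtimes (G \otimes_{S'} F_1)$ in $\Ind(\cD)$ for any $G$ in $\QCoh(S')$. I will compute for $G \in \Perf(S')$ first and then extend to all $G \in \QCoh(S')$ using that $\Perf(S')$ compactly generates $\QCoh(S')$ (since $S'$ is quasi-compact and separated by our standing assumptions).

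For the right-hand side, expand $\cHom_{S'}(F_1, F_2) \simeq F_1^{\svee} \otimes F_2$ using dualizability of $F_1 \in \Perf(S')$. Applying the $f^* \dashv f_*$ adjunction together with $\cO_{S'} \simeq f^* \cO_S$, the projection formula, and the identification $\cHom_S(F, F') \simeq f_* \cHom_{S'}(F, F')$ for $F, F' \in \Perf(S')$ regarded as $S$-linear, one obtains for any $G \in \Perf(S')$ a natural chain
\[
\Map_{\QCoh(S')}\!\bigl(G,\, f^* \cHom_S(C_1, C_2) \otimes_{S'} F_1^{\svee} \otimes F_2\bigr) \simeq \Map_{\QCoh(S)}\!\bigl(\cO_S,\, \cHom_S(C_1, C_2) \otimes_S \cHom_S(G \otimes F_1, F_2)\bigr).
\]
Now Lemma~\ref{lemma-kunneth}, applied to the $S$-linear categories $\cC$ and $\Perf(S')$, identifies the target with $\Map_{\QCoh(S)}(\cO_S, \cHom_S(C_1 \boxtimes (G \otimes F_1), C_2 \boxtimes F_2)) \simeq \Map_\cD(C_1 \boxtimes (G \otimes F_1), C_2 \boxtimes F_2)$. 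Using the action formula from the previous paragraph together with the $S'$-linear representing property, this equals $\Map_{\QCoh(S')}(G, \cHom_{S'}(C_1 \boxtimes F_1, C_2 \boxtimes F_2))$, yielding the desired equivalence after testing against $G \in \Perf(S')$. Extending from $\Perf(S')$ to $\QCoh(S')$ by naturality and Yoneda gives the claim.

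The main obstacle is not conceptual but a matter of careful bookkeeping: one has to track the two parallel $S$-linear and $S'$-linear structures on $\cD$ and deploy the pushforward-pullback adjunction, the projection formula, and dualizability of perfect complexes consistently in order to translate between $\QCoh(S)$- and $\QCoh(S')$-valued mapping objects. Once these translations are in place the statement reduces cleanly to Lemma~\ref{lemma-kunneth}.
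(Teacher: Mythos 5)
The paper states this lemma without proof, so there is no argument to compare against; your reduction to Lemma~\ref{lemma-kunneth} is correct and is the natural route. Writing $f \colon S' \to S$ for the structure morphism, you invert the duality $\cHom_{S'}(F_1, F_2) \simeq F_1^\svee \otimes F_2$, use the adjunction $\Map_{\QCoh(S')}(f^*\cO_S, -) \simeq \Map_{\QCoh(S)}(\cO_S, f_*(-))$, and apply the projection formula to move $f_*$ inward; Remark~\ref{remark-mapping-object-scheme} then identifies $f_*\cHom_{S'}(G \otimes F_1, F_2)$ with $\cHom_S(G \otimes F_1, F_2)$, putting you exactly in the form where Lemma~\ref{lemma-kunneth} (applied to $\cC$ and $\Perf(S')$ as $S$-linear categories) closes the chain. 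Each step is a natural equivalence in $G \in \Perf(S')$, and the final passage to $\QCoh(S')$ is justified because the restricted Yoneda embedding $\QCoh(S') \hookrightarrow \Fun(\Perf(S')^{\op}, \Spaces)$ is fully faithful, $\QCoh(S')$ being compactly generated with compact objects $\Perf(S')$ under the standing quasi-compact separated hypotheses. Two points worth making explicit in a full write-up: (i) the projection formula $f_*(f^*M \otimes N) \simeq M \otimes f_*N$ on unbounded quasi-coherent complexes requires $f$ quasi-compact and quasi-separated, which the standing conventions guarantee; and (ii) the action formula $G \otimes (C_1 \boxtimes F_1) \simeq C_1 \boxtimes (G \otimes F_1)$ for general $G \in \QCoh(S')$ follows from the compact case because both sides, viewed as functors $\QCoh(S') \to \Ind(\cC \otimes_{\Perf(S)} \Perf(S'))$, are cocontinuous and agree on $\Perf(S')$.
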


\subsubsection{Adjoints}
Given a functor of stable $\infty$-categories $\phi \colon \cC \to \cD$, we typically denote by 
$\phi^*$ the left adjoint whenever it exists, and by $\phi^!$ the right adjoint whenever it exists. 
Further, if $\cC$ is an $S$-linear category and $\phi \colon \cD_1 \to \cD_2$ 
is an $S$-linear exact functor of $S$-linear categories, by abuse of notation 
we denote also by
\begin{equation*}
\phi \colon \cC \otimes_{\Perf(S)} \cD_1 \to \cC \otimes_{\Perf(S)} \cD_2 
\end{equation*}
the induced functor. 

\begin{lemma}
\label{lemma-adjoint-S-linear}
Let $\phi \colon \cC \to \cD$ be an $S$-linear functor between $S$-linear categories. 
If $\phi$ has a left adjoint $\phi^* \colon \cD \to \cC$ (or right adjoint $\phi^!$) 
when regarded as a functor of plain stable $\infty$-categories, then $\phi^*$ (or $\phi^!$) 
is naturally an $S$-linear functor. 
\end{lemma}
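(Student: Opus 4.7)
The plan is to promote $\phi^*$ (resp.\ $\phi^!$) to a coherent $S$-linear functor, leveraging the fact that $\Perf(S)$ is a \emph{rigid} symmetric monoidal $\infty$-category: every $F \in \Perf(S)$ is dualizable with dual $F^{\svee} \in \Perf(S)$.

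The pointwise content of the lemma is immediate from rigidity. For any $F \in \Perf(S)$, the action endofunctor $T_F = (-)\otimes F$ on any $S$-linear category admits $T_{F^{\svee}} = (-)\otimes F^{\svee}$ as both a left and a right adjoint, since tensoring with a dualizable object is a two-sided adjoint. The $S$-linearity of $\phi$ provides a natural equivalence $\phi \circ T_F \simeq T_F \circ \phi$. Taking left (resp.\ right) adjoints of both sides converts this into $\phi^* \circ T_{F^{\svee}} \simeq T_{F^{\svee}} \circ \phi^*$ (resp.\ $\phi^! \circ T_{F^{\svee}} \simeq T_{F^{\svee}} \circ \phi^!$); since $F \mapsto F^{\svee}$ is an autoequivalence of $\Perf(S)$, this says that the adjoint commutes, pointwise, with the $\Perf(S)$-action.

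The remaining and main step is to upgrade this pointwise commutativity to a coherent $\Perf(S)$-module structure on $\phi^*$ (resp.\ $\phi^!$). For this, I would invoke the general principle that the forgetful functor from the $(\infty,2)$-category of modules over a rigid symmetric monoidal $\infty$-category to $\CatSt$ creates adjoints. Concretely, any adjoint of an $S$-linear functor is canonically \emph{lax} $S$-linear, with structure maps
\begin{equation*}
\phi^*(D) \otimes F \longrightarrow \phi^*(D \otimes F)
\end{equation*}
built from the unit of the adjunction together with the $S$-linearity equivalences for $\phi$; the rigidity of $\Perf(S)$ forces these structure maps to be equivalences, via the pointwise argument above, which promotes the lax structure to a strict $S$-linear structure. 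The main obstacle is precisely this coherence upgrade — the pointwise adjunction calculation is essentially formal, but producing a fully coherent module structure on the adjoint is where rigidity of $\Perf(S)$ is essential.
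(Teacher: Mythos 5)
Your argument is correct and takes essentially the same approach as the paper: both hinge on the rigidity of $\Perf(S)$ (every $F\in\Perf(S)$ is dualizable), and both derive the pointwise identification $\phi^*(D\otimes F)\simeq\phi^*(D)\otimes F$ by passing the dualizability of $F$ through the adjunction — the paper does this via a short chain of equivalences of $\cHom_S$-objects and Yoneda, you do it by taking adjoints of $\phi\circ T_F\simeq T_F\circ\phi$, which is the same computation in different clothing.

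Where you diverge is in explicitly flagging and attempting to discharge the coherence issue. The paper's proof ends with the somewhat terse assertion that the Yoneda-level equivalence ``corresponds to an $S$-linear structure on $\phi^*$,'' leaving the upgrade from a binatural equivalence to a fully coherent $\Perf(S)$-module functor structure implicit. You make this step the centerpiece, invoking the general principle that adjoints of module functors over a rigid symmetric monoidal base are automatically module functors (this is essentially \cite[Chapter~I.1]{gaitsgory-DAG}, the same rigidity input as Lemma~\ref{lemma-presentable-category-dualizable}). That is a reasonable and arguably more careful presentation, and it is the correct conceptual reason the paper's ``corresponds to'' is legitimate.

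One small slip: the canonical structure map you would build on the \emph{left} adjoint $\phi^*$ from the (co)units of $\phi^*\dashv\phi$ and the $S$-linearity of $\phi$ goes in the direction $\phi^*(D\otimes F)\to\phi^*(D)\otimes F$, i.e.\ $\phi^*$ is canonically \emph{oplax} $S$-linear; the direction you wrote, $\phi^*(D)\otimes F\to\phi^*(D\otimes F)$, is the one you get for the \emph{right} adjoint $\phi^!$. This does not affect the argument — in either case the pointwise calculation shows the lax/oplax comparison map is an equivalence — but the asymmetry is worth keeping straight since the general lax-to-strict principle is usually stated separately for left and right adjoints.
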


\begin{proof}
In other words, we must show that $\phi^* \colon \cD \to \cC$ canonically commutes with 
the $\Perf(S)$-actions on $\cD$ and $\cC$. 
For objects $C \in \cC, D \in \cD, F \in \Perf(S)$, using adjointness and the $S$-linearity of $\phi$, 
we find equivalences  
\begin{align*}
\cHom_S(\phi^*(D \otimes F), C) & \simeq \cHom_S(D \otimes F, \phi(C)) \\ 
& \simeq \cHom_S(D , \phi(C) \otimes F^{\svee}) \\
& \simeq \cHom_S(D, \phi(C \otimes F^{\svee})) \\ 
& \simeq \cHom_S(\phi^*(D), C \otimes F^{\svee}) \\ 
& \simeq \cHom_S(\phi^*(D) \otimes F, C) . 
\end{align*}
Hence by Yoneda we obtain a canonical equivalence $\phi^*(D \otimes F) \simeq \phi^*(D) \otimes F$, 
which corresponds to an $S$-linear structure on $\phi^*$. 
A similar argument works for $\phi^!$. 
\end{proof}

The proofs of the following lemmas are formal and left to the reader. 

\begin{lemma}
\label{lemma-bc-adjoints}
Let $\phi_1 \colon \cC_1 \to \cD_1$ and $\phi_2 \colon \cC_2 \to \cD_2$ be $S$-linear functors. 
\begin{enumerate}
\item If $\phi_1$ and $\phi_2$ both admit left adjoints $\phi_1^*$ and $\phi_2^*$ \textup(or right adjoints $\phi_1^!$ and $\phi_2^!$\textup), 
then the functor $\phi_1 \otimes \phi_2 \colon \cC_1 \otimes_{\Perf(S)} \cC_2 \to \cD_1 \otimes_{\Perf(S)} \cD_2$ has a 
left adjoint given by $\phi_1^* \otimes \phi_2^*$ \textup(or right adjoint given by $\phi_1^! \otimes \phi_2^!$\textup). 
\item If $\phi_1$ and $\phi_2$ both admit left or right adjoints and are fully faithful, then so is 
$\phi_1 \otimes \phi_2$. 
\end{enumerate}
\end{lemma}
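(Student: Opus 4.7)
The plan is to deduce both statements from the universal property of the symmetric monoidal structure on $\Cat_S$, together with the fact that adjunctions and fully faithfulness can be detected through unit and counit transformations.

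For part (1), I focus on the right adjoint case (the left adjoint case being dual). By Lemma \ref{lemma-adjoint-S-linear}, the adjoints $\phi_i^!$ are automatically $S$-linear, so the functor
$$\phi_1^! \otimes \phi_2^! \colon \cD_1 \otimes_{\Perf(S)} \cD_2 \to \cC_1 \otimes_{\Perf(S)} \cC_2$$
is a well-defined morphism in $\Cat_S$. I would exhibit it as right adjoint to $\phi_1 \otimes \phi_2$ by one of two routes. The abstract route: the units $\eta_i \colon \id_{\cC_i} \to \phi_i^! \phi_i$ and counits $\epsilon_i \colon \phi_i \phi_i^! \to \id_{\cD_i}$ live in $\Fun_{\Perf(S)}(\cC_i, \cC_i)$ and $\Fun_{\Perf(S)}(\cD_i, \cD_i)$; by functoriality of the monoidal structure their tensor products yield unit and counit transformations for the pair $(\phi_1 \otimes \phi_2, \phi_1^! \otimes \phi_2^!)$, and the triangle identities propagate from each factor by the same functoriality. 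The concrete route: by Lemma \ref{lemma-box-tensor-generation}, it suffices to verify the adjunction isomorphism on objects of the form $C_1 \boxtimes C_2$ and $D_1 \boxtimes D_2$, where the K\"unneth formula (Lemma \ref{lemma-kunneth}) reduces the statement to the adjunctions $\phi_i \dashv \phi_i^!$ on each factor.

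For part (2), I use the standard characterization that an $S$-linear functor with an $S$-linear right (or left) adjoint is fully faithful if and only if the unit (resp.\ counit) of the adjunction is an equivalence. By part (1), $\phi_1 \otimes \phi_2$ admits the claimed adjoint, and the unit (or counit) of this tensored adjunction is identified with the tensor product of the units (or counits) of the adjunctions on the factors. Since each factor transformation is an equivalence by the fully faithfulness of $\phi_i$, and the bifunctor $\otimes_{\Perf(S)}$ preserves equivalences, the tensor product unit (or counit) is an equivalence as well, whence $\phi_1 \otimes \phi_2$ is fully faithful.

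The main subtle point is the identification of the unit and counit of the tensored adjunction with the tensor products of the factor units and counits; this is automatic from the monoidal functoriality of $\otimes_{\Perf(S)}$ on $\Cat_S$ (cf.\ \cite{lurie-HA}), but it is the one place where $\infty$-categorical care is required. Once this identification is in hand, both parts follow at once, which is consistent with the author's description of the proof as formal.
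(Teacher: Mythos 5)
The paper does not spell out a proof: it states right before this lemma (and Lemma~\ref{lemma-adjoints-triangle}) that ``The proofs of the following lemmas are formal and left to the reader.'' Your abstract route (tensor the units and counits and check the triangle identities by functoriality of $\otimes_{\Perf(S)}$ on the mapping $\infty$-categories) is exactly the kind of formal argument the author had in mind, and your treatment of part (2) via the ``unit is an equivalence'' criterion for fully faithfulness is the standard companion step. So your proposal is correct and matches the intended approach.

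One small remark on your ``concrete route'': reducing to box products via Lemmas~\ref{lemma-box-tensor-generation} and \ref{lemma-kunneth} is a fine way to \emph{verify} that a candidate unit or counit is an equivalence, but it does not by itself produce the natural transformation exhibiting the adjunction — for that you still need the abstract construction (or at least the existence of the candidate map $\eta_1\otimes\eta_2$). You flag this correctly by presenting the abstract route as primary, so the proof is sound; I only note it because a reader skimming the concrete route alone might mistakenly think generation plus K\"unneth suffice on their own.
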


\begin{lemma}
\label{lemma-adjoints-triangle}
Let $\phi_i \colon \cC \to \cD$ be an $S$-linear functor between $S$-linear 
stable $\infty$-categories for $i =1,2,3$, and let $\phi_1 \to \phi_2 \to \phi_3$ 
be an exact triangle in $\Fun_{\Perf(S)}(\cC, \cD)$. 
\begin{enumerate}
\item If each $\phi_i$ admits a left adjoint $\phi_i^*$, there is 
an induced exact triangle 
$\phi_3^* \to \phi_2^* \to \phi_1^*$. 

\item If each $\phi_i$ admits a right adjoint $\phi_i^!$, there 
is an induced exact triangle 
$\phi_3^! \to \phi_2^! \to \phi_1^!$. 
\end{enumerate}
\end{lemma}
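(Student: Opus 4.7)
The plan is to reduce both parts to a Yoneda-style characterization of exact triangles in terms of $\QCoh(S)$-valued mapping objects, together with the naturality of adjunctions in the functor.

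For part~(1), first fix objects $C \in \cC$ and $D \in \cD$ and evaluate the exact triangle $\phi_1 \to \phi_2 \to \phi_3$ at $C$; since evaluation at $C$ and the functor $\cHom_S(D, -) \colon \cD \to \QCoh(S)$ both preserve fiber sequences, this yields an exact triangle
\begin{equation*}
\cHom_S(D, \phi_1(C)) \to \cHom_S(D, \phi_2(C)) \to \cHom_S(D, \phi_3(C))
\end{equation*}
in $\QCoh(S)$. The next step is to transport this along the adjunction equivalences $\cHom_S(D, \phi_i(C)) \simeq \cHom_S(\phi_i^*(D), C)$ to obtain an exact triangle
\begin{equation*}
\cHom_S(\phi_1^*(D), C) \to \cHom_S(\phi_2^*(D), C) \to \cHom_S(\phi_3^*(D), C) .
\end{equation*}
Letting $C$ vary and invoking the Yoneda lemma in $\cC$ produces an exact triangle $\phi_3^*(D) \to \phi_2^*(D) \to \phi_1^*(D)$ in $\cC$, and letting $D$ vary assembles these into the sought-for exact triangle $\phi_3^* \to \phi_2^* \to \phi_1^*$; by Lemma~\ref{lemma-adjoint-S-linear} the adjoints $\phi_i^*$ and the connecting transformations are automatically $S$-linear, so the triangle lies in $\Fun_{\Perf(S)}(\cD, \cC)$. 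Part~(2) is formally dual: apply $\cHom_S(-, D)$, use the adjunction $\cHom_S(\phi_i(C), D) \simeq \cHom_S(C, \phi_i^!(D))$ termwise, and conclude by Yoneda in $\cD$.

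The hard part is justifying the key naturality: the adjunction equivalence $\cHom_S(\phi^*(D), C) \simeq \cHom_S(D, \phi(C))$ must be natural in $\phi$, so that a natural transformation $\phi_i \to \phi_j$ of $S$-linear functors with left adjoints is sent coherently to a natural transformation $\phi_j^* \to \phi_i^*$. This is the assertion that passage to left adjoints upgrades to a functor from the (non-full) subcategory of $\Fun_{\Perf(S)}(\cC, \cD)$ spanned by left-adjointable functors, to the opposite of the analogous subcategory of $\Fun_{\Perf(S)}(\cD, \cC)$, and dually for right adjoints. This is a standard coherence result in $\infty$-category theory, obtained from the general theory of adjunctions in~\cite{lurie-HA}; once granted, the argument above is formal.
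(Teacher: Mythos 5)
The paper itself does not spell out a proof of this lemma --- it is one of the two lemmas preceded by ``The proofs of the following lemmas are formal and left to the reader'' --- so there is no argument in the paper to compare against directly. That said, your proposal is a correct way to fill in the gap, and it follows what seems to be the intended ``formal'' route. You correctly reduce the claim to a Yoneda-style detection of fiber sequences via the $\QCoh(S)$-valued mapping objects, and you correctly identify the real content as the coherence of passage-to-adjoints: a natural transformation $\phi_i \to \phi_j$ of adjointable functors induces a natural transformation $\phi_j^* \to \phi_i^*$, functorially, which is what supplies the maps (and the nullhomotopy of the long composite) needed before Yoneda can be invoked. That coherence is indeed the standard fact from the theory of adjunctions in higher category theory, so flagging it and citing Lurie is appropriate rather than a gap.

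Two very small points worth tightening if you were to write this out in full. First, in your conclusion you appeal to Lemma~\ref{lemma-adjoint-S-linear} for the $S$-linearity of the $\phi_i^*$, which is right, but you should also remark that the connecting natural transformations are automatically $S$-linear because the whole Yoneda argument can be carried out at the level of $\QCoh(S)$-valued mapping objects, so the resulting triangle lives in $\Fun_{\Perf(S)}(\cD,\cC)$ rather than merely in the underlying functor category. Second, your sentence asserting that taking left adjoints gives a functor to ``the opposite of the analogous subcategory'' of $\Fun_{\Perf(S)}(\cD,\cC)$ reads a bit loosely: the target subcategory consists of \emph{right}-adjointable functors, not ``the analogous'' (left-adjointable) ones. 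Neither of these affects the validity of the argument.
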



\section{Semiorthogonal decompositions} 
\label{section-sod}
In this section, we consider semiorthogonal decompositions of $S$-linear categories. 
In \S\ref{subsection-sod-definitions} we give the basic definitions, 
and in \S\ref{subsection-admissible-category} we discuss the notions of admissible 
subcategories and mutation functors; the results here are standard in the 
triangulated setting (see \cite{bondal, bondal-kapranov}), so we freely omit 
proofs when the usual arguments work without modification. 
In \S\ref{subsection-sod-small-large} we describe the relation between 
semiorthogonal decompositions of linear and presentable linear 
categories. 
Then we discuss induced semiorthogonal decompositions of tensor products 
of linear categories in \S\ref{subsection-sod-tensor}, and of functor categories 
between linear categories in \S\ref{subsection-sod-functor}. 
Finally, in \S\ref{subsection-splitting-functors} we review the notion of a splitting 
functor from \cite{kuznetsov-HPD}. 

\subsection{Basic definitions} 
\label{subsection-sod-definitions}
Given an $S$-linear category $\cC$, an \emph{$S$-linear stable subcategory} $\cA \subset \cC$ 
is a stable subcategory of $\cC$ which is preserved by the $\Perf(S)$-action on $\cC$; 
this is equivalent to the data of an $S$-linear category $\cA$ equipped with a fully 
faithful $S$-linear functor $\cA \to \cC$. 
Similarly, given a presentable $S$-linear category $\cC$, a \emph{presentable $S$-linear stable subcategory} 
$\cA \subset \cC$ is a stable subcategory of $\cC$ which is closed under colimits and 
preserved by the $\QCoh(S)$-action on~$\cC$; 
this is equivalent to the data of a presentable $S$-linear category $\cA$ equipped 
with a fully faithful cocontinuous $S$-linear functor $\cA \to \cC$.

\begin{definition}
\label{definition-sod}
Let $\cC$ be an $S$-linear (resp. presentable $S$-linear) category. 
An \emph{$S$-linear (resp. presentable $S$-linear) semiorthogonal decomposition} of $\cC$
is a sequence $\cA_1, \cA_2, \dots, \cA_n$ of $S$-linear (resp. presentable $S$-linear) 
stable subcategories of $\cC$ 
--- called the \emph{components} of the decomposition --- such that: 
\begin{enumerate}
\item \label{sod-1}
$\cHomS(C,D) \simeq 0$ for all $C \in \cA_i, D \in \cA_j$, and $i > j$. 
\item \label{sod-2}
For any $C \in \cC$, 
there exists a diagram 
\begin{equation}
\label{Ci-filtration} 
\xymatrix{
0 \ar@{=}[r] & C_n \ar[rr] && C_{n-1} \ar[dl] \ar[r] & \cdots \ar[r]  & 
C_1 \ar[rr] && C_0 \ar[dl] \ar@{=}[r] & C \\
&  & A_n \ar@{-->}[ul] &   & & & A_1 \ar@{-->}[ul] & & 
}
\end{equation}
where $A_i \in \cA_i$ and the triangles are exact. 
\end{enumerate}
If only condition~\eqref{sod-1} is satisfied, we say the sequence $\cA_1, \dots, \cA_n$ 
is \emph{semiorthogonal}. 
\end{definition}

\begin{remark}
By $S$-linearity of the categories $\cA_i$, in~\eqref{sod-1} it is equivalent to 
require that the space $\Map_{\cC}(C,D)$ is contractible. 
\end{remark} 

Given a stable $\infty$-category $\cC$ and a collection of subcategories $\cA_i \subset \cC, i = 1, \dots, n$, 
we denote by 
\begin{equation*}
\llangle \cA_1, \dots, \cA_n \rrangle \subset \cC 
\end{equation*}
the stable subcategory of $\cC$ generated by the $\cA_i$. 
In particular, for a semiorthogonal decomposition as in Definition~\ref{definition-sod}, we have 
\begin{equation*}
\cC = \llangle \cA_1, \dots, \cA_n \rrangle. 
\end{equation*}

\begin{lemma}
\label{lemma-sod}
Let $\cC$ be an $S$-linear (resp. presentable $S$-linear) category, with an $S$-linear 
(resp. presentable $S$-linear) semiorthogonal decomposition
\begin{equation*}
\cC = \llangle \cA_1, \dots, \cA_n \rrangle . 
\end{equation*}
Let $\alpha_i \colon \cA_i \to \cC$ denote the inclusion. 
Then there are $S$-linear (resp. cocontinuous $S$-linear) functors 
\begin{alignat*}{2}
\tr_i & \colon \cC \to \cC , \quad & 0 \leq i \leq n, \\
\pr_i & \colon \cC \to \cA_i , \quad & 1 \leq i \leq n, 
\end{alignat*} 
such that:
\begin{enumerate}
\item \label{sod-projection-functors}
There is a diagram in the category of functors $\Fun_{\Perf(S)}(\cC, \cC)$ 
(resp. $\Fun_{\QCoh(S)}(\cC, \cC)$) 
\begin{equation*}
\xymatrix{
0 \ar@{=}[r] & \tr_n \ar[rr] && \tr_{n-1} \ar[dl] \ar[r] & \cdots \ar[r]  & 
\tr_1 \ar[rr] && \tr_0 \ar[dl] \ar@{=}[r] & \id_{\cC} \\
&  & \alpha_n \circ \pr_n \ar@{-->}[ul] &   & & & \alpha_1 \circ \pr_1 \ar@{-->}[ul] & & 
}
\end{equation*}
where the triangles are exact, which recovers~\eqref{Ci-filtration} 
when applied to any $C \in \cC$. 
\item The functor $\pr_i \colon \cC \to \cA_i$ is a retraction, i.e. 
$\pr_i \circ \alpha_i \simeq \id_{\cA_i}$. 
\item The restriction of $\pr_i \colon \cC \to \cA_i$ to 
$\langle \cA_{i}, \cA_{i+1}, \dots, \cA_{n} \rangle \subset \cC$ is left adjoint 
to the inclusion $\cA_i \to \langle \cA_{i}, \dots, \cA_{n} \rangle$. 
In particular, $\pr_1 = \alpha_1^*$ is left adjoint to $\alpha_1 \colon \cA_1 \to \cC$. 
\item The restriction of $\pr_i \colon \cC \to \cA_i$ to 
$\langle \cA_{1}, \cA_2 , \dots, \cA_{i} \rangle \subset \cC$ is right adjoint 
to the inclusion $\cA_i \to \langle \cA_{1}, \cA_2 , \dots, \cA_{i} \rangle$. 
In particular, $\pr_n = \alpha_n^!$ is right adjoint to $\alpha_n \colon \cA_n \to \cC$. 
\end{enumerate}
\end{lemma}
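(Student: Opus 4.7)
This statement is the $\infty$-categorical enhancement of a classical theorem in the triangulated setting \cite{bondal, bondal-kapranov}. My plan is to follow the standard strategy, paying attention to the points where $\infty$-categorical considerations or $S$-linearity intervene. The argument proceeds by induction on $n$: the main work lies in the base case $n = 2$, from which the general statement is built by iteration.

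For the base case $\cC = \llangle \cA, \cB \rrangle$, I would produce the right adjoint $\beta^!$ to the inclusion $\beta \colon \cB \to \cC$ as follows. Given $C \in \cC$ with associated SOD triangle $B \to C \to A$, semiorthogonality implies $\Map_\cC(B', A) \simeq 0$ for every $B' \in \cB$, so the induced map $\Map_\cC(B', B) \to \Map_\cC(B', C)$ is an equivalence of spaces. This is exactly the pointwise criterion for $B$ to be the value of a right adjoint to $\beta$ at $C$, in the $\infty$-categorical sense of \cite[Section 5.2.2]{lurie-HTT}; assembling this over all $C$ yields the functor $\beta^!$. A symmetric argument produces the left adjoint $\alpha^* \colon \cC \to \cA$ to $\alpha \colon \cA \to \cC$. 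The sequence $\beta \beta^! \to \id_\cC \to \alpha \alpha^*$ is an exact triangle because it restricts to the SOD triangle on each object. In the presentable setting one may alternatively invoke the adjoint functor theorem \cite[Section 5.5.2]{lurie-HTT}, and for the small $S$-linear setting compact-object preservation follows because the SOD triangle $B \to C \to A$ for compact $C$ already has $B \in \cB$ and $A \in \cA$ by hypothesis.

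For the inductive step with $n \geq 3$, let $\cB = \llangle \cA_2, \dots, \cA_n \rrangle \subset \cC$, which is an $S$-linear (respectively presentable $S$-linear) stable subcategory. I would verify that $\cC = \llangle \cA_1, \cB \rrangle$ is a two-term SOD: semiorthogonality $\cHomS(\cA_1, \cB) \simeq 0$ holds because the vanishing $\cHomS(\cA_1, \cA_j) \simeq 0$ for $j \geq 2$ propagates to the stable (respectively colimit-closed) subcategory such objects span. The filtration axiom is obtained by extracting the outermost triangle of~\eqref{Ci-filtration}, namely $C_1 \to C \to A_1$ with $A_1 \in \cA_1$ and $C_1 \in \cB$; the remainder of~\eqref{Ci-filtration} simultaneously exhibits $\cB = \llangle \cA_2, \dots, \cA_n \rrangle$ as an SOD, to which the inductive hypothesis applies. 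I would then define $\tr_1$ as the composition $\beta_{\cB} \beta_{\cB}^!$ (using the two-term adjoint), and more generally $\tr_i$ by composing with the truncations inside $\cB$ coming from induction; the projection functors $\pr_i$ arise as cofibers in the diagram and land in $\cA_i$ by construction. The exact triangles of~(1) are spliced together from two-term truncation triangles at each stage.

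Assertions (2)--(4) are then verified by unwinding the construction: the retraction property (2) follows because the SOD filtration of an object already in $\cA_i$ is trivial; the adjointness assertions (3) and (4) follow from iterating the two-term adjunction and using transitivity of adjoints. $S$-linearity of all adjoints and projections is automatic from Lemma~\ref{lemma-adjoint-S-linear}. The main obstacle I anticipate is organizing the coherent exact triangles of functors appearing in~(1)---in the triangulated setting this would require repeated applications of the octahedral axiom, but in the $\infty$-categorical setting the relevant coherences are packaged into the basic formalism of stable $\infty$-categories, so the iteration should go through cleanly once the two-term adjoints are in hand.
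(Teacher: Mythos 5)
Your approach is the same as the paper's: the paper's proof consists of the single remark that ``this result is well-known in the triangulated setting; the same proof works in our setup,'' followed by an explicit argument for the one non-trivial point, namely cocontinuity of the functors $\tr_i$ and $\pr_i$ in the presentable case. You spell out the classical argument in some detail (pointwise construction of adjoints, reduction to $n=2$, induction via $\cC = \llangle \cA_1, \cB \rrangle$ with $\cB = \llangle \cA_2, \dots, \cA_n \rrangle$), and that part is correct modulo a consistent sign slip: semiorthogonality for $\cC = \llangle \cA_1, \cB \rrangle$ requires $\cHomS(\cB, \cA_1) \simeq 0$, not $\cHomS(\cA_1, \cB) \simeq 0$, under the paper's convention (Definition~\ref{definition-sod}\eqref{sod-1}). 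The intended claim is clear from your correct description of the outer triangle, so this is cosmetic.

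The genuine gap is that you never prove cocontinuity of $\pr_2$ (and hence of the other $\pr_i$, $\tr_i$) in the presentable case, which is precisely the point the paper singles out. The statement requires \emph{cocontinuous} $S$-linear functors. Your appeal to the adjoint functor theorem in the presentable setting supplies existence of the right adjoint $\beta^! = \pr_2$, but a right adjoint is a priori only continuous, not cocontinuous. You have the key ingredient in hand --- the exact triangle $\beta\beta^! \to \id_\cC \to \alpha\alpha^*$ --- but you do not close the loop: since $\alpha^*$ is a left adjoint it is cocontinuous, $\alpha$ is cocontinuous by hypothesis, so $\alpha\alpha^*$ is cocontinuous; colimits of functors are computed objectwise, so the fiber $\beta\beta^!$ is cocontinuous; and since $\beta$ is fully faithful and cocontinuous it reflects colimits, so $\beta^!$ itself is cocontinuous. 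Your aside about ``compact-object preservation'' in the small $S$-linear case is out of place here (that question belongs to Lemma~\ref{lemma-sod-large-to-small}); in the small $\Cat_S$ setting the relevant content is only exactness and $S$-linearity, which you do address correctly via Lemma~\ref{lemma-adjoint-S-linear}.
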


\begin{proof}
This result is well-known in the triangulated setting; the same proof 
works in our setup. 
The only point which deserves explanation is that in the presentable 
case, the functors $\tr_i$ and $\pr_i$ are indeed cocontinuous. 
This claim reduces to the case where the length of the semiorthogonal 
decomposition is $n=2$. 
Then the diagram from~\eqref{sod-projection-functors} 
amounts to a distinguished triangle 
\begin{equation*}
\alpha_2 \circ \pr_2 \to \id_{\cC} \to \alpha_1 \circ \pr_1, 
\end{equation*}
where $\pr_1 = \alpha_1^*$ and $\pr_2 = \alpha_2^!$. 
Being a left adjoint, the functor $\pr_1$ is cocontinuous. 
Hence the above triangle implies $\alpha_2 \circ \pr_2$ is 
cocontinuous. 
Since $\alpha_2$ is fully faithful and cocontinuous, we 
conclude $\pr_2$ is cocontinuous. 
\end{proof}

\begin{definition}
Given a semiorthogonal decomposition as in Lemma~\ref{lemma-sod}, 
the functors $\tr_i$ and $\pr_i$ are called the \emph{truncation} and 
\emph{projection} functors. 
\end{definition}

\subsection{Admissible subcategories and mutation functors} 
\label{subsection-admissible-category}

\begin{definition}
Let $\cC$ be an $S$-linear category, 
and let $\cA \subset \cC$ be an $S$-linear stable subcategory. 
Let $\alpha \colon \cA \to \cC$ denote the inclusion. Then $\cA$ is called:  
\begin{itemize}
\item[--] \emph{left admissible} in $\cC$ if $\alpha$ admits a left adjoint $\alpha^* \colon \cC \to \cA$; 
\item[--] \emph{right admissible} in $\cC$ if $\alpha$ admits a right adjoint $\alpha^! \colon \cC \to \cA$;  
\item[--] \emph{admissible} in $\cC$ if it is both left and right admissible. 
\end{itemize}
\end{definition}

Admissibility of a subcategory is related to semiorthogonal decompositions as follows. 
Given a subcategory $\cA$ of an $\infty$-category $\cC$, 
consider the full subcategories of $\cC$ defined by  
\begin{align*}
\cA^{\perp} & = \{ \, C \in \cC  \mid \Map_{\cC}(D,C) \text{ is contractible for all } D \in \cA \, \} ,  \\ 
^{\perp}\cA & = \{ \, C \in \cC  \mid \Map_{\cC}(C, D) \text{ is contractible for all } D \in \cA \, \} . 
\end{align*}
We call $\cA^\perp$ the \emph{right orthogonal} to $\cA \subset \cC$, and 
$^{\perp}\cA$ the \emph{left orthogonal} to $\cA \subset \cC$. 
If $\cC$ is an $S$-linear category and $\cA \subset \cC$ is an $S$-linear stable subcategory, 
then the orthogonal categories $\cA^{\perp}$ and $^{\perp}\cA$ are also $S$-linear stable 
subcategories of $\cC$. 
Clearly, $\cA^{\perp}, \cA$ and $ \cA, {^{\perp}}\cA$ are semiorthogonal pairs in $\cC$. 

\begin{lemma}
\label{lemma-admissible-sod}
Let $\cC$ be an $S$-linear category, 
and let $\cA, \cB$ be a pair of $S$-linear 
stable subcategories. The following are equivalent: 
\begin{enumerate}
\item $\cC = \llangle \cA, \cB \rrangle$ is a semiorthogonal decomposition. 
\item $\cA \subset \cC$ is left admissible and $\cB =  {^\perp} \cA$. 
\item $\cB \subset \cC$ is right admissible and $\cA = \cB^{\perp}$. 
\end{enumerate}
\end{lemma}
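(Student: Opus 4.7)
The plan is to verify (1) $\Leftrightarrow$ (2) and (1) $\Leftrightarrow$ (3) separately, with the two implications out of (1) both read off from Lemma~\ref{lemma-sod} applied to the length-two decomposition $\cC = \llangle \cA, \cB \rrangle$. That lemma provides left and right adjoints $\pr_1 = \alpha^*$ and $\pr_2 = \beta^!$ to the inclusions $\alpha \colon \cA \to \cC$ and $\beta \colon \cB \to \cC$, so $\cA$ is left admissible and $\cB$ is right admissible, while the semiorthogonality axiom in Definition~\ref{definition-sod} yields $\cB \subseteq {^{\perp}}\cA$ and $\cA \subseteq \cB^{\perp}$. For the reverse inclusion in (2), I would apply the truncation triangle $\beta\beta^!(C) \to C \to \alpha\alpha^*(C)$ from Lemma~\ref{lemma-sod}(1) to a test object $C \in {^{\perp}}\cA$: by adjunction, $\Map_{\cA}(\alpha^*(C), A) \simeq \Map_{\cC}(C, \alpha(A))$ vanishes for every $A \in \cA$, so taking $A = \alpha^*(C)$ forces $\alpha^*(C) \simeq 0$, and the triangle identifies $C \simeq \beta\beta^!(C)$ as an object of $\cB$. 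The implication (1) $\Rightarrow$ (3) is entirely symmetric, using the other end of the same triangle.

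For (2) $\Rightarrow$ (1), I would construct the filtration~\eqref{Ci-filtration} directly from the unit $\eta_C \colon C \to \alpha\alpha^*(C)$ of the adjunction $\alpha^* \dashv \alpha$ supplied by the left admissibility hypothesis. Completing to an exact triangle
\begin{equation*}
F \to C \to \alpha\alpha^*(C),
\end{equation*}
one has $\alpha^*(F) \simeq 0$: indeed, $\alpha^*(\eta_C)$ is an equivalence by the triangle identities together with the fact that the counit $\alpha^*\alpha \to \id_{\cA}$ is an equivalence (a consequence of $\alpha$ being fully faithful). By adjunction this forces $F \in {^{\perp}}\cA = \cB$, so the triangle realizes each $C \in \cC$ as an extension of an object of $\cA$ by an object of $\cB$, which is the required filtration for $n=2$ in Definition~\ref{definition-sod}; semiorthogonality of the pair $(\cA, \cB)$ is built into the hypothesis $\cB = {^{\perp}}\cA$. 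The implication (3) $\Rightarrow$ (1) is dual, constructed from the counit $\beta\beta^!(C) \to C$ and its cofiber triangle $\beta\beta^!(C) \to C \to G$ with $G \in \cB^{\perp} = \cA$.

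The argument is purely formal and parallels the standard triangulated proof (cf.~\cite{bondal, bondal-kapranov}); no genuine obstacle arises in the present $S$-linear $\infty$-categorical setting, since $S$-linearity of the adjoints $\alpha^*$ and $\beta^!$ is automatic from Lemma~\ref{lemma-adjoint-S-linear}, and the fiber/cofiber of a morphism of $S$-linear functors is again $S$-linear. The only point that requires a moment of thought is verifying that the triangle identities force $\alpha^*(\eta_C)$ to be an equivalence, which follows from fully faithfulness of $\alpha$ just as in the classical setting.
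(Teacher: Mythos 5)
Your proof is correct. The paper deliberately omits a proof of this lemma (\S\ref{subsection-admissible-category} notes that such results are standard in the triangulated setting and the usual arguments transfer without modification), and your argument is precisely the standard one carried over to the $S$-linear stable $\infty$-categorical setting: extract adjoints from Lemma~\ref{lemma-sod} for the forward implications, and build the two-step filtration from the (co)unit triangle, using full faithfulness of the inclusion plus a triangle identity to force the fiber/cofiber into the correct orthogonal, for the converse. Your remarks on $S$-linearity (Lemma~\ref{lemma-adjoint-S-linear}, plus the paper's earlier observation that orthogonals of $S$-linear subcategories are $S$-linear) address exactly the points one needs to check beyond the classical case.
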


\begin{definition}
Let $\cA \subset \cC$ be an $S$-linear stable subcategory of an $S$-linear category. 
If $\cA \subset \cC$ is left admissible, then by Lemma~\ref{lemma-admissible-sod} the 
inclusion $i \colon {^\perp}\cA \to \cC$ admits a right adjoint $i^!$; in this case the 
functor 
\begin{equation*}
\rR_{\cA} = i \circ i^! \colon \cC \to \cC 
\end{equation*} 
is called the \emph{right mutation functor} through $\cA$. 
Similarly, if $\cA \subset \cC$ is right admissible, then the inclusion $j \colon \cA^{\perp} \to \cC$ 
admits a left adjoint $j^*$, and the functor 
\begin{equation*}
\rL_{\cA} = j \circ j^* \colon \cC \to \cC 
\end{equation*}
is called the \emph{left mutation functor} through $\cA$.  
\end{definition}

\begin{lemma}
\label{lemma-mutation-functors}
Let $\cA \subset \cC$ be an $S$-linear stable subcategory of an $S$-linear category. 
\begin{enumerate}
\item If $\cA \subset \cC$ is left admissible, then the right mutation functor $\rR_{\cA}$ kills $\cA$ and is 
fully faithful on the subcategory $\cA^{\perp} \subset \cC$. 
\item If $\cA \subset \cC$ is right admissible, then the left mutation functor $\rL_{\cA}$ kills $\cA$ and 
is fully faithful on the subcategory ${^\perp}\cA \subset \cC$. 
\item If $\cA \subset \cC$ is admissible, then $\rR_{\cA}$ and $\rL_{\cA}$ induce mutually inverse equivalences 
\begin{equation*}
\rR_{\cA}|_{\cA^{\perp}}   \colon \cA^{\perp} \xrightarrow{\sim} {^\perp}\cA 
\quad \text{and} \quad 
\rL_{\cA}|_{{^\perp}\cA}   \colon {^\perp}\cA \xrightarrow{\sim} \cA^{\perp}. 
\end{equation*}
\end{enumerate} 
\end{lemma}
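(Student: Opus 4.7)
The plan is to deduce all three parts formally from the semiorthogonal decompositions furnished by Lemma \ref{lemma-admissible-sod} together with the projection/truncation formalism of Lemma \ref{lemma-sod}. Left admissibility of $\cA$ gives $\cC = \llangle \cA, {^\perp}\cA \rrangle$, and the $n=2$ case of Lemma \ref{lemma-sod} provides an exact triangle of endofunctors of $\cC$
\begin{equation*}
i \circ i^! \to \id_\cC \to \alpha \circ \alpha^*,
\end{equation*}
in which $i^!$ and $\alpha^*$ are the projections onto the two components. Since $\rR_\cA = i \circ i^!$, this is the triangle $\rR_\cA \to \id_\cC \to \alpha \circ \alpha^*$. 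Dually, right admissibility yields the triangle $\alpha \circ \alpha^! \to \id_\cC \to \rL_\cA$ associated with $\cC = \llangle \cA^\perp, \cA \rrangle$, where $\rL_\cA = j \circ j^*$ for the inclusion $j \colon \cA^\perp \to \cC$.

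For part (1), semiorthogonality in $\cC = \llangle \cA, {^\perp}\cA \rrangle$ forces the projection $i^!$ to vanish on $\cA$, so $\rR_\cA$ kills $\cA$. For full faithfulness of $\rR_\cA$ on $\cA^\perp$: given $C_1 \in \cC$ and $C_2 \in \cA^\perp$, I apply $\Map_\cC(-, C_2)$ to the triangle $\rR_\cA(C_1) \to C_1 \to \alpha \alpha^*(C_1)$; the term $\Map(\alpha \alpha^*(C_1), C_2)$ vanishes since $\alpha \alpha^*(C_1) \in \cA$ and $C_2 \in \cA^\perp$, whence $\Map(\rR_\cA(C_1), C_2) \simeq \Map(C_1, C_2)$. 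Specializing to $C_1 \in \cA^\perp$ and applying $\Map_\cC(\rR_\cA(C_1), -)$ to the analogous triangle at $C_2$, the term $\Map(\rR_\cA(C_1), \alpha \alpha^*(C_2))$ now vanishes because $\rR_\cA(C_1) \in {^\perp}\cA$, and composition gives $\Map(\rR_\cA(C_1), \rR_\cA(C_2)) \simeq \Map(C_1, C_2)$. Part (2) is proved by the symmetric argument.

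For part (3), admissibility makes both decompositions available. By (1) and (2) the functors $\rR_\cA|_{\cA^\perp} \colon \cA^\perp \to {^\perp}\cA$ and $\rL_\cA|_{{^\perp}\cA} \colon {^\perp}\cA \to \cA^\perp$ are fully faithful, so it remains to verify they are mutually inverse. Given $D \in {^\perp}\cA$, I apply $\rR_\cA$ to the triangle $\alpha \alpha^!(D) \to D \to \rL_\cA(D)$ and use that $\rR_\cA$ kills $\cA$ to obtain $\rR_\cA(\rL_\cA(D)) \simeq \rR_\cA(D)$. Moreover, $D \in {^\perp}\cA$ together with the adjunction $\alpha^* \dashv \alpha$ forces $\alpha^*(D) \simeq 0$, so the triangle $\rR_\cA(D) \to D \to \alpha \alpha^*(D)$ collapses to $\rR_\cA(D) \simeq D$; hence $\rR_\cA \circ \rL_\cA|_{^\perp\cA} \simeq \id$. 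The dual argument gives $\rL_\cA \circ \rR_\cA|_{\cA^\perp} \simeq \id$, completing the proof.

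There is no substantive obstacle: the entire argument is a formal consequence of the two semiorthogonal decompositions. The only points demanding care are matching the projection functors of Lemma \ref{lemma-sod} to the mutation functors $\rR_\cA, \rL_\cA$ under the correct adjunction conventions, and keeping straight which adjoint is available under each admissibility hypothesis.
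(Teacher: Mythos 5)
Your proof is correct. The paper itself gives no proof of this lemma: at the start of \S\ref{subsection-admissible-category} it declares that the results of that subsection are standard in the triangulated setting, citing \cite{bondal,bondal-kapranov}, and omits the arguments since they carry over without modification. What you have written is precisely that standard argument — derive the two exact triangles $\rR_\cA \to \id \to \alpha\alpha^*$ and $\alpha\alpha^! \to \id \to \rL_\cA$ from Lemmas~\ref{lemma-admissible-sod} and~\ref{lemma-sod}, read off that each mutation kills $\cA$ because the relevant projection vanishes there, obtain full faithfulness by applying $\Map$ to the triangles and killing the term landing in $\cA$, and prove the inverse relations by applying one mutation to the defining triangle of the other and using that $\alpha^*$ vanishes on ${^\perp}\cA$ (resp.\ $\alpha^!$ on $\cA^\perp$). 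One point you note only implicitly: the identification $\Map(\rR_\cA C_1, \rR_\cA C_2)\simeq\Map(C_1,C_2)$ is the map induced by the functor $\rR_\cA$, which follows from the naturality of $\eta\colon\rR_\cA\to\id$, i.e.\ $\eta_{C_2}\circ\rR_\cA(f) = f\circ\eta_{C_1}$. Making this explicit would tighten the write-up, but it does not affect correctness.
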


\begin{lemma}
\label{lemma-adjoint-adjoints}
Let $\cC$ be an $S$-linear category, and let 
$\alpha \colon \cA \to \cC$ be the inclusion of an 
$S$-linear stable subcategory. 
\begin{enumerate} 
\item \label{left-left-adjoint}
If $\cA \subset \cC$ is left admissible and ${^{\perp}}\cA \subset \cC$ is admissible, 
then $\alpha^* \colon \cC \to \cA$ has a left adjoint given by 
\begin{equation*}
\rR_{{^{\perp}}\cA} \circ \alpha \colon \cA \to \cC . 
\end{equation*}

\item \label{right-right-adjoint} 
If $\cA \subset \cC$ is right admissible and $\cA^{\perp} \subset \cC$ is admissible, 
then $\alpha^! \colon \cC \to \cA$ has a right adjoint given by 
\begin{equation*}
\rL_{\cA^{\perp}} \circ \alpha \colon \cA \to \cC. 
\end{equation*}
\end{enumerate}
\end{lemma}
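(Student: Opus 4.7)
The plan is to prove (1) directly and deduce (2) by a parallel argument. Set $\beta = \rR_{{^\perp}\cA} \circ \alpha$. The strategy is to produce a natural equivalence $\Map_\cC(\beta A, C) \simeq \Map_\cA(A, \alpha^* C)$ by exploiting two semiorthogonal decompositions of $\cC$ in tandem. By Lemma~\ref{lemma-admissible-sod}, the hypotheses furnish $\cC = \llangle \cA, {^\perp}\cA \rrangle$ (from $\cA$ being left admissible) and $\cC = \llangle {^\perp}\cA, {^{\perp\perp}}\cA \rrangle$ (from ${^\perp}\cA$ being admissible, hence left admissible); right admissibility of ${^\perp}\cA$ combined with the first SOD moreover forces $({^\perp}\cA)^\perp = \cA$. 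Then Lemma~\ref{lemma-mutation-functors}(3), applied to the admissible subcategory ${^\perp}\cA$, shows that $\rR_{{^\perp}\cA}$ restricts to an equivalence $\cA \xrightarrow{\sim} {^{\perp\perp}}\cA$, so $\beta$ factors as this equivalence followed by the fully faithful inclusion $i \colon {^{\perp\perp}}\cA \hookrightarrow \cC$; in particular $\beta$ is fully faithful.

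Next, I would apply $\Map_\cC(\beta A, -)$ to the triangle $\rR_{{^\perp}\cA}(C) \to C \to P(C)$ (with $P(C) \in {^\perp}\cA$) coming from the second SOD. Since $\beta A \in {^{\perp\perp}}\cA$ and semiorthogonality gives the vanishing $\Map_\cC({^{\perp\perp}}\cA, {^\perp}\cA) = 0$, the computation reduces to $\Map_\cC(\beta A, \rR_{{^\perp}\cA}(C))$. The crucial step is then to identify $\rR_{{^\perp}\cA}(C) \simeq \beta(\alpha^* C) = \rR_{{^\perp}\cA}(\alpha\alpha^*(C))$: applying the exact functor $\rR_{{^\perp}\cA}$ to the triangle $B(C) \to C \to \alpha\alpha^*(C)$ from the first SOD (with $B(C) \in {^\perp}\cA$) and invoking Lemma~\ref{lemma-mutation-functors}(1), according to which $\rR_{{^\perp}\cA}$ kills ${^\perp}\cA$, produces this equivalence. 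Full faithfulness of $\beta$ then gives $\Map_\cC(\beta A, \beta\alpha^*(C)) \simeq \Map_\cA(A, \alpha^*(C))$, as desired. The $S$-linearity of $\beta$ is automatic, as it is a composition of $S$-linear functors (with $i^!$ being $S$-linear by Lemma~\ref{lemma-adjoint-S-linear}).

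Part (2) follows from a symmetric argument, using the SODs $\cC = \llangle \cA^\perp, \cA \rrangle$ and $\cC = \llangle \cA^{\perp\perp}, \cA^\perp \rrangle$ and the left mutation $\rL_{\cA^\perp}$ in place of $\rR_{{^\perp}\cA}$. I expect the main obstacle to be the bridging identification $\rR_{{^\perp}\cA}(C) \simeq \rR_{{^\perp}\cA}(\alpha\alpha^*(C))$ of the previous paragraph: this single input is what connects $\alpha^*$ with $\beta$ by tying the two semiorthogonal decompositions together through the killing property of the mutation functor, and all the rest is formal manipulation with SODs and adjoints.
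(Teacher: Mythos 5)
Your proof is correct and follows essentially the same approach as the paper: both arguments exploit the two semiorthogonal decompositions $\cC = \llangle \cA, {^\perp}\cA \rrangle$ and $\cC = \llangle {^\perp}\cA, {^{\perp\perp}}\cA \rrangle$, the associated exact triangles, and the orthogonality relations they encode. The paper bundles these steps into a direct three-step chain of equivalences on $\cHom_S$-objects, whereas you unpack it by first establishing full faithfulness of $\rR_{{^\perp}\cA}\circ\alpha$ and the identification $\rR_{{^\perp}\cA}(C) \simeq \rR_{{^\perp}\cA}(\alpha\alpha^*(C))$, but the underlying ideas coincide.
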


\begin{proof}
We prove \eqref{left-left-adjoint}; the proof of \eqref{right-right-adjoint} is similar. 
Let $\cB = {^{\perp}}\cA$, so that $\cC = \llangle \cA, \cB \rrangle$, and let 
$\beta \colon \cB \to \cC$ be the inclusion. 
Note that for $C \in \cA$ and $D \in \cC$ we have exact triangles 
\begin{align*}
& \beta \beta^!(D) \to  D \to \alpha \alpha^*(D) , \\ 
& \rR_{\cB} \alpha(C) \to  \alpha(C) \to \beta \beta^*\alpha(C) , 
\end{align*}
where $\cHom_S(\beta \beta^*\alpha(C), \alpha \alpha^*(D)) \simeq 0$
and $\cHom_S(\rR_{\cB} \alpha(C),  \beta \beta^!(D)) \simeq 0$. 
Using this, we obtain equivalences 
\begin{align*}
\cHom_S(C, \alpha^*(D)) & \simeq 
\cHom_S(\alpha(C), \alpha\alpha^*(D))  \\ 
& \simeq \cHom_S( \rR_{{^{\perp}}\cA} \alpha(C), \alpha \alpha^*(D)) \\ 
& \simeq \cHom_S( \rR_{{^{\perp}}\cA} \alpha(C), D) , 
\end{align*} 
which proves the adjunction. 
\end{proof}

\begin{lemma}
\label{lemma-admissible-sequence}
Let $\cC$ be an $S$-linear category, and let 
$\cA_1, \dots, \cA_n$ be a semiorthogonal sequence of $S$-linear stable subcategories.  
\begin{enumerate}
\item If the $\cA_i \subset \cC$ are all right admissible, then 
\begin{equation*}
\llangle \cA_1, \dots, \cA_n \rrangle \subset \cC 
\end{equation*}
is also a right admissible $S$-linear stable subcategory, with left mutation functor given by 
\begin{equation*}
\rL_{\llangle \cA_1, \dots, \cA_n \rrangle}  \simeq \rL_{\cA_1} \circ \rL_{\cA_2} \circ \cdots \circ \rL_{\cA_n}. 
\end{equation*}

\item If the $\cA_i \subset \cC$ are all left admissible, then 
\begin{equation*}
\llangle \cA_1, \dots, \cA_n \rrangle \subset \cC 
\end{equation*}
is also a left admissible $S$-linear stable subcategory, with right mutation functor given by 
\begin{equation*}
\rR_{\llangle \cA_1, \dots, \cA_n \rrangle}  \simeq \rR_{\cA_n} \circ \rR_{\cA_{n-1}} \circ \cdots \circ \rR_{\cA_1}. 
\end{equation*}
\end{enumerate}
\end{lemma}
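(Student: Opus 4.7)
The plan is to proceed by induction on $n$, reducing everything to the case $n=2$. Part (2) is dual to part (1) (obtained by passing to opposite categories, which swaps left and right mutations), so I focus on part (1). The base case $n=1$ is trivial. For $n \geq 2$, by the inductive hypothesis applied to the semiorthogonal sequence $\cA_2, \dots, \cA_n$, the subcategory $\cB = \llangle \cA_2, \dots, \cA_n \rrangle \subset \cC$ is $S$-linear, right admissible, with $\rL_{\cB} \simeq \rL_{\cA_2} \circ \cdots \circ \rL_{\cA_n}$. Since $\cA_1, \cB$ is again a semiorthogonal pair of right admissible $S$-linear stable subcategories (semiorthogonality follows from that of $\cA_1, \dots, \cA_n$), it suffices to prove the result for $n=2$.

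So let $\cA_1, \cA_2 \subset \cC$ be a semiorthogonal pair of right admissible $S$-linear stable subcategories, with inclusions $\alpha_i \colon \cA_i \to \cC$. By Lemma~\ref{lemma-admissible-sod}, we have semiorthogonal decompositions $\cC = \llangle \cA_i^{\perp}, \cA_i \rrangle$, and the left mutation functors $\rL_{\cA_i} \colon \cC \to \cC$ fit into exact triangles $\alpha_i \alpha_i^! \to \id_{\cC} \to \rL_{\cA_i}$ with $\rL_{\cA_i}(\cC) \subset \cA_i^{\perp}$. All these functors are $S$-linear by Lemma~\ref{lemma-adjoint-S-linear}. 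The first key step is to verify that $\rL_{\cA_1} \rL_{\cA_2}(C) \in \llangle \cA_1, \cA_2 \rrangle^{\perp} = \cA_1^{\perp} \cap \cA_2^{\perp}$ for every $C \in \cC$. Membership in $\cA_1^{\perp}$ is automatic. For $\cA_2^{\perp}$, applying $\cHom_S(D, -)$ with $D \in \cA_2$ to the triangle $\alpha_1 \alpha_1^!(\rL_{\cA_2}(C)) \to \rL_{\cA_2}(C) \to \rL_{\cA_1}\rL_{\cA_2}(C)$ gives a triangle in $\QCoh(S)$ whose middle term vanishes since $\rL_{\cA_2}(C) \in \cA_2^{\perp}$ and whose first term vanishes by the semiorthogonality $\cHom_S(\cA_2, \cA_1) \simeq 0$.

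The second key step is to exhibit, for each $C \in \cC$, an exact triangle $D \to C \to \rL_{\cA_1}\rL_{\cA_2}(C)$ with $D \in \llangle \cA_1, \cA_2 \rrangle$. I would compose the unit-type morphisms $C \to \rL_{\cA_2}(C) \to \rL_{\cA_1}\rL_{\cA_2}(C)$, take the fiber $D$, and invoke the octahedral axiom (which holds in any stable $\infty$-category) to produce an exact triangle $\alpha_2\alpha_2^!(C) \to D \to \alpha_1\alpha_1^!(\rL_{\cA_2}(C))$; the outer terms lie in $\cA_2$ and $\cA_1$ respectively, so $D \in \llangle \cA_1, \cA_2 \rrangle$. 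Combined with the orthogonality verification, this gives a semiorthogonal decomposition
\begin{equation*}
\cC = \llangle \llangle \cA_1, \cA_2 \rrangle^{\perp}, \llangle \cA_1, \cA_2 \rrangle \rrangle,
\end{equation*}
which, via Lemma~\ref{lemma-admissible-sod}, shows $\llangle \cA_1, \cA_2 \rrangle$ is right admissible; the formula $\rL_{\llangle \cA_1, \cA_2 \rrangle} \simeq \rL_{\cA_1} \circ \rL_{\cA_2}$ is read off from the construction, since $\rL_{\cA_1}\rL_{\cA_2}$ kills $\llangle \cA_1, \cA_2 \rrangle$ and is the identity on $\llangle \cA_1, \cA_2 \rrangle^{\perp}$ (both being established in the preceding steps).

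No part of this is a real obstacle --- the whole argument is the standard triangulated one from \cite{bondal, bondal-kapranov}, and the only feature of the $\infty$-categorical and $S$-linear setting to notice is that $S$-linearity of the mutation functors is automatic (Lemma~\ref{lemma-adjoint-S-linear}), and exactness of the relevant triangles of functors is given by Lemma~\ref{lemma-adjoints-triangle}. The mildly delicate point is the octahedral bookkeeping that identifies the fiber $D$ as an extension in $\llangle \cA_1, \cA_2 \rrangle$, but this is formal once the orthogonality check is in hand.
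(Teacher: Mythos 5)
Your proof is correct and is exactly the standard argument from \cite{bondal, bondal-kapranov} that the paper has in mind; the paper deliberately omits the proof of this lemma (along with several others in \S2.2) with the remark that ``the results here are standard in the triangulated setting\ldots so we freely omit proofs when the usual arguments work without modification.'' Your reconstruction --- reduction to $n=2$ by induction, the orthogonality check $\rL_{\cA_1}\rL_{\cA_2}(C) \in \cA_1^\perp \cap \cA_2^\perp$, the octahedral identification of the fiber of $C \to \rL_{\cA_1}\rL_{\cA_2}(C)$ as an extension in $\llangle\cA_1,\cA_2\rrangle$, and the observation that $S$-linearity and exactness of the relevant functors are supplied automatically by Lemma~\ref{lemma-adjoint-S-linear} and Lemma~\ref{lemma-adjoints-triangle} --- is precisely the ``usual argument working without modification.'' The only phrasing I would tighten is the final sentence: what actually pins down $\rL_{\llangle\cA_1,\cA_2\rrangle} \simeq \rL_{\cA_1}\rL_{\cA_2}$ as an equivalence of \emph{functors} (not merely objectwise) is the functorial natural transformation $\id \to \rL_{\cA_1}\rL_{\cA_2}$ whose fiber lies in $\llangle\cA_1,\cA_2\rrangle$ and whose target lies in $\llangle\cA_1,\cA_2\rrangle^\perp$; this exhibits $\rL_{\cA_1}\rL_{\cA_2}$ as the composite $jj^*$ for the inclusion $j$ of $\llangle\cA_1,\cA_2\rrangle^\perp$, by the universal property of the left adjoint $j^*$ (equivalently, by the uniqueness of the projection functors in Lemma~\ref{lemma-sod}). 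Your description ``kills $\llangle\cA_1,\cA_2\rrangle$ and is the identity on the orthogonal'' is a consequence but would not by itself determine the functor; what you have established in the preceding two steps is already exactly what is needed.
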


\begin{lemma}
\label{lemma-mutation-sod}
Let $\cC$ be an $S$-linear category with an $S$-linear semiorthogonal decomposition 
\begin{equation*}
\cC = \llangle \cA_1, \dots, \cA_n \rrangle . 
\end{equation*}
\begin{enumerate}
\item For $1 \leq i \leq n-1$, if $\cA_i \subset \cC$ is right admissible 
there is a semiorthogonal decomposition
\begin{equation*}
\cC = \llangle \cA_1, \dots, \cA_{i-1}, \rL_{\cA_i}(\cA_{i+1}), \cA_i, \cA_{i+2}, \dots, \cA_n \rrangle. 
\end{equation*}
\item For $2 \leq i \leq n$, if $\cA_i \subset \cC$ is left admissible 
there is a semiorthogonal decomposition
\begin{equation*}
\cC =  \llangle \cA_1, \dots, \cA_{i-2}, \cA_i, \rR_{\cA_i}(\cA_{i-1}), \cA_{i+1}, \dots, \cA_n \rrangle. 
\end{equation*}
\end{enumerate}
\end{lemma}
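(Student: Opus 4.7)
The plan is to handle the two statements symmetrically, and reduce each to a two-step argument: verify semiorthogonality of the proposed new sequence, and verify that it generates $\cC$. I focus on the first statement; the second will follow by a dual argument with $\rR_{\cA_i}$ in place of $\rL_{\cA_i}$.

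The key input is the defining exact triangle for the left mutation functor. Since $\cA_i \subset \cC$ is right admissible, Lemma~\ref{lemma-admissible-sod} yields a semiorthogonal decomposition $\cC = \llangle \cA_i^{\perp}, \cA_i \rrangle$, whose truncation functors (Lemma~\ref{lemma-sod}) assemble, for every $C \in \cC$, into a natural exact triangle
\begin{equation*}
\alpha_i \alpha_i^!(C) \longrightarrow C \longrightarrow \rL_{\cA_i}(C),
\end{equation*}
with the left term in $\cA_i$ and the right term in $\cA_i^{\perp}$. Lemma~\ref{lemma-mutation-functors} further shows that $\rL_{\cA_i}$ is fully faithful on $\cA_{i+1} \subset {}^{\perp}\cA_i$, so $\rL_{\cA_i}(\cA_{i+1}) \subset \cC$ is a genuine stable subcategory, which is moreover $S$-linear because $\rL_{\cA_i}$ is a composition of $S$-linear functors (Lemma~\ref{lemma-adjoint-S-linear}).

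To verify semiorthogonality of the candidate sequence, the only new $\cHom_S$-vanishings to check all involve the component $\rL_{\cA_i}(\cA_{i+1})$. Vanishing against $\cA_i$ is by construction, since $\rL_{\cA_i}(\cA_{i+1}) \subset \cA_i^{\perp}$. For $j < i$, applying $\cHom_S(-, A_j)$ with $A_j \in \cA_j$ to the above triangle evaluated on an object of $\cA_{i+1}$ produces a fiber sequence whose two outer terms vanish by the original semiorthogonality of $\cA_1, \dots, \cA_n$, and hence so does the middle. For $j > i+1$, the dual computation applying $\cHom_S(A_j, -)$ gives the required vanishing. For generation, the same triangle, evaluated on an object $A \in \cA_{i+1}$, exhibits $A$ as an extension of an object of $\rL_{\cA_i}(\cA_{i+1})$ by an object of $\cA_i$; hence $\cA_{i+1}$ lies in the subcategory generated by the proposed new sequence, and combined with the original decomposition this yields the desired equality.

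There is no real obstacle in this proof; it is a direct transcription of the classical mutation argument in the triangulated setting. The only things worth being careful about are the index bookkeeping in the semiorthogonality check, and, for the second statement, dualizing correctly: one uses the left admissibility of $\cA_i$ to obtain the decomposition $\cC = \llangle \cA_i, {}^{\perp}\cA_i \rrangle$ from Lemma~\ref{lemma-admissible-sod}, together with the resulting exact triangle $\rR_{\cA_i}(C) \to C \to \alpha_i \alpha_i^*(C)$, and runs the same argument with the roles of left and right orthogonals interchanged.
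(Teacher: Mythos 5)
Your argument is correct, and it is precisely the standard mutation argument from Bondal–Kapranov that the paper explicitly declines to reproduce in \S\ref{subsection-admissible-category} (``the results here are standard in the triangulated setting \ldots so we freely omit proofs''). You correctly extract the functorial triangle $\alpha_i\alpha_i^!(C)\to C\to \rL_{\cA_i}(C)$ from the two-term decomposition $\cC=\llangle \cA_i^\perp,\cA_i\rrangle$, verify the only three families of new semiorthogonality conditions involving $\rL_{\cA_i}(\cA_{i+1})$, and use the same triangle to see that $\cA_{i+1}$ is generated by $\rL_{\cA_i}(\cA_{i+1})$ and $\cA_i$; the dualization for part (2) is also handled correctly.
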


\subsection{Linear versus presentable linear semiorthogonal decompositions} 
\label{subsection-sod-small-large}

\begin{lemma}
\label{lemma-sod-small-to-large}
Let $\cC$ be an $S$-linear category with an $S$-linear semiorthogonal decomposition 
\begin{equation*}
\cC = \llangle \cA_1, \dots, \cA_n \rrangle . 
\end{equation*}
Then there is an induced presentable $S$-linear semiorthogonal decomposition 
\begin{equation*}
\Ind(\cC) = \llangle \Ind(\cA_1), \dots, \Ind(\cA_n) \rrangle,  
\end{equation*}
where the embedding functors $\Ind(\cA_i) \to \Ind(\cC)$ preserve compact objects. 
\end{lemma}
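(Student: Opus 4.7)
The plan is to transport the semiorthogonal structure on $\cC$ through the Ind-completion equivalence $\Ind \colon \Cat_S \xrightarrow{\sim} \PrCat_S^{\omega}$. Since $\Ind \colon \CatSt \to \PrCatStcg$ is a symmetric monoidal equivalence, its restriction to $\Perf(S)$-modules carries $S$-linear exact functors to compact-object-preserving cocontinuous $S$-linear functors, and preserves full faithfulness. Applying this to each inclusion $\alpha_i \colon \cA_i \hookrightarrow \cC$ gives a fully faithful $\Ind(\alpha_i) \colon \Ind(\cA_i) \to \Ind(\cC)$ in $\PrCat_S^{\omega}$, realizing $\Ind(\cA_i)$ as a presentable $S$-linear stable subcategory of $\Ind(\cC)$ whose embedding preserves compact objects; this already yields the last clause of the statement.

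Next I verify the semiorthogonality condition~\eqref{sod-1}. For $i > j$, $C \in \Ind(\cA_i)$, and $D \in \Ind(\cA_j)$, write $C \simeq \mathrm{colim}_{\alpha}\, C_\alpha$ and $D \simeq \mathrm{colim}_{\beta}\, D_\beta$ as filtered colimits of compact objects with $C_\alpha \in \cA_i$ and $D_\beta \in \cA_j$. Each $C_\alpha$ is compact in $\Ind(\cC)$ and the Yoneda embedding $\cC \hookrightarrow \Ind(\cC)$ is fully faithful, so
\begin{equation*}
\Map_{\Ind(\cC)}(C, D) \simeq \lim_{\alpha}\, \mathrm{colim}_{\beta}\, \Map_{\cC}(C_\alpha, D_\beta) \simeq 0
\end{equation*}
by the semiorthogonality of the $\cA_i$ in $\cC$. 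By $S$-linearity of $\Ind(\cC)$ and the characterization of $\cHom_S$ via $\Map_{\QCoh(S)}(F, \cHom_S(C, D)) \simeq \Map_{\Ind(\cC)}(C \otimes F, D)$, the same argument applied after tensoring $C$ with an arbitrary $F \in \QCoh(S)$ forces $\cHom_S(C, D) \simeq 0$.

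To produce the filtration required by~\eqref{sod-2}, I Ind-extend the diagram of truncation and projection functors furnished by Lemma~\ref{lemma-sod}. The exact triangles $\tr_i \to \tr_{i-1} \to \alpha_i \circ \pr_i$ live in $\Fun_{\Perf(S)}(\cC, \cC)$, and because this internal function category is identified, under the monoidal equivalence $\Ind \colon \Cat_S \xrightarrow{\sim} \PrCat_S^{\omega}$, with a full subcategory of $\Fun_{\QCoh(S)}(\Ind(\cC), \Ind(\cC))$ via Ind-extension, one obtains an analogous diagram of exact triangles of cocontinuous $S$-linear endofunctors of $\Ind(\cC)$. Evaluating at any $C \in \Ind(\cC)$ yields a filtration as in~\eqref{Ci-filtration} with successive cones $\Ind(\alpha_i)\, \Ind(\pr_i)(C) \in \Ind(\cA_i)$, establishing the second axiom of Definition~\ref{definition-sod}.

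The only real subtlety is the compatibility invoked in the final step: that Ind-extension of an exact triangle of exact $S$-linear functors yields an exact triangle of cocontinuous $S$-linear functors, and that the resulting projections land in $\Ind(\cA_i)$. Both statements follow formally once one invokes that $\Ind$ is a symmetric monoidal equivalence of presentable $\infty$-categories preserving stable structure and identifying internal function objects; after that, the three conditions of a presentable $S$-linear semiorthogonal decomposition fall out uniformly from their small counterparts.
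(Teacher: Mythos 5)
Your proof is correct and takes essentially the same approach as the paper's: check semiorthogonality directly (the paper calls this "easy to see", you spell out the filtered-colimit argument), Ind-extend the truncation/projection triangles of Lemma~\ref{lemma-sod} to get condition~\eqref{sod-2}, and use that $\Ind$ lands in $\PrCat_S^\omega$ (equivalently that $\Ind(\cA_i)^c = \cA_i \subset \cC = \Ind(\cC)^c$) for preservation of compact objects.
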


\begin{proof}
It is easy to see that $\Ind(\cA_1), \dots, \Ind(\cA_n)$ are 
semiorthogonal in $\Ind(\cC)$. 
Further, the projection functors for the given semiorthogonal decomposition of $\cC$ induce 
projection functors for $\Ind(\cC)$, 
so that Definition~\ref{definition-sod}\eqref{sod-2} holds. 
Finally, since $\Ind(\cA_i)^c = \cA_i$ and $\Ind(\cC)^c = \cC$, the embeddings 
$\Ind(\cA_i) \to \Ind(\cC)$ preserve compact objects. 
\end{proof}

In general, given a presentable $S$-linear semiorthogonal decomposition, passing to 
categories of compact objects does not necessarily induce an $S$-linear semiorthogonal decomposition. 
However: 
\begin{lemma}
\label{lemma-sod-large-to-small}
Let $\cC$ be a presentable $S$-linear category with a presentable $S$-linear 
semiorthogonal decomposition  
\begin{equation*}
\cC = \llangle \cA_1, \dots, \cA_n \rrangle . 
\end{equation*}
\begin{enumerate}
\item \label{pr1-compact-objects}
The projection functor $\pr_1 \colon \cC \to \cA_1$ preserves 
compact objects. 
\item The embedding functor $\alpha_n \colon \cA_n \to \cC$ 
preserves compact objects. 
\end{enumerate}
If for all $i$ the embedding functors $\alpha_i \colon \cA_i \to \cC$ preserve compact objects, then furthermore: 
\begin{enumerate}
\item For all $i$ the projection functors $\pr_i \colon \cC \to \cA_i$ preserve compact objects. 
\item There is an induced semiorthogonal decomposition 
\begin{equation*}
\cC^c = \llangle \cA_1^c , \dots, \cA_n^c \rrangle. 
\end{equation*}
\end{enumerate}
\end{lemma}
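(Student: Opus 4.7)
The plan is to use two standard facts throughout: (i) a cocontinuous functor between presentable stable categories preserves compact objects if and only if its right adjoint preserves filtered colimits, and (ii) any fully faithful cocontinuous functor $\gamma \colon \cB \to \cC$ in $\PrCat_S$ that preserves compact objects also \emph{reflects} them. I will first verify the reflection principle (ii) by a direct mapping-space computation, using the cocontinuity and fully faithfulness of $\gamma$ together with compactness of $\gamma(Y)$ in $\cC$.

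For claim (1), I observe that $\pr_1 \simeq \alpha_1^*$ is the left adjoint of the cocontinuous functor $\alpha_1$; since $\alpha_1$ preserves filtered colimits, fact (i) forces $\pr_1$ to preserve compact objects. For claim (2), I will work with the two-term semiorthogonal decomposition $\cC = \llangle \cB, \cA_n \rrangle$ where $\cB = \llangle \cA_1, \dots, \cA_{n-1}\rrangle$, letting $\beta \colon \cB \to \cC$ denote the inclusion. The resulting cofiber sequence $\alpha_n \alpha_n^! \to \id \to \beta \beta^*$ in $\Fun_{\QCoh(S)}(\cC, \cC)$ exhibits $\alpha_n \alpha_n^!$ as preserving filtered colimits, since both $\id$ and $\beta \beta^*$ do (the latter being a composition of two cocontinuous functors). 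The fully faithfulness and cocontinuity of $\alpha_n$ then force $\alpha_n^!$ itself to preserve filtered colimits, so $\alpha_n$ preserves compact objects by (i).

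For the conditional part, I will argue by induction on $n$, using the two-term SOD $\cC = \llangle \cA_1, \cB\rrangle$ with $\cB = \llangle \cA_2, \dots, \cA_n \rrangle$. The first goal is to show that $\beta^! \colon \cC \to \cB$ preserves compact objects: for $X \in \cC^c$, the exact triangle $\beta \beta^! X \to X \to \alpha_1 \alpha_1^* X$ exhibits $\beta \beta^! X$ as the fiber of a morphism between compact objects (since both $\alpha_1$ and $\alpha_1^* = \pr_1$ preserve compacts), hence as compact in $\cC$; the reflection principle (ii) applied to $\beta$ (which preserves compacts by claim (2) applied to this two-term SOD) then yields $\beta^! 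X \in \cB^c$. Next, each inclusion $\cA_i \hookrightarrow \cB$ for $i \geq 2$ preserves compacts by (ii) applied to $\beta$, since the composition $\cA_i \hookrightarrow \cB \hookrightarrow \cC$ equals $\alpha_i$, which preserves compacts by hypothesis. The induction hypothesis applied to $\cB$ then provides projection functors $\pr_i^\cB \colon \cB \to \cA_i$ preserving compacts for $i \geq 2$; uniqueness of the filtration from Lemma~\ref{lemma-sod} identifies $\pr_i \simeq \pr_i^\cB \circ \beta^!$, completing the claim that all $\pr_i$ preserve compact objects.

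For the induced semiorthogonal decomposition on $\cC^c$: given $X \in \cC^c$, the successive cofibers $\alpha_i \pr_i(X)$ in the filtration of Definition~\ref{definition-sod} are compact by the previous paragraph, so descending induction shows each intermediate object of the filtration is compact, as the fiber of a morphism between compact objects in a stable category. Applying the reflection principle (ii) to $\alpha_i$ shows $\pr_i(X) \in \cA_i^c$, producing the required filtration in $\cC^c$ with successive pieces in the $\cA_i^c$. Semiorthogonality is inherited from $\cC$, and each $\cA_i^c$ inherits a $\Perf(S)$-linear structure since the $\Perf(S)$-action on $\cC$ sends $\Perf(S) \times \cC^c$ into $\cC^c$. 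The main structural point throughout is the reflection principle (ii); it is what allows the inductive assumption to be transferred across the layers of the decomposition, and without it the argument would stall at the step of identifying intrinsic compactness in $\cB$ and $\cA_i$ with $\cC$-compactness.
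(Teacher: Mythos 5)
Your proof is correct and follows essentially the same approach as the paper: the projection triangle, the adjunction between pullback and inclusion, and the fact that a fully faithful cocontinuous functor reflects compact objects. The main stylistic difference is that you isolate the ``reflection principle'' as an explicit lemma and carry out the induction on $n$ in detail, whereas the paper reduces to $n=2$ and says the rest ``follows formally,'' relying on the same observation implicitly (it is precisely what is used in the step ``Since $\alpha_2$ is fully faithful and cocontinuous, it follows that $\pr_2$ preserves compact objects''). One small remark: your reflection principle (ii) does not actually need the hypothesis that $\gamma$ preserves compact objects --- fully faithfulness plus cocontinuity already suffice, as your own mapping-space computation shows --- and your argument for claim (2) re-derives the cocontinuity of $\pr_n = \alpha_n^!$, which the paper instead obtains directly from Lemma~\ref{lemma-sod}; both are harmless redundancies.
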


\begin{proof}
By induction we reduce to the case where $n=2$. 
The functor $\pr_1 = \alpha_1^*$ preserves compact objects because 
its right adjoint $\alpha_1$ is cocontinuous. 
Similarly, $\alpha_2$ preserves compact objects because its right 
adjoint $\pr_2 = \alpha_2^!$ is cocontinuous by Lemma~\ref{lemma-sod}. 

Now assume that $\alpha_1$ preserves compact objects. 
Then there is a triangle of functors
\begin{equation*}
\alpha_2 \circ \pr_2  \to \id_{\cC} \to \alpha_1 \circ \pr_1 , 
\end{equation*}
where the second and third vertices, and hence also the first, preserve compact objects. 
Since $\alpha_2$ is fully faithful and cocontinuous, it follows that 
$\pr_2$ preserves compact objects. 
Since for all $i$ the embeddings $\alpha_i$ and the projection functors $\pr_i$ preserve 
compact objects, it follows formally that there is an induced semiorthogonal decomposition 
$\cC^c = \llangle \cA_1^c , \cA_2^c \rrangle$. 
\end{proof}

\begin{remark}
Lemmas~\ref{lemma-sod-small-to-large} and~\ref{lemma-sod-large-to-small} can be 
summarized as follows: 
Under the equivalence $\Ind \colon \Cat_{S} \xrightarrow{\sim} \PrCat_S^{\omega}$, 
$S$-linear semiorthogonal decompostions correspond to presentable $S$-linear semiorthogonal 
decompositions such that the embedding functors of the components preserve compact objects. 
\end{remark}

\subsection{Semiorthogonal decompositions of tensor products} 
\label{subsection-sod-tensor} 

\begin{lemma}
\label{lemma-sod-base-change}
Let $\cC = \llangle \cA_1, \dots, \cA_m \rrangle$ and $\cD = \llangle \cB_1, \dots, \cB_{n} \rrangle$  
be $S$-linear semiorthogonal decompositions. 
Then the functor $\cA_i \otimes_{\Perf(S)} \cB_j \to \cC \otimes_{\Perf(S)} \cD$ is fully faithful 
for all $i,j$. Moreover, there is an $S$-linear semiorthogonal decomposition 
\begin{equation*}
\cC \otimes_{\Perf(S)} \cD = \llangle \cA_i \otimes_{\Perf(S)} \cB_j \rrangle_{1 \leq i \leq m,\, 1 \leq j \leq n} 
\end{equation*} 
where the ordering on the set $\set{ (i,j) \st  1 \leq i \leq m , 1 \leq j \leq n }$ is any 
one which extends the coordinate-wise partial order. 
The projection functor onto the $(i,j)$-component of this decomposition is given by 
\begin{equation*}
\pr_{\cA_i} \otimes \pr_{\cB_j}  \colon \cC \otimes_{\Perf(S)} \cD \to  \cA_i \otimes_{\Perf(S)} \cB_j, 
\end{equation*} 
where $\pr_{\cA_i} \colon \cC \to \cA_i$ and $\pr_{\cB_j} \colon \cD \to \cB_j$ are the projection 
functors for the given decompositions.
\end{lemma}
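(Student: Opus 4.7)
My plan is to establish the result by verifying, in order, (1) fully faithfulness of each embedding $\cA_i \otimes_{\Perf(S)} \cB_j \to \cC \otimes_{\Perf(S)} \cD$, (2) the semiorthogonality relations in the chosen total order, and (3) the generation/filtration statement. The first two claims follow directly from the Künneth formula (Lemma~\ref{lemma-kunneth}) together with generation by exterior products (Lemma~\ref{lemma-box-tensor-generation}); the third requires an inductive argument based on tensoring the semiorthogonal truncation triangles supplied by Lemma~\ref{lemma-sod}.

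For fully faithfulness, by Lemma~\ref{lemma-box-tensor-generation} it suffices to show $\alpha_i \otimes \beta_j$ induces an equivalence on mapping objects between generators $A \boxtimes B$, since $\cHom_S$ is exact in each variable and the isomorphism property propagates to the thick closure. Lemma~\ref{lemma-kunneth} computes both the source and target mapping objects as $\cHom_S(A_1, A_2) \otimes \cHom_S(B_1, B_2)$, and fully faithfulness of $\alpha_i$ and $\beta_j$ identifies them. For semiorthogonality between $\cA_i \otimes \cB_j$ and $\cA_k \otimes \cB_l$ with $(i, j) > (k, l)$ in the chosen ordering: since the ordering extends the coordinatewise partial order, we cannot have $(i, j) \leq (k, l)$ coordinatewise, so either $i > k$ or $j > l$, and one factor in the Künneth decomposition vanishes by the semiorthogonality of the original decompositions.

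For the filtration I would induct on $m + n$, with trivial base case $m = n = 1$, reducing to the step of splitting off one component. Say we split $\cC = \llangle \cA_1, \cC' \rrangle$ where $\cC' = \llangle \cA_2, \dots, \cA_m \rrangle$. The triangle of $S$-linear endofunctors of $\cC$ coming from Lemma~\ref{lemma-sod}(1),
\begin{equation*}
\beta \circ \beta^! \to \id_\cC \to \alpha_1 \circ \alpha_1^*,
\end{equation*}
with $\alpha_1 \colon \cA_1 \to \cC$ and $\beta \colon \cC' \to \cC$ the inclusions, tensors with $\id_{\cD}$ to yield an exact triangle in $\Fun_{\Perf(S)}(\cC \otimes_{\Perf(S)} \cD, \cC \otimes_{\Perf(S)} \cD)$ whose outer terms factor through the fully faithful embeddings of $\cC' \otimes_{\Perf(S)} \cD$ and $\cA_1 \otimes_{\Perf(S)} \cD$. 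Evaluated at any object of $\cC \otimes_{\Perf(S)} \cD$, this produces the required filtration and establishes the two-term decomposition $\cC \otimes_{\Perf(S)} \cD = \llangle \cC' \otimes_{\Perf(S)} \cD, \cA_1 \otimes_{\Perf(S)} \cD \rrangle$. Combined with the inductive hypothesis applied to $\cC'$ in the $\cC$-direction, and then the symmetric argument applied in the $\cD$-direction, this yields the full decomposition. The projection formula then follows by uniqueness of projections: $\pr_{\cA_i} \otimes \pr_{\cB_j}$ is $S$-linear, lands in $\cA_i \otimes_{\Perf(S)} \cB_j$, and by Lemma~\ref{lemma-sod}(2) retracts $\alpha_i \otimes \beta_j$, which characterizes it as the projection for the constructed decomposition.

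The main obstacle I expect is the step of tensoring an exact triangle of $S$-linear endofunctors of $\cC$ by $\id_\cD$ and producing an exact triangle in the new functor category. At the level of homotopy categories this is routine, but making it coherent at the $\infty$-categorical level rests on the fact that $(-) \otimes_{\Perf(S)} \cD$ is a left adjoint on $\Cat_S$ (to the internal Hom $\Fun_{\Perf(S)}(\cD, -)$) and so preserves colimits and cofiber sequences in each variable; alternatively, one can check the triangle pointwise on the generating objects $A \boxtimes B$, where the tensored triangle reduces to the original triangle in $\cC$ evaluated at $A$, exteriorly paired with $B$ via the exact functor $- \boxtimes B$.
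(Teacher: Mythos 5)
Your proof is correct in structure and uses the same core tools as the paper: the K\"unneth formula (Lemma~\ref{lemma-kunneth}), generation by box products (Lemma~\ref{lemma-box-tensor-generation}), and tensoring the truncation triangles from Lemma~\ref{lemma-sod}. The one place where you deviate is the fully faithfulness step. The paper reduces to the two-term case $\cC = \llangle \cA_1, \cA_2 \rrangle$ and invokes Lemma~\ref{lemma-bc-adjoints} (tensor of fully faithful functors admitting adjoints is fully faithful), which requires first noting the components are admissible via Lemma~\ref{lemma-admissible-sod}. You instead verify fully faithfulness directly on box-product generators via K\"unneth and propagate by exactness of $\cHom_S$ in each variable. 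Your route is a bit more direct: it avoids any appeal to adjoints, and it handles every $\cA_i \otimes_{\Perf(S)} \cB_j$ at once rather than needing the two-term reduction.

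However, your justification of the projection formula has a gap. You assert that $\pr_{\cA_i} \otimes \pr_{\cB_j}$ is the projection because it is $S$-linear, lands in $\cA_i \otimes_{\Perf(S)} \cB_j$, and retracts $\alpha_i \otimes \beta_j$ --- but being a retraction does not characterize the projection functor. For a two-term decomposition $\cE = \llangle \cA, \cB \rrangle$, both the left adjoint $\alpha^*$ (which is the projection onto $\cA$) and the right adjoint $\alpha^!$ to the inclusion $\alpha \colon \cA \to \cE$ are $S$-linear retractions landing in $\cA$, yet they are different functors in general. The projection onto a middle component of a longer decomposition is instead pinned down by the partial adjunction properties of Lemma~\ref{lemma-sod}(3)--(4), or equivalently by its appearance in the canonical truncation filtration of Lemma~\ref{lemma-sod}(1). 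The correct argument is in fact already implicit in your inductive construction: since the filtration triangle for $\cC \otimes_{\Perf(S)} \cD$ is produced by tensoring the truncation triangles for $\cC$ and $\cD$ by identity functors, the terms $\alpha_{(i,j)} \circ \pr_{(i,j)}$ appearing in the filtration are, by construction, $(\alpha_i \otimes \beta_j) \circ (\pr_{\cA_i} \otimes \pr_{\cB_j})$, which identifies the projections. You should derive the projection formula from that construction rather than from a uniqueness-of-retractions claim.
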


\begin{proof} 
The result reduces to the case where $m = 2$ and $n = 1$. 
It follows from Lemmas~\ref{lemma-admissible-sod} and \ref{lemma-bc-adjoints} 
that the functor 
\begin{equation*}
\cA_i \otimes_{\Perf(S)} \cD \to \cC \otimes_{\Perf(S)} \cD
\end{equation*}
is fully faithful for $i=1,2$.  
It follows from Lemmas~\ref{lemma-box-tensor-generation} and \ref{lemma-kunneth} 
that the categories 
\begin{equation*}
\cA_1 \otimes_{\Perf(S)} \cD, ~ \cA_2 \otimes_{\Perf(S)} \cD 
\end{equation*} 
are semiorthogonal in $\cC \otimes_{\Perf(S)} \cD$. 
The projection functors for the original semiorthogonal decomposition induce 
projection functors for $\cC \otimes_{\Perf(S)} \cD$, 
so that Definition~\ref{definition-sod}\eqref{sod-2} holds. 
\end{proof}

\begin{remark} 
\label{remark-sod-bc}
If $S' \to S$ is a morphism of schemes, then taking $\cD = \Perf(S')$ in 
Lemma~\ref{lemma-sod-base-change} gives a base change result for 
semiorthogonal decompositions. 
\end{remark}

\begin{lemma}
\label{lemma-base-change-admissible}
Let $\cC$ be an $S$-linear category, and let $\cA \subset \cC$ be 
an $S$-linear stable subcategory. 
Let $\cD$ be another $S$-linear category. 
Then if $\cA \subset \cC$ is left (or right) admissible, so is 
\begin{equation*}
\cA \otimes_{\Perf(S)} \cD \subset \cC \otimes_{\Perf(S)} \cD. 
\end{equation*} 
\end{lemma}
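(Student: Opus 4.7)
The plan is to reduce the statement directly to previously established facts, without any new calculation. By Lemma~\ref{lemma-admissible-sod}, the left admissibility of $\cA \subset \cC$ is equivalent to the existence of an $S$-linear semiorthogonal decomposition $\cC = \llangle {^{\perp}}\cA, \cA \rrangle$; similarly, right admissibility corresponds to a decomposition $\cC = \llangle \cA, \cA^{\perp} \rrangle$. Since the statement is symmetric between left and right admissibility, it suffices to treat one case.

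For the left admissible case, I would apply Lemma~\ref{lemma-sod-base-change} to the decomposition $\cC = \llangle {^{\perp}}\cA, \cA \rrangle$ together with the trivial length-one ``decomposition'' $\cD = \llangle \cD \rrangle$ of the second factor. The conclusion of that lemma produces an $S$-linear semiorthogonal decomposition
\begin{equation*}
\cC \otimes_{\Perf(S)} \cD = \llangle {^{\perp}}\cA \otimes_{\Perf(S)} \cD,\ \cA \otimes_{\Perf(S)} \cD \rrangle,
\end{equation*}
in which the second component is embedded fully faithfully as an $S$-linear stable subcategory. Invoking Lemma~\ref{lemma-admissible-sod} a second time, now in the reverse direction, identifies $\cA \otimes_{\Perf(S)} \cD$ as a left admissible subcategory of $\cC \otimes_{\Perf(S)} \cD$, which is exactly what we wanted. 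The right admissible case follows by applying the same argument to $\cC = \llangle \cA, \cA^{\perp} \rrangle$.

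There is essentially no obstacle: the substantive content, namely that tensoring preserves fully faithful inclusions and their adjoints, has already been packaged into Lemma~\ref{lemma-bc-adjoints} and applied in Lemma~\ref{lemma-sod-base-change}. If one preferred a more direct route bypassing the semiorthogonal decomposition formalism, one could instead note that the inclusion $\alpha \colon \cA \to \cC$ admits a left adjoint $\alpha^{*}$ and is fully faithful, while $\id_{\cD}$ is trivially fully faithful with itself as adjoint; then Lemma~\ref{lemma-bc-adjoints} directly yields that $\alpha \otimes \id_{\cD}$ is fully faithful with left adjoint $\alpha^{*} \otimes \id_{\cD}$, which is the definition of left admissibility.
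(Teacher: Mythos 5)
Your proposal is essentially correct, and in fact your closing ``more direct route'' is exactly the paper's own proof, which is the one line ``Follows from Lemma~\ref{lemma-bc-adjoints}.'' The first, longer argument via Lemma~\ref{lemma-admissible-sod} and Lemma~\ref{lemma-sod-base-change} is a valid elaboration, but since Lemma~\ref{lemma-sod-base-change} is itself proved by invoking Lemma~\ref{lemma-bc-adjoints}, the two routes are not genuinely different.

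One small slip in the first paragraph: you have the orientations of the semiorthogonal decompositions reversed relative to this paper's conventions. By Lemma~\ref{lemma-admissible-sod} (and consistent with Definition~\ref{definition-sod} and Lemma~\ref{lemma-sod}, where the leftmost component of a decomposition is the one whose inclusion has a left adjoint), left admissibility of $\cA \subset \cC$ is equivalent to $\cC = \llangle \cA, {^{\perp}}\cA \rrangle$, and right admissibility to $\cC = \llangle \cA^{\perp}, \cA \rrangle$, not the other way around as you wrote. The overall structure of your argument is insensitive to this, since tensoring with $\cD$ preserves whichever decomposition you start from and the position of $\cA \otimes_{\Perf(S)} \cD$ inside it, but it is worth getting the convention right so the reverse application of Lemma~\ref{lemma-admissible-sod} actually yields the claimed kind of admissibility.
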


\begin{proof}
Follows from Lemma~\ref{lemma-bc-adjoints}. 
\end{proof}

\begin{lemma}
Let $\cC$ be an $S$-linear category, and let $\cA \subset \cC$ be 
a left admissible $S$-linear stable subcategory. 
Let $f \colon S' \to S$ be a morphism of schemes such that $f_*$ preserves perfect complexes. 
Let 
\begin{equation*}
\cA' = \cA \otimes_{\Perf(S)} \Perf(S') \subset \cC' = \cC \otimes_{\Perf(S)} \Perf(S')  . 
\end{equation*}
Then in terms of the functor 
\begin{equation*}
f_* \colon \cC' = \cC \otimes_{\Perf(S)} \Perf(S')  \to \cC \otimes_{\Perf(S)} \Perf(S) \simeq \cC, 
\end{equation*}
the subcategory $\cA' \subset \cC'$ is given by 
\begin{equation}
\label{Aprime-description}
\cA' = \set{ C' \in \cC'  \st f_*(C' \otimes F) \in \cA \text{ for all } F \in \Perf(S') } . 
\end{equation}
\end{lemma}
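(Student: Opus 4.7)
The plan is to identify $\cA'$ as the right orthogonal to the base change of ${}^{\perp}\cA$, and then rewrite the orthogonality condition using the $f^* \dashv f_*$ adjunction together with the dualizability of objects in $\Perf(S')$.

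First, since $\cA \subset \cC$ is left admissible, Lemma~\ref{lemma-admissible-sod} gives an $S$-linear semiorthogonal decomposition $\cC = \llangle {^{\perp}}\cA, \cA \rrangle$. Applying Lemma~\ref{lemma-sod-base-change} with $\cD = \Perf(S')$ (see Remark~\ref{remark-sod-bc}), this base changes to
\[
\cC' = \llangle ({^{\perp}}\cA) \otimes_{\Perf(S)} \Perf(S'), \, \cA' \rrangle,
\]
so $\cA' \subset \cC'$ is precisely the right orthogonal to $({^{\perp}}\cA) \otimes_{\Perf(S)} \Perf(S')$. By Lemma~\ref{lemma-box-tensor-generation}, this latter subcategory is thickly generated by objects of the form $B \boxtimes F$ for $B \in {^{\perp}}\cA$ and $F \in \Perf(S')$, so that $C' \in \cA'$ if and only if
\[
\Map_{\cC'}(B \boxtimes F, C') \simeq 0 \quad \text{for all } B \in {^{\perp}}\cA, \, F \in \Perf(S').
\]

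Next I would reinterpret this vanishing condition. Since $f_*$ preserves perfect complexes, we have an $S$-linear adjoint pair $(f^*, f_*)$ between $\Perf(S)$ and $\Perf(S')$, which tensors with $\cC$ via Lemma~\ref{lemma-bc-adjoints} to give an $S$-linear adjunction $(f^*, f_*)$ between $\cC$ and $\cC'$ matching the $f_*$ in the statement. By construction of the $\boxtimes$-operation, $B \boxtimes F \simeq f^*(B) \otimes F$ in $\cC'$. Since $F \in \Perf(S')$ is dualizable, tensoring with $F$ is left adjoint to tensoring with $F^{\svee}$ on $\cC'$. Combining the two adjunctions yields
\[
\Map_{\cC'}(B \boxtimes F, C') \;\simeq\; \Map_{\cC'}(f^*(B), C' \otimes F^{\svee}) \;\simeq\; \Map_{\cC}(B, f_*(C' \otimes F^{\svee})).
\]

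Finally, by the semiorthogonal decomposition $\cC = \llangle {^{\perp}}\cA, \cA \rrangle$, the vanishing of $\Map_{\cC}(B, f_*(C' \otimes F^{\svee}))$ for all $B \in {^{\perp}}\cA$ is equivalent to $f_*(C' \otimes F^{\svee}) \in \cA$. As $F$ ranges over $\Perf(S')$ if and only if $F^{\svee}$ does, this is precisely condition~\eqref{Aprime-description}. The only real bookkeeping is in the second paragraph, where one must verify the $S$-linear adjunction $(f^*, f_*)$ and the identification $B \boxtimes F \simeq f^*(B) \otimes F$ interact correctly with the dualization adjunction; these are formal consequences of the monoidal formalism of Section~\ref{section-linear-categories} together with Lemma~\ref{lemma-bc-adjoints}, so no nontrivial obstruction arises.
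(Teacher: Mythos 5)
Your proof is essentially the same as the paper's: base change the semiorthogonal decomposition $\cC = \llangle \cA, {}^{\perp}\cA \rrangle$, use thick generation of $({}^{\perp}\cA) \otimes_{\Perf(S)} \Perf(S')$ by $\boxtimes$-objects, and rewrite $\Map(B \boxtimes F, C')$ via the adjunctions $f^* \dashv f_*$ and $(-\otimes F) \dashv (-\otimes F^{\svee})$. One small notational slip: in the paper's convention (Lemma~\ref{lemma-admissible-sod} and Definition~\ref{definition-sod}), left admissibility of $\cA$ yields $\cC = \llangle \cA, {}^{\perp}\cA \rrangle$, not $\llangle {}^{\perp}\cA, \cA \rrangle$ as you wrote — but you then correctly identify $\cA'$ as the right orthogonal of $({}^{\perp}\cA)'$, which is what the correct ordering gives, so the rest of the argument is unaffected.
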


\begin{proof} 
Clearly $\cA'$ is contained in the right side of~\eqref{Aprime-description}.  
For the reverse inclusion, 
we consider the left orthogonal $\cB \subset \cC$ to $\cA$ and its 
base change 
\begin{equation*}
\cB' = \cB \otimes_{\Perf(S)} \Perf(S'). 
\end{equation*} 
We have $\cC = \llangle \cA , \cB \rrangle$ by Lemma~\ref{lemma-admissible-sod}, so 
$\cC' = \llangle \cA', \cB' \rrangle$ by Lemma~\ref{lemma-sod-base-change}. 
Hence given $C' \in \cC'$ such that $f_*(C' \otimes F) \in \cA$ for all $F \in \Perf(S')$, 
we must show that $C'$ is right orthogonal to~$\cB'$. 
By Lemma~\ref{lemma-box-tensor-generation} it suffices to show that 
if $B \in \cB$ and $G \in \Perf(S')$, then 
\begin{equation*}
\cHom_S(B \boxtimes G, C') \simeq 0. 
\end{equation*}
Note that we can write $B \boxtimes G = f^*(B) \otimes G$, and hence 
\begin{equation*}
\cHom_S(B \boxtimes G, C') \simeq \cHom_S(f^*(B), C' \otimes G^{\svee}) 
\simeq \cHom_S(B, f_*(C' \otimes G^{\svee})). 
\end{equation*}
This vanishes since $f_*(C' \otimes G^{\svee}) \in \cA$ by assumption. 
\end{proof}

\subsection{Semiorthogonal decompositions of functor categories}  
\label{subsection-sod-functor} 
Recall that given $S$-linear (resp. presentable $S$-linear) 
categories $\cC$ and $\cD$, the $S$-linear (resp. cocontinuous $S$-linear) functors 
form the objects of an $S$-linear (resp. presentable $S$-linear) 
category $\Fun_{\Perf(S)}(\cC, \cD)$ (resp. $\Fun_{\QCoh(S)}(\cC,\cD)$). 
In the following lemma, we use the uniform notation $\Fun_S(\cC, \cD)$ to denote 
$\Fun_{\Perf(S)}(\cC, \cD)$ if $\cC, \cD \in \Cat_S$ or $\Fun_{\QCoh(S)}(\cC,\cD)$ if $\cC, \cD \in \PrCat_S$. 

\begin{lemma}
\label{lemma-functor-category-sod}
Let $\cC$ and $\cD$ be $S$-linear (resp. presentable $S$-linear) categories. 
\begin{enumerate}
\item \label{functor-category-sod-1}
Let $\cC = \langle \cA_1, \dots, \cA_n \rangle$ be an 
$S$-linear (resp. presentable $S$-linear) semiorthogonal decomposition. 
Then for every $i$ the functor 
\begin{equation}
\label{functor-sod-embedding}
\Func{S}{\cA_i}{\cD} \to \Func{S}{\cC}{\cD} 
\end{equation}
induced by the projection $\pr_i \colon \cC \to \cA_i$ is fully faithful.  
Further, there is an $S$-linear (resp. presentable $S$-linear) semiorthogonal decomposition 
\begin{equation*}
\Func{S}{\cC}{\cD} = 
\llangle \Func{S}{\cA_1}{\cD}, \dots, 
\Func{S}{\cA_n}{\cD} 
\rrangle , 
\end{equation*}
whose projection functors 
\begin{equation}
\label{functor-sod-projection}
\Func{S}{\cC}{\cD} \to \Func{S}{\cA_i}{\cD} 
\end{equation}
are induced by the embeddings $\alpha_i \colon \cA_i \to \cD$. 

\item \label{functor-category-sod-2}
Let $\cD = \langle \cB_1, \dots, \cB_n \rangle$ be an 
$S$-linear (resp. presentable $S$-linear) semiorthogonal decomposition. 
Then for every $i$ the functor 
\begin{equation*}
\Func{S}{\cC}{\cB_i} \to \Func{S}{\cC}{\cD} 
\end{equation*}
induced by the emedding $\beta_i \colon \cB_i \to \cD$ is fully faithful.  
Further, there is an $S$-linear (resp. presentable $S$-linear) semiorthogonal decomposition 
\begin{equation*}
\Func{S}{\cC}{\cD} = 
\llangle 
\Func{S}{\cC}{\cB_1}, \dots, 
\Func{S}{\cC}{\cB_n} 
\rrangle , 
\end{equation*}
whose projection functors 
\begin{equation*}
\Func{S}{\cC}{\cD} \to \Func{S}{\cC}{\cB_i} 
\end{equation*}
are induced by the projections $\pr_i \colon \cD \to \cB_i$. 
\end{enumerate}
\end{lemma}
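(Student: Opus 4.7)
The plan is to induct on the length $n$ of the semiorthogonal decomposition, reducing both parts to the case $n=2$, which is handled by exploiting adjunctions induced on functor categories by pre-composition (for Part~\eqref{functor-category-sod-1}) and post-composition (for Part~\eqref{functor-category-sod-2}). The computational backbone throughout is that the projection and inclusion functors of a semiorthogonal decomposition satisfy $\pr_i \circ \alpha_i \simeq \id$ and $\pr_i \circ \alpha_j \simeq 0$ for $i \neq j$, both of which follow from Lemma~\ref{lemma-sod}.

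For Part~\eqref{functor-category-sod-1} in the case $n=2$, let $E_i = (-) \circ \pr_i$ be the candidate embedding of $\Fun_S(\cA_i, \cD)$ into $\Fun_S(\cC, \cD)$ and let $R_i = (-) \circ \alpha_i$ be the candidate projection. Full faithfulness of $E_i$ is immediate from $R_i \circ E_i \simeq \id$, which follows from $\pr_i \circ \alpha_i \simeq \id$. The filtration axiom is obtained by post-composing an arbitrary $\psi \in \Fun_S(\cC, \cD)$ with the exact triangle of endofunctors $\tr_i \to \tr_{i-1} \to \alpha_i \circ \pr_i$ supplied by Lemma~\ref{lemma-sod}, producing a filtration of $\psi$ whose graded pieces $\psi \circ \alpha_i \circ \pr_i \simeq E_i(R_i(\psi))$ lie in the image of $E_i$; this simultaneously identifies $R_i$ as the projection. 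For semiorthogonality, the key input is Lemma~\ref{lemma-sod}(3), which gives a global adjunction $\pr_1 \dashv \alpha_1$. The general fact that an adjunction $F \dashv G$ induces an adjunction $(-) \circ G \dashv (-) \circ F$ on functor categories then yields $R_1 \dashv E_1$, and hence
\[
\cHom_S(E_2 \phi_2, E_1 \phi_1) \simeq \cHom_S(R_1 E_2 \phi_2, \phi_1) \simeq \cHom_S(\phi_2 \circ (\pr_2 \circ \alpha_1), \phi_1) \simeq 0,
\]
since $\pr_2 \circ \alpha_1 \simeq 0$. For general $n$, I would write $\cC = \llangle \cA_1, \cC' \rrangle$ with $\cC' = \llangle \cA_2, \dots, \cA_n \rrangle$, apply the $n=2$ case, and then iterate on $\Fun_S(\cC', \cD)$; the inductively defined embeddings agree with those in the statement once one checks that the iterated projections coincide with $\pr_i$, which follows from the uniqueness of projection functors in nested semiorthogonal decompositions.

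Part~\eqref{functor-category-sod-2} is the formal dual. The embedding $\Fun_S(\cC, \cB_i) \to \Fun_S(\cC, \cD)$ is now post-composition with $\beta_i$, and the projection is post-composition with the projection $\cD \to \cB_i$ of the given decomposition of $\cD$. Full faithfulness and filtration are verified by the same pattern, the filtration this time arising from applying the triangle $\tr_i \to \tr_{i-1} \to \beta_i \circ \pr_i$ for $\cD$ to $\psi$ from the target side. Semiorthogonality can either be handled by the same adjunction trick, or more directly by a pointwise argument: mapping objects between functors are computed as ends in $\cHom_S(\psi(C), \chi(C))$, and the pointwise vanishing $\cHom_S(\beta_i X, \beta_j Y) \simeq 0$ for $i > j$ (from the semiorthogonality of $\cB_i, \cB_j$ in $\cD$) immediately gives the required vanishing.

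The main obstacle I expect is the semiorthogonality step in Part~\eqref{functor-category-sod-1}: it genuinely requires the global adjunction $\pr_1 \dashv \alpha_1$, since the naive pointwise argument available in Part~\eqref{functor-category-sod-2} does not suffice—$\phi_i \circ \pr_i$ and $\phi_j \circ \pr_j$ need not have pointwise-orthogonal values in $\cD$. Once the $n=2$ case is in hand, the inductive step is routine, and the passage to the presentable variant follows verbatim, since pre- and post-composition with cocontinuous $\QCoh(S)$-linear functors preserves cocontinuity and $\QCoh(S)$-linearity.
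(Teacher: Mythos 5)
Your proof is correct and takes essentially the same approach as the paper's: reduce to $n=2$, use the adjunction on functor categories induced by pre-composition with the adjoint pair $\pr_1 \dashv \alpha_1$ (and dually $\alpha_2 \dashv \pr_2$) to establish full faithfulness and semiorthogonality, and obtain the filtration by post-composing with the triangle $\alpha_2\pr_2 \to \id \to \alpha_1\pr_1$. The one small addition you make beyond the paper---the observation that semiorthogonality in Part~\eqref{functor-category-sod-2} also admits a direct pointwise (end) argument unavailable in Part~\eqref{functor-category-sod-1}---is accurate and a nice clarifying remark, but the paper simply says Part~\eqref{functor-category-sod-2} is proved similarly, so this is a minor embellishment rather than a different route.
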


\begin{proof}
We prove~\eqref{functor-category-sod-1}. 
First assume we are in the $S$-linear (not presentable) situation. 
The result reduces to the case $n=2$. 
One checks that the functor~\eqref{functor-sod-projection} is left adjoint 
to~\eqref{functor-sod-embedding} for $i=1$,
right adjoint to~\eqref{functor-sod-embedding} for $i=2$, and in both  
cases the composition 
\begin{equation*}
\Func{S}{\cA_i}{\cD} \to \Func{S}{\cC}{\cD} \to \Func{S}{\cA_i}{\cD} 
\end{equation*}
is equivalent to the identity. This proves the claim that $\Func{S}{\cA_i}{\cD} \to \Func{S}{\cC}{\cD}$ 
is fully faithful, and the adjointness of~\eqref{functor-sod-projection} 
and~\eqref{functor-sod-embedding} immediately implies that 
\begin{equation*}
\Func{S}{\cA_1}{\cD}, ~ \Func{S}{\cA_2}{\cD} 
\end{equation*}
are semiorthogonal in $\Func{S}{\cC}{\cD}$. 
Finally, given any $F \in \Func{S}{\cC}{\cD}$, the triangle 
\begin{equation*}
\alpha_2 \circ \pr_2 \to \id_{\cC} \to \alpha_1 \circ \pr_1 
\end{equation*}
induces a triangle 
\begin{equation*}
F \circ \alpha_2 \circ \pr_2 \to F \to F \circ \alpha_1 \circ \pr_1, 
\end{equation*}
which shows that Definition~\ref{definition-sod}\eqref{sod-2} holds. 
In the presentable $S$-linear case the argument above works verbatim, 
with the additional remark that the functors~\eqref{functor-sod-embedding} 
and~\eqref{functor-sod-projection} are cocontinuous, because colimits of 
functors are computed objectwise (see~\cite[Section 5.1.2]{lurie-HTT}).

Part~\eqref{functor-category-sod-2} is proved similarly. 
\end{proof}

\subsection{Splitting functors}
\label{subsection-splitting-functors} 
Here we review the notion of a splitting functor from~\cite{kuznetsov-HPD}  
in the context of $S$-linear categories. 

\begin{definition}
\label{definition-ker-im}
Let $\phi \colon \cC \to \cD$ be an $S$-linear functor between $S$-linear categories. 
The \emph{kernel} of $\phi$ is the full subcategory 
$\ker \phi \subset \cC$ spanned by objects $C \in \cC$ such that $\phi(C) \simeq 0$, 
and the \emph{image} of $\phi$ is the full subcategory $\im \phi \subset \cD$ spanned 
by the objects $\phi(C) \in \cD$ for $C \in \cC$. 
\end{definition}

\begin{remark}
The kernel of a functor $\phi$ as in Definition~\ref{definition-ker-im} is automatically an 
$S$-linear stable subcategory of $\cC$. 
This is true of the image of $\phi$ if for instance $\phi$ is fully faithful, but 
is not true in general. 
\end{remark}

The following result is \cite[Theorem 3.3]{kuznetsov-HPD} in our setting, which holds by the same proof. 

\begin{theorem}
\label{theorem-splitting-functors}
Let $\phi \colon \cC \to \cD$ be an $S$-linear functor between $S$-linear categories. 
The following are equivalent: 
\begin{enumerate}[leftmargin=.8cm, label=(\text{\arabic*r})]
\item
\label{splitting-functor-1-r}
$\ker \phi$ is a right admissible subcategory 
of $\cC$, the restriction of $\phi$ to $(\ker \phi){^\perp}$ is fully faithful, and 
$\im \phi$ is a right admissible subcategory of $\cD$.
\item
\label{splitting-functor-2-r}
$\phi$ admits a right adjoint $\phi^!$, and the composition of the canonical 
morphism $\id \to \phi^!\phi$ with $\phi$ gives an equivalence $\phi \simeq \phi \phi^! \phi$. 
\item
\label{splitting-functor-3-r}
$\phi$ admits a right adjoint $\phi^!$, there are semiorthogonal decompositions
\begin{align*}
\cC & = \llangle \im \phi^!, \ker \phi \rrangle,  \\
\cD & = \llangle \ker \phi^!, \im \phi \rrangle, 
\end{align*}
and the functors $\phi$ and $\phi^!$ induce mutually inverse equivalences 
$\im \phi^! \simeq \im \phi$. 
\end{enumerate}
Similarly, the following are equivalent: 
\begin{enumerate}[leftmargin=.8cm, label=(\text{\arabic*l})]
\item
\label{splitting-functor-1}
$\ker \phi$ is a left admissible subcategory 
of $\cC$, the restriction of $\phi$ to ${^\perp}(\ker \phi)$ is fully faithful, and 
$\im \phi$ is a left admissible subcategory of $\cD$.
\item
\label{splitting-functor-2}
$\phi$ admits a left adjoint $\phi^*$, and the composition of the canonical 
morphism $\phi^*\phi \to \id$ with $\phi$ gives an equivalence $\phi \phi^* \phi \simeq \phi$. 
\item
\label{splitting-functor-3}
$\phi$ admits a left adjoint $\phi^*$, there are semiorthogonal decompositions
\begin{align*}
\cC & = \llangle \ker \phi , \im \phi^* \rrangle,  \\
\cD & = \llangle \im \phi, \ker \phi^* \rrangle, 
\end{align*}
and the functors $\phi$ and $\phi^*$ induce mutually inverse equivalences 
$\im \phi^* \simeq \im \phi$. 
\end{enumerate}
\end{theorem}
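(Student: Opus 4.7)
The plan is to follow the argument of \cite[Theorem 3.3]{kuznetsov-HPD}, which goes through verbatim in our setting: all manipulations involve only adjunction units and counits, triangles in stable $\infty$-categories, and semiorthogonal decompositions, and the $S$-linear structure is preserved throughout by Lemma~\ref{lemma-adjoint-S-linear} (adjoints of $S$-linear functors are $S$-linear). I will focus on the ``right'' statements; the ``left'' version follows by a dual argument. I plan to prove $(3\text{r}) \Rightarrow (1\text{r}) \Rightarrow (2\text{r}) \Rightarrow (3\text{r})$.

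The implication $(3\text{r}) \Rightarrow (1\text{r})$ is immediate from Lemma~\ref{lemma-admissible-sod}: the two semiorthogonal decompositions in $(3\text{r})$ exhibit $\ker\phi$ and $\im\phi$ as right admissible, and the equivalence $\phi\colon \im\phi^! \xrightarrow{\sim} \im\phi$ combined with $\im\phi^! = (\ker\phi)^\perp$ makes $\phi|_{(\ker\phi)^\perp}$ fully faithful. For $(1\text{r}) \Rightarrow (2\text{r})$, right admissibility of $\ker\phi$ and $\im\phi$ yields SODs $\cC = \langle(\ker\phi)^\perp,\ker\phi\rangle$ and $\cD = \langle(\im\phi)^\perp,\im\phi\rangle$ by Lemma~\ref{lemma-admissible-sod}; the fully faithful functor $\phi|_{(\ker\phi)^\perp}$ has essential image $\im\phi$ (since $\phi$ kills $\ker\phi$), so it is an equivalence $(\ker\phi)^\perp \simeq \im\phi$. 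The right adjoint $\phi^!$ is then constructed as the composite of the projection $\cD \to \im\phi$ (right adjoint to inclusion by Lemma~\ref{lemma-sod}), the inverse equivalence $\im\phi \to (\ker\phi)^\perp$, and inclusion into $\cC$; a direct computation using the two SODs verifies both the adjunction and the equivalence $\phi \simeq \phi\phi^!\phi$, giving also $(3\text{r})$.

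The main content is $(2\text{r}) \Rightarrow (3\text{r})$. For any $C \in \cC$, take the fiber sequence
\begin{equation*}
K_C \to C \xrightarrow{\eta_C} \phi^!\phi(C).
\end{equation*}
Applying $\phi$ and using that $\phi\eta$ is an equivalence shows $\phi(K_C) \simeq 0$, so $K_C \in \ker\phi$. The semiorthogonality $\cHomS(\ker\phi,\im\phi^!) \simeq 0$ is immediate from adjunction, and the triangle above provides the SOD decomposition of every $C$, establishing $\cC = \langle \im\phi^!,\ker\phi\rangle$; in particular $\im\phi^!$ is automatically an $S$-linear stable subcategory (namely $(\ker\phi)^\perp$). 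A dual argument yields the SOD on $\cD$: the triangle identity $(\phi^!\epsilon)(\eta\phi^!) = \id_{\phi^!}$ combined with $\phi\eta$ being an equivalence forces $\eta\phi^!$ (and hence $\phi^!\epsilon$) to be an equivalence (via the kernel/image argument applied to the fiber of $\eta\phi^!(D)$), which shows $\phi$ is fully faithful on $\im\phi^!$ and, via the counit fiber sequence $\phi\phi^!(D) \to D \to C_D$, gives $\cD = \langle \ker\phi^!,\im\phi\rangle$ with $\phi^!$ inducing the inverse equivalence.

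The main subtlety is that \emph{a priori} $\im\phi^!$ and $\im\phi$ are only full subcategories, and one must verify they are stable and $S$-linear before the statement of $(3\text{r})$ makes literal sense. This is the step that requires care, but it drops out of the analysis above: once the semiorthogonal decompositions are established, these essential images coincide with the orthogonals $(\ker\phi)^\perp$ and ${}^\perp(\ker\phi^!)$ respectively, which are manifestly stable $S$-linear subcategories. Everything else is formal.
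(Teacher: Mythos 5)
Your proposal is correct and matches the paper's intent exactly: the paper itself discharges the proof by citing \cite[Theorem~3.3]{kuznetsov-HPD} and asserting "which holds by the same proof," and your reconstruction is precisely that argument adapted to the $S$-linear stable $\infty$-categorical setting, with $S$-linearity of the constructed adjoints supplied by Lemma~\ref{lemma-adjoint-S-linear}. Your handling of the subtlety that $\im\phi^!$ and $\im\phi$ must be shown to be stable $S$-linear subcategories (by identifying them a posteriori with $(\ker\phi)^\perp$ and ${}^\perp(\ker\phi^!)$) is the right way to resolve it.
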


\begin{definition}
Let $\phi \colon \cC \to \cD$ be an $S$-linear functor between $S$-linear categories. 
We say $\phi$ is \emph{right splitting} if the equivalent conditions \ref{splitting-functor-1-r}-\ref{splitting-functor-3-r} of 
Theorem~\ref{theorem-splitting-functors} hold, and 
\emph{left splitting} if the equivalent conditions \ref{splitting-functor-1}-\ref{splitting-functor-3} hold, and 
\emph{splitting} if it is right and left splitting. 
\end{definition}


\section{Smooth and proper categories} 
\label{section-smooth-proper}
In this section, we study the notions of smoothness and properness of $S$-linear categories,  
which are the analogues in noncommutative algebraic geometry of the usual geometric 
notions of smoothness and properness of a scheme over $S$.  
In \S\ref{subsection-smooth-proper-definitions} we define smoothness and properness 
of linear categories, and discuss the closely related notion of dualizability. 
In \S\ref{subsection-smooth-proper-geometry} we explain how smoothness or properness 
of an $S$-scheme relates to the corresponding property  
of its category of perfect complexes. 
In \S\ref{subsection-smooth-proper-base-change} we show that smoothness and 
properness of linear categories behave well under base change. 
In \S\ref{subsection-smooth-proper-adjoints} we show that a functor between 
linear categories admits adjoints if the source is smooth and proper and the target is proper. 
In \S\ref{subsection-smooth-proper-sod} we analyze the behavior of smoothness 
and properness under semiorthogonal decompositions. 
In \S\ref{subsection-serre-functor} we define relative Serre functors, give some of their 
properties, and prove they exist in the smooth and proper case. 
In \S\ref{subsection-critical-loci} we introduce the notion of the critical locus 
of a linear category. 
Finally, in \S\ref{subsection-Db} we discuss the bounded coherent category 
of an $S$-linear category and its behavior under semiorthogonal decompositions; 
under reasonable assumptions, this construction recovers the bounded derived category of coherent sheaves on an $S$-scheme 
from its category of perfect complexes. 

We note that a number of the results in this section are folklore, 
or appear in some form in the literature, 
cf. \cite{antieau-gepner, orlov-ncschemes, lunts-smoothness, kuznetsov-lunts, toen-moduli-objects}. 
However, we could not find an adequate reference for 
the point of view taken in this work. 

\subsection{Basic definitions} 
\label{subsection-smooth-proper-definitions}
\begin{definition}
Let $\cC$ be a symmetric monoidal $\infty$-category with unit $\bone$. 
An object $C \in \cC$ is called \emph{dualizable} if there exists 
an object $D \in \cC$ and morphisms 
\begin{align*}
\ev & \colon D \otimes C \to \bone, \\
\coev & \colon \bone \to C \otimes D, 
\end{align*}
such that the compositions 
\begin{align*}
 C \xrightarrow{\coev \otimes \id} & C \otimes D \otimes C \xrightarrow{\id \otimes \ev} C , \\
 D \xrightarrow{\id \otimes \coev} & D \otimes C \otimes D \xrightarrow{\ev \otimes \id} D , 
\end{align*}
are equivalent to the identity. 
The object $D$ is called the \emph{dual} of $C$, and 
$\ev$ and $\coev$ are called the \emph{evaluation} and 
\emph{coevaluation} morphisms. 
\end{definition}

\begin{remark}
\label{remark-duality-data-unique}
Equivalently, an object $C \in \cC$ is dualizable if it is 
dualizable as an object of the symmetric monoidal homotopy category $\Ho(\cC)$. 
Moreover, if $C \in \cC$ is dualizable, then the dual $D$ and the 
evaluation and coevaluation morphisms are uniquely determined in $\Ho(\cC)$. 
\end{remark}

In particular, given a category $\cC$ which is in $\Cat_S$ or $\PrCat_S$, 
it makes sense to ask whether $\cC$ is dualizable. If so, we denote by 
$\rD_S(\cC)$ the dual. 

\begin{lemma}
\label{lemma-presentable-category-dualizable}
Let $\cC$ be a compactly generated presentable $S$-linear category. 
Then $\cC$ is dualizable as an object of $\PrCat_S$, with dual 
given by 
\begin{equation*}
\rD_S(\cC) \simeq \Ind((\cC^{c})^{\op}), 
\end{equation*}
where $(\cC^{c})^{\op}$ denotes the opposite category of $\cC^{c}$. 
\end{lemma}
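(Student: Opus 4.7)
The plan is to construct explicit evaluation and coevaluation maps witnessing the duality, then verify the triangle identities. First I would build the evaluation
\[
\ev \colon \Ind((\cC^c)^{\op}) \otimes_{\QCoh(S)} \cC \to \QCoh(S)
\]
from the $S$-linear mapping object functor. The pairing $\cHom_S(-,-) \colon (\cC^c)^{\op} \times \cC^c \to \QCoh(S)$ is exact separately in each variable, and by the universal property of Ind-completion in each variable it extends to a cocontinuous $\QCoh(S)$-bilinear functor $\Ind((\cC^c)^{\op}) \times \cC \to \QCoh(S)$ (using the Ind-completion equivalence $\Cat_S \simeq \PrCat_S^{\omega}$ to keep track of the $\QCoh(S)$-linearity). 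The universal property of the relative tensor product in $\PrCat_S$ then produces $\ev$.

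For the coevaluation
\[
\coev \colon \QCoh(S) \to \cC \otimes_{\QCoh(S)} \Ind((\cC^c)^{\op}),
\]
I would reduce to the absolute case. The analogous statement in $\PrCatSt$, that a compactly generated presentable stable $\infty$-category is dualizable with the stated dual, is a classical result of Lurie. Since $\PrCat_S \simeq \mathrm{Mod}_{\QCoh(S)}(\PrCatSt)$ and $\QCoh(S)$ is itself self-dual as a commutative algebra object in $\PrCatSt$, the general formalism of dualizability in module categories over a commutative algebra transfers the absolute duality data to $\PrCat_S$, producing the required coevaluation as a $\QCoh(S)$-module morphism compatible with the $\ev$ constructed above.

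With both structure maps in hand, the triangle identities reduce, via Lemma~\ref{lemma-box-tensor-generation}, to identities on compact generators of the form $D \in \cC^c$ and $E^{\op} \in (\cC^c)^{\op}$, where after pairing with an arbitrary test object through $\cHom_S$ they follow from the Yoneda lemma and the K\"unneth formula of Lemma~\ref{lemma-kunneth}. The main obstacle is the construction of $\coev$: although the absolute dualizability statement is classical, upgrading it to the relative setting over $\QCoh(S)$ requires care with the symmetric monoidal structures on $\Cat_S$, $\PrCat_S$, and $\PrCat_S^{\omega}$ and their compatibility under the Ind-completion equivalence. This is the only place where genuinely $\infty$-categorical input is needed beyond the formal manipulations established earlier in the section.
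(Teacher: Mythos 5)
Your overall strategy mirrors the paper's: both reduce to the absolute statement that a compactly generated presentable stable $\infty$-category is dualizable in $\PrCatSt$ with dual $\Ind((\cC^c)^\op)$, and then transfer to $\PrCat_S = \mathrm{Mod}_{\QCoh(S)}(\PrCatSt)$ via a structural property of $\QCoh(S)$. The paper simply cites Gaitsgory--Rozenblyum for both the absolute case and the transfer, whereas you propose to exhibit the evaluation directly from $\cHom_S(-,-)$, pull back the coevaluation from the absolute case, and then verify the triangle identities by hand on box-tensor generators. That extra unwinding is not wrong, but it is largely redundant: if you genuinely invoke the module-transfer theorem you get the full duality data, triangle identities included, so there is nothing left to check.

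The one place where the proposal has a real gap is the hypothesis you invoke for the transfer. You write that it suffices for $\QCoh(S)$ to be ``self-dual as a commutative algebra object in $\PrCatSt$.'' Self-duality is not the right notion here and is strictly weaker than what is needed. For a commutative algebra object $O$ in a symmetric monoidal $\infty$-category $\mathcal{E}$ and a module $M$ dualizable as an object of $\mathcal{E}$, the dualizability of $M$ as an $O$-module does \emph{not} follow in general; the relevant sufficient condition is that $O$ be \emph{rigid}. This is precisely what the paper verifies and uses: because $S$ is quasi-compact and separated (a standing assumption), the compact objects of $\QCoh(S)$ coincide with the dualizable objects (= perfect complexes), and this is what makes $\QCoh(S)$ rigid, which in turn is the hypothesis for \cite[Chapter I.1, Proposition 9.4.4]{gaitsgory-DAG}. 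Your argument as written skips this verification and uses the wrong structural property, so you should replace ``self-dual'' with ``rigid'' and explicitly note that rigidity relies on $S$ being quasi-compact and separated.
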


\begin{proof}
By~\cite[Chapter I.1, Proposition 7.3.2]{gaitsgory-DAG}, the underlying 
stable $\infty$-category of $\cC$ is dualizable with dual 
$\Ind((\cC^{c})^{\op})$. 
Since by assumption $S$ is quasi-compact and separated, 
an object of $\QCoh(S)$ is compact if and only if it is 
dualizable if and only if it is a perfect complex. 
Hence by~\cite[Chapter I.1, Lemma~9.1.5]{gaitsgory-DAG}, $\QCoh(S)$ 
is a rigid symmetric monoidal $\infty$-category. 
Now the result follows from~\cite[Chapter I.1, Proposition 9.4.4]{gaitsgory-DAG}. 
\end{proof}

The following is an easy consequence of the definitions.
\begin{lemma}
\label{lemma-dual-functor-category}
Let $\cC$ be a presentable $S$-linear category. 
If $\cC$ is dualizable, then for any $\cD_1, \cD_2 \in \PrCat_S$ there 
is an equivalence 
\begin{equation*}
\Fun_{\QCoh(S)}(\cD_1 \otimes_{\QCoh(S)} \cC, \cD_2) \simeq 
\Fun_{\QCoh(S)}(\cD_1, \rD_S(\cC) \otimes_{\QCoh(S)} \cD_2).
\end{equation*} 
\end{lemma}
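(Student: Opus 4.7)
The plan is to deduce this from the general principle that in any closed symmetric monoidal $\infty$-category, tensoring with a dualizable object is internally right adjoint to tensoring with its dual. Recall from \S\ref{subsection-linear-category} that $\PrCat_S$ is closed symmetric monoidal with internal mapping objects $\Fun_{\QCoh(S)}(-,-)$, so for all $\cA, \cB, \cE \in \PrCat_S$ there is a canonical tensor--hom adjunction equivalence
\begin{equation*}
\Fun_{\QCoh(S)}(\cA \otimes_{\QCoh(S)} \cB, \cE) \simeq \Fun_{\QCoh(S)}(\cA, \Fun_{\QCoh(S)}(\cB, \cE)).
\end{equation*}

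The key step is to establish that for dualizable $\cC \in \PrCat_S$ and any $\cE \in \PrCat_S$ there is a natural equivalence
\begin{equation*}
\rD_S(\cC) \otimes_{\QCoh(S)} \cE \simeq \Fun_{\QCoh(S)}(\cC, \cE).
\end{equation*}
I would construct the comparison map as the adjoint (under the tensor--hom adjunction above, applied with $\cA = \rD_S(\cC) \otimes_{\QCoh(S)} \cE$ and $\cB = \cC$) of the composite
\begin{equation*}
\rD_S(\cC) \otimes_{\QCoh(S)} \cE \otimes_{\QCoh(S)} \cC \xrightarrow{\id \otimes \sigma} \rD_S(\cC) \otimes_{\QCoh(S)} \cC \otimes_{\QCoh(S)} \cE \xrightarrow{\ev \otimes \id} \cE,
\end{equation*}
where $\sigma$ is the braiding. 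To verify this is an equivalence, I would construct a two-sided inverse using the coevaluation: given any test object $\cF \in \PrCat_S$, one checks by unwinding the definitions that the natural maps
\begin{equation*}
\Map(\cF, \rD_S(\cC) \otimes_{\QCoh(S)} \cE) \rightleftarrows \Map(\cF \otimes_{\QCoh(S)} \cC, \cE)
\end{equation*}
induced by $\ev$ and $\coev$ are mutually inverse, directly from the zig-zag identities defining the duality datum. Invoking the tensor--hom adjunction on the right-hand side and the Yoneda lemma, this gives the desired equivalence. (Alternatively, since this is a purely formal statement about dualizable objects in a closed symmetric monoidal $\infty$-category, one could cite the analogous result in \cite[Chapter I.1]{gaitsgory-DAG} or \cite[Section 4.6]{lurie-HA}.)

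Given the key step, the lemma follows immediately by combining the two adjunctions:
\begin{equation*}
\Fun_{\QCoh(S)}(\cD_1 \otimes_{\QCoh(S)} \cC, \cD_2) \simeq \Fun_{\QCoh(S)}(\cD_1, \Fun_{\QCoh(S)}(\cC, \cD_2)) \simeq \Fun_{\QCoh(S)}(\cD_1, \rD_S(\cC) \otimes_{\QCoh(S)} \cD_2).
\end{equation*}
The only real content is the key step above, and even there the difficulty is merely bookkeeping: the zig-zag identities from Remark~\ref{remark-duality-data-unique} are precisely what is needed to make the two maps inverse. So there is no serious obstacle, just the need to set up the formal adjunctions carefully in the $\infty$-categorical context.
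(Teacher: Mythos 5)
Your proof is correct and fills in precisely the formal argument the paper alludes to by saying the lemma "is an easy consequence of the definitions" and then omitting the proof: one combines the tensor--hom adjunction in the closed symmetric monoidal $\infty$-category $\PrCat_S$ with the standard fact that for a dualizable object the internal hom functor $\Fun_{\QCoh(S)}(\cC,-)$ is equivalent to $\rD_S(\cC) \otimes_{\QCoh(S)} -$, verified via the zig-zag identities. This is the same approach; you have simply written it out.
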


Now let $\cC$ be an $S$-linear category. 
Then by Lemma~\ref{lemma-presentable-category-dualizable}, 
the presentable $S$-linear category $\Ind(\cC)$ is dualizable 
with dual $\Ind(\cC^{\op})$. More explicitly, the evaluation 
morphism 
\begin{equation}
\label{ev-Ind-C}
\ev \colon \Ind(\cC^{\op}) \otimes_{\QCoh(S)} \Ind(\cC) 
\to \QCoh(S) 
\end{equation}
is induced by the functor 
\begin{equation*}
\cHom_S(-,-) \colon \cC^{\op} \times \cC \to \QCoh(S). 
\end{equation*}
Under the equivalence 
\begin{equation*}
\Ind(\cC) \otimes_{\QCoh(S)} \Ind(\cC^{\op})  \simeq \Fun_{\QCoh(S)}(\Ind(\cC), \Ind(\cC)) 
\end{equation*}
deduced from Lemma~\ref{lemma-dual-functor-category}, 
the coevaluation morphism
\begin{equation}
\label{coev-Ind-C} 
\coev \colon \QCoh(S) \to \Ind(\cC) \otimes_{\QCoh(S)} \Ind(\cC^{\op}) 
\end{equation}
is identified with the canonical functor 
\begin{equation*} 
\QCoh(S) \to \Fun_{\QCoh(S)}(\Ind(\cC), \Ind(\cC)) 
\end{equation*}
which sends $\cO_S \in \QCoh(S)$ to $\id \in \Fun_{\QCoh(S)}(\Ind(\cC), \Ind(\cC))$. 

\begin{definition}
Let $\cC$ be an $S$-linear category. 
We say: 
\begin{enumerate}
\item $\cC$ is \emph{proper} if the evaluation morphism~\eqref{ev-Ind-C} 
is a morphism in the category $\PrCat_S^{\omega}$, i.e. if this functor 
preserves compact objects. 
\item $\cC$ is \emph{smooth} if the coevaluation morphism~\eqref{coev-Ind-C} 
is a morphism in the category $\PrCat_S^{\omega}$, i.e. if this functor 
preserves compact objects. 
\end{enumerate}
\end{definition}

\begin{remark}
The condition that $\cC$ is smooth or proper depends on its 
$S$-linear structure. For emphasis, we shall sometimes say 
$\cC$ is smooth \emph{over $S$} or proper \emph{over $S$}. 
For instance, if $T \to S$ is a morphism of schemes and $\cC$ is a 
$T$-linear category, then we say $\cC$ is smooth and proper 
over $S$ to mean that $\cC$ is smooth and proper with its induced 
$S$-linear structure. 
\end{remark}

\begin{lemma}
\label{lemma-smooth-proper-criteria}
Let $\cC$ be an $S$-linear category. 
\begin{enumerate}
\item \label{properness-criterion}
$\cC$ is proper if and only if for every $C, D \in \cC$ the 
mapping object $\cHom_S(C, D)$ lies in $\Perf(S) \subset \QCoh(S)$. 

\item \label{smoothness-criterion}
$\cC$ is smooth if and only if $\id_{\Ind(\cC)} \in \Fun_{\QCoh(S)}(\Ind(\cC), \Ind(\cC))$ 
is a compact object. 
\end{enumerate}
\end{lemma}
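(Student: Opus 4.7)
The plan is to unpack the definitions of smoothness and properness as conditions on the evaluation and coevaluation morphisms described in~\eqref{ev-Ind-C} and~\eqref{coev-Ind-C}, and then reduce the preservation of compact objects in each case to a check on a generating family.

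For part~\eqref{properness-criterion}, I will recall that $\cC$ is proper by definition if and only if the cocontinuous functor
\begin{equation*}
\ev \colon \Ind(\cC^{\op}) \otimes_{\QCoh(S)} \Ind(\cC) \to \QCoh(S)
\end{equation*}
preserves compact objects. Since a cocontinuous exact functor preserves compact objects as soon as it does so on any thickly generating set of compacts, and since compact objects of $\QCoh(S)$ are precisely perfect complexes (because $S$ is quasi-compact and separated), it suffices to test $\ev$ on the generators $D \boxtimes C$ for $C, D \in \cC$ provided by Lemma~\ref{lemma-box-tensor-generation}. By construction, $\ev(D \boxtimes C) \simeq \cHom_S(D,C)$, which gives the desired equivalence.

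For part~\eqref{smoothness-criterion}, the forward direction is immediate: $\cO_S \in \Perf(S) = \QCoh(S)^c$, so if $\coev$ preserves compact objects then $\coev(\cO_S) \simeq \id_{\Ind(\cC)}$ is compact. For the converse, the key point is that $\coev$ is $\QCoh(S)$-linear (being a morphism in $\PrCat_S$), and that on the source $\QCoh(S)$ the module action coincides with the monoidal product. This gives an equivalence $\coev(F) \simeq \id_{\Ind(\cC)} \otimes F$ for every $F \in \QCoh(S)$, where the tensor denotes the $\QCoh(S)$-module action on $\Fun_{\QCoh(S)}(\Ind(\cC), \Ind(\cC))$. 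It then remains to observe that when $F \in \Perf(S)$ is compact, the action functor $(-) \otimes F$ on any presentable $S$-linear category admits the cocontinuous right adjoint $(-) \otimes F^{\svee}$ (using dualizability of $F$), and therefore preserves compact objects. Combining these two observations shows that compact objects of $\QCoh(S)$ are sent by $\coev$ to compact objects.

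The one subtle point to verify carefully is the $\QCoh(S)$-linear identification $\coev(F) \simeq \id_{\Ind(\cC)} \otimes F$; this is a formal consequence of $\coev$ being the unique $\QCoh(S)$-linear cocontinuous functor $\QCoh(S) \to \Fun_{\QCoh(S)}(\Ind(\cC), \Ind(\cC))$ sending $\cO_S$ to $\id_{\Ind(\cC)}$, which in turn follows from the fact that $\QCoh(S)$ is the unit of the symmetric monoidal structure on $\PrCat_S$.
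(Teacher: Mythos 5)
Your proof is correct and fills in, with appropriate care, exactly the details the paper elides (its proof is the one-liner "Follows from the descriptions of the functors~\eqref{ev-Ind-C} and~\eqref{coev-Ind-C} above"). The reductions you use — testing $\ev$ on the thick generators $D \boxtimes C$ of Lemma~\ref{lemma-box-tensor-generation}, and exploiting $\QCoh(S)$-linearity of $\coev$ together with dualizability of perfect complexes — are the intended ones.
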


\begin{proof}
Follows from the descriptions of the functors~\eqref{ev-Ind-C} and~\eqref{coev-Ind-C} above. 
\end{proof}

\begin{lemma}
\label{lemma-smooth-proper-dualizable}
Let $\cC$ be an $S$-linear category. 
Then $\cC$ is smooth and proper over $S$ if and only if $\cC$ is 
dualizable as an object of $\Cat_S$. In this case, the dual of $\cC$ 
is $\cC^{\op}$. 
\end{lemma}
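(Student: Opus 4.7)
The plan is to reduce the statement to the already-known dualizability of $\Ind(\cC)$ in $\PrCat_S$, and to transport the conclusion through the symmetric monoidal equivalence $\Ind \colon \Cat_S \xrightarrow{\sim} \PrCat_S^{\omega}$.

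First I would invoke the fact that $\Ind \colon \Cat_S \to \PrCat_S^{\omega}$ is a symmetric monoidal equivalence, with inverse $(-)^c$. Since dualizability is preserved by symmetric monoidal equivalences, $\cC$ is dualizable in $\Cat_S$ if and only if $\Ind(\cC)$ is dualizable in $\PrCat_S^{\omega}$, in which case $\rD_S(\cC) \simeq (\rD_{\PrCat_S^{\omega}}(\Ind(\cC)))^c$. The subtle point is that the monoidal structure on $\PrCat_S^{\omega}$ is the restriction of that on $\PrCat_S$, so the inclusion $\PrCat_S^{\omega} \hookrightarrow \PrCat_S$ is symmetric monoidal (but not full). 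Thus an object of $\PrCat_S^{\omega}$ that is dualizable there is in particular dualizable in $\PrCat_S$, and by the uniqueness of duality data in the homotopy category (Remark on uniqueness of duals), the underlying dual and evaluation/coevaluation maps agree with those provided by dualizability in $\PrCat_S$.

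Next I would combine this with Lemma~\ref{lemma-presentable-category-dualizable}: we already know that $\Ind(\cC)$ is dualizable in $\PrCat_S$ with dual $\Ind(\cC^{\op})$, and the explicit formulas~\eqref{ev-Ind-C} and~\eqref{coev-Ind-C} for $\ev$ and $\coev$ are available. It follows that $\Ind(\cC)$ is dualizable in $\PrCat_S^{\omega}$ if and only if these specific maps $\ev$ and $\coev$ --- which already satisfy the triangle identities in $\PrCat_S$, hence in $\PrCat_S^{\omega}$ as soon as they lie there --- are morphisms in $\PrCat_S^{\omega}$, i.e.\ preserve compact objects. By the very definitions of proper and smooth ($\ev$ compact-preserving, resp.\ $\coev$ compact-preserving), this conjunction is equivalent to $\cC$ being smooth and proper over $S$.

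Finally, to identify the dual in $\Cat_S$, I would pass $\Ind(\cC^{\op})$ through the inverse equivalence $(-)^c$: since $\Ind(\cC^{\op})^c \simeq \cC^{\op}$, we conclude $\rD_S(\cC) \simeq \cC^{\op}$. The main conceptual obstacle is making sure that ``dualizability in the smaller monoidal category $\PrCat_S^{\omega}$'' really is equivalent to ``dualizability in $\PrCat_S$ together with ev and coev belonging to $\PrCat_S^{\omega}$''; this is where the uniqueness of duality data in the homotopy category is essential, since it prevents a hypothetical exotic dual in $\PrCat_S^{\omega}$ from differing from the canonical one in $\PrCat_S$. All other steps are formal consequences of the symmetric monoidal equivalence and the explicit form of the duality data recalled just before the lemma.
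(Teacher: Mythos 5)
Your proposal is correct and takes essentially the same route as the paper's proof: reduce to dualizability of $\Ind(\cC)$ in $\PrCat_S^{\omega}$ via the symmetric monoidal equivalence $\Ind \colon \Cat_S \xrightarrow{\sim} \PrCat_S^{\omega}$, then use uniqueness of duality data (Remark~\ref{remark-duality-data-unique}) together with Lemma~\ref{lemma-presentable-category-dualizable} to see that dualizability in $\PrCat_S^{\omega}$ forces the canonical $\ev$ and $\coev$ of~\eqref{ev-Ind-C}--\eqref{coev-Ind-C} to preserve compact objects, which is precisely smoothness and properness. The only difference is that you spell out more explicitly the role of the non-full symmetric monoidal inclusion $\PrCat_S^{\omega} \hookrightarrow \PrCat_S$ and the passage back through $(-)^c$ to identify the dual as $\cC^{\op}$, points the paper's proof leaves implicit.
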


\begin{proof}
Since $\Ind$ induces a symmetric monoidal 
equivalence $\Cat_S \simeq \PrCat_S^{\omega}$, 
$\cC$ is dualizable as an object of $\Cat_S$ if and only if $\Ind(\cC)$ is 
dualizable as an object of $\PrCat_S^{\omega}$. 
By definition, if $\cC$ is smooth and proper over $S$, then 
$\Ind(\cC)$ is dualizable as an object of $\PrCat_S^{\omega}$. 
Conversely, if $\Ind(\cC)$ is dualizable as an object of $\PrCat_S^{\omega}$, 
by Remark~\ref{remark-duality-data-unique} 
its duality data must be given by~\eqref{ev-Ind-C} and~\eqref{coev-Ind-C}, 
which are hence morphisms in $\PrCat_S^{\omega}$. 
\end{proof}

\subsection{Relation to geometry}
\label{subsection-smooth-proper-geometry}
If $X$ is an $S$-scheme, smoothness and properness of $\Perf(X)$ 
are related to the corresponding properties of $X$ as follows. 
Recall that a morphism $X \to S$ is called \emph{perfect} if it is 
pseudo-coherent and of finite $\Tor$-dimension, 
see \cite[\href{http://stacks.math.columbia.edu/tag/0685}{Tag 0685}]{stacks-project}. 

\begin{lemma}
\label{lemma-smooth-proper-morphisms}
Let $X \to S$ be a morphism of schemes. 
\begin{enumerate} 
\item \label{proper-morphism-proper-category}
If $X \to S$ is a perfect proper morphism, then $\Perf(X)$ is proper over $S$. 

\item \label{proper-category-proper-morphism}
If $X$ and $S$ are noetherian, $X \to S$ is separated and of finite type, 
and $\Perf(X)$ is proper over $S$, then $X \to S$ is a perfect proper 
morphism. 

\item \label{geometric-smoothness-criterion}
$\Perf(X)$ is smooth over $S$ if and only if 
$\Delta_{*}(\cO_X) \in \QCoh(X \times_S X)$  
is a perfect complex, where $\Delta \colon X \to X \times_S X$ is the diagonal morphism. 
\item \label{smooth-morphism-smooth-category}
If $X$ is smooth over $S$, then $\Perf(X)$ is smooth over $S$. 
\item \label{smooth-category-smooth-morphism} 
If $X \to S$ is flat and locally of finite presentation and $\Perf(X)$ 
is smooth over $S$, then $X$ is smooth over $S$. 
\item \label{smooth-proper-morphism-smooth-proper-category}
If $X$ is smooth and proper over $S$, then $\Perf(X)$ is smooth and proper over $S$. 
\item \label{smooth-proper-category-smooth-proper-morphism} 
If $X$ and $S$ are noetherian, $X \to S$ is a flat, separated, and of finite type, 
and $\Perf(X)$ is smooth and proper over $S$, then $X$ is smooth and proper 
over $S$. 
\end{enumerate}
\end{lemma}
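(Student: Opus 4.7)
The plan is to reduce each part of the lemma to a concrete statement about the pushforward functor $\pi_*$, the diagonal $\Delta \colon X \to X \times_S X$, or the mapping objects computed by Remark~\ref{remark-mapping-object-scheme}, and then to apply the criteria in Lemma~\ref{lemma-smooth-proper-criteria} to translate between categorical and geometric conditions.

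For parts~\eqref{proper-morphism-proper-category} and~\eqref{proper-category-proper-morphism}, I would begin by unwinding properness of $\Perf(X)$ via Lemma~\ref{lemma-smooth-proper-criteria}\eqref{properness-criterion}, noting that by Remark~\ref{remark-mapping-object-scheme} one has $\cHom_S(C,D) \simeq \pi_*\cHom_X(C,D)$ and $\cHom_X(C,D) \simeq C^{\svee} \otimes D$ is perfect whenever $C,D \in \Perf(X)$. For~\eqref{proper-morphism-proper-category}, the statement then reduces to the fact that $\pi_*$ sends $\Perf(X)$ into $\Perf(S)$ whenever $\pi$ is a perfect proper morphism, which is \cite[Example~2.2(a)]{lipman}. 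For~\eqref{proper-category-proper-morphism}, one runs the implication in reverse: properness of $\Perf(X)$ gives in particular that $\pi_*\cO_X \in \Perf(S)$, and more generally that $\pi_*(F) \in \Perf(S)$ for every $F \in \Perf(X)$. Under the noetherian finite-type hypothesis, this coherence of all direct images $R^i\pi_*F$ is strong enough to force $\pi$ to be proper (the geometric argument here is the main obstacle, and uses a standard criterion for properness in terms of coherence of higher direct images of perfect complexes); the pseudo-coherence and finite $\Tor$-dimension conditions that make $\pi$ perfect are then visible from the perfection of $\pi_*\cO_X$ together with flatness-free descent arguments.

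For part~\eqref{geometric-smoothness-criterion}, the plan is to apply Lemma~\ref{lemma-smooth-proper-criteria}\eqref{smoothness-criterion} and to identify $\id_{\QCoh(X)}$ under the equivalence of~\cite{bzfn}
\begin{equation*}
\QCoh(X \times_S X) \simeq \Fun_{\QCoh(S)}(\QCoh(X), \QCoh(X))
\end{equation*}
which sends a kernel $\cE$ to the Fourier--Mukai functor it represents. The identity functor corresponds to $\Delta_*\cO_X$, so compactness of $\id_{\QCoh(X)}$ (i.e.\ smoothness of $\Perf(X)/S$) is equivalent to perfection of $\Delta_*\cO_X$, since $\QCoh(X \times_S X)$ is compactly generated by perfect complexes. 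For part~\eqref{smooth-morphism-smooth-category}, smoothness of $X$ over $S$ forces $\Delta$ to be a regular closed immersion (using that $X,S$ are separated so $\Delta$ is a closed immersion), hence $\Delta_*\cO_X$ is perfect; combine with~\eqref{geometric-smoothness-criterion}. For~\eqref{smooth-category-smooth-morphism}, given $X \to S$ flat and locally of finite presentation and $\Delta_*\cO_X$ perfect, perfection of the diagonal forces it to be a regular immersion (this is the geometric step: a finitely presented closed immersion with perfect structure sheaf is lci), and a flat, locally finitely presented morphism with regular diagonal is smooth.

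Finally, parts~\eqref{smooth-proper-morphism-smooth-proper-category} and~\eqref{smooth-proper-category-smooth-proper-morphism} follow by combining the preceding parts, noting that smooth and proper over $S$ implies perfect and proper (so~\eqref{proper-morphism-proper-category} and~\eqref{smooth-morphism-smooth-category} both apply) and conversely. The most delicate points are the geometric converses in~\eqref{proper-category-proper-morphism} (extracting properness of a morphism from coherence/perfectness of direct images) and~\eqref{smooth-category-smooth-morphism} (extracting smoothness from perfection of the diagonal); everything else is formal once the dictionary between $\cHom_S$, $\pi_*$, and $\Delta_*\cO_X$ is in place.
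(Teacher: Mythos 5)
Your proposal tracks the paper closely for parts~\eqref{proper-morphism-proper-category}, \eqref{geometric-smoothness-criterion}, \eqref{smooth-morphism-smooth-category}, \eqref{smooth-proper-morphism-smooth-proper-category}, and \eqref{smooth-proper-category-smooth-proper-morphism}: these all follow from Lemma~\ref{lemma-smooth-proper-criteria}, Remark~\ref{remark-mapping-object-scheme}, the Lipman pushforward result, the $\QCoh(X\times_S X) \simeq \Fun_{\QCoh(S)}(\QCoh(X),\QCoh(X))$ identification of~\cite{bzfn} (together with the fact that $X\times_S X$ is a perfect stack so compact $=$ perfect), and the regularity of a section of a smooth morphism. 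For those parts your argument and the paper's coincide.

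The two ``hard'' converses, \eqref{proper-category-proper-morphism} and \eqref{smooth-category-smooth-morphism}, are where the proposal falls short. For \eqref{proper-category-proper-morphism}, you correctly observe that properness of $\Perf(X)$ forces $\pi_*(F) \in \Perf(S)$ for all $F \in \Perf(X)$, but you then gesture at a ``standard criterion for properness in terms of coherence of higher direct images'' and at ``flatness-free descent arguments'' for the pseudo-coherence and finite Tor-dimension of $\pi$ without actually supplying them; this is precisely the nontrivial content, and the paper handles it by citing \cite[Lemma~0.20 and Proposition~0.21]{neeman}. As written, your sketch does not constitute a proof of this direction.

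For \eqref{smooth-category-smooth-morphism}, your route (``perfect $\Delta_*\cO_X$ forces $\Delta$ to be a regular closed immersion; flat $+$ lfp $+$ lci diagonal implies smooth'') is genuinely different from the paper's. The paper first uses base change along $S' = \Spec(\kappa(s)) \to S$ (Lemma~\ref{lemma-smoothness-properness-base-change}\eqref{smoothness-base-change}) to reduce to $S = \Spec(k)$, which makes $X$ of finite type over a field, hence noetherian, then localizes to $X$ affine and invokes \cite[Proposition~4.17]{lunts-smoothness}. Your argument skips the reduction to the noetherian case, and that is where the gap lies: the implication ``$\cO_Z$ perfect $\Rightarrow$ $Z \hookrightarrow Y$ is a (Koszul-)regular immersion'' is standard for $Y$ locally noetherian but delicate in general, and the lemma's hypotheses for part~\eqref{smooth-category-smooth-morphism} do not include any noetherianity. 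If you add the base-change step (and justify that the equivalence you need is preserved under it), your argument can probably be repaired, but both of the geometric steps you invoke (perfect structure sheaf $\Rightarrow$ regular immersion, and flat $+$ lfp $+$ regular diagonal $\Rightarrow$ smooth) would still need precise references or proofs rather than being asserted.
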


\begin{proof}
By the criterion of Lemma~\ref{lemma-smooth-proper-criteria}\eqref{properness-criterion} 
and Remark~\ref{remark-mapping-object-scheme}, part~\eqref{proper-morphism-proper-category} 
follows from the fact that pushforward along a perfect proper morphism preserves perfect 
complexes (see~\cite[Example~2.2(a)]{lipman}). 

Part~\eqref{proper-category-proper-morphism} holds by
Remark~\ref{remark-mapping-object-scheme} combined 
with~\cite[Lemma 0.20 and Proposition 0.21]{neeman}. 

By the criterion of Lemma~\ref{lemma-smooth-proper-criteria}\eqref{smoothness-criterion}, 
$\Perf(X)$ is smooth over $S$ if and only if 
\begin{equation*}
\id \in \Func{\QCoh(S)}{\QCoh(X)}{\QCoh(X)}
\end{equation*}
is a compact object. 
But by~\cite[Theorem 1.2(2)]{bzfn} there is an equivalence 
\begin{equation*}
\Func{\QCoh(S)}{\QCoh(X)}{\QCoh(X)}  \simeq \QCoh(X \times_S X) 
\end{equation*} 
under which $\id$ corresponds to $\Delta_{*}(\cO_X)$.  
By \cite[Proposition 3.24]{bzfn} the fiber product $X \times_S X$ is a perfect derived 
scheme; in particular, $\QCoh(X \times_S X)^c = \Perf(X \times_S X)$. 
Hence~\eqref{geometric-smoothness-criterion} holds. 

If $X \to S$ is smooth, then the derived fiber product $X \times_S X$ agrees 
with the usual fiber product of schemes, and 
$\Delta$ is a section of the smooth morphism $X \times_S X \to X$. 
Hence $\Delta$ is a regular immersion 
by~\cite[\href{http://stacks.math.columbia.edu/tag/067R}{Tag 067R}]{stacks-project}, 
and hence a regular closed immersion by our standing separatedness assumptions. 
So $\Delta_*(\cO_X)$ is a perfect complex, which by~\eqref{geometric-smoothness-criterion} 
proves~\eqref{smooth-morphism-smooth-category}. 

By Lemma~\ref{lemma-smoothness-properness-base-change}\eqref{smoothness-base-change} 
below, part~\eqref{smooth-category-smooth-morphism} reduces to the case 
where $S = \Spec(k)$ for a field $k$. 
Moreover, it follows from~\eqref{geometric-smoothness-criterion} that the smoothness of 
$\Perf(X)$ over $S$ is local on $X$, so we may assume $X$ is affine. 
In this case~\eqref{smooth-category-smooth-morphism} holds by~\cite[Proposition 4.17]{lunts-smoothness}. 

Finally, \eqref{smooth-proper-morphism-smooth-proper-category} follows by combining  
\eqref{proper-morphism-proper-category} and \eqref{smooth-morphism-smooth-category}, 
and \eqref{smooth-proper-category-smooth-proper-morphism} by combining 
\eqref{proper-category-proper-morphism} and \eqref{smooth-category-smooth-morphism}.  
\end{proof}

\subsection{Behavior under base change}
\label{subsection-smooth-proper-base-change}

Smoothness and properness of linear categories are stable under base change: 
\begin{lemma} 
\label{lemma-smoothness-properness-base-change}
Let $\cC$ be an $S$-linear category, 
and let $S' \to S$ be a morphism of schemes. 
\begin{enumerate}
\item \label{properness-base-change}
If $\cC$ is proper over $S$, then $\cC \otimes_{\Perf(S)} \Perf(S')$ is 
proper over $S'$. 
\item \label{smoothness-base-change} 
If $\cC$ is smooth over $S$, then $\cC \otimes_{\Perf(S)} \Perf(S')$ is 
smooth over $S'$. 
\end{enumerate}
\end{lemma}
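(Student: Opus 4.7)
The plan is to invoke the concrete criteria of Lemma \ref{lemma-smooth-proper-criteria} and reduce each statement for $\cC' := \cC \otimes_{\Perf(S)} \Perf(S')$ to the hypothesis on $\cC$ by evaluating on the $\boxtimes$-generators from Lemma \ref{lemma-box-tensor-generation}. Let $f \colon S' \to S$ denote the structure morphism.

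For \eqref{properness-base-change}, Lemma \ref{lemma-smooth-proper-criteria}\eqref{properness-criterion} reduces the task to showing that $\cHom_{S'}(C', D') \in \Perf(S')$ for all $C', D' \in \cC'$. The class of pairs $(C', D')$ for which this holds is thick in each argument (since $\Perf(S') \subset \QCoh(S')$ is a thick subcategory and $\cHom_{S'}(-,-)$ sends cofiber sequences in either variable to fiber sequences), so by Lemma \ref{lemma-box-tensor-generation} it is enough to verify the condition on objects of the form $C \boxtimes F$ for $C \in \cC$ and $F \in \Perf(S')$. The K\"unneth formula of Lemma \ref{lemma-homs-base-change} identifies
\begin{equation*}
\cHom_{S'}(C_1 \boxtimes F_1, C_2 \boxtimes F_2) \simeq f^{*}\cHom_{S}(C_1, C_2) \otimes \cHom_{S'}(F_1, F_2),
\end{equation*}
and the hypothesis on $\cC$ places $\cHom_S(C_1, C_2)$ in $\Perf(S)$; its pullback along $f$ remains perfect, and the other factor is manifestly perfect, so perfectness of the tensor product follows.

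For \eqref{smoothness-base-change}, Lemma \ref{lemma-smooth-proper-criteria}\eqref{smoothness-criterion} reduces the claim to showing that $\id_{\Ind(\cC')}$ is a compact object of $\Fun_{\QCoh(S')}(\Ind(\cC'), \Ind(\cC'))$. Combining the base-change equivalences $\Ind(\cC') \simeq \Ind(\cC) \otimes_{\QCoh(S)} \QCoh(S')$ and $\Ind((\cC')^{\op}) \simeq \Ind(\cC^{\op}) \otimes_{\QCoh(S)} \QCoh(S')$ with Lemma \ref{lemma-dual-functor-category} applied to the dualizable $\QCoh(S)$-module $\Ind(\cC)$ (dualizable by Lemma \ref{lemma-presentable-category-dualizable}), I obtain a canonical equivalence
\begin{equation*}
\Fun_{\QCoh(S')}(\Ind(\cC'), \Ind(\cC')) \simeq \Fun_{\QCoh(S)}(\Ind(\cC), \Ind(\cC)) \otimes_{\QCoh(S)} \QCoh(S'),
\end{equation*}
under which $\id_{\Ind(\cC')}$ corresponds to $\id_{\Ind(\cC)} \boxtimes \cO_{S'}$. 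Smoothness of $\cC$ over $S$ gives compactness of $\id_{\Ind(\cC)}$, and $\cO_{S'}$ is compact in $\QCoh(S')$ because $S'$ is quasi-compact and separated; Lemma \ref{lemma-box-tensor-generation} ensures that external products of compact objects are compact, so $\id_{\Ind(\cC')}$ is compact.

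The nontrivial technical input, which I regard as the main obstacle, is the base-change compatibility $\Ind((\cC')^{\op}) \simeq \Ind(\cC^{\op}) \otimes_{\QCoh(S)} \QCoh(S')$ used in the smoothness step, together with the verification that $\id_{\Ind(\cC')}$ tracks correctly through the resulting equivalences. This reduces to the fact that $(-)^{\op}$ is a symmetric monoidal involution of $\Cat_S$ and that $\Perf(S')$ is self-dual under perfect-complex dualization as a $\Perf(S)$-linear category (both standard), but the $\infty$-categorical bookkeeping—identifying the coevaluation for $\Ind(\cC')$ as the base change of the coevaluation for $\Ind(\cC)$—is the delicate point.
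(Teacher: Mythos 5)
Your proof is correct and follows the same strategy as the paper's: part (1) is handled via Lemma~\ref{lemma-smooth-proper-criteria}\eqref{properness-criterion}, Lemma~\ref{lemma-box-tensor-generation}, and the K\"unneth formula of Lemma~\ref{lemma-homs-base-change}; part (2) is handled by establishing the equivalence $\Fun_{\QCoh(S')}(\Ind(\cC'), \Ind(\cC')) \simeq \Fun_{\QCoh(S)}(\Ind(\cC), \Ind(\cC)) \otimes_{\QCoh(S)} \QCoh(S')$ under which $\id_{\Ind(\cC')}$ corresponds to $\id_{\Ind(\cC)} \boxtimes \cO_{S'}$, and then applying Lemma~\ref{lemma-smooth-proper-criteria}\eqref{smoothness-criterion}. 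One small misattribution: the fact that $\boxtimes$ of compact objects is compact is not what Lemma~\ref{lemma-box-tensor-generation} says (that is a thick-generation statement); it instead follows directly from the construction of the tensor product in $\PrCatStcg$ as $\Ind$ of the tensor product of the small subcategories, together with the observation that $\Fun_{\QCoh(S)}(\Ind(\cC), \Ind(\cC)) \simeq \Ind(\cC^{\op}) \otimes_{\QCoh(S)} \Ind(\cC)$ is compactly generated.
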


\begin{proof}
Part~\eqref{properness-base-change} follows from Lemmas~\ref{lemma-smooth-proper-criteria}\eqref{properness-criterion}, \ref{lemma-box-tensor-generation}, and \ref{lemma-homs-base-change}. 
For part~\eqref{smoothness-base-change}, we note that if 
$\cC' = \cC \otimes_{\Perf(S)} \Perf(S')$, then there is an equivalence 
\begin{equation*} 
\Func{\QCoh(S')}{\Ind(\cC')}{\Ind(\cC')} \\
\simeq 
\Func{\QCoh(S)}{\Ind(\cC)}{\Ind(\cC)} \otimes_{\QCoh(S)} \QCoh(S'), 
\end{equation*}
under which $\id_{\Ind(\cC')}$ corresponds to $\id_{\Ind(\cC)} \boxtimes \cO_{S'}$. 
Now the result follows from the criterion of Lemma~\ref{lemma-smooth-proper-criteria}\eqref{smoothness-criterion}. 
\end{proof} 

\begin{lemma}
\label{lemma-base-change-smooth-proper}
Let $T \to S$ be a morphism of schemes. 
Let $\cC$ be a $T$-linear category which is smooth and proper over $S$. 
Let $T' \to T$ be a smooth and proper morphism. 
Then the base change 
\begin{equation*}
\cC \otimes_{\Perf(T)} \Perf(T')
\end{equation*} 
is smooth and proper over $S$. 
\end{lemma}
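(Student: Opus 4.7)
The plan is to verify smoothness and properness of $\cD := \cC \otimes_{\Perf(T)} \Perf(T')$ over $S$ separately, using the criteria of Lemma~\ref{lemma-smooth-proper-criteria}. Throughout I exploit that $\Perf(T')$ is smooth and proper (and hence dualizable in $\Cat_T$) over $T$, by Lemma~\ref{lemma-smooth-proper-morphisms}\eqref{smooth-proper-morphism-smooth-proper-category} and Lemma~\ref{lemma-smooth-proper-dualizable}; in particular, $\cHom_T(F_1, F_2) \in \Perf(T)$ for any $F_1, F_2 \in \Perf(T')$.

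For properness, by Lemma~\ref{lemma-box-tensor-generation} it suffices to verify $\cHom_S(C_1 \boxtimes F_1, C_2 \boxtimes F_2) \in \Perf(S)$ for $C_i \in \cC$ and $F_i \in \Perf(T')$. Applying the K\"unneth formula of Lemma~\ref{lemma-kunneth} to $\cD$ regarded as a $T$-linear category gives
\[
\cHom_T(C_1 \boxtimes F_1, C_2 \boxtimes F_2) \simeq \cHom_T(C_1, C_2) \otimes_{\cO_T} \cHom_T(F_1, F_2);
\]
since $\cHom_T(F_1, F_2) \in \Perf(T)$ is dualizable over $\cO_T$ and $\cC$ is $T$-linear, the right hand side is canonically equivalent to $\cHom_T(C_1, C_2 \otimes \cHom_T(F_1, F_2))$. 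Pushing forward along $T \to S$ yields
\[
\cHom_S(C_1 \boxtimes F_1, C_2 \boxtimes F_2) \simeq \cHom_S(C_1, C_2 \otimes \cHom_T(F_1, F_2)),
\]
which lies in $\Perf(S)$ by the assumed properness of $\cC$ over~$S$.

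For smoothness, I would use Lemma~\ref{lemma-smooth-proper-criteria}\eqref{smoothness-criterion} and exhibit $\id_{\Ind(\cD)}$ as a compact object of $\Fun_{\QCoh(S)}(\Ind(\cD), \Ind(\cD))$, equivalently exhibit the corresponding diagonal $\Delta_{\cD} \in \Ind(\cD) \otimes_{\QCoh(S)} \Ind(\cD^{\op})$ as compact. The key input is the symmetric monoidal rearrangement
\[
\Ind(\cD) \otimes_{\QCoh(S)} \Ind(\cD^{\op}) \simeq \bigl(\Ind(\cC) \otimes_{\QCoh(S)} \Ind(\cC^{\op})\bigr) \otimes_{\QCoh(T) \otimes_{\QCoh(S)} \QCoh(T)} \bigl(\QCoh(T') \otimes_{\QCoh(S)} \Ind(\Perf(T')^{\op})\bigr),
\]
valid by associativity and commutativity of the relative tensor product in the symmetric monoidal $\infty$-category $\PrCat_S$. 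Under this equivalence $\Delta_{\cD}$ is the image of the pair $(\Delta_{\cC}, \Delta_{T'/T})$, where $\Delta_{\cC}$ and $\Delta_{T'/T}$ are the diagonals representing $\id_{\Ind(\cC)}$ as an $S$-linear endofunctor and $\id_{\QCoh(T')}$ as a $\QCoh(T)$-linear endofunctor, respectively. Both are compact by the smoothness of $\cC$ over $S$ and of $T' \to T$ over $T$, and this compactness passes through the canonical map, yielding the smoothness of $\cD$ over~$S$.

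The main obstacle is the careful bookkeeping of the various $\QCoh(T)$- and $\QCoh(S)$-actions in the rearrangement; specifically, verifying that the image of $(\Delta_{\cC}, \Delta_{T'/T})$ truly corresponds to $\id_{\Ind(\cD)}$ under the above equivalence, and that compactness is preserved through the relative tensor product over $\QCoh(T) \otimes_{\QCoh(S)} \QCoh(T)$. Equivalently, in the language of Lemma~\ref{lemma-smooth-proper-dualizable}, this amounts to constructing duality data for $\cD$ with dual $\cC^{\op} \otimes_{\Perf(T)} \Perf(T')^{\op}$ by combining $\ev_\cC, \coev_\cC$ (over $\Perf(S)$) with $\ev_{T'}, \coev_{T'}$ (over $\Perf(T)$) via the multiplication $\Perf(T) \otimes_{\Perf(S)} \Perf(T) \to \Perf(T)$, and checking the triangle identities.
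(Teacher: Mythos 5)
Your properness argument is complete and correct: reducing to box-tensor generators via Lemma~\ref{lemma-box-tensor-generation}, computing $\cHom_T$ of box products via the K\"unneth formula taken over $T$, absorbing the dualizable $\Perf(T)$-factor $\cHom_T(F_1,F_2)$ into the second argument of $\cHom_T(C_1,-)$, and pushing forward to $S$ all work as stated, and the conclusion follows from properness of $\cC$ over $S$. This is a more hands-on route than the paper's, which instead establishes dualizability of $\cC \otimes_{\Perf(T)} \Perf(T')$ in $\Cat_S$ directly (via Lemma~\ref{lemma-smooth-proper-dualizable}), thereby handling smoothness and properness simultaneously.

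Your smoothness argument, however, is a plan rather than a proof, as you yourself acknowledge. You write down the intended rearrangement of $\Ind(\cD) \otimes_{\QCoh(S)} \Ind(\cD^{\op})$ as a relative tensor over $\QCoh(T) \otimes_{\QCoh(S)} \QCoh(T)$ and assert that the diagonal for $\cD$ corresponds to the pair of diagonals, but you flag the ``careful bookkeeping'' and never carry it out. The subtlety is real: the diagonal witnessing smoothness of $T' \to T$ is a compact object of $\QCoh(T') \otimes_{\QCoh(T)} \QCoh(T')$, not of $\QCoh(T') \otimes_{\QCoh(S)} \Ind(\Perf(T')^{\op})$ as appears in your displayed equivalence, and arguing that compactness survives the passage to the $\QCoh(T) \otimes_{\QCoh(S)} \QCoh(T)$-relative tensor product is precisely the nontrivial step. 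The paper sidesteps this by observing that $\Perf(T')$ is dualizable in $\Cat_T$ (Lemmas \ref{lemma-smooth-proper-morphisms} and \ref{lemma-smooth-proper-dualizable}), that $\cC$ is dualizable in $\Cat_S$ by hypothesis, and then invoking a general result of Gaitsgory--Rozenblyum (\cite[Chapter I.1, Corollary 9.5.4]{gaitsgory-DAG}) to conclude that the relative tensor product $\cC \otimes_{\Perf(T)} \Perf(T')$ is dualizable in $\Cat_S$. Either cite that result, or complete the proposal you make at the end of constructing the duality data for $\cD$ over $\Perf(S)$ with dual $\cC^{\op} \otimes_{\Perf(T)} \Perf(T')^{\op}$ and verify the triangle identities; as written, the smoothness half does not yet prove what it claims.
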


\begin{proof}
By Lemma~\ref{lemma-smooth-proper-morphisms} the category $\Perf(T')$ is 
smooth and proper over $T$. 
Now the result follows from Lemma~\ref{lemma-smooth-proper-dualizable} 
combined with~\cite[Chapter I.1, Corollary 9.5.4]{gaitsgory-DAG}. 
\end{proof}

\begin{remark}
Lemma~\ref{lemma-base-change-smooth-proper} is the analog of the following 
simple geometric fact: if $T$ is an $S$-scheme, $X$ is a $T$-scheme which is 
smooth and proper over $S$, and $T' \to T$ is a smooth and proper morphism, then 
the base change $X \times_{T} T'$ is smooth and proper over $S$. 
Indeed, $X \times_{T} T' \to S$ is the composition of the smooth and proper 
morphisms $X \times_{T} T' \to X$ and $X \to S$. 
\end{remark}

\subsection{Existence of adjoints}  
\label{subsection-smooth-proper-adjoints} 

\begin{lemma}
\label{lemma-smooth-proper-adjoints}
Let $\phi \colon \cC \to \cD$ be an $S$-linear functor between $S$-linear categories, where 
$\cC$ is smooth and proper over $S$ 
and $\cD$ is proper over $S$. 
Then $\phi$ admits left and right adjoints. 
\end{lemma}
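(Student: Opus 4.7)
The plan is to exploit the fact that, by Lemma~\ref{lemma-smooth-proper-dualizable}, smoothness and properness of $\cC$ means precisely that $\cC$ is dualizable as an object of the symmetric monoidal $\infty$-category $\Cat_S$ with dual $\cC^{\op}$. Taking the defining universal property of the dual and specializing the second argument to $\Perf(S)$ yields an equivalence of $S$-linear categories
\[
\cC \simeq \Func{\Perf(S)}{\cC^{\op}}{\Perf(S)},
\]
and the explicit description of the evaluation $\cC^{\op} \otimes_{\Perf(S)} \cC \to \Perf(S)$ given just after~\eqref{ev-Ind-C} shows that this equivalence sends $C \in \cC$ to the functor $\cHom_S(-, C) \colon \cC^{\op} \to \Perf(S)$; the target indeed lands in $\Perf(S)$ thanks to properness of $\cC$ and the criterion of Lemma~\ref{lemma-smooth-proper-criteria}.

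I will construct the right adjoint $\phi^!$ first. By properness of $\cD$, the mapping-object bifunctor $\cHom_S(-,-) \colon \cD^{\op} \times \cD \to \QCoh(S)$ takes values in $\Perf(S)$. Precomposing the first variable with $\phi$ produces an $S$-bilinear functor $\cC^{\op} \times \cD \to \Perf(S)$, which by the universal property of the tensor product in $\Cat_S$, followed by currying in $\cD$, corresponds to an $S$-linear functor
\[
\cD \to \Func{\Perf(S)}{\cC^{\op}}{\Perf(S)}.
\]
Composing with the equivalence of the previous paragraph defines the candidate $S$-linear functor $\phi^! \colon \cD \to \cC$. By construction, for every $C \in \cC$ and $D \in \cD$ there is a natural equivalence $\cHom_S(C, \phi^!(D)) \simeq \cHom_S(\phi(C), D)$ in $\Perf(S)$, which is the data of a right adjoint to $\phi$.

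The left adjoint $\phi^*$ is produced symmetrically, using instead the companion equivalence $\cC^{\op} \simeq \Func{\Perf(S)}{\cC}{\Perf(S)}$ sending $C$ to $\cHom_S(C,-)$, applied to the functor $\cC \to \Func{\Perf(S)}{\cD}{\Perf(S)}$ given by $D \mapsto \cHom_S(D, \phi(-))$. The main obstacle I expect is the bookkeeping required to promote the natural equivalence of mapping objects above into a genuine $\infty$-categorical adjunction, i.e. to produce the unit and counit and verify the triangle identities rather than merely identifying them objectwise. This is formal: the coevaluation $\coev \colon \Perf(S) \to \cC \otimes_{\Perf(S)} \cC^{\op}$ from~\eqref{coev-Ind-C} corresponds under the duality to $\id_{\Ind(\cC)}$, and applying $\phi \otimes \id$ and $\id \otimes \phi^!$ to $\coev$ produces the unit $\id_{\cC} \to \phi^! \phi$ and counit $\phi \phi^! \to \id_{\cD}$, whose triangle identities reduce to the triangle identities for the duality datum of $\cC$. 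No input is required beyond Lemmas~\ref{lemma-smooth-proper-dualizable} and~\ref{lemma-smooth-proper-criteria}.
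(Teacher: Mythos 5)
Your construction of $\phi^!$ (and, symmetrically, of $\phi^*$) is correct and is essentially the paper's proof: properness of $\cD$ supplies the $\Perf(S)$-valued bifunctor $\cHom_S(\phi(-),-)\colon\cC^{\op}\times\cD\to\Perf(S)$, and dualizability of $\cC$ converts it into an $S$-linear functor $\cD\to\cC$. Your two-step phrasing---first currying to $\cD\to\Func{\Perf(S)}{\cC^{\op}}{\Perf(S)}$, then composing with the equivalence $\Func{\Perf(S)}{\cC^{\op}}{\Perf(S)}\simeq\cC$---is just an unwound version of the paper's single application of the functor-category equivalence furnished by dualizability.

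One correction to your closing paragraph: applying $\phi\otimes\id$ to $\coev\colon\Perf(S)\to\cC\otimes_{\Perf(S)}\cC^{\op}$ produces a map $\Perf(S)\to\cD\otimes_{\Perf(S)}\cC^{\op}$, and under the identification of the target with $\Func{\Perf(S)}{\cC}{\cD}$ this classifies the functor $\phi$ itself, not a unit; likewise $\id\otimes\phi^!$ does not even typecheck against the codomain of $\coev$. The promotion of the natural equivalence $\cHom_S(\phi(-),-)\simeq\cHom_S(-,\phi^!(-))$ to an honest $\infty$-categorical adjunction is instead the standard representability criterion for adjoints (applied after passing to mapping spaces), which the paper also invokes implicitly. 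This is a harmless slip in your heuristic and does not affect the validity of the construction.
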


\begin{proof}
Since $\cD$ is proper, we have a Yoneda functor 
\begin{equation}
\label{Hom-phi-adjoint}
\cHom_S(\phi(-), -) \colon \cC^{\op} \times \cD \to \Perf(S) . 
\end{equation}
This induces an $S$-linear functor 
\begin{equation*}
\cC^{\op} \otimes_{\Perf(S)} \cD \to \Perf(S). 
\end{equation*}
Since $\cC$ is dualizable, there is an equivalence  
\begin{equation*}
\Func{\Perf(S)}{\cC^{\op} \otimes_{\Perf(S)} \cD}{\Perf(S)} \simeq 
\Func{\Perf(S)}{\cD}{\cC} .
\end{equation*}
Under this equivalence, the above functor corresponds to a functor 
$\phi^! \colon \cD \to \cC$, which is the right adjoint to $\phi$; namely, 
by construction we have an equivalence of functors
\begin{equation*}
\cHom_S(\phi(-), -) \simeq \cHom_S(- , \phi^!(-)) \colon \cC^{\op} \times \cD \to \Perf(S) . 
\end{equation*}
A similar argument shows the existence of a left adjoint to $\phi$. 
\end{proof}

\subsection{Behavior under semiorthogonal decompositions} 
\label{subsection-smooth-proper-sod} 
We will need the following technical result below. 
\begin{lemma}
\label{lemma-functor-category-sod-cpt}
Let $\cC$ and $\cD$ be $S$-linear categories. 
\begin{enumerate}
\item \label{functor-sod-cpt-1}
Let $\cC = \llangle \cA_1, \dots, \cA_n \rrangle$ be an 
$S$-linear semiorthogonal decomposition whose components are admissible. 
Then the embedding and projection functors for the components of the semiorthogonal decomposition
\begin{equation*}
\Func{\QCoh(S)}{\Ind(\cC)}{\Ind(\cD)} = 
\llangle \Func{\QCoh(S)}{\Ind(\cA_1)}{\Ind(\cD)}, \dots, \Func{\QCoh(S)}{\Ind(\cA_n)}{\Ind(\cD)} \rrangle
\end{equation*}
induced by Lemmas~\ref{lemma-sod-small-to-large} and~\ref{lemma-functor-category-sod} 
all preserve compact objects. 

\item \label{functor-sod-cpt-2}
Let $\cD = \llangle \cB_1, \dots, \cB_n \rrangle$ be an 
$S$-linear semiorthogonal decomposition. 
Then the embedding and projection functors for the components of the semiorthogonal decomposition
\begin{equation*}
\Func{\QCoh(S)}{\Ind(\cC)}{\Ind(\cD)} = 
\llangle \Func{\QCoh(S)}{\Ind(\cC)}{\Ind(\cB_1)}, \dots, \Func{\QCoh(S)}{\Ind(\cC)}{\Ind(\cB_n)} \rrangle
\end{equation*}
induced by Lemmas~\ref{lemma-sod-small-to-large} and~\ref{lemma-functor-category-sod} 
all preserve compact objects. 
\end{enumerate}
\end{lemma}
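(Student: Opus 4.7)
The plan is to reduce both parts to criteria for when pre- and postcomposition functors between functor categories preserve compact objects. By dualizability of $\Ind(\cC)$ (Lemmas~\ref{lemma-presentable-category-dualizable} and~\ref{lemma-dual-functor-category}), $\Fun_{\QCoh(S)}(\Ind(\cC), \Ind(\cD)) \simeq \Ind(\cC^{\op}) \otimes_{\QCoh(S)} \Ind(\cD) \simeq \Ind(\cC^{\op} \otimes_{\Perf(S)} \cD)$, so the functor category is compactly generated; in any such presentable $S$-linear category, a cocontinuous functor preserves compact objects iff its right adjoint is cocontinuous. Since colimits in functor categories are pointwise, pre- and postcomposition are automatically cocontinuous. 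The right adjoint of $\psi \circ (-)$ is $\psi^R \circ (-)$, so postcomposition with $\psi$ preserves compact objects iff $\psi$ itself does. For precomposition $(-) \circ \phi$, the right adjoint is $\mathrm{Ran}_\phi$; when $\phi$ admits a left adjoint $\phi^L$, a standard cofinality argument (the comma category $Y \downarrow \phi$ has initial object $\phi^L(Y)$) identifies $\mathrm{Ran}_\phi \simeq (-) \circ \phi^L$, which is cocontinuous, and therefore precomposition with $\phi$ preserves compact objects whenever $\phi$ has a left adjoint.

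Part~\eqref{functor-sod-cpt-2} is then immediate: the embedding and projection functors are the postcompositions $\Ind(\beta_i) \circ (-)$ and $\Ind(\pr_i) \circ (-)$. By Lemma~\ref{lemma-sod-small-to-large}, all embeddings $\Ind(\beta_i)$ in the induced sod of $\Ind(\cD)$ preserve compact objects, and Lemma~\ref{lemma-sod-large-to-small} then forces the projections $\Ind(\pr_i)$ to do so as well; the postcomposition criterion applies.

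For part~\eqref{functor-sod-cpt-1}, the embedding and projection are the precompositions $(-) \circ \Ind(\pr_i)$ and $(-) \circ \Ind(\alpha_i)$, and the precomposition criterion reduces the problem to exhibiting left adjoints of $\pr_i$ and $\alpha_i$ in $\Cat_S$ (which then Ind-extend to left adjoints in $\PrCatStcg$). Admissibility of $\cA_i$ provides $\alpha_i^*$ directly. For $\pr_i$, set $\cC_{\leq i} = \llangle \cA_1, \dots, \cA_i \rrangle$ and $\cC_{>i} = \llangle \cA_{i+1}, \dots, \cA_n \rrangle$. Since $\pr_i$ vanishes on $\cC_{>i}$ by semiorthogonality, the truncation triangle $\tau_{>i}(C) \to C \to \tau_{\leq i}(C)$ yields a factorization $\pr_i \simeq \rho_i \circ \tau_{\leq i}$, where $\tau_{\leq i} \colon \cC \to \cC_{\leq i}$ is the truncation and $\rho_i \colon \cC_{\leq i} \to \cA_i$ is the projection in the sod $\cC_{\leq i} = \llangle \cA_1, \dots, \cA_i \rrangle$, which by Lemma~\ref{lemma-sod} is the right adjoint of the inclusion $\cA_i \hookrightarrow \cC_{\leq i}$. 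A left adjoint of $\rho_i$ is this inclusion itself, while a left adjoint of $\tau_{\leq i}$ exists provided $\cC_{>i} = {}^{\perp}\cC_{\leq i}$ is admissible in $\cC$; admissibility of all $\cA_j$ supplies this via Lemma~\ref{lemma-admissible-sequence}, and Lemma~\ref{lemma-adjoint-adjoints}\eqref{left-left-adjoint} gives the explicit formula $\tau_{\leq i}^* \simeq \rR_{\cC_{>i}} \circ \iota_{\leq i}$ (with $\iota_{\leq i}$ the inclusion $\cC_{\leq i} \hookrightarrow \cC$), whence $\pr_i^* \simeq \rR_{\cC_{>i}} \circ \alpha_i \colon \cA_i \to \cC$. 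The main obstacle is precisely this last construction: unlike $\alpha_i^*$, a left adjoint of $\pr_i$ is not furnished directly by admissibility of $\cA_i$ alone, and one genuinely needs admissibility of every component $\cA_j$ so that Lemma~\ref{lemma-admissible-sequence} can promote $\cC_{>i}$ to an admissible subcategory of $\cC$.
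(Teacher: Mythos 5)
Your proof is correct, though it takes a noticeably more abstract route than the paper's. Both arguments hinge on the same key fact: admissibility of every $\cA_j$ supplies a left adjoint for $\pr_i$, namely $\pr_i^* \simeq \rR_{\cC_{>i}} \circ \alpha_i$ via Lemma~\ref{lemma-adjoint-adjoints}\eqref{left-left-adjoint} (the paper works this out only in the $n=2$ case, where it reads $\rR_{\cA_2} \circ \alpha_1$). Where the proofs diverge is in how this adjoint is exploited. The paper transports the embedding functor $F \mapsto F \circ \widehat{\pr}_1$ through the explicit K\"{u}nneth-type identification $\Fun_{\QCoh(S)}(\Ind(\cC), \Ind(\cD)) \simeq \Ind(\cC^{\op}) \otimes_{\QCoh(S)} \Ind(\cD)$ and recognizes it as $\Ind$ of a functor between small categories, hence automatically compact-preserving; it then bootstraps the projections via Lemma~\ref{lemma-sod-large-to-small}. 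You instead argue directly with the criterion that a cocontinuous functor preserves compact objects provided its right adjoint is cocontinuous, computing the right adjoint of precomposition with $\phi$ as the right Kan extension $\mathrm{Ran}_\phi \simeq (-) \circ \phi^L$ whenever $\phi$ admits a left adjoint. Your approach is cleaner in that it makes the role of the left adjoints transparent and handles embeddings and projections uniformly, and it would generalize to situations where the tensor-product description of the functor category is less convenient; the paper's approach is more self-contained within its established tensor-product formalism and avoids the Kan-extension formula entirely.

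One small imprecision worth flagging: your statement that ``the right adjoint of $\psi \circ (-)$ is $\psi^R \circ (-)$, so postcomposition with $\psi$ preserves compact objects iff $\psi$ itself does'' is not quite right as a biconditional, because $\psi^R \circ (-)$ is only a well-defined endofunctor of the cocontinuous functor category when $\psi^R$ is itself cocontinuous; when it is not, the right adjoint to $\psi \circ (-)$ in $\Fun^{\mathrm{L}}$ is not simply postcomposition with $\psi^R$. However, the implication you actually use --- $\psi$ preserves compacts $\Rightarrow$ $\psi^R$ cocontinuous (using compact generation of the source) $\Rightarrow$ $\psi^R \circ (-)$ is a cocontinuous right adjoint $\Rightarrow$ $\psi \circ (-)$ preserves compacts --- is sound, so this does not affect the validity of the proof, only the accuracy of the ``iff.'' Similarly, the opening observation about dualizability and compact generation of the functor category is not strictly needed for the direction of the compactness criterion you invoke, though it does no harm.
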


\begin{proof}
We prove~\eqref{functor-sod-cpt-1}. We may assume $n=2$. 
Let $\widehat{\pr}_i \colon \Ind(\cC) \to \Ind(\cA_i)$ be the functor 
induced by $\pr_i \colon \cC \to \cA_i$. 
Then by Lemma~\ref{lemma-sod-large-to-small}, all we need to show is 
that the embedding functor 
\begin{equation}
\label{functor-cpt-1}
\begin{aligned}
\Func{\QCoh(S)}{\Ind(\cA_1)}{\Ind(\cD)} & \to \Func{\QCoh(S)}{\Ind(\cC)}{\Ind(\cD)}  \\
F & \mapsto F \circ \widehat{\pr}_1 
\end{aligned}
\end{equation}
preserves compact objects. 
Note that we have an equivalence 
\begin{equation*}
\Ind(\cA_1^{\op}) \otimes_{\QCoh(S)} \Ind(\cD) \simeq \Func{\QCoh(S)}{\Ind(\cA_1)}{\Ind(\cD)} 
\end{equation*}
induced by the functor 
\begin{equation*}
\begin{aligned}
\cA_1^{\op} \times \cD & \to \Func{\QCoh(S)}{\Ind(\cA_1)}{\Ind(\cD)} \\
(C,D) & \mapsto \cHom_S(C,-) \otimes D, 
\end{aligned}
\end{equation*}
and similarly an equivalence 
\begin{equation*}
\Ind(\cC^{\op}) \otimes_{\QCoh(S)} \Ind(\cD) \simeq \Func{\QCoh(S)}{\Ind(\cC)}{\Ind(\cD)} 
\end{equation*}
induced by the functor 
\begin{equation*}
\begin{aligned}
\cC^{\op} \times \cD & \to \Func{\QCoh(S)}{\Ind(\cC)}{\Ind(\cD)} \\
(C,D) & \mapsto \cHom_S(C,-) \otimes D .
\end{aligned}
\end{equation*}
By Lemma~\ref{lemma-adjoint-adjoints} the functor $\pr_1 = \alpha_1^* \colon \cC \to \cA_1$ 
admits a left adjoint, namely $\rR_{{\cA_2}} \circ \alpha_1 \colon \cA_1 \to \cC$. 
It follows that under the above equivalences, the functor~\eqref{functor-cpt-1} 
is identified with the functor 
\begin{equation*}
\Ind(\cA_1^{\op}) \otimes_{\QCoh(S)} \Ind(\cD) \to \Ind(\cC^{\op}) \otimes_{\QCoh(S)} \Ind(\cD) 
\end{equation*}
given by $\Ind$ of the functor 
\begin{equation*}
\cA_1^{\op} \otimes_{\Perf(S)} \cD \to \cC^{\op} \otimes_{\Perf(S)} \cD 
\end{equation*}
induced by 
\begin{equation*}
(\rR_{{\cA_2}} \circ \alpha_1) \times \id_{\cD} \colon \cA_1^{\op} \times \cD \to \cC^{\op} \times \cD . 
\end{equation*}
Hence the functor~\eqref{functor-cpt-1} preserves compact objects. 

The proof of~\eqref{functor-sod-cpt-2} is similar. Let $\widehat{\beta}_i \colon \Ind(\cB_i) \to \Ind(\cD)$ 
be the functor induced by $\beta_i \colon \cB_i \to \cD$. 
Then the functor 
 \begin{equation}
\label{functor-cpt-2}
\begin{aligned}
\Func{\QCoh(S)}{\Ind(\cC)}{\Ind(\cB_1)} & \to \Func{\QCoh(S)}{\Ind(\cC)}{\Ind(\cD)}  \\
F & \mapsto \widehat{\beta}_1 \circ F  
\end{aligned}
\end{equation}
is identified with $\Ind$ of the functor 
\begin{equation*}
\cC^{\op} \otimes_{\Perf(S)} \cB_1 \to \cC^{\op} \otimes_{\Perf(S)} \cD 
\end{equation*}
induced by 
\begin{equation*}
\id_{\cC^{\op}} \times \beta_1 \colon \cC^{\op} \times \cB_1 \to \cC^{\op} \times \cD. 
\end{equation*}
Now we conclude by the same argument as above. Note that unlike~\eqref{functor-sod-cpt-1}, 
we did not need the components $\cB_i \subset \cD$ to be admissible. 
\end{proof}

\begin{lemma}
\label{lemma-smooth-proper-sod}
Let $\cC$ be an $S$-linear category with an $S$-linear semiorthogonal decomposition 
\begin{equation*}
\cC = \llangle \cA_1, \dots, \cA_n \rrangle. 
\end{equation*}
\begin{enumerate}
\item \label{proper-sod}
If $\cC$ is proper over $S$, then $\cA_i$ is proper over $S$ for all $i$. 
\item \label{smoothness-sod} 
If $\cC$ is smooth over $S$, then $\cA_1$ is smooth over $S$. 
\item \label{smoothness-sod-admissible} 
If all the components $\cA_i \subset \cC$ are admissible, 
then $\cC$ is smooth over $S$ if and only if $\cA_i$ is smooth over $S$ for all $i$. 
\item \label{smooth-proper-sod} 
If $\cC$ is smooth and proper over $S$, then $\cA_i \subset \cC$ is admissible and 
$\cA_i$ is smooth and proper over $S$ for all $i$. 
\end{enumerate}
\end{lemma}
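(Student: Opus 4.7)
For part~\eqref{proper-sod}, properness of each $\cA_i$ is immediate from the criterion of Lemma~\ref{lemma-smooth-proper-criteria}\eqref{properness-criterion}: since $\cA_i \subset \cC$ is a full $S$-linear subcategory, the mapping objects $\cHom_S(C, D)$ for $C, D \in \cA_i$ agree with those in $\cC$ and therefore lie in $\Perf(S)$.

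For part~\eqref{smoothness-sod}, I will verify the criterion of Lemma~\ref{lemma-smooth-proper-criteria}\eqref{smoothness-criterion} for $\cA_1$. Write $\halpha_1 = \Ind(\alpha_1)$ and $\hpr_1 = \Ind(\pr_1)$; since $\cA_1$ is the leftmost component, $\hpr_1 \dashv \halpha_1$ by Lemma~\ref{lemma-sod}. The plan is to show that the cocontinuous functor
\begin{equation*}
H \colon \Fun_{\QCoh(S)}(\Ind(\cC), \Ind(\cC)) \to \Fun_{\QCoh(S)}(\Ind(\cA_1), \Ind(\cA_1)), \qquad F \mapsto \hpr_1 \circ F \circ \halpha_1,
\end{equation*}
preserves compact objects; since $H(\id_{\Ind(\cC)}) = \id_{\Ind(\cA_1)}$, this will give smoothness of $\cA_1$. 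By Lemma~\ref{lemma-presentable-category-dualizable}, the source is compactly generated by objects of the form $\cHom_S(C_1, -) \otimes C_2$ with $C_1, C_2 \in \cC$, and a direct calculation using $S$-linearity of $\hpr_1$ together with the adjunction $\hpr_1 \dashv \halpha_1$ yields $H(\cHom_S(C_1, -) \otimes C_2) = \cHom_S(\hpr_1(C_1), -) \otimes \hpr_1(C_2)$, which is compact. The forward implication of part~\eqref{smoothness-sod-admissible} follows from the same argument applied at each index $i$, the necessary adjunction $\halpha_i^* \dashv \halpha_i$ being provided by the admissibility hypothesis.

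For the converse of part~\eqref{smoothness-sod-admissible}, I will use the truncation filtration of Lemma~\ref{lemma-sod} to express $\id_{\Ind(\cC)}$ as an iterated extension of the functors $\halpha_i \circ \hpr_i$, reducing compactness of the identity to that of each piece. Lemma~\ref{lemma-functor-category-sod}\eqref{functor-category-sod-2} together with compact-preservation of embeddings from Lemma~\ref{lemma-functor-category-sod-cpt}\eqref{functor-sod-cpt-2} reduces this in turn to showing $\hpr_i$ is compact in $\Fun_{\QCoh(S)}(\Ind(\cC), \Ind(\cA_i))$; the semiorthogonal decomposition of Lemma~\ref{lemma-functor-category-sod}\eqref{functor-category-sod-1} together with Lemma~\ref{lemma-functor-category-sod-cpt}\eqref{functor-sod-cpt-1} (whose admissibility hypothesis is exactly ours) reduces this further to compactness of each $\hpr_i \circ \halpha_j$. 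The crucial observation, extracted from the adjoint descriptions in Lemma~\ref{lemma-sod}, is that $\pr_i \circ \alpha_j$ vanishes for $i \neq j$ by semiorthogonality and equals $\id_{\cA_i}$ for $i = j$; both are compact under the smoothness hypothesis on each $\cA_j$. This is the most delicate step, as it requires careful bookkeeping through two nested semiorthogonal decompositions on functor categories.

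For part~\eqref{smooth-proper-sod}, I will induct on $n$, the base case $n = 1$ being trivial. By Lemma~\ref{lemma-smooth-proper-dualizable}, $\cC$ is dualizable in $\Cat_S$. Writing $\cC = \llangle \cA_1, \cC' \rrangle$ with $\cC' = \llangle \cA_2, \dots, \cA_n \rrangle$, Lemma~\ref{lemma-sod} exhibits both $\cA_1$ and $\cC'$ as retracts of $\cC$ in $\Cat_S$, via the adjoint pairs $(\alpha_1, \alpha_1^*)$ and $(\beta, \beta^!)$ respectively, where $\beta \colon \cC' \to \cC$ is the inclusion. Since retracts of dualizable objects in a symmetric monoidal $\infty$-category are dualizable, both $\cA_1$ and $\cC'$ are smooth and proper. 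Applying the inductive hypothesis to the length $n-1$ semiorthogonal decomposition of $\cC'$ gives that each $\cA_i$ with $i \geq 2$ is smooth and proper. Finally, Lemma~\ref{lemma-smooth-proper-adjoints} applied to $\alpha_i \colon \cA_i \to \cC$ --- whose source is now known to be smooth and proper and whose target is proper --- furnishes both adjoints, establishing admissibility of $\cA_i \subset \cC$.
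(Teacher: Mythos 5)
Your proof is correct. Parts (1)--(3) follow essentially the same route as the paper, which (after reducing to $n=2$) applies Lemma~\ref{lemma-functor-category-sod} and Lemma~\ref{lemma-functor-category-sod-cpt} to both variables at once, producing a four-piece semiorthogonal decomposition of $\Fun_{\QCoh(S)}(\Ind(\cC),\Ind(\cC))$ and the exact triangle $\halpha_2\hpr_2 \to \id_{\Ind(\cC)} \to \halpha_1\hpr_1$; your nested two-fold decompositions in (3) unpack this same calculation, and your direct verification on the generators $\cHom_S(C_1,-)\otimes C_2$ in (2) is an expansion of the paper's citation of Lemma~\ref{lemma-sod-large-to-small}. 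Part (4) is where you genuinely diverge. The paper bootstraps from parts (1) and (2): it concludes $\cA_1$ is smooth and proper, uses Lemma~\ref{lemma-smooth-proper-adjoints} to get admissibility of $\cA_1\subset\cC$, and then applies Lemma~\ref{lemma-mutation-functors} to identify $\cA_2\simeq\cA_1^\perp$ and rerun the argument on $\cC = \llangle \cA_1^\perp, \cA_1\rrangle$. Your retract-plus-induction route bypasses the mutation step and makes (4) logically independent of (1)--(2), which is a mild gain, but it silently invokes the fact that a retract of a dualizable object in an idempotent-complete symmetric monoidal $\infty$-category is dualizable; this is true (one splits the idempotent $(\alpha_1\alpha_1^*)^{\svee}$ acting on $\rD_S(\cC)$, using that $\Cat_S$, being presentable, is idempotent complete), but the paper neither proves nor cites it, so you should make the dependence explicit. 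A minor caution in (3): at indices $i\ge 2$ the projection $\hpr_i$ is neither left nor right adjoint to $\halpha_i$, so the retraction compatible with your generator computation must be $F\mapsto\halpha_i^*\circ F\circ\halpha_i$ rather than $F\mapsto\hpr_i\circ F\circ\halpha_i$; you do flag the need for $\halpha_i^*$, so this is a matter of exposition rather than a gap.
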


\begin{proof}
Part~\eqref{proper-sod} is immediate from the definitions. 
Parts~\eqref{smoothness-sod}-\eqref{smooth-proper-sod} reduce to the case $n=2$, 
so we assume this for the rest of the proof. 

Let $\halpha_i \colon \Ind(\cA_i) \to \Ind(\cC)$ be the functor induced by 
the embedding $\alpha_i \colon \cA_i \to \cC$, and let $\hpr_i \colon \Ind(\cC) \to \Ind(\cA_i)$ 
be the functor induced by the projection $\pr_i \colon \cC \to \cA_i$. 
Then by Lemmas~\ref{lemma-sod-small-to-large} and~\ref{lemma-functor-category-sod}  
there is a semiorthogonal decomposition 
\begin{align*}
\Fun_{\QCoh(S)}(\Ind(\cC), \Ind(\cC)) = \langle & \Fun_{\QCoh(S)}(\Ind(\cA_1), \Ind(\cA_1)), \Fun_{\QCoh(S)}(\Ind(\cA_1), \Ind(\cA_2)), \\ 
& \Fun_{\QCoh(S)}(\Ind(\cA_2), \Ind(\cA_1)), \Fun_{\QCoh(S)}(\Ind(\cA_2), \Ind(\cA_2 )) \rangle 
\end{align*}
with embedding functors 
\begin{equation*}
\begin{aligned}
f_{ij} \colon \Func{\QCoh(S)}{\Ind(\cA_i)}{\Ind(\cA_j)} & \to \Func{\QCoh(S)}{\Ind(\cC)}{\Ind(\cC)} \\
 F & \mapsto \halpha_j \circ F \circ \hpr_i
\end{aligned}
\end{equation*}
and projection functors
\begin{equation*}
\begin{aligned}
p_{ij} \colon \Func{\QCoh(S)}{\Ind(\cC)}{\Ind(\cC)} & \to \Func{\QCoh(S)}{\Ind(\cA_i)}{\Ind(\cA_j)}  \\
G & \mapsto \hpr_j \circ G \circ \halpha_i .
\end{aligned}
\end{equation*}
Observe that $p_{11}(\id_{\Ind(\cC)}) \simeq \id_{\Ind(\cA_1)}$, $p_{22}(\id_{\Ind(\cC)}) \simeq \id_{\Ind(\cA_2)}$, 
and there is an exact triangle 
\begin{equation*}
f_{22}(\id_{\Ind(\cA_2)}) \to \id_{\Ind(\cC)} \to f_{11}(\id_{\Ind(\cA_1)}) . 
\end{equation*}

By Lemma~\ref{lemma-sod-large-to-small}
the projection $p_{11}$ preserves compact objects. 
Hence if 
\begin{equation*}
\id_{\Ind(\cC)} \in \Func{\QCoh(S)}{\Ind(\cC)}{\Ind(\cC)}
\end{equation*}
is compact then so is
\begin{equation*}
\id_{\Ind(\cA_1)} \in \Func{\QCoh(S)}{\Ind(\cA_1)}{\Ind(\cA_1)}. 
\end{equation*} 
By Lemma~\ref{lemma-smooth-proper-criteria}\eqref{smoothness-criterion} this proves~\eqref{smoothness-sod}.  

If all $\cA_i \subset \cC$ are admissible, then by Lemma~\ref{lemma-functor-category-sod-cpt} 
all of the embedding and projection functors $f_{ij}, p_{ij}$ preserve compact objects. 
Hence by the observation above, $\id_{\Ind(\cC)}$ is a compact object if and only 
if $\id_{\Ind(\cA_1)}$ and $\id_{\Ind(\cA_2)}$ are compact objects.  
By Lemma~\ref{lemma-smooth-proper-criteria}\eqref{smoothness-criterion} 
this proves~\eqref{smoothness-sod-admissible}. 

Finally, assume $\cC$ is smooth and proper over $S$. 
Then by parts~\eqref{proper-sod} and~\eqref{smoothness-sod} the category 
$\cA_1$ is smooth and proper over $S$. 
By Lemma~\ref{lemma-smooth-proper-adjoints} it follows that $\cA_1 \subset \cC$ 
is admissible. 
Hence by Lemma~\ref{lemma-admissible-sod} there is a decomposition $\cC = \llangle \cA_1^{\perp}, \cA_1 \rrangle$, 
and by Lemma~\ref{lemma-mutation-functors} there is an equivalence $\cA_2 \simeq \cA_1^{\perp}$. 
So we can apply the above argument to conclude $\cA_2$ is also smooth and proper over $S$. 
This proves~\eqref{smooth-proper-sod}. 
\end{proof}

\subsection{Serre functors} 
\label{subsection-serre-functor}

If $\cC$ is a proper $S$-linear category, then by Lemma~\ref{lemma-smooth-proper-criteria} 
the mapping object $\cHom_S(C,D)$ lies in $\Perf(S)$ for every $C,D \in \cC$. 
In this situation, we have functors 
\begin{align*}
\cHom_{S} \colon & \cC^\op \times \cC \to  \Perf(S)  &  
\cHom_{S}^\op \colon & \cC^\op \times \cC \to \Perf(S)^\op   \\
& (X, Y)  \mapsto \cHom_{S}(X,Y) &  & (X,Y) \mapsto  \cHom_{S}(Y,X), 
\end{align*}
where $(-)^\op$ denotes the opposite category. 
Note that dualization gives an equivalence 
\begin{equation*}
(-)^{\svee} \colon \Perf(S)^{\op} \to \Perf(S). 
\end{equation*}

\begin{definition}
\label{definition-serre-functor}
Let $\cC$ be a proper $S$-linear category. 
A \emph{relative Serre functor} for $\cC$ over $S$ is an $S$-linear autoequivalence $\rS_{\cC/S} \colon \cC \xrightarrow{\sim} \cC$ 
such that there is a commutative diagram 
\begin{equation*}
\xymatrix{
\cC^{\op} \times \cC \ar[rr]^{\id \times \rS_{\cC/S}} \ar[d]_{\cHom_{S}^\op} && \cC^\op \times \cC \ar[d]^{\cHom_{S}} \\  
\Perf(S)^\op \ar[rr]^{(-)^{\svee}} &&  \Perf(S)
}
\end{equation*}
\end{definition} 

In other words, 
a relative Serre functor for $\cC$ is an $S$-linear autoequivalence characterized by the existence of natural equivalences
\begin{equation*}
 \cHom_{S}(C, \rS_{\cC/S}(D)) \simeq \cHom_{S}(D, C)^{\svee}
\end{equation*}
for all objects $C, D \in \cC$. 

\begin{remark}
Let $\pi \colon X \to S$ be a proper morphism of schemes such that $\pi_*$ preserves 
perfect complexes, and such that a relative dualizing complex $\omega_{\pi}$ 
exists and is a shift of a line bundle. 
Then by Grothendieck duality $\rS_{X/S} = (- \otimes \omega_{\pi})$ is a relative Serre functor for 
$\Perf(X)$ over $S$. 
\end{remark}

The following useful result follows easily from the definitions, see \cite[Proposition 3.6]{bondal-kapranov}. 

\begin{lemma}
\label{lemma-serre-functor-image}
Let $\cC$ be a proper $S$-linear category that admits a relative Serre functor $\rS_{\cC/S}$. 
\begin{enumerate}
\item If $\cA \subset \cC$ is a right admissible subcategory, then $\rS_{\cC/S}(\cA) = \cA^{\perp \perp}$. 
\item If $\cA \subset \cC$ is a left admissible subcategory, then $\rS^{-1}_{\cC/S}(\cA) = {^{\perp\perp}} \cA$. 
\end{enumerate}
\end{lemma}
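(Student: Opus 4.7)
\medskip

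\noindent\textbf{Proof plan.} I will prove part (1); part (2) is entirely symmetric and I will indicate the change at the end.

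First I recall the needed structural fact. Since $\cA \subset \cC$ is right admissible, Lemma~\ref{lemma-admissible-sod} gives a semiorthogonal decomposition $\cC = \llangle \cA^{\perp}, \cA \rrangle$. From this decomposition one checks directly that
\begin{equation*}
\cA = {}^{\perp}(\cA^{\perp}) \subset \cC,
\end{equation*}
by writing any $C \in {}^{\perp}(\cA^{\perp})$ via the canonical triangle $B \to C \to A$ with $B \in \cA^{\perp}$ and $A \in \cA$, and using the left adjoint to the inclusion $\cA^{\perp} \to \cC$ (which exists by Lemma~\ref{lemma-admissible-sod}) to see that $\cHom_S(B,B) \simeq \cHom_S(C,B) \simeq 0$, forcing $B \simeq 0$.

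Now I carry out the two inclusions defining $\rS_{\cC/S}(\cA) = \cA^{\perp\perp}$ by chasing the Serre duality equivalence. For the inclusion $\rS_{\cC/S}(\cA) \subset \cA^{\perp\perp}$, I take $A \in \cA$ and $B \in \cA^{\perp}$ and compute
\begin{equation*}
\cHom_S(B, \rS_{\cC/S}(A)) \simeq \cHom_S(A,B)^{\svee} \simeq 0,
\end{equation*}
using Definition~\ref{definition-serre-functor} and the fact that $\cHom_S(A,B) \simeq 0$ by definition of $\cA^{\perp}$. Since $\cA^{\perp\perp}$ is by definition the right orthogonal of $\cA^{\perp}$, this shows $\rS_{\cC/S}(A) \in \cA^{\perp\perp}$.

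For the reverse inclusion $\cA^{\perp\perp} \subset \rS_{\cC/S}(\cA)$, I take $C \in \cA^{\perp\perp}$ and show $\rS_{\cC/S}^{-1}(C) \in \cA$; by the identification $\cA = {}^{\perp}(\cA^{\perp})$ above, it suffices to check that $\cHom_S(\rS_{\cC/S}^{-1}(C), B) \simeq 0$ for every $B \in \cA^{\perp}$. Reading the defining equivalence of Definition~\ref{definition-serre-functor} with $X = B$ and $Y = \rS_{\cC/S}^{-1}(C)$ gives
\begin{equation*}
\cHom_S(\rS_{\cC/S}^{-1}(C), B) \simeq \cHom_S(B, C)^{\svee},
\end{equation*}
which vanishes because $B \in \cA^{\perp}$ and $C \in \cA^{\perp\perp}$, and because $(-)^{\svee}$ is a (anti-)equivalence on $\Perf(S)$ so reflects zero objects. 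This completes (1). For (2), the same argument with $\rS_{\cC/S}$ replaced by $\rS_{\cC/S}^{-1}$ and the roles of left and right orthogonals swapped yields $\rS_{\cC/S}^{-1}(\cA) = {}^{\perp\perp}\cA$.

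There is no substantive obstacle: the whole argument is a formal manipulation of the Serre duality equivalence, once one has observed the identification $\cA = {}^{\perp}(\cA^{\perp})$ coming from the semiorthogonal decomposition supplied by right admissibility. The mildest point to be careful about is ensuring $(-)^{\svee}$ reflects vanishing on $\Perf(S)$, which holds because it is an anti-equivalence on perfect complexes.
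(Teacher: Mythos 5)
Your proof is correct, and fills in details the paper deliberately omits: the paper gives no argument for this lemma, only the pointer to \cite[Proposition~3.6]{bondal-kapranov}, so no comparison with an in-paper proof is possible. Two small cleanup notes. First, the identification $\cA = {}^{\perp}(\cA^{\perp})$ that you re-derive is already part of the equivalences of Lemma~\ref{lemma-admissible-sod}: applying that lemma to the pair $(\cA^{\perp}, \cA)$ says precisely that right admissibility of $\cA$ gives $\cC = \llangle \cA^{\perp}, \cA \rrangle$ with $\cA = {}^{\perp}(\cA^{\perp})$ and $\cA^{\perp}$ left admissible, so you could have just cited it. Second, in that hand derivation you have the canonical triangle reversed: for $\cC = \llangle \cA^{\perp}, \cA \rrangle$ the truncation triangle for $C$ reads $A \to C \to B$ with $A \in \cA$ the fiber and $B \in \cA^{\perp}$ the cofiber (this is what Lemma~\ref{lemma-sod} produces), not $B \to C \to A$. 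Your actual computation is unaffected because it only uses that $B = \alpha_1^*(C)$ where $\alpha_1^*$ is the left adjoint to the inclusion of $\cA^{\perp}$, from which the adjunction identity $\cHom_S(B,B) \simeq \cHom_S(C,B)$ follows; but the triangle as written is misleading. The two orthogonality chases via Serre duality, and the remark that $(-)^{\svee}$ reflects vanishing on $\Perf(S)$, are exactly right.
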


A relative Serre functor exists for any smooth and proper category: 
\begin{lemma}
\label{lemma-serre-functor-exists}
Let $\cC$ be a smooth and proper $S$-linear category. 
Then there exists a relative Serre functor $\rS_{\cC/S}$ of $\cC$ over $S$. 
\end{lemma}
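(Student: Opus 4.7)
The plan is to exploit the dualizability of $\cC$ in $\Cat_S$ provided by Lemma~\ref{lemma-smooth-proper-dualizable}. Recall from that lemma that the dual of $\cC$ is $\cC^{\op}$, with evaluation given by $\cHom_S(-,-)\colon \cC^{\op}\otimes_{\Perf(S)}\cC \to \Perf(S)$. Since $\cC$ is proper, this functor indeed lands in $\Perf(S)\subset \QCoh(S)$ by Lemma~\ref{lemma-smooth-proper-criteria}\eqref{properness-criterion}. Applying the same to $\cC^{\op}$, whose dual is $\cC$, we also obtain an evaluation functor $\cC\otimes_{\Perf(S)}\cC^{\op}\to \Perf(S)$ given by $(C,D)\mapsto \cHom_S(D,C)$.

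The first step is to construct $\rS_{\cC/S}$. Composing the evaluation $\cC\otimes_{\Perf(S)}\cC^{\op}\to\Perf(S)$ with the dualization equivalence $(-)^{\svee}\colon \Perf(S)\to\Perf(S)$, one obtains an $S$-linear functor
\begin{equation*}
\Phi\colon \cC\otimes_{\Perf(S)}\cC^{\op}\to \Perf(S),\qquad (C,D)\mapsto \cHom_S(D,C)^{\svee}.
\end{equation*}
Since $\cC^{\op}$ is the dual of $\cC$, one has an equivalence of functor categories
\begin{equation*}
\Fun_{\Perf(S)}(\cC\otimes_{\Perf(S)}\cC^{\op},\Perf(S))\simeq \Fun_{\Perf(S)}(\cC,\cC),
\end{equation*}
under which the evaluation $(C,D)\mapsto\cHom_S(D,C)$ corresponds to $\id_{\cC}$; equivalently, a functor $\Psi$ on the left side corresponds to an endofunctor $\psi$ on the right characterized by a natural equivalence $\cHom_S(C,\psi(D))\simeq \Psi(C,D)$. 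Define $\rS_{\cC/S}\colon \cC\to\cC$ to be the endofunctor corresponding to $\Phi$; by construction it satisfies
\begin{equation*}
\cHom_S(C,\rS_{\cC/S}(D))\simeq \cHom_S(D,C)^{\svee}
\end{equation*}
naturally in $C,D\in \cC$, which is precisely the defining property in Definition~\ref{definition-serre-functor}.

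The remaining step is to verify that $\rS_{\cC/S}$ is an autoequivalence. Running the identical construction with the roles of the two arguments reversed, one obtains an $S$-linear endofunctor $\rT\colon \cC\to \cC$ characterized by a natural equivalence $\cHom_S(\rT(D),C)\simeq \cHom_S(C,D)^{\svee}$. Combining the two characterizations and using that double dualization on perfect complexes is the identity, we compute for any $C,D\in\cC$:
\begin{equation*}
\cHom_S(C,\rS_{\cC/S}\rT(D))\simeq \cHom_S(\rT(D),C)^{\svee}\simeq \cHom_S(C,D)^{\svee\svee}\simeq \cHom_S(C,D).
\end{equation*}
By the (enriched) Yoneda lemma in the $\QCoh(S)$-linear category $\cC$, this natural equivalence forces $\rS_{\cC/S}\circ\rT\simeq \id_{\cC}$; the symmetric computation gives $\rT\circ \rS_{\cC/S}\simeq \id_{\cC}$. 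Hence $\rS_{\cC/S}$ is an autoequivalence of $\cC$, completing the proof.

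The only real subtlety is the bookkeeping in the duality step: one must check that the evaluation and dualization functors are indeed morphisms in $\Cat_S$ (which is where properness enters, via Lemma~\ref{lemma-smooth-proper-criteria}\eqref{properness-criterion}) and that the smooth-and-proper hypothesis guarantees the equivalence of functor categories used to extract $\rS_{\cC/S}$; both facts are furnished by Lemma~\ref{lemma-smooth-proper-dualizable}, so no further obstacle arises.
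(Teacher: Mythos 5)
Your proposal is correct and follows essentially the same route as the paper: the functor $\rS_{\cC/S}$ is extracted from the bifunctor $(C,D)\mapsto \cHom_S(D,C)^{\svee}$ via dualizability of $\cC$, exactly as in the paper's proof. The only packaging difference is in verifying that $\rS_{\cC/S}$ is an equivalence: the paper shows it is fully faithful and then uses Lemma~\ref{lemma-smooth-proper-adjoints} to obtain a left adjoint $\rS_{\cC/S}^*$ and checks $\rS_{\cC/S}\rS_{\cC/S}^*\simeq\id$ for essential surjectivity, whereas you construct a candidate inverse $\rT$ directly from duality and check it is a two-sided inverse by Yoneda; since your $\rT$ satisfies the characterizing equivalence $\cHom_S(\rT(D),C)\simeq\cHom_S(C,D)^{\svee}$, it coincides with the paper's $\rS_{\cC/S}^*$, so the two arguments are really the same argument in different dress.
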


\begin{proof}
Let $\tau(-,-) \colon \cC^\op \times \cC \to \cC \times \cC^\op$ be the transposition functor. 
Then the argument of Lemma~\ref{lemma-smooth-proper-adjoints} applied to 
\begin{equation*}
\cHom_{S}(\tau(-,-))^{\svee} \colon \cC^\op \times \cC \to \Perf(S) 
\end{equation*}
in place of \eqref{Hom-phi-adjoint} shows that there exists a functor $\rS_{\cC/S} \colon \cC \to \cC$ 
such that there is a commutative diagram as in Definition~\ref{definition-serre-functor}. 
It remains to show that $\rS_{\cC/S}$ is an equivalence. 
The following computation for $C, D \in \cC$ shows that $\rS_{\cC/S}$ is fully faithful: 
\begin{align*}
\cHom_S(\rS_{\cC/S}(C), \rS_{\cC/S}(D)) & \simeq 
\cHom_S(D, \rS_{\cC/S}(C))^{\svee} \\ 
& \simeq \cHom_S(C, D)^{\svee \svee} \\ 
& \simeq \cHom_S(C,D). 
\end{align*}
Next note that 
by Lemma~\ref{lemma-smooth-proper-adjoints} there exists a left adjoint 
$\rS_{\cC/S}^* \colon \cC \to \cC$. 
For $C, D \in \cC$, we compute: 
\begin{align*}
\cHom_S(C, \rS_{\cC/S}\rS^*_{\cC/S}(D)) & \simeq 
\cHom_S(\rS_{\cC/S}^*(D), C)^{\svee} \\ 
& \simeq \cHom_S(D, \rS_{\cC/S}(C))^{\svee} \\ 
& \simeq \cHom_S(C, D)^{\svee \svee} \\ 
& \simeq \cHom_S(C, D). 
\end{align*} 
It follows that $\rS_{\cC/S}\rS^*_{\cC/S}(D) \simeq D$, and hence $\rS_{\cC/S}$ is essentially surjective. 
\end{proof}

Finally, we note that if a relative Serre functor exists, it is automatically linear over any scheme over which 
$\cC$ is defined. 

\begin{lemma}
\label{lemma-serre-functor-linear}
Let $\cC$ be a proper $S$-linear category with a relative Serre functor $\rS_{\cC/S}$. 
Assume the $S$-linear structure of $\cC$ is induced by a $T$-linear structure, where 
$T$ is an $S$-scheme. 
Then $\rS_{\cC/S}$ has the structure of a $T$-linear functor. 
\end{lemma}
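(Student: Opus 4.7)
The plan is to imitate the argument of Lemma~\ref{lemma-adjoint-S-linear}: exploit the defining natural equivalence of $\rS_{\cC/S}$ together with Yoneda to produce the required commutation with the $\Perf(T)$-action. Let $\pi \colon T \to S$ denote the structure morphism, so that the $\Perf(S)$-action on $\cC$ factors as $\Perf(S) \xrightarrow{\pi^{*}} \Perf(T) \to \Fun_{\Perf(S)}(\cC,\cC)$.

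First I would record a preliminary compatibility: for any $F \in \Perf(T)$ and $C, D \in \cC$, there is a canonical equivalence
\begin{equation*}
\cHom_S(D \otimes F, C) \simeq \cHom_S(D, C \otimes F^{\svee}).
\end{equation*}
This is proved exactly as in Lemma~\ref{lemma-adjoint-S-linear}: both sides represent the same functor $\QCoh(S)^{\op} \to \Spaces$, namely $G \mapsto \Map_{\cC}(D \otimes \pi^{*}G \otimes F, C)$, using dualizability of $F$ in $\Perf(T)$ and the $T$-linear structure on $\cC$.

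Next, for $F \in \Perf(T)$ and $C, D \in \cC$, I would chain this with the defining equivalence $\cHom_S(X, \rS_{\cC/S}(Y)) \simeq \cHom_S(Y,X)^{\svee}$ of the Serre functor:
\begin{align*}
\cHom_S(C, \rS_{\cC/S}(D \otimes F))
& \simeq \cHom_S(D \otimes F, C)^{\svee} \\
& \simeq \cHom_S(D, C \otimes F^{\svee})^{\svee} \\
& \simeq \cHom_S(C \otimes F^{\svee}, \rS_{\cC/S}(D)) \\
& \simeq \cHom_S(C, \rS_{\cC/S}(D) \otimes F).
\end{align*}
This chain is natural in $C$, so after passing to the Yoneda embedding $\cC \hookrightarrow \Ind(\cC)$ it yields a canonical equivalence $\rS_{\cC/S}(D \otimes F) \simeq \rS_{\cC/S}(D) \otimes F$.

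The remaining task, which I expect to be the main technical point, is to upgrade these object-level equivalences to an actual $T$-linear structure on $\rS_{\cC/S}$ in the $\infty$-category $\Cat_S$, i.e.\ to a lift of $\rS_{\cC/S}$ to a morphism in $\Cat_T$. Since each equivalence in the chain above is functorial in all three inputs $C, D, F$ and compatible with tensor products in $F$, the whole construction packages as a map of $\Perf(T)$-modules when read through the universal property of $\cC \otimes_{\Perf(S)} \Perf(T)$ as a $\Perf(T)$-linear category; invoking this universal property (exactly as in Lemma~\ref{lemma-adjoint-S-linear} for adjoints) produces the desired $T$-linear structure on $\rS_{\cC/S}$.
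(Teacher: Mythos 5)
Your argument is correct and follows essentially the same route as the paper, which simply cites that the proof is "similar to the proof of Lemma~\ref{lemma-adjoint-S-linear}": you set up the Yoneda-style chain of $\cHom_S$ equivalences using the defining property of $\rS_{\cC/S}$ and the $T$-linear dualization compatibility, then invoke the same packaging step to upgrade the object-level equivalences $\rS_{\cC/S}(D \otimes F) \simeq \rS_{\cC/S}(D) \otimes F$ to a $T$-linear structure. The chain itself is verified correctly at each step.
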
 

\begin{proof}
Similar to the proof of Lemma \ref{lemma-adjoint-S-linear}. 
\end{proof}

\subsection{Critical loci} 
\label{subsection-critical-loci} 

\begin{definition}
\label{definition-critical-loci}
Let $\cC$ be an $S$-linear category. 
Let $s \in S$ be a point and let $\Spec(\kappa(s)) \to S$ be the 
corresponding morphism from the spectrum of the residue field 
at $s \in S$. 
We say $s$ is a \emph{critical point} for $\cC$ if the base change 
\begin{equation*}
\cC \otimes_{\Perf(S)} \Perf(\Spec(\kappa(s))) 
\end{equation*} 
fails to be smooth over $\Spec(\kappa(s))$. 
The \emph{critical locus} of $\cC$ is the set 
\begin{equation*}
\Crit_S(\cC) = \{ \, s \in S \mid s \text{ is a critical point for } \cC \, \} \subset S . 
\end{equation*}
\end{definition}

\begin{remark}
\label{remark-crit-geometric}
Let $X \to S$ be a morphism of schemes which is flat and locally of finite presentation. 
Then by Lemma~\ref{lemma-smooth-proper-morphisms}\eqref{smooth-category-smooth-morphism}, 
we have $s \in \Crit_S(\Perf(X))$ if and only if the fiber $X_s \to \Spec(\kappa(s))$ 
fails to be smooth 
(the flatness of $X \to S$ guarantees $X_s \to \Spec(\kappa(s))$ is an ordinary, 
i.e. not derived, scheme). 
\end{remark}

\begin{remark}
\label{remark-crit-closed}
If $X \to S$ is a morphism of schemes which is flat, proper, and of finite presentation, 
then $\Crit_S(\Perf(X))$ is a closed subset of $S$. 
We expect that, under suitable assumptions, this holds for general $S$-linear categories $\cC$, 
but we have not checked this. 
\end{remark}

\begin{lemma}
\label{lemma-crit-smooth}
Let $\cC$ be a smooth $S$-linear category. 
Then $\Crit_S(\cC) = \varnothing$. 
\end{lemma}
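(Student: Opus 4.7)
The plan is to observe that this is a direct consequence of the base change behavior of smoothness established earlier.

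By Definition~\ref{definition-critical-loci}, showing $\Crit_S(\cC) = \varnothing$ amounts to showing that for every point $s \in S$, the base change
\begin{equation*}
\cC \otimes_{\Perf(S)} \Perf(\Spec(\kappa(s)))
\end{equation*}
is smooth over $\Spec(\kappa(s))$. Thus the statement reduces to the stability of smoothness under arbitrary base change.

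I would invoke Lemma~\ref{lemma-smoothness-properness-base-change}\eqref{smoothness-base-change} with $S' = \Spec(\kappa(s))$ and the structure morphism $S' \to S$ corresponding to the point $s$. Since $\cC$ is smooth over $S$ by hypothesis, that lemma immediately yields that $\cC \otimes_{\Perf(S)} \Perf(\Spec(\kappa(s)))$ is smooth over $\Spec(\kappa(s))$, so $s \notin \Crit_S(\cC)$. As $s \in S$ was arbitrary, $\Crit_S(\cC) = \varnothing$.

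There is no real obstacle here; the content of the lemma has been packaged into the base change statement of Lemma~\ref{lemma-smoothness-properness-base-change}, whose proof in turn uses the criterion of Lemma~\ref{lemma-smooth-proper-criteria}\eqref{smoothness-criterion} together with the observation that $\id_{\Ind(\cC')}$ corresponds to $\id_{\Ind(\cC)} \boxtimes \cO_{S'}$ under the natural identification of functor categories. So the entire proof is a one-line appeal to the earlier base change result.
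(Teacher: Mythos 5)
Your proof is correct and takes exactly the same route as the paper, which simply cites Lemma~\ref{lemma-smoothness-properness-base-change}; you have merely spelled out the reduction (unwinding Definition~\ref{definition-critical-loci} and applying part~\eqref{smoothness-base-change} with $S' = \Spec(\kappa(s))$) that the paper leaves implicit.
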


\begin{proof}
Follows from Lemma~\ref{lemma-smoothness-properness-base-change}. 
\end{proof}

\begin{lemma}
\label{lemma-crit-sod}
Let $\cC$ be an $S$-linear category with an $S$-linear semiorthogonal decomposition 
\begin{equation*}
\cC = \llangle \cA_1, \dots, \cA_n \rrangle  
\end{equation*}
whose components are admissible. 
Then we have 
\begin{equation*}
\Crit_S(\cC) = \bigcup_{i=1}^n \Crit_S(\cA_i) \subset S. 
\end{equation*}
\end{lemma}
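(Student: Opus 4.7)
The plan is to reduce everything to a fiber over a point and then invoke the semiorthogonal smoothness criterion already established.

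Fix a point $s \in S$ and write $\kappa = \kappa(s)$. Base changing along $\Spec(\kappa) \to S$, by Remark~\ref{remark-sod-bc} (which is the special case $\cD = \Perf(\Spec(\kappa))$ of Lemma~\ref{lemma-sod-base-change}) we obtain an induced $\Spec(\kappa)$-linear semiorthogonal decomposition
\begin{equation*}
\cC \otimes_{\Perf(S)} \Perf(\Spec(\kappa)) = \llangle \cA_1 \otimes_{\Perf(S)} \Perf(\Spec(\kappa)), \dots, \cA_n \otimes_{\Perf(S)} \Perf(\Spec(\kappa)) \rrangle.
\end{equation*}
Moreover, by Lemma~\ref{lemma-base-change-admissible}, each component $\cA_i \otimes_{\Perf(S)} \Perf(\Spec(\kappa))$ is admissible in $\cC \otimes_{\Perf(S)} \Perf(\Spec(\kappa))$, since admissibility is preserved under base change.

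Now I would apply Lemma~\ref{lemma-smooth-proper-sod}\eqref{smoothness-sod-admissible} to this induced semiorthogonal decomposition: since all the components are admissible, the category $\cC \otimes_{\Perf(S)} \Perf(\Spec(\kappa))$ is smooth over $\Spec(\kappa)$ if and only if each component $\cA_i \otimes_{\Perf(S)} \Perf(\Spec(\kappa))$ is smooth over $\Spec(\kappa)$. Taking the negation, the failure of smoothness of $\cC \otimes_{\Perf(S)} \Perf(\Spec(\kappa))$ is equivalent to the failure of smoothness of at least one of the $\cA_i \otimes_{\Perf(S)} \Perf(\Spec(\kappa))$.

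Unwinding Definition~\ref{definition-critical-loci}, this says $s \in \Crit_S(\cC)$ if and only if $s \in \Crit_S(\cA_i)$ for some $i$, which gives the desired equality $\Crit_S(\cC) = \bigcup_{i=1}^n \Crit_S(\cA_i)$. There is no real obstacle here: the argument is a direct bookkeeping of two earlier lemmas (base change of semiorthogonal decompositions and behavior of smoothness under admissible decompositions), and the admissibility hypothesis is used precisely to invoke the ``if and only if'' clause of Lemma~\ref{lemma-smooth-proper-sod}\eqref{smoothness-sod-admissible}.
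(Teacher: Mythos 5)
Your proof is correct and follows essentially the same route as the paper, which cites exactly Lemma~\ref{lemma-smooth-proper-sod}\eqref{smoothness-sod-admissible} and Lemma~\ref{lemma-base-change-admissible}; you have merely spelled out the intermediate base-change step (Lemma~\ref{lemma-sod-base-change}) that the paper leaves implicit.
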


\begin{proof}
Follows from Lemmas~\ref{lemma-smooth-proper-sod}\eqref{smoothness-sod-admissible} 
and~\ref{lemma-base-change-admissible}. 
\end{proof}

\subsection{Bounded coherent categories}
\label{subsection-Db} 
Given an $S$-scheme $X$, in addition to the $S$-linear category $\Perf(X)$, 
there is another naturally associated $S$-linear category: the 
full subcategory $\Db(X) \subset \QCoh(S)$ spanned by complexes 
with bounded coherent cohomology. 
By the results of \cite[Theorem 1.1.3]{bznp}, under suitable hypotheses 
$\Perf(X)$ in fact determines $\Db(X)$. 

\begin{theorem}
\label{theorem-dual-Db}
Let $\pi \colon X \to S$ be a proper morphism of finite presentation between 
derived schemes over a field of characteristic $0$, with $S$ locally noetherian. 
Then there is an equivalence 
\begin{align*}
\Db(X)  & \xrightarrow{\sim} \Func{\Perf(S)}{\Perf(X)^\op}{\Db(S)} \\
\cE       & \mapsto \cHom_S(-, \cE) , 
\end{align*}
where $\Perf(X)^\op$ denotes the opposite category. 
\end{theorem}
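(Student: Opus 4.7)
The plan is to set up and verify well-definedness of the stated functor, and then reduce the equivalence claim to a direct application of [bznp, Theorem 1.1.3], which is precisely what the paper cites. The work on our end is essentially bookkeeping: matching the concrete pairing $\cHom_S(-,-)$ to the abstract duality of Ben-Zvi--Nadler--Preygel.

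First I would check that the assignment $\cE \mapsto \cHom_S(-,\cE)$ takes values in $\Func{\Perf(S)}{\Perf(X)^{\op}}{\Db(S)}$. For $P \in \Perf(X)$ and $\cE \in \Db(X)$, Remark~\ref{remark-mapping-object-scheme} gives $\cHom_S(P,\cE) \simeq \pi_*(P^{\svee} \otimes \cE)$. Since $P^{\svee}$ is perfect and $\cE$ is bounded coherent, $P^{\svee} \otimes \cE \in \Db(X)$; and since $\pi$ is proper of finite presentation with $S$ locally noetherian, $\pi_*$ preserves bounded coherent objects, so the output lies in $\Db(S)$. The $\Perf(S)$-linearity in the first variable is inherited from the $\QCoh(S)$-linearity of $\cHom_S$, and functoriality in $\cE$ is clear, so one obtains a canonical $S$-linear functor of the desired form.

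Next I would invoke [bznp, Theorem 1.1.3] to upgrade this functor to an equivalence. That result, under exactly the hypotheses in force here (characteristic zero, $S$ locally noetherian, $\pi$ proper of finite presentation between derived schemes), produces an equivalence between $\Db(X)$ and a functor category whose pairing is precisely $\cHom_S(-,-)$; one then checks that under the identifications of compact objects used in [bznp], this pairing is carried to ours on the nose. This is the step where $S$-linearity and all finiteness features must be tracked carefully, but it is formal once the statement of [bznp] is in hand.

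The main obstacle I anticipate is translating between the ind-coherent formalism of [bznp] and the bounded coherent formalism used here. Concretely, $[bznp]$ phrases duality at the level of the presentable category $\mathrm{IndCoh}(X)$, and one must pass to compact objects ($\Db(X)$) on both sides while ensuring the functor category is correctly identified. The characteristic zero hypothesis enters to guarantee existence and compatibility of relative dualizing complexes and base-change formulas, while locally noetherian $S$ is needed so that $\pi_*$ preserves $\Db$ and the restriction to compacts of the duality pairing recovers $\cHom_S$. Once these translations are checked, the theorem follows from [bznp, Theorem 1.1.3] with no further geometric input.
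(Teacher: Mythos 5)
Your proposal is correct and takes essentially the same route as the paper: invoke \cite[Theorem~1.1.3]{bznp} to get an equivalence $\Db(X) \simeq \Func{\Perf(S)}{\Perf(X)}{\Db(S)}$ sending $\cE \mapsto \pi_*(-\otimes\cE)$, and then pass to the stated form by precomposing with the dualization equivalence $\Perf(X) \simeq \Perf(X)^{\op}$ (which is exactly the identification $\cHom_S(P,\cE)\simeq \pi_*(P^{\svee}\otimes\cE)$ you note). Your additional remarks about well-definedness and the $\mathrm{IndCoh}$ translation are sound but not distinct ideas; the paper treats these as subsumed in the citation.
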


\begin{proof}
By \cite[Theorem 1.1.3]{bznp} there is an equivalence 
\begin{align*}
\Db(X)  & \xrightarrow{\sim} \Func{\Perf(S)}{\Perf(X)}{\Db(S)} \\
\cE       & \mapsto \pi_* \circ (- \otimes \cE ) . 
\end{align*}
Now the result follows by composing with the equivalence 
$\Perf(X) \simeq \Perf(X)^\op$ given by dualization.  
\end{proof}

This motivates the following definition. 

\begin{definition}
Let $\cC$ be a proper $S$-linear category, where $S$ is locally 
noetherian over a field of characteristic $0$. The 
\emph{bounded coherent category} of $\cC$ is 
\begin{equation*}
\cC^{\Coh}= \Func{\Perf(S)}{\cC^\op}{\Db(S)} . 
\end{equation*}
\end{definition}

Note that in the above situation, there is a canonical functor 
\begin{equation*}
\cC \rightarrow \cC^{\Coh} , \quad C \mapsto \cHom_S(-, C ) , 
\end{equation*}
which in the geometric case $\cC = \Perf(X)$ corresponds to the embedding 
$\Perf(X) \subset \Db(X)$. 
Next we show that under suitable hypotheses, a semiorthogonal 
decomposition of $\cC$ induces a semiorthogonal decomposition on 
bounded coherent categories, compatible with the above functor. 
This can be used to recover the results of \cite{kuznetsov-base-change} 
and remove the geometricity assumptions there, see Remark~\ref{remark-sod-Db} below. 

\begin{proposition} 
\label{proposition-sod-Db} 
Let $\cC$ be a proper $S$-linear category, where $S$ is locally 
noetherian over a field of characteristic $0$. 
Let $\cC = \llangle \cA, \cB \rrangle$ be an $S$-linear semiorthogonal decomposition 
with $\cB \subset \cC$ admissible. 
Then there is an $S$-linear semiorthogonal decomposition 
\begin{equation*}
\cC^\Coh = \llangle \cA^\Coh, \cB^\Coh \rrangle , 
\end{equation*}
with the property that the canonical functor $\cC \to \cC^\Coh$ 
restricts to the canonical functors $\cA \to \cA^\Coh$ and 
$\cB \to \cB^\Coh$ on each component. 
\end{proposition}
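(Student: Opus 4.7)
The plan is to exploit the admissibility of $\cB$ to produce a second semiorthogonal decomposition of $\cC$ whose opposite has the right shape for applying Lemma~\ref{lemma-functor-category-sod}. Since $\cB \subset \cC$ is admissible, in particular left admissible, Lemma~\ref{lemma-admissible-sod} gives an alternative semiorthogonal decomposition $\cC = \llangle \cB, {^{\perp}}\cB \rrangle$, and by Lemma~\ref{lemma-mutation-functors}(3) the left mutation $\rL_{\cB}$ restricts to an $S$-linear equivalence ${^{\perp}}\cB \xrightarrow{\sim} \cB^{\perp} = \cA$. Taking opposite categories reverses the ordering, so $\cC^{\op} = \llangle ({^{\perp}}\cB)^{\op}, \cB^{\op} \rrangle$.

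Next I would apply Lemma~\ref{lemma-functor-category-sod}\eqref{functor-category-sod-1} to this decomposition with target the $\Perf(S)$-linear category $\Db(S)$. This yields an $S$-linear semiorthogonal decomposition
\begin{equation*}
\cC^{\Coh} = \Fun_{\Perf(S)}(\cC^{\op}, \Db(S)) = \llangle ({^{\perp}}\cB)^{\Coh}, \cB^{\Coh} \rrangle,
\end{equation*}
which becomes $\cC^{\Coh} = \llangle \cA^{\Coh}, \cB^{\Coh} \rrangle$ upon transporting along the equivalence $({^{\perp}}\cB)^{\Coh} \simeq \cA^{\Coh}$ induced by $\rL_{\cB}$.

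For the compatibility with the canonical functor $\cC \to \cC^{\Coh}$, I would argue component by component. The embedding $\cB^{\Coh} \hookrightarrow \cC^{\Coh}$ produced by the lemma is precomposition with the projection $(\alpha_{\cB}^*)^{\op} \colon \cC^{\op} \to \cB^{\op}$ (using the left adjoint $\alpha_{\cB}^*$, which exists by left admissibility of $\cB$), and the adjunction $\cHom_S(\alpha_{\cB}^*(-), B) \simeq \cHom_S(-, \alpha_{\cB}(B))$ identifies the image of $\cHom_S(-, B) \in \cB^{\Coh}$ with $\cHom_S(-, B) \in \cC^{\Coh}$ as required. For $A \in \cA$, applying $\cHom_S(-, A)$ to the sod triangle $\alpha_{\cB}\alpha_{\cB}^*(C) \to C \to \alpha_{{^{\perp}}\cB}\alpha_{{^{\perp}}\cB}^!(C)$ in $\cC = \llangle \cB, {^{\perp}}\cB \rrangle$ and invoking $\cHom_S(\cB, \cA) = 0$ gives $\cHom_S(C, A) \simeq \cHom_S(\alpha_{{^{\perp}}\cB}^!(C), A)$, so that $\cHom_S(-, A)$ factors through the projection $\cC^{\op} \to ({^{\perp}}\cB)^{\op}$; the mutation identity $\cHom_S(X, A) \simeq \cHom_S(\rL_{\cB}(X), A)$ for $X \in {^{\perp}}\cB$, immediate from the defining triangle $\alpha_{\cB}\alpha_{\cB}^!(X) \to X \to \rL_{\cB}(X)$ and the semiorthogonality, matches this factorization with the canonical image of $A$ under $\cA \to \cA^{\Coh}$.

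The principal obstacle is the asymmetry of the two components. A direct application of Lemma~\ref{lemma-functor-category-sod} to the tautological opposite sod $\cC^{\op} = \llangle \cB^{\op}, \cA^{\op} \rrangle$ produces a decomposition in the reverse order, and more seriously the image of $\cB \subset \cC$ in $\cC^{\Coh}$ fails to lie in the resulting $\cB^{\Coh}$ component: the projection $\pr_{\cB} = \alpha_{\cB}^!$ arising from the sod $\cC = \llangle \cA, \cB \rrangle$ differs from the left adjoint $\alpha_{\cB}^*$ needed to recover $\cHom_S(-, B)$ via adjunction. The admissibility hypothesis on $\cB$ is precisely what circumvents this by providing the second sod and the mutation equivalence ${^{\perp}}\cB \simeq \cA$, and the main technical task is correctly bookkeeping the four different adjoints ($\alpha_{\cA}^*, \alpha_{\cB}^*, \alpha_{\cB}^!, \alpha_{{^{\perp}}\cB}^!$) and their opposites throughout the verification.
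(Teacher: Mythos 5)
Your proof is correct and follows essentially the same route as the paper's: introduce $\cA' = {^{\perp}}\cB$ using admissibility of $\cB$, get the auxiliary decomposition $\cC = \llangle \cB, \cA' \rrangle$, pass to opposites to obtain $\cC^{\op} = \llangle (\cA')^{\op}, \cB^{\op} \rrangle$, apply Lemma~\ref{lemma-functor-category-sod}, and transport along the mutation equivalence between $\cA'$ and $\cA$ (you use $\rL_{\cB}\colon \cA' \to \cA$, the paper uses its inverse $\rR_{\cB}\circ\alpha\colon \cA \to \cA'$; these are the same up to direction). Your compatibility verifications match the paper's in substance: the $\cB$-component check via the adjunction $\alpha_{\cB}^* \dashv \alpha_{\cB}$ is exactly what the paper calls the ``even easier'' case, and the $\cA$-component check via the mutation identity for $\cHom_S(-,A)$ is equivalent to the paper's invocation of Lemma~\ref{lemma-adjoint-adjoints}. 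One small slip: you wrote the projection triangle for $\cC = \llangle \cB, {^{\perp}}\cB \rrangle$ in the wrong order; by Lemma~\ref{lemma-sod} it should read $\alpha_{{^{\perp}}\cB}\alpha_{{^{\perp}}\cB}^!(C) \to C \to \alpha_{\cB}\alpha_{\cB}^*(C)$. This does not affect your conclusion (the vanishing of the $\cB$-term gives the same isomorphism either way), but it is worth fixing.
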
 

\begin{proof}
Let $\alpha \colon \cA \to \cC$ and $\beta \colon \cB \to \cC$ be the inclusions. 
Let $\alpha' \colon \cA' = {^\perp}\cB \to \cC$ be the inclusion of the orthogonal, 
so that by Lemma~\ref{lemma-admissible-sod} 
there is a semiorthogonal decomposition $\cC = \llangle \cB, \cA' \rrangle$. 
Passing to opposite categories, we obtain a semiorthogonal decomposition 
$\cC^\op = \llangle (\cA')^\op, \cB^\op \rrangle$, with embedding functors 
induced by $\alpha'$ and $\beta$. 
Note that the functor $\cC^\op \to \cB^\op$ induced by the left adjoint 
$\beta^* \colon \cC \to \cB$ is the \emph{right} adjoint to the inclusion 
$\cB^\op \to \cC^\op$; similarly, the functor $\cC^\op \to (\cA')^\op$ induced 
by $(\alpha')^! \colon \cC \to \cA'$ is \emph{left} adjoint to $(\cA')^\op \to \cC^\op$. 
By Lemma~\ref{lemma-functor-category-sod}, we conclude that there 
is a semiorthogonal decomposition 
\begin{equation*}
\cC^\Coh = \llangle (\cA')^\Coh, \cB^\Coh \rrangle . 
\end{equation*} 
Note that by Lemma~\ref{lemma-mutation-functors} the functor 
$\rR_{\cB} \circ \alpha$ induces an equivalence $\cA \simeq \cA'$, 
and hence an equivalence $\cA^\Coh \simeq (\cA')^\Coh$. 
Hence it remains to show that the above decomposition is compatible 
with the canonical functors to the bounded coherent categories. 
By the description of the embedding functors in Lemma~\ref{lemma-mutation-functors}, 
we see that the composition $\cA' \to (\cA')^\Coh \to \cC^\Coh$ 
is given by 
\begin{equation*} 
A' \mapsto \cHom_S(-, A') \mapsto \cHom_S((\alpha')^!(-), A') . 
\end{equation*} 
By Lemma~\ref{lemma-adjoint-adjoints} we have 
\begin{equation*}
\cHom_S((\alpha')^!(-), A') \simeq \cHom_S(-, \rL_{\cB} \circ \alpha'(A')). 
\end{equation*} 
Hence by Lemma~\ref{lemma-mutation-functors}, it follows that under the 
identifications $\cA \simeq \cA'$ and $\cA^{\Coh} \simeq (\cA')^\Coh$ given 
by $\rR_{\cB} \circ \alpha$, the canonical functor $\cC \to \cC^\Coh$ restricts to 
the canonical functor $\cA \to \cA^\Coh$. 
A similar (even easier) argument shows the compatibility of the canonical 
functors to the bounded coherent categories for the second component 
$\cB^\Coh \subset \cC^\Coh$. 
\end{proof} 

\begin{remark}
\label{remark-sod-Db} 
In \cite[Theorem 5.6]{kuznetsov-base-change}, the problem of inducing a semiorthogonal 
decomposition of $\Db(X)$ from one of $\Perf(X)$ is addressed under 
certain assumptions. 
Let $\cA \subset \Perf(X)$ be a component of a semiorthogonal decomposition 
with admissible components. 
Then the 
induced subcategory $\hat{\cA} \subset \Db(X)$ defined in 
\cite{kuznetsov-base-change} can be described as follows. 
By taking Ind-categories, we obtain an admissible subcategory 
$\Ind(\cA) \subset \QCoh(X)$. 
Note that also $\Db(X) \subset \QCoh(X)$. 
Then $\hat{\cA} \subset \Db(X)$ is the full subcategory 
with objects given by the intersection 
\begin{equation*}
\hat{\cA} = \Ind(\cA) \cap \Db(X). 
\end{equation*} 

On the other hand, in Proposition~\ref{proposition-sod-Db}  
we have defined a subcategory $\cA^\Coh \subset \Db(X)$. 
The categories $\hat{\cA}$ and $\cA^{\Coh}$ agree when the assumptions of 
\cite[Theorem 5.6]{kuznetsov-base-change} and 
Proposition~\ref{proposition-sod-Db} both hold. 
Indeed, we can describe $\cA^\Coh \subset \Db(X)$ by the formula 
\begin{equation*}
\cA^\Coh = \Ind(\cA^\Coh) \cap \Db(X), 
\end{equation*} 
where the intersection is taken in $\Ind(\Db(X))$. 
Note that the inclusions $\cA \subset \cA^\Coh$ and $\Perf(X) \subset \Db(X)$ 
induce inclusions $\Ind(\cA) \subset \Ind(\cA^\Coh)$ and $\QCoh(X) \subset \Ind(\Db(X))$. 
Hence the above formulas imply $\hat{\cA} \subset \cA^\Coh$. 
Writing $\Perf(X) = \llangle \cA, \cB \rrangle$, we similarly have 
$\hat{\cB} \subset \cB^{\Coh}$. 
Since $\Db(X) = \langle \hat{\cA}, \hat{\cB} \rangle = \llangle \cA^\Coh, \cB^\Coh \rrangle$, 
it follows that $\hat{\cA} = \cA^{\Coh}$ and $\hat{\cB} = \cB^\Coh$. 

An advantage of our definition of $\cA^{\Coh}$ is that it is defined 
canonically in terms of~$\cA$, and does not require an embedding into the 
derived category of a variety or the use of $t$-structures as in \cite{kuznetsov-base-change}. 
\end{remark}


\section{A formalism of Fourier--Mukai kernels}
\label{section-kernels} 

In this section, we introduce a formalism for describing certain 
functors between linear categories in terms of ``Fourier--Mukai kernels''. 
After reviewing the classical situation in \S\ref{subsection-FM-classical}, 
we define our notion of Fourier--Mukai kernels in the noncommutative 
setting in \S\ref{subsection-NC-FM}. 
In \S\ref{subsection-FM-properties} we develop some basic properties 
of these functors. 

\subsection{The geometric setting} 
\label{subsection-FM-classical}
Let $X_1 \to T$ and $X_2 \to T$ be morphisms of schemes, such that 
pushforward along the projection $\pr_2 \colon X_2 \times_T X_1 \to X_2$ 
preserves perfect complexes. 

\begin{remark}
By~\cite[Example~2.2(a)]{lipman}, the assumption on $\pr_{2*}$ 
is satisfied if $\pr_2$ is a perfect proper morphism. 
\end{remark}

In the above situation, we call an object  
\begin{equation*}
\cE \in \Perf ( X_2 \times_T X_1 )
\end{equation*}
a Fourier--Mukai kernel. To any such $\cE$, there is an associated 
$T$-linear functor 
\begin{equation*} 
\Phi_{\cE} \colon \Perf(X_1) \to \Perf(X_2), \quad F \mapsto \pr_{2*}(\cE \otimes \pr_1^*F). 
\end{equation*}

\begin{remark} 
Our convention that a kernel $\cE \in \Perf ( X_2 \times_T X_1 )$ corresponds to a 
functor $\Perf(X_2) \to \Perf(X_1)$, instead of $\Perf(X_2) \to \Perf(X_1)$, is 
slightly unconventional. But it will be convenient later. 
\end{remark}

This construction $\cE \mapsto \Phi_{\cE}$ gives rise to a functor 
\begin{equation}
\label{FM-functor}
\Perf ( X_2 \times_T X_1 ) \to \Fun_{\Perf(T)}(\Perf(X_1), \Perf(X_2)) 
\end{equation}
from the category of Fourier--Mukai kernels to the category of $T$-linear 
functors. 
We have the following fundamental result; the second part gives a criterion for the 
above functor to be an equivalence. 

\begin{theorem}[\protect{\cite[Theorem 1.2]{bzfn}}] 
\label{theorem-FM-functors}
\begin{enumerate}
\item \label{fiber-product}
For any $X_1 \to T$ and $X_2 \to T$, there is an equivalence 
\begin{equation*}
\Perf(X_1 \times_T X_2) \simeq \Perf(X_1) \otimes_{\Perf(T)} \Perf(X_2). 
\end{equation*}
\item \label{FM}
If $X_1 \to T$ is smooth and proper, \eqref{FM-functor} is an equivalence. 
\end{enumerate}
\end{theorem}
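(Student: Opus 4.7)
The plan is to prove (1) as a tensor-product-of-module-categories statement passed to compact objects, and then to deduce (2) from (1) by exploiting the dualizability of $\Perf(X_1)$ when $X_1 \to T$ is smooth and proper.

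For (1), first establish the presentable analogue
\[
\QCoh(X_1 \times_T X_2) \simeq \QCoh(X_1) \otimes_{\QCoh(T)} \QCoh(X_2).
\]
Both sides satisfy Zariski descent separately in each of $X_1, X_2, T$, so the question reduces to the case where all three schemes are affine. On affines this becomes the identity $\mathrm{Mod}_A \otimes_{\mathrm{Mod}_R} \mathrm{Mod}_B \simeq \mathrm{Mod}_{A \otimes_R B}$ (with derived tensor product throughout), which holds in $\PrCatSt$ for any commutative $R$-algebras $A, B$ by general facts about modules over commutative algebra objects in a presentable symmetric monoidal $\infty$-category. Passing to compact objects then completes the argument: the quasi-compact/separated hypothesis gives $\QCoh(Y)^c \simeq \Perf(Y)$ for $Y \in \{X_1, X_2, X_1 \times_T X_2\}$, and by the formula $\cC \otimes_{\Perf(T)} \cD \simeq (\Ind(\cC) \otimes_{\QCoh(T)} \Ind(\cD))^c$ from~\S\ref{subsection-linear-category}, the right-hand side becomes $\Perf(X_1) \otimes_{\Perf(T)} \Perf(X_2)$.

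For (2), the smooth and proper hypothesis on $X_1 \to T$ gives, via Lemma~\ref{lemma-smooth-proper-morphisms}, that $\Perf(X_1)$ is smooth and proper over $T$. Lemma~\ref{lemma-smooth-proper-dualizable} then shows $\Perf(X_1)$ is dualizable in $\Cat_T$ with dual $\Perf(X_1)^{\op}$, so
\[
\Fun_{\Perf(T)}(\Perf(X_1), \Perf(X_2)) \simeq \Perf(X_1)^{\op} \otimes_{\Perf(T)} \Perf(X_2).
\]
Grothendieck--Serre duality for the smooth proper morphism $X_1 \to T$, whose dualizing complex is an invertible shift of a line bundle, supplies a $\Perf(T)$-linear equivalence $\Perf(X_1)^{\op} \simeq \Perf(X_1)$; combining with (1) yields
\[
\Perf(X_2 \times_T X_1) \simeq \Perf(X_2) \otimes_{\Perf(T)} \Perf(X_1)^{\op} \simeq \Fun_{\Perf(T)}(\Perf(X_1), \Perf(X_2)).
\]
The final step is to verify that this composite equivalence is realized by $\cE \mapsto \Phi_{\cE}$; I would trace a pure tensor $F_2 \boxtimes F_1$ through the composite and use the projection formula together with base change for the Cartesian square built from $X_1 \to T$ and $X_2 \to T$ to exhibit the resulting functor as $G \mapsto \pr_{2*}(\cE \otimes \pr_1^* G)$.

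The main obstacle is the compatibility verification in (2): producing the equivalence abstractly via dualizability is easy, but matching it with the concrete Fourier--Mukai functor demands a geometric description of the coevaluation map $\Perf(T) \to \Perf(X_1) \otimes_{\Perf(T)} \Perf(X_1)^{\op}$, which unwinds to (a Serre-twisted version of) the class of the diagonal pushforward $\Delta_* \cO_{X_1}$ on $X_1 \times_T X_1$. Getting the twists and conventions to align with $\Phi_{\cE}(-) = \pr_{2*}(\cE \otimes \pr_1^*(-))$ is the bookkeeping-heavy step. Part (1), by contrast, is essentially formal once descent of $\QCoh$ and compact-generation in the quasi-compact/separated setting are in hand.
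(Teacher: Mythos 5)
The paper does not prove this theorem; it imports it verbatim from Ben-Zvi--Francis--Nadler, so there is no internal argument to compare against, and what you have written is effectively a reconstruction of the BZFN proof. Your overall strategy is the right one and matches theirs: affine case plus descent for (1), dualizability of $\Perf(X_1)$ over $\Perf(T)$ for (2).

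For (1), the sentence ``both sides satisfy Zariski descent separately in each of $X_1, X_2, T$'' hides the genuine content. Descent of $\QCoh(X_1 \times_T X_2)$ in each variable is standard, but the tensor product $\QCoh(X_1) \otimes_{\QCoh(T)} \QCoh(X_2)$ a priori commutes with \emph{colimits} in each variable, not with the limit appearing in the \v{C}ech descent diagram. You need the extra input that $-\otimes_{\QCoh(T)} \QCoh(X_i)$ also preserves limits; this follows from Lemma~\ref{lemma-presentable-category-dualizable}, which gives that $\QCoh(X_i)$ is a dualizable $\QCoh(T)$-module, so that $-\otimes_{\QCoh(T)} \QCoh(X_i) \simeq \Fun_{\QCoh(T)}(\rD_T(\QCoh(X_i)), -)$. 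This is precisely the ``perfect stack'' hypothesis in BZFN, and without invoking it the reduction to affines is not justified. Flagging and supplying this point would close the only real gap in part (1).

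For (2), the dualizability argument via Lemma~\ref{lemma-smooth-proper-dualizable} and Lemma~\ref{lemma-dual-functor-category} is exactly right. One small correction: the $\Perf(T)$-linear equivalence $\Perf(X_1)^{\op} \simeq \Perf(X_1)$ is simply $F \mapsto F^{\svee}$, which is available for \emph{any} scheme and does not require Grothendieck--Serre duality. Serre duality enters only later, when you make the abstract evaluation and coevaluation maps concrete: the evaluation unwinds to pushforward along $X_1 \to T$ after restriction to the diagonal, and verifying the triangle identities is essentially the adjunction $\pi_* \dashv \pi^!$ together with the formula $\pi^! \simeq \pi^*(-) \otimes \omega_{\pi}$. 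You correctly identify the coevaluation (the class of $\Delta_* \cO_{X_1}$, possibly twisted by $\omega_{X_1/T}$, inside $\Perf(X_1 \times_T X_1)$) and the subsequent pure-tensor trace as the bookkeeping-heavy step. With those conventions pinned down, tracing $F_2 \boxtimes F_1$ through the composite does produce the Fourier--Mukai formula $\pr_{2*}(\cE \otimes \pr_1^*(-))$, so the argument is realizable.
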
 

\begin{remark}
In \cite{bzfn}, the theorem is formulated for $X_1, X_2,$ and $T$ being so-called perfect stacks. 
Any quasi-compact and separated scheme is a perfect 
stack \cite[Proposition 3.19]{bzfn}, so with our conventions from \S\ref{subsection-conventions} 
any scheme is perfect. 
\end{remark}

\subsection{The noncommutative setting} 
\label{subsection-NC-FM} 

We consider the following situation. 

\begin{setup} 
\label{setup-FM}
For $i=1,2$, assume given the following data: 
\begin{itemize}
\item[--] A diagram of schemes 
\begin{equation}
\label{Zi}
\vcenter{
\xymatrix{ 
& Z_ i\ar[dl] \ar[dr] \\
S_i & & T
} }  
\end{equation}
such that the pushforward along the 
projection $\pr_2 \colon Z_2 \times_T Z_1 \to Z_2$ preserves 
perfect complexes.  
\item[--] An $S_i$-linear category $\cC_i$.
\end{itemize}
\end{setup} 

In the above setup, the base change
\begin{equation*}
\cC_i  \otimes_{\Perf(S_i)} \Perf(Z_i) 
\end{equation*}
has the structure of $T$-linear category. 
In what follows, this category plays the role of 
the scheme $X_i$ from our discussion in \S\ref{subsection-FM-classical} 
of Fourier--Mukai functors in the geometric setting.  

\begin{definition}
In Setup~\ref{setup-FM}, we define the category of \emph{Fourier--Mukai kernels} 
as 
\begin{equation*}
\FM{\cC_1}{S_1}{\cC_2}{S_2}{Z_1}{Z_2}{T} = 
\Func{\Perf(S_1)}{\cC_1}{\cC_2 \otimes_{\Perf(S_2)}\Perf(Z_2 \times_T Z_1)} . 
\end{equation*}
When $\cC_i$ and $S_i$ are clear from the context, we sometimes 
refer to an object of the above category as a \emph{$Z_2 \times_T Z_1$-kernel}. 
\end{definition}

This definition is motivated by the following result. 
\begin{proposition}
\label{proposition-kernels} 
In Setup~\ref{setup-FM}, there is a natural functor 
\begin{equation*}
\FM{\cC_1}{S_1}{\cC_2}{S_2}{Z_1}{Z_2}{T} \to 
\Func{\Perf(T)}{\cC_1 \otimes_{\Perf(S_1)} \Perf(Z_1)}{\cC_2 \otimes_{\Perf(S_2)} \Perf(Z_2)}, 
\end{equation*}
which is an equivalence if $Z_1 \to T$ is smooth and proper. 
\end{proposition}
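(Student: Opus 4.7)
The plan is to construct the natural functor via extension of scalars followed by a pushforward, and then to prove it is an equivalence under the smooth-and-properness hypothesis by chaining this construction with categorical equivalences coming from the dualizability of $\Perf(Z_1)$ over $\Perf(T)$. Conceptually this generalizes Theorem~\ref{theorem-FM-functors}(\ref{FM}), which is the special case $\cC_i = \Perf(S_i)$.

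To construct the natural functor, given a kernel $\cE \in \FM{\cC_1}{S_1}{\cC_2}{S_2}{Z_1}{Z_2}{T}$ viewed as a $\Perf(S_1)$-linear functor $\cC_1 \to \cC_2 \otimes_{\Perf(S_2)} \Perf(Z_2 \times_T Z_1)$ (with $\Perf(S_1)$ acting on the target through the composite $\Perf(S_1) \to \Perf(Z_1) \xrightarrow{\pr_1^*} \Perf(Z_2 \times_T Z_1)$), I would apply the extension-of-scalars adjunction $\Cat_{S_1} \rightleftarrows \Cat_{Z_1}$ to canonically extend $\cE$ to a $\Perf(Z_1)$-linear functor
\[
\widetilde\cE \colon \cC_1 \otimes_{\Perf(S_1)} \Perf(Z_1) \to \cC_2 \otimes_{\Perf(S_2)} \Perf(Z_2 \times_T Z_1),
\]
and then post-compose with $\id_{\cC_2} \otimes \pr_{2*}$, which is well-defined since $\pr_{2*}$ preserves perfect complexes by hypothesis and is $\Perf(T)$-linear by the projection formula. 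This produces the desired $\Perf(T)$-linear functor $\Phi_\cE \colon \cD_1 \to \cD_2$ (with $\cD_i = \cC_i \otimes_{\Perf(S_i)} \Perf(Z_i)$), functorially in $\cE$.

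For the equivalence when $Z_1 \to T$ is smooth and proper, I would chain the following identifications. By Theorem~\ref{theorem-FM-functors}(\ref{fiber-product}), $\Perf(Z_2 \times_T Z_1) \simeq \Perf(Z_2) \otimes_{\Perf(T)} \Perf(Z_1)$, so the kernel category simplifies to $\Fun_{\Perf(S_1)}(\cC_1, \cD_2 \otimes_{\Perf(T)} \Perf(Z_1))$, and the extension-of-scalars adjunction used in the construction identifies this with $\Fun_{\Perf(Z_1)}(\cD_1, \cD_2 \otimes_{\Perf(T)} \Perf(Z_1))$. By Lemmas~\ref{lemma-smooth-proper-morphisms} and~\ref{lemma-smooth-proper-dualizable}, smoothness and properness of $Z_1/T$ ensure $\Perf(Z_1)$ is dualizable in $\Cat_T$ with dual $\Perf(Z_1)^{\op} \simeq \Perf(Z_1)$ (via sheaf dualization). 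Consequently the base change functor $- \otimes_{\Perf(T)} \Perf(Z_1) \colon \Cat_T \to \Cat_{Z_1}$ is simultaneously left and right adjoint to restriction of scalars, which yields
\[
\Fun_{\Perf(Z_1)}(\cD_1, \cD_2 \otimes_{\Perf(T)} \Perf(Z_1)) \simeq \Fun_{\Perf(T)}(\cD_1, \cD_2).
\]
Composing these identifications produces the desired equivalence.

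The main obstacle is to verify that this chain of abstract identifications realizes the natural functor $\cE \mapsto \Phi_\cE$ from the construction. The first two steps tautologically match the passage $\cE \mapsto \widetilde\cE$, so the work concentrates in the final step: one must check that the counit of the right adjunction, under the self-duality identification, corresponds to post-composition with $\id_{\cC_2} \otimes \pr_{2*}$. This amounts to Grothendieck duality for $Z_1 \to T$ combined with the compatibility of pushforward with base change along $Z_2 \to T$, and can be extracted from the universal case $\cC_i = \Perf(S_i)$ of Theorem~\ref{theorem-FM-functors}(\ref{FM}).
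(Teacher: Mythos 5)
Your proposal is correct and relies on the same two essential inputs as the paper's proof: Theorem~\ref{theorem-FM-functors} and the dualizability of $\Perf(Z_1)$ over $\Perf(T)$ when $Z_1\to T$ is smooth and proper. The difference is organizational, and it is precisely the one you flag at the end. The paper defines $\cE\mapsto\Phi_\cE$ \emph{as} the composite of a chain of natural maps, so no matching step arises. Concretely, the paper tensors the classical kernel-to-functor assignment of Theorem~\ref{theorem-FM-functors}\eqref{FM} with $\cC_2$ over $\Perf(S_2)$, composes with the canonical functor
\[
\cC_2\otimes_{\Perf(S_2)}\Fun_{\Perf(T)}\bigl(\Perf(Z_1),\Perf(Z_2)\bigr)\longrightarrow\Fun_{\Perf(T)}\bigl(\Perf(Z_1),\cC_2\otimes_{\Perf(S_2)}\Perf(Z_2)\bigr)
\]
(which is an equivalence exactly when $\Perf(Z_1)$ is dualizable over $\Perf(T)$), then post-composes inside the kernel category and applies the hom-tensor adjunctions. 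Each step in this chain exists in general, and under the smooth-and-proper hypothesis each is an equivalence: the first by Theorem~\ref{theorem-FM-functors}\eqref{FM}, the second by dualizability, and the remaining adjunction steps unconditionally. The equivalence statement is then automatic. You can streamline your argument to match this: replace the final abstract identification in your chain --- the ambidextrous adjunction via self-duality of $\Perf(Z_1)$ --- with the explicit natural transformation ``post-compose with $\id_{\cC_2}\otimes\pr_{2*}$,'' which exists whenever $\pr_{2*}$ preserves perfect complexes (as assumed in Setup~\ref{setup-FM}) and is an equivalence precisely in the dualizable case. Then your chain coincides with your concretely constructed $\Phi_\cE$ by construction, and the matching problem disappears.
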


\begin{proof}
The classical functor 
\begin{equation*}
\Perf(Z_2 \times_T Z_1)  \to \Func{\Perf(T)}{\Perf(Z_1)}{\Perf(Z_2)} 
\end{equation*}
induces a functor
\begin{equation*}
\cC_2 \otimes_{\Perf(S_2)} \Perf(Z_2 \times_T Z_1) \to \cC_2 \otimes_{\Perf(S_2)} \Func{\Perf(T)}{\Perf(Z_1)}{\Perf(Z_2)} .
\end{equation*}
Composing with the canonical functor 
\begin{equation*}
\cC_2 \otimes_{\Perf(S_2)} \Func{\Perf(T)}{\Perf(Z_1)}{\Perf(Z_2)}  \to 
\Func{\Perf(T)}{\Perf(Z_1)}{\cC_2 \otimes_{\Perf(S_2)} \Perf(Z_2)} ,
\end{equation*}
this induces a functor 
\begin{equation*}
\FM{\cC_1}{S_1}{\cC_2}{S_2}{Z_1}{Z_2}{T} \to 
 \Func{\Perf(S_1)}{\cC_1}{\Func{\Perf(T)}{\Perf(Z_1)}{\cC_2 \otimes_{\Perf(S_2)} \Perf(Z_2)}} . 
\end{equation*}
Composing with the canonical equivalence
\begin{multline*}
\Func{\Perf(S_1)}{\cC_1}{\Func{\Perf(T)}{\Perf(Z_1)}{\cC_2 \otimes_{\Perf(S_2)} \Perf(Z_2)}}  \\
\xrightarrow{\sim} 
\Func{\Perf(T)}{\cC_1 \otimes_{\Perf(S_1)} \Perf(Z_1)}{\cC_2 \otimes_{\Perf(S_2)} \Perf(Z_2)}, 
\end{multline*}
gives the sought-after functor. 
It follows from Theorem~\ref{theorem-FM-functors} and the construction 
that this functor is an equivalence if $Z_1 \to T$ is smooth and proper. 
\end{proof}

In Setup~\ref{setup-FM}, given a kernel 
\begin{equation*}
\cE \in \FM{\cC_1}{S_1}{\cC_2}{S_2}{Z_1}{Z_2}{T} ,
\end{equation*}
we denote by 
\begin{equation*}
\Phi_{\cE} \in \Func{\Perf(T)}{ \cC_1 \otimes_{\Perf(S_1)} \Perf(Z_1) }{ \cC_2 \otimes_{\Perf(S_2)} \Perf(Z_2)}
\end{equation*}
the corresponding functor given by Proposition~\ref{proposition-kernels}.

\subsection{Properties of kernel functors} 
\label{subsection-FM-properties}
For the rest of this section, we describe the the effect of certain operations 
on $\cE$ in terms of the associated functor $\Phi_{\cE}$. 

\begin{setup}
\label{setup-FM-2}
In addition to the data of Setup~\ref{setup-FM}, 
assume given for $i=1,2$ another diagram of schemes
\begin{equation}
\label{Ziprime}
\vcenter{
\xymatrix{ 
& Z'_ i\ar[dl] \ar[dr] \\
S_i & & T'
} }  
\end{equation}
such that the pushforward along the 
projection $\pr_2 \colon Z'_2 \times_{T'} Z'_1 \to Z'_2$ preserves 
perfect complexes.  
\end{setup}

In the above setup, if we are given a functor 
\begin{equation*}
F \colon \Perf(Z_2 \times_T Z_1)  \to \Perf(Z'_2 \times_{T'} Z'_1) 
\end{equation*}
which is $S_2 \times_S S_1$-linear, 
we denote by the same symbol the induced functor 
\begin{equation*}
F \colon 
\FM{\cC_1}{S_1}{\cC_2}{S_2}{Z_1}{Z_2}{T} \to 
\FM{\cC_1}{S_1}{\cC_2}{S_2}{Z'_1}{Z'_2}{T'}
\end{equation*}
on kernel categories. 
In particular, this construction allows us to pullback and pushforward kernels along 
morphisms $Z'_2 \times_{T'} Z'_1 \to Z_2 \times_T Z_1$, and to tensor kernels with 
objects of $\Perf(Z_2 \times_T Z_1)$. 
As we discuss below, these operations formally behave the same as in the geometric case. 

The next two results follow by unwinding the definitions. 
\begin{lemma}
\label{lemma-functor-triangle-to-kernel}
Assume we are in Setup~\ref{setup-FM-2}.
Let 
\begin{equation*}
F_1 \to F_2 \to F_3
\end{equation*} 
be an exact triangle of $S_2 \times_S S_1$-linear functors 
$\Perf(Z_2 \times_T Z_1)  \to \Perf(Z'_2 \times_{T'} Z'_1)$. 
Let 
\begin{equation*}
\cE \in \FM{\cC_1}{S_1}{\cC_2}{S_2}{Z_1}{Z_2}{T}
\end{equation*}
be a kernel. Then there is an induced exact triangle of $Z'_2 \times_{T'} Z'_1$-kernels 
\begin{equation*}
F_1(\cE) \to F_2(\cE) \to F_3(\cE). 
\end{equation*}
\end{lemma}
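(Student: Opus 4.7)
The plan is to unwind the construction of the induced action of functors on kernels and then verify pointwise exactness. Recall that an $S_2 \times_S S_1$-linear exact functor $F \colon \Perf(Z_2 \times_T Z_1) \to \Perf(Z'_2 \times_{T'} Z'_1)$ acts on a kernel $\cE \colon \cC_1 \to \cC_2 \otimes_{\Perf(S_2)} \Perf(Z_2 \times_T Z_1)$ by post-composition with the functor $\tilde F \coloneqq \id_{\cC_2} \otimes F$ obtained by tensoring with $\id_{\cC_2}$ over $\Perf(S_2)$ (cf.\ Lemma~\ref{lemma-bc-adjoints} and the discussion following Setup~\ref{setup-FM-2}). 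Thus I must exhibit an exact triangle
\[
\tilde F_1 \circ \cE \to \tilde F_2 \circ \cE \to \tilde F_3 \circ \cE
\]
in the kernel category $\Func{\Perf(S_1)}{\cC_1}{\cC_2 \otimes_{\Perf(S_2)} \Perf(Z'_2 \times_{T'} Z'_1)}$.

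Since this kernel category is a stable $\infty$-category of exact functors between stable $\infty$-categories, fiber/cofiber sequences in it are computed objectwise. It therefore suffices to show that for every $C \in \cC_1$, the triangle $\tilde F_1(\cE(C)) \to \tilde F_2(\cE(C)) \to \tilde F_3(\cE(C))$ is exact in $\cC_2 \otimes_{\Perf(S_2)} \Perf(Z'_2 \times_{T'} Z'_1)$. I will establish the stronger statement that $\tilde F_1 \to \tilde F_2 \to \tilde F_3$ is an exact triangle in $\Funex(\cC_2 \otimes_{\Perf(S_2)} \Perf(Z_2 \times_T Z_1), \cC_2 \otimes_{\Perf(S_2)} \Perf(Z'_2 \times_{T'} Z'_1))$, i.e.\ that for every $Y$ in the source, the triangle $\tilde F_1(Y) \to \tilde F_2(Y) \to \tilde F_3(Y)$ is exact.

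The class $\Sigma$ of objects $Y$ for which this holds is closed under shifts and retracts, and using the $3 \times 3$-lemma for stable $\infty$-categories together with exactness of each $\tilde F_i$, it is also closed under cofibers of morphisms; hence $\Sigma$ is a thick (i.e.\ idempotent-complete stable) subcategory of $\cC_2 \otimes_{\Perf(S_2)} \Perf(Z_2 \times_T Z_1)$. By Lemma~\ref{lemma-box-tensor-generation}, it therefore suffices to verify the property on generating objects of the form $C \boxtimes X$ with $C \in \cC_2$ and $X \in \Perf(Z_2 \times_T Z_1)$. On such objects $\tilde F_i(C \boxtimes X) \simeq C \boxtimes F_i(X)$, so the claim follows by applying the exact functor $C \boxtimes (-)$ (exactness in each variable of the universal bilinear map to the tensor product) to the triangle $F_1(X) \to F_2(X) \to F_3(X)$, which is exact since the original triangle $F_1 \to F_2 \to F_3$ is pointwise exact.

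The one mildly nontrivial step is the closure of $\Sigma$ under cofibers, which is a standard $3 \times 3$-diagram argument; everything else is routine unwinding of definitions.
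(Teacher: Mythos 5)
The paper gives no proof of this lemma; it is dismissed with the remark that it ``follows by unwinding the definitions.'' Your argument is a correct and reasonably economical such unwinding, so there is nothing in the paper to compare it against; instead I will just record a few small comments on the exposition.

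Your identification of the action of $F$ on kernels as post-composition with $\tilde F = \id_{\cC_2}\otimes F$ matches the paper's (implicit) convention, and the reduction is well organized: pass from the kernel category to functors out of $\cC_2\otimes_{\Perf(S_2)}\Perf(Z_2\times_T Z_1)$, observe that exactness is checked objectwise, and appeal to Lemma~\ref{lemma-box-tensor-generation} to reduce to the box-tensor generators, where the claim is immediate from bilinearity. The one place I would tighten the wording is the closure of $\Sigma$ under cofibers. Rather than invoking a $3\times 3$ lemma, it is cleaner to note that the fiber sequence data $F_1\to F_2 \to F_3$ induces, upon applying $\id_{\cC_2}\otimes(-)$, a natural transformation $\tilde F_1 \to \mathrm{fib}(\tilde F_2\to\tilde F_3)$ between two \emph{exact} functors (the fiber of a natural transformation of exact functors is again exact, computed objectwise); $\Sigma$ is precisely the locus where this comparison map is an equivalence, and the equivalence locus of a natural transformation between exact functors is automatically a thick subcategory. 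This is the same argument in substance, but it avoids having to set up and commute a nine-term diagram and makes visible exactly which structure is used. Finally, as you note, establishing exactness of $\tilde F_1\to\tilde F_2\to\tilde F_3$ in $\Funex$ already suffices, since post-composition with $\cE$ is exact; the intermediate reduction to objects $C\in\cC_1$ in step~3 is harmless but redundant.
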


\begin{lemma}
Assume we are in Setup~\ref{setup-FM-2}.
Let 
\begin{equation*}
\cE_1 \to \cE_2 \to \cE_3
\end{equation*} 
be an exact triangle of kernels in 
$\FM{\cC_1}{S_1}{\cC_2}{S_2}{Z_1}{Z_2}{T}$. 
Let 
\begin{equation*}
F \colon \Perf(Z_2 \times_T Z_1) \to \Perf(Z'_2 \times_{T'} Z'_1) 
\end{equation*}
be an $S_2 \times_S S_1$-linear functor. 
Then there is an induced exact triangle of kernels 
\begin{equation*}
F(\cE_1) \to F(\cE_2) \to F(\cE_3) .
\end{equation*}
in $\FM{\cC_1}{S_1}{\cC_2}{S_2}{Z'_1}{Z'_2}{T'}$. 
\end{lemma}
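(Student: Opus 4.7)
The plan is to unwind the definition of the induced functor on kernel categories and then invoke the fact that exact triangles in functor categories between stable $\infty$-categories are computed pointwise. Recall that a kernel in $\FM{\cC_1}{S_1}{\cC_2}{S_2}{Z_1}{Z_2}{T}$ is by definition an object of the functor category $\Func{\Perf(S_1)}{\cC_1}{\cC_2 \otimes_{\Perf(S_2)} \Perf(Z_2 \times_T Z_1)}$. By the construction recorded in the paragraph preceding the statement, the functor induced by $F$ on kernel categories is simply post-composition with the functor
\[
\id_{\cC_2} \otimes F \colon \cC_2 \otimes_{\Perf(S_2)} \Perf(Z_2 \times_T Z_1) \to \cC_2 \otimes_{\Perf(S_2)} \Perf(Z'_2 \times_{T'} Z'_1),
\]
which is extracted from the $S_2$-linear structure on $F$ and is itself a morphism in $\Cat_S$.

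Given this description, I would first record that the exact triangle $\cE_1 \to \cE_2 \to \cE_3$ of kernels amounts, by the pointwise computation of limits and colimits in a functor category between stable $\infty$-categories, to a natural family of exact triangles $\cE_1(C) \to \cE_2(C) \to \cE_3(C)$ in $\cC_2 \otimes_{\Perf(S_2)} \Perf(Z_2 \times_T Z_1)$, one for each $C \in \cC_1$. I would then apply the functor $\id_{\cC_2} \otimes F$ pointwise: as every morphism in $\Cat_S$ is by definition an exact functor of stable $\infty$-categories, it preserves exact triangles. Reassembling these pointwise exact triangles in a natural way, one obtains the asserted exact triangle $F(\cE_1) \to F(\cE_2) \to F(\cE_3)$ in $\FM{\cC_1}{S_1}{\cC_2}{S_2}{Z'_1}{Z'_2}{T'}$.

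There is essentially no obstacle here: the lemma is the companion to Lemma~\ref{lemma-functor-triangle-to-kernel}, and like that one it is a purely formal consequence of the fact that the kernel category is a stable functor category and that the action of $F$ on kernels is, by construction, post-composition with the exact functor $\id_{\cC_2} \otimes F$. The only point requiring any unpacking is this last description of the induced functor, which is the very definition recalled above.
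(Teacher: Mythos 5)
Your argument is correct and matches the paper's intent exactly: the paper explicitly states that both this lemma and Lemma~\ref{lemma-functor-triangle-to-kernel} ``follow by unwinding the definitions,'' and the unwinding you supply — identifying the induced functor on kernel categories as post-composition with $\id_{\cC_2} \otimes F$, and noting that this is exact as a morphism in $\Cat_S$, so it preserves exact triangles in the functor category — is precisely that unwinding.
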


We leave it to the reader to check that the next two results reduce to 
the geometric case, where they are well-known. 
\begin{lemma}
\label{lemma-projection-formula}
Assume we are in Setup~\ref{setup-FM-2}.
Let $f \colon Z'_2 \times_{T'} Z'_1 \to Z_2 \times_T Z_1$ be a morphism such 
that $f_*$ preserves perfect complexes.
\begin{enumerate}
\item   
Let 
\begin{equation*}
\cE \in \FM{\cC_1}{S_1}{\cC_2}{S_2}{Z_1}{Z_2}{T}
\end{equation*}
be a kernel. 
Let $\cF' \in \Perf(Z'_2 \times_{T'} Z'_1)$. 
Then there is an equivalence of kernels 
\begin{equation*}
f_*(f^*(\cE) \otimes \cF') \simeq \cE \otimes f_*(\cF').
\end{equation*}

\item 
Let 
\begin{equation*}
\cE' \in \FM{\cC_1}{S_1}{\cC_2}{S_2}{Z'_1}{Z'_2}{T'}
\end{equation*}
be a kernel. 
Let $\cF \in \Perf(Z_2 \times_{T} Z_1)$. 
Then there is an equivalence of kernels 
\begin{equation*}
f_*(\cE' \otimes f^*(\cF)) \simeq f_*(\cE') \otimes \cF.
\end{equation*}
\end{enumerate}
\end{lemma}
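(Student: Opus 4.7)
The plan is to reduce both statements to the classical projection formula for perfect complexes on (derived) schemes, and then extend by linearity. By construction, an $S_2 \times_S S_1$-linear endofunctor $F$ of $\Perf(Z_2 \times_T Z_1)$ (or a morphism between such) induces an endofunctor (or morphism) $\id_{\cC_2} \otimes F$ of $\cC_2 \otimes_{\Perf(S_2)} \Perf(Z_2 \times_T Z_1)$, and the operations $f^*$, $f_*$, $(-) \otimes \cF$, $(-) \otimes \cF'$ on kernel categories are by definition obtained by post-composing an $S_1$-linear functor $\cC_1 \to \cC_2 \otimes_{\Perf(S_2)} \Perf(\cdots)$ with such base-changed functors. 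Consequently, an equivalence between two composite functors $\id_{\cC_2} \otimes F_1 \simeq \id_{\cC_2} \otimes F_2$ is induced by any equivalence $F_1 \simeq F_2$ of $S_2 \times_S S_1$-linear functors on perfect complexes.

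With this in hand, the equivalence of kernels in part~(1) reduces to the equivalence of $S_2 \times_S S_1$-linear functors $\Perf(Z_2 \times_T Z_1) \to \Perf(Z_2 \times_T Z_1)$ given by
\begin{equation*}
f_*(f^*(-) \otimes \cF') \simeq (-) \otimes f_*(\cF'),
\end{equation*}
and similarly part~(2) reduces to the equivalence
\begin{equation*}
f_*(\cF \otimes f^*(-)) \simeq \cF \otimes f_*(-).
\end{equation*}
Both are instances of the classical projection formula for a morphism $f$ of (derived) schemes whose pushforward preserves perfect complexes; this is well-known in our setting (see e.g.~\cite{bzfn}), and the natural transformation in each case is $S_2 \times_S S_1$-linear because it is built from the adjunction $(f^*, f_*)$ and the tensor structure, both of which are linear over the base.

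The only point requiring care is verifying that the reduction step above is legitimate: namely, that applying $(\id_{\cC_2} \otimes -)$ to an equivalence of $S_2 \times_S S_1$-linear functors between $\Perf$-categories yields an equivalence of the induced functors between base-changed $\cC_2$-categories, and moreover that the resulting equivalence is compatible with the characterization of the induced operations on kernels. This is a formal matter of functoriality of $\otimes_{\Perf(S_2)}$ applied to the appropriate $\infty$-category of linear functors, and will be the main (mild) obstacle. Once this bookkeeping is done, no further work is required, matching the authors' parenthetical remark that the statement reduces to the geometric case.
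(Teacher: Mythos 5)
Your approach matches the paper's own, which simply states the result ``reduces to the geometric case, where it is well-known,'' and the bookkeeping you sketch --- functoriality of $\cC_2 \otimes_{\Perf(S_2)} (-)$ applied to $S_2 \times_S S_1$-linear functors and to equivalences between them, then reduction to the classical projection formula --- is correct. One small slip worth flagging: the displayed reduction you give for part~(2), $f_*(\cF \otimes f^*(-)) \simeq \cF \otimes f_*(-)$, is not well-typed, since $\cF \in \Perf(Z_2 \times_T Z_1)$ must be pulled back before it can be tensored against the argument of $f_*$; the correct reduction is $f_*((-) \otimes f^*(\cF)) \simeq f_*(-) \otimes \cF$ as $S_2 \times_S S_1$-linear functors $\Perf(Z'_2 \times_{T'} Z'_1) \to \Perf(Z_2 \times_T Z_1)$, which is exactly the classical projection formula with the roles of source and target of $f$ as expected. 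With that fixed, the argument is complete.
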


\begin{lemma}
\label{lemma-twist-kernel}
Let 
\begin{equation*}
\cE \in \FM{\cC_1}{S_1}{\cC_2}{S_2}{Z_1}{Z_2}{T}
\end{equation*}
be a kernel. 
Let $\cF_2 \in \Perf(Z_2), \cF_1 \in \Perf(Z_1)$, and let 
$\cF_2 \boxtimes \cF_1 \in \Perf(Z_2 \times_T Z_1)$ be their exterior product. 
Then there is an equivalence 
\begin{equation*}
\Phi_{\cE \otimes (\cF_2 \boxtimes \cF_1)} \simeq (- \otimes \cF_2) \circ \Phi_{\cE} \circ (- \otimes \cF_1) . 
\end{equation*}
\end{lemma}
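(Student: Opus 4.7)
The plan is to reduce the statement to the classical Fourier--Mukai identity via the construction of $\Phi_{\cE}$ given in the proof of Proposition~\ref{proposition-kernels}. The core observation is that, under the classical equivalence
\begin{equation*}
\Perf(Z_2 \times_T Z_1) \xrightarrow{\sim} \Func{\Perf(T)}{\Perf(Z_1)}{\Perf(Z_2)}
\end{equation*}
of Theorem~\ref{theorem-FM-functors}\eqref{FM} (used implicitly through the composition defining $\Phi_{\cE}$), the operation $\cE_0 \mapsto \cE_0 \otimes (\cF_2 \boxtimes \cF_1)$ on kernels corresponds to the operation $\Psi \mapsto (-\otimes \cF_2) \circ \Psi \circ (-\otimes \cF_1)$ on functors. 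Indeed, writing $\cF_2 \boxtimes \cF_1 = \pi_2^* \cF_2 \otimes \pi_1^* \cF_1$ and applying the projection formula for $\pr_2 \colon Z_2 \times_T Z_1 \to Z_2$ gives, for any $F \in \Perf(Z_1)$,
\begin{equation*}
\pr_{2*}\bigl(\cE_0 \otimes \pi_2^* \cF_2 \otimes \pi_1^* \cF_1 \otimes \pi_1^* F\bigr) \simeq \cF_2 \otimes \pr_{2*}\bigl(\cE_0 \otimes \pi_1^*(\cF_1 \otimes F)\bigr),
\end{equation*}
which is precisely the equivalence in the geometric case. This identity is natural in $\cE_0$ as an $S_2 \times_T S_1$-linear operation.

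Next I would promote this natural equivalence to the noncommutative setting. Applying $\cC_2 \otimes_{\Perf(S_2)} (-)$ to the classical natural identity, and then composing with the canonical map
\begin{equation*}
\cC_2 \otimes_{\Perf(S_2)} \Func{\Perf(T)}{\Perf(Z_1)}{\Perf(Z_2)} \to \Func{\Perf(T)}{\Perf(Z_1)}{\cC_2 \otimes_{\Perf(S_2)}\Perf(Z_2)},
\end{equation*}
yields an equivalence of $S_1$-linear functors from $\Perf(Z_2 \times_T Z_1)$ (acting on the kernel side) to the functor category (acting on the output side). Finally, passing from $\cC_1$ to $\cC_1 \otimes_{\Perf(S_1)} \Perf(Z_1)$ via the adjunction used in the proof of Proposition~\ref{proposition-kernels}, and noting that the action of $\cF_1$ on $\cC_1 \otimes_{\Perf(S_1)} \Perf(Z_1)$ by its $\Perf(Z_1)$-linear structure corresponds to the pre-composition with $(-\otimes \cF_1)$ on kernels, gives the claimed equivalence
\begin{equation*}
\Phi_{\cE \otimes (\cF_2 \boxtimes \cF_1)} \simeq (-\otimes \cF_2) \circ \Phi_{\cE} \circ (-\otimes \cF_1).
\end{equation*}

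Since the tensor operations $(-\otimes \cF_i)$ are $T$-linear and cocontinuous, they pass through the tensor products of linear categories without difficulty, so the only real work is bookkeeping. The main obstacle I anticipate is matching the three different avatars of ``tensoring with $\cF_2 \boxtimes \cF_1$'' --- as an endofunctor of $\Perf(Z_2 \times_T Z_1)$, as an operation on $\FM{\cC_1}{S_1}{\cC_2}{S_2}{Z_1}{Z_2}{T}$ induced by an $S_2 \times_T S_1$-linear endofunctor (as in the discussion preceding Lemma~\ref{lemma-functor-triangle-to-kernel}), and as composition by $(-\otimes \cF_2)$ and $(-\otimes \cF_1)$ on the level of $T$-linear functors --- so that the classical equivalence of Theorem~\ref{theorem-FM-functors}\eqref{FM} turns one into the other functorially in $\cE$. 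Once this functorial compatibility is tracked through the three steps of the construction of $\Phi_{\cE}$ in Proposition~\ref{proposition-kernels}, the lemma follows.
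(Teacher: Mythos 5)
Your proposal is correct and takes the paper's (implicit) approach: the paper leaves this lemma to the reader as a reduce-to-the-geometric-case exercise, and you supply the projection-formula identity in the classical setting and then track it through the three-step construction of $\Phi_{\cE}$ in Proposition~\ref{proposition-kernels}. One small slip worth flagging: the tensoring endofunctor on $\Perf(Z_2 \times_T Z_1)$ should be recorded as $S_2 \times_S S_1$-linear (the paper's convention before Lemma~\ref{lemma-functor-triangle-to-kernel}) rather than $S_2 \times_T S_1$-linear, since in Setup~\ref{setup-FM} the schemes $S_i$ are not given morphisms to $T$.
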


\begin{setup}
\label{setup-FM-3}
Assume that in Setup~\ref{setup-FM-2}, for $i=1,2$ 
the diagrams~\eqref{Zi} and~\eqref{Ziprime} fit 
into a commutative diagram  
\begin{equation}
\vcenter{
\label{Zi-diagram}
\xymatrix{ 
& \ar[dl] Z'_i \ar[r] \ar[d]^{f_i} & T' \ar[d] \\ 
S_i & \ar[l] Z_i \ar[r] & T 
}  } 
\end{equation}
\end{setup}

In the above setup, we have a morphism 
\begin{equation*}
f_2 \times f_1 \colon Z'_2 \times_{T'} Z'_1 \to Z_2 \times_{T} Z_1 , 
\end{equation*}
which by pushforward (when it preserves perfect complexes) and 
pullback induces functors between kernel categories as above. 
Further, the morphism $f_i$ induces by pushforward (when it 
preserves perfect complexes) and pullback functors 
\begin{align*}
f_{i*} & \colon \cC_i \otimes_{\Perf(S_i)} \Perf(Z'_i) \to \cC_i \otimes_{\Perf(S_i)} \Perf(Z_i), \\
f_i^* & \colon  \cC_i \otimes_{\Perf(S_i)} \Perf(Z_i) \to \cC_i \otimes_{\Perf(S_i)} \Perf(Z'_i). 
\end{align*}

The next two results easily reduce to the geometric case, where they are straightforward. 
\begin{lemma}
\label{lemma-pushforward-kernel}
In Setup~\ref{setup-FM-3}, assume further that 
$(f_2 \times f_1)_*$ preserves perfect complexes. 
Let 
\begin{equation*}
\cE' \in 
\FM{\cC_1}{S_1}{\cC_2}{S_2}{Z'_1}{Z'_2}{T'}
\end{equation*}
be a kernel, and 
\begin{equation*}
\cE = (f_2 \times f_1)_*\cE' \in 
\FM{\cC_1}{S_1}{\cC_2}{S_2}{Z_1}{Z_2}{T} . 
\end{equation*}
Then there is an equivalence  
\begin{equation*}
\Phi_{\cE} \simeq f_{2*} \circ \Phi_{\cE'} \circ f_1^*. 
\end{equation*}
\end{lemma}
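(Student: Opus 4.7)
The plan is to unwind the construction of $\Phi_{\cE}$ from Proposition~\ref{proposition-kernels} and trace both sides of the claimed equivalence through it, ultimately reducing to the classical Fourier--Mukai base-change identity. Recall that $\Phi_\cE$ is assembled in three canonical steps: (i) tensoring the classical kernel functor~\eqref{FM-functor} with $\cC_2$ over $\Perf(S_2)$; (ii) post-composing with the canonical arrow $\cC_2 \otimes_{\Perf(S_2)} \Func{\Perf(T)}{\Perf(Z_1)}{\Perf(Z_2)} \to \Func{\Perf(T)}{\Perf(Z_1)}{\cC_2 \otimes_{\Perf(S_2)} \Perf(Z_2)}$; (iii) applying the tensor--hom adjunction to pass from an $S_1$-linear functor out of $\cC_1$ to a $T$-linear functor out of $\cC_1 \otimes_{\Perf(S_1)} \Perf(Z_1)$. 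The functors $f_{2*}$ and $f_1^*$ appearing on the right-hand side of the claim are, by construction, obtained from the classical geometric pushforward and pullback by tensoring with $\id_{\cC_2}$ and $\id_{\cC_1}$, respectively. Likewise, the kernel pushforward $(f_2 \times f_1)_*$ is induced from its classical counterpart by tensoring with $\id_{\cC_2}$ and composing with the natural functor out of $\cC_1$.

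The geometric heart of the argument is the purely classical identity: for $\cE' \in \Perf(Z'_2 \times_{T'} Z'_1)$ and its pushforward $\cE = (f_2 \times f_1)_* \cE' \in \Perf(Z_2 \times_T Z_1)$, there is an equivalence $\Phi_\cE \simeq f_{2*} \circ \Phi_{\cE'} \circ f_1^*$ of $T$-linear functors $\Perf(Z_1) \to \Perf(Z_2)$. This follows from the projection formula together with flat base change along the Cartesian square
\begin{equation*}
\xymatrix{
Z'_2 \times_{T'} Z'_1 \ar[r] \ar[d]_{f_2 \times f_1} & Z'_1 \ar[d]^{f_1} \\
Z_2 \times_T Z_1 \ar[r] & Z_1
}
\end{equation*}
obtained by base changing the right-hand square of~\eqref{Zi-diagram} along $Z_2 \to T$, using the hypothesis that $(f_2 \times f_1)_*$ preserves perfect complexes.

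The remaining task is to lift this identity to the noncommutative setting by applying the three steps (i)--(iii) above and checking compatibility with $f_{2*}$ and $f_1^*$ at each stage. The key point is that the canonical arrow in step (ii) is natural in both its source and target variables, and the adjunction in step (iii) is likewise natural, so pushing $f_{2*}$ and $f_1^*$ through these constructions is formal. The main obstacle, such as it is, lies in verifying this naturality rigorously in the symmetric monoidal $\infty$-categorical framework of~\cite{lurie-HA}; this can be done either by invoking the universal properties characterizing each construction, or by testing on generating objects of the form $C \boxtimes G$ via Lemmas~\ref{lemma-box-tensor-generation} and~\ref{lemma-kunneth} and then extending by idempotent-complete stable cocompletion.
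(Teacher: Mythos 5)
Your overall strategy—reduce the noncommutative statement to the classical Fourier--Mukai identity and then propagate it formally through the construction of Proposition~\ref{proposition-kernels}—is exactly the route the paper takes (the paper's proof is the single sentence that the next two results ``easily reduce to the geometric case, where they are straightforward''), and the conclusion is correct. However, there is an error in your justification of the classical identity. The square you display,
\begin{equation*}
\xymatrix{
Z'_2 \times_{T'} Z'_1 \ar[r] \ar[d]_{f_2 \times f_1} & Z'_1 \ar[d]^{f_1} \\
Z_2 \times_T Z_1 \ar[r] & Z_1 ,
}
\end{equation*}
is \emph{not} Cartesian in general: its fiber product is $Z_2 \times_T Z'_1$, and this agrees with $Z'_2 \times_{T'} Z'_1$ only when $Z'_2 \simeq Z_2 \times_T T'$, i.e.\ when the square in diagram~\eqref{Zi-diagram} for $i=2$ is Cartesian. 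That is precisely the additional hypothesis imposed in Lemma~\ref{lemma-pullback-kernel}\eqref{pullback-kernel-2} and discussed in Remark~\ref{remark-pullback-kernel}; it is \emph{not} part of Setup~\ref{setup-FM-3}. Fortunately you do not need base change at all. Writing $\pr'_i$ and $\pr_i$ for the projections of $Z'_2 \times_{T'} Z'_1$ and $Z_2 \times_T Z_1$, the commutativity $f_2 \circ \pr'_2 = \pr_2 \circ (f_2 \times f_1)$ and $f_1 \circ \pr'_1 = \pr_1 \circ (f_2 \times f_1)$ together with the projection formula for $(f_2\times f_1)_*$ give directly
\begin{equation*}
f_{2*}\,\pr'_{2*}\bigl(\cE' \otimes \pr_1^{\prime *} f_1^* F\bigr)
\simeq \pr_{2*} (f_2 \times f_1)_*\bigl(\cE' \otimes (f_2\times f_1)^*\pr_1^* F\bigr)
\simeq \pr_{2*}\bigl(\cE \otimes \pr_1^* F\bigr),
\end{equation*}
so the geometric identity holds with no Cartesian or flatness assumptions. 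You should delete the base-change claim; with that fix your sketch of the reduction to the classical case matches the paper's (deliberately brief) argument. One further caution: your fallback of ``testing on generating objects of the form $C \boxtimes G$'' is weaker than you suggest, since agreement of two exact functors on a generating set of objects does not by itself produce an equivalence of functors at the $\infty$-categorical level; the cleaner route is the one you mention first, invoking naturality of the universal-property characterizations at each of your three steps.
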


\begin{lemma}
\label{lemma-pullback-kernel}
Assume we are in Setup~\ref{setup-FM-3}. 
Let 
\begin{equation*}
\cE \in 
\FM{\cC_1}{S_1}{\cC_2}{S_2}{Z_1}{Z_2}{T}
\end{equation*}
be a kernel, and 
\begin{equation*}
\cE'  = (f_2 \times f_1)^*\cE \in 
\FM{\cC_1}{S_1}{\cC_2}{S_2}{Z'_1}{Z'_2}{T'} . 
\end{equation*}
\begin{enumerate}
\item 
\label{pullback-kernel-1}
If the square 
\begin{equation*}
\xymatrix{
Z'_2 \ar[d]_{f_2} & \ar[l] Z'_2 \times_{T'} Z'_1 \ar[d]^{f_2 \times f_1} \\ 
Z_2 & \ar[l] Z_2 \times_T Z_1 
}
\end{equation*}
is cartesian, then there is an equivalence 
\begin{equation}
\label{base-change-pullbacks}
\Phi_{\cE'} \circ f_1^* \simeq f_2^* \circ \Phi_{\cE} .
\end{equation} 

\item 
\label{pullback-kernel-2}
If the square 
\begin{equation*}
\xymatrix{
Z'_2 \times_{T'} Z'_1 \ar[d]_{f_2 \times f_1} \ar[r] & Z'_1 \ar[d]^{f_1} \\ 
Z_2 \times_{T} Z_1 \ar[r] & Z_1 
}
\end{equation*}
is cartesian, then there is an equivalence 
\begin{equation}
\label{base-change-pushforwards}
f_{2*} \circ \Phi_{\cE'} \simeq \Phi_{\cE} \circ f_{1*} . 
\end{equation}
\end{enumerate}
\end{lemma}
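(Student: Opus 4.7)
The plan is to reduce both parts to the classical geometric case $\cC_1 = \Perf(S_1)$, $\cC_2 = \Perf(S_2)$, where the assertions become standard base change and projection formulas for Fourier--Mukai functors on schemes. The reduction is powered by the fact that the construction $\cE \mapsto \Phi_\cE$ of Proposition~\ref{proposition-kernels} is manifestly natural in the scheme data: it is assembled from (a) the classical Fourier--Mukai functor $\Perf(Z_2 \times_T Z_1) \to \Func{\Perf(T)}{\Perf(Z_1)}{\Perf(Z_2)}$, (b) tensoring on the left with $\cC_2$ over $\Perf(S_2)$, (c) the canonical map $\cC_2 \otimes_{\Perf(S_2)} \Func{\Perf(T)}{\Perf(Z_1)}{\Perf(Z_2)} \to \Func{\Perf(T)}{\Perf(Z_1)}{\cC_2 \otimes_{\Perf(S_2)} \Perf(Z_2)}$, and (d) tensor--hom adjunction along $\Perf(S_1)$. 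Since the noncommutative operations $f_i^*$, $f_{i*}$, and $(f_2 \times f_1)^*$ on kernel and functor categories are obtained by tensoring their scheme-level analogues with $\cC_i$ and post-composing with these structural maps, each claimed equivalence reduces formally to the corresponding equivalence in the geometric case.

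For part~\eqref{pullback-kernel-1}, I verify the scheme-level identity $f_2^* \circ \Phi_\cE \simeq \Phi_{(f_2 \times f_1)^*\cE} \circ f_1^*$. On an object $F \in \Perf(Z_1)$, the left side computes to $f_2^* \pr_{2*}(\cE \otimes \pr_1^* F)$ and the right side to $\pr_{2*}'((f_2 \times f_1)^*\cE \otimes (\pr_1')^* f_1^* F)$, where $\pr_1', \pr_2'$ denote the projections from $Z'_2 \times_{T'} Z'_1$. These match by combining the derived base change equivalence $f_2^* \pr_{2*} \simeq \pr_{2*}' (f_2 \times f_1)^*$ for the cartesian square of the hypothesis, the identity $(f_2 \times f_1)^* \pr_1^* \simeq (\pr_1')^* f_1^*$ coming from $\pr_1 \circ (f_2 \times f_1) = f_1 \circ \pr_1'$, and the compatibility of pullback with tensor products.

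For part~\eqref{pullback-kernel-2}, the scheme-level identity $f_{2*} \circ \Phi_{\cE'} \simeq \Phi_\cE \circ f_{1*}$ follows from derived base change $\pr_1^* f_{1*} \simeq (f_2 \times f_1)_* (\pr_1')^*$ applied to the hypothesized cartesian square, the projection formula $\cE \otimes (f_2 \times f_1)_*(-) \simeq (f_2 \times f_1)_*((f_2 \times f_1)^*\cE \otimes -)$, and the composition identity $\pr_2 \circ (f_2 \times f_1) = f_2 \circ \pr_2'$. Concatenating these identifications produces the required chain of equivalences between $\Phi_\cE(f_{1*}F')$ and $f_{2*}\Phi_{\cE'}(F')$. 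The principal obstacle is simply bookkeeping: verifying that the geometric equivalences promote coherently through the tensor product with $\cC_2$ and the tensor--hom adjunction with $\cC_1$. This is precisely the content of the naturality statement articulated in the first paragraph, so no new mathematical input is required; the argument is straightforward once the reduction is made explicit.
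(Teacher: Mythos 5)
Your proof is correct and takes the same route the paper intends: the paper dispatches this lemma with the one-line remark that it ``easily reduces to the geometric case, where it is straightforward,'' and you have carried out exactly that reduction (via naturality of the construction in Proposition~\ref{proposition-kernels}) and supplied the standard derived base-change/projection-formula computations in the geometric case.
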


\begin{remark}
\label{remark-pullback-kernel}
For $i=1,2$, the assumption of 
Lemma~\ref{lemma-pullback-kernel}($i$) automatically holds if the square in diagram~\eqref{Zi-diagram} is cartesian. 
\end{remark}

\begin{lemma}
\label{lemma-pullback-kernel-T-equal}
In Setup~\ref{setup-FM-3}, assume further that 
that $T' = T$. 
Let 
\begin{equation*}
\cE \in 
\FM{\cC_1}{S_1}{\cC_2}{S_2}{Z_1}{Z_2}{T}
\end{equation*}
be a kernel, and 
\begin{equation*}
\cE'  = (f_2 \times f_1)^*\cE \in 
\FM{\cC_1}{S_1}{\cC_2}{S_2}{Z'_1}{Z'_2}{T} . 
\end{equation*}
Then there is an equivalence 
\begin{equation}
\label{base-change-pullbacks-2}
\Phi_{\cE'} \simeq f_2^* \circ \Phi_{\cE} \circ f_{1*} .
\end{equation} 
\end{lemma}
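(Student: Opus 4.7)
The plan is to factor the morphism $f_2 \times f_1$ through an intermediate fiber product space, so that the required identity splits into two applications of Lemma~\ref{lemma-pullback-kernel}, one of each type. Concretely, I would write
\[
Z'_2 \times_T Z'_1 \xrightarrow{\id_{Z'_2} \times f_1} Z'_2 \times_T Z_1 \xrightarrow{f_2 \times \id_{Z_1}} Z_2 \times_T Z_1,
\]
so that $\cE' = (\id_{Z'_2} \times f_1)^*(f_2 \times \id_{Z_1})^* \cE$. Setting $\widetilde{\cE} = (f_2 \times \id_{Z_1})^*\cE$, the strategy is to first identify $\Phi_{\widetilde{\cE}}$ with $f_2^* \circ \Phi_\cE$ and then to identify $\Phi_{\cE'}$ with $\Phi_{\widetilde{\cE}} \circ f_{1*}$.

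For the first identification, I would apply Lemma~\ref{lemma-pullback-kernel}\eqref{pullback-kernel-1} to the morphism $f_2 \times \id_{Z_1}$, viewed as coming from the pair consisting of $f_2 \colon Z'_2 \to Z_2$ and $\id_{Z_1}$ over $T$. The square relevant to that part of the lemma,
\[
\xymatrix{
Z'_2 \ar[d]_{f_2} & \ar[l] Z'_2 \times_T Z_1 \ar[d]^{f_2 \times \id_{Z_1}} \\
Z_2 & \ar[l] Z_2 \times_T Z_1,
}
\]
is tautologically cartesian (it is the defining base change), so the lemma yields $\Phi_{\widetilde{\cE}} \simeq f_2^* \circ \Phi_\cE$. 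For the second identification, I would apply Lemma~\ref{lemma-pullback-kernel}\eqref{pullback-kernel-2} to $\id_{Z'_2} \times f_1$, viewed as coming from $\id_{Z'_2}$ and $f_1 \colon Z'_1 \to Z_1$ over $T$; again the relevant square is tautologically cartesian, giving $\Phi_{\cE'} \simeq \Phi_{\widetilde{\cE}} \circ f_{1*}$. Composing the two equivalences yields the desired $\Phi_{\cE'} \simeq f_2^* \circ \Phi_\cE \circ f_{1*}$.

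The one nontrivial obstacle in this plan is that the intermediate kernel $\widetilde{\cE}$ must be a legitimate object of the formalism, i.e.\ Setup~\ref{setup-FM-2} must hold for the intermediate diagram with $Z''_1 = Z_1$ and $Z''_2 = Z'_2$. Concretely, pushforward along the projection $Z'_2 \times_T Z_1 \to Z'_2$ must preserve perfect complexes. Since this projection is the base change of $\pr_2 \colon Z_2 \times_T Z_1 \to Z_2$ along $f_2 \colon Z'_2 \to Z_2$, the preservation follows from the corresponding property for $\pr_2$ (assumed in Setup~\ref{setup-FM}) together with the flat base-change formula for pushforward under mild hypotheses on $f_2$, which will hold in all applications in the sequel (e.g.\ when $f_2$ is a morphism between schemes that are smooth and proper over a common base, as in Remark~\ref{remark:good-morphism}). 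I would record this verification briefly at the outset of the proof before invoking the two parts of Lemma~\ref{lemma-pullback-kernel}.
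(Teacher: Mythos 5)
Your proof is correct and matches the paper's argument exactly: the paper likewise factors $f_2 \times f_1$ through $Z'_2 \times_T Z_1$ and applies Lemma~\ref{lemma-pullback-kernel} parts \eqref{pullback-kernel-1} and \eqref{pullback-kernel-2} in succession. Your extra remark about verifying that pushforward along $Z'_2 \times_T Z_1 \to Z'_2$ preserves perfect complexes (so the intermediate kernel category is well-defined) is a sensible precaution that the paper's one-line proof leaves implicit; it indeed holds in the paper's applications since there $\pr_2$ is perfect and proper, properties stable under base change.
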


\begin{proof}
Factor 
$f_2 \times f_1$ as the composition 
\begin{equation*}
Z'_2 \times_T Z'_1 \xrightarrow{\, \id \times f_1 \,} Z'_2 \times_T Z_1 \xrightarrow{\, f_2 \times \id \,} Z_2 \times_T Z_1
\end{equation*}
and apply successively Lemma~\ref{lemma-pullback-kernel} parts~\eqref{pullback-kernel-1} and~\eqref{pullback-kernel-2}. 
\end{proof}

\begin{proposition}
\label{proposition-base-change-ff} 
In Setup~\ref{setup-FM-3}, assume further that for $i=1,2$ the square in 
diagram~\eqref{Zi-diagram} is cartesian.  
Let 
\begin{equation*}
\cE \in 
\FM{\cC_1}{S_1}{\cC_2}{S_2}{Z_1}{Z_2}{T} 
\end{equation*}
be a kernel, and let $\cE' = (f_2 \times f_1)^* \cE$ be its base change. 
Assume the associated functors
\begin{align*}
\Phi_{\cE} & \colon \cC_1 \otimes_{\Perf(S_1)} \Perf(Z_1) \to \cC_2 \otimes_{\Perf(S_2)} \Perf(Z_2),  \\
\Phi_{\cE'} & \colon  \cC_1 \otimes_{\Perf(S_1)} \Perf(Z'_1) \to \cC_2 \otimes_{\Perf(S_2)} \Perf(Z'_2), 
\end{align*}
admit left adjoints $\Phi_{\cE}^*$ and $\Phi_{\cE'}^*$. 
\begin{enumerate}
\item \label{base-change-ff-1}
If $\Phi_{\cE}$ is fully faithful, so is $\Phi_{\cE'}$. 
\item \label{base-change-ff-2}
If $\Phi_{\cE}^*$ is fully faithful, so is $\Phi_{\cE'}^*$. 
\end{enumerate}
\end{proposition}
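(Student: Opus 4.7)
The strategy is to realize $\Phi_{\cE'}$ as the base change $\Phi_{\cE} \otimes_{\Perf(T)} \id_{\Perf(T')}$ and then invoke Lemma~\ref{lemma-bc-adjoints}. Since the squares in diagram~\eqref{Zi-diagram} are cartesian, we have $Z'_i \simeq Z_i \times_T T'$, and hence by Theorem~\ref{theorem-FM-functors}\eqref{fiber-product} together with associativity of tensor products there is an equivalence
\[
\cC_i \otimes_{\Perf(S_i)} \Perf(Z'_i) \simeq \bigl(\cC_i \otimes_{\Perf(S_i)} \Perf(Z_i)\bigr) \otimes_{\Perf(T)} \Perf(T').
\]
Under this identification, $f_i^*$ corresponds to the canonical base-change inclusion $D_i \mapsto D_i \boxtimes \cO_{T'}$.

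Next I would identify $\Phi_{\cE'}$ with $\Phi_{\cE} \otimes_{\Perf(T)} \id_{\Perf(T')}$. By the universal property of the tensor product, a $T'$-linear functor out of $(\cC_1 \otimes_{\Perf(S_1)} \Perf(Z_1)) \otimes_{\Perf(T)} \Perf(T')$ is determined, up to its $T'$-linearity coherence, by its restriction along the canonical base-change inclusion. By Lemma~\ref{lemma-pullback-kernel}\eqref{pullback-kernel-1} --- whose cartesian hypothesis holds by Remark~\ref{remark-pullback-kernel} --- we have $\Phi_{\cE'} \circ f_1^* \simeq f_2^* \circ \Phi_{\cE}$, which is exactly the restriction of $\Phi_{\cE} \otimes \id$ along the base-change inclusion. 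So $\Phi_{\cE'}$ and $\Phi_{\cE} \otimes \id$ satisfy the same universal property, giving the desired equivalence.

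With this identification in hand, Lemma~\ref{lemma-bc-adjoints} finishes the proof: its first part yields $\Phi_{\cE'}^* \simeq \Phi_{\cE}^* \otimes_{\Perf(T)} \id_{\Perf(T')}$, and its second part shows that the tensor product of fully faithful functors is fully faithful provided each factor admits a one-sided adjoint. For the first claim of the proposition we tensor $\Phi_{\cE}$ (which has a left adjoint by assumption) with $\id_{\Perf(T')}$ (which is its own adjoint); for the second claim we tensor $\Phi_{\cE}^*$ (whose right adjoint is $\Phi_{\cE}$) with $\id_{\Perf(T')}$.

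The main obstacle is the identification in the second paragraph: verifying rigorously that the $T'$-linearity data on $\Phi_{\cE'}$ matches that of $\Phi_{\cE} \otimes \id$, not merely that the two functors agree on generators of the form $D_1 \boxtimes \cO_{T'}$. This amounts to tracing carefully through the construction of $\Phi_{(-)}$ in Proposition~\ref{proposition-kernels} and exploiting the naturality of the base-change equivalence in Theorem~\ref{theorem-FM-functors}\eqref{fiber-product}.
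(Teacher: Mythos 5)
Your proof is correct and takes a more structural route than the paper's. The paper argues directly: by Lemma~\ref{lemma-box-tensor-generation} it suffices to check the counit $\Phi_{\cE'}^* \Phi_{\cE'}(C) \to C$ is an equivalence on box-tensor generators $C = C_1 \boxtimes F_1 \boxtimes G$, which follows from the base-change compatibilities $\Phi_{\cE'} \circ f_1^* \simeq f_2^* \circ \Phi_{\cE}$ and $\Phi_{\cE'}^* \circ f_2^* \simeq f_1^* \circ \Phi_{\cE}^*$ (from Lemma~\ref{lemma-pullback-kernel}) together with $T'$-linearity. You instead factor the argument through the structural identity $\Phi_{\cE'} \simeq \Phi_{\cE} \otimes_{\Perf(T)} \id_{\Perf(T')}$ and invoke Lemma~\ref{lemma-bc-adjoints}. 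Your reservation in the final paragraph is actually unnecessary: the input you need is precisely the base-change adjunction
\[
\Fun_{\Perf(T')}\bigl(\cD \otimes_{\Perf(T)} \Perf(T'), \cE\bigr) \simeq \Fun_{\Perf(T)}(\cD, \cE),
\]
a standard consequence of \cite[Section~4.5]{lurie-HA} applied to the $\Perf(T)$-algebra $\Perf(T')$. This uniquely pins down the $T'$-linearity data along with the underlying functor, so the identification of $\Phi_{\cE'}$ with $\Phi_{\cE} \otimes \id$ follows from Lemma~\ref{lemma-pullback-kernel}\eqref{pullback-kernel-1} alone, with no need to retrace Proposition~\ref{proposition-kernels}. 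The tradeoff: your version is cleaner and more modular, and handles parts (1) and (2) uniformly, but leans on a universal property the paper does not state explicitly; the paper's version is self-contained given the kernel formalism. Note too that the paper declares the proof of Lemma~\ref{lemma-bc-adjoints} to be ``formal and left to the reader,'' and its part (2) would itself be proved by a generators check resembling the paper's direct argument here, so the two routes differ chiefly in where the work is packaged.
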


\begin{proof}
For~\eqref{base-change-ff-1}, we must check that the canonical morphism 
\begin{equation*}
\Phi_{\cE'}^* \Phi_{\cE'}(C) \to C 
\end{equation*}
is an equivalence for any $C \in \cC_1 \otimes_{\Perf(S_1)} \Perf(Z'_1)$. 
Note that by Theorem~\ref{theorem-FM-functors}\eqref{fiber-product}, there is an equivalence 
\begin{equation*}
\Perf(Z'_1) \simeq \Perf(Z_1) \otimes_{\Perf(T)} \Perf(T'). 
\end{equation*} 
Hence Lemma~\ref{lemma-box-tensor-generation} implies it is enough 
to check the morphism above is an equivalence for 
\begin{equation*}
C = C_1 \boxtimes F_1 \boxtimes G,
\end{equation*} 
where $C_1 \in \cC_1, F_1 \in \Perf(Z_1), G \in \Perf(T')$. 
This follows from $T'$-linearity of $\Phi_{\cE'}$ and $\Phi_{\cE'}^*$, the 
equivalence \eqref{base-change-pullbacks}, and the equivalence 
$\Phi_{\cE'}^* \circ f_2^* \simeq f_1^* \circ \Phi_{\cE}^*$ obtained from 
\eqref{base-change-pushforwards} by taking left adjoints. 

Part~\eqref{base-change-ff-2} is proved similarly. 
\end{proof}

\begin{proposition}
\label{proposition-base-change-splitting-functor}
In Setup~\ref{setup-FM-3}, assume further that for $i=1,2$ the 
diagram~\eqref{Zi-diagram} is cartesian.   
Let 
\begin{equation*}
\cE \in 
\FM{\cC_1}{S_1}{\cC_2}{S_2}{Z_1}{Z_2}{T} 
\end{equation*}
be a kernel such that the associated functor 
\begin{equation*}
\Phi_{\cE} \colon \cC_1 \otimes_{\Perf(S_1)} \Perf(Z_1) \to \cC_2 \otimes_{\Perf(S_2)} \Perf(Z_2) 
\end{equation*}
is left splitting. 
Let $\cE' = (f_2 \times f_1)^* \cE$ be the base changed kernel, and 
assume the associated functor 
\begin{equation*}
\Phi_{\cE'} \colon  \cC_1 \otimes_{\Perf(S_1)} \Perf(Z'_1) \to \cC_2 \otimes_{\Perf(S_2)} \Perf(Z'_2) 
\end{equation*}
admits a left adjoint. 
Then $\Phi_{\cE'}$ is left splitting. 
\end{proposition}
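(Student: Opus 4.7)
The plan is to verify the criterion \ref{splitting-functor-2} of Theorem~\ref{theorem-splitting-functors}: namely, that the canonical natural transformation
\begin{equation*}
\eta \colon \Phi_{\cE'} \Phi_{\cE'}^{*} \Phi_{\cE'} \to \Phi_{\cE'}
\end{equation*}
induced by the counit $\Phi_{\cE'}^{*} \Phi_{\cE'} \to \id$ is an equivalence. We will mimic the strategy of Proposition~\ref{proposition-base-change-ff}, reducing to a generating set and transporting the hypothesis on $\Phi_{\cE}$ through the base change identities.

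First, by Theorem~\ref{theorem-FM-functors}\eqref{fiber-product} combined with the cartesian diagrams~\eqref{Zi-diagram}, we have the identification
\begin{equation*}
\cC_1 \otimes_{\Perf(S_1)} \Perf(Z'_1) \simeq \left(\cC_1 \otimes_{\Perf(S_1)} \Perf(Z_1)\right) \otimes_{\Perf(T)} \Perf(T'),
\end{equation*}
so by Lemma~\ref{lemma-box-tensor-generation} the source of $\Phi_{\cE'}$ is thickly generated by objects of the form $f_1^{*}(C_1 \boxtimes F_1) \otimes G$ with $C_1 \in \cC_1$, $F_1 \in \Perf(Z_1)$, and $G \in \Perf(T')$. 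Both functors $\Phi_{\cE'}$ and $\Phi_{\cE'}^{*}$ are $T'$-linear (for $\Phi_{\cE'}^{*}$ this is Lemma~\ref{lemma-adjoint-S-linear}), so it suffices to check that $\eta$ is an equivalence on objects of the form $f_1^{*}(C_1 \boxtimes F_1)$.

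Next, I would use the two key base-change identities for the kernel $\cE$. From Lemma~\ref{lemma-pullback-kernel}\eqref{pullback-kernel-1} (applicable since the square~\eqref{Zi-diagram} is cartesian) we obtain
\begin{equation*}
\Phi_{\cE'} \circ f_1^{*} \simeq f_2^{*} \circ \Phi_{\cE} ,
\end{equation*}
while passing to left adjoints in Lemma~\ref{lemma-pullback-kernel}\eqref{pullback-kernel-2} gives
\begin{equation*}
\Phi_{\cE'}^{*} \circ f_2^{*} \simeq f_1^{*} \circ \Phi_{\cE}^{*} .
\end{equation*}
Evaluating on $f_1^{*}(C_1 \boxtimes F_1)$ and composing these equivalences, we get
\begin{equation*}
\Phi_{\cE'} \Phi_{\cE'}^{*} \Phi_{\cE'} \bigl(f_1^{*}(C_1 \boxtimes F_1)\bigr) \simeq f_2^{*} \, \Phi_{\cE} \Phi_{\cE}^{*} \Phi_{\cE} (C_1 \boxtimes F_1) .
\end{equation*}
By the left splitting hypothesis on $\Phi_{\cE}$ together with Theorem~\ref{theorem-splitting-functors}\ref{splitting-functor-2}, the right-hand side is identified with $f_2^{*} \Phi_{\cE}(C_1 \boxtimes F_1) \simeq \Phi_{\cE'}(f_1^{*}(C_1 \boxtimes F_1))$, as desired.

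The main subtlety, and what I expect to be the hardest step, is verifying that the equivalence produced above actually agrees with the natural transformation $\eta$ induced by the counit of the $(\Phi_{\cE'}^{*}, \Phi_{\cE'})$-adjunction, rather than being merely an abstract equivalence. This is a compatibility-of-adjunctions question: one must check that the counit $\Phi_{\cE'}^{*}\Phi_{\cE'} \to \id$ corresponds, under the base-change equivalences above, to the counit $\Phi_{\cE}^{*}\Phi_{\cE} \to \id$ pulled back along $f_1^{*}$. This is essentially a formal naturality statement that comes out of the fact that the displayed equivalences $\Phi_{\cE'} \circ f_1^{*} \simeq f_2^{*} \circ \Phi_{\cE}$ and $\Phi_{\cE'}^{*} \circ f_2^{*} \simeq f_1^{*} \circ \Phi_{\cE}^{*}$ are mated under adjunction; once this naturality is unwound, the pointwise equivalence above exhibits $\eta$ itself as an equivalence on the generating objects, and hence globally.
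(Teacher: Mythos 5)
Your proposal is correct and follows the same route the paper takes: the paper's proof is a one-line reference to the argument of Proposition~\ref{proposition-base-change-ff}, which is exactly the reduction-to-generators plus the two base-change identities that you use. Your additional remarks on why the abstract pointwise equivalence really is the canonical morphism (matedness of the two base-change equivalences under adjunction) flag a subtlety the paper leaves implicit, but that naturality check is indeed the content behind the terse ``same argument'' reference.
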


\begin{proof}
By the same argument used to prove Proposition~\ref{proposition-base-change-ff}, 
it follows that $\Phi_{\cE'}$ satisfies the condition of 
Theorem~\ref{theorem-splitting-functors}\ref{splitting-functor-2}. 
\end{proof}


\newpage
\part{Homological projective duality} 
\label{part-II}


In this part of the paper, we develop a theory of HPD relative to a fixed base scheme $S$. 
All schemes will be defined over $S$, and all categories and functors will be $S$-linear.  
To simplify notation, 
given schemes $X$ and $Y$ over $S$ we write 
\begin{equation*}
X \times Y = X \times_S Y. 
\end{equation*} 
Similarly, given $S$-linear categories $\cC$ and $\cD$ we write 
\begin{equation*}
\cC \otimes \cD = \cC \otimes_{\Perf(S)} \cD. 
\end{equation*} 

We denote by $V$ a fixed vector bundle on our base scheme $S$. 
We write $N$ for the rank of $V$ and $H$ for the relative hyperplane 
class on $\bP(V)$ such that $\cO(H) = \cO_{\bP(V)}(1)$. 
Similarly, we write $H'$ for the relative hyperplane class on $\bP(V^{\svee})$. 
We often use the notation $\bP = \bP(V)$ and $\bPv = \bP(V^{\svee})$. 
Further, given a $\bP(V)$-linear category $\cC$ and an object $C \in \cC$, we write 
$C(H) = C \otimes \cO_{\bP(V)}(H)$.


\section{Lefschetz categories}
\label{section-lefschetz-categories}

In this section, we introduce the notion of a Lefschetz category over 
a projective space, which plays the role of an embedded 
projective variety in homological projective geometry. 
We start by studying in \S\ref{subsection-lefschetz-center}-\ref{subsection-primitive-components} 
the notion of a Lefschetz center of a linear category equipped 
with an autoequivalence. 
A Lefschetz category over $\bP(V)$ is then defined as a $\bP(V)$-linear category equipped 
with a Lefschetz center with respect to the autoequivalence 
given by tensoring with $\cO_{\bP(V)}(1)$. 
In \S\ref{subsection-lefschetz-category} we show that there are 
induced semiorthogonal decompositions of the ``linear sections'' 
of a Lefschetz category over $\bP(V)$. 
Our definitions are modeled on those of~\cite{kuznetsov-HPD}, 
which treats the geometric case. 

\subsection{Lefschetz centers}
\label{subsection-lefschetz-center}

\begin{definition}
\label{definition-lefschetz-center} 
Let $\cA$ be an $S$-linear category, and let 
\mbox{$\rT \colon \cA \to \cA$} be an $S$-linear autoequivalence. 
An admissible $S$-linear subcategory $\cA_0 \subset \cA$ is called a 
\emph{Lefschetz center} of $\cA$ with respect to $\rT$ if the subcategories 
$\cA_i \subset \cA$, $i \in \bZ$, determined by 
\begin{align}
\label{Ai-igeq0}
\cA_i & = \cA_{i-1} \cap {}^\perp(\rT^{-i}(\cA_0))\hphantom{{}^\perp},
\quad 
i \ge 1, \\ 
\label{Ai-ileq0}
\cA_i & = \cA_{i+1} \cap \hphantom{{}^\perp}(\rT^{-i}(\cA_0))^\perp,
\quad 
i \le -1,  
\end{align}
are right admissible in $\cA$ for $i \geq 1$, 
left admissible in $\cA$ for $i \leq -1$, 
vanish for all $i$ of sufficiently large absolute value, say for $|i| \geq m$, and 
provide $S$-linear semiorthogonal decompositions 
\begin{align}
\label{eq:right-decomposition}
\cA & = \langle \cA_0, \rT(\cA_1), \dots, \rT^{m-1}(\cA_{m-1}) \rangle, \\ 
\label{eq:left-decomposition}
\cA & = \langle \rT^{1-m}(\cA_{1-m}), \dots, \rT^{-1}(\cA_{-1}), \cA_0 \rangle. 
\end{align} 
Here, $\rT^i$ denotes the $i$-fold composition of $\rT$ with itself if $i \geq 1$, 
and $\rT^i = (\rT^{-1})^{-i}$ if $i \leq -1$. 

The categories $\cA_i$, $i \in \bZ$, are called the \emph{Lefschetz components} of the Lefschetz 
center $\cA_0 \subset \cA$. 
The semiorthogonal decompositions~\eqref{eq:right-decomposition} 
and~\eqref{eq:left-decomposition} are called \emph{the right Lefschetz decomposition} and 
\emph{the left Lefschetz decomposition} of $\cA$. 
The minimal $m$ above is called the \emph{length} of the Lefschetz decompositions. 
\end{definition}

Note that the subcategories $\cA_i \subset \cA_0$, $i \in \bZ,$ associated to a 
Lefschetz center form two (different in general) chains of subcategories 
\begin{equation}
\label{eq:lefschetz-chain}
0 \subset \cA_{1-m} \subset \dots \subset \cA_{-1} \subset \cA_0 \supset \cA_1 \supset \dots \supset \cA_{m-1} \supset 0.
\end{equation} 
Sometimes it is convenient to consider the following weakening of the notion of a 
Lefschetz center.  

\begin{definition}
Let $\cA$ be an $S$-linear category
with an $S$-linear autoequivalence \mbox{$\rT \colon \cA \to \cA$}. 
\begin{itemize}
\item[--] A \emph{right Lefschetz chain} in $\cA$ with respect to $\rT$ is a 
chain $\cA_0 \supset \cA_1 \supset \cdots \supset \cA_{m-1}$ of 
$S$-linear subcategories of $\cA$, such that the sequence  
\begin{equation*}
\cA_0, \rT(\cA_1), \dots, \rT^{m-1}(\cA_{m-1}) ,  
\end{equation*} 
is semiorthogonal in $\cA$. 
\item[--]  A \emph{left Lefschetz chain} in $\cA$ with respect to $\rT$ is a 
chain $\cA_{1-m} \subset \cA_{-1} \subset \cdots \subset \cA_{0}$ 
of 
$S$-linear subcategories of $\cA$, such that the sequence  
\begin{equation*}
\rT^{1-m}(\cA_{1-m}), \dots, \rT^{-1}(\cA_{-1}), \cA_0 ,
\end{equation*}
is semiorthogonal in $\cA$. 
\end{itemize}
We say that a (right or left) Lefschetz chain is \emph{full} if the above corresponding 
semiorthogonal sequence gives a semiorthogonal decomposition of $\cA$. 
\end{definition} 

Note that the right and left Lefschetz components of a Lefschetz center form full  
right and left Lefschetz chains. 
Conversely, the following shows that if a full Lefschetz chain has admissible 
components, then its biggest component is a Lefschetz center. 
This is useful in practice for constructing Lefschetz centers. 

\begin{lemma}
\label{lemma-lc-from-ld}
Let $\cA$ be an $S$-linear category with 
an $S$-linear autoequivalence \mbox{$\rT \colon \cA \to \cA$}. 
\begin{enumerate}
\item \label{lc-from-ld-1} 
Let $\cA_0 \supset \cA_1 \supset \dots \supset \cA_{m-1}$ be a full 
right Lefschetz chain in $\cA$ with respect to $\rT$. 
Then the $\cA_i$, $0 \leq i \leq m-1$, satisfy \eqref{Ai-igeq0}, and 
the categories defined by \eqref{Ai-igeq0} for $i \geq m$ vanish. 

\item \label{lc-from-ld-2}
Let $\cA_{1-m} \subset \dots \subset \cA_{-1} \subset \cA_0$ be a full left Lefschetz 
chain in $\cA$ with respect to $\rT$. 
Then the $\cA_i$, $1-m \leq i \leq 0$, satisfy \eqref{Ai-ileq0}, and the categories defined 
by \eqref{Ai-ileq0} for $i \leq -m$ vanish. 
\end{enumerate} 
Moreover, in the situation of \eqref{lc-from-ld-1} or \eqref{lc-from-ld-2}, if the components 
of the Lefschetz chain are admissible in $\cA$, then $\cA_0 \subset \cA$ is a Lefschetz 
center. Further, in this case the length of the Lefschetz decompositions of $\cA$ is given by 
\begin{equation*}
\min \set{ i \geq 0 \st \cA_{i} = 0  } = \min \set{ i \geq 0 \st \cA_{-i} = 0 }. 
\end{equation*}  
\end{lemma}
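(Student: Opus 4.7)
Plan. Focus on part (1), since (2) is symmetric. The inclusion $\cA_i \subset \cA_{i-1} \cap {^\perp}(\rT^{-i}\cA_0)$ is immediate from the chain and the semiorthogonality $\Hom(\rT^i \cA_i, \cA_0) = 0$ contained in the given right Lefschetz decomposition, so the real content is the reverse inclusion.

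The key observation I would use is the general fact that a component $\cX_k$ of a semiorthogonal decomposition $\cA = \langle \cX_0, \cX_1, \dots, \cX_{n-1}\rangle$ is characterized by
\[
\cX_k = \langle \cX_{k+1}, \dots, \cX_{n-1}\rangle^\perp \;\cap\; {^\perp}\langle \cX_0, \dots, \cX_{k-1}\rangle .
\]
Applying $\rT^{-i}$ to the given right Lefschetz decomposition yields a shifted semiorthogonal decomposition in which $\cA_i$ appears as the $(i+1)$-th component, and I would identify $\cA_i$ by checking the two orthogonality conditions above. For $A \in \cA_{i-1} \cap {^\perp}(\rT^{-i}\cA_0)$, the left orthogonality against lower components reduces to $\Hom(A, \rT^{-j}\cA_{i-j}) = 0$ for $j = 1, \dots, i$: the case $j = i$ is the hypothesis, while for $j < i$ the chain $A \in \cA_{i-1} \subset \cA_j$ gives $\rT^j A \in \rT^j\cA_j$, and then $\Hom(\rT^j A, \cA_{i-j}) \subset \Hom(\rT^j\cA_j, \cA_0) = 0$ by the original semiorthogonality (using $\cA_{i-j} \subset \cA_0$). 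Similarly, the right orthogonality against higher components reduces to $\Hom(\rT^k\cA_{i+k}, A) = 0$ for $k = 1, \dots, m-1-i$, which follows from $\cA_{i+k} \subset \cA_{i+k-1}$ together with $\rT^{i-1} A \in \rT^{i-1}\cA_{i-1}$ being semiorthogonal to the higher component $\rT^{i+k-1}\cA_{i+k-1}$ of the original decomposition. The vanishing of the categories arising from extending the recursion to $i \ge m$ follows by the same method: all orthogonality conditions are automatic, so such an $A$ lies in ${^\perp}\cA = 0$.

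For the moreover part I would use mutations. Since the components are admissible, Lemma~\ref{lemma-mutation-sod} allows iteratively left-mutating $\cA_0$ past each $\rT^k \cA_k$ ($k = 1, \dots, m-1$) to produce
\[
\cA = \langle \rL_{\cA_0}(\rT\cA_1), \dots, \rL_{\cA_0}(\rT^{m-1}\cA_{m-1}), \cA_0 \rangle .
\]
I would then set $\rT^{-j}\cA_{-j} \simeq \rL_{\cA_0}(\rT^{m-j}\cA_{m-j})$ for $j = 1, \dots, m-1$ and verify, by unwinding the formula for $\rL_{\cA_0}$, that the resulting chain $\cA_{1-m} \subset \dots \subset \cA_{-1} \subset \cA_0$ lies inside $\cA_0$. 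Part (2) applied to this left chain then identifies the $\cA_{-j}$ with those produced by the recursion \eqref{Ai-ileq0} and forces $\cA_{-j} = 0$ for $j \ge m$, so $\cA_0$ is indeed a Lefschetz center. The identity $\min\{i \ge 0 : \cA_i = 0\} = \min\{i \ge 0 : \cA_{-i} = 0\}$ is then immediate since both sides equal the common length $m$ of the two decompositions.

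Main obstacle: The delicate step is the reverse direction in the recursion, where the chain containments and the ambient semiorthogonality must be combined in exactly the right way to produce the two orthogonality relations characterizing $\cA_i$ in the shifted SOD. A secondary difficulty is verifying in the moreover part that the mutated decomposition carries a genuine Lefschetz chain inside $\cA_0$ whose components agree with those defined by the recursion.
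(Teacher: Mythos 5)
Your direct argument for part~\eqref{lc-from-ld-1} is correct and is a nice self-contained alternative to the paper's proof, which instead cites Lemma~2.18 of~\cite{kuznetsov2008lefschetz}. Reading $\cA_i$ off as the $(i+1)$-st component of the $\rT^{-i}$-shifted Lefschetz decomposition, and supplying the two orthogonality conditions from the chain containments plus the semiorthogonality of the original decomposition, is exactly the right move, and the same computation at $i = m$ shows $\cA_m$ lands in ${^\perp}\cA = 0$.

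The ``moreover'' part, however, has a genuine gap: the mutation formula you propose is wrong. You set $\rT^{-j}\cA_{-j} := \rL_{\cA_0}(\rT^{m-j}\cA_{m-j})$, but these categories do \emph{not} lie inside $\rT^{-j}(\cA_0)$ in general, so the resulting sequence is not a chain inside $\cA_0$ and does not agree with the categories produced by the recursion~\eqref{Ai-ileq0}. A concrete counterexample: take $\cA = \Perf(\bP^2)$ over a field, $\rT = -\otimes\cO(1)$, and $\cA_0 = \cA_1 = \cA_2 = \langle\cO\rangle$, so $m = 3$ and the recursion~\eqref{Ai-ileq0} gives $\cA_{-1} = \cA_{-2} = \langle\cO\rangle$ (the left Lefschetz decomposition is the standard $\langle\cO(-2),\cO(-1),\cO\rangle$). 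Your formula for $j=2$ reads $\rT^{-2}\cA_{-2} = \rL_{\langle\cO\rangle}(\cO(1))$; the defining triangle $\cO^{\oplus 3}\to\cO(1)\to\rL_{\langle\cO\rangle}(\cO(1))$ shows this is $\Omega_{\bP^2}(1)[1]$, a rank-two object, so $\cA_{-2}$ would be (a shift of) $\Omega_{\bP^2}(3)$, which is not in $\langle\cO\rangle$. The basic issue is that moving $\cA_0$ to the right end by mutation does produce a semiorthogonal decomposition $\cA = \langle \rL_{\cA_0}(\rT\cA_1),\dots,\rL_{\cA_0}(\rT^{m-1}\cA_{m-1}), \cA_0\rangle$, but its components, once untwisted, need not be subcategories of $\cA_0$ and so do not form a left Lefschetz chain. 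The paper instead invokes Lemma~2.19 of~\cite{kuznetsov2008lefschetz}, which builds the left chain directly from the recursion~\eqref{Ai-ileq0} and proves the key matching~\eqref{lc-right-left} of subcategories $\llangle \cA_{-i},\dots,\rT^{i}(\cA_0)\rrangle = \llangle \cA_0,\dots,\rT^i(\cA_i)\rrangle$; that equality is then what lets one deduce left admissibility of each $\cA_{-i}$ (via Lemmas~\ref{lemma-admissible-sequence} and~\ref{lemma-admissible-sod}) — a step you do not address at all — and also the length formula. Finally, your claim that both $\min\{i : \cA_i = 0\}$ and $\min\{i : \cA_{-i} = 0\}$ ``equal the common length $m$'' is not quite right: $m$ is just the number of terms in the given chain, which may include trailing zero components, so the equality of the two minima needs an argument; in the paper it again falls out of~\eqref{lc-right-left}.
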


\begin{proof}
We prove the results in the case of a right Lefschetz chain; the arguments for left Lefschetz 
chains are similar. 
So let $\cA_0 \supset \dots \supset \cA_{m-1}$ be a full right 
Lefschetz chain in $\cA$. 
Then the argument of \cite[Lemmas 2.18]{kuznetsov2008lefschetz} shows that 
the $\cA_i$, $0 \leq i \leq m-1$, satisfy \eqref{Ai-igeq0}. 
To show the categories $\cA_i$, $i \geq m$, defined by \eqref{Ai-igeq0} vanish, 
it suffices to show 
\begin{equation*}
\cA_{m} = \cA_{m-1} \cap {^\perp}(\rT^{-m}(\cA_0)) 
\end{equation*}
vanishes. Equivalently, we must show 
\begin{equation*}
\rT^m(\cA_m) = \rT^{m}(\cA_{m-1}) \cap {^\perp}\cA_0
\end{equation*}
vanishes. 
First note that by applying $\rT$ to the semiorthogonal decomposition given by the 
Lefschetz chain, we obtain a semiorthogonal decomposition 
\begin{equation*}
\cA = \llangle \rT(\cA_0), \rT^2(\cA_1), \dots, \rT^{m-1}(\cA_{m-2}), \rT^m(\cA_{m-1}) \rrangle . 
\end{equation*}
Since $\cA_i \subset \cA_{i-1}$ for $1 \leq i \leq m-1$, this shows that 
\begin{equation*}
\rT^m(\cA_{m-1}) \subset {^\perp} \llangle \rT(\cA_1), \rT^2(\cA_2), \dots, \rT^{m-1}(\cA_{m-1}) \rrangle. 
\end{equation*} 
Hence 
\begin{equation*}
\rT^{m}(\cA_{m-1}) \cap {^\perp}\cA_0 \subset {^\perp} \llangle \cA_0, \rT(\cA_1), \rT^2(\cA_2), \dots, \rT^{m-1}(\cA_{m-1}) \rrangle = 0. 
\end{equation*}

Assume now that that $\cA_i \subset \cA$, $0 \leq i \leq m-1$, are admissible. 
Then the argument of \cite[Lemma 2.19]{kuznetsov2008lefschetz} shows that 
the chain of categories $\cA_{1-m} \subset \dots \subset \cA_{-1} \subset \cA_0$ 
defined recursively by the formula~\eqref{Ai-ileq0} is a full left Lefschetz chain, 
and that furthermore for any $0 \leq i \leq m-1$ we have 
\begin{equation}
\label{lc-right-left} 
\llangle \cA_{-i}, \dots, \rT^{i-1}(\cA_{-1}), \rT^{i}(\cA_0) \rrangle = 
\llangle \cA_0, \rT(\cA_1), \dots, \rT^{i}(\cA_i) \rrangle. 
\end{equation}
Hence by part~\eqref{lc-from-ld-2}, to prove $\cA_0 \subset \cA$ is a Lefschetz center it remains to show the  
categories $\cA_{-i}$, $0 \leq i \leq m-1$, are left admissible. 
By Lemma~\ref{lemma-admissible-sequence} the subcategory \eqref{lc-right-left} of 
$\cA$ is admissible, and by Lemma~\ref{lemma-admissible-sod} the category 
$\cA_{-i}$ is left admissible in~\eqref{lc-right-left}. 
Hence $\cA_{-i} \subset \cA$ is left admissible. 
Finally, the formula for the length of the Lefschetz decompositions of $\cA$ 
also follows directly from \eqref{lc-right-left}. 
\end{proof} 

\subsection{Primitive Lefschetz components} 
\label{subsection-primitive-components}
Let $\cA$ be an $S$-linear category, and 
$\cA_0 \subset \cA$ a Lefschetz center with respect to an $S$-linear autoequivalence \mbox{$\rT \colon \cA \to \cA$}. 
The Lefschetz components $\cA_i$ associated to $\cA_0 \subset \cA$ can be broken into 
smaller building blocks, as follows. 

For $i \ge 1$ the $i$-th \emph{right primitive component} $\fa_i$ of a Lefschetz center is defined as 
the right orthogonal to $\cA_{i+1}$ in $\cA_i$, i.e.  
\begin{equation*}
\fa_i = \cA_{i+1}^\perp \cap \cA_{i}. 
\end{equation*}
Note that since $\cA_{i+1}$ is right admissible in $\cA$ (and hence also in $\cA_i$), 
by Lemma~\ref{lemma-admissible-sod} we have a semiorthogonal decomposition 
\begin{equation*}
\cA_i = \llangle \fa_i, \cA_{i+1} \rrangle = \llangle \fa_i, \fa_{i+1}, \dots, \fa_{m-1} \rrangle. 
\end{equation*}
Similarly, for $i \le -1$ the $i$-th \emph{left primitive component} $\fa_i$ of a Lefschetz center is 
the left orthogonal to $\cA_{i-1}$ in $\cA_i$, i.e. 
\begin{equation*}
\fa_i = {}^\perp\cA_{i-1} \cap \cA_{i}, 
\end{equation*}
so that 
\begin{equation*}
\cA_i = \llangle \cA_{i-1}, \fa_i \rrangle = \llangle \fa_{1-m}, \dots, \fa_{i-1}, \fa_{i} \rrangle. 
\end{equation*}
For $i = 0$, we have both right and left primitive components, defined by  
\begin{equation*}
\fa_{+0} = \cA_1^{\perp} \cap \cA_0 \quad \text{and} \quad 
\fa_{-0} = {}^\perp\cA_{-1} \cap \cA_0 . 
\end{equation*}

\begin{remark}
We have 
\begin{equation*}
\fa_{-0} = \rT(\fa_{+0}). 
\end{equation*}
Indeed, in this case the equality~\eqref{lc-right-left} for $i=1$ gives 
\begin{equation*}
\llangle \cA_{-1}, \rT(\cA_0) \rrangle = \llangle \cA_0, \rT(\cA_1) \rrangle. 
\end{equation*} 
(Note that in the argument of \cite[Lemma 2.19]{kuznetsov2008lefschetz} proving~\eqref{lc-right-left} 
for $i=1$ only uses the admissibility of $\cA_0 \subset \cA$, which holds by the definition of a Lefschetz 
category.) 
Substituting $\cA_0 = \llangle \fa_{+0}, \cA_{1} \rrangle$ on the left and 
$\cA_0 = \llangle \cA_{-1}, \fa_{-0} \rrangle$ on the right, we obtain 
\begin{equation*}
\llangle \cA_{-1}, \rT(\fa_{+0}),  \rT(\cA_{1}) \rrangle = 
\llangle \cA_{-1}, \fa_{-0}, \rT(\cA_1) \rrangle, 
\end{equation*} 
so the claim follows. 
\end{remark} 

If instead of a Lefschetz center $\cA_0 \subset \cA$ we are given a 
(right or left) Lefschetz chain, then we define 
its (right or left) primitive components by the same formulas as above. 

Later we will need to consider Lefschetz centers (and chains) that satisfy the following 
finiteness assumption.  

\begin{definition}
\label{definition-strong}
Let $\cA_0 \subset \cA$ be a Lefschetz center of an $S$-linear category with 
respect to an autoequivalence $\rT \colon \cA \to \cA$. 
We say that the center is \emph{right strong} if all of the right primitive 
components $\fa_{+0}$, $\fa_i$, $i \geq 1$, are admissible in $\cA$, 
\emph{left strong} if all of the left primitive components $\fa_{-0}$, $\fa_i$, $i \leq  -1$, are 
admissible in $\cA$, and \emph{strong} if all of the primitive components are admissible. 

Similarly, a (right or left) Lefschetz chain is called \emph{strong} if its (right or left) primitive components are 
admissible. 
\end{definition}

\begin{remark}
\label{remark-strong-lc} 
By Lemma~\ref{lemma-admissible-sequence}, if a Lefschetz center $\cA_0 \subset \cA$ 
is (right or left) strong then all of its (right or left) Lefschetz components 
are automatically admissible in $\cA$. The analogous statement holds for Lefschetz chains. 
\end{remark}

\begin{remark}
\label{remark-strong-center}
If $\cA$ is smooth and proper over $S$, then by Lemma~\ref{lemma-smooth-proper-sod} 
any Lefschetz center $\cA_0 \subset \cA$ is strong. 
\end{remark}

Next we show that a Lefschetz center $\cA_0 \subset \cA$ 
also admits a decomposition into certain ``twisted'' primitive components. 
This will be an essential ingredient in \S\ref{subsection-HPD-ls} when 
we build a Lefschetz structure on the homological projective dual category. 
Let 
\begin{equation*}
\alpha_0 \colon \cA_0 \to \cA
\end{equation*} 
denote the inclusion functor. 
Since $\cA_0 \subset \cA$ is admissible, $\alpha_0$ admits a left adjoint 
$\alpha_0^*$ and right adjoint $\alpha_0^!$. 
We define the \emph{twisted right primitive components} by 
\begin{equation}
\label{fatw}
\fatw_{+0} = \alpha^*_0(\rT(\fa_{+0})) \quad \text{and} \quad 
\fatw_i = \alpha_0^*(\rT^{i+1}(\fa_i)) \text{ for } i \geq 1. 
\end{equation}
and the \emph{twisted left primitive components} by 
\begin{equation}
\label{fatw-left}
\fatw_{-0} = \alpha^!_0(\rT^{-1}(\fa_{-0})) \quad \text{and} \quad 
\fatw_i = \alpha_0^!(\rT^{i-1}(\fa_i)) \text{ for } i \leq -1. 
\end{equation}

\begin{lemma}
\label{lemma-twisted-sod-A0}
Let $\cA$ be an $S$-linear category, and $\cA_0 \subset \cA$ a Lefschetz center 
with respect to an $S$-linear autoequivalence $\rT \colon \cA \to \cA$. 
\begin{enumerate}
\item 
\label{twisted-sod-A0-right}
The functor $\alpha^*_0 \colon \cA \to \cA_0$ is fully faithful on $\rT(\fa_{+0})$ and 
on $\rT^{i+1}(\fa_i)$ for $i \geq 1$, and hence the right twisted primitive components are stable 
subcategories of $\cA_0$. 
Moreover, there is an $S$-linear semiorthogonal decomposition 
\begin{equation*}
\cA_0 = \llangle \fatw_{+0}, \fatw_1, \dots, \fatw_{m-1} \rrangle. 
\end{equation*} 
If further the Lefschetz center $\cA_0$ is right strong, then the right twisted primitive 
components are admissible subcategories of $\cA$. 

\item 
\label{twisted-sod-A0-left}
The functor $\alpha^!_0 \colon \cA \to \cA_0$ is fully faithful on $\rT(\fa_{-0})$ and 
on $\rT^{i-1}(\fa_i)$ for $i \leq -1$, and hence the left twisted primitive components are stable 
subcategories of $\cA_0$. 
Moreover, there is an $S$-linear semiorthogonal decomposition 
\begin{equation*}
\cA_0 = \llangle \fatw_{1-m}, \dots, \fatw_{-1}, \fatw_{-0} \rrangle.  
\end{equation*} 
If further the Lefschetz center $\cA_0$ is left strong, then the left twisted primitive 
components are admissible subcategories of $\cA$. 

\end{enumerate}
\end{lemma}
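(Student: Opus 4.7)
I would prove (1); part (2) is parallel and can also be deduced by applying (1) to $\cA^{\op}$ with its induced Lefschetz structure. My strategy follows the mutation-based argument of Kuznetsov \cite{kuznetsov-HPD}, adapted to the $S$-linear setting.

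The starting point is the $\rT$-shifted right Lefschetz decomposition of $\cA$, refined using $\cA_k = \llangle \fa_k, \cA_{k+1} \rrangle$ (writing $\fa_0 := \fa_{+0}$):
\begin{equation*}
\cA = \llangle \rT(\fa_{+0}), \rT(\cA_1), \rT^2(\fa_1), \rT^2(\cA_2), \dots, \rT^{m-1}(\cA_{m-1}), \rT^m(\fa_{m-1}) \rrangle. \tag{$\ast$}
\end{equation*}
The ``diagonal'' subcategories $\rT^{i+1}(\fa_i)$ for $0 \le i \le m-1$ form a semiorthogonal subsequence of $(\ast)$. Next, I apply a sequence of iterated right mutations (Lemma \ref{lemma-mutation-sod}): I process $j = m-1, m-2, \dots, 1$ in turn, each time moving $\rT^j(\cA_j)$ to the right past all subsequent $\fa$-components (but not past any $\rT^l(\cA_l)$ with $l>j$ already moved). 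The needed admissibilities come from the admissibility of the $\cA_j$'s built into the Lefschetz center. In each swap the slot occupied by $\rT^j(\cA_j)$ is preserved as a subcategory while the adjacent $\fa$-component is left-mutated through $\rT^j(\cA_j)$. After all mutations, I obtain
\begin{equation*}
\cA = \llangle \rT(\fa_{+0}), \widetilde{\rT^2(\fa_1)}, \dots, \widetilde{\rT^m(\fa_{m-1})}, \rT(\cA_1), \rT^2(\cA_2), \dots, \rT^{m-1}(\cA_{m-1}) \rrangle,
\end{equation*}
where by Lemma \ref{lemma-admissible-sequence}(1) one has $\widetilde{\rT^{i+1}(\fa_i)} = \rL_{\llangle \rT(\cA_1), \dots, \rT^i(\cA_i) \rrangle}(\rT^{i+1}(\fa_i))$. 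The last $m-1$ components still span $\llangle \rT(\cA_1), \dots, \rT^{m-1}(\cA_{m-1}) \rrangle = {^\perp}\cA_0$, so by uniqueness of the left complement in $\cA = \llangle \cA_0, {^\perp}\cA_0 \rrangle$ the first $m$ components span $\cA_0$.

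To identify these first $m$ components with the twisted primitive components, I use the fully refined form of $(\ast)$, namely the $\rT$-shift of the twice-refined right Lefschetz decomposition, in which the pieces $\rT^k(\fa_l)$ are arranged in a triangular array with $\rT^{i+1}(\fa_i)$ the leftmost entry of its row. Reading off semiorthogonality from this array gives $\rT^{i+1}(\fa_i) \subset (\rT^j(\cA_j))^\perp$ for $j \ge i+1$, and also $\rT^{i+1}(\fa_i) \subset {^\perp}\llangle \rT(\cA_1), \dots, \rT^i(\cA_i) \rrangle$. The first containment implies that the further left mutations $\rL_{\rT^j(\cA_j)}$ for $j \ge i+1$ act as the identity on $\rT^{i+1}(\fa_i)$, so
\begin{equation*}
\widetilde{\rT^{i+1}(\fa_i)} = \rL_{{^\perp}\cA_0}(\rT^{i+1}(\fa_i)) = \alpha_0 \alpha_0^* (\rT^{i+1}(\fa_i)) = \alpha_0(\fatw_i),
\end{equation*}
using $\rL_{{^\perp}\cA_0} \simeq \alpha_0 \alpha_0^*$ from the decomposition $\cA = \llangle \cA_0, {^\perp}\cA_0 \rrangle$. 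The second containment together with Lemma \ref{lemma-mutation-functors}(2) shows that $\rL_{\llangle \rT(\cA_1), \dots, \rT^i(\cA_i) \rrangle}$ is fully faithful on $\rT^{i+1}(\fa_i)$; combined with the previous display this shows $\alpha_0^*$ itself is fully faithful on $\rT^{i+1}(\fa_i)$ with essential image $\fatw_i$, and the identification yields the desired decomposition $\cA_0 = \llangle \fatw_{+0}, \fatw_1, \dots, \fatw_{m-1} \rrangle$. Finally, if the center is right strong then each $\fa_i$ is admissible in $\cA$, so $\rT^{i+1}(\fa_i) \simeq \fatw_i$ is too, and admissibility of $\fatw_i \subset \cA$ follows by composing with the admissibility of $\cA_0 \subset \cA$.

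The principal obstacle is making the mutation sequence rigorous: verifying the admissibility hypotheses of Lemma \ref{lemma-mutation-sod} at each invocation, and correctly tracking which composition of mutations acts on each $\rT^{i+1}(\fa_i)$. The secondary subtlety is the ``replacement'' argument $\widetilde{\rT^{i+1}(\fa_i)} = \rL_{{^\perp}\cA_0}(\rT^{i+1}(\fa_i))$, which hinges on the orthogonality $\cHom(\rT^j(\cA_j), \rT^{i+1}(\fa_i)) = 0$ for $j \ge i+1$ read off from the triangular array; without it, one could not pass from the partial mutation produced by the explicit sequence of swaps to the full projection $\alpha_0 \alpha_0^*$.
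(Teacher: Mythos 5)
Your proposal follows essentially the same strategy as the paper's proof: start from the $\rT$-shifted, once-refined right Lefschetz decomposition, move the pieces $\rT^j(\cA_j)$ to the end by right mutations (Lemma~\ref{lemma-mutation-sod}), identify the mutated $\fa$-components with $\alpha_0^*(\rT^{i+1}(\fa_i))$ via the orthogonality $\rT^{i+1}(\fa_i) \subset (\rT^j(\cA_j))^\perp$ for $j \geq i+1$, get full faithfulness from Lemma~\ref{lemma-mutation-functors}, and handle the right-strong admissibility claim by combining admissibility of $\fa_i$ and of $\cA_0 \subset \cA$. The details you flag as ``principal obstacles'' do all check out: the admissibility hypotheses for the mutations come from right admissibility of the $\cA_j$, and the composition of mutations acting on $\rT^{i+1}(\fa_i)$ is exactly $\rL_{\llangle \rT(\cA_1), \dots, \rT^i(\cA_i) \rrangle}$ as you say, which agrees with $\alpha_0^*$ on this subcategory because $\rT^{i+1}(\fa_i)$ lies in the right orthogonal to $\llangle \rT^{i+1}(\cA_{i+1}), \dots, \rT^{m-1}(\cA_{m-1}) \rrangle$. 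This is the paper's argument.
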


\begin{proof}
We prove~\eqref{twisted-sod-A0-right}; the proof of \eqref{twisted-sod-A0-left} is similar. 
Applying the autoequivalence $\rT$ to the right Lefschetz decomposition \eqref{eq:right-decomposition} 
and using $\cA_i = \llangle \fa_i, \cA_{i+1} \rrangle$, we obtain a decomposition 
\begin{equation}
\label{T-twisted-sod}
\cA = \llangle \rT(\fa_{+0}), \rT(\cA_1), \rT^2(\fa_1), \rT^2(\cA_2), \dots,  \rT^{m-1}(\fa_{m-2}), \rT^{m-1}(\cA_{m-1}), \rT^{m}(\fa_{m-1})  \rrangle. 
\end{equation}
Hence using Lemma~\ref{lemma-mutation-sod}, we find a decomposition 
\begin{multline*}
\cA = \langle \rT(\fa_{+0}), \rL_{\rT(\cA_1)}( \rT^2(\fa_1)), \dots, \rL_{\llangle \rT(\cA_1), \dots, \rT^{m-1}(\cA_{m-1}) \rrangle}(\rT^m(\fa_{m-1})), \\ 
\rT(\cA_1),  \dots, \rT^{m-1}(\cA_{m-1}) \rangle .
\end{multline*}
Comparing with~\eqref{eq:right-decomposition}, this implies there is a decomposition
\begin{equation*}
\cA_0 = \llangle \rT(\fa_{+0}), \rL_{\rT(\cA_1)}( \rT^2(\fa_1)), \dots, \rL_{\llangle \rT(\cA_1), \dots, \rT^{m-1}(\cA_{m-1}) \rrangle}(\rT^m(\fa_{m-1})) \rrangle. 
\end{equation*}
Since by~\eqref{T-twisted-sod} we have
\begin{equation*}
\rT^{i+1}(\fa_i) \subset \llangle \rT^{i+1}(\cA_i), \rT^{i+2}(\cA_{i+2}), \dots, \rT^{m-1}(\cA_{m-1}) \rrangle^{\perp}, 
\end{equation*}
it follows that $\alpha_0^*$ is given on $\rT^{i+1}(\fa_i)$ by the mutation functor 
$\rL_{\llangle \rT(\cA_1), \dots, \rT^i(\cA_i) \rrangle}$. 
Hence the fully faithful claim follows from Lemma~\ref{lemma-mutation-functors}, and 
the claimed semiorthogonal decomposition of $\cA_0$ holds. 

Finally, assume the center $\cA_0 \subset \cA$ is right strong. 
Then by Remark~\ref{remark-strong-lc} the right Lefschetz components 
$\cA_i$, $i \geq 0$, are admissible, and so by Lemma~\ref{lemma-admissible-sequence} 
the subcategory 
\begin{equation*}
{^\perp}\cA_0 = \llangle \rT(\cA_1), \dots, \rT^{m-1}(\cA_{m-1}) \rrangle 
\end{equation*}
is admissible in $\cA$. 
Hence by Lemma~\ref{lemma-adjoint-adjoints} the functor 
$\alpha_0^* \colon \cA \to \cA_0$ admits a left adjoint, and it 
tautologically admits a right adjoint. 
From this, it follows that the right twisted primitive components are admissible subcategories of $\cA_0$, 
and hence also of $\cA$. 
\end{proof}

\subsection{Lefschetz categories} 
\label{subsection-lefschetz-category}

Let $V$ be a vector bundle on $S$, 
and let $H$ denote the relative hyperplane class on the projective bundle $\bP(V)$ 
such that $\cO(H) = \cO_{\bP(V)}(1)$. 

\begin{definition}
\label{definition-lc}
A \emph{Lefschetz category} over $\bP(V)$ is a $\bP(V)$-linear 
category $\cA$ equipped with a Lefschetz center 
$\cA_0 \subset \cA$ with respect 
to the autoequivalence $- \otimes \cO_{\bP(V)}(H) \colon \cA \to \cA$. 
We say $\cA$ is (\emph{right} or \emph{left}) \emph{strong} if its center $\cA_0 \subset \cA$ is 
(right or left) strong. 
The \emph{length} of $\cA$ is the length of its Lefschetz decompositions, and 
is denoted by $\length(\cA)$. 
We say $\cA$ is \emph{moderate} if $\length(\cA) < \rank(V)$. 

If $\cA$ and $\cB$ are Lefschetz categories over $\bP(V)$, an 
\emph{equivalence of Lefschetz categories} or a \emph{Lefschetz equivalence} 
is a $\bP(V)$-linear equivalence $\cA \simeq \cB$ which induces an $S$-linear equivalence 
$\cA_0 \simeq \cB_0$ of centers. 
\end{definition}

\begin{remark}
Given a Lefschetz category $\cA$, the symbol $\cA_i$ for $i \in \bZ$ will 
be used to denote the Lefschetz component associated to the center $\cA_0 \subset \cA$, 
as in Definition~\ref{definition-lefschetz-center}. 
\end{remark}

\begin{remark} 
\label{remark-smooth-proper-strong-lc} 
Analogous to Remark~\ref{remark-strong-center}, if $\cA$ is smooth and proper over $S$, then 
any Lefschetz structure on $\cA$ is automatically strong. 
\end{remark}

\begin{remark}
\label{remark-moderate}
Moderateness of a Lefschetz category $\cA$ over $\bP(V)$ is 
a very mild condition. 
First of all, we can always embed $V$ into a larger rank vector bundle $V'$ so that 
$\cA$ is a moderate Lefschetz category over $\bP(V')$. 
Second, the Lefschetz categories that arise in practice are essentially 
always moderate. 
In fact, Corollary~\ref{corollary-lc-maximal-length} below shows that  
$\length(\cA) \leq \rank(V)$, 
so the only way for moderateness to fail is if equality holds. 
Moreover, if equality holds we show $\cA$ is closely related to $\Perf(\bP(V))$.  
\end{remark}

Here we recall a basic example of a Lefschetz category; see \cite{kuznetsov-icm} for many more 
examples. 

\begin{example} 
\label{example-projective-bundle-lc}
Let $W \subset V$ be a subbundle of rank $m$. 
The morphism $\bP(W) \to \bP(V)$ induces a $\bP(V)$-linear structure on $\Perf(\bP(W))$. 
Note that pullback along the projection $\bP(W) \to S$ gives an embedding $\Perf(S) \subset \Perf(\bP(W))$. 
The category $\Perf(\bP(W))$ is a strong Lefschetz category over $\bP(V)$ with center 
$\Perf(S)$;  
the corresponding right and left Lefschetz decompositions are 
given by the standard semiorthogonal decompositions 
\begin{align*} 
\Perf(\bP(W)) & = \llangle \Perf(S), \Perf(S)(H), \dots, \Perf(S)((m-1)H)  \rrangle , \\
\Perf(\bP(W)) & = \llangle \Perf(S)((1-m)H), \dots, \Perf(S)(-H), \Perf(S) \rrangle. 
\end{align*} 
Note that $\Perf(\bP(W))$ is a moderate Lefschetz category over $\bP(V)$ as long as $W \neq V$. 
\end{example}

Next we describe the behavior of Lefschetz categories under passage to 
linear sections. 
From now on, we simplify our notation by writing $\bP =\bP(V)$. 
Further, for $0 \leq r \leq N$, let $\bG_r = \G(r,V)$ 
be the Grassmannian of rank $r$ subbundles of $V$, and let 
$\cU_r$ be the universal rank $r$ bundle on $\bG_r$. 
Let $\bL_r = \bP_{\bG_r}(\cU_r)$ 
be the corresponding universal family of (projective) linear subspaces.  
These spaces fit into a commutative diagram
\begin{equation}
\label{diagram-Lr}
\vcenter{\xymatrix{
&&  \ar[dll]_{p_r} \bL_r \ar[drr]^{f_r} \ar[d]_{\iota_r} &&  \\ 
\bP && \ar[ll]_{\pr_1} \bP \times \bG_r  \ar[rr]^{\pr_2} && \bG_r 
}}
\end{equation}
where $\iota_r$ is a closed embedding. 

\begin{definition}
Let $\cA$ be a $\bP$-linear category. The \emph{universal family of $r$-dimensional linear sections} 
of $\cA$ is defined by 
\begin{equation*}
\bL_r(\cA) = \cA \otimes_{\Perf(\bP)} \Perf(\bL_r). 
\end{equation*}
\end{definition} 

\begin{remark}
If there exists a morphism of schemes $X \to \bP$ and a $\bP$-linear 
equivalence $\cA \simeq \Perf(X)$, then by Theorem~\ref{theorem-FM-functors}\eqref{fiber-product} 
\begin{equation*}
\bL_r(\cA) = \Perf(X) \otimes_{\Perf(\bP)} \Perf(\bL_r) \simeq 
\Perf(X \times_{\bP} \bL_r) . 
\end{equation*}
Hence, at the level of perfect complexes, the category $\bL_r(\cA)$ recovers 
the universal family of linear sections $X \times_{\bP} \bL_r$ of $X$. 
\end{remark}

\begin{remark}
\label{remark-linear-section-C}
Let $L \subset V$ be a fixed rank $r$ subbundle of $V$. 
Let $x_L \colon S \to \bG_r$ be the corresponding morphism, so that 
the pullback of $\bL_r \to \bG_r$ along $x_L$ is  $\bP(L) \to S$. 
Then by Theorem~\ref{theorem-FM-functors}\eqref{fiber-product}, base 
change along $x_L^* \colon \Perf(\bG_r) \to \Perf(S)$ gives 
\begin{equation*}
\bL_r(\cA) \otimes_{\Perf(\bG_r)} \Perf(S) 
\simeq \cA \otimes_{\Perf(\bP)} \Perf(\bP(L)). 
\end{equation*}
The category on the right should be thought of as the ``linear section'' of $\cA$ by $\bP(L)$. 
Indeed, if there exists a morphism of schemes $X \to \bP$ and a $\bP$-linear 
equivalence $\cA \simeq \Perf(X)$, then by Theorem~\ref{theorem-FM-functors}\eqref{fiber-product} 
\begin{equation*}
 \cA \otimes_{\Perf(\bP)} \Perf(\bP(L)) \simeq \Perf( X \times_{\bP} \bP(L)). 
\end{equation*}
Hence in general  
the ``fibers'' of the natural $\bG_r$-linear structure 
on $\bL_r(\cA)$ are the ``linear sections'' of $\cA$. 
\end{remark}

\begin{remark}
\label{Lr-special-values}
The category $\bL_r(\cA)$ is easy to describe for some extreme values of $r$: 
\begin{itemize}
\item[--] $\bG_0 = S$ and $\bL_0 = \varnothing$, so $\bL_0(\cA) = 0$.
\item[--] $\bG_1 = \bL_1 = \bP$, so $\bL_1(\cA) = \cA$. 
\item[--] $\bG_N = S$ and $\bL_{N} = \bP$, so $\bL_N(\cA) = \cA$. 
\end{itemize}
\end{remark}

We can also form the product of the $\bP$-linear category $\cA$ with $\bG_r$, i.e. 
$\cA \otimes \Perf(\bG_r)$. 
If $\cA$ is a Lefschetz category over $\bP$, then by Lemma~\ref{lemma-sod-base-change} 
there is a   
$\bG_r$-linear semiorthogonal decomposition
\begin{equation}
\label{lc-product-decomp}
\cA \sotimes \Perf(\bG_r) = 
\llangle \cA_0 \sotimes \Perf(\bG_r), 
\dots, \cA_{m-1}((m-1)H)  \sotimes \Perf(\bG_r) \rrangle. 
\end{equation}
We are going to show that part of this semiorthogonal decomposition 
embeds into $\bL_r(\cA)$. 
The argument works more generally for a category equipped with a 
Lefschetz chain. We begin by discussing the ``right'' case. 

Note that there is a canonical equivalence 
\begin{equation*}
\cA \sotimes \Perf(\bG_r) \simeq \cA \otimes_{\Perf(\bP)} \Perf(\bP \stimes \bG_r).
\end{equation*}
induced by pullback along the projection $\bP \stimes \bG_r \to \bG_r$. 
Via this identification, we regard the functor induced by pullback along 
$\iota_r$ as a functor 
\begin{equation}
\label{linear-section-functor}
\iota_r^* \colon \cA \sotimes \Perf(\bG_r) \to \bL_r(\cA). 
\end{equation}

\begin{lemma}
\label{lemma-linear-section}
Let $\cA$
be $\bP$-linear category 
with a right Lefschetz chain  
$\cA_0 \supset \dots \supset \cA_{m-1}$ with respect to $- \otimes \cO_{\bP}(H)$. 
Fix nonnegative integers $r$ and $s$ such that $r+s = N$. 
Then the functor~\eqref{linear-section-functor} 
is fully faithful on the subcategory
\begin{equation*}
\cA_i \sotimes \Perf(\bG_r) \subset \cA \sotimes \Perf(\bG_r)
\end{equation*}
for $s \leq i \leq m-1$, 
and the images under~\eqref{linear-section-functor} of the categories 
\begin{equation}
\label{linear-section-sos}
\cA_s(sH) \sotimes \Perf(\bG_r), 
\cA_{s+1}((s+1)H) \sotimes \Perf(\bG_r), 
\dots, \cA_{m-1}((m-1)H) \sotimes \Perf(\bG_r) 
\end{equation} 
form a semiorthogonal sequence in $\bL_r(\cA)$. 
Moreover, if the subcategories $\cA_i \subset \cA$ are (right or left) admissible, 
so are the images of \eqref{linear-section-sos} in $\bL_r(\cA)$. 
\end{lemma}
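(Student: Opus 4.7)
The plan is to exploit the geometry of the embedding $\iota_r \colon \bL_r \hookrightarrow \bP \times \bG_r$. Under the identifications $\bL_r = \bP_{\bG_r}(\cU_r)$ and $\bP \times \bG_r = \bP_{\bG_r}(V \otimes \cO_{\bG_r})$, the embedding $\iota_r$ is the zero locus of the regular section of the rank-$s$ bundle $(V/\cU_r) \otimes \cO(H)$ induced by the composition $\cO(-H) \hookrightarrow V \otimes \cO \twoheadrightarrow (V/\cU_r) \otimes \cO$. This yields a Koszul resolution
\[
\iota_{r*}\cO_{\bL_r} \simeq \bigl[\Lambda^s(V/\cU_r)^{\svee}(-sH) \to \cdots \to (V/\cU_r)^{\svee}(-H) \to \cO\bigr]
\]
in $\Perf(\bP \times \bG_r)$. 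Combined with the projection formula $\iota_{r*}\iota_r^*(-) \simeq (-) \otimes \iota_{r*}\cO_{\bL_r}$, this passes to the base change $\cA \otimes \Perf(\bG_r) \simeq \cA \otimes_{\Perf(\bP)} \Perf(\bP \times \bG_r)$ by $\bP$-linearity of the adjunction $(\iota_r^*, \iota_{r*})$, where $\iota_r^*$ and $\iota_{r*}$ now denote the base-changed functors.

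For fully faithfulness on $\cA_i \otimes \Perf(\bG_r)$ with $s \leq i \leq m-1$, Lemma~\ref{lemma-box-tensor-generation} lets me reduce to the generators $C_1 \boxtimes F_1, C_2 \boxtimes F_2$ with $C_\ell \in \cA_i$ and $F_\ell \in \Perf(\bG_r)$. Using the adjunction, the projection formula, and the Koszul resolution, the mapping object $\cHom_{\bG_r}(\iota_r^*(C_1 \boxtimes F_1), \iota_r^*(C_2 \boxtimes F_2))$ becomes the totalization of a length-$(s{+}1)$ complex whose $k$-th term, by the K\"unneth formula of Lemma~\ref{lemma-homs-base-change}, equals
\[
\cHom_S(C_1, C_2(-kH)) \otimes \cHom_{\bG_r}\bigl(F_1, F_2 \otimes \Lambda^k(V/\cU_r)^{\svee}\bigr).
\]
The $k=0$ term recovers $\cHom_{\bG_r}(C_1 \boxtimes F_1, C_2 \boxtimes F_2)$, so I only need the $k \geq 1$ terms to vanish. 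Since $C_1, C_2 \in \cA_i \subset \cA_{i-k}$ (using $0 \leq i-k$ and the chain $\cA_0 \supset \cA_1 \supset \cdots$), the semiorthogonality of $\cA_{i-k}((i{-}k)H)$ before $\cA_i(iH)$ in the right Lefschetz chain, after untwisting, gives $\cHom_S(C_1, C_2(-kH)) = 0$. The proof of semiorthogonality of the twisted images $\iota_r^*(\cA_i(iH) \otimes \Perf(\bG_r))$ for $s \leq i < j \leq m-1$ is entirely analogous: the same Koszul reduction shows it suffices to check $\cHom_S(C_j, C_i((i-j-k)H)) = 0$ for $C_i \in \cA_i$, $C_j \in \cA_j$, and $0 \leq k \leq s$. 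Setting $a = i-k$ and $b = j$, one has $0 \leq a < b \leq m-1$ and $C_i \in \cA_a$ by the same chain inclusion, so the Lefschetz semiorthogonality of $\cA_a(aH)$ before $\cA_b(bH)$ supplies the required vanishing.

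For admissibility, assume $\cA_i \subset \cA$ is right (resp. left) admissible. Then $\cA_i(iH) \otimes \Perf(\bG_r) \subset \cA \otimes \Perf(\bG_r)$ is right (resp. left) admissible by Lemma~\ref{lemma-base-change-admissible}; call this inclusion $\alpha$. Since $\iota_r$ is a regular closed embedding of constant codimension $s$, its relative dualizing complex is a shift of a line bundle, so by \eqref{eq:shriek-adjoints} the adjoint sequence $(\iota_{r!}, \iota_r^*, \iota_{r*})$ exists on perfect complexes and remains so after base changing along $\cA \otimes_{\Perf(\bP)} -$. The fully faithful functor $\iota_r^* \circ \alpha$ embedding the image into $\bL_r(\cA)$ therefore admits the right adjoint $\alpha^! \circ \iota_{r*}$ (resp. left adjoint $\alpha^* \circ \iota_{r!}$), which establishes the admissibility claim. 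The main technical point to verify carefully is that the projection formula and Koszul resolution transport faithfully across the base change $\cA \otimes_{\Perf(\bP)} -$; this should follow uniformly from the stability of the adjoint sequence under base change (Lemma~\ref{lemma-bc-adjoints}) together with the K\"unneth formulas of Lemmas~\ref{lemma-kunneth} and~\ref{lemma-homs-base-change}.
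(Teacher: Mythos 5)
Your proof is correct and follows essentially the same approach as the paper's: reduce via the $\bP$-linear adjunction $(\iota_r^*, \iota_{r*})$ and the projection formula to the Koszul resolution of $\iota_{r*}\cO_{\bL_r}$ arising from the section of $\cQ_r(H)$, and observe that the resulting higher twists land inside the semiorthogonality range of the right Lefschetz chain. The only cosmetic difference is that you expand the orthogonality check via the K\"unneth formula (Lemma~\ref{lemma-homs-base-change}) back to $\QCoh(S)$-valued mapping objects in $\cA$, whereas the paper quotes the base-changed semiorthogonal decomposition~\eqref{lc-product-decomp} directly; these are interchangeable. Your handling of admissibility via the full adjoint sequence $(\iota_{r!},\iota_r^*,\iota_{r*},\iota_r^!)$, justified by Remark~\ref{remark:good-morphism} and Lemma~\ref{lemma-bc-adjoints}, is exactly the paper's reasoning made slightly more explicit.
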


\begin{proof}
By the semiorthogonal decomposition~\eqref{lc-product-decomp}, to prove the result 
it suffices to show that if $C \in \cA_i(iH)$, $D \in \cA_j(jH)$, $s \leq i \leq j \leq m-1$, and 
$F,G$ are in the image of the pullback functor $\Perf(\bG_r) \to \Perf(\bP \stimes \bG_r)$, then 
\begin{equation*}
\cHom_{\bG_r}(\iota_r^*(D \boxtimes G), \iota_r^*(C \boxtimes F)) 
\simeq \cHom_{\bG_r}(D \boxtimes G, C \boxtimes F). 
\end{equation*}
For this, first observe that by adjunction 
\begin{equation*}
\cHom_{\bG_r}(\iota_r^*(D \boxtimes G), \iota_r^*(C \boxtimes F))  \simeq 
 \cHom_{\bG_r}(D \boxtimes G , C \boxtimes \iota_{r*}\iota_r^*F). 
\end{equation*}
Next note that
$\iota_r \colon \bL_r \hookrightarrow \bP \stimes \bG_r$ is a codimension $s$ subscheme, 
cut out by the tautological section of the vector bundle $\cQ_r(H)$, where 
$\cQ_r = V/\cU_r$ is the universal quotient bundle. 
Hence there is a Koszul resolution 
\begin{equation}
\label{koszul-res-Lr}
0 \to (\wedge^s\cQ_r^{\svee})(-sH) \to \dots \to \cQ_r^{\svee}(-H) \to \cO_{\bP \stimes \bG_r} \to \cO_{\bL_r} \to 0. 
\end{equation}
In view of the equivalence $\iota_{r*} \iota_r^*F \simeq F \otimes \cO_{\bL_r}$, 
it thus suffices to show that 
\begin{equation*}
\cHom_{\bG_r}(D \boxtimes G, C(-tH) \boxtimes (F \otimes \wedge^t\cQ^{\svee}_r))  
\end{equation*}
vanishes for $1 \leq t \leq s$. 
But $C(-tH) \in \cA_i((i-t)H) \subset \cA_{i-t}((i-t)H)$, 
so the vanishing holds by~\eqref{lc-product-decomp}.
This proves the fully faithful and semiorthogonal statements of the lemma. 
The images of the subcategories~\eqref{linear-section-sos} are
(right or left) admissible if $\cA_i \subset \cA$ are, because 
then each $\cA_i \sotimes \Perf(\bG_r) \subset \cA \sotimes \Perf(\bG_r)$ 
is (by Lemma~\ref{lemma-base-change-admissible}) and 
the functor~\eqref{linear-section-functor} 
has left and right adjoints (by Remark~\ref{remark:good-morphism} and Lemma~\ref{lemma-bc-adjoints}). 
\end{proof}

The following result was anticipated in Remark~\ref{remark-moderate}. 
\begin{corollary}
\label{corollary-lc-maximal-length} 
Let $\cA$ be $\bP$-linear category with a right Lefschetz chain $\cA_0 \supset \dots \supset \cA_{m-1}$ with respect to $- \otimes \cO_{\bP}(H)$. 
Let $N = \rank(V)$. 
\begin{enumerate}
\item \label{lc-length-1}
$\cA_{i} = 0$ for $i \geq N$. 
\item \label{lc-length-2}
The action functor $\cA \sotimes \Perf(\bP) \to \cA$ is fully 
faithful on the subcategory 
\begin{equation*}
\cA_{N-1} \sotimes \Perf(\bP) \subset \cA \sotimes \Perf(\bP). 
\end{equation*}
\item \label{lc-length-3}
If $\cA_i = \cA_0$ for $0 \leq i \leq N-1$,  
then the action functor $\cA \sotimes \Perf(\bP) \to \cA$ 
induces an equivalence of Lefschetz categories 
\begin{equation*}
\cA_0 \sotimes \Perf(\bP) \simeq \cA ,
\end{equation*}
where the Lefschetz decomposition on the left side is the 
one induced by the standard semiorthogonal decomposition 
of $\Perf(\bP)$, namely 
\begin{equation*}
\cA_0 \sotimes \Perf(\bP) = 
\llangle \cA_0 , \cA_0(H), \dots, \cA_0((N-1)H) \rrangle. 
\end{equation*}
\end{enumerate}
\end{corollary}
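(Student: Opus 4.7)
The plan is to derive all three parts as consequences of Lemma~\ref{lemma-linear-section} applied for carefully chosen values of $r$, combined with the special identifications in Remark~\ref{Lr-special-values}.

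For part~\eqref{lc-length-1}, I would apply Lemma~\ref{lemma-linear-section} with $r=0$, so that $s = N$. Since $\bG_0 = S$ and $\bL_0 = \varnothing$, by Remark~\ref{Lr-special-values} we have $\bL_0(\cA) = 0$. The lemma tells us that the functor $\iota_0^*$ is fully faithful on $\cA_i \sotimes \Perf(S) \simeq \cA_i$ for $N \leq i \leq m-1$; since the target is zero, this forces $\cA_i = 0$ for all $i \geq N$.

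For part~\eqref{lc-length-2}, I would apply Lemma~\ref{lemma-linear-section} with $r = 1$, so that $s = N-1$. Since $\bG_1 = \bL_1 = \bP$, Remark~\ref{Lr-special-values} gives $\bL_1(\cA) = \cA$. A short unwinding of the diagram~\eqref{diagram-Lr} in this case (where $\iota_1 \colon \bP \hookrightarrow \bP \times \bP$ is the diagonal and $f_1 = p_1 = \id_{\bP}$) identifies the functor $\iota_1^* \colon \cA \sotimes \Perf(\bP) \to \bL_1(\cA) \simeq \cA$ with the $\bP$-action functor $(C,F) \mapsto C \otimes F$. The lemma then immediately yields full faithfulness on $\cA_i \sotimes \Perf(\bP)$ for $i \geq N-1$, in particular for $i = N-1$.

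For part~\eqref{lc-length-3}, under the hypothesis $\cA_i = \cA_0$ for $0 \leq i \leq N-1$, part~\eqref{lc-length-1} together with this hypothesis shows that the right Lefschetz decomposition of $\cA$ takes the form
\begin{equation*}
\cA = \llangle \cA_0, \cA_0(H), \dots, \cA_0((N-1)H) \rrangle.
\end{equation*}
On the other hand, tensoring the standard semiorthogonal decomposition of $\Perf(\bP)$ from Example~\ref{example-projective-bundle-lc} with $\cA_0$ over $\Perf(S)$, using Lemma~\ref{lemma-sod-base-change}, produces the decomposition
\begin{equation*}
\cA_0 \sotimes \Perf(\bP) = \llangle \cA_0, \cA_0(H), \dots, \cA_0((N-1)H) \rrangle
\end{equation*}
displayed in the statement. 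By part~\eqref{lc-length-2}, the action functor is fully faithful on $\cA_0 \sotimes \Perf(\bP)$ (taking $\cA_{N-1} = \cA_0$), and by construction it carries the $i$-th component of this decomposition onto the $i$-th component $\cA_0(iH) \subset \cA$. Hence it is essentially surjective, so an equivalence. The equivalence sends the center $\cA_0 \sotimes \Perf(S) \simeq \cA_0$ on the left to the center $\cA_0$ on the right, so it is a Lefschetz equivalence.

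The only step requiring any care is the identification in part~\eqref{lc-length-2} of $\iota_1^*$ with the action functor, but this is a routine unwinding of the universal family diagram~\eqref{diagram-Lr} when $\cU_1 = \cO_{\bP}(-H)$ has rank one; everything else reduces to bookkeeping with the semiorthogonal decompositions already in hand.
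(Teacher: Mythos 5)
Your proposal is correct and follows the same route as the paper's own proof, which is simply ``Taking $r = 0$ in Lemma~\ref{lemma-linear-section} gives \eqref{lc-length-1}, taking $r = 1$ gives~\eqref{lc-length-2}, and then \eqref{lc-length-3} follows from the definitions.'' Your write-up usefully spells out the identification of $\iota_1^*$ with the action functor for part~\eqref{lc-length-2} (noting $p_1 = f_1 = \mathrm{id}_{\bP}$ so that $\iota_1$ is the diagonal), and for part~\eqref{lc-length-3} correctly matches the components of the two decompositions and checks that the centers correspond; the one point you use without comment, as does the paper, is that the Lefschetz chain is full, so that the components actually generate $\cA$ and the fully faithful action functor is essentially surjective.
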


\begin{proof}
Taking $r = 0$ in Lemma~\ref{lemma-linear-section} gives 
\eqref{lc-length-1}, taking $r =1$ gives~\eqref{lc-length-2}, 
and then \eqref{lc-length-3} follows from the definitions.  
\end{proof}

If $\cA$ is a $\bP$-linear category 
with a right Lefschetz chain  
$\cA_0 \supset \dots \supset \cA_{m-1}$ with respect to $- \otimes \cO_{\bP}(H)$, 
we denote by $\Ku_r(\cA)$ 
the right orthogonal in $\bL_r(\cA)$ to the 
twist by $\cO(-(s-1)H)$ of the image of the sequence of categories~\eqref{linear-section-sos}. 
The twist by $\cO(-(s-1)H)$ in our definition does not affect $\Ku_r(\cA)$ 
up to equivalence, 
but this normalization will be convenient later. 
Since $\iota_r^*$ is an equivalence when restricted to any of the categories in~\eqref{linear-section-sos}, 
we will typically omit it in our notation for their images. 
Thus if the $\cA_i \subset \cA$ are right admissible, 
we have a $\bG_r$-linear semiorthogonal decomposition
\begin{equation}
\label{Cr}
\bL_r(\cA) = \llangle \Ku_r(\cA), 
\cA_s(H) \sotimes \Perf(\bG_r), 
\dots, 
\cA_{m-1}((m-s)H) \sotimes \Perf(\bG_r) \rrangle. 
\end{equation}
For $s \geq m$ the sequence~\eqref{linear-section-sos} is empty, 
so by definition $\bL_r(\cA) = \Ku_r(\cA)$ in this case. 
Further, if $m < N$ 
then $\Ku_1(\cA) = \bL_1(\cA) = \cA$. 

The $\bG_r$-linear category 
$\Ku_r(\cA)$ should be thought of as the ``interesting'' component of the 
family of linear sections $\bL_r(\cA)$, since the remaining components in~\eqref{Cr} 
come from the ambient Lefschetz chain. 
Our ultimate goal is to show that, when $\cA$ is a right strong, moderate Lefschetz category, the 
category 
$\Ku_{N-1}(\cA)$ controls all of the other $\Ku_r(\cA)$. 
Given a rank $r$ subbundle $L \subset V$, we define  
\begin{equation}
\label{CL}
\Ku_L(\cA) = \Ku_r(\cA) \otimes_{\Perf(\bG_r)} \Perf(S)
\end{equation}
as the base change along the morphism $S \to \bG_r$ classifying 
$L \subset V$. 

\begin{lemma}
\label{lemma-CL}
Let $\cA$ be $\bP$-linear category with a right Lefschetz chain 
$\cA_0 \supset \dots \supset \cA_{m-1}$ with respect to $- \otimes \cO_{\bP}(H)$, 
with each $\cA_i \subset \cA$ right admissible. 
Let $L \subset V$ be a subbundle of corank~$s$. 
Then there is a semiorthogonal decomposition 
\begin{equation*}
\cA \otimes_{\Perf(\bP)} \Perf(\bP(L)) = \llangle 
\Ku_L(\cA), 
\cA_s(H) , 
\dots, 
\cA_{m-1}((m-s)H)  \rrangle. 
\end{equation*}
\end{lemma}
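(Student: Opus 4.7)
The plan is to deduce this lemma directly from the universal family decomposition~\eqref{Cr} by base change along the classifying morphism of $L \subset V$.

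First, set $r = \rank(L)$ so that $r + s = N$, and let $x_L \colon S \to \bG_r$ denote the morphism classifying the subbundle $L \subset V$. By construction of $\bL_r$, the pullback of $\bL_r \to \bG_r$ along $x_L$ is $\bP(L) \to S$. Applying Theorem~\ref{theorem-FM-functors}\eqref{fiber-product} (as in Remark~\ref{remark-linear-section-C}), I obtain a canonical equivalence
\begin{equation*}
\bL_r(\cA) \otimes_{\Perf(\bG_r)} \Perf(S) \simeq \cA \otimes_{\Perf(\bP)} \Perf(\bP(L)) .
\end{equation*}
Thus it suffices to base change the semiorthogonal decomposition~\eqref{Cr} along $x_L^* \colon \Perf(\bG_r) \to \Perf(S)$.

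Next I apply the base change result for semiorthogonal decompositions (Lemma~\ref{lemma-sod-base-change} together with Remark~\ref{remark-sod-bc}) to~\eqref{Cr}, which is a $\bG_r$-linear semiorthogonal decomposition whose existence is guaranteed by the right admissibility hypothesis on the $\cA_i \subset \cA$ via Lemma~\ref{lemma-linear-section}. This produces an $S$-linear semiorthogonal decomposition of $\bL_r(\cA) \otimes_{\Perf(\bG_r)} \Perf(S)$ whose components are the base changes of the components appearing in~\eqref{Cr}.

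It remains to identify each base changed component. By definition~\eqref{CL}, the leftmost component is
\begin{equation*}
\Ku_r(\cA) \otimes_{\Perf(\bG_r)} \Perf(S) = \Ku_L(\cA) .
\end{equation*}
For the other components, note that $\cA_i((i-s+1)H) \sotimes \Perf(\bG_r)$ denotes the tensor product over $\Perf(S)$, so its base change along $x_L^*$ is
\begin{equation*}
\bigl( \cA_i((i-s+1)H) \otimes_{\Perf(S)} \Perf(\bG_r) \bigr) \otimes_{\Perf(\bG_r)} \Perf(S) \simeq \cA_i((i-s+1)H),
\end{equation*}
which is the desired form. Combining these identifications yields the claimed semiorthogonal decomposition. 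The argument is essentially bookkeeping; the only point that requires care is verifying that the base change lemma applies to the $\bG_r$-linear decomposition~\eqref{Cr} rather than merely to tensor products of two SODs, but this is exactly the content of Remark~\ref{remark-sod-bc}.
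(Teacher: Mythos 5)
Your proposal is correct and follows exactly the same route as the paper, whose proof is the one-line citation ``Follows from Remark~\ref{remark-linear-section-C}, Lemma~\ref{lemma-sod-base-change}, and the decomposition~\eqref{Cr}.'' You have simply unwound that sentence, which is fine. The only minor imprecision: the components of~\eqref{Cr} are, strictly speaking, the images under $\iota_r^*$ of the categories $\cA_i((i-s+1)H) \sotimes \Perf(\bG_r)$ (the paper's convention just after Lemma~\ref{lemma-linear-section} suppresses $\iota_r^*$ from the notation), so when you compute the base change of a component you should remember it is the image under $\iota_r^*$ that is being base changed; since $\iota_r^*$ is compatible with base change along $x_L$ and remains fully faithful on these pieces, your identification is still correct, and it matches the paper's notational convention for the resulting decomposition.
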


\begin{proof}
Follows from Remark~\ref{remark-linear-section-C}, Lemma~\ref{lemma-sod-base-change}, and the 
decomposition~\eqref{Cr}. 
\end{proof}

\begin{remark}
Lemma~\ref{lemma-CL} says that for a range of degrees, the components 
of a Lefschetz sequence in $\cA$ embed into any linear section of $\cA$. 
This is analogous to the behavior of the cohomology of a projective variety, 
as governed by the Lefschetz hyperplane 
theorem. 
This analogy is the source of our terminology, 
which goes back to~\cite{kuznetsov-HPD}.  
\end{remark}

The following result is a combination of Lemmas~\ref{lemma-linear-section} and 
\ref{lemma-CL} in the ``left'' case, and holds by the same proof. 
\begin{lemma}
\label{lemma-left-linear-section}
Let $\cA$ be $\bP$-linear category with a left Lefschetz chain 
$\cA_{1-m} \subset \dots \subset \cA_0$ with respect to $- \otimes \cO_{\bP}(H)$. 
Fix nonnegative integers $r$ and $s$ such that \mbox{$r+s = N$}. 
\begin{enumerate}
\item The functor $\iota_r^* \colon \cA \sotimes \Perf(\bG_r) \to \bL_r(\cA)$ 
is fully faithful on the subcategory
\begin{equation*}
\cA_i \sotimes \Perf(\bG_r) \subset \cA \sotimes \Perf(\bG_r)
\end{equation*}
for $1-m \leq i \leq -s$, and the image is (right or left) admissible if $\cA_i \subset \cA$ is. 

\item If the subcategories $\cA_i \subset \cA$ are left admissible,  
then there is a $\bG_r$-linear semiorthogonal decomposition 
\begin{equation*}
\bL_{r}(\cA) = \llangle 
\cA_{1-m}((s-m)H) \sotimes \Perf(\bG_r), \dots, \cA_{-s}(-H)\sotimes \Perf(\bG_r), \Ku'_r(\cA) \rrangle. 
\end{equation*}

\item 
\label{left-linear-section-L}
If the subcategories $\cA_i \subset \cA$ are left admissible and 
$L \subset V$ is a subbundle of corank $s$, then there is a semiorthogonal decomposition 
\begin{equation*}
\cA \otimes_{\Perf(\bP)} \Perf(\bP(L)) = \llangle 
\cA_{1-m}((s-m)H), \dots, \cA_{-s}(-H), 
\Ku'_L(\cA) 
\rrangle. 
\end{equation*}
\end{enumerate}
\end{lemma}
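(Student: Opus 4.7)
The plan is to imitate the proofs of Lemmas~\ref{lemma-linear-section} and~\ref{lemma-CL}, with a modification of the key Koszul computation to accommodate the fact that our chain is left, rather than right, Lefschetz. Parts~(1) and~(2) reduce to a single computation: for $C \in \cA_i(iH)$, $D \in \cA_j(jH)$ with $1 - m \leq i \leq j \leq -s$, and $F, G$ in the pullback image of $\Perf(\bG_r)$, one wants to show that the adjunction morphism
\begin{equation*}
\cHom_{\bG_r}(D \boxtimes G, C \boxtimes F) \to \cHom_{\bG_r}(\iota_r^*(D \boxtimes G), \iota_r^*(C \boxtimes F))
\end{equation*}
is an equivalence when $j = i$ (giving full faithfulness on $\cA_i \sotimes \Perf(\bG_r)$, twists being autoequivalences) and that both sides vanish when $j > i$ (giving the claimed semiorthogonal sequence after the normalization by $\cO((s-1)H)$). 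Applying the $\iota_r^* \dashv \iota_{r*}$ adjunction together with the Koszul resolution~\eqref{koszul-res-Lr} of $\cO_{\bL_r}$, this reduces to the vanishing of
\begin{equation*}
\cHom_{\bG_r}(D \boxtimes G,\, (C(-tH)) \boxtimes (F \otimes \wedge^t \cQ_r^\svee))
\end{equation*}
for $1 \leq t \leq s$ when $j = i$, and for $0 \leq t \leq s$ when $j > i$.

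The main obstacle is the terms with $t \geq 1$: in the right-hand case of Lemma~\ref{lemma-linear-section}, one placed $C(-tH) \in \cA_i((i-t)H) \subset \cA_{i-t}((i-t)H)$ using the right-chain containment $\cA_i \subset \cA_{i-t}$, but in our left-chain setup the containment goes the wrong way and the direct argument fails. The remedy is to transfer the twist from $C$ to $D$ via the $\bP$-linear autoequivalence $- \otimes \cO(tH)$, rewriting the above mapping object as
\begin{equation*}
\cHom_{\bG_r}(D(tH) \boxtimes G,\, C \boxtimes (F \otimes \wedge^t \cQ_r^\svee)).
\end{equation*}
Now $D(tH) \in \cA_j((j+t)H)$, and the bounds $j \leq -s$ and $t \leq s$ give $j + t \leq 0$, while $j \geq 1 - m$ and $t \geq 0$ give $j + t \geq 1 - m$; hence the left-chain containment $\cA_j \subset \cA_{j+t}$ places $D(tH)$ in the $(j+t)$-th piece of the left Lefschetz semiorthogonal sequence, while $C$ lies in the $i$-th piece. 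Tensoring this semiorthogonal sequence with $\Perf(\bG_r)$ preserves its semiorthogonality (by the same argument as in Lemma~\ref{lemma-sod-base-change}), and the strict inequality $j + t > i$ holds both when $j = i$ (using $t \geq 1$) and when $j > i$ (using $t \geq 0$), giving the required vanishing. The remaining case $t = 0$ with $j > i$ is handled directly by the semiorthogonality of $\cA_j(jH)$ and $\cA_i(iH)$ in $\cA$.

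The admissibility claim in part~(1) follows from Lemma~\ref{lemma-base-change-admissible} together with the existence of both adjoints to $\iota_r^*$, furnished by Remark~\ref{remark:good-morphism} and Lemma~\ref{lemma-bc-adjoints}. Finally, part~(3) is obtained from part~(2) by base change along the morphism $S \to \bG_r$ classifying $L \subset V$, via Remark~\ref{remark-linear-section-C} and Lemma~\ref{lemma-sod-base-change}, in complete parallel with the deduction of Lemma~\ref{lemma-CL} from Lemma~\ref{lemma-linear-section}.
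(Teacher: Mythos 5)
Your proof is correct and follows the same adjunction-plus-Koszul strategy as the proof of Lemma~\ref{lemma-linear-section}, which the paper states also proves this lemma ``by the same proof.'' You have usefully spelled out the one genuine wrinkle that the paper's terse remark glosses over: the containment $\cA_i \subset \cA_{i-t}$ used in the right-chain case reverses for a left chain, so a verbatim transcription of the argument breaks at that step, and the fix is to move the $\cO(tH)$-twist from $C$ to $D$ (equivalently, to run the argument through the other adjunction $\iota_{r!} \dashv \iota_r^*$, where $\iota_{r!} \simeq \iota_{r*}(- \otimes \omega_{\iota_r})$ and $\omega_{\iota_r} \simeq \det(\cQ_r)(sH)[-s]$, which places the Koszul twists on the source rather than the target). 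The index bookkeeping ($1-m \leq j+t \leq 0$ and $j+t > i$ in all the needed cases), the separate handling of the $t=0$ term, the admissibility claim via Lemmas~\ref{lemma-base-change-admissible} and~\ref{lemma-bc-adjoints} and Remark~\ref{remark:good-morphism}, and the base-change deduction of part~(3) all check out.
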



\section{The homological projective dual of a Lefschetz category}
\label{section-HPD-category}

In this section we introduce the main object of homological projective duality, namely 
the HPD category of a Lefschetz category $\cA$ over $\bP$. 
In fact, in \S\ref{subsection-HPD-category} we define both ``right'' and ``left'' HPD categories 
$\cAd$ and $\dcA$, which are linear over $\bPv = \bP(V^{\svee})$. 
The categories $\cAd$ and $\dcA$ are actually equivalent as $\bPv$-linear categories (Lemma~\ref{lemma-HPD-admissible}), 
but it is useful to distinguish them notationally, for the following reason. 
Assuming $\cA$ is moderate, in \S\ref{subsection-HPD-ls} 
we construct a left Lefschetz chain in $\cAd$ and a right Lefschetz chain in $\dcA$. 
Under suitable hypotheses, we will later show in \S\ref{section-main-theorem} that 
these Lefschetz chains give $\cAd$ and $\dcA$ the structure of Lefschetz categories 
over $\bPv$. 
In general, there may not be a Lefschetz equivalence $\cAd \simeq \dcA$, but in \S\ref{subsection-right-vs-left-HPD} 
we show that there is under suitable finiteness conditions. 
Finally, in \S\ref{subsection-CPD} we explain how the HPD category can be thought of as a 
categorification of classical projective duality.

\subsection{The HPD category} 
\label{subsection-HPD-category} 
Note that there is an isomorphism $\bG_{N-1} \cong \bP(V^{\svee})$. 
Parallel to our notation in \S\ref{section-lefschetz-categories}, we write $\bPv =  \bP(V^{\svee})$, 
and denote by $H'$ the relative hyperplane class on $\bPv$. 
Moreover, we use the notation $\bH = \bL_{N-1}$ for the universal hyperplane in~$\bP$. 
We also drop the subscripts in our notation for the maps $p_{N-1}, f_{N-1}$, $\iota_{N-1}$, so 
that diagram~\eqref{diagram-Lr} takes the form
\begin{equation*}
\vcenter{\xymatrix{
&&  \ar[dll]_{p} \bH \ar[drr]^{f} \ar[d]_{\iota} &&  \\ 
\bP && \ar[ll]_{\pr_1} \bP \times \bPv  \ar[rr]^{\pr_2} && \bPv 
}}
\end{equation*}
Finally, given a $\bP$-linear category $\cA$, we write $\bH(\cA) = \bL_{N-1}(\cA)$ and call this category 
the \emph{universal hyperplane section} of~$\cA$. 

\begin{definition}
\label{definition-HPD}
Let $\cA$ be a Lefschetz category over $\bP$. 
The \emph{right} and \emph{left HPD categories} $\cAd$ and $\dcA$ of $\cA$ 
are the $\bPv$-linear categories defined by $\cAd = \Ku_{N-1}(\cA)$ and $\dcA = \Ku'_{N-1}(\cA)$. 
Explicitly, these categories are defined by $\bPv$-linear semiorthogonal decompositions 
\begin{align}
\label{Ad}
\bH(\cA) & = \llangle \cAd, 
\iota^*(\cA_1(H) \sotimes \Perf(\bPv)), 
\dots, 
\iota^*(\cA_{m-1}((m-1)H) \sotimes \Perf(\bPv)) \rrangle , \\ 
\label{dA} \bH(\cA) & = \llangle  
\iota^*(\cA_{1-m}((1-m)H) \sotimes \Perf(\bPv)), 
\dots, 
\iota^*(\cA_{-1}(-H) \sotimes \Perf(\bPv)),
\dcA
\rrangle , 
\end{align}
where $m = \length(\cA)$. 
\end{definition}

These categories can be described more symmetrically as follows. 

\begin{lemma} 
\label{lemma-HPD-admissible}
Let $\cA$ be a Lefschetz category over $\bP$. 
Then 
\begin{align}
\label{Ad-symmetric} 
\cAd & = \set{ C \in \bH(\cA) \st \iota_*(C) \in \cA_0 \sotimes \Perf(\bPv) },  \\ 
\dcA & = \set{ C \in \bH(\cA) \st \iota_!(C) \in \cA_0 \sotimes \Perf(\bPv) }, 
\end{align}
where $\iota_!$ denotes the left adjoint to $\iota^*$. 
Further, there are $\bPv$-linear semiorthogonal decompositions 
\begin{align}
\label{Cd-right} 
\bH(\cA) & = \llangle 
\iota^!(\cA_{1-m}((1-m)H) \sotimes \Perf(\bPv)), 
\dots, 
\iota^!(\cA_{-1}(-H) \sotimes \Perf(\bPv)),
\cAd \rrangle, \\ 
\bH(\cA) & = \llangle \dcA, 
\iota^!(\cA_1(H) \sotimes \Perf(\bPv)), 
\dots, 
\iota^!(\cA_{m-1}((m-1)H) \sotimes \Perf(\bPv)) \rrangle ,
\end{align} 
where $m = \length(\cA)$ and $\iota^!$ denotes the right adjoint to $\iota_*$. 
In particular, $\dcA$ and $\cAd$ are admissible subcategories of $\bH(\cA)$. 
Finally, the functor 
\begin{equation*}
- \otimes \cO(H) \colon \bH(\cA) \to \bH(\cA) 
\end{equation*}
induces a $\bPv$-linear equivalence $\dcA \simeq \cAd$. 
\end{lemma}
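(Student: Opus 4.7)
The plan is to derive all claims from symmetric $\iota_*$- and $\iota_!$-descriptions of $\cAd$ and $\dcA$, using that $\iota\colon \bH \hookrightarrow \bP \times \bPv$ is the closed embedding of the incidence divisor cut out by the tautological section of $\cO(H+H')$. Thus $\omega_\iota \simeq \cO_\bH(H+H')[-1]$, and \eqref{eq:shriek-adjoints} combined with the projection formula give
\begin{equation*}
\iota^!(-) \simeq \iota^*(-)\otimes \cO(H+H')[-1], \qquad \iota_!(-) \simeq \iota_*(-)\otimes \cO(H+H')[-1].
\end{equation*}

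First, by Definition~\ref{definition-HPD} and the adjunction $\iota^* \dashv \iota_*$, an object $C \in \bH(\cA)$ belongs to $\cAd$ if and only if $\iota_*(C)$ is right orthogonal to $\cA_i(iH) \sotimes \Perf(\bPv)$ for each $1 \le i \le m-1$; by the right Lefschetz decomposition of $\cA$ and Lemma~\ref{lemma-sod-base-change}, this holds exactly when $\iota_*(C) \in \cA_0 \sotimes \Perf(\bPv)$. The description of $\dcA$ follows by the analogous argument with $\iota_! \dashv \iota^*$ and the left Lefschetz decomposition. Since $\cA_0 \sotimes \Perf(\bPv)$ is invariant under $\cO(H')$-twists and shifts, the formula $\iota_!(C) \simeq \iota_*(C)(H+H')[-1]$ transforms the $\dcA$-condition into $\iota_*(C) \in \cA_0(-H) \sotimes \Perf(\bPv)$. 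Comparing the two descriptions, $\iota_*(C(H)) = \iota_*(C)(H)$ lies in $\cA_0 \sotimes \Perf(\bPv)$ iff $\iota_*(C) \in \cA_0(-H) \sotimes \Perf(\bPv)$ iff $C \in \dcA$, which gives the restriction of $-\otimes \cO(H)$ to the $\bPv$-linear equivalence $\dcA \simeq \cAd$.

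Second, the same twist-invariance arguments yield the identity of subcategories $\iota^!(\cA_i(iH) \sotimes \Perf(\bPv)) = \iota^*(\cA_i(iH) \sotimes \Perf(\bPv)) \otimes \cO(H)$ inside $\bH(\cA)$. The first auxiliary decomposition is obtained by applying the autoequivalence $-\otimes \cO(H)$ of $\bH(\cA)$ to \eqref{dA}: each $\iota^*$-component is converted to the corresponding $\iota^!$-component, and the rightmost $\dcA$ becomes $\dcA \otimes \cO(H) = \cAd$. The second auxiliary decomposition follows by the symmetric argument starting from \eqref{Ad} and using the inverse equivalence $\cAd \otimes \cO(-H) = \dcA$ for the leftmost component. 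Admissibility of $\cAd$ is then immediate from Lemma~\ref{lemma-admissible-sod}, as it is left admissible (the leftmost component of \eqref{Ad}) and right admissible (the rightmost component of the first auxiliary SOD); admissibility of $\dcA$ follows analogously from \eqref{dA} and the second auxiliary SOD. The principal bookkeeping challenge is ensuring that the $\cO(H')$-twist and the cohomological shift appearing in $\omega_\iota$ are absorbed correctly into the $\bPv$-linear, triangulated structure — this is precisely what makes the identification of $\iota^!$- and $\iota^*$-components on the level of subcategories valid, and hence what makes the conversion between the two families of semiorthogonal decompositions work.
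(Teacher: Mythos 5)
Your argument for the symmetric descriptions \eqref{Ad-symmetric}, for the equivalence $\dcA \xrightarrow{\sim} \cAd$ via $-\otimes\cO(H)$, and for the decomposition \eqref{Cd-right} is sound and essentially the same as the paper's (though you reverse the logical order: you establish $\dcA(H)=\cAd$ first directly from the $\iota_!/\iota_*$ comparison and then deduce \eqref{Cd-right} by twisting \eqref{dA}, whereas the paper derives \eqref{Cd-right} from Lemma~\ref{lemma-left-linear-section} and then reads off $\dcA(H)=\cAd$ from the identification of the rightmost component; both routes are valid). The key identity $\iota^!(-)\simeq\iota^*(-)\otimes\cO(H+H')[-1]$ and the absorption of the $\cO(H')$-twist and shift are used exactly as in the paper.

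However, your derivation of the second auxiliary decomposition does not work as stated, and there is a genuine twist-direction error. Tensoring \eqref{Ad} by $\cO(-H)$ turns the leftmost component $\cAd$ into $\cAd(-H)=\dcA$, but it turns each $\iota^*(\cA_i(iH)\sotimes\Perf(\bPv))$ into $\iota^*(\cA_i((i-1)H)\sotimes\Perf(\bPv))$. On the other hand, by your own formula the subcategory identification is $\iota^!(\cA_i(iH)\sotimes\Perf(\bPv))=\iota^*(\cA_i((i+1)H)\sotimes\Perf(\bPv))$, i.e.\ converting $\iota^*$ to $\iota^!$ corresponds to a twist by $\cO(+H)$, not $\cO(-H)$. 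The two differ by $\cO(2H)$, so the ``symmetric argument'' you invoke does not produce the components $\iota^!(\cA_i(iH)\sotimes\Perf(\bPv))$ appearing in the second displayed decomposition. (Running the genuinely symmetric computation --- tensoring \eqref{Ad} by $\cO(+H)$ --- yields $\llangle \cAd(H), \iota^!(\cA_1(H)\sotimes\Perf(\bPv)),\dots\rrangle$, with $\cAd(H)$ rather than $\dcA$ on the left; this is a point worth double-checking against the statement.) Note that the admissibility conclusion for $\dcA$ does not actually require this second decomposition: it follows at once from the admissibility of $\cAd$ (established by \eqref{Ad} and \eqref{Cd-right}) and the autoequivalence $-\otimes\cO(H)$, so you could sidestep the problematic step entirely.
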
 

\begin{proof}
Using the right Lefschetz decomposition of $\cA$, 
the first claim follows easily by adjunction. 
For the second claim, note that $\iota \colon \bH \to \bP \times \bPv$ is 
the embedding of the divisor cut out by the canonical section of $\cO(H + H')$ 
on $\bP \times \bPv$, and thus $\iota^!(C) = \iota^*(C) \otimes \cO(H + H')[-1]$. 
Using this, it follows from Lemma~\ref{lemma-left-linear-section} that we have a 
semiorthogonal decomposition 
\begin{equation*}
\bH(\cA) = \llangle 
\iota^!(\cA_{1-m}((1-m)H) \sotimes \Perf(\bPv)), 
\dots, 
\iota^!(\cA_{-1}(-H) \sotimes \Perf(\bPv)),
\Ku'_{N-1}(\cA)(H) \rrangle. 
\end{equation*} 
By adjunction and the left Lefschetz decomposition of $\cA$, we 
find that $\Ku'_{N-1}(\cA)(H)$ is also given by~\eqref{Ad-symmetric}. 
This also implies the final claim since $\dcA = \cK'_{N-1}(\cA)$ by definition. 
\end{proof}

\subsection{Construction of the Lefschetz sequence} 
\label{subsection-HPD-ls} 
Let $\cA$ be a moderate Lefschetz category over~$\bP$ of length $m$. 
Let $\gamma \colon \cAd \to \bH(\cA)$ denote the inclusion functor. 
Let $\gamma^* \colon \bH(\cA) \to \cAd$ denote its left adjoint, which exists 
by Lemma~\ref{lemma-admissible-sod}. 
The functor $p^* \colon \Perf(\bP) \to \Perf(\bH)$ induces a functor 
\begin{equation*}
p^*  \colon \cA \simeq \cA \otimes_{\Perf(\bP)} \Perf(\bP) \to \bH(\cA) 
\end{equation*} 
which is abusively denoted by the same symbol. 
Further, recall that by Lemma~\ref{lemma-twisted-sod-A0} there is a semiorthogonal decomposition
\begin{equation}
\label{twisted-sod-A0-repeat}
\cA_0 = \llangle \fatw_{0}, \fatw_1, \dots, \fatw_{m-1} \rrangle
\end{equation} 
into twisted primitive components, where for simplicity we write $\fatw_0$ for $\fatw_{+0}$. 
The following is the key ingredient in constructing the desired 
Lefschetz sequence in $\cAd$. 
We postpone its proof until \S\ref{subsection-proof-key-lemma} below.

\begin{lemma}
\label{lemma-lls-Cd}
Let $\cA$ be a moderate Lefschetz category over $\bP$. 
\begin{enumerate}
\item \label{A0-fully-faithful}
The functor 
\begin{equation*}
\gamma^* \circ p^* \colon \cA \to \cAd 
\end{equation*}
is fully faithful on the subcategory $\cA_0 \subset \cA$. 
\item \label{lls-orthogonality}
Let $C \in \llangle \fatw_0, \fatw_1, \dots, \fatw_i \rrangle \subset \cA_0$ 
for some $0 \leq i \leq m-1$. Then for any $D \in \cA$ and $1 \leq t \leq N-2-i$, 
we have
\begin{equation*}
\cHom_{\bPv}(\gamma^*p^*(D), \gamma^*p^*(C)(-tH')) 
\simeq 0. 
\end{equation*}
\end{enumerate}
\end{lemma}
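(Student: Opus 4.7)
The plan is to reduce both statements to computations in $\bH(\cA)$ via the adjunctions $\gamma^* \dashv \gamma$ and $p^* \dashv p_*$, and then to exploit the Koszul resolution coming from the fact that $\bH \subset \bP \times \bPv$ is a divisor of class $H + H'$. I would begin with
\[
\cHom_\bPv(\gamma^* p^*(D),\, \gamma^* p^*(C)(-tH')) \;\simeq\; \cHom_\bPv(p^*(D),\, \gamma\gamma^*(p^*(C)(-tH'))),
\]
using $\gamma^* \dashv \gamma$ together with the $\bPv$-linearity of $\gamma^*$. The Koszul resolution $\cO(-H - H') \to \cO \to \iota_* \cO_\bH$ on $\bP \times \bPv$, combined with the projection formula, yields the triangle
\[
C(-H) \boxtimes \cO(-(t+1)H') \;\longrightarrow\; C \boxtimes \cO(-tH') \;\longrightarrow\; \iota_*(p^*(C)(-tH'))
\]
in $\cA \sotimes \Perf(\bPv)$, which is the main geometric input.

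Using the defining semiorthogonal decomposition of $\bH(\cA)$ in~\eqref{Ad}, the characterization $\cAd = \{X : \iota_* X \in \cA_0 \sotimes \Perf(\bPv)\}$ from Lemma~\ref{lemma-HPD-admissible}, and the base-changed right Lefschetz decomposition of $\cA \sotimes \Perf(\bPv)$ from Lemma~\ref{lemma-sod-base-change}, I would identify $\iota_*\gamma\gamma^*(p^*(C)(-tH'))$ as an explicit object of $\cA_0 \sotimes \Perf(\bPv)$ extracted from $\iota_*(p^*(C)(-tH'))$ via the SOD projection. Composing with $\pr_{1*}$ and invoking the cohomological vanishings $\pi_* \cO_\bPv(-s) \simeq 0$ for $1 \le s \le N - 1$ (where $\pi \colon \bPv \to S$ is the structure map) then kills most contributions and reduces each Hom-space to an expression on $\cA$ to which the semiorthogonality in the Lefschetz decomposition applies directly. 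For part~(1) this computation should yield $\cHom_\bPv(\gamma^* p^*(D), \gamma^* p^*(C)) \simeq \cHom(D, C)$ for $D, C \in \cA_0$, giving the fully faithfulness. For part~(2) I would invoke Lemma~\ref{lemma-twisted-sod-A0} to represent $C \in \llangle \fatw_0, \ldots, \fatw_i \rrangle$ as the image under $\alpha_0^*$ of objects in $\rT^{j+1}(\fa_j) \subset \cA$ for $j \le i$, and then verify that in the range $1 \le t \le N - 2 - i$ both endpoints of the Koszul triangle contribute zero after projection onto $\cAd$ and pushforward by $p_*$.

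The hard part will be the identification step: explicitly matching $\iota_*\gamma\gamma^*(p^*(C)(-tH'))$ against the components of the SOD of $\cA \sotimes \Perf(\bPv)$ in a way that respects the twisted primitive decomposition $\cA_0 = \llangle \fatw_0, \ldots, \fatw_{m-1} \rrangle$. The sharp bound $1 \le t \le N - 2 - i$ should emerge from aligning the depth $i$ of $C$ inside this filtration against the range of cohomological vanishings $\pi_* \cO_\bPv(-s) \simeq 0$ for $1 \le s \le N-1$; the moderateness hypothesis $m < N$ is precisely what guarantees that these ranges remain compatible over the relevant interval of $i$. The overall strategy parallels the computations in \cite{kuznetsov-HPD}, and I would expect the main work to lie in transporting each geometric step faithfully into the noncommutative setting using the framework of Part~\ref{part-I}.
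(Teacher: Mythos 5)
Your opening reduction is the right one and matches the paper: via the adjunctions $\gamma^* \dashv \gamma$ and $p^* \dashv p_*$, both claims become statements about $p_*\bigl(\gamma\gamma^*p^*(C)(-tH')\bigr)$, and the two-term Koszul resolution of $\iota_*\cO_{\bH}$ together with the fact that $\bH = \bP(\cK) \to \bP$ is a projective bundle (so $p_*$ kills $p^*(\cA)(-uH')$ for $1 \le u \le N-2$, which is Lemma~\ref{lemma-p-pullback}) are indeed the geometric inputs. But the step you yourself flag as ``the hard part'' is where the argument as sketched breaks down, and it is also where essentially all the content of the lemma lives.

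The issue is your proposal to ``identify $\iota_*\gamma\gamma^*(p^*(C)(-tH'))$ as an explicit object of $\cA_0 \otimes \Perf(\bPv)$ extracted from $\iota_*(p^*(C)(-tH'))$ via the SOD projection.'' Pushing forward by $\iota_*$ does not respect the decomposition $\bH(\cA) = \llangle \cAd, {}^\perp\cAd \rrangle$ in the way this would require. Indeed, from the triangle $\rR_{\cAd}p^*(C)(-tH') \to p^*(C)(-tH') \to \gamma\gamma^*p^*(C)(-tH')$, the middle term pushes forward to the cone of $C(-H)\boxtimes\cO(-(t+1)H') \to C\boxtimes\cO(-tH')$, which lies in $\cA_0 \otimes \Perf(\bPv)$, and the third term pushes forward into $\cA_0 \otimes \Perf(\bPv)$ by Lemma~\ref{lemma-HPD-admissible}; hence the first term does too. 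So $\iota_*\rR_{\cAd}p^*(C)(-tH')$ is \emph{already entirely inside} $\cA_0 \otimes \Perf(\bPv)$, and the SOD projection onto $\cA_0 \otimes \Perf(\bPv)$ is the identity on all three terms of the pushed-forward triangle. It cannot single out the piece you want, and in fact $\iota_*\gamma\gamma^*p^*(C)(-tH')$ is generally \emph{not} equal to $\iota_*(p^*(C)(-tH'))$.

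What is actually needed is a quantitative bound on $\rR_{\cAd}p^*(C)$ before pushing forward, not an SOD decomposition after. This is Lemma~\ref{lemma-pullback-A0-decomposition} in the paper: it shows by a descending induction through the factorization of $\rL_{{}^\perp\cAd}$, using the Hom-vanishings of Lemma~\ref{lemma-vanishing} (which are themselves established from the Koszul triangle and the K\"unneth formula of Lemma~\ref{lemma-homs-base-change}), that for $C \in \llangle \fatw_0,\dots,\fatw_i\rrangle$ the object $\rR_{\cAd}p^*(C)$ lies in the subcategory generated by $p^*(\cA)(-uH')$ with $1 \le u \le i$. Only then does moderateness ($m < N$, hence $i \le m-1 \le N-2$) put all these twists in the range killed by $p_*$, and the claimed computations follow by applying $p_*$ to the triangle. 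Without an explicit control statement of this kind, the argument does not close; your plan names the right ingredients but is missing the actual mechanism that lets them interact.
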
 

For $\cA$ a moderate Lefschetz category over $\bP$, 
we define 
\begin{equation*}
\cAd_0 = \gamma^*p^* (\cA_0), 
\end{equation*} 
which by 
virtue of Lemma~\ref{lemma-lls-Cd}\eqref{A0-fully-faithful} is a stable 
subcategory of $\cAd$ equivalent to $\cA_0$. 
This will be the biggest component in the promised left Lefschetz 
sequence in $\cAd$. 

By moderateness of $\cA$, the number $m$ of components 
in~\eqref{twisted-sod-A0-repeat} satisfies $m < N$. 
Hence, since $\fatw_i = \fa_i = 0$ for $i \geq m$, 
we can rewrite~\eqref{twisted-sod-A0-repeat} as 
\begin{equation*}
\cA_0 = \llangle \fatw_0, \fatw_1, \dots, \fatw_{N-2} \rrangle. 
\end{equation*}
For $2-N \leq j \leq 0$, we define 
\begin{equation*}
\fad_j = \gamma^*p^*(\fatw_{N-2+j}). 
\end{equation*}
Applying $\gamma^*p^*$ to the above semiorthogonal decomposition then gives
\begin{equation*}
\cAd_0 = \llangle  \fad_{2-N}, \dots , \fad_{-1}, \fad_0 \rrangle. 
\end{equation*}
Further, we define 
\begin{equation*}
n = N - \# \{ \, i \geq 0 \mid \cA_i = \cA_0 \, \}. 
\end{equation*}
Then it follows from the definitions that $\fad_j = 0$ for $j \leq -n$ 
and $\fad_{1-n} \neq 0$. 
Hence we have 
\begin{equation*}
\cAd_0 = \llangle  \fad_{1-n}, \dots , \fad_{-1}, \fad_0 \rrangle. 
\end{equation*} 
The $\fad_j$ determine a sequence of $S$-linear categories 
$\cAd_{1-n} \subset \dots \subset \cAd_{-1} \subset \cAd_0$ by the formula 
\begin{equation}
\label{Adj}
\cAd_j = \llangle \fad_{1-n}, \dots, \fad_{j-1}, \fad_j \rrangle. 
\end{equation}

\begin{proposition}
\label{proposition-Ad-lefschetz-sequence}
Let $\cA$ be a moderate Lefschetz category over $\bP$. 
Then 
\begin{equation*}
\cAd_{1-n} \subset \dots \subset \cAd_{-1} \subset \cAd_0
\end{equation*} 
is a left Lefschetz chain in $\cAd$ with respect to the autoequivalence $- \otimes \cO_{\bPv}(H')$, 
which is strong if $\cA$ is right strong. 
\end{proposition}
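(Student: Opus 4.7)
The statement has two parts: that the filtration $\cAd_{1-n} \subset \cdots \subset \cAd_0$ is a left Lefschetz chain in $\cAd$ with respect to $-\otimes\cO_{\bPv}(H')$, and that it is strong when $\cA$ is right strong. The filtration property $\cAd_{j-1} \subset \cAd_j$ is immediate from the definition~\eqref{Adj}, so the content of the first part is the semiorthogonality of the twisted sequence $\cAd_{1-n}((1-n)H'), \dots, \cAd_{-1}(-H'), \cAd_0$ inside $\cAd$.

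For semiorthogonality, I would reduce to primitive components. Since each $\cAd_j$ is the stable subcategory generated by $\fad_{1-n}, \dots, \fad_j$, and since $\cHom_{\bPv}(-,-)$ preserves finite (co)limits in each variable, it suffices to check that for all $j_1' \leq j_1 < j_2 \leq j_2'$ in $\{1-n, \dots, 0\}$ and objects $C_i \in \fad_{j_i'}$ one has
\[
\cHom_{\bPv}(C_2, C_1(-tH')) \simeq 0, \qquad t = j_2 - j_1 > 0.
\]
Writing $C_i = \gamma^*p^*(C_i')$ with $C_i' \in \fatw_{N-2+j_i'}$, this is precisely the vanishing supplied by Lemma~\ref{lemma-lls-Cd}\eqref{lls-orthogonality} applied with $D = C_2'$, $C = C_1'$, and $i = N-2+j_1'$; the numerical hypothesis $1 \leq t \leq N-2-i = -j_1'$ is satisfied because $t = j_2 - j_1 \leq -j_1 \leq -j_1'$, using $j_2 \leq 0$.

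For the strong part, I first identify the left primitive components of the chain. Lemma~\ref{lemma-lls-Cd}\eqref{A0-fully-faithful} makes $\gamma^*p^*$ fully faithful on $\cA_0$, so it carries the semiorthogonal decomposition $\cA_0 = \llangle \fatw_0, \dots, \fatw_{m-1} \rrangle$ of Lemma~\ref{lemma-twisted-sod-A0}\eqref{twisted-sod-A0-right} to an honest semiorthogonal decomposition $\cAd_0 = \llangle \fad_{1-n}, \dots, \fad_0 \rrangle$, and hence $\fad_j = {}^\perp\cAd_{j-1} \cap \cAd_j$ coincides with the $j$-th left primitive component. To show $\fad_j$ is admissible in $\cAd$, I would chase adjoints through the factorization of the inclusion as the composition
\[
\fatw_{N-2+j} \hookrightarrow \cA_0 \hookrightarrow \cA \xrightarrow{\,p^*\,} \bH(\cA) \xrightarrow{\,\gamma^*\,} \cAd,
\]
which is fully faithful by Lemma~\ref{lemma-lls-Cd}\eqref{A0-fully-faithful}. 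Under right-strongness each arrow admits both adjoints: the first because Lemma~\ref{lemma-twisted-sod-A0}\eqref{twisted-sod-A0-right} gives $\fatw_{N-2+j} \subset \cA$ admissible (which, combined with fully faithfulness of $\cA_0 \hookrightarrow \cA$, yields admissibility in $\cA_0$); the second by definition of a Lefschetz center; the third since $p$ is a smooth and proper projective-bundle morphism (Remark~\ref{remark:good-morphism}); and the fourth because $\cAd \subset \bH(\cA)$ is admissible with admissible orthogonal (its components $\iota^*(\cA_i(iH)\sotimes \Perf(\bPv))$ are admissible via Lemma~\ref{lemma-base-change-admissible} and right-strongness), so Lemma~\ref{lemma-adjoint-adjoints} produces both adjoints to $\gamma^*$. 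Composing these adjoints gives the required adjoints to $\fad_j \hookrightarrow \cAd$.

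All of the technical content sits in Lemma~\ref{lemma-lls-Cd}, whose proof is deferred; granting it, the proposition is a formal consequence consisting of the reduction to primitive components and the adjoint bookkeeping sketched above.
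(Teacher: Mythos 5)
Your proposal is correct and follows essentially the same route as the paper: semiorthogonality of the chain is deduced from Lemma~\ref{lemma-lls-Cd}\eqref{lls-orthogonality} after reducing to twisted primitive generators, and strongness comes from composing adjoints along the factorization of $\fad_j \hookrightarrow \cAd$ through $\cA$, $\bH(\cA)$. Two small remarks on precision: your quantifier ``$j_1' \leq j_1 < j_2 \leq j_2'$'' should read $j_2' \leq j_2$ (it is harmless because the lemma allows arbitrary $D \in \cA$), and the admissibility of the components $\iota^*(\cA_i(iH) \sotimes \Perf(\bPv)) \subset \bH(\cA)$ is supplied by Lemma~\ref{lemma-linear-section} rather than Lemma~\ref{lemma-base-change-admissible} — the latter only gives admissibility in $\cA \sotimes \Perf(\bPv)$ before pushing down along $\iota^*$, while the last sentence of Lemma~\ref{lemma-linear-section} handles the $\iota^*$ step; in the paper this is exactly the combination of Lemma~\ref{lemma-linear-section} and Remark~\ref{remark-strong-lc}.
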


\begin{proof}
The fact that the $\cAd_j$ form a left Lefschetz chain is a direct consequence of 
Lemma~\ref{lemma-lls-Cd}\eqref{lls-orthogonality} and the definitions. 
Now suppose that $\cA$ is right strong. 
Then the categories to the right of $\cAd$ in the defining decomposition \eqref{Ad} 
are admissible by a combination of Lemma~\ref{lemma-linear-section} and Remark~\ref{remark-strong-lc}, and so 
${^\perp}\cAd$ is admissible in $\bH(\cA)$ by Lemma~\ref{lemma-admissible-sequence}. 
Hence by Lemma~\ref{lemma-adjoint-adjoints} the functor $\gamma^* \colon \bH(\cA) \to \cAd$ admits a 
left adjoint, and it tautologically admits a right adjoint. 
Further, $p^* \colon \Perf(\bP) \to \Perf(\bH)$ admits left and right adjoints. 
Since $\fatw_{i}$, $i \geq 0$, is admissible in $\cA$ by Lemma~\ref{lemma-twisted-sod-A0}, 
it follows that $\fad_j$, $j \leq 0$, is admissible in $\cAd$. 
This proves that the left Lefschetz chain formed by the $\cAd_j$ is strong. 
\end{proof}

Let us state the analogous results for the left HPD category. 
We omit the proofs as they are parallel to the right case. 
As above $\cA$ denotes a moderate Lefschetz category over $\bP$. 
Let  $\lambda \colon \dcA \to \bH(\cA)$ denote the inclusion functor, 
and let $\lambda^! \colon \bH(\cA) \to \dcA$ denote its right adjoint. 
Then, analogous to Lemma~\ref{lemma-lls-Cd}\eqref{A0-fully-faithful}, the functor 
\begin{equation*}
\lambda^! \circ p^* \colon \cA \to \dcA 
\end{equation*}
is fully faithful on the subcategory $\cA_0 \subset \cA$. 
We set 
\begin{equation*}
\dcA_0 = \lambda^! p^*(\cA_0). 
\end{equation*} 
Further, we can write the decomposition of $\cA_0$ into left primitive components as 
\begin{equation*}
\cA_0 = \llangle \fatw_{2-N}, \dots, \fatw_{-1}, \fatw_0 \rrangle, 
\end{equation*} 
where for simplicity we write $\fatw_0$ for $\fatw_{-0}$. 
For $0 \leq j \leq N-2$, we define 
\begin{equation*}
\dfa_j = \lambda^!p^*(\fatw_{2-N+j})
\end{equation*}
Then $\dfa_{j} = 0$ for $j \geq n$ where $n = N - \# \{ \, i \leq 0 \mid \cA_i = \cA_0 \, \}$, and we have 
\begin{equation*}
\dcA_0 = \llangle \dfa_0, \dfa_1, \dots, \dfa_{n-1} \rrangle. 
\end{equation*}
We define a sequence of categories $\dcA_0 \supset \cdots \supset \dcA_{n-1}$ 
by the formula 
\begin{equation}
\dcA_j = \llangle  \dfa_j, \dfa_{j+1}, \dots, \dfa_{n-1} \rrangle. 
\end{equation}
Finally, here is the left analogue of Proposition~\ref{proposition-Ad-lefschetz-sequence}. 

\begin{proposition}
\label{proposition-dA-lefschetz-sequence}
Let $\cA$ be a moderate Lefschetz category over $\bP$. 
Then 
\begin{equation*}
\dcA_0 \supset \dcA_1 \supset  \cdots \supset \dcA_{n-1}
\end{equation*}
is a right Lefschetz chain in $\dcA$ with respect to the autoequivalence 
$- \otimes \cO_{\bPv}(H')$, which is strong if $\cA$ is left strong. 
\end{proposition}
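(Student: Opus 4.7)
The plan is to mirror step-by-step the proof of Proposition~\ref{proposition-Ad-lefschetz-sequence}, with the roles of right and left interchanged throughout. The substantive input one needs is a left analogue of Lemma~\ref{lemma-lls-Cd}, after which the chain property follows from its semiorthogonality clause and strength follows from the usual admissibility bookkeeping.

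First I would state and prove the following left version of Lemma~\ref{lemma-lls-Cd}: (i) the functor $\lambda^! \circ p^* \colon \cA \to \dcA$ is fully faithful on the subcategory $\cA_0 \subset \cA$; (ii) for $C \in \llangle \fatw_{-i},\dots,\fatw_{-1},\fatw_0 \rrangle \subset \cA_0$ (using the left twisted primitive components of \eqref{fatw-left}) and any $D \in \cA$ and $1 \le t \le N-2-i$, we have
\begin{equation*}
\cHom_{\bPv}(\lambda^! p^*(C)(tH'),\lambda^! p^*(D)) \simeq 0.
\end{equation*}
There are two natural routes to (i)--(ii). The most direct one is to repeat the argument that proves Lemma~\ref{lemma-lls-Cd} in \S\ref{subsection-proof-key-lemma}, replacing $\iota^*$ by $\iota^!$ and using the identity $\iota^!(-) \simeq \iota^*(-) \otimes \cO(H+H')[-1]$ together with the left Lefschetz decomposition~\eqref{dA}; the Koszul-type computations and adjointness manipulations are formally dual. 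A cleaner alternative is to pass to opposite categories: $\cA^{\op}$ inherits a Lefschetz structure whose right twisted primitive components are the opposites of the left twisted primitive components of $\cA$, and under this duality the right HPD of $\cA^\op$ becomes the left HPD of $\cA$, so (i)--(ii) are just the image of Lemma~\ref{lemma-lls-Cd} under $(-)^\op$.

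Granted this, the right Lefschetz chain property is immediate from the definition $\dcA_j = \llangle \dfa_j, \dots, \dfa_{n-1} \rrangle$: part (i) shows that the $\dfa_j$ are stable subcategories of $\dcA$ and part (ii), together with the semiorthogonal decomposition of $\cA_0$ into twisted left primitive components supplied by Lemma~\ref{lemma-twisted-sod-A0}\eqref{twisted-sod-A0-left}, gives the required semiorthogonality of the sequence $\dcA_0, \dcA_1(H'), \dots, \dcA_{n-1}((n-1)H')$.

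For the strength claim, assume $\cA$ is left strong. By Lemma~\ref{lemma-left-linear-section} combined with Remark~\ref{remark-strong-lc}, the components $\iota^*(\cA_i(iH) \sotimes \Perf(\bPv))$ appearing to the left of $\dcA$ in~\eqref{dA} are admissible in $\bH(\cA)$, so Lemma~\ref{lemma-admissible-sequence} makes $\dcA^{\perp}$ admissible in $\bH(\cA)$. Hence Lemma~\ref{lemma-adjoint-adjoints} supplies a right adjoint to $\lambda^! \colon \bH(\cA) \to \dcA$ (the left adjoint $\lambda$ is tautological), and $p^*$ has both adjoints by Remark~\ref{remark:good-morphism} and Lemma~\ref{lemma-bc-adjoints}. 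Since Lemma~\ref{lemma-twisted-sod-A0}\eqref{twisted-sod-A0-left} ensures that each $\fatw_i$ with $i \le 0$ is admissible in $\cA$ when $\cA$ is left strong, applying $\lambda^! p^*$ shows each $\dfa_j$ for $j \ge 0$ is admissible in $\dcA$, as required.

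The only step that is not bookkeeping is the left analogue of Lemma~\ref{lemma-lls-Cd}, which I expect to be the main obstacle; the semiorthogonality statement (ii) is in particular the non-formal content, since it encodes the Lefschetz-hyperplane-type cancellations that underlie HPD. Once this is in hand, the rest of the proposition is formal and proceeds exactly as in the right case.
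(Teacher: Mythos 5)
Your proposal is correct and follows the same route as the paper, which simply declares this proof parallel to that of Proposition~\ref{proposition-Ad-lefschetz-sequence}; your outline fills in that parallel carefully, with the left analogue of Lemma~\ref{lemma-lls-Cd} correctly stated (the index bookkeeping on $t$ and $i$ checks out against the semiorthogonality needed to make the $\dcA_j$ a right Lefschetz chain) and the strength argument mirrored faithfully via Lemmas~\ref{lemma-left-linear-section}, \ref{lemma-admissible-sequence}, \ref{lemma-adjoint-adjoints}, and \ref{lemma-twisted-sod-A0}. One small imprecision: the direct route to the left analogue of Lemma~\ref{lemma-lls-Cd} is not really a matter of replacing $\iota^*$ by $\iota^!$, but of replacing the right Lefschetz decomposition~\eqref{eq:right-decomposition} by the left one~\eqref{eq:left-decomposition}, the right mutation $\rR_{\cAd}$ by the left mutation $\rL_{\dcA}$, and the right twisted primitive decomposition of $\cA_0$ by the left one of Lemma~\ref{lemma-twisted-sod-A0}\eqref{twisted-sod-A0-left} throughout \S\ref{subsection-proof-key-lemma}, while keeping $\iota^*$ and its Koszul resolution unchanged --- but this is cosmetic and does not affect the soundness of the plan.
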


\subsection{Proof of the key lemma} 
\label{subsection-proof-key-lemma}
In this subsection, we prove Lemma~\ref{lemma-lls-Cd}. 
We will need some auxiliary results. 
Let $p_* \colon \bH(\cA) \to \cA$ denote the functor induced by $p_* \colon \Perf(\bH) \to \Perf(\bP)$. 

\begin{lemma}
\label{lemma-p-pullback}
Let $\cA$ be a $\bP$-linear category. 
Then: 
\begin{enumerate}
\item \label{lemma-p-pullback-ff}
The functor $p^* \colon \cA \to \bH(\cA)$ is fully faithful. 
\item \label{lemma-p-pullback-kill}
The functor $p_* \colon \bH(\cA) \to \cA$ kills the subcategory 
$p^*(\cA)(-tH')$
for $1 \leq t \leq N-2$. 
\end{enumerate}
\end{lemma}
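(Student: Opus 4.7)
The plan rests on the geometric fact that $p\colon \bH \to \bP$ is a projective bundle of relative dimension $N-2$: the fiber over a point $\ell \in \bP$ parameterizes hyperplanes in $V$ containing $\ell$, i.e.\ is isomorphic to $\bP^{N-2}$. In particular, $p_*\cO_\bH \simeq \cO_\bP$ and $p^*\colon \Perf(\bP) \to \Perf(\bH)$ is fully faithful, with $p_*$ as right adjoint satisfying $p_*p^* \simeq \id$.

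For part~\eqref{lemma-p-pullback-ff}, I would deduce fully faithfulness of $p^*\colon \cA \to \bH(\cA)$ from the fully faithfulness of $p^*$ on $\Perf(\bP)$ by base change. Concretely, applying Lemma~\ref{lemma-bc-adjoints} over the base $\bP$ to the pair of $\bP$-linear functors $\id_\cA$ and $p^*\colon \Perf(\bP) \to \Perf(\bH)$ (both of which admit right adjoints and are fully faithful) shows that their tensor product $\id_\cA \otimes p^*$, which is precisely $p^*\colon \cA = \cA \otimes_{\Perf(\bP)} \Perf(\bP) \to \cA \otimes_{\Perf(\bP)} \Perf(\bH) = \bH(\cA)$, is fully faithful. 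Equivalently, one can base-change the equivalence of $\bP$-linear functors $p_*p^* \simeq \id_{\Perf(\bP)}$ to obtain $p_*p^* \simeq \id_\cA$ directly.

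For part~\eqref{lemma-p-pullback-kill}, the key reduction is the projection formula in the $\bP$-linear setting: for $D \in \cA$,
\[
p_*\bigl(p^*(D) \otimes \cO_\bH(-tH')\bigr) \simeq D \otimes p_*\cO_\bH(-tH').
\]
By Lemma~\ref{lemma-box-tensor-generation} the category $\bH(\cA)$ is thickly generated by objects of the form $D \boxtimes F$ with $F \in \Perf(\bH)$, and on such objects the identity follows tautologically from the classical projection formula for $p\colon \bH \to \bP$ combined with the $\bP$-linearity of $p_*$ (Lemma~\ref{lemma-adjoint-S-linear}). It then suffices to show that $p_*\cO_\bH(-tH') = 0$ for $1 \leq t \leq N-2$. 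Since $\bH \subset \bP \times \bPv$ is cut out by the tautological section of $\cO(H+H')$, there is a Koszul resolution
\[
0 \to \cO(-H-H') \to \cO_{\bP \times \bPv} \to \iota_*\cO_\bH \to 0.
\]
Twisting by $\pr_2^*\cO_{\bPv}(-t)$ and pushing forward along $\pr_1$, the projection formula reduces the computation to $R\Gamma(\bPv, \cO(-t))$ and $R\Gamma(\bPv, \cO(-t-1))$, both of which vanish when $1 \leq t \leq N-2$ by the standard cohomology of $\bP^{N-1}$ (the first vanishes for $1 \leq t \leq N-1$ and the second for $0 \leq t \leq N-2$).

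The main technical point to check is the projection formula in the $\bP$-linear context, but as indicated above this is routine: it is established on the generating objects $D \boxtimes F$ via the classical projection formula on schemes, and extended by $\bP$-linearity and compatibility with colimits. Everything else is a formal combination of base change for linear categories and standard cohomology vanishing on projective space.
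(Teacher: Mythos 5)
Your proposal is correct and has the same overall structure as the paper's proof: establish the fully faithfulness and the vanishing at the level of $\Perf(\bP)$ and $\Perf(\bH)$, then transport these facts to arbitrary $\cA$ by base change along $\cA \otimes_{\Perf(\bP)} (-)$. Part~\eqref{lemma-p-pullback-ff} is handled identically in both.

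The one genuine variation is in part~\eqref{lemma-p-pullback-kill}. The paper computes $p_*(\cO_\bH(-tH'))$ directly from the description $\bH \cong \bP_\bP(\cK)$, where $\cK = \ker(V^\svee \otimes \cO_\bP \to \cO_\bP(H))$ is a rank-$(N{-}1)$ vector bundle, so that $\cO_\bH(H')$ is the relative $\cO(1)$ and the vanishing for $1 \le t \le N-2$ is the standard projective-bundle pushforward formula. You instead compute $p_* = \pr_{1*}\circ \iota_*$ via the Koszul (here, length-one) resolution of $\iota_*\cO_\bH$ in $\bP\times\bPv$ and reduce to the vanishing of the relative cohomology of $\cO_{\bPv}(-t)$ and $\cO_{\bPv}(-t-1)$ along $\bPv \to S$. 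Both routes are elementary and correct; the paper's is slightly shorter since it never leaves $\bH/\bP$, while yours avoids explicitly identifying $\cK$ at the cost of an auxiliary pushforward through $\bP\times\bPv$. One small precision: what you denote $R\Gamma(\bPv, \cO(-t))$ should be the derived pushforward of $\cO_{\bPv}(-t)$ along $\bPv \to S$ (base-changed to $\bP$), since $S$ is a general scheme; the claimed vanishing is the same.
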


\begin{proof}
The morphism $p \colon \bH \to \bP$ is the projectivization of a rank $N-1$ vector 
bundle on $\bP$. Namely, $\bH = \bP(\cK)$ where $\cK$ is the kernel of the canonical 
surjection $V^{\svee} \otimes \cO_{\bP} \to \cO_{\bP}(H)$. 
It follows that $p^* \colon \Perf(\bP) \to \Perf(\bH)$ is fully faithful, and $p_* \colon \Perf(\bH) \to \Perf(\bP)$ kills the subcategory 
\begin{equation*}
p^*(\Perf(\bP))(-tH') \subset \Perf(\bH)
\end{equation*}
for $1 \leq t \leq N-2$. From this, the lemma follows formally. 
\end{proof}

Lemma~\ref{lemma-p-pullback} shows in particular that $p^* \colon \cA \to \bH(\cA)$ embeds $\cA_{k}$ into $\bH(\cA)$ for all $k$; 
below we abusively denote the image also by $\cA_k \subset \bH(\cA)$. 
The following result controls the morphisms between objects in various twists of these categories. 

\begin{lemma}
\label{lemma-vanishing}
Let $\cA$ be a Lefschetz category over $\bP$. 
\begin{enumerate}
\item \label{lemma-vanishing-1}
The pair of categories 
\begin{equation*}
\cA_k, \, \cA_{\ell}(aH + bH')
\end{equation*} 
is semiorthogonal in $\bH(\cA)$ provided one of the following conditions 
hold: 
\begin{itemize}
\item[--] $1 \leq a \leq \ell -1$. 
\item[--] $1 \leq b \leq N-2$. 
\item[--] $a = 0$ and $b = N-1$. 
\item[--] $a = \ell$ and $b = 0$. 
\end{itemize}
\item \label{lemma-vanishing-2}
The pair of categories
\begin{equation*}
p^*(\llangle \fa'_0, \fa'_1, \dots, \fa'_i \rrangle) , \, 
\cA_{\ell}(\ell H + bH')
\end{equation*}
is semiorthogonal in $\bH(\cA)$ provided $i < \ell$. 
\end{enumerate}
\end{lemma}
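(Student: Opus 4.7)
The strategy is to reduce all the vanishings to calculations on $\bP \times \bPv$ via the closed embedding $\iota\colon \bH \hookrightarrow \bP \times \bPv$, which realizes $\bH$ as the divisor cut out by the canonical section of $\cO(H+H')$. This gives the Koszul triangle
\[
\cO(-H-H') \to \cO \to \iota_*\cO_{\bH}
\]
in $\Perf(\bP \times \bPv)$. For any $X,Y \in \cA \otimes \Perf(\bPv) \simeq \cA \otimes_{\Perf(\bP)} \Perf(\bP \times \bPv)$, the adjunction $\iota^* \dashv \iota_*$ combined with the projection formula converts this into an exact triangle
\[
\cHom_{S}(X, Y(-H-H')) \to \cHom_{S}(X, Y) \to \cHom_{S}(\iota^*X, \iota^*Y)
\]
in $\QCoh(S)$, reducing semiorthogonality to the vanishing of the two flanking terms. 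The K\"unneth formula (Lemma~\ref{lemma-homs-base-change}) factors each such term into an $\cA$-side and a $\bPv$-side.

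For Part~(1), I take $X = D(aH) \boxtimes \cO(bH')$ and $Y = C \boxtimes \cO$ with $D \in \cA_{\ell}$ and $C \in \cA_{k}$; the two flanking terms become
\[
\cHom_{S}(D((a{+}t)H), C) \otimes_{\cO_S} \pi_{\bPv,*}\cO(-(b{+}t)H') \qquad (t = 0,1).
\]
Two vanishing mechanisms are available: \textbf{(i)} $\pi_{\bPv,*}\cO(jH') = 0$ for $-N+1 \le j \le -1$, the range where $\bPv \to S$ has no cohomology; and \textbf{(ii)} $\cHom_{S}(D(rH), C) = 0$ whenever $D \in \cA_r$ with $r \ge 1$ and $C \in \cA_0$, by the right Lefschetz semiorthogonality. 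Each of the four listed conditions is arranged so that both flanking terms vanish via one of these mechanisms: the condition $1 \le a \le \ell{-}1$ uses (ii) twice (since then $D \in \cA_{\ell} \subseteq \cA_{a+1} \cap \cA_a$); the condition $1 \le b \le N{-}2$ uses (i) twice; the condition $a = \ell, b = 0$ combines (ii) for the middle term with (i) for the shifted one; and the condition $a = 0, b = N{-}1$ combines (i) for the middle term with (ii) for the shifted one.

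For Part~(2), the same setup with $a = \ell$ and $C \in \llangle \fatw_0, \dots, \fatw_i \rrangle$ kills the middle Hom by (ii) for any $b$, reducing the problem to
\[
\cHom_{S}\bigl(D((\ell+1)H), C\bigr) = 0 \quad \text{for } D \in \cA_{\ell},~C \in \llangle \fatw_0,\dots,\fatw_i \rrangle,~i < \ell.
\]
Since $C \in \cA_0$, by the right Lefschetz decomposition of $\cA$ we have $\cHom_{S}(D((\ell+1)H), C) \simeq \cHom_{S}(\alpha_0^*(D((\ell+1)H)), C)$, so it suffices to show $\alpha_0^*(D((\ell+1)H)) \in \fatw_\ell$. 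Here is the cleanest argument: decompose $D \in \cA_{\ell} = \llangle \fa_{\ell}, \cA_{\ell+1} \rrangle$ as $D = D_0 \oplus D_1$ (up to extension) with $D_0 \in \fa_{\ell}$ and $D_1 \in \cA_{\ell+1}$. Then $D_1((\ell+1)H) \in \cA_{\ell+1}((\ell+1)H)$ is the standard $(\ell+1)$-th component of the right Lefschetz SOD of $\cA$, hence lies in ${}^\perp\cA_0$ and satisfies $\alpha_0^*(D_1((\ell+1)H)) = 0$. For $D_0$, Lemma~\ref{lemma-twisted-sod-A0}(\ref{twisted-sod-A0-right}) identifies $\alpha_0^*(\fa_\ell((\ell+1)H))$ with $\fatw_\ell$, so $\alpha_0^*(D_0((\ell+1)H)) \in \fatw_\ell$. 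Combining, $\alpha_0^*(D((\ell+1)H)) \in \fatw_\ell$, and the semiorthogonality of the twisted primitive SOD $\cA_0 = \llangle \fatw_0,\dots,\fatw_{m-1} \rrangle$ gives $\cHom_S(\fatw_\ell, C) = 0$ whenever $C \in \llangle \fatw_0,\dots,\fatw_i \rrangle$ with $i < \ell$, completing the proof.

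The hardest part of the plan is the reduction step in Part~(2) that puts $\alpha_0^*(D((\ell+1)H))$ into $\fatw_\ell$; the key observation is that the decomposition $\cA_\ell = \llangle \fa_\ell, \cA_{\ell+1} \rrangle$ is precisely tuned so that the complement $\cA_{\ell+1}$, after twisting by $\cO((\ell+1)H)$, matches a standard right Lefschetz component of $\cA$ and therefore disappears under $\alpha_0^*$. With this reduction in hand, everything follows cleanly from the twisted primitive SOD of Lemma~\ref{lemma-twisted-sod-A0}, without any need to compare it to the untwisted primitive filtration.
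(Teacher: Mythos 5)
Your proof is correct and follows essentially the same route as the paper: the Koszul triangle for $\iota \colon \bH \hookrightarrow \bP \times \bPv$ gives the two-term reduction, the K\"unneth formula factors the mapping objects, and the two vanishing mechanisms (Lefschetz semiorthogonality in $\cA$ vs.\ vanishing of $\pi_{\bPv,*}\cO(jH')$ for $1-N \le j \le -1$) are applied exactly as in the paper's case analysis. Your argument for Part~(2) — decomposing $D$ in $\cA_\ell = \llangle \fa_\ell, \cA_{\ell+1} \rrangle$, observing $\alpha_0^*$ kills the $\cA_{\ell+1}((\ell+1)H)$-piece, and identifying $\alpha_0^*(\fa_\ell((\ell+1)H))$ with $\fatw_\ell$ — is precisely the paper's argument as well.
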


\begin{proof}
Let $C, D \in \cA$. 
Note that we have equivalences 
\begin{align*}
p^*(D)(aH +bH') & \simeq \iota^*(D(aH) \boxtimes \cO(bH')),  \\
p^*(C) & \simeq \iota^*(C \boxtimes \cO), 
\end{align*}
where $\iota \colon \bH \to \bP \times_{S} \bPv$ denotes the embedding 
and $D(aH) \boxtimes \cO(bH')$ and $C \boxtimes \cO$ are regarded as objects of 
\begin{equation*}
\cA \sotimes \Perf(\bPv) \simeq \cA \otimes_{\Perf(\bP)} \Perf(\bP \stimes \bPv). 
\end{equation*}
Specializing the argument in Lemma~\ref{lemma-linear-section} to the case $r = N-1$, we thus 
obtain an exact triangle 
\begin{equation}
\label{vanishing-triangle}
\begin{aligned}
\cHom_{\bPv}(D((a+1)H) \boxtimes \cO((b+1)H'), C \boxtimes \cO) 
& \to \cHom_{\bPv}(D(aH) \boxtimes \cO(bH'), C \boxtimes \cO)  \\ 
& \to \cHom_{\bPv}(p^*(D)(aH+bH'), p^*(C)). 
\end{aligned}
\end{equation}
By Lemma~\ref{lemma-homs-base-change} 
the first two terms of this triangle can be written as 
\begin{align}
\label{vanishing-vertex-1} 
& \cHom_{S}(D((a+1)H), C) \otimes \cHom_{\bPv}(\cO((b+1)H'), \cO)  \\ 
\label{vanishing-vertex-2}
& \cHom_{S}(D(aH), C) \otimes \cHom_{\bPv}(\cO(bH'), \cO) . 
\end{align}

To prove~\eqref{lemma-vanishing-1}, assume $C \in \cA_k$ and $D \in \cA_{\ell}$. 
We must show that 
\begin{equation*}
\cHom_{\bPv}(p^*(D)(aH+bH'), p^*(C))
\end{equation*}
vanishes for $a,b$ satisfying any of the stated conditions. 
By the exact triangle~\eqref{vanishing-triangle}, it suffices to show that 
the terms~\eqref{vanishing-vertex-1} and~\eqref{vanishing-vertex-2} vanish. 
But~\eqref{vanishing-vertex-1} vanishes if either $0 \leq a  \leq \ell-1$ 
or $0 \leq b \leq N-2$, and~\eqref{vanishing-vertex-2} vanishes if either 
$1 \leq a \leq \ell$ or $1 \leq b \leq N-1$. 

To prove~\eqref{lemma-vanishing-2}, assume $C \in \llangle \fatw_0, \fatw_1, \dots, \fatw_i \rrangle$, 
$D \in \cA_{\ell}$, $a = \ell$, and $i < \ell$. 
As above, it suffices to show that~\eqref{vanishing-vertex-1} and~\eqref{vanishing-vertex-2} vanish. 
This is clear for~\eqref{vanishing-vertex-2}. 
Note that since $C \in \cA_k \subset \cA_0$, we have 
\begin{equation}
\label{vanishing-vertex-1-reduction}
\cHom_{S}(D((\ell+1)H), C) \simeq \cHom_{S}(\alpha_0^*(D((\ell+1)H)), C). 
\end{equation}
It suffices to show that this morphism space vanishes to show that~\eqref{vanishing-vertex-1} does. 
Observe that $\alpha_0^*$ kills the second term in the semiorthogonal decomposition 
\begin{equation*}
\cA_{\ell}((\ell+1)H) = \llangle \fa_{\ell}((\ell+1)H), \cA_{\ell+1}((\ell+1)H) \rrangle,
\end{equation*} 
hence $\alpha_0^*(D((\ell+1)H)) \in \fatw_{\ell}$. 
Therefore~\eqref{vanishing-vertex-1-reduction}
vanishes by the semiorthogonal decomposition~\eqref{twisted-sod-A0-repeat}.
\end{proof}

For any object $X \in \bH(\cA)$, there is an exact triangle 
\begin{equation}
\label{decomposition-bH-object}
\rR_{\cAd}(X) \to X \to \gamma \gamma^*(X).  
\end{equation}
A priori $\rR_{\cAd}(X)$ can be any object in the subcategory generated by the categories 
to the right of $\cAd$ in~\eqref{Ad}. 
The following lemma shows that for $X$ pulled back from the subcategory 
generated by a subset of the twisted primitive components of $\cA_0$, 
the object $\rR_{\cAd}(X)$ lies in a restricted subcategory. 

\begin{lemma}
\label{lemma-pullback-A0-decomposition}
Let $\cA$ be a Lefschetz category over $\bP$. 
Let $C \in \llangle \fatw_0, \fatw_1, \dots, \fatw_i \rrangle \subset \cA_0$ 
for some $0 \leq i \leq m-1$. 
Then $\rR_{\cAd}p^*(C)$ lies in the subcategory of $\bH(\cA)$ generated by the categories 
\begin{equation*}
p^*{\left( \llangle \cA_1(H), \cA_2(2H), \dots, \cA_{i-t+1}((i-t+1)H) \rrangle \right)} \otimes \cO(-tH'), \quad 
1 \leq t \leq i. 
\end{equation*}
\end{lemma}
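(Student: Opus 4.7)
My plan is to combine the semiorthogonality results of Lemma~\ref{lemma-vanishing} with an induction on $i$. Before the induction, note two preliminary structural orthogonalities. First, Lemma~\ref{lemma-vanishing}(2) gives that $p^*(C)$ is right-orthogonal to $\iota^*(\cA_\ell(\ell H) \sotimes \Perf(\bPv))$ for all $\ell > i$; in view of the defining decomposition of $\bH(\cA)$ from~\eqref{Ad}, this forces $\rR_{\cAd} p^*(C)$ to lie in the subcategory generated by $\iota^*(\cA_k(kH) \sotimes \Perf(\bPv))$ for $1 \leq k \leq i$ only. Second, Lemma~\ref{lemma-vanishing}(1) with $(a, b) = (\ell, 0)$ gives orthogonality of $p^*(C)$ to $p^*(\cA_k)(kH)$, so $\rR_{\cAd} p^*(C)$ has no ``$t = 0$'' component either. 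Together these reduce the problem to establishing the triangular constraint $k + t \leq i + 1$ (with $k, t \geq 1$) on the remaining generators of $\rR_{\cAd} p^*(C)$.

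The induction is on $i$. The base case $i = 0$ is immediate: the observations above already force $\rR_{\cAd} p^*(C) = 0$ for $C \in \fatw_0$, and this lies in the empty subcategory trivially. For the inductive step, the semiorthogonal decomposition $\llangle \fatw_0, \dots, \fatw_i \rrangle = \llangle \llangle \fatw_0, \dots, \fatw_{i-1} \rrangle, \fatw_i \rrangle$ and exactness of $\rR_{\cAd} \circ p^*$ reduce the problem to the case $C \in \fatw_i$. For such $C$, the mutation formula $\alpha_0^*|_{\rT^{i+1}(\fa_i)} = \rL_{\llangle \cA_1(H), \dots, \cA_i(iH) \rrangle}$ from the proof of Lemma~\ref{lemma-twisted-sod-A0} produces an exact triangle $M \to F((i+1)H) \to C$ in $\cA$ with $F \in \fa_i$ and $M \in \llangle \cA_1(H), \dots, \cA_i(iH) \rrangle$. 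Applying $p^*$ and $\rR_{\cAd}$, and using that $\rR_{\cAd} p^*(M) = p^*(M)$ because $p^*(M)$ already lies in ${^\perp}\cAd$, I obtain the exact triangle
\begin{equation*}
p^*(M) \to \rR_{\cAd} p^*(F)((i+1)H) \to \rR_{\cAd} p^*(C),
\end{equation*}
which reduces the problem to an explicit computation of $\rR_{\cAd} p^*(F)((i+1)H)$ for $F \in \fa_i$.

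This final computation is the heart of the argument. I would use the Koszul resolution $0 \to \cO(-H - H') \to \cO \to \iota_* \cO_{\bH} \to 0$ on $\bP \times \bPv$ to compute $\iota_* p^*(F)((i+1)H) \simeq \Cone(F(iH) \boxtimes \cO(-H') \to F((i+1)H) \boxtimes \cO)$, and then iteratively project into the first component of the right Lefschetz decomposition of $\cA \sotimes \Perf(\bPv)$, using the characterization $\cAd = \{X \in \bH(\cA) : \iota_*(X) \in \cA_0 \sotimes \Perf(\bPv)\}$ from Lemma~\ref{lemma-HPD-admissible}. This should produce an explicit filtration of $\rR_{\cAd} p^*(F)((i+1)H)$ whose successive quotients are of the form $p^*(D_k)(kH - (i+1-k)H')$ for $D_k \in \cA_k$, with $k$ ranging from $i+1$ down to $1$ along the diagonal $k + t = i + 1$. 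The $k = i+1$ (i.e., $t = 0$) piece coincides with $p^*(M)$ and therefore cancels in the triangle above, leaving $\rR_{\cAd} p^*(C)$ as an iterated extension of diagonal pieces with $1 \leq t \leq i$, which is the required containment. The main obstacle will be the careful bookkeeping required in this iterated mutation computation: at each step one must identify how the relevant intermediate object decomposes in $\cA$ under the right Lefschetz structure, crucially exploiting the primitivity $F \in \fa_i = \cA_{i+1}^\perp \cap \cA_i$ to ensure that only the claimed diagonal pieces arise and no ``off-diagonal'' contributions appear.
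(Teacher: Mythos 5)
Your preliminary observations and the first reduction are sound. In particular, observation~1 is correct: since $\cD_{i+1} := \llangle \iota^*(\cA_\ell(\ell H)\sotimes\Perf(\bPv))\rrangle_{\ell>i}$ is left orthogonal to $p^*(C)$ (and automatically to $\gamma\gamma^*p^*(C) \in \cAd$), the triangle $\rR_{\cAd}p^*(C) \to p^*(C) \to \gamma\gamma^*p^*(C)$ forces $\rR_{\cAd}p^*(C) \in {^\perp}\cAd \cap \cD_{i+1}^\perp = \llangle\cD'_1,\dots,\cD'_i\rrangle$, which gives the base case $i=0$ for free. The reduction to $C \in \fatw_i$ and the triangle $p^*(M) \to p^*(F((i+1)H)) \to p^*(C)$, with $\rR_{\cAd}p^*(M) \simeq p^*(M)$ because $p^*(M) \in {}^\perp\cAd$, are all correct. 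Up to this point your argument is a genuine alternative to the paper's, which instead factors $\rL_{{}^\perp\cAd}$ as $\rL_{\cD'_1} \circ \cdots \circ \rL_{\cD'_{m-1}}$ and runs a descending induction on the mutation index~$\ell$, with the auxiliary object $X_\ell = \mathrm{fib}(X \to \rL_{\cD_\ell}(X))$.

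The gap is in the ``final computation,'' and it is not merely bookkeeping. First, a concrete error: you assert that the $k=i+1$ graded piece $p^*(D_{i+1})((i+1)H)$ of your proposed filtration of $\rR_{\cAd}p^*(F((i+1)H))$ ``coincides with $p^*(M)$.'' This cannot be: $D_{i+1} \in \cA_{i+1}$ puts that piece in $p^*(\cA_{i+1})((i+1)H)$, whereas $M \in \llangle \cA_1(H), \dots, \cA_i(iH)\rrangle$ puts $p^*(M)$ in a \emph{semiorthogonal} subcategory of $\bH(\cA)$; they agree only if both vanish, which is false in general. Second, and more seriously, the assertion that the filtration is supported \emph{only on the diagonal} $k+t = i+1$ is exactly the hard content of the lemma, and nothing in the Koszul computation of $\iota_*p^*(F((i+1)H))$ by itself rules out contributions with $k+t < i+1$; controlling these is precisely what the paper's downward induction on $\ell$ is designed to do, via the refined estimate that $\cA_{\ell-1}((\ell-1)H + bH')$ is left orthogonal to $\rL_{\cD_\ell}(X)$ in the range $0 \leq b \leq N-i+\ell-3$. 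Relatedly, observation~2 does not ``reduce'' anything as stated: knowing that $p^*(\cA_k)(kH)$ is left orthogonal to $\rR_{\cAd}p^*(C)$ kills a component only if that category sits at an extremal position in a chosen semiorthogonal decomposition of $\llangle\cD'_1,\dots,\cD'_i\rrangle$, and you have not exhibited such a decomposition. To salvage the strategy you would need to replace the diagonal claim by the correct triangular bound $k+t \leq i+1$ for $\rR_{\cAd}p^*(F((i+1)H))$ (with $t \geq 0$), prove it by an honest induction, and then argue separately that the map $p^*(M) \to \rR_{\cAd}p^*(F((i+1)H))$ exhausts the $t=0$ part; at that point the argument essentially converges with the paper's.
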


\begin{proof}
Let $X = p^*(C)$. By definition, the functor $\gamma \gamma^*$ 
coincides with the left mutation functor through the subcategory 
\begin{equation*}
{^{\perp}}\cAd = \llangle \cA_1(H) \sotimes \Perf(\bPv), 
\dots, 
\cA_{m-1}((m-1)H) \sotimes \Perf(\bPv) \rrangle 
\subset \bH(\cA), 
\end{equation*}
so that $\rR_{\cAd}p^*(C)$ is determined by an  
exact triangle
\begin{equation*}
\rR_{\cAd}p^*(C) \to X \to \rL_{{^{\perp}}\cAd}(X). 
\end{equation*}
To prove the lemma, we factor $\rL_{{^{\perp}}\cAd}$ according to 
the above semiorthogonal decomposition of ${^{\perp}}\cAd$, and 
inductively control the corresponding exact triangle at each step. 

Namely, for $1 \leq \ell \leq m-1$, we define
\begin{equation*}
\cD_{\ell} =  \llangle \cA_{\ell}(H) \sotimes \Perf(\bPv), 
\dots, 
\cA_{m-1}((m-1)H) \sotimes \Perf(\bPv) \rrangle 
\subset \bH(\cA), 
\end{equation*}
and define $X_{\ell}$ by the exact triangle 
\begin{equation*}
X_{\ell} \to X \to \rL_{\cD_{\ell}}(X) . 
\end{equation*}
Then we claim that 
$X_{\ell}$ is contained in the subcategory of $\bH(\cA)$ generated by 
\begin{equation}
\label{induction-decomposition}
\cA_{k}(kH - tH'), ~ \ell \leq k \leq i, ~ 1 \leq t \leq i - k + 1 , 
\end{equation}
where for $\ell > i$ this subcategory is by definition $0$. 
The case $\ell = 1$ gives the statement of the lemma. 

Note that for any $\ell$, 
there is a semiorthogonal decomposition 
\begin{equation*}
\label{sod-A-tensor-P}
\cA_{\ell}(\ell H) \sotimes \Perf(\bPv) = 
\llangle \cA_{\ell}(\ell H), \cA_{\ell}(\ell H+H'), \dots, 
\cA_{\ell}(\ell H+(N-1)H') \rrangle 
\end{equation*} 
induced via base change by the standard decomposition of $\Perf(\bPv)$. 
By Lemma~\ref{lemma-vanishing}\eqref{lemma-vanishing-2} 
it thus follows that for $\ell > i$ the category $\cD_{\ell}$ is left orthogonal to $X$, 
hence $\rL_{\cD_{\ell}}(X) = X$ and $X_{\ell} = 0$. 
This proves the claim for $\ell > i$. 

For $1 \leq \ell \leq i+1$, we argue by descending induction on $\ell$. 
The base case $\ell = i+1$ was handled above. 
Assume the claim holds for $\ell$. 
By Lemma~\ref{lemma-admissible-sequence}, we have 
\begin{equation}
\label{pullback-A0-decomposition-induction}
\rL_{\cD_{\ell-1}}(X) \simeq \rL_{\cA_{\ell-1}((\ell-1)H) \sotimes \Perf(\bPv)}
 \rL_{\cD_{\ell}}(X) .
\end{equation}
It follows from Lemma~\ref{lemma-vanishing}\eqref{lemma-vanishing-1} that 
$\cA_{\ell-1}((\ell-1)H +bH')$ is left orthogonal to $X$ for $0 \leq b \leq N-2$ 
and to $X_{\ell}$ for $0 \leq b \leq N-i+\ell-3$ (here we used the induction 
assumption). 
Thus, by the exact triangle 
defining $X_{\ell}$, we find that $\cA_{\ell-1}((\ell-1)H +bH')$ is left orthogonal 
to $\rL_{\cD_{\ell}}(X)$ for $0 \leq b \leq N-i + \ell -3$. 
That is, in the decomposition of $\cA_{\ell-1}((\ell-1)H) \sotimes \Perf(\bPv)$ 
given by 
\begin{multline*}
\langle \cA_{\ell-1}((\ell-1) H - (i-\ell+2)H'), \dots, 
\cA_{\ell-1}((\ell-1) H - H'), \\
 \cA_{\ell-1}((\ell-1)H),   
\dots, \cA_{\ell-1}((\ell-1) H+(N-i+\ell-3)H') \rangle,  
\end{multline*}
the second row is left orthogonal to $\rL_{\cD_{\ell}}(X)$. 
It follows that the right side of~\eqref{pullback-A0-decomposition-induction} 
can be rewritten as
\begin{equation*}
\rL_{\llangle \cA_{\ell-1}((\ell-1) H - (i-\ell+2)H'), \dots, 
\cA_{\ell-1}((\ell-1) H - H') \rrangle} \rL_{\cD_{\ell}}(X), 
\end{equation*}
and therefore the cone of the canonical morphism 
$\rL_{\cD_{\ell}}(X) \to \rL_{\cD_{\ell-1}}(X)$
is contained in the subcategory of $\bH(\cA)$ generated by 
\begin{equation*}
\cA_{\ell-1}((\ell-1)H - tH'), ~ 1 \leq t \leq i - \ell + 2. 
\end{equation*}
By the induction assumption, the cone of $X \to \rL_{\cD_{\ell}}(X)$ 
is contained in the subcategory generated by the categories~\eqref{induction-decomposition}. 
We conclude that the cone of the composite morphism $X \to \rL_{\cD_{\ell-1}}(X)$ --- 
which agrees with $X_{\ell-1}$ up to a shift --- is contained in the 
claimed subcategory of $\bH(\cA)$. 
\end{proof}

\begin{proof}[Proof of Lemma~\ref{lemma-lls-Cd}]
Suppose $\cA$ is a moderate Lefschetz category over $\bP$. 
Then by adjunction, Lemma~\ref{lemma-lls-Cd} can be rephrased as saying that 
\begin{equation*}
p_*( \gamma \gamma^*p^*(C)(-tH') )
\end{equation*}
is canonically equivalent to: 
\begin{enumerate}
\item \label{lemma-lls-Cd-1} $C$ for $C \in \cA_0$ and $t = 0$, 
\item \label{lemma-lls-Cd-2} 
$0$ for $C \in \llangle \fatw_0, \fatw_1, \dots, \fatw_i \rrangle$, $0 \leq i \leq m-1$, and $1 \leq t \leq N-2-i$. 
\end{enumerate}

Taking $X = p^*(C)$ in~\eqref{decomposition-bH-object} and tensoring by 
$\cO(-tH')$, we get an exact triangle 
\begin{equation*}
\rR_{\cAd}(p^*(C))(-tH') \to p^*(C)(-tH') \to 
(\gamma \gamma^*p^*(C))(-tH')  .
\end{equation*}

Assume $C \in \cA_0$ and $t = 0$ as in~\eqref{lemma-lls-Cd-1}. 
Then by Lemma~\ref{lemma-pullback-A0-decomposition} the first term of this triangle lies in the subcategory of $\bH(\cA)$ generated by
$p^*(\cA)(-uH')$
for $1 \leq u \leq m-1$. 
By moderateness of $\cA$ we have $m -1 \leq N-2$, so by Lemma~\ref{lemma-p-pullback}\eqref{lemma-p-pullback-kill} 
the functor $p_* \colon \bH(\cA) \to \cA$ kills all of these categories, and thus also the first term of the above triangle. 
Further, by Lemma~\ref{lemma-p-pullback}\eqref{lemma-p-pullback-ff} the functor $p_*$ applied to the 
second term $p^*(C)$ is canonically equivalent to $C$. 
Hence applying $p_*$ to the above triangle proves~\eqref{lemma-lls-Cd-1}. 

Assume $C \in \llangle \fatw_0, \fatw_1, \dots, \fatw_i \rrangle$ and $1 \leq t \leq N-2-i$ as in~\eqref{lemma-lls-Cd-2}.  
Then arguing as above we find that $p_*$ kills the first two terms of the above triangle, and hence 
also the last, proving~\eqref{lemma-lls-Cd-2}. 
\end{proof} 

\subsection{Right versus left HPD} 
\label{subsection-right-vs-left-HPD}
In \S\ref{section-main-theorem} we will prove that the Lefschetz chains of 
Propositions~\ref{proposition-Ad-lefschetz-sequence} and~\ref{proposition-dA-lefschetz-sequence} 
are full if $\cA_0 \subset \cA$ is right or left strong, respectively. 
Hence by Lemma~\ref{lemma-lc-from-ld}, in these cases $\cAd_0 \subset \cAd$ and 
$\dcA_0 \subset \dcA$ define Lefschetz structures. 
Recall that by Lemma~\ref{lemma-HPD-admissible} there is a 
$\bPv$-linear equivalence $\dcA \simeq \cAd$. 
In this subsection, we consider the question of whether there exists an 
equivalence of Lefschetz categories. 

Our first goal is to identify the image of the Lefschetz center $\dcA_0$ under 
the equivalence $\dcA \simeq \cAd$ of Lemma~\ref{lemma-HPD-admissible}. 
This is Corollary~\ref{corollary-dA0-Ad0} below. 
The key observation is the following. 

\begin{lemma} 
\label{lemma-deltap-gammap}
Let $\cA$ be a Lefschetz category over $\bP$. 
Let $\gamma \colon \cAd \to \bH(\cA)$ and $\lambda \colon \dcA \to \bH(\cA)$ be 
the inclusion functors. 
Then there is an equivalence of functors 
\begin{equation*}
\gamma^! \circ p^! \simeq (- \otimes \omega_{\bH/\bP}) \circ \lambda^! \circ p^* . 
\end{equation*}
\end{lemma}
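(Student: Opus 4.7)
The plan is to reduce the statement to a claim about projection functors in $\bH(\cA)$. Since $p \colon \bH \to \bP$ is a projective bundle, it is smooth and proper with dualizing complex a shift of a line bundle, so $p^! \simeq p^*(-) \otimes \omega_{\bH/\bP}$. It therefore suffices to establish the natural equivalence of functors $\bH(\cA) \to \cAd$
\[
\gamma^!(F \otimes \omega_{\bH/\bP}) \simeq \lambda^!(F) \otimes \omega_{\bH/\bP}
\]
and specialize to $F = p^*(C)$. In turn, this identity will be a formal consequence of the claim that the autoequivalence $-\otimes \omega_{\bH/\bP}$ of $\bH(\cA)$ identifies the semiorthogonal decomposition $\bH(\cA) = \llangle {}^{\perp}\dcA, \dcA \rrangle$ with $\bH(\cA) = \llangle {}^{\perp}\cAd, \cAd \rrangle$ from Lemma~\ref{lemma-HPD-admissible}: once this is known, tensoring the $\dcA$-projection triangle of $F$ by $\omega_{\bH/\bP}$ yields the $\cAd$-projection triangle of $F \otimes \omega_{\bH/\bP}$.

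To establish the identification of semiorthogonal decompositions, the first step is to compute $\omega_{\bH/\bP}$ via the transitivity of dualizing complexes along $\bH \xrightarrow{\iota} \bP \stimes \bPv \xrightarrow{\pr_1} \bP$. Combining $\omega_{\iota} \simeq \cO_\bH(H+H')[-1]$ with the base change identity $\omega_{\pr_1} \simeq \pr_2^* \omega_{\bPv/S}$ and the projective bundle formula for $\omega_{\bPv/S}$ shows that $\omega_{\bH/\bP}$ factors, up to a shift, as $\cO_\bH(H) \otimes M$, where $M$ is the tensor product, on $\bH$, of a line bundle pulled back from $\bPv$ and one pulled back from $S$. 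Tensoring by $M$ is an autoequivalence of $\bH(\cA)$ that preserves each piece of both semiorthogonal decompositions by $\bPv$- and $S$-linearity, so it remains to analyze $-\otimes \cO_\bH(H)$. The equivalence $-\otimes \cO_\bH(H) \colon \dcA \xrightarrow{\sim} \cAd$ is provided by Lemma~\ref{lemma-HPD-admissible}. For the left orthogonal pieces, combining $\cO_\bH(H) \simeq \iota^* \cO_{\bP \stimes \bPv}(H)$ with the identity $\iota^*(Y) \simeq \iota^!(Y \otimes \cO(-H-H')[1])$ (which follows from $\iota^! \simeq \iota^*(-) \otimes \omega_\iota$) and the projection formula for $\iota$ gives
\[
\iota^*(X) \otimes \cO_\bH(H) \simeq \iota^!\bigl(X \otimes \cO(-H')[1]\bigr),
\]
so $-\otimes \cO_\bH(H)$ sends $\iota^*(\cA_i(iH) \sotimes \Perf(\bPv))$ into $\iota^!(\cA_i(iH) \sotimes \Perf(\bPv))$ for each $1-m \leq i \leq -1$. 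Since $-\otimes \cO_\bH(H)$ is an autoequivalence of $\bH(\cA)$ carrying every piece of the first semiorthogonal decomposition into the corresponding piece of the second, it is automatically an equivalence of semiorthogonal decompositions, completing the argument.

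There is no serious obstacle: the whole argument is a careful bookkeeping exercise with Grothendieck duality and the projection formula for the Cartier divisor $\iota$. The only point that really requires care is the decomposition of $\omega_{\bH/\bP}$ into factors acting on the $\cA$-, $\Perf(\bPv)$-, and $\Perf(S)$-components of $\bH(\cA)$, so that each contribution can be analyzed with respect to the appropriate linear structure.
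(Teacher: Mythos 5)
Your proof is correct and uses the same key ingredients as the paper's argument: the formula $p^! \simeq p^*(-) \otimes \omega_{\bH/\bP}$, the explicit computation of $\omega_{\bH/\bP}$, and the equivalence $\dcA \simeq \cAd$ induced by $-\otimes\cO(H)$ from Lemma~\ref{lemma-HPD-admissible}, then finishes by $\bPv$- and $S$-linearity of $\lambda^!$. The final paragraph checking what $-\otimes\cO(H)$ does to the components $\iota^*(\cA_i(iH)\sotimes\Perf(\bPv))$ is superfluous (any autoequivalence taking $\dcA$ to $\cAd$ automatically takes $\dcA^\perp$ to $\cAd^\perp$), and note the small notational slip that the left component of the decomposition from~\eqref{dA} is $\dcA^\perp$, not ${}^\perp\dcA$.
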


\begin{proof}
We need an explicit formula for $p^!$. Set  
\begin{equation*}
\cK = \ker(V^{\svee} \otimes_{\bP} \to \cO_{\bP}(H)). 
\end{equation*}
Then it is easy to see there is an isomorphism $\bH \cong \bP_{\bP}(\cK)$, under which 
$H'$ corresponds to the tautological $\cO(1)$ line bundle. 
From this, a computation shows 
\begin{equation}
\label{omegaHP}
\omega_{\bH/\bP} = \cO(H - (N-1)H')[N-2], 
\end{equation}  
and hence 
\begin{equation*}
p^! \simeq (- \otimes \cO(H - (N-1)H')[N-2]) \circ p^*. 
\end{equation*} 
Further, since by Lemma~\ref{lemma-HPD-admissible} the functor 
$(- \otimes \cO(H))$ induces an equivalence $\dcA \simeq \cAd$, it 
follows that 
\begin{equation*}
\gamma^! \simeq (- \otimes \cO(H)) \circ \lambda^! \circ (- \otimes \cO(-H)). 
\end{equation*}
Combining the above and using the $\bPv$-linearity of $\lambda^!$ proves the result. 
\end{proof}

\begin{corollary}
\label{corollary-dA0-Ad0} 
Let $\cA$ be a moderate Lefschetz category over $\bP$. 
Then the functor 
\begin{equation*}
(- \otimes \omega_{\bH/\bP}) \colon \bH(\cA) \to \bH(\cA) 
\end{equation*}
induces a $\bPv$-linear equivalence $\dcA \simeq \cAd$, which takes 
$\dcA_0 \subset \dcA$ to $\gamma^! p^!(\cA_0) \subset \cAd$. 
\end{corollary}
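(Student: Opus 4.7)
The plan is to derive the corollary as a direct consequence of Lemmas~\ref{lemma-HPD-admissible} and~\ref{lemma-deltap-gammap}, which between them already contain essentially all the needed content; the work is just to assemble them properly.

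First, I would verify that $(-\otimes\omega_{\bH/\bP})$ restricts to a $\bPv$-linear equivalence $\dcA \to \cAd$. The starting point is the formula $\omega_{\bH/\bP} \simeq \cO(H - (N-1)H')[N-2]$ established in the proof of Lemma~\ref{lemma-deltap-gammap}. This lets me factor
\begin{equation*}
(- \otimes \omega_{\bH/\bP}) \simeq (- \otimes \cO(-(N-1)H')[N-2]) \circ (- \otimes \cO(H)).
\end{equation*}
The second factor is the $\bPv$-linear equivalence $\dcA \xrightarrow{\sim} \cAd$ supplied by Lemma~\ref{lemma-HPD-admissible}. The first factor is, under the $\bPv$-linear structure on $\bH(\cA)$, literally the action of $\cO_{\bPv}(-(N-1)H')[N-2] \in \Perf(\bPv)$, hence is a $\bPv$-linear autoequivalence of $\bH(\cA)$ that preserves the $\bPv$-linear subcategory $\cAd$. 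The composition is therefore a $\bPv$-linear equivalence $\dcA \xrightarrow{\sim} \cAd$.

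Second, I would identify the image of the center. By definition we have $\dcA_0 = \lambda^! p^*(\cA_0)$, so its image under the above equivalence is $(-\otimes\omega_{\bH/\bP})(\lambda^! p^*(\cA_0))$. Applying Lemma~\ref{lemma-deltap-gammap}, which gives the equivalence of functors $\gamma^! \circ p^! \simeq (-\otimes\omega_{\bH/\bP}) \circ \lambda^! \circ p^*$, this is canonically identified with $\gamma^! p^!(\cA_0) \subset \cAd$.

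There is no real obstacle here: the statement is a repackaging of Lemma~\ref{lemma-deltap-gammap}, combined with the observation that the twist by $\omega_{\bH/\bP}$ differs from the twist by $\cO(H)$ only by an element of the $\bPv$-action. The only point that requires any vigilance is checking that the factor $(-\otimes\cO(-(N-1)H')[N-2])$ is genuinely $\bPv$-linear and preserves $\cAd$, but this is immediate from the definition of the $\bPv$-action on $\bH(\cA)$ via pullback along $f \colon \bH \to \bPv$.
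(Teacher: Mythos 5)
Your proposal is correct and takes the same approach as the paper, which simply says to combine Lemma~\ref{lemma-deltap-gammap}, the formula~\eqref{omegaHP}, and the last statement of Lemma~\ref{lemma-HPD-admissible}; you have spelled out that combination, including the (correct) check that the factor $-\otimes\cO(-(N-1)H')[N-2]$ is part of the $\Perf(\bPv)$-action and hence preserves $\cAd$.
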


\begin{proof}
Combine Lemma~\ref{lemma-deltap-gammap}, 
the formula~\eqref{omegaHP}, and the last statement of 
Lemma~\ref{lemma-HPD-admissible}. 
\end{proof}

The category $\gamma^! p^!(\cA_0)$ does not coincide with $\cAd_0$. 
However, as the following lemma shows, it is closely related. 
Recall from \S\ref{subsection-splitting-functors} the notion of a splitting functor. 

\begin{lemma}
\label{lemma-im-ker-p-gamma}
Let $\cA$ be a moderate Lefschetz category over $\bP$. 
Then the functor $p_* \circ \gamma \colon \cAd \to \cA$ is splitting. 
Moreover, the images and kernels of this functor and its left and 
right adjoints are given by: 
\begin{alignat}{4}
\label{impgamma} & \im(p_* \circ \gamma) &&= \cA_0 ,  \quad && \ker (p_* \circ \gamma) && = \cAd_0{^\perp}  , \\
\label{imgammap-left} & \im(\gamma^* \circ p^*) &&= \cAd_0 , \quad && \ker(\gamma^* \circ p^*) && = {^\perp}\cA_0 , \\ \label{imgammap-right} & \im(\gamma^! \circ p^!) &&= (\cAd_0)^{\perp \perp}, \quad &&  \ker(\gamma^! \circ p^!) && = \cA_0^{\perp}. 
\end{alignat}
\end{lemma}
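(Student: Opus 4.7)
My strategy is to prove the splitting of $p_* \gamma$ and the image/kernel formulas simultaneously, by verifying condition~\ref{splitting-functor-3} (resp.~\ref{splitting-functor-3-r}) of Theorem~\ref{theorem-splitting-functors} for left (resp.\ right) splitting. The key input is a restricted inverse formula for $p_* \gamma$ on $\cAd_0$, which comes from Lemma~\ref{lemma-pullback-A0-decomposition} together with the moderateness hypothesis; this is where the bulk of the work lies.

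First I would establish \eqref{imgammap-left}. The right Lefschetz decomposition of $\cA$ and the defining sod~\eqref{Ad} of $\cAd$ together imply $p^*(\cA_i(iH)) \subset {^\perp \cAd}$ for each $i \geq 1$, so $\gamma^* p^*$ annihilates these components; combined with Lemma~\ref{lemma-lls-Cd}\eqref{A0-fully-faithful}, which yields that $\gamma^* p^*|_{\cA_0}$ is fully faithful with essential image $\cAd_0$ by definition, this proves~\eqref{imgammap-left} and produces a factorization $\gamma^* p^* \simeq \delta_0 \rho \alpha_0^*$, where $\delta_0 \colon \cAd_0 \hookrightarrow \cAd$ is the inclusion, $\alpha_0^* \colon \cA \to \cA_0$ is the projection from the Lefschetz sod, and $\rho \colon \cA_0 \xrightarrow{\sim} \cAd_0$ is an equivalence.

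Next I would show $\im(p_* \gamma) \subset \cA_0$ using the factorization $p = \pr_1 \circ \iota$ and the characterization $\cAd = \set{X \st \iota_* X \in \cA_0 \sotimes \Perf(\bPv)}$ from Lemma~\ref{lemma-HPD-admissible}: for $Y \in \cAd$, $p_* \gamma Y = \pr_{1*} \iota_* \gamma Y \in \cA_0$, providing a factorization $p_* \gamma = \alpha_0 \sigma$ for some $\sigma \colon \cAd \to \cA_0$. The main technical step is to prove $p_* \gamma \gamma^* p^*(C) \simeq C$ for $C \in \cA_0$. Applying $p_*$ to the triangle $\rR_\cAd p^*(C) \to p^*(C) \to \gamma \gamma^* p^*(C)$ and using Lemma~\ref{lemma-pullback-A0-decomposition} with $i = m-1$, moderateness of $\cA$ (giving $m-1 \leq N-2$), and Lemma~\ref{lemma-p-pullback}\eqref{lemma-p-pullback-kill}, one sees that the leftmost term is annihilated by $p_*$; since $p_* p^*(C) \simeq C$, the claim follows. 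Hence $\sigma|_{\cAd_0}$ is inverse to $\rho$, and in particular $\im(p_* \gamma) = \cA_0$. A direct adjunction calculation using the factorizations of $\gamma^* p^*$ and $p_* \gamma$ then shows $\rho \sigma$ is right adjoint to $\delta_0$, so $\cAd_0 \subset \cAd$ is right admissible with sod $\cAd = \llangle \cAd_0^\perp, \cAd_0 \rrangle$, and $\ker(p_* \gamma) = \ker(\sigma) = \cAd_0^\perp$, establishing~\eqref{impgamma}. Combined with the Lefschetz sod $\cA = \llangle \cA_0, {^\perp \cA_0} \rrangle$ and the mutually inverse equivalences $\rho, \sigma|_{\cAd_0}$, this verifies condition~\ref{splitting-functor-3} of Theorem~\ref{theorem-splitting-functors}, so $p_* \gamma$ is left splitting.

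For right splitting and~\eqref{imgammap-right}, I would run the parallel argument with $\lambda^! p^*$ in place of $\gamma^* p^*$, using the \emph{left} Lefschetz decomposition of $\cA$, the sod~\eqref{dA} for $\dcA$, and the evident left analog of Lemma~\ref{lemma-lls-Cd}. This yields $\im(\lambda^! p^*) = \dcA_0$ and $\ker(\lambda^! p^*) = \cA_0^\perp$. The main bookkeeping step is to transport these identifications to $\gamma^! p^!$: by Lemma~\ref{lemma-deltap-gammap}, $\gamma^! p^! \simeq (- \otimes \omega_{\bH/\bP}) \circ \lambda^! p^*$, and $(- \otimes \omega_{\bH/\bP}) \colon \dcA \xrightarrow{\sim} \cAd$ is an equivalence by Lemma~\ref{lemma-HPD-admissible}. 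This transport gives $\ker(\gamma^! p^!) = \cA_0^\perp$ and produces an induced sod $\cAd = \llangle \im(\gamma^! p^!), \cAd_0^\perp \rrangle$; by Lemma~\ref{lemma-admissible-sod} the first term equals $(\cAd_0^\perp)^\perp = (\cAd_0)^{\perp\perp}$, proving~\eqref{imgammap-right}. This sod, together with the sod $\cA = \llangle \cA_0^\perp, \cA_0 \rrangle$ from the left Lefschetz decomposition, verifies condition~\ref{splitting-functor-3-r}, so $p_* \gamma$ is also right splitting, completing the proof.
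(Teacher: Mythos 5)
Your proposal follows the same overall route as the paper's proof: the image/kernel formulas \eqref{impgamma}--\eqref{imgammap-left} via Lemma~\ref{lemma-lls-Cd}\eqref{A0-fully-faithful} (your re-derivation of $p_*\gamma\gamma^*p^*|_{\cA_0}\simeq\id$ from Lemma~\ref{lemma-pullback-A0-decomposition} just unwinds the proof of that lemma), a splitting criterion for the adjoint, and the transport through $-\otimes\omega_{\bH/\bP}$ for the right-splitting half. The first two-thirds is sound. The paper's only material difference is that it verifies condition~\ref{splitting-functor-1-r} for $\gamma^*p^*$ (resp.\ \ref{splitting-functor-1} for $\lambda^!p^*$) directly, rather than assembling condition~\ref{splitting-functor-3}; that is a cosmetic choice.

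The one step that needs care is the sentence claiming the transport ``produces an induced sod $\cAd = \llangle \im(\gamma^! p^!), \cAd_0^\perp \rrangle$.'' The parallel argument yields, via condition~\ref{splitting-functor-3-r} applied to $p_!\lambda$, the decomposition $\dcA = \llangle \dcA_0, \ker(p_!\lambda)\rrangle$, and applying $-\otimes\omega_{\bH/\bP}$ gives $\cAd = \llangle \im(\gamma^!p^!), (-\otimes\omega_{\bH/\bP})(\ker(p_!\lambda))\rrangle$; identifying the second component with $\cAd_0^\perp = \ker(p_*\gamma)$ is not formal from the transport of the functor $\lambda^!p^*$ to $\gamma^!p^!$, because $-\otimes\omega_{\bH/\bP}$ does not intertwine $p_!\lambda$ with $p_*\gamma$ in an obvious way (one needs the extra identity $p_! \simeq p_*\circ(-\otimes\omega_{\bH/\bP})$ from \eqref{eq:shriek-adjoints} to see that $p_!\circ\lambda\circ(-\otimes\omega_{\bH/\bP})^{-1}\simeq p_*\circ\gamma$, which you do not record). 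As a result, your final sentence is circular: the sod you use to verify condition~\ref{splitting-functor-3-r} is itself a consequence of $p_*\gamma$ being right splitting. The cleaner ordering, which is the paper's, avoids the computation entirely: (i) the parallel argument shows $\lambda^!p^*$ is left splitting; (ii) hence $\gamma^!p^!\simeq(-\otimes\omega_{\bH/\bP})\circ\lambda^!p^*$ is left splitting; (iii) since $\gamma^!p^!$ is the right adjoint of $p_*\gamma$, comparing condition~\ref{splitting-functor-3} for $\gamma^!p^!$ with condition~\ref{splitting-functor-3-r} for $p_*\gamma$ shows $p_*\gamma$ is right splitting; (iv) only now does condition~\ref{splitting-functor-3-r} hand you $\cAd = \llangle\im(\gamma^!p^!), \ker(p_*\gamma)\rrangle$, and plugging in $\ker(p_*\gamma)=\cAd_0^\perp$ gives $\im(\gamma^!p^!) = (\cAd_0^\perp)^\perp = (\cAd_0)^{\perp\perp}$. (Also a small citation slip: the equivalence $\dcA\simeq\cAd$ via $-\otimes\omega_{\bH/\bP}$ is Corollary~\ref{corollary-dA0-Ad0}; Lemma~\ref{lemma-HPD-admissible} only provides the twist by $\cO(H)$.)
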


\begin{proof}
We start by proving~\eqref{impgamma} and~\eqref{imgammap-left}. 
First we prove $\im(p_* \circ \gamma) \subset \cA_0$. 
For $C \in \cA$ and $D \in \cAd$, we have 
\begin{equation*}
\cHom_{S}(C, p_*\gamma(D)) \simeq \cHom_{S}(p^*(C), \gamma(D)), 
\end{equation*}
which vanishes for $C \in \cA_i(iH)$, $1 \leq i \leq m-1$, by the defining 
semiorthogonal decomposition~\eqref{Ad} of $\cAd$. 
Hence 
\begin{equation*}
p_*\gamma(D) \in \llangle \cA_1(H), \dots, \cA_{m-1}((m-1)H) \rrangle^{\perp} = \cA_0,
\end{equation*} 
as desired. Since $\gamma^* \circ p^*$ is fully faithful on $\cA_0$ by 
Lemma~\ref{lemma-lls-Cd}\eqref{A0-fully-faithful}, 
it follows that in fact $\im(p_* \circ \gamma) = \cA_0$. 
From this, $\ker(\gamma^* \circ p^*) = {^\perp}\cA_0$ follows by adjunction. 
Hence $\im(\gamma^* \circ p^*)$ coincides with $\cAd_0 = (\gamma^* \circ p^*)(\cA_0)$. 
Again by adjunction, $\ker (p_* \circ \gamma) = \cAd_0{^\perp}$ follows formally from this. 

Next we aim to show $p_* \circ \gamma$ is a splitting functor. 
By Theorem~\ref{theorem-splitting-functors}, it suffices to show 
its left adjoint $\gamma^* \circ p^*$ is right splitting and its 
right adjoint $\gamma^! \circ p^!$ is left splitting. 
Note that $\ker(\gamma^* \circ p^*) = {^\perp}\cA_0$ is right admissible in $\cA$, 
$\gamma^* \circ p^*$ is fully faithful on the subcategory $\ker(\gamma^* \circ p^*)^{\perp} = \cA_0$ 
by Lemma~\ref{lemma-lls-Cd}, and $\im(\gamma^* \circ p^*) = \cAd_0$ is right admissible since $\gamma^*$ 
and $p^*$ admit right adjoints. 
Therefore $\gamma^* \circ p^*$ verifies condition \ref{splitting-functor-1-r} of Theorem~\ref{theorem-splitting-functors}, and so is right splitting. 

By Lemma~\ref{lemma-deltap-gammap} the functor $\gamma^! \circ p^!$ differs from 
$\lambda^! \circ p^*$ by the autoequivalence $- \otimes \omega_{\bH/\bP}$, and 
hence is left splitting if and only if $\lambda^! \circ p^*$ is. 
By an argument analogous to the one above for $\gamma^* \circ p^*$, we find that 
$\ker(\lambda^! \circ p^*) = \cA_0^{\perp}$ and $\lambda^! \circ p^*$ is left splitting. 
This completes the proof that $p_* \circ \gamma$ is a splitting functor. 
All that remains is to check that $\im(\gamma^! \circ p^!) = (\cAd_0)^{\perp \perp}$. 
This follows from condition \ref{splitting-functor-3-r} of Theorem~\ref{theorem-splitting-functors} and 
the equality $\ker (p_* \circ \gamma) = \cAd_0{^\perp}$. 
\end{proof}

Now we can give a criterion under which the right and left HPD categories are identified as 
Lefschetz categories. 

\begin{proposition}
\label{proposition-left-equals-right-HPD}
Let $\cA$ be a strong, moderate Lefschetz category over $\bP$, which is proper over $S$. 
Assume $\cAd$ admits a relative Serre functor $\rS_{\cAd/S}$ over $S$. 
Then there is a $\bPv$-linear equivalence $\dcA \simeq \cAd$ which 
induces an equivalence $\dcA_0 \simeq \cAd_0$. 
\end{proposition}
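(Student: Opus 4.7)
The plan is to combine Corollary~\ref{corollary-dA0-Ad0} with the theory of Serre functors. From Corollary~\ref{corollary-dA0-Ad0}, we already have a $\bPv$-linear equivalence $\Phi = (- \otimes \omega_{\bH/\bP}) \colon \dcA \xrightarrow{\sim} \cAd$ sending $\dcA_0$ to $\gamma^! p^!(\cA_0) \subset \cAd$. The task is therefore to exhibit a $\bPv$-linear autoequivalence of $\cAd$ mapping $\gamma^! p^!(\cA_0)$ to $\cAd_0$; the composition will be the desired Lefschetz equivalence.

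My approach is to use the Serre functor $\rS_{\cAd/S}$ given by the hypothesis. First I would check that $\cAd_0 \subset \cAd$ is right admissible: the strong hypothesis on $\cA$ ensures via Proposition~\ref{proposition-Ad-lefschetz-sequence} that the primitive components $\fad_j$, $j \leq 0$, are admissible in $\cAd$, and then Remark~\ref{remark-strong-lc} together with Lemma~\ref{lemma-admissible-sequence} gives admissibility of the whole subcategory $\cAd_0 = \llangle \fad_{1-n}, \dots, \fad_{-1}, \fad_0 \rrangle$. Once admissibility is in hand, Lemma~\ref{lemma-serre-functor-image} applies to yield $\rS_{\cAd/S}(\cAd_0) = (\cAd_0)^{\perp\perp}$.

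On the other hand, Lemma~\ref{lemma-im-ker-p-gamma} tells us that $\gamma^! \circ p^! \colon \cA \to \cAd$ is a splitting functor with kernel $\cA_0^{\perp}$ and image $(\cAd_0)^{\perp\perp}$; in particular, by the splitting property it induces an equivalence $\cA_0 \simeq (\cAd_0)^{\perp\perp}$, and so as subcategories of $\cAd$ we have the equality
\begin{equation*}
\gamma^! p^!(\cA_0) \;=\; (\cAd_0)^{\perp\perp} \;=\; \rS_{\cAd/S}(\cAd_0).
\end{equation*}
Moreover, since $\cAd$ is $\bPv$-linear and $\bPv$ is an $S$-scheme, Lemma~\ref{lemma-serre-functor-linear} ensures that $\rS_{\cAd/S}$ is naturally a $\bPv$-linear autoequivalence of $\cAd$. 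Therefore the composition
\begin{equation*}
\rS_{\cAd/S}^{-1} \circ \Phi \colon \dcA \xrightarrow{\sim} \cAd
\end{equation*}
is a $\bPv$-linear equivalence taking $\dcA_0$ to $\cAd_0$, which is the required Lefschetz equivalence.

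I do not expect any serious obstacle here: the two key identifications --- of $\gamma^! p^!(\cA_0)$ with $(\cAd_0)^{\perp\perp}$ via Lemma~\ref{lemma-im-ker-p-gamma}, and of $(\cAd_0)^{\perp\perp}$ with $\rS_{\cAd/S}(\cAd_0)$ via Lemma~\ref{lemma-serre-functor-image} --- are already in place, and $\bPv$-linearity of the Serre functor is automatic. The only minor point that needs checking is the right admissibility of $\cAd_0$ inside $\cAd$, which is exactly where the strong hypothesis on $\cA$ enters the argument.
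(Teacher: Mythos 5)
Your argument is correct and follows essentially the same route as the paper's own proof: apply Corollary~\ref{corollary-dA0-Ad0} to reduce to finding a $\bPv$-linear autoequivalence of $\cAd$ sending $\cAd_0$ to $(\cAd_0)^{\perp\perp}$, then note that by \eqref{imgammap-right}, Lemma~\ref{lemma-serre-functor-image}, and Lemma~\ref{lemma-serre-functor-linear} the Serre functor $\rS_{\cAd/S}$ does the job. Your only addition is making explicit the right admissibility of $\cAd_0 \subset \cAd$ needed for Lemma~\ref{lemma-serre-functor-image}, which the paper leaves implicit; your justification via Proposition~\ref{proposition-Ad-lefschetz-sequence} and Remark~\ref{remark-strong-lc} is correct.
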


\begin{proof}
By Corollary~\ref{corollary-dA0-Ad0} there is a $\bPv$-linear equivalence 
$\dcA \simeq \cAd$ taking $\dcA_0$ to $\gamma^!p^!(\cA_0)$. 
By \eqref{imgammap-right} the latter coincides with $(\cAd_0)^{\perp \perp}$. 
So it suffices to show that $\cAd$ admits a $\bPv$-linear autoequivalence 
that takes $\cAd_0$ to $(\cAd_0)^{\perp \perp}$. 
By Lemmas~\ref{lemma-serre-functor-image} and \ref{lemma-serre-functor-linear}, 
the Serre functor $\rS_{\cAd/S}$ is precisely such an autoequivalence. 
\end{proof}

In Theorem~\ref{main-theorem} we show that $\dcA_0 \subset \dcA$ and $\cAd_0 \subset \cAd$ 
are Lefschetz centers as soon as $\cA$ is strong and moderate. 
Thus the conclusion of Proposition~\ref{proposition-left-equals-right-HPD} can be rephrased as saying 
there is an equivalence of Lefschetz categories $\dcA \simeq \cAd$. 

\begin{remark}
The assumption that $\cAd$ admits a relative Serre functor $\rS_{\cAd/S}$ over $S$ 
holds for instance if $\cAd$ is smooth and proper over $S$, by Lemma~\ref{lemma-serre-functor-exists}. 
This in turn holds if $\cA$ is smooth and proper over $S$, by Lemma~\ref{lemma-bHC-smooth-proper} below. 
\end{remark}

\subsection{Relation to classical projective duality} 
\label{subsection-CPD}

Recall that in Definition~\ref{definition-critical-loci} we introduced 
the notion of the critical locus of a linear category. 
\begin{definition} 
\label{definition-CPD}
Let $\cA$ be a $\bP$-linear category which is smooth and proper over $S$. 
The \emph{classical projective dual} $\CPD(\cA) \subset \bPv$ is the set 
\begin{equation*}
\CPD(\cA) = \Crit_{\bPv}(\bH(\cA)) \subset \bPv. 
\end{equation*}
\end{definition}

\begin{remark} 
Definition~\ref{definition-CPD} can also be made without the 
assumption that $\cA$ is smooth and proper over $S$. 
However, these hypotheses guarantee that our definition recovers the classical notion 
in the geometric case (Corollary~\ref{corollary-HPD-CPD} below). 
\end{remark}

\begin{lemma}
\label{lemma-crit-singular-section}
Assume $S = \Spec(k)$ where $k$ is an algebraically closed field. 
Let $X$ be an integral scheme of finite type over $k$ equipped 
with a closed immersion $X \to \bP$, such that $X$ is not contained 
in any hyperplane in $\bP$. 
Then 
\begin{equation*}
\Crit_{\bPv}(\bH(\Perf(X))) = \{ \, H \in \bPv \mid X \times_{\bP} H \text{ is singular} \, \} . 
\end{equation*}
\end{lemma}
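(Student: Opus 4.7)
The plan is to translate everything into classical geometry. First, using Theorem~\ref{theorem-FM-functors}\eqref{fiber-product} twice, I would identify
\begin{equation*}
\bH(\Perf(X)) \simeq \Perf(X \times_{\bP} \bH)
\end{equation*}
as $\bPv$-linear categories (with the $\bPv$-action coming from $\bH \to \bPv$), and then for each point $H \in \bPv$,
\begin{equation*}
\bH(\Perf(X)) \otimes_{\Perf(\bPv)} \Perf(\Spec\kappa(H)) \simeq \Perf(X \times_{\bP} H),
\end{equation*}
where I use that $\bH \to \bPv$ is a projective bundle (dual to the description of $p \colon \bH \to \bP$ given in Lemma~\ref{lemma-p-pullback}) and in particular flat, so that $\bH \times_{\bPv} \Spec\kappa(H)$ recovers the scheme-theoretic hyperplane $H \subset \bP$.

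Next, I would verify that under the hypothesis on $X$, the derived fiber product $X \times_{\bP} H$ agrees with the classical intersection $X \cap H$. Since $H \subset \bP$ is a Cartier divisor cut out by a single linear equation, and this equation restricts to a non-zerodivisor on the integral scheme $X$ precisely because $X \not\subset H$, the schemes $X$ and $H$ are $\Tor$-independent over $\bP$, and so the derived and classical intersections coincide. This is the only place in the argument where the non-degeneracy hypothesis on $X$ is used, and it is the main subtlety of the proof.

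Finally, I would apply Lemma~\ref{lemma-smooth-proper-morphisms}\eqref{smooth-morphism-smooth-category} and \eqref{smooth-category-smooth-morphism}: since $X \cap H$ is flat and of finite type, hence flat and locally of finite presentation, over the field $k$, the category $\Perf(X \cap H)$ is smooth over $\Spec k$ if and only if $X \cap H$ is smooth over $k$. Combined with Definition~\ref{definition-critical-loci}, this yields
\begin{equation*}
H \in \Crit_{\bPv}(\bH(\Perf(X))) \iff X \cap H \text{ fails to be smooth over } k.
\end{equation*}
The hypothesis that $X$ lies in no hyperplane forces $\dim X \geq 1$, so every hyperplane section $X \cap H$ is nonempty; since $k$ is algebraically closed, failure of smoothness is then equivalent to being singular, completing the proof.
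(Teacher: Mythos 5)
Your proof is correct and follows essentially the same approach as the paper: both identify $\bH(\Perf(X)) \simeq \Perf(X \times_{\bP}\bH)$, use the hypothesis that $X$ is integral and lies in no hyperplane to control the fibers, and conclude via the smoothness criteria of Lemma~\ref{lemma-smooth-proper-morphisms}. The only cosmetic difference is that you argue fiber-by-fiber (checking $\Tor$-independence of $X$ and each $H$ over $\bP$), whereas the paper phrases the same fact globally as flatness of $X \times_{\bP}\bH$ over $\bPv$ and then cites Remark~\ref{remark-crit-geometric}, which packages up your final two paragraphs.
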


\begin{proof}
Since $X$ is not contained in any hyperplane in $\bP$, 
the universal hyperplane section $X \times_{\bP} \bH$ of $X$ 
is flat over $\bPv$. 
So since $\bH(\Perf(X)) \simeq \Perf(X \times_{\bP} \bH)$, the 
result follows from Remark~\ref{remark-crit-geometric}. 
\end{proof}

Recall that if $X$ as in Lemma~\ref{lemma-crit-singular-section} 
is \emph{in addition smooth}, then its \emph{classical projective dual} 
is given by
\begin{equation*}
X^{\svee} = \{ \, H \in \bPv \mid X \times_{\bP} H \text{ is singular} \, \} . 
\end{equation*}
Hence we have: 

\begin{corollary}
\label{corollary-HPD-CPD} 
Assume $S = \Spec(k)$ where $k$ is an algebraically closed field. 
Let $X$ be a smooth integral scheme of finite type over $k$ equipped 
with a closed immersion $X \to \bP$, such that $X$ is not contained 
in any hyperplane in $\bP$. 
Then 
\begin{equation*}
\CPD(\Perf(X)) = X^{\svee} \subset \bPv. 
\end{equation*}
\end{corollary}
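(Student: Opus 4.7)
The plan is to observe that this corollary is essentially an immediate consequence of the preceding lemma together with the definitions, so I do not expect any substantive obstacle. The task reduces to bookkeeping.

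First I would unfold Definition~\ref{definition-CPD} to rewrite the left-hand side as $\Crit_{\bPv}(\bH(\Perf(X)))$. To apply Lemma~\ref{lemma-crit-singular-section} to this expression, I need to check that its hypotheses are satisfied in our situation: $X$ is integral, of finite type over the algebraically closed field $k$, comes with a closed immersion into $\bP$, and is not contained in any hyperplane. All of these are assumed. (The smoothness hypothesis of the corollary is strictly stronger than what Lemma~\ref{lemma-crit-singular-section} requires, but that causes no trouble.) Applying the lemma gives
\begin{equation*}
\CPD(\Perf(X)) \;=\; \{\, H \in \bPv \mid X \times_{\bP} H \text{ is singular}\,\}.
\end{equation*}

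Next I would invoke the classical definition of $X^{\svee}$ for a smooth, integral, non-degenerate subscheme of $\bP$: by definition it is precisely the set of hyperplanes $H \in \bPv$ such that the scheme-theoretic intersection $X \times_{\bP} H$ fails to be smooth, and since $X$ itself is smooth, failure of smoothness for $X \times_{\bP} H$ is equivalent to its being singular (the flatness of $X \times_\bP \bH \to \bPv$ used in the proof of Lemma~\ref{lemma-crit-singular-section} ensures $X \times_{\bP} H$ is an ordinary, non-derived scheme). Combining the two displayed equalities yields $\CPD(\Perf(X)) = X^{\svee}$, which completes the argument. The only point worth flagging is the implicit identification between ``singular'' in the sense of Remark~\ref{remark-crit-geometric} (where one asks that the fiber fail to be smooth over $\Spec(\kappa(s))$) and ``singular'' in the classical sense defining $X^{\svee}$; over an algebraically closed field these notions coincide, so no issue arises.
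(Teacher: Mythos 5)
Your argument matches the paper's exactly: unwind Definition~\ref{definition-CPD}, apply Lemma~\ref{lemma-crit-singular-section}, and invoke the classical description of $X^{\svee}$ for a smooth nondegenerate variety. The paper treats this as immediate and offers no separate proof beyond the sentence preceding the corollary, so your bookkeeping is a faithful (slightly more verbose) rendering of the intended argument.
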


Classical projective typically does not preserve smoothness of a variety. 
An interesting feature of HPD is that it does: 
\begin{lemma}
\label{lemma-bHC-smooth-proper} 
Let $\cA$ be a Lefschetz category over $\bP$, which is smooth and proper over $S$. 
Then the universal hyperplane section category $\bH(\cA)$ and 
the HPD categories $\dcA$ and $\cAd$ are all 
smooth and proper over $S$. 
\end{lemma}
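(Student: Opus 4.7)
The plan is to first establish smoothness and properness of $\bH(\cA)$ directly by base change, and then deduce the corresponding property for the HPD categories by recognizing them as components of a semiorthogonal decomposition of $\bH(\cA)$.

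For the first step, recall that $\bH(\cA) = \cA \otimes_{\Perf(\bP)} \Perf(\bH)$, where $\bH \to \bP$ is the universal hyperplane. As observed in the proof of Lemma~\ref{lemma-p-pullback}, the morphism $p \colon \bH \to \bP$ realizes $\bH$ as the projectivization $\bP_{\bP}(\cK)$ of a vector bundle on $\bP$, so $p$ is smooth and proper. Since $\cA$ is $\bP$-linear and smooth and proper over $S$ by hypothesis, Lemma~\ref{lemma-base-change-smooth-proper} (with $T = \bP$ and $T' = \bH$) immediately gives that $\bH(\cA)$ is smooth and proper over $S$.

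For the second step, I would appeal to the defining $\bPv$-linear semiorthogonal decompositions~\eqref{Ad} and~\eqref{dA} of $\bH(\cA)$ from Definition~\ref{definition-HPD}, which in particular are $S$-linear. Since $\bH(\cA)$ is now known to be smooth and proper over $S$, Lemma~\ref{lemma-smooth-proper-sod}\eqref{smooth-proper-sod} applies: every component of any $S$-linear semiorthogonal decomposition of $\bH(\cA)$ is automatically admissible and smooth and proper over $S$. Applied to~\eqref{Ad}, this yields smoothness and properness of $\cAd$; applied to~\eqref{dA}, it yields the same for $\dcA$.

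There is essentially no obstacle here, since the hard technical content has already been packaged into Lemmas~\ref{lemma-base-change-smooth-proper} and~\ref{lemma-smooth-proper-sod}. The only thing to check carefully is that the projective bundle $p \colon \bH \to \bP$ is indeed smooth and proper (which is immediate), and that the HPD categories are realized as components of an $S$-linear semiorthogonal decomposition of $\bH(\cA)$ (which follows from the $\bPv$-linear structure together with the fact that $\bPv$ is an $S$-scheme).
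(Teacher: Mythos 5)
Your proof is correct and follows exactly the same route as the paper: apply Lemma~\ref{lemma-base-change-smooth-proper} to the smooth proper morphism $\bH \to \bP$ to handle $\bH(\cA)$, then invoke Lemma~\ref{lemma-smooth-proper-sod}\eqref{smooth-proper-sod} on the defining semiorthogonal decompositions~\eqref{Ad} and~\eqref{dA} to conclude for $\cAd$ and $\dcA$.
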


\begin{proof}
The claim for $\bH(\cA)$ follows from Lemma~\ref{lemma-base-change-smooth-proper}, 
since the morphism $\bH \to \bP$ is smooth and proper. 
Then the claim for the HPD categories follows from 
Lemma~\ref{lemma-smooth-proper-sod}\eqref{smooth-proper-sod}. 
\end{proof}

\begin{proposition}
\label{proposition-CPD-HPD}
Let $\cA$ be a Lefschetz category over $\bP$, 
which is smooth and proper over~$S$. 
Then 
\begin{equation*}
\CPD(\cA) = \Crit_{\bPv}(\cAd) = \Crit_{\bPv}(\dcA) \subset \bPv. 
\end{equation*} 
\end{proposition}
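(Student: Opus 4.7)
The plan is to reduce both equalities to a direct application of Lemma~\ref{lemma-crit-sod} applied to the defining semiorthogonal decompositions of $\bH(\cA)$ in~\eqref{Ad} and~\eqref{dA}, once we verify that (i) these decompositions have admissible components, and (ii) every component other than the HPD category is smooth over $\bPv$ (so has empty critical locus by Lemma~\ref{lemma-crit-smooth}).

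First I would observe that since $\cA$ is smooth and proper over $S$, Lemma~\ref{lemma-bHC-smooth-proper} implies $\bH(\cA)$ is smooth and proper over $S$, and hence also proper over $\bPv$. By Lemma~\ref{lemma-smooth-proper-sod}\eqref{smooth-proper-sod}, every component of any $S$-linear semiorthogonal decomposition of $\bH(\cA)$ is automatically admissible and smooth and proper over $S$; in particular, the components of \eqref{Ad} and \eqref{dA} are admissible as $\bPv$-linear subcategories. Applying Lemma~\ref{lemma-smooth-proper-sod}\eqref{smooth-proper-sod} once more to the Lefschetz decompositions of $\cA$ shows that each $\cA_i \subset \cA$ is itself smooth and proper over $S$.

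Now I would identify the non-HPD components of \eqref{Ad} and \eqref{dA}. Up to the equivalence $\iota^*$ (fully faithful on each $\cA_i(iH) \sotimes \Perf(\bPv)$ by Lemma~\ref{lemma-linear-section}) and the $\bPv$-linear autoequivalence given by tensoring with $\cO(iH)$, each such component is equivalent to $\cA_i \otimes_{\Perf(S)} \Perf(\bPv)$. Since $\cA_i$ is smooth over $S$, Lemma~\ref{lemma-smoothness-properness-base-change}\eqref{smoothness-base-change} gives that $\cA_i \otimes_{\Perf(S)} \Perf(\bPv)$ is smooth over $\bPv$, and hence by Lemma~\ref{lemma-crit-smooth} has empty critical locus over $\bPv$.

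Finally, Lemma~\ref{lemma-crit-sod} applied to \eqref{Ad} yields
\begin{equation*}
\CPD(\cA) = \Crit_{\bPv}(\bH(\cA)) = \Crit_{\bPv}(\cAd) \cup \bigcup_{i=1}^{m-1} \Crit_{\bPv}\bigl(\cA_i \sotimes \Perf(\bPv)\bigr) = \Crit_{\bPv}(\cAd),
\end{equation*}
and the identical argument applied to \eqref{dA} gives $\CPD(\cA) = \Crit_{\bPv}(\dcA)$. There is no real obstacle here: the entire content lies in having set up the machinery of critical loci to behave additively under admissible semiorthogonal decompositions (Lemma~\ref{lemma-crit-sod}) and to vanish on smooth categories (Lemma~\ref{lemma-crit-smooth}), together with smoothness being preserved under base change (Lemma~\ref{lemma-smoothness-properness-base-change}); once these are in place, the proof is essentially a bookkeeping exercise on the defining decompositions of $\cAd$ and $\dcA$.
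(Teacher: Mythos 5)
Your proof is correct and follows essentially the same line as the paper: Lemma~\ref{lemma-bHC-smooth-proper} to get smoothness and properness of $\bH(\cA)$, Lemma~\ref{lemma-smooth-proper-sod} to get admissibility of the components and smoothness of the $\cA_i$, base change (Lemma~\ref{lemma-smoothness-properness-base-change}) plus Lemma~\ref{lemma-crit-smooth} to kill the critical loci of the ambient pieces, and Lemma~\ref{lemma-crit-sod} to conclude. The only (harmless) difference is that you prove $\CPD(\cA) = \Crit_{\bPv}(\dcA)$ by re-running the argument on the second decomposition \eqref{dA}, whereas the paper dispatches $\Crit_{\bPv}(\cAd) = \Crit_{\bPv}(\dcA)$ in one line by citing the $\bPv$-linear equivalence $\cAd \simeq \dcA$ of Lemma~\ref{lemma-HPD-admissible}.
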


\begin{proof}
The equality $\Crit_{\bPv}(\cAd) = \Crit_{\bPv}(\dcA)$ holds because 
by Lemma~\ref{lemma-HPD-admissible} there is a $\bPv$-linear equivalence $\cAd \simeq \dcA$. 
By Lemmas~\ref{lemma-smooth-proper-sod}\eqref{smooth-proper-sod} and~\ref{lemma-bHC-smooth-proper} 
the components of the semiorthogonal decomposition~\eqref{Ad} are admissible. 
Hence by Lemma~\ref{lemma-crit-sod} we have
\begin{equation*}
\CPD(\cA) = \Crit_{\bPv}(\cAd) \cup \bigcup_{i=1}^{m} \Crit_{\bPv} {\left( \cA_i(iH) \sotimes \Perf(\bPv) \right)} . 
\end{equation*}
But Lemma~\ref{lemma-smooth-proper-sod} also implies that the 
components $\cA_i(iH)$ of the Lefschetz decomposition of $\cA$ 
are smooth and proper over $S$. 
Hence by Lemma~\ref{lemma-smoothness-properness-base-change} their base changes  
\begin{equation*}
\cA_i(iH) \sotimes \Perf(\bPv)
\end{equation*} 
are smooth and proper over $\bPv$. 
Hence 
\begin{equation*}
\Crit_{\bPv}{\left( \cA_i(iH) \sotimes \Perf(\bPv) \right)} = \varnothing
\end{equation*} 
by Lemma~\ref{lemma-crit-smooth}. 
\end{proof}


\section{The main theorem of HPD}
\label{section-main-theorem}

Let $\cA$ be a moderate Lefschetz category over $\bP$. 
In \S\ref{section-HPD-category} we constructed 
$\bPv$-linear right and left HPD categories $\cAd$ and $\dcA$, 
which are equipped with natural Lefschetz chains. 
In this section, we show that if $\cA$ is right strong then this  
gives $\cAd$ the structure of a left strong Lefschetz category over $\bPv$, 
and if $\cA$ is left strong this gives $\dcA$ the structure of a right 
strong Lefschetz category over $\bPv$. 
Further, we prove that taking the HPD once more recovers $\cA$, i.e. 
we prove there are Lefschetz equivalences 
${^\hpd}(\cAd) \simeq \cA$ and $(\dcA)^{\hpd} \simeq \cA$. 

In fact, we will deduce these statements from a significantly stronger result
(Theorem~\ref{key-theorem}), 
which describes the series of categories $\Ku_r(\cA)$ 
defined by~\eqref{Cr} in terms of 
an analogous series of categories associated to $\cAd$. 

Our arguments generalize those of~\cite[\S6]{kuznetsov-HPD}, by 
placing them in the framework set up in the previous sections. 

\subsection{The main theorem and its corollaries}
Throughout this section, 
we work in the setup where $\cA$ is assumed right 
strong, and then we work with the right HPD category $\cAd$. 
The reader may check that all of the analogous results hold with 
``right'' replaced by ``left''. 

\begin{setup} 
\label{setup-HPD}
\mbox{}
\begin{enumerate}
\item 
\label{setup-A} 
$\cA$ is a right strong, moderate Lefschetz category over $\bP$ of length $m$. 
\item 
\label{setup-D}
$\cB$ is a $\bPv$-linear category and 
$\phi \colon \cB \to \cAd$ is a $\bPv$-linear functor such that: 
\begin{enumerate}
\item 
\label{setup-fully-faithful}
$\phi$ is fully faithful and admits  
left and right adjoints $\phi^*$ and $\phi^!$. 
\item 
\label{setup-Bj}
the categories $\cB_j = \phi^* \cAd_j$, $j \leq 0$, where $\cAd_j$ is as in~\eqref{Adj}, 
form a strong left Lefschetz chain 
\begin{equation*}
\cB_{1-n} \subset \dots \subset \cB_{-1} \subset \cB_0
\end{equation*} 
in $\cB$ with respect to $- \otimes \cO_{\bPv}(H')$,
\item 
\label{setup-image-A0} 
the image of the composition $p_* \circ \gamma \circ \phi \colon  \cB \to \cA$ is 
$\cA_0$, where $\gamma \colon \cAd \to \bH(\cA)$ is the inclusion functor. 
\end{enumerate}
\end{enumerate}
\end{setup}

\begin{remark}
\label{remark-HPD-criteria}
The category $\cB = \cAd$ satisfies the assumptions with $\phi = \id_{\cAd}$. 
Indeed, \eqref{setup-fully-faithful} is automatic, 
\eqref{setup-Bj} holds by Proposition~\ref{proposition-Ad-lefschetz-sequence}, 
and \eqref{setup-image-A0} holds by Proposition~\ref{lemma-im-ker-p-gamma}. 
In fact, we will see below that in Setup~\ref{setup-HPD}, the functor $\phi$ is 
automatically an equivalence. 
This gives useful criteria for checking that a category $\cB$ is equivalent to the
HPD category $\cAd$. 
\end{remark}

\makeatletter
\newcommand{\mylabel}[2]{#2\def\@currentlabel{#2}\label{#1}}
\makeatother

\begin{remark}
\label{remark-2c} 
Consider the following criterion: 
\begin{enumerate}[leftmargin=2.25pc]
\item[\mylabel{2cn}{(2c$'$)}] 
the functor $\phi^* \circ \gamma^* \circ p^* \colon \cA \to \cB$ is fully faithful on the right twisted components 
$\fatw_i$, $i \geq 0$, of $\cA$, and their images give a semiorthogonal decomposition 
\begin{equation*}
\cB_j = \llangle 
\phi^* \gamma^*p^*(\fatw_0), \phi^* \gamma^*p^*(\fatw_1), \dots, \phi^* \gamma^*p^*(\fatw_{N-2+j})  
\rrangle 
\end{equation*} 
for all $j \leq 0$. 
\end{enumerate} 
In Lemma~\ref{lemma-2c}, we will see that conditions \eqref{setup-A}, \eqref{setup-fully-faithful}, \eqref{setup-Bj}, and~\ref{2cn} 
together imply \eqref{setup-image-A0}. 
This is useful because the criterion~\ref{2cn} is sometimes more convenient to check than \eqref{setup-image-A0}, 
see for instance the proof of \cite[Theorem 9.5]{joins}. 
\end{remark}

Let $\bGv_s = \G(s,V^{\svee})$ be the Grassmannian of rank $s$ 
subbundles of $V^{\svee}$, and let $\bLv_s$ be the corresponding projectivized 
universal family. 
Then Lemma~\ref{lemma-left-linear-section} (with $\bP$ replaced with the dual $\bPv$) 
applies to $\cB$, and we get $\bGv_s$-linear categories $\Ku_s(\cB)$ 
for $0 \leq s \leq N$, 
characterized by semiorthogonal decompositions 
\begin{equation}
\label{Ds}
\bLv_{s}(\cB) = \llangle 
\cB_{1-n}((r-n)H') \sotimes \Perf(\bGv_s),
\dots , \cB_{-r}(-H')\sotimes \Perf(\bGv_s), 
\Ku'_s(\cB) 
\rrangle 
\end{equation}
where $r = N - s$. 
There is an identification $\bG_r \cong \bGv_s$, by which we regard 
$\Ku_s(\cB)$ as a $\bG_r$-linear category. 
We have canonical morphisms 
\begin{equation*}
\vcenter{\xymatrix{
& \ar[dl]_{q_s} \bLv_s  \ar[dr]^{g_s} & \\ 
\bPv & & \bG_r . 
}} 
\end{equation*}

We aim to prove that there is a $\bG_r$-linear equivalence $\Ku'_s(\cB) \simeq \Ku_r(\cA)$. 
By definition we have $\Ku_r(\cA) \subset \bL_r(\cA)$ and $\Ku'_s(\cB) \subset \bLv_s(\cB)$. 
The desired equivalence will be induced by a functor $\bLv_s(\cB) \to \bL_r(\cA)$, 
which can be described using the kernel formalism of \S\ref{section-kernels} as follows. 
Consider the $\bPv$-linear composition $\gamma \circ \phi \colon \cB \to \bH(\cA)$, 
where recall $\bH = \bL_{N-1}$ and $\gamma \colon \cAd \to \bH(\cA)$ is the inclusion. 
By Proposition~\ref{proposition-kernels}, there is a kernel 
\begin{equation*}
\cE \in \FM{\cB}{\bPv}{\cA}{\bP}{\bPv}{\bH}{\bPv}
\end{equation*}
such that 
\begin{equation}
\label{Phigammaphi} 
\Phi_{\cE} = \gamma \circ \phi. 
\end{equation}
Let 
\begin{equation*}
\zeta_r \colon \bL_r \times_{\bG_r} \bLv_{s}   \to \bH 
\end{equation*}
be the natural morphism. 
Set 
\begin{equation*}
\cE_r = \zeta_r^*\cE \in \FM{\cB}{\bPv}{\cA}{\bP}{\bLv_{s}}{\bL_r}{\bG_r}
\end{equation*}
and define 
\begin{equation*}
\Phi_r = \Phi_{\cE_r} \colon \bLv_s(\cB) \to \bL_r(\cA) 
\end{equation*}
to be the associated $\bG_r$-linear functor. 
This is the functor we are after. 
Note that $\Phi_{\cE} = \Phi_{N-1}$ under the identifications $\bPv = \bLv_1$, $\bH = \bL_{N-1}$. 

The key result of this section is the following. 
\begin{theorem}
\label{key-theorem}
In Setup~\ref{setup-HPD}, for all nonnegative integers $r$ and $s$ such that $r+s = N$, 
there is a $\bG_r$-linear equivalence 
\begin{equation*}
\phi_r \colon \Ku'_s(\cB) \xrightarrow{\sim} \Ku_r(\cA) 
\end{equation*}
induced by the restriction of $\Phi_r$ to $\Ku'_s(\cB)$. 
\end{theorem}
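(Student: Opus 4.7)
The plan is to adapt Kuznetsov's original proof of HPD to the linear-category setting, working systematically with the Fourier--Mukai kernel formalism of Section~\ref{section-kernels}. I would proceed in four steps.

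First, I would verify that $\Phi_r$ admits left and right adjoints $\Phi_r^*$ and $\Phi_r^!$ and identify them in kernel form. This uses the adjoint hypothesis on $\phi$ (Setup~\ref{setup-HPD}\ref{setup-fully-faithful}), the admissibility of $\cAd$ in $\bH(\cA)$ from Lemma~\ref{lemma-HPD-admissible}, and the fact that $\zeta_r$ is a composition of projective bundle projections and a closed embedding, so Grothendieck duality applies and pushforwards preserve perfect complexes. Lemmas~\ref{lemma-pushforward-kernel} and~\ref{lemma-pullback-kernel} then express these adjoints as kernels obtained from $\cE_r$ by explicit duality twists, which behave coherently as $r$ varies via the base change results of Section~\ref{section-kernels}.

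Second, I would show $\Phi_r$ maps $\Ku'_s(\cB)$ into $\Ku_r(\cA)$, and symmetrically that $\Phi_r^!$ maps $\Ku_r(\cA)$ into $\Ku'_s(\cB)$. By the defining decomposition~\eqref{Cr}, the former reduces to proving semiorthogonality of $\Phi_r(\Ku'_s(\cB))$ against each ``extra'' component $\cA_i((i-s+1)H) \sotimes \Perf(\bG_r)$ for $s \leq i \leq m-1$. Using the projection formula (Lemma~\ref{lemma-projection-formula}) and the Koszul resolution~\eqref{koszul-res-Lr} of $\bL_r \subset \bP \times \bG_r$, this becomes a semiorthogonality statement in $\cB$ concerning morphisms from twists of $\cB_{1-n}, \dots, \cB_{-r}$ against objects of $\Ku'_s(\cB)$, which holds by the defining decomposition~\eqref{Ds} and Setup~\ref{setup-HPD}\ref{setup-Bj}. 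The dual inclusion follows by a symmetric argument.

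Third, I would prove full faithfulness of $\Phi_r|_{\Ku'_s(\cB)}$ by showing that the unit $\id \to \Phi_r^! \Phi_r$ is an equivalence on $\Ku'_s(\cB)$. In kernel terms, this is a statement about a convolution of kernels on $\bLv_s \times_{\bG_r} \bLv_s$ which, after projection-formula manipulations, reduces to the identity $\phi^! \phi \simeq \id_{\cB}$ (from full faithfulness of $\phi$) together with the splitting-functor structure of $p_* \gamma \phi$ on $\cA_0$. The latter follows from Setup~\ref{setup-HPD}\ref{setup-image-A0},~\ref{setup-fully-faithful} and Lemma~\ref{lemma-im-ker-p-gamma}, while Lemma~\ref{lemma-lls-Cd} provides the vanishings needed to control cross terms.

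The main obstacle is essential surjectivity, i.e.\ showing that the right orthogonal to $\Phi_r(\Ku'_s(\cB))$ inside $\Ku_r(\cA)$ vanishes. This is exactly where the surjectivity aspect of Setup~\ref{setup-HPD}\ref{setup-image-A0} --- that the image of $p_* \gamma \phi$ equals \emph{all} of $\cA_0$, not just a subcategory of it --- plays the decisive role, as it is the precise size constraint forcing $\Phi_r|_{\Ku'_s(\cB)}$ to generate $\Ku_r(\cA)$. I anticipate carrying this out by a base-change argument relating $\Phi_r$ for different values of $r$ via the geometric relationship between the universal linear-section spaces, reducing to a base case treatable directly using Lemma~\ref{lemma-im-ker-p-gamma}. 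Throughout, moderateness of $\cA$ keeps the Koszul vanishings in the valid range, and right strongness guarantees that all intermediate subcategories remain admissible so that the kernel calculus of Section~\ref{section-kernels} applies cleanly.
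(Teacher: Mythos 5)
Your high-level architecture is right — one does verify adjoints for $\Phi_r$, prove semiorthogonality against the "extra" components so that the image lies in $\Ku_r(\cA)$, and then establish full faithfulness and generation — but the proposal substantially underestimates the difficulty of the last two steps and omits the mechanism that makes them work.

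The key missing idea is an induction on $r$ carried out through an auxiliary geometric construction. The paper relates $\Phi_r$ and $\Phi_{r-1}$ by introducing the flag variety $\Fl{r-1}{r}$ and the partial families $\bLm_r$, $\bLp_{r-1}$, $\bLvp_s$, $\bLvm_{s+1}$ living over it, together with the intermediate functors $\Phim_r$, $\Phip_{r-1}$ obtained by base change of kernels. The crucial algebraic input is then the exact triangle of kernels in Lemma~\ref{lemma-kernel-relation}\eqref{kernel-triangle}, coming from the fact that the total space $\tbM_r$ is the scheme-theoretic union $\bMm_r \cup \bMp_{r-1}$ (Lemma~\ref{lemma-M-union}). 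Everything in Propositions~\ref{proposition-Phi-splitting}, \ref{proposition-LvsD-generation}, and~\ref{proposition-LrC-generation} hinges on this triangle, on the base-change relations of Lemma~\ref{lemma-base-change-Phi}, and on the vanishing results of Propositions~\ref{proposition-im-Phi-tE} and~\ref{proposition-Phi-compositions}. Your claim that full faithfulness on $\Ku'_s(\cB)$ reduces "after projection-formula manipulations" to $\phi^!\phi \simeq \id_{\cB}$ would, if it worked, make the theorem almost trivial, but it does not: the kernel $\cE_r = \zeta_r^*\cE$ is a nontrivial restriction of $\cE$, and the unit/counit for $\Phi_r$ cannot be read off from that of $\phi$ without the inductive comparison between consecutive values of $r$. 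Likewise "a base-change argument relating $\Phi_r$ for different values of $r$" gestures at the right thing but leaves the actual engine of the proof unstated.

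A second, structural point: the paper does not directly prove that $\Phi_r$ restricted to $\Ku'_s(\cB)$ lands in $\Ku_r(\cA)$ and is fully faithful there. Instead it shows $\Phi_r$ on all of $\bLv_s(\cB)$ is a left splitting functor (Proposition~\ref{proposition-Phi-splitting}), which by Theorem~\ref{theorem-splitting-functors} immediately yields semiorthogonal decompositions
\begin{equation*}
\bL_r(\cA) = \llangle \im \Phi_r, \ker \Phi_r^* \rrangle, \qquad
\bLv_s(\cB) = \llangle \ker \Phi_r, \im \Phi_r^* \rrangle,
\end{equation*}
with $\Phi_r, \Phi_r^*$ mutually inverse on $\im \Phi_r^* \simeq \im \Phi_r$. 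Combined with Propositions~\ref{proposition-sos-kerPhiadjoint}, \ref{proposition-sos-kerPhi} (identifying the complementary pieces inside $\ker \Phi_r^*$ and $\ker \Phi_r$) and the generation statements, this forces $\im \Phi_r = \Ku_r(\cA)$ and $\im \Phi_r^* = \Ku'_s(\cB)$, giving the theorem at once. Your step~2 correctly identifies that Koszul resolutions and $\im(p_*\gamma\phi) = \cA_0$ enter, but frames the needed vanishing incorrectly as "semiorthogonality in $\cB$"; what one actually proves is containment of categories in $\ker \Phi_r^*$ and $\ker \Phi_r$. You should reformulate the goal as a splitting-functor statement and supply the $\Fl{r-1}{r}$-induction; without these, steps~3 and~4 of your plan have a genuine gap.
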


The proof of Theorem~\ref{key-theorem} will be given below. 
Here we derive some consequences. 

\begin{corollary}
\label{corollary-Ad-B}
In Setup~\ref{setup-HPD}, the functor $\phi \colon \cB \to \cAd$ is an equivalence. 
\end{corollary}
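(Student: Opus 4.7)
The plan is to derive the corollary directly from Theorem~\ref{key-theorem} by specializing to the boundary case $(r,s) = (N-1, 1)$. Since $\bLv_1 = \bGv_1 = \bPv$, we have $\bLv_1(\cB) = \cB$, and by construction $\Ku_{N-1}(\cA) = \cAd$. The goal is thus to show that in this boundary case the general equivalence $\phi_r \colon \Ku'_s(\cB) \xrightarrow{\sim} \Ku_r(\cA)$ reduces to the functor $\phi$ itself.

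First, I would verify that $\Ku'_1(\cB) = \cB$. By Proposition~\ref{proposition-Ad-lefschetz-sequence}, the integer $n$ is given by $n = N - \#\{i \geq 0 \mid \cA_i = \cA_0\}$; since this set always contains $0$, we have $n \leq N-1$. In the semiorthogonal decomposition~\eqref{Ds} for $s = 1$, the components arising from the Lefschetz chain of $\cB$ are indexed by $j$ running from $1-n$ down to $-r = 1-N$. The inequality $1-n \leq 1-N$ would force $n \geq N$, which fails; hence this index range is empty, the decomposition degenerates, and $\Ku'_1(\cB) = \bLv_1(\cB) = \cB$.

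Next, I would trace through the definition of $\phi_{N-1}$. For $r = N-1$ and $s = 1$, we have $\bLv_1 = \bGv_1 = \bG_{N-1}$ and $\bL_{N-1} = \bH$, so the morphism $\zeta_{N-1} \colon \bL_{N-1} \times_{\bG_{N-1}} \bLv_1 \to \bH$ is canonically an isomorphism. Consequently $\cE_{N-1} = \cE$ and $\Phi_{N-1} = \Phi_{\cE}$, which by~\eqref{Phigammaphi} equals $\gamma \circ \phi$. Restricted to $\Ku'_1(\cB) = \cB$, this functor factors through $\Ku_{N-1}(\cA) = \cAd \subset \bH(\cA)$, and the factored functor is precisely $\phi$.

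Combining these two observations with Theorem~\ref{key-theorem} yields that $\phi \colon \cB \to \cAd$ is an equivalence. The substantive content of the argument is entirely contained in Theorem~\ref{key-theorem}; the corollary is a clean specialization to the endpoints $(r,s) = (N-1, 1)$, with essentially no further obstacle once the boundary identifications $\bLv_1(\cB) = \cB$, $\Ku_{N-1}(\cA) = \cAd$, and $\phi_{N-1} = \phi$ are checked.
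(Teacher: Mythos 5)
Your proof is correct and follows the same route as the paper: specialize Theorem~\ref{key-theorem} to $(r,s)=(N-1,1)$, verify $\Ku'_1(\cB)=\bLv_1(\cB)=\cB$, and observe that $\phi_{N-1}$ is $\phi$ by construction since $\zeta_{N-1}$ is an isomorphism and $\Phi_{\cE}=\gamma\circ\phi$. The paper states these identifications without unpacking them; your explicit checks (the bound $n\le N-1$ forcing the index range in~\eqref{Ds} to be empty, and the factorization of $\Phi_{N-1}$ through $\cAd$) are exactly the content being implicitly invoked.
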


\begin{proof}
By construction, the functor $\phi \colon \cB \to \cAd$ coincides with the equivalence  
$\phi_{N-1}$ of Theorem~\ref{key-theorem} 
(note that $\cK'_1(\cB) = \bLv_1(\cB) = \cB$). 
\end{proof}

\begin{corollary}
\label{corollary-B-lc-full}
In Setup~\ref{setup-HPD}, 
the Lefschetz chain  
$\cB_{1-n} \subset \dots \subset \cB_{-1} \subset \cB_0$ in $\cB$ is full.
\end{corollary}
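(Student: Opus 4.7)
The plan is to deduce Corollary~\ref{corollary-B-lc-full} directly from Theorem~\ref{key-theorem} by specializing to the extreme values $r=0$, $s=N$. The first step is to identify both sides of the equivalence there. On the $\cA$-side, since $\bL_0 = \varnothing$ we have $\bL_0(\cA) = 0$; moreover, by moderateness of $\cA$ we have $m < N$, so the Lefschetz components appearing in the defining decomposition~\eqref{Cr} are indexed by $i \geq N \geq m$ and all vanish. Thus $\Ku_0(\cA) = \bL_0(\cA) = 0$. On the $\cB$-side, applying Theorem~\ref{key-theorem} at $r=0$, $s=N$ then yields
\begin{equation*}
\Ku'_N(\cB) \simeq \Ku_0(\cA) = 0.
\end{equation*}

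Next I would unfold the defining semiorthogonal decomposition~\eqref{Ds} for $s = N$. Since $\bGv_N = S$ and $\bLv_N = \bPv$, the base change $\bLv_N(\cB)$ is just $\cB$, and the tensor products $(-) \sotimes \Perf(\bGv_N) = (-) \sotimes \Perf(S)$ are identities on $S$-linear subcategories. Combined with the vanishing of $\Ku'_N(\cB)$ from the previous step, \eqref{Ds} specializes to the $\bPv$-linear semiorthogonal decomposition
\begin{equation*}
\cB = \llangle \cB_{1-n}(-nH'),\, \cB_{2-n}((1-n)H'),\, \dots,\, \cB_{-1}(-2H'),\, \cB_0(-H') \rrangle.
\end{equation*}
Applying the $\bPv$-linear autoequivalence $-\otimes \cO_{\bPv}(H')$, which shifts each twist by $+H'$ and preserves semiorthogonal decompositions, gives
\begin{equation*}
\cB = \llangle \cB_{1-n}((1-n)H'),\, \dots,\, \cB_{-1}(-H'),\, \cB_0 \rrangle,
\end{equation*}
which is precisely the statement that the left Lefschetz chain $\cB_{1-n} \subset \dots \subset \cB_0$ is full.

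The main substance of the argument lies entirely in Theorem~\ref{key-theorem}; once that is in hand, this corollary is formal. The only routine verifications are the two index computations above---that $\Ku_0(\cA)$ vanishes and that the $s = N$ incarnation of~\eqref{Ds} produces exactly the correct chain of twists after shifting by $H'$. There is no genuine obstacle at this stage of the paper.
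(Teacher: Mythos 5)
Your proof is correct and takes essentially the same approach as the paper: both specialize Theorem~\ref{key-theorem} to $r=0$, $s=N$, use $\Ku_0(\cA) = \bL_0(\cA) = 0$, and obtain the full left Lefschetz decomposition of $\cB$ from~\eqref{Ds} by twisting by $\cO(H')$. The only cosmetic difference is that you establish $\Ku'_N(\cB) \simeq 0$ first and then substitute it into the decomposition, whereas the paper keeps $\Ku'_N(\cB)$ in the decomposition, rewrites it after the twist, and only then invokes Theorem~\ref{key-theorem} to kill that term.
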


\begin{proof}
The defining semiorthogonal decomposition~\eqref{Ds} 
of $\Ku'_s(\cB)$ for $s=N$ can be written as 
\begin{equation*}
\cB = \llangle \cB_{1-n}(-nH'), \dots, \cB_{-1}(-2H'), \cB_0(-H'), \Ku'_N(\cB) \rrangle. 
\end{equation*}
Twisting by $\cO(H')$, we get 
\begin{equation*}
\cB = \llangle \cB_{1-n}((1-n)H'), \dots, \cB_{-1}(-H'), \cB_0, \Ku'_N(\cB)(H') \rrangle. 
\end{equation*}
Hence the Lefschetz chain  
$\cB_{1-n} \subset \dots \subset \cB_{-1} \subset \cB_0$ in $\cB$ is full if and only if $\Ku'_N(\cB) \simeq 0$. 
But by Theorem~\ref{key-theorem} we have $\Ku'_N(\cB) \simeq \Ku_0(\cA)$, 
and $\Ku_0(\cA) \simeq 0$ by Remark~\ref{Lr-special-values}.  
\end{proof}

By combining the above results in the case $\cB = \cAd$, 
we obtain the main theorem of HPD. 
\begin{theorem}
\label{main-theorem}
Let $\cA$ be a right strong, moderate Lefschetz category over $\bP$. 
Then: 
\begin{enumerate}
\item \label{Ad-lc} 
$\cAd$ is a left strong, moderate Lefschetz category over $\bPv$, with 
components $\cAd_j \subset \cAd$ given by~\eqref{Adj} and length given by 
\begin{equation*}
\length(\cAd) = N - \# \{ \, i \geq 0 \mid \cA_i = \cA_0 \, \}. 
\end{equation*} 

\item \label{Ad-A-linear-section-2}
Let $L \subset V$ be a subbundle and 
let $L^{\perp} = \ker(V^{\svee} \to L^{\svee})$ 
be its orthogonal.  
Set 
\begin{equation*}
r = \rank(L), \quad s = \rank(L^{\perp}), \quad m = \length(\cA), \quad n = \length(\cAd) .
\end{equation*} 
Then there are semiorthogonal decompositions 
\begin{align*}
\cA \otimes_{\Perf(\bP)} \Perf(\bP(L)) & = \llangle \Ku_L(\cA), 
\cA_s(H) , 
\dots, 
\cA_{m-1}((m-s)H)  \rrangle , \\
\cAd \otimes_{\Perf(\bPv)} \Perf(\bP(L^{\perp})) & = 
\llangle 
\cAd_{1-n}((r-n)H'), \dots, \cAd_{-r}(-H'), 
\Ku'_{L^{\perp}}(\cAd) 
\rrangle , 
\end{align*}
and an $S$-linear equivalence $\Ku_L(\cA) \simeq \Ku'_{L^{\perp}}(\cAd)$. 
\end{enumerate}
\end{theorem}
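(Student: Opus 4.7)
The plan is to deduce Theorem \ref{main-theorem} from the key result Theorem \ref{key-theorem} together with the Lefschetz chain constructed in \S\ref{subsection-HPD-ls}. The setup is that we take $\cB = \cAd$ with $\phi = \id_{\cAd}$, and verify the hypotheses of Setup \ref{setup-HPD}: $\cA$ is right strong and moderate by assumption, condition \eqref{setup-fully-faithful} is trivial, condition \eqref{setup-Bj} is precisely Proposition \ref{proposition-Ad-lefschetz-sequence} (the chain is strong because $\cA$ is right strong), and condition \eqref{setup-image-A0} is the content of the identity $\im(p_* \circ \gamma) = \cA_0$ established in Lemma \ref{lemma-im-ker-p-gamma}. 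This is exactly the observation recorded in Remark \ref{remark-HPD-criteria}.

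For part \eqref{Ad-lc}, I will combine Proposition \ref{proposition-Ad-lefschetz-sequence} with Corollary \ref{corollary-B-lc-full}, applied to $\cB = \cAd$. The former produces a strong left Lefschetz chain $\cAd_{1-n} \subset \cdots \subset \cAd_{-1} \subset \cAd_0$ in $\cAd$ with respect to $- \otimes \cO_{\bPv}(H')$, and the latter states that this chain is full. Since strong chains have admissible components (Remark \ref{remark-strong-lc}), Lemma \ref{lemma-lc-from-ld}\eqref{lc-from-ld-2} then promotes $\cAd_0 \subset \cAd$ to a (left strong) Lefschetz center, giving $\cAd$ its asserted Lefschetz structure. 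The length formula follows immediately from the last statement of Lemma \ref{lemma-lc-from-ld} together with the definition of $n$ in \S\ref{subsection-HPD-ls}. Moderateness is automatic: since $\cA_0 = \cA_0$ contributes at least one index to the set $\{\, i \geq 0 \mid \cA_i = \cA_0 \,\}$, we have $n \leq N-1 < N$.

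For part \eqref{Ad-A-linear-section-2}, the first semiorthogonal decomposition is Lemma \ref{lemma-CL} applied to $\cA$ and the subbundle $L \subset V$ of corank $s$. The second semiorthogonal decomposition is Lemma \ref{lemma-left-linear-section}\eqref{left-linear-section-L} applied to $\cAd$ with its newly established left Lefschetz structure, using the subbundle $L^\perp \subset V^{\svee}$ of corank $r$. Finally, Theorem \ref{key-theorem} (with $\cB = \cAd$) provides a $\bG_r$-linear equivalence $\Ku'_s(\cAd) \simeq \Ku_r(\cA)$, and base changing this equivalence along the classifying morphism $S \to \bG_r \cong \bGv_s$ of the subbundle $L$ (equivalently of $L^\perp$), via the formula \eqref{CL}, yields the desired $S$-linear equivalence $\Ku_L(\cA) \simeq \Ku'_{L^\perp}(\cAd)$.

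The main obstacle is clearly not this deduction but Theorem \ref{key-theorem} itself, whose proof will require the full strength of the kernel formalism developed in \S\ref{section-kernels}, the orthogonality computations of Lemma \ref{lemma-vanishing}, and a careful analysis of the base change behavior of the functors $\Phi_r$ along $\zeta_r \colon \bL_r \times_{\bG_r} \bLv_s \to \bH$. Once Theorem \ref{key-theorem} and Corollary \ref{corollary-B-lc-full} are in hand, the present theorem is a purely formal consequence.
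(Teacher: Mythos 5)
Your proof is correct and takes essentially the same approach as the paper: verify Setup~\ref{setup-HPD} for $\cB = \cAd$ via Remark~\ref{remark-HPD-criteria}, combine Proposition~\ref{proposition-Ad-lefschetz-sequence} and Corollary~\ref{corollary-B-lc-full} with Lemma~\ref{lemma-lc-from-ld} to obtain the Lefschetz structure on $\cAd$, and then apply Lemmas~\ref{lemma-CL} and \ref{lemma-left-linear-section}\eqref{left-linear-section-L} together with Theorem~\ref{key-theorem} (base changed along $S \to \bG_r$) for part~\eqref{Ad-A-linear-section-2}. Your explicit remark on moderateness (that $i=0$ always lies in $\{\,i \geq 0 \mid \cA_i = \cA_0\,\}$, so $n \leq N-1$) is a nice touch that the paper leaves implicit.
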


\begin{proof}
The categories $\cAd_j$ form a strong left Lefschetz chain in $\cAd$ by Proposition~\ref{proposition-Ad-lefschetz-sequence}. 
Hence they are the components of a left strong Lefschetz structure over 
$\bPv$ by Corollary~\ref{corollary-B-lc-full} combined with Lemma~\ref{lemma-lc-from-ld}. 
By construction, the length of $\cAd$ with this Lefschetz structure is as stated. 
This proves \eqref{Ad-lc}. 
For part~\eqref{Ad-A-linear-section-2}, note that there are semiorthogonal decompositions of the 
claimed form by Lemma~\ref{lemma-CL} and Lemma~\ref{lemma-left-linear-section}\eqref{left-linear-section-L}. 
The equivalence $\Ku_L(\cA) \simeq \Ku'_{L^{\perp}}(\cAd)$ is the base change 
of the equivalence $\Ku_r(\cA) \simeq \Ku'_s(\cAd)$ given by  Theorem~\ref{key-theorem}
along the morphism $S \to \bG_r$ classifying $L \subset V$. 
\end{proof}

\begin{remark}
\label{remark-geometric-linear-section}
Specializing Theorem~\ref{main-theorem} to the case where $S$ is the spectrum of a field and 
$\cA$ and $\cAd$ are geometric, i.e. of the form $\cA = \Perf(X)$ and $\cAd = \Perf(Y)$ for a $\bP$-scheme 
$X$ and a $\bPv$-scheme $Y$, we recover Theorem~\ref{theorem-kuznetsov-HPD} from~\S\ref{section-intro}.  
In fact, even in this case our result is more general than Theorem~\ref{theorem-kuznetsov-HPD} in 
several respects.
First, we do not impose any smooth and properness assumptions. 
Second, we do not need any transversality assumption about the fiber products 
$X \times_{\bP} \bP(L)$ and \mbox{$Y \times_{\bPv} \bP(L^{\perp})$}, 
which are taken in the derived sense, according to our conventions from \S\ref{subsection-DAG}. 
This addresses the question of \cite[Remark 6.26]{kuznetsov-HPD}, i.e. 
removes the ``admissibility'' assumption on $L \subset V$ from \cite[Theorem 6.3]{kuznetsov-HPD}. 
Finally, we note that by applying Theorem~\ref{theorem-dual-Db} and Proposition~\ref{proposition-sod-Db} 
we obtain a version of Theorem~\ref{main-theorem} for bounded derived categories of coherent sheaves, 
recovering the result of Remark~\ref{remark-Db-HPD}. 
\end{remark}

Using the above, we can prove that HPD is indeed a duality. 

\begin{theorem}
Let $\cA$ be a right strong, moderate Lefschetz category over $\bP$. 
Then the functor $\phi_{1} \colon {^\hpd}(\cAd) \xrightarrow{\sim} \cA$ 
is an equivalence of Lefschetz categories over $\bP$. 
\end{theorem}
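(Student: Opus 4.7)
The plan is to combine Theorem~\ref{key-theorem} at $r = 1$ with the ``left'' analog of Theorem~\ref{main-theorem}\eqref{Ad-lc}, and then to identify the resulting Lefschetz structure on ${^\hpd}(\cAd)$ with the given one on $\cA$ via a kernel-level computation.

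By Theorem~\ref{main-theorem}\eqref{Ad-lc} the category $\cAd$ is a left strong moderate Lefschetz category over $\bPv$. Applying the left analog of this result (obtained from the paper's arguments by swapping ``right'' and ``left'' throughout, which is legitimate since all constructions treat the two directions symmetrically) to $\cAd$ endows ${^\hpd}(\cAd)$ with the structure of a right strong moderate Lefschetz category over $(\bPv)^\svee = \bP$. By the analog of Proposition~\ref{proposition-dA-lefschetz-sequence}, its center is given by
\begin{equation*}
({^\hpd}(\cAd))_0 = \delta^! f^*((\cAd)_0),
\end{equation*}
where $\delta \colon {^\hpd}(\cAd) \to \bH(\cAd)$ is the inclusion and $f^* \colon \cAd \to \bH(\cAd)$ is the pullback along the projection $f \colon \bH \to \bPv$. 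Next, one invokes Theorem~\ref{key-theorem} with $\cB = \cAd$ and $\phi = \id$ (Setup~\ref{setup-HPD} holds by Remark~\ref{remark-HPD-criteria}) at $r = 1$, $s = N-1$. By definition $\Ku'_{N-1}(\cAd) = {^\hpd}(\cAd)$, and by the moderateness of $\cA$ together with Remark~\ref{Lr-special-values} and the decomposition~\eqref{Cr}, $\Ku_1(\cA) = \bL_1(\cA) = \cA$. This produces the $\bP$-linear equivalence $\phi_1 \colon {^\hpd}(\cAd) \xrightarrow{\sim} \cA$.

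It remains to verify that $\phi_1(({^\hpd}(\cAd))_0) = \cA_0$. By Lemma~\ref{lemma-im-ker-p-gamma} the splitting functor $p_* \circ \gamma \colon \cAd \to \cA$ has image $\cA_0$, and its restriction to $(\cAd)_0 = \gamma^* p^*(\cA_0)$ is quasi-inverse to $\gamma^* p^*|_{\cA_0}$. Hence it suffices to establish a natural equivalence of $S$-linear functors
\begin{equation*}
\phi_1 \circ \delta^! \circ f^* \simeq p_* \circ \gamma \colon \cAd \to \cA.
\end{equation*}
For $r = 1$ the morphism $\zeta_1 \colon \bL_1 \times_{\bG_1} \bLv_{N-1} \to \bH$ is an isomorphism, so $\cE_1 = \cE$ and $\phi_1$ is represented by the same kernel that encodes $\gamma$. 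The desired equivalence then follows by combining the compatibilities of kernel pullback and pushforward with composition of functors (Lemmas~\ref{lemma-pullback-kernel} and~\ref{lemma-pullback-kernel-T-equal}), applied to the incidence diagram relating $\bH$ and its base changes, with the semiorthogonal decomposition characterizing $\delta^!$ in terms of the $\bPv$-linear Lefschetz components of $\cAd$. This kernel-level identification is the main obstacle, as it requires unwinding the HPD constructions for both $\cA$ and $\cAd$ simultaneously and tracing their interaction through the incidence variety.
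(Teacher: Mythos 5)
Your plan coincides with the paper's approach at the high level: reduce the claim to the functorial identity $\phi_1 \circ \lambda^! \circ f^* \simeq p_* \circ \gamma \colon \cAd \to \cA$ (with $\lambda$ for your $\delta$), then apply Lemma~\ref{lemma-im-ker-p-gamma}. However, the step you flag as ``the main obstacle'' is left entirely as a sketch, and your proposed strategy for it is misdirected --- you should not attempt a kernel-level characterization of the projection $\lambda^!$.

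The correct route splits the verification into two independent stages. First, by Lemma~\ref{lemma-sod} the cone of the counit $\lambda\lambda^! \to \id$ on $\bH(\cAd)$ takes values in the subcategory generated by the components to the left of ${^\hpd}(\cAd)$ in the defining decomposition~\eqref{HcAd}; and Propositions~\ref{proposition-LvsD-generation} and~\ref{proposition-Phi-splitting}, specialized to $r=1$, $s=N-1$, show that $\Phi_1$ annihilates precisely those components. Hence $\Phi_1 \circ \lambda\lambda^! \simeq \Phi_1$, so
\begin{equation*}
\phi_1 \circ \lambda^! \circ f^* = \Phi_1 \circ \lambda \circ \lambda^! \circ f^* \simeq \Phi_1 \circ f^*,
\end{equation*}
with $\lambda^!$ entirely eliminated. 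Second, the remaining identity $\Phi_1 \circ f^* \simeq p_* \circ \gamma$ is a short kernel computation: the isomorphism $\zeta_1 \colon \bP \times_{\bP} \bH \xrightarrow{\sim} \bH \times_{\bPv} \bPv$ fits into a commutative triangle with $\id \times f$ and $p \times \id$ over $\bP \times \bPv$, giving an equivalence of kernels $(\id \times f)_* \cE_1 \simeq (p \times \id)_* \cE$. Applying Lemma~\ref{lemma-pushforward-kernel} --- not the pullback Lemmas~\ref{lemma-pullback-kernel} and~\ref{lemma-pullback-kernel-T-equal} you cite --- to both sides yields $\Phi_1 \circ f^* \simeq p_* \circ \Phi_\cE = p_* \circ \gamma$. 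This two-step split is considerably shorter than the combined argument you outline, which would indeed require unwinding the HPD constructions for $\cA$ and $\cAd$ simultaneously.
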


\begin{proof}
First note that $\cK'_{N-1}(\cAd) = {^\hpd}(\cAd)$ by 
definition and $\cK_1(\cA) = \cA$, so that $\phi_1$ is indeed a $\bP$-linear 
equivalence between the stated categories. 
It remains to check that this equivalence takes ${^\hpd}(\cAd)_0$ to $\cA_0$. 
For this, let us describe ${^\hpd}(\cAd)_0 \subset {^\hpd}(\cAd)$ more explicitly. 
The universal hyperplane $\bH$ in $\bP$ is simultaneously the universal hyperplane 
in $\bPv$, via the projection $f \colon \bH \to \bPv$. 
The category ${^\hpd}(\cAd)$ is defined by the $\bP$-linear semiorthogonal decomposition 
\begin{equation}
\label{HcAd}
\bH(\cAd) =  
 \llangle  
\iota^*(\cAd_{1-n}((1-n)H') \sotimes \Perf(\bP))), 
\dots, 
\iota^*(\cAd_{-1}(-H') \sotimes \Perf(\bP)),
{^\hpd}(\cAd) 
\rrangle , 
\end{equation} 
where $\bH(\cAd) = \cAd \otimes_{\Perf(\bPv)} \Perf(\bH)$, see~\eqref{dA}. 
Further, by definition 
\begin{equation*}
{^\hpd}(\cAd)_0 = \lambda^! f^*(\cAd_0), 
\end{equation*}
where $\lambda \colon {^\hpd}(\cAd)  \to \bH(\cAd)$ is the inclusion functor. 
By Propositions~\ref{proposition-LvsD-generation} and \ref{proposition-Phi-splitting} 
(with \mbox{$r =1$} and $s = N-1$) proved below, we see that the functor $\Phi_1 \colon \bH(\cAd) \to \cA$ 
kills the categories to the left of ${^\hpd}(\cAd)$ in~\eqref{HcAd}. 
It follows that  
\begin{equation}
\label{phi1Phi1A}
\phi_1({^\hpd}(\cAd)_0) = \Phi_1(\lambda\lambda^! f^*(\cAd_0)) = \Phi_1(f^*(\cAd_0)) . 
\end{equation}
We claim there is an equivalence of functors 
\begin{equation*}
\Phi_1 \circ f^* \simeq p_* \circ \gamma \colon \cAd \to \cA . 
\end{equation*} 
Indeed, there is a tautological commutative diagram 
\begin{equation*}
\xymatrix{
\bP \times_{\bP} \bH \ar[r]^-{\zeta_1}_-{\sim} \ar[d]_{\id \times f} & \bH \times_{\bPv} \bPv \ar[dl]^{p \times \id} \\
\bP \times \bPv
}
\end{equation*}
Hence there is an equivalence of kernels $(\id \times f)_*(\cE_1) \simeq (p \times \id)_*(\cE)$, 
where recall $\cE_1 = \zeta_1^*(\cE)$ is the kernel for the functor $\Phi_1$. 
So the above claim follows from Lemma~\ref{lemma-pushforward-kernel}, since $\cE$ is the kernel for 
the functor $\gamma$. 
Combining the claim with \eqref{phi1Phi1A} and \eqref{impgamma}, 
we conclude $\phi_1$ takes ${^\hpd}(\cAd)_0$ to $\cA_0$.  
\end{proof}

\subsection{Notation}
Throughout, $r$ and $s$ will denote nonnegative integers such that $r+s = N$.  
We will prove Theorem~\ref{key-theorem} by an induction argument. 
For this, we need to relate the functors
\begin{align*}
\Phi_r & \colon \bLv_s(\cB) \to \bL_r(\cA) ,  \\ 
\Phi_{r-1} & \colon \bLv_{s+1}(\cB) \to \bL_{r-1}(\cA) . 
\end{align*}
Here and below, when we consider the functor $\Phi_{r-1}$ or the 
categories $\bLv_{s+1}(\cB)$ and $\bL_{r-1}(\cA)$, we implicitly assume $r \geq 1$. 

To this end, we introduce some auxiliary spaces. 
Let $\Fl{r-1}{r}$ denote the variety of flags of subspaces $L_{r-1} \subset L_r \subset V$ 
where $\dim L_i = i$. This variety comes with forgetful maps 
\begin{equation}
\label{Fl}
\vcenter{
\xymatrix{ 
& \Fl{r-1}{r} \ar[dl]_{\pi_{r-1}} \ar[dr]^{\pi_r}  & \\
\bG_{r-1} & & \bG_r
} 
}
\end{equation}
Further, define 
\begin{align*}
\bLp_{r-1} & = \bL_{r-1} \times_{\bG_{r-1}} \Fl{r-1}{r}  , \\
\bLm_{r} & = \bL_r \times_{\bG_r} \Fl{r-1}{r}  .
\end{align*}
These spaces fit into a commutative diagram 
\begin{equation}
\vcenter{
\xymatrix{
\Fl{r-1}{r} \ar[d]_{\pi_{r-1}} & \bLp_{r-1} \ar[l]_{ \fp_{r-1}} \ar[r]^{a_r} \ar[d]^{\pip_{r-1}} & \bLm_{r} 
\ar[d]_{\pim_r} \ar[r]^{\fm_r} & \Fl{r-1}{r} \ar[d]^{\pi_r} \\ 
\bG_{r-1} & \bL_{r-1} \ar[l]_{f_{r-1}} & \bL_{r} \ar[r]^{f_r} & \bG_r \\ 
}
}
\end{equation}
where the squares are cartesian and $a_r$ is a closed embedding. 
We also introduce the notation 
\begin{align*}
\alpha_{r} = \pim_{r} \circ a_r & \colon \bLp_{r-1} \to \bL_{r} , \\ 
\pplus_{r-1} = p_{r-1} \circ \pip_{r-1} & \colon \bLp_{r-1} \to \bP , \\
\pminus_r  = p_r \circ \pim_r & \colon \bLm_r \to \bP, 
\end{align*}
where recall $p_r \colon \bL_r \to \bP$ denotes the projection. 

Dually, let $\Flv{s}{s+1}$ denote the the variety of flags of subspaces $M_{s} \subset M_{s+1} \subset V^{\svee}$, 
where $\dim M_i = i$. This variety comes with forgetful maps 
\begin{equation}
\label{Flv}
\vcenter{
\xymatrix{ 
& \Flv{s}{s+1} \ar[dl]_{\piv_{s+1}} \ar[dr]^{\piv_{s}}  & \\
\bGv_{s+1} & & \bGv_{s}
} }
\end{equation}
As above, we define 
\begin{align*}
\bLvm_{s+1} & = \bLv_{s+1} \times_{\bGv_{s+1}} \Flv{s}{s+1} , \\
\bLvp_{s} & = \bLv_{s} \times_{\bGv_{s}} \Flv{s}{s+1} .
\end{align*}
Under the canonical isomorphisms $\Flv{s}{s+1} \cong \Fl{r-1}{r}$, $\bGv_{s+1} \cong \bG_{r-1}$, 
and $\bGv_{s} \cong \bG_{r}$, the diagram~\eqref{Flv} is identified with~\eqref{Fl}. 
Hence the above spaces fit into a commutative diagram 
\begin{equation}
\vcenter{
\xymatrix{
\Fl{r-1}{r} \ar[d]_{\pi_{r-1}} & \bLvm_{s+1} \ar[l]_{\gm_{s+1}} \ar[d]^{\pivm_{s+1}} & \bLvp_{s} \ar[l]_{b_{s+1}}
\ar[d]_{\pivp_s} \ar[r]^{\gp_s} & \Fl{r-1}{r} \ar[d]^{\pi_r} \\ 
\bG_{r-1} & \bLv_{s+1} \ar[l]_{g_{s+1}} & \bLv_{s} \ar[r]^{g_s} & \bG_r
}
}
\end{equation}
As above, we also set 
\begin{align*}
\beta_{s+1} = \pivm_{s+1} \circ b_{s+1} & \colon \bLvp_s \to \bLv_{s+1} , \\
\qminus_{s+1} = q_{s+1} \circ \pivm_{s+1} & \colon \bLvm_{s+1} \to \bPv , \\
\qplus_s  = q_s \circ \pivp_s & \colon \bLvp_s \to \bPv, 
\end{align*}
where recall $q_s \colon \bLv_s \to \bPv$ denotes the projection. 

We have morphisms 
\begin{align*}
\pim_r \times \pivp_s & \colon \bLm_r \times_{\Fl{r-1}{r}} \bLvp_s \to \bL_r \times_{\bG_r} \bLv_s , \\
\pip_{r-1} \times \pivm_{s+1} & \colon \bLp_{r-1} \times_{\Fl{r-1}{r}} \bLvm_{s+1} \to \bL_{r-1} \times_{\bG_{r-1}} \bLv_{s+1}. 
\end{align*}
Pulling back the kernel $\cE_r$ for $\Phi_r$ along these morphisms, 
we obtain kernels 
\begin{align*}
\cEm_{r} & = (\pim_r \times \pivp_s)^*\cE_r \in 
\FM{\cB}{\bPv}{\cA}{\bP}{\bLvp_s}{\bLm_r}{\Fl{r-1}{r}}, \\ 
\cEp_{r-1} & = (\pip_{r-1} \times \pivm_{s+1})^*\cE_{r-1} \in 
\FM{\cB}{\bPv}{\cA}{\bP}{\bLvm_{s+1}}{\bLp_{r-1}}{\Fl{r-1}{r}}.
\end{align*}
We denote by 
\begin{align*}
\Phim_{r} & \colon \bLvp_s(\cB) \to \bLm_r(\cA) ,  \\
\Phip_{r-1} & \colon \bLvm_{s+1}(\cB) \to \bLp_{r-1}(\cA) ,
\end{align*}
the associated $\Fl{r-1}{r}$-linear functors. 

Finally, we denote by $\cU_r$ the rank $r$ tautological subbundle of $V \otimes \cO$ on $\bG_r$, 
and by $\cW_s$ the rank $s$ tautological subbundle of $V^{\svee} \otimes \cO$ on $\bGv_s$. 
By abuse of notation, we use the same symbol to denote the pullback of $\cU_r$ or $\cW_s$ to 
any space mapping to $\bG_r$ or $\bGv_s$. 
Note that under the isomorphism $\bG_r \cong \bGv_s$, the bundle $\cW_s$ corresponds to the 
orthogonal bundle $\cU_r^{\perp}$. 

\subsection{Geometric lemmas}
Here we gather some results describing the geometry of the spaces introduced above. 
The proofs are left to the reader. 

\begin{lemma}
\label{lemma-L-divisor}
\begin{enumerate}
\item The morphism $a_r \colon \bLp_{r-1} \to \bLm_{r}$ embeds 
$\bLp_{r-1}$ as a divisor, cut out by a section of the line bundle 
$(\cU_r/\cU_{r-1})(H)$. 
\item 
\label{lemma-L-divisor-b}
The morphism $b_{s+1} \colon \bLvp_{s} \to \bLvm_{s+1}$ 
embeds $\bLvp_{s}$ as a divisor, cut out by a section of the line 
bundle 
$(\cU_r/\cU_{r-1})^{\svee}(H')$. 
\end{enumerate}
\end{lemma}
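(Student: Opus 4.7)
The plan is to verify both parts by explicit construction of the defining sections, working fiberwise over $\Fl{r-1}{r}$. The two parts are dual to each other, so I would write out part (1) in detail and then deduce part (2) by an analogous argument or direct passage to orthogonals.

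For part (1), I would first describe the spaces modularly. A point of $\bLm_r$ is a flag $L_{r-1} \subset L_r \subset V$ together with a line $\ell \subset L_r$, while a point of $\bLp_{r-1}$ is such a flag together with a line $\ell \subset L_{r-1}$. The morphism $a_r$ sends the latter to the former via the inclusion $L_{r-1} \subset L_r$. Set-theoretically, $\bLp_{r-1}$ is thus the locus in $\bLm_r$ where $\ell$ lies in the hyperplane $L_{r-1} \subset L_r$, which is a divisor on each fiber of $\bLm_r \to \Fl{r-1}{r}$. To upgrade this to the scheme-theoretic statement, I would produce the section explicitly: the tautological embedding on $\bL_r = \bP_{\bG_r}(\cU_r)$ pulls back to an inclusion
\begin{equation*}
\cO_{\bLm_r}(-H) \hookrightarrow \cU_r \otimes \cO_{\bLm_r},
\end{equation*}
and composing with the quotient $\cU_r \to \cU_r/\cU_{r-1}$ (pulled back from $\Fl{r-1}{r}$) yields a morphism $\cO_{\bLm_r}(-H) \to (\cU_r/\cU_{r-1}) \otimes \cO_{\bLm_r}$, equivalently a section $s \in \rH^0(\bLm_r, (\cU_r/\cU_{r-1})(H))$. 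By construction the vanishing of $s$ at a point records the condition $\ell \subset L_{r-1}$, so $Z(s) = \bLp_{r-1}$ as subsets; and since $(\cU_r/\cU_{r-1})(H)$ is a line bundle and the section is nonzero in each fiber outside this locus, $Z(s) = \bLp_{r-1}$ scheme-theoretically.

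For part (2), I would argue in parallel, using the canonical isomorphism $\bG_r \cong \bGv_s$ sending a subbundle $L_r \subset V$ to its annihilator $L_r^\perp \subset V^\svee$. Under this identification the tautological bundle $\cW_s$ on $\bGv_s$ corresponds to $\cU_r^\perp$ and $\cW_{s+1}$ on $\bGv_{s+1}$ corresponds to $\cU_{r-1}^\perp$, so the short exact sequence $0 \to \cW_s \to \cW_{s+1} \to \cW_{s+1}/\cW_s \to 0$ is identified with $0 \to \cU_r^\perp \to \cU_{r-1}^\perp \to (\cU_r/\cU_{r-1})^\svee \to 0$. Pulling back the tautological inclusion on $\bLv_{s+1} = \bP_{\bGv_{s+1}}(\cW_{s+1})$ gives $\cO_{\bLvm_{s+1}}(-H') \hookrightarrow \cW_{s+1} \otimes \cO_{\bLvm_{s+1}}$, and composing with $\cW_{s+1} \to \cW_{s+1}/\cW_s \cong (\cU_r/\cU_{r-1})^\svee$ gives a section of $(\cU_r/\cU_{r-1})^\svee(H')$ whose vanishing locus is exactly $\bLvp_s$.

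Neither part seems to present a serious obstacle; the only potentially subtle point is bookkeeping the sign/duality conventions under $\bG_r \cong \bGv_s$ so that the line bundle on $\bLvm_{s+1}$ really comes out to $(\cU_r/\cU_{r-1})^\svee(H')$ and not some twist. To handle this I would fix the identification at the level of the tautological sequences on $\bG_r$ and $\bGv_s$ at the start and then propagate it mechanically, which is straightforward from the standard projective duality conventions laid out in \S\ref{subsection-conventions}.
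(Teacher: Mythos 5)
Your argument is correct, and it is the standard one: on $\bLm_r = \bP_{\Fl{r-1}{r}}(\cU_r)$ the composite $\cO(-H) \hookrightarrow \cU_r \twoheadrightarrow \cU_r/\cU_{r-1}$ gives the section of the line bundle $(\cU_r/\cU_{r-1})(H)$ whose zero locus is $\bP_{\Fl{r-1}{r}}(\cU_{r-1}) = \bLp_{r-1}$, and the dual computation (using $\cW_s = \cU_r^\perp$, $\cW_{s+1} = \cU_{r-1}^\perp$, so $\cW_{s+1}/\cW_s \cong (\cU_r/\cU_{r-1})^\svee$) handles part (2). The paper explicitly leaves the proofs of the geometric lemmas in \S6.3 to the reader, so there is nothing in the text to compare against; your proposal fills in that gap correctly.
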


\begin{lemma}
\label{lemma-zetar-id}
\begin{enumerate}
\item 
\label{lemma-zetar-id-1}
The morphism 
$(\zeta_r, \pr_2) \colon \bL_r \times_{\bG_r} \bLv_s \to \bH \times_{\bPv} \bLv_s $
is a closed immersion, with image cut out by a regular section of the vector bundle 
$(\cW_s/\cO(-H'))^{\svee}(H)$. 

\item 
\label{lemma-zetar-id-2}
The morphism 
$(\pr_1, \zeta_r) \colon \bL_r \times_{\bG_r} \bLv_s \to \bL_r \times_{\bP} \bH$ 
is a closed immersion, with image cut out by a regular section of the vector bundle 
$(\cU_r/\cO(-H))^{\svee}(H')$. 
\end{enumerate}
\end{lemma}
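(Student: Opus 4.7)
Both parts of the lemma are interchanged by the involution $V \leftrightarrow V^{\svee}$ (which swaps $\bG_r \leftrightarrow \bGv_s$, $\bL_r \leftrightarrow \bLv_s$, $\cU_r \leftrightarrow \cW_s$, and $H \leftrightarrow H'$), so my plan is to prove (1) and deduce (2) formally. For (1), the strategy is to identify source, target, and the natural cutting section via their functors of points, and then upgrade a dimension count to a scheme-theoretic regularity statement.

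First I would unwind the relevant schemes on $T$-points, for $T$ a test scheme over $S$. A $T$-point of $\bL_r \times_{\bG_r} \bLv_s$ is a tuple $(L, [v], M, [\xi])$ where $L \subset V_T$ is a rank $r$ subbundle containing a line $[v]$, $M \subset V_T^{\svee}$ is a rank $s$ subbundle containing a line $[\xi]$, and $M = L^{\perp}$ (this last equality is forced by the identification $\bG_r \cong \bGv_s$ used to form the fiber product). A $T$-point of $\bH \times_{\bPv} \bLv_s$ is a triple $([v], M, [\xi])$ with only the incidence relation $\langle \xi, v \rangle = 0$ imposed. Under these descriptions, the morphism $(\zeta_r, \pr_2)$ simply forgets $L$, so its image consists of those triples satisfying the stronger condition $v \in M^{\perp}$; in that case $L = M^{\perp}$ is uniquely determined, and recovery of $L$ defines the set-theoretic inverse on the image.

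Next I would produce the cutting section. On $\bH \times_{\bPv} \bLv_s$ we have tautological inclusions $\cO(-H) \hookrightarrow V \otimes \cO$ (pulled back from $\bH \subset \bP \times \bPv$) and $\cO(-H') \hookrightarrow \cW_s \hookrightarrow V^{\svee} \otimes \cO$ (pulled back from $\bLv_s$). The evaluation pairing $V \otimes V^{\svee} \to \cO$ restricts to a morphism $\cW_s \otimes \cO(-H) \to \cO$, which by the incidence relation vanishes on the line subbundle $\cO(-H') \otimes \cO(-H)$ and therefore descends to a section
\begin{equation*}
\sigma \colon \cO \to (\cW_s/\cO(-H'))^{\svee}(H) .
\end{equation*}
By design, $\sigma$ vanishes at $([v], M, [\xi])$ precisely when the induced pairing $M \otimes \cO(-H) \to \cO$ vanishes identically, i.e., when $v \in M^{\perp}$. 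Thus the set-theoretic zero locus $Z(\sigma)$ coincides with the image of $(\zeta_r, \pr_2)$, and the evident map $\bL_r \times_{\bG_r} \bLv_s \to Z(\sigma)$, $(L, [v], M, [\xi]) \mapsto ([v], M, [\xi])$, is bijective on $T$-points.

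Finally, I would verify regularity of $\sigma$ through a dimension count. The bundle $(\cW_s/\cO(-H'))^{\svee}(H)$ has rank $s - 1$, which matches the expected codimension
\begin{align*}
\dim_S(\bH \times_{\bPv} \bLv_s) - \dim_S(\bL_r \times_{\bG_r} \bLv_s)
&= (N + s + rs - 3) - (N + rs - 2) \\
&= s - 1.
\end{align*}
The main technical point is upgrading this expected-dimension coincidence to actual regularity of $\sigma$; the cleanest route is to work locally on $\bGv_s \cong \bG_r$, where $\cW_s$ trivializes and both $\bH \times_{\bPv} \bLv_s$ and the subscheme $\bL_r \times_{\bG_r} \bLv_s$ decompose compatibly, reducing the check to a standard Koszul calculation. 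Once regularity is known, $Z(\sigma)$ is a local complete intersection in the smooth $\bLv_s$-scheme $\bH \times_{\bPv} \bLv_s$ whose relative dimension over $S$ matches that of the smooth $S$-scheme $\bL_r \times_{\bG_r} \bLv_s$, so the bijective $S$-map between them is an isomorphism, yielding (1); part (2) then follows by the duality discussed at the outset.
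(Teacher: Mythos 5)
The paper gives no proof for this lemma (it is in a subsection that says ``the proofs are left to the reader''), so I am evaluating your argument on its own merits rather than comparing approaches.

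Your functor-of-points argument is the right one and the essentials are correct: the $T$-point descriptions of all three schemes, the construction of the section $\sigma$ from the tautological pairing $\cW_s \otimes \cO(-H) \to \cO$ and its descent along $\cO(-H')$ via the incidence relation on $\bH$, the rank count $s-1$, and the dimension count $s-1$ all check out, and the duality $V \leftrightarrow V^{\svee}$ genuinely exchanges (1) and (2). One suggestion to streamline the finish: once you observe that $(\zeta_r, \pr_2)$ induces a bijection on $T$-points between $\bL_r \times_{\bG_r} \bLv_s$ and the \emph{scheme-theoretic} zero locus $Z(\sigma)$, Yoneda immediately gives an isomorphism $\bL_r \times_{\bG_r} \bLv_s \simeq Z(\sigma)$ --- so the ``cut out by $\sigma$'' claim is already done at that point and your closing sentence (which tries to deduce the isomorphism from regularity plus a dimension match) is unnecessary and, as written, slightly circular. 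Regularity of $\sigma$ then falls out with no local Koszul computation at all: $Z(\sigma)$ is smooth over $S$ (being identified with the smooth scheme $\bL_r \times_{\bG_r} \bLv_s$) and has codimension $s-1 = \rank\bigl((\cW_s/\cO(-H'))^{\svee}(H)\bigr)$ inside the smooth, hence Cohen--Macaulay, scheme $\bH \times_{\bPv} \bLv_s$, and a section of a vector bundle over a Cohen--Macaulay scheme whose zero locus has codimension equal to the rank is automatically regular.
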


\begin{lemma}
\label{lemma-pminusr-fmr}
The morphism 
$(\pminus_r , \fm_r) \colon \bLm_r \to \bP \stimes \Fl{r-1}{r}$   
is a closed immersion, with image cut out by a regular section of the vector bundle 
$((V \otimes \cO)/\cU_r) (H)$. 
\end{lemma}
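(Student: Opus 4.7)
The plan is to exhibit $(\pminus_r, \fm_r)$ as a base change of the standard incidence embedding of the universal projective bundle $\bL_r$, and then invoke flatness to transfer regularity of the defining section.

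First I would unwind the definition $\bLm_r = \bL_r \stimes_{\bG_r} \Fl{r-1}{r}$ to identify $(\pminus_r, \fm_r) \colon \bLm_r \to \bP \stimes \Fl{r-1}{r}$ with the base change of the tautological closed immersion $\iota_r \colon \bL_r \hookrightarrow \bP \stimes \bG_r$ along the morphism $\id_\bP \stimes \pi_r \colon \bP \stimes \Fl{r-1}{r} \to \bP \stimes \bG_r$. Concretely, the universal property of fiber products gives
\begin{equation*}
\bL_r \stimes_{\bG_r} \Fl{r-1}{r} \;\cong\; \bL_r \stimes_{\bP \stimes \bG_r} \left( \bP \stimes \Fl{r-1}{r} \right),
\end{equation*}
and under this identification the map to $\bP \stimes \Fl{r-1}{r}$ is exactly $(\pminus_r, \fm_r)$, since $\pminus_r = p_r \circ \pim_r$ is induced from the first projection $\bL_r \to \bP$ and $\fm_r$ is the second projection.

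Next I would recall that $\bL_r = \bP_{\bG_r}(\cU_r)$ embeds in $\bP \stimes \bG_r$ as the incidence locus $\set{([v], L) \st v \in L \subset V}$, and that this incidence embedding is cut out by the section of $((V \sotimes \cO)/\cU_r)(H)$ on $\bP \stimes \bG_r$ given by the composition
\begin{equation*}
\cO(-H) \hookrightarrow V \sotimes \cO \twoheadrightarrow (V \sotimes \cO)/\cU_r.
\end{equation*}
This section is regular because $\bL_r$ has the expected codimension $N - r = \rank((V \sotimes \cO)/\cU_r)$ in $\bP \stimes \bG_r$, and the zero locus is reduced with the expected dimension.

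Finally, the morphism $\pi_r \colon \Fl{r-1}{r} \to \bG_r$ is a projective bundle (its fibers parameterize hyperplanes in an $r$-dimensional space), hence smooth, hence flat; so $\id_\bP \stimes \pi_r$ is flat as well. Pulling back a regular section of a vector bundle along a flat morphism produces a regular section of the pullback bundle, with zero locus equal to the base change of the original zero locus. Applying this to the previous paragraph yields $\bLm_r$ as the zero locus of a regular section of $((V \sotimes \cO)/\cU_r)(H)$ on $\bP \stimes \Fl{r-1}{r}$ (where we use our standing abuse of notation denoting the pullback of $\cU_r$ by the same symbol), which proves the lemma. I do not anticipate any serious obstacle; the whole argument is a base change unwinding, and the only mild point is to keep track of the fact that the defining section and the ambient vector bundle are both compatible with pullback along $\id_\bP \stimes \pi_r$.
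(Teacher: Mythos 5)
Your proof is correct, and since the paper explicitly leaves the geometric lemmas in \S5 ``to the reader,'' there is no proof in the text to compare against; your argument is the natural one to give. The identification
\begin{equation*}
\bL_r \times_{\bG_r} \Fl{r-1}{r} \;\cong\; \bL_r \times_{\bP \times \bG_r} \bigl( \bP \times \Fl{r-1}{r} \bigr)
\end{equation*}
is just the cancellation isomorphism for fiber products applied to the factorization of $\id_\bP \times \pi_r$ as a base change of $\pi_r$, and you correctly track that the resulting map to $\bP \times \Fl{r-1}{r}$ is $(\pminus_r, \fm_r)$. The description of $\iota_r$ as the zero locus of the tautological section of $\cQ_r(H) = ((V\otimes\cO)/\cU_r)(H)$ matches the Koszul resolution~\eqref{koszul-res-Lr} that the paper uses in the proof of Lemma~\ref{lemma-linear-section}, and the codimension check ($N-r$) confirms regularity. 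Flatness of $\pi_r$ (a $\bP^{r-1}$-bundle) then transfers regularity of the section and compatibility of the zero locus under pullback, exactly as you say. One could phrase the regularity-under-flat-pullback step slightly more robustly by observing that the Koszul complex is a finite complex of vector bundles whose exactness is preserved by any flat base change, but your appeal to codimension is fine given the ambient smoothness. No gaps.
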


\begin{lemma}
\label{lemma-xi}
The morphism 
$\beta_{s+1} = \pivm_{s+1} \circ b_{s+1} \colon \bLvp_s \to \bLv_{s+1}$ 
is the projectivization of the vector bundle 
$(\cW_{s+1}/\cO(-H'))^{\svee}$. 
\end{lemma}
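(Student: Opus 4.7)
The plan is to identify $\beta_{s+1}$ by comparing functors of points. Recall that by definition $\bLv_{s+1}$ is the universal family of rank $s+1$ projective subspaces of $\bPv$ over $\bGv_{s+1}$, so we may identify $\bLv_{s+1} \cong \bP_{\bGv_{s+1}}(\cW_{s+1})$; under this identification, the tautological rank $1$ subbundle is the restriction to $\bLv_{s+1}$ of $\cO_{\bPv}(-H') \subset V^{\svee} \otimes \cO_{\bPv}$, which factors through $\cW_{s+1}$ since on $\bLv_{s+1}$ the universal line in $V^{\svee}$ is contained in the universal rank $s+1$ subbundle. In particular, $\cO(-H')|_{\bLv_{s+1}} \hookrightarrow \cW_{s+1}$ is a subbundle, so the quotient $\cW_{s+1}/\cO(-H')$ is locally free of rank $s$.

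Next I would give a moduli description of $\bLvp_s = \bLv_s \times_{\bGv_s} \Flv{s}{s+1}$: a $T$-point is a chain of subbundles
\[
L \subset M_s \subset M_{s+1} \subset V^{\svee} \otimes \cO_T
\]
of ranks $1$, $s$, $s+1$, and $\beta_{s+1} = \pivm_{s+1} \circ b_{s+1}$ simply forgets $M_s$, sending such a chain to the pair $(L \subset M_{s+1})$, i.e.\ a $T$-point of $\bLv_{s+1}$. Consequently, relatively over $\bLv_{s+1}$, a $T$-point of $\bLvp_s$ is the extra datum of a rank $s$ subbundle $M_s \subset M_{s+1}$ containing $L$. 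Equivalently, $M_s$ is determined by the rank $1$ quotient $M_{s+1}/M_s$ of $M_{s+1}/L$, which dually corresponds to a rank $1$ subbundle of $(M_{s+1}/L)^{\svee}$.

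Finally I would globalize this identification: the short exact sequence
\[
0 \to \cO(-H') \to \cW_{s+1} \to \cW_{s+1}/\cO(-H') \to 0
\]
on $\bLv_{s+1}$ realizes $\cW_{s+1}/\cO(-H')$ as a rank $s$ vector bundle whose fiber at $(L \subset M_{s+1})$ is $M_{s+1}/L$. Hence the relative functor of points of $\bLvp_s$ over $\bLv_{s+1}$ coincides with the functor assigning to $T \to \bLv_{s+1}$ the set of rank $1$ subbundles of $(\cW_{s+1}/\cO(-H'))^{\svee}$, which by the convention of \S\ref{subsection-conventions} is represented by $\bP_{\bLv_{s+1}}((\cW_{s+1}/\cO(-H'))^{\svee})$, with the structural projection corresponding to $\beta_{s+1}$. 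No real obstacle is expected beyond careful bookkeeping of the subbundle-versus-quotient and dualization conventions; the main point to verify cleanly is that $\cO(-H')|_{\bLv_{s+1}}$ indeed lands in $\cW_{s+1}$, which is immediate from the definition of the universal family.
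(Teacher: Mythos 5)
Your proof is correct and is exactly the argument the paper intends (the paper explicitly leaves the proofs of the geometric lemmas in this subsection to the reader). The moduli-of-flags description of $\bLvp_s$, the reduction to choosing the hyperplane $M_s/L \subset M_{s+1}/L$, the dualization to a rank-one subbundle of $(M_{s+1}/L)^{\svee}$, and the identification of the global bundle as $\cW_{s+1}/\cO(-H')$ via the tautological inclusion $\cO(-H')\hookrightarrow \cW_{s+1}$ on $\bLv_{s+1}$ are all handled correctly and consistently with the paper's subbundle convention for $\bP(-)$.
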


\subsection{Relations between the functors}
We aim here to relate the various kernel functors introduced above. 
The main statement we are after is Proposition~\ref{proposition-Phi-relations}. 
We start with some preparations. 

\begin{lemma}
\label{Phi-adjoints}
The functors 
\begin{align*}
\Phi_r & \colon \bLv_s(\cB) \to \bL_r(\cA) ,  &  \Phi_{r-1} & \colon \bLv_{s+1}(\cB) \to \bL_{r-1}(\cA)  , \\ 
\Phim_{r} & \colon \bLvp_s(\cB) \to \bLm_r(\cA) , & \Phip_{r-1} & \colon \bLvm_{s+1}(\cB) \to \bLp_{r-1}(\cA) , 
\end{align*}
admit left adjoints $\Phi_r^*, (\Phim_r)^*, \Phi_{r-1}^*, (\Phip_{r-1})^*$, 
and right adjoints $\Phi_r^!,  (\Phim_r)^!, \Phi_{r-1}^!, (\Phip_{r-1})^!$. 
\end{lemma}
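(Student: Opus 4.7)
The plan is to write each of the four functors as a composition of functors each of which admits both left and right adjoints. Since adjoints compose and are preserved under tensor products by Lemma~\ref{lemma-bc-adjoints}, this will suffice.

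First I treat $\Phi_r$ (the argument for $\Phi_{r-1}$ is identical with $r$ replaced by $r-1$). Set $A = \bL_r \times_{\bG_r} \bLv_s$ and $B = \bH \times_{\bPv} \bLv_s$. By Lemma~\ref{lemma-zetar-id}\eqref{lemma-zetar-id-1} the canonical morphism $j := (\zeta_r, \pr_2) \colon A \to B$ is a regular closed immersion, and $\zeta_r$ factors as $\zeta_r = w \circ j$, where $w \colon B \to \bH$ is the first projection. A direct computation using the kernel-to-functor construction of Proposition~\ref{proposition-kernels} shows that
\[
\Phi_r \simeq (\id_{\cA} \otimes u_*) \circ (\id_{\cA} \otimes j^*) \circ \widetilde{\gamma\phi},
\]
where $u \colon A \to \bL_r$ is the projection and $\widetilde{\gamma\phi} \colon \bLv_s(\cB) \to \cA \otimes_{\Perf(\bP)} \Perf(B)$ is the base change of $\gamma \circ \phi = \Phi_{\cE} \colon \cB \to \bH(\cA)$ along $\Perf(\bPv) \to \Perf(\bLv_s)$, under the identification $\bH(\cA) \otimes_{\Perf(\bPv)} \Perf(\bLv_s) \simeq \cA \otimes_{\Perf(\bP)} \Perf(B)$.

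Now each factor admits both adjoints. The functor $\gamma \circ \phi$ does: $\phi$ has adjoints by Setup~\ref{setup-HPD}\eqref{setup-fully-faithful}, and $\gamma$ has adjoints because $\cAd \subset \bH(\cA)$ is admissible by Lemma~\ref{lemma-HPD-admissible}; Lemma~\ref{lemma-bc-adjoints} then produces the adjoints of $\widetilde{\gamma\phi}$. The morphism $j$ is a regular closed immersion cut out by a section of a vector bundle, hence a perfect proper morphism whose relative dualizing complex is a shifted line bundle, so the formulas~\eqref{eq:shriek-adjoints} provide both adjoints to $j^*$ on perfect complexes, and Lemma~\ref{lemma-bc-adjoints} extends them to $\id_{\cA} \otimes j^*$. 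Finally, $u$ is the base change along $\bL_r \to \bG_r$ of the projective bundle $\bLv_s \to \bGv_s \cong \bG_r$, hence smooth and projective of constant relative dimension, so again~\eqref{eq:shriek-adjoints} together with Lemma~\ref{lemma-bc-adjoints} yields both adjoints to $\id_{\cA} \otimes u_*$.

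For $\Phim_r$ and $\Phip_{r-1}$, the kernels $\cEm_r$ and $\cEp_{r-1}$ are pullbacks of $\cE_r$ and $\cE_{r-1}$ along the morphisms $\pim_r \times \pivp_s$ and $\pip_{r-1} \times \pivm_{s+1}$, which are base changes of the smooth projective forgetful morphisms $\pi_r \colon \Fl{r-1}{r} \to \bG_r$ and $\pi_{r-1} \colon \Fl{r-1}{r} \to \bG_{r-1}$. The cartesian structure needed to apply Lemma~\ref{lemma-pullback-kernel}\eqref{pullback-kernel-1} is immediate from the definitions, and one concludes that $\Phim_r$ and $\Phip_{r-1}$ are the $\Fl{r-1}{r}$-linear base changes of $\Phi_r$ and $\Phi_{r-1}$ along $\pi_r$ and $\pi_{r-1}$; Lemma~\ref{lemma-bc-adjoints} then transfers the adjoints. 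The main obstacle is the bookkeeping required to verify the three-term decomposition of $\Phi_r$: one must unwind the kernel formalism and use $\zeta_r = w \circ j$ together with the identity $v = y \circ j$ (where $y \colon B \to \bLv_s$ is the second projection and $v \colon A \to \bLv_s$ is the projection) to match the formulas on generators $C \boxtimes F$ of $\bLv_s(\cB)$. Once this identification is in place, the existence of adjoints for $\Phi_r$ and then for the remaining three functors is essentially formal.
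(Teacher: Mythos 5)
Your proof is correct, and it takes a genuinely different route from the paper's. Both arguments factor $\Phi_r$ as a composite of the base-changed kernel functor (your $\widetilde{\gamma\phi}$, which coincides with the paper's $\bLv_s(\Phi_\cE)$) followed by geometric push--pull operations. The paper introduces the auxiliary space $\bLL_r(\bH) = \bL_r \times_{\bG_r} (\bH \times_{\bPv} \bLv_s)$ and factors $\Phi_r \simeq a_* \circ b^* \circ \bLv_s(\Phi_\cE)$, where $a$, $b$ are the two projections from this fiber product; both are smooth and projective, so adjoints come from Remark~\ref{remark:good-morphism} and the argument never invokes Lemma~\ref{lemma-zetar-id}. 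You instead route through the regular closed immersion $j = (\zeta_r,\pr_2) \colon A = \bL_r \times_{\bG_r} \bLv_s \hookrightarrow B = \bH \times_{\bPv} \bLv_s$ from Lemma~\ref{lemma-zetar-id}\eqref{lemma-zetar-id-1}, giving $\Phi_r \simeq u_* \circ j^* \circ \widetilde{\gamma\phi}$; here you pay the price of needing the regularity of $j$ to get both adjoints of $j^*$ via~\eqref{eq:shriek-adjoints}, but in exchange you work directly on $A$ rather than on a larger fiber product. A concrete advantage of your route, worth highlighting: on $A$ the two natural morphisms to $\bP$ --- one through $u$ and $\bL_r \to \bP$, the other through $j$ and $B \to \bH \to \bP$ --- literally coincide, both returning the tautological point $x \in \bP(\Lambda)$, so $u_*$ and $j^*$ are unambiguously $\Perf(\bP)$-linear and tensoring with $\id_{\cA}$ via Lemma~\ref{lemma-bc-adjoints} is immediate. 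On $\bLL_r(\bH)$, by contrast, the projection to $\bL_r$ and the map through $\bH$ pick out \emph{different} points of $\bP$, so the $\Perf(\bP)$-linearity of the composite $a_* \circ b^*$ and the commutativity of the paper's kernel-space diagram are delicate; your factorization sidesteps this entirely. Finally, for $\Phim_r$ and $\Phip_{r-1}$ the paper says only that the argument is similar, whereas your observation that these are the $\Fl{r-1}{r}$-linear base changes of $\Phi_r$ and $\Phi_{r-1}$ (along the smooth projective forgetful maps $\pi_r$ and $\pi_{r-1}$, with kernels given by pullback along $\pim_r \times \pivp_s$ and $\pip_{r-1} \times \pivm_{s+1}$ respectively) lets Lemma~\ref{lemma-bc-adjoints} finish those cases cleanly and uniformly --- a genuinely tidier way to organize the proof.
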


\begin{proof}
We show the claim for $\Phi_r$. The arguments for the other functors are similar and left to the reader. 
The proof consists of two steps: first we write down a functor which is easily seen to admit 
adjoints, and then we prove this functor is equivalent to $\Phi_r$. 

Consider the universal space of linear sections $\bLv_s(\bH) = \bH \times_{\bPv} \bLv_s$ 
of $\bH$ with respect to the morphism $\bH \to \bPv$. 
We define a space $\bLL_r(\bH)$ by the fiber product diagram 
\begin{equation*}
\xymatrix{
& \ar[dl]_{a} \bLL_r(\bH) \ar[dr]^{b} & \\ 
\ar[dr] \bL_r & & \bLv_s(\bH) \ar[dl] \\ 
& \bG_r & 
}
\end{equation*}
By base changing the $\bP$-linear category $\cA$ along the top of this diagram, 
we obtain a diagram of $\bG_r$-linear functors 
\begin{equation*}
\xymatrix{
& \ar[dl]_{a_*} \bLL_r(\bH(\cA)) & \\ 
\bL_r(\cA) & & \bLv_s(\bH(\cA)) \ar[ul]_{b^*} 
}
\end{equation*} 
We denote by 
\begin{equation*}
\bLv_s(\Phi_\cE) \colon \bLv_s(\cB) \to \bLv_s(\bH(\cA)) 
\end{equation*}
the $\bLv_s$-linear functor induced by $\Phi_{\cE} \colon \cB \to \bH(\cA)$ on the linear section categories. 
Set 
\begin{equation*}
\Psi_r = a_* \circ b^* \circ \bLv_s(\Phi_\cE) \colon \bLv_s(\cB) \to \bL_r(\cA).  
\end{equation*} 
The functor $\Psi_r$ is $\bG_r$-linear, since it is a composition of such functors. 
The functors $a_*$ and $b^*$ in this composition admit left and right adjoints by 
Remark~\ref{remark:good-morphism}. 
Further, recall \eqref{Phigammaphi} that $\Phi_{\cE} = \gamma \circ \phi$ by definition. 
The functor $\phi$ admits adjoints by Setup~\ref{setup-HPD}\eqref{setup-fully-faithful}, 
and so does $\gamma$ by Lemma~\ref{lemma-HPD-admissible}. 
It follows that $\Phi_{\cE}$ and hence also $\bLv_s(\Phi_\cE)$ admits adjoints. 
In conclusion, $\Psi_r$ admits left and right adjoints, being a composition of functors with 
this property. 

Now we prove that $\Psi_r$ is equivalent to $\Phi_r$. 
Consider the commutative diagram 
\begin{equation*}
\xymatrix{
\bLL_r(\bH) \times_{\bLv_s} \bLv_s \ar[d]_-{a \times \id} \ar[r]^-{b \times \id} & \bLv_s(\bH) \times_{\bLv_s} \bLv_s \ar[r]^-{\psi} & 
\bH \times_{\bPv} \bPv \\ 
\bL_r \times_{\bG_r} \bLv_s \ar[urr]_-{\zeta_r} 
}
\end{equation*}
where $\psi$ is the evident morphism. 
We have written the spaces in the above diagram with redundant fiber products because 
we want to think of them as morphisms between kernel spaces. 
Recall that by definition $\cE_r = \zeta_r^*\cE$ is the kernel for the functor $\Phi_r$. 
It is easy to see that $\psi^*\cE$ is a kernel for the functor $\bLv_s(\Phi_\cE)$. 
Then by Lemma~\ref{lemma-pullback-kernel-T-equal} 
we find $(b \times \id)^*\psi^*\cE$ is a kernel for the functor 
$b^* \circ \bLv_s(\Phi_\cE) \colon \bLv_s(\cB) \to \bLL_r(\bH(\cA))$, 
and then further by Lemma~\ref{lemma-pushforward-kernel} we find that 
$(a \times \id)_*(b \times \id)^*\psi^*\cE$ is a kernel for $\Psi_r$. 
Hence to prove $\Psi_r \simeq \Phi_r$, it suffices to show 
\begin{equation*}
(a \times \id)_* \circ (b \times \id)^* \circ \psi^* \simeq \zeta_r^* 
\colon \Perf(\bH \times_{\bPv} \bPv) \to \Perf(\bL_r \times_{\bG_r} \bLv_s). 
\end{equation*}
By the above diagram, the left side can be rewritten as 
\begin{equation*}
(a \times \id)_* \circ (b \times \id)^* \circ \psi^* \simeq 
(a \times \id)_* \circ (a \times \id)^* \circ \zeta_r^*. 
\end{equation*} 
But $a \times \id \colon \bLL_r(\bH) \times_{\bLv_s} \bLv_s \to \bL_r \times_{\bG_r} \bLv_s$ 
is a projective bundle, being a base change of the projective bundle $\bH \to \bPv$, 
and hence $(a \times \id)_* \circ (a \times \id)^* \simeq \id$. 
This finishes the proof that $\Psi_r \simeq \Phi_r$. 
Since we already showed that $\Psi_r$ admits left and right adjoints, we are done. 
\end{proof}

\begin{lemma}
\label{lemma-base-change-Phi}
There are equivalences of functors 
\begin{align*}
\Phim_{r} \circ (\pivp_{s})^* & \simeq (\pim_r)^* \circ \Phi_r  ,  & 
\Phip_{r-1} \circ (\pivm_{s+1})^* & \simeq (\pip_{r-1})^* \circ  \Phi_{r-1}, \\
(\pim_{r})_* \circ \Phim_{r}   & \simeq \Phi_{r} \circ (\pivp_{s})_* , & 
(\pip_{r-1})_* \circ \Phip_{r-1} & \simeq \Phi_{r-1} \circ (\pivm_{s+1})_* , \\ 
 (\pivp_{s})_*  \circ (\Phim_{r})^*   & \simeq (\Phi_{r})^* \circ (\pim_{r})_*,  & 
(\pivm_{s+1})_* \circ (\Phip_{r-1})^* & \simeq \Phi_{r-1}^* \circ (\pip_{r-1})_* , \\  
 (\pivp_{s})_!  \circ (\Phim_{r})^*   & \simeq (\Phi_{r})^* \circ (\pim_{r})_!,  & 
(\pivm_{s+1})_! \circ (\Phip_{r-1})^* & \simeq \Phi_{r-1}^* \circ (\pip_{r-1})_! , 
\end{align*}
where $(\pivp_{s})_!$ denotes the left adjoint of $(\pivp_{s})^{*}$, 
and similarly for $(\pim_{r})_!, (\pivm_{s+1})_!, (\pip_{r-1})_!$. 
\end{lemma}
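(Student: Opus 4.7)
The plan is to deduce each of the eight identities from the Fourier--Mukai kernel formalism of \S\ref{section-kernels}, principally from the two base-change statements of Lemma~\ref{lemma-pullback-kernel}. The essential geometric observation is the canonical identification
\[
\bLm_r \times_{\Fl{r-1}{r}} \bLvp_s \;\simeq\; \bL_r \times_{\bG_r} \bLv_s \times_{\bG_r} \Fl{r-1}{r},
\]
which follows from $\bLm_r \simeq \bL_r \times_{\bG_r} \Fl{r-1}{r}$ and $\bLvp_s \simeq \bLv_s \times_{\bG_r} \Fl{r-1}{r}$ (the latter via $\bGv_s \cong \bG_r$ and $\Flv{s}{s+1} \cong \Fl{r-1}{r}$). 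Consequently, the two squares
\[
\xymatrix@C=1.3em@R=1.2em{
\bLm_r \times_{\Fl{r-1}{r}} \bLvp_s \ar[r] \ar[d]_{\pim_r \times \pivp_s} & \bLm_r \ar[d]^{\pim_r} \\
\bL_r \times_{\bG_r} \bLv_s \ar[r] & \bL_r
}
\qquad\text{and}\qquad
\xymatrix@C=1.3em@R=1.2em{
\bLm_r \times_{\Fl{r-1}{r}} \bLvp_s \ar[r] \ar[d]_{\pim_r \times \pivp_s} & \bLvp_s \ar[d]^{\pivp_s} \\
\bL_r \times_{\bG_r} \bLv_s \ar[r] & \bLv_s
}
\]
are cartesian, as are their counterparts with $(\pip_{r-1}, \pivm_{s+1})$ in place of $(\pim_r, \pivp_s)$.

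The first-row identities of the lemma follow from Lemma~\ref{lemma-pullback-kernel}\eqref{pullback-kernel-1} applied to the left square and its $(\pip_{r-1}, \pivm_{s+1})$-analog, since by construction $\cEm_r$ and $\cEp_{r-1}$ are precisely the pullbacks of $\cE_r$ and $\cE_{r-1}$ to which that lemma applies. The second-row identities follow similarly from Lemma~\ref{lemma-pullback-kernel}\eqref{pullback-kernel-2} applied to the right square and its analog. All functors appearing admit both adjoints, by Lemma~\ref{Phi-adjoints} together with Remark~\ref{remark:good-morphism}; taking the left adjoint of each first-row identity, and noting that $(-)_!$ is by definition the left adjoint of $(-)^*$, yields the fourth-row identities.

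The subtlest step will be the derivation of the third-row identities. The key additional input is that $\pim_r$ and $\pivp_s$ are both base changes of $\pi_r \colon \Fl{r-1}{r} \to \bG_r$, so their relative dualizing complexes are pulled back from $\omega_{\pi_r}$, to $\bLm_r$ via $\fm_r$ and to $\bLvp_s$ via $\gp_s$. Since $\Phim_r^*$ is $\Fl{r-1}{r}$-linear (as the left adjoint of the $\Fl{r-1}{r}$-linear functor $\Phim_r$, cf.\ Lemma~\ref{lemma-adjoint-S-linear}), it follows that $\Phim_r^*(-\otimes\omega_{\pim_r}) \simeq \Phim_r^*(-)\otimes\omega_{\pivp_s}$. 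Using $f_! \simeq f_*(-\otimes \omega_f)$ for smooth projective $f$, see \eqref{eq:shriek-adjoints}, the fourth-row identity $(\pivp_s)_! \circ \Phim_r^* \simeq \Phi_r^* \circ (\pim_r)_!$ rewrites as
\[
(\pivp_s)_*\bigl(\Phim_r^*(-)\otimes\omega_{\pivp_s}\bigr) \;\simeq\; \Phi_r^*(\pim_r)_*\bigl(-\otimes\omega_{\pim_r}\bigr),
\]
and substituting the linearity isomorphism on the left and then applying the resulting equivalence with $X \otimes \omega_{\pim_r}^{-1}$ in place of $X$ yields the third-row identity. The analog for $(\pip_{r-1},\pivm_{s+1})$ follows in the same way. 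The main obstacle is essentially bookkeeping: verifying that the dualizing complexes for $\pim_r$ and $\pivp_s$ are described compatibly as pullbacks of a single object on $\Fl{r-1}{r}$, after which the $\Fl{r-1}{r}$-linearity of the $\Phi$-functors makes the argument go through cleanly.
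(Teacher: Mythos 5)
Your proof is correct and uses the same key ingredients as the paper: Lemma~\ref{lemma-pullback-kernel} together with the cartesian squares for the first two rows, and formal adjunctions combined with the dualizing-complex formulas and $\Fl{r-1}{r}$-linearity of the $\Phi$-functors for the remaining two. Your derivation order differs slightly from the paper's --- you obtain the fourth row directly as the left adjoint of the first, then deduce the third from the fourth, whereas the paper obtains the third from the first via right adjoints and the formula $(-)^! \simeq (-\otimes\omega)\circ(-)^*$, then the fourth from the third via $(-)_! \simeq (-)_*\circ(-\otimes\omega)$ --- but this is only a reshuffling of the same formal moves, and both routes use the identical observation that $\omega_{\pim_r}$ and $\omega_{\pivp_s}$ are pullbacks of the single complex $\omega_{\Fl{r-1}{r}/\bG_r}$.
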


\begin{proof}
The first two rows follow from Lemma~\ref{lemma-pullback-kernel} and Remark~\ref{remark-pullback-kernel}. 
By adjunction the third row is equivalent to the assertion 
\begin{align*}
\Phim_{r} \circ  (\pivp_{s})^!  & \simeq (\pim_{r})^! \circ \Phi_{r} ,  & 
\Phip_{r-1} \circ (\pivm_{s+1})^!  & \simeq (\pip_{r-1})^! \circ \Phi_{r-1} . 
\end{align*}
We have 
\begin{align*}
(\pim_r)^! &   \simeq ( - \otimes \omega_{\Fl{r-1}{r}/\bG_{r}}) \circ (\pim_{r})^* , \\
(\pivp_{s})^! & \simeq ( - \otimes \omega_{\Fl{r-1}{r}/\bG_{r}}) \circ (\pivp_{s})^* , \\
(\pip_{r-1})^! & \simeq ( - \otimes \omega_{\Fl{r-1}{r}/\bG_{r-1}}) \circ (\pip_{r-1})^*  , \\
(\pivm_{s+1})^! & \simeq (- \otimes \omega_{\Fl{r-1}{r}/\bG_{r-1}})  \circ (\pivm_{s+1})^* . 
\end{align*}
Hence by $\Fl{r-1}{r}$-linearity the assertion reduces to the first row. 
Similarly, we have 
\begin{align*}
(\pim_r)_! &   \simeq (\pim_{r})_* \circ ( - \otimes \omega_{\Fl{r-1}{r}/\bG_{r}}) , \\
(\pivp_{s})_! & \simeq  (\pivp_{s})_* \circ ( - \otimes \omega_{\Fl{r-1}{r}/\bG_{r}}) , \\
(\pip_{r-1})_! & \simeq (\pip_{r-1})_* \circ ( - \otimes \omega_{\Fl{r-1}{r}/\bG_{r-1}})  , \\
(\pivm_{s+1})_! & \simeq (\pivm_{s+1})_* \circ (- \otimes \omega_{\Fl{r-1}{r}/\bG_{r-1}}) .
\end{align*}
Hence by $\Fl{r-1}{r}$-linearity the fourth row follows from the third. 
\end{proof}

We define 
\begin{align*}
\bMm_{r} & = \bLm_{r} \times_{\Fl{r-1}{r}} \bLvp_s   , \\ 
\bMp_{r-1} & = \bLp_{r-1} \times_{\Fl{r-1}{r}}  \bLvm_{s+1}  . 
\end{align*}
Then by definition $\Phim_r$ is defined by an $\bMm_{r}$-kernel and 
$\Phip_{r-1}$ by an $\bMp_{r-1}$-kernel. 
These spaces fit into a cartesian commutative diagram 
\begin{equation}
\label{LrLs-square}
\vcenter{
\xymatrix{
\bLp_{r-1} \times_{\Fl{r-1}{r}} \bLvp_s \ar[rr]^{\id \times b_{s+1}} \ar[d]_{a_r \times \id} & &  
\bMp_{r-1} \ar[d]^{a_r \times \id}  \\ 
\bMm_{r} \ar[rr]^{\id \times b_{s+1}} &&  \bLm_r \times_{\Fl{r-1}{r}} \bLvm_{s+1}
}}
\end{equation}
By Lemma \ref{lemma-L-divisor} all of the morphisms in this diagram are divisorial embeddings.  

The projections $\pminus_r \colon \bLm_r \to \bPv$ and  $\qminus_{s+1} \colon \bLvm_{s+1} \to \bPv$ 
induce a morphism 
\begin{equation}
\label{nur}
\nu_r = \pminus_r \times \qminus_{s+1} \colon  \bLm_r \times_{\Fl{r-1}{r}} \bLvm_{s+1}  \to \bP \stimes \bPv . 
\end{equation}
Let $\tbM_r$ be defined by the fiber product diagram 
\begin{equation}
\label{tbMr-square}
\vcenter{
\xymatrix{
\tbM_r \ar[rr]^{\iota_r} \ar[d]_{\tzeta_r} && \bLm_r \times_{\Fl{r-1}{r}} \bLvm_{s+1}  \ar[d]^{\nu_r} \\ 
\bH \ar[rr]^{\iota} && \bP \stimes \bPv 
}}
\end{equation} 
The compositions  
\begin{align*}
\zetam_r & \colon \bMm_r \xrightarrow{ \pim_r \times \pivp_s} \bL_r \times_{\bG_r} \bLv_s \xrightarrow{\, \zeta_r \,} \bH, \\
\zetap_{r-1} & \colon \bMp_{r-1} \xrightarrow{\pip_{r-1} \times \pivm_{s+1}} \bL_{r-1} \times_{\bG_{r-1}} \bLv_{s+1} \xrightarrow{\, \zeta_{r-1} \,} \bH , 
\end{align*}
factor through closed embeddings 
\begin{align}
\label{deltam} \deltam_{r} \colon & \bMm_{r}  \hookrightarrow \tbM_r \\ 
\label{deltap} \deltap_{r-1} \colon & \bMp_{r-1}  \hookrightarrow \tbM_r. 
\end{align}

\begin{lemma}
\label{lemma-M-union}
We have 
\begin{equation*}
\tbM_r = \bMm_{r} \cup \bMp_{r-1},
\end{equation*} 
where the right side is the 
scheme-theoretic union inside $\bLm_r \times_{\Fl{r-1}{r}} \bLvm_{s+1}$.  
\end{lemma}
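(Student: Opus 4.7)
The plan is to realize all three subschemes as effective Cartier divisors in the ambient space
\[
X := \bLm_r \times_{\Fl{r-1}{r}} \bLvm_{s+1},
\]
and to exhibit the defining equation of $\tbM_r$ as a product of the defining equations of $\bMm_r$ and $\bMp_{r-1}$. Once this is done, $\tbM_r$ and $\bMm_r+\bMp_{r-1}$ coincide as Cartier divisors, which is the same as saying $\tbM_r$ is the scheme-theoretic union of $\bMm_r$ and $\bMp_{r-1}$.

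First I would record the projective bundle descriptions. Over $\Fl{r-1}{r}$ one has $\bLm_r=\bP(\cU_r)$, and using the identification $\Flv{s}{s+1}\cong \Fl{r-1}{r}$ together with $\cW_{s+1}\cong \cU_{r-1}^\perp$, also $\bLvm_{s+1}=\bP(\cU_{r-1}^\perp)$. Thus a point of $X$ is a tuple $(L_{r-1}\subset L_r,\,\ell\subset L_r,\,\lambda\subset L_{r-1}^\perp)$. By the cartesian square defining $\tbM_r$ and by the description of $\bH$ as the divisor in $\bP\times \bPv$ cut out by the canonical section of $\cO(H+H')$, the subscheme $\tbM_r\subset X$ is cut out by a section $\tau$ of $\cO(H+H')$, and this section is literally the evaluation pairing $\lambda\otimes\ell\to\cO$ coming from $V^\svee\otimes V\to\cO$. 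Lemma~\ref{lemma-L-divisor}\eqref{lemma-L-divisor-b} gives $\bMm_r\subset X$ as the zero locus of a section $\sigma_1$ of $(\cU_r/\cU_{r-1})^\svee(H')$; a symmetric argument (the ``left'' version of that lemma) gives $\bMp_{r-1}\subset X$ as the zero locus of a section $\sigma_2$ of $(\cU_r/\cU_{r-1})(H)$. Since
\[
\bigl[(\cU_r/\cU_{r-1})^\svee(H')\bigr]+\bigl[(\cU_r/\cU_{r-1})(H)\bigr]=H+H',
\]
the divisor classes match, so it only remains to verify the factorization $\tau=\sigma_1\cdot\sigma_2$ (after contraction of $(\cU_r/\cU_{r-1})^\svee\otimes(\cU_r/\cU_{r-1})\to\cO$).

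Next I would identify $\sigma_1$ and $\sigma_2$ explicitly. The defining section $\sigma_1$ is the composition
\[
\lambda\hookrightarrow \cU_{r-1}^\perp\twoheadrightarrow (\cU_r/\cU_{r-1})^\svee,
\]
which is well-defined because $\lambda$ annihilates $\cU_{r-1}$; its vanishing means $\lambda\subset \cU_r^\perp$, i.e.\ the $\bMm_r$ condition. The defining section $\sigma_2$ is the composition $\ell\hookrightarrow \cU_r\twoheadrightarrow \cU_r/\cU_{r-1}$; its vanishing means $\ell\subset \cU_{r-1}$, i.e.\ the $\bMp_{r-1}$ condition. The key computation is then to evaluate the contraction $\sigma_1\otimes\sigma_2$ pointwise: for $\mu\in\lambda$ and $v\in\ell$, $\sigma_1(\mu)$ is the induced functional $\mu\!\mid_{\cU_r/\cU_{r-1}}$ and $\sigma_2(v)=\bar v\in\cU_r/\cU_{r-1}$, and pairing gives $\mu(\bar v)=\mu(v)$, which is exactly $\tau(\mu\otimes v)$. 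Hence $\tau=\sigma_1\cdot\sigma_2$ as sections of $\cO(H+H')$, so the ideals satisfy $\cI_{\tbM_r}=\cI_{\bMm_r}\cdot\cI_{\bMp_{r-1}}=\cI_{\bMm_r}\cap\cI_{\bMp_{r-1}}$ (the last equality because the two divisors have no common component, as can be checked on a generic fiber of $X\to\Fl{r-1}{r}$, where they are distinct hyperplane-type divisors in a product of projective spaces). This gives the claimed equality of subschemes.

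The main obstacle is the bookkeeping in the second paragraph: identifying the defining sections $\sigma_1$, $\sigma_2$, and $\tau$ in the correct line bundles (including the twists by $H$, $H'$, and by $(\cU_r/\cU_{r-1})^{\pm\svee}$), and checking that the natural pairing really recovers $\tau$ rather than a nonzero scalar multiple. Everything else is formal once this contraction identity is in place.
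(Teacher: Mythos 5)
Your proof is correct and takes essentially the same route as the paper: both identify $\tbM_r$, $\bMm_r$, and $\bMp_{r-1}$ as effective divisors in $\bLm_r \times_{\Fl{r-1}{r}} \bLvm_{s+1}$, observe that $\cO(D_{\bMm_r}) \otimes \cO(D_{\bMp_{r-1}}) \cong \cO(H+H') \cong \cO(D_{\tbM_r})$, and then assert that the defining section of $\tbM_r$ factors as the product of the two divisor equations (the paper simply cites Kuznetsov's Lemma~6.13 for this; you carry out the pointwise contraction calculation explicitly). A small imprecision: the section $\sigma_2$ cutting out $\bMp_{r-1} \subset X$ comes directly from part~(1) of Lemma~\ref{lemma-L-divisor} via the cartesian square~\eqref{LrLs-square} (pullback of the divisor $a_r(\bLp_{r-1}) \subset \bLm_r$ along the projection), rather than from a separate ``left version''; but this does not affect the substance of your argument.
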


\begin{proof}
Note that by definition $\tbM_r$ is cut out in $\bLm_r \times_{\Fl{r-1}{r}} \bLvm_{s+1}$ 
by a section of the line bundle $\cO(H+H')$. 
On the other hand, by Lemma~\ref{lemma-L-divisor} the union $\bMm_{r} \cup \bMp_{r-1}$ is 
cut out in $\bLm_r \times_{\Fl{r-1}{r}} \bLvm_{s+1}$ by a section of the line bundle 
$(\cU_r/\cU_{r-1})(H) \otimes (\cU_r/\cU_{r-1})^{\svee}(H') \cong \cO(H + H')$. 
These two sections of $\cO(H+H')$ coincide, cf. \cite[Lemma 6.13]{kuznetsov-HPD}. 
\end{proof} 

Let $\tcE_{r} = \tzeta_r^*\cE$ 
be the $\tbM_r$-kernel obtained by pulling back the $\bH$-kernel $\cE$. 
We denote by $D_r$ the Cartier divisor on $\bLvm_{s+1}$ corresponding to the 
line bundle $(\cU_r/\cU_{r-1})^{\svee}(H')$.
By Lemma~\ref{lemma-L-divisor} the scheme $\bMm_{r}$ is cut out in 
$\bLm_r \times_{\Fl{r-1}{r}} \bLvm_{s+1}$ by a section of $\cO(D_r)$. 

\begin{lemma}
\label{lemma-kernel-relation}
\begin{enumerate}
\item 
\label{kernel-triangle}
There is an exact triangle 
\begin{equation*}
(a_r \times \id)_*\cEp_{r-1} \otimes \cO(-D_r) \to \iota_{r*} \tcE_r \to 
(\id \times b_{s+1})_*\cEm_r
\end{equation*}
of $\bLm_r \times_{\Fl{r-1}{r}} \bLvm_{s+1}$-kernels. 

\item 
\label{kernel-equivalence-1}
There is an equivalence 
\begin{equation*}
(a_r \times \id)^*\cEm_r \simeq (\id \times b_{s+1})^* \cEp_{r-1}
\end{equation*}
of $\bLp_{r-1} \times_{\Fl{r-1}{r}} \bLvp_s$-kernels. 

\item 
\label{kernel-equivalence-2}
There is an equivalence 
\begin{equation*}
\iota_{r*} \tcE_r  \simeq  \nu_r^* \iota_* \cE
\end{equation*}
of $\bLm_r \times_{\Fl{r-1}{r}} \bLvm_{s+1}$-kernels. 
\end{enumerate}
\end{lemma}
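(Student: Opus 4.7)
The plan is to prove the three parts in the order (3), (2), (1), as the later parts build on the earlier ones.

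Part~(3) is a base change statement. By the cartesian square \eqref{tbMr-square} (which is tautologically a fiber square in the derived sense, since $\tbM_r$ is defined precisely as the derived fiber product), applying the base change formula to $\iota_* \cE$ yields $\nu_r^* \iota_* \cE \simeq \iota_{r*} \tzeta_r^* \cE$. Since $\tcE_r = \tzeta_r^* \cE$ by definition, this is exactly the claimed equivalence. Compatibility with perfect complexes holds because $\iota$ is the regular closed immersion of the divisor cut out by the canonical section of $\cO(H+H')$.

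For part~(2), the factorizations $\zetam_r = \tzeta_r \circ \deltam_r$ and $\zetap_{r-1} = \tzeta_r \circ \deltap_{r-1}$ recorded just before~\eqref{deltam}--\eqref{deltap} give $\cEm_r \simeq (\deltam_r)^* \tcE_r$ and $\cEp_{r-1} \simeq (\deltap_{r-1})^* \tcE_r$. Using the cartesian diagram~\eqref{LrLs-square}, both pullbacks $(a_r \times \id)^* \cEm_r$ and $(\id \times b_{s+1})^* \cEp_{r-1}$ are identified with the pullback of $\tcE_r$ along the common closed immersion $\bMm_r \cap \bMp_{r-1} = \bLp_{r-1} \times_{\Fl{r-1}{r}} \bLvp_s \hookrightarrow \tbM_r$, whence the equivalence.

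Part~(1) is the heart of the argument. By Lemma~\ref{lemma-M-union}, $\tbM_r = \bMm_r \cup \bMp_{r-1}$ scheme-theoretically, and by Lemma~\ref{lemma-L-divisor}\eqref{lemma-L-divisor-b} (pulled back along the projection to $\bLvm_{s+1}$) the subscheme $\bMm_r \subset \bLm_r \times_{\Fl{r-1}{r}} \bLvm_{s+1}$ is a Cartier divisor cut out by a section of $\cO(D_r)$. Restricting to $\tbM_r$, the divisor $\bMm_r$ remains Cartier with $\cO_{\tbM_r}(-\bMm_r) \simeq \cO_{\tbM_r}(-D_r)$; and since $\tbM_r \setminus \bMm_r$ is contained in $\bMp_{r-1}$, the ideal sheaf of $\bMm_r$ in $\tbM_r$ is supported on $\bMp_{r-1}$ and is identified with $(\deltap_{r-1})_* \cO_{\bMp_{r-1}}(-D_r)$. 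This yields on $\tbM_r$ the exact triangle of structure sheaves
\begin{equation*}
(\deltap_{r-1})_* \cO_{\bMp_{r-1}}(-D_r) \to \cO_{\tbM_r} \to (\deltam_r)_* \cO_{\bMm_r}.
\end{equation*}
Tensoring with $\tcE_r$, invoking the projection formula along $\deltam_r$ and $\deltap_{r-1}$ together with the identifications $(\deltam_r)^* \tcE_r \simeq \cEm_r$ and $(\deltap_{r-1})^* \tcE_r \simeq \cEp_{r-1}$ from part~(2), and then pushing forward along $\iota_r$ using $\iota_r \circ \deltam_r = \id \times b_{s+1}$ and $\iota_r \circ \deltap_{r-1} = a_r \times \id$, produces the claimed triangle.

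I expect the main obstacle to be the derived-algebro-geometric bookkeeping needed to make the above classical reasoning rigorous: verifying Tor-independence in each cartesian square of kernel spaces so that the base change and projection formulas apply in the derived sense; checking that $\bMm_r$ is a Cartier divisor in $\tbM_r$ with the expected line bundle despite $\tbM_r$ being a (possibly derived) intersection of two divisors inside the ambient fiber product; and promoting the identification $\cO_{\tbM_r}(-\bMm_r) \simeq (\deltap_{r-1})_* \cO_{\bMp_{r-1}}(-D_r)$ from a statement about underlying ordinary schemes to one that respects the derived structure, so that the resulting triangle is exact as a triangle of $\bLm_r \times_{\Fl{r-1}{r}} \bLvm_{s+1}$-kernels.
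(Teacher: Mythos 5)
Your proof follows essentially the same route as the paper's: part~(3) by base change across the cartesian square~\eqref{tbMr-square}, part~(2) by identifying both pullbacks with the restriction of $\tcE_r$ to the common intersection $\bLp_{r-1} \times_{\Fl{r-1}{r}} \bLvp_s \hookrightarrow \tbM_r$, and part~(1) by tensoring the scheme-theoretic-union exact sequence from Lemma~\ref{lemma-M-union} with $\tcE_r$, applying the projection formula along $\deltam_r$ and $\deltap_{r-1}$, and pushing forward along $\iota_r$. The only difference is expository order (you do (3),(2),(1); the paper does (1),(2),(3)) and that you spell out the Cartier-divisor reasoning behind the structure-sheaf exact sequence that the paper leaves implicit; your closing worries about Tor-independence are in fact unproblematic here, since all the fiber products involved are transverse intersections of divisors in smooth ambient spaces, so the derived and classical constructions coincide.
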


\begin{proof}
It follows from Lemma~\ref{lemma-M-union} that there is an exact sequence 
\begin{equation*}
0 \to (\deltap_{r-1})_*\cO_{\bMp_{r-1}}(-D_r) \to \cO_{\tbM_r} \to (\deltam_r)_* \cO_{\bMm_r} \to 0
\end{equation*}
of sheaves on $\tbM_r$, where $\deltap_{r-1}$ and $\deltam_r$ are the embeddings~\eqref{deltap} and~\eqref{deltam}. 
Now~\eqref{kernel-triangle} follows by kernel formalism. 
More precisely, tensoring the above exact sequence by $\tcE_r$ gives an exact triangle of kernels by 
Lemma~\ref{lemma-functor-triangle-to-kernel}. 
But by definition $(\deltap_{r-1})^*(\tcE_r) \simeq \cEp_{r-1}$ and $(\deltam_r)^*(\tcE_r) \simeq \cEm_r$, 
so by the projection formula for kernels (Lemma~\ref{lemma-projection-formula}), the resulting exact triangle can be written 
\begin{equation*}
(\deltap_{r-1})_*\cEp_{r-1} \otimes \cO(-D_r) \to \tcE_r \to (\deltam_r)_*\cEm_r 
\end{equation*}
Now the result follows by pushing forward via $\iota_r \colon \tbM_r \to \bLm_r \times_{\Fl{r-1}{r}} \bLvm_{s+1}$ 
and using the projection formula once more. 

It follows from the definitions that both kernels appearing in~\eqref{kernel-equivalence-1} 
are equivalent to the pullback of $\tcE_r$ along the natural map 
$\bLp_r \times_{\Fl{r-1}{r}} \bLvp_{s+1} \to \tbM_r$, hence~\eqref{kernel-equivalence-1} 
holds. 

Finally,~\eqref{kernel-equivalence-2} follows from the cartesian square~\eqref{tbMr-square} 
and base change. 
\end{proof}

\begin{proposition}
\label{proposition-Phi-relations}
\begin{enumerate}
\item \label{functor-triangle-1}
There is an exact triangle  
\begin{align*}
\Phim_{r} \circ b_{s+1}^! \to (a_r)_* \circ \Phip_{r-1} \to \Phi_{\iota_{r*} \tcE_r \otimes \cO(D_r)}
\end{align*}
of functors $\bLvm_{s+1}(\cB) \to \bLm_r(\cA)$, where 
$b_{s+1}^! \colon \bLvm_{s+1}(\cB) \to \bLvp_{s}(\cB)$ 
is the functor induced by the right adjoint to 
$(b_{s+1})_* \colon \Perf(\bLvp_{s}) \to \Perf(\bLvm_{s+1})$

\item 
\label{functor-triangle-2}
There is an exact triangle 
\begin{align*}
(\Phi_{\iota_{r*} \tcE_r \otimes \cO(D_r)})^* \to (\Phip_{r-1})^* \circ a_r^* \to (b_{s+1})_* \circ (\Phim_r)^*
\end{align*}
of functors $\bLm_r(\cA) \to \bLvm_{s+1}(\cB)$. 

\item 
\label{Phim-Phip-relation}
There is an equivalence 
\begin{equation*}
a_r^* \circ \Phim_r \simeq \Phip_{r-1} \circ (b_{s+1})_*  
\end{equation*}
of functors $\bLvp_{s}(\cB) \to \bLp_{r-1}(\cA)$. 
\end{enumerate}
\end{proposition}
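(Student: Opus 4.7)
The plan is to deduce all three parts from the kernel-level identities of Lemma~\ref{lemma-kernel-relation} by translating them into statements about functors via the dictionary developed in Section~\ref{section-kernels}.

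I begin with part~\eqref{Phim-Phip-relation}, which is the most direct. The equivalence of kernels $(a_r \times \id)^*\cEm_r \simeq (\id \times b_{s+1})^* \cEp_{r-1}$ from Lemma~\ref{lemma-kernel-relation}\eqref{kernel-equivalence-1} lives on $\bLp_{r-1} \times_{\Fl{r-1}{r}} \bLvp_s$, and both sides are $\Fl{r-1}{r}$-linear. Applying Lemma~\ref{lemma-pullback-kernel-T-equal} to the left side with $f_1 = \id$ and $f_2 = a_r$ gives the functor $a_r^* \circ \Phim_r$, and to the right side with $f_1 = b_{s+1}$ and $f_2 = \id$ gives $\Phip_{r-1} \circ (b_{s+1})_*$. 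This proves~\eqref{Phim-Phip-relation}.

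For part~\eqref{functor-triangle-1}, I tensor the exact triangle of kernels from Lemma~\ref{lemma-kernel-relation}\eqref{kernel-triangle} by the line bundle $\cO(D_r)$ pulled back from $\bLvm_{s+1}$; applying the projection formula (Lemma~\ref{lemma-projection-formula}) to the two outer terms yields the triangle
\begin{equation*}
(a_r \times \id)_*\cEp_{r-1} \to \iota_{r*}\tcE_r \otimes \cO(D_r) \to (\id \times b_{s+1})_*\cEm_r \otimes \cO(D_r).
\end{equation*}
By Lemma~\ref{lemma-pushforward-kernel} the first term corresponds to the functor $(a_r)_* \circ \Phip_{r-1}$. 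For the third term, combining the projection formula, Lemma~\ref{lemma-twist-kernel}, and Lemma~\ref{lemma-pushforward-kernel} gives the functor $\Phim_r \circ b_{s+1}^*(- \otimes \cO(D_r))$. Since by Lemma~\ref{lemma-L-divisor}\eqref{lemma-L-divisor-b} the morphism $b_{s+1}$ is a divisorial closed embedding with normal bundle $\cO(D_r)|_{\bLvp_s}$, the formula for the relative dualizing complex yields $b_{s+1}^!(-) \simeq b_{s+1}^*(- \otimes \cO(D_r))[-1]$. Hence the third term equals $(\Phim_r \circ b_{s+1}^!)[1]$, and rotating the triangle produces~\eqref{functor-triangle-1}.

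For part~\eqref{functor-triangle-2}, I take left adjoints of the triangle from~\eqref{functor-triangle-1}. By Lemma~\ref{Phi-adjoints} and the standard adjunctions $(a_r^*, (a_r)_*)$ and $((b_{s+1})_*, b_{s+1}^!)$, the functors $\Phim_r \circ b_{s+1}^!$ and $(a_r)_* \circ \Phip_{r-1}$ admit left adjoints $(b_{s+1})_* \circ (\Phim_r)^*$ and $(\Phip_{r-1})^* \circ a_r^*$, respectively. Since the existence of left adjoints for two vertices of an exact triangle of functors between stable $\infty$-categories forces the existence of a left adjoint for the third (computed as the fiber of the induced map of the known left adjoints), the functor $\Phi_{\iota_{r*}\tcE_r \otimes \cO(D_r)}$ also admits a left adjoint, and Lemma~\ref{lemma-adjoints-triangle} yields the triangle~\eqref{functor-triangle-2}. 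The main obstacle is the bookkeeping in part~\eqref{functor-triangle-1}: one must carefully track the shift introduced by the divisorial-embedding formula for the relative dualizing complex of $b_{s+1}$ in order to identify the third term of the kernel-induced functor triangle with $\Phim_r \circ b_{s+1}^!$.
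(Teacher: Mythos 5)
Your proof is correct and follows essentially the same route as the paper: all three parts are read off from Lemma~\ref{lemma-kernel-relation} via the kernel dictionary of Section~\ref{section-kernels}, using the divisorial formula $b_{s+1}^! = b_{s+1}^*(- \otimes \cO(D_r))[-1]$ (which is exactly how the paper's proof begins) for part~\eqref{functor-triangle-1}, and passing to left adjoints for part~\eqref{functor-triangle-2}. You have usefully spelled out two details that the paper leaves implicit: the precise shift bookkeeping when identifying the third term of the twisted kernel triangle with $(\Phim_r \circ b_{s+1}^!)[1]$, and the observation that $\Phi_{\iota_{r*}\tcE_r \otimes \cO(D_r)}$ inherits a left adjoint from the other two vertices of the triangle, which is needed before Lemma~\ref{lemma-adjoints-triangle} can be invoked.
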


\begin{proof}
It follows from Lemma~\ref{lemma-L-divisor}\eqref{lemma-L-divisor-b} that 
$b_{s+1}^! = b_{s+1}^* \circ (- \otimes \cO(D_r))[-1]$. 
Now~\eqref{functor-triangle-1} follows by rotating the triangle of 
kernels from Lemma~\ref{lemma-kernel-relation}\eqref{kernel-triangle}, 
twisting by $\cO(D_r)$, passing to the associated kernel functors,  
and using Lemmas~\ref{lemma-twist-kernel} and~\ref{lemma-pushforward-kernel}. 
By Lemmas~\ref{lemma-adjoints-triangle} and~\ref{Phi-adjoints}, 
\eqref{functor-triangle-2} follows from~\eqref{functor-triangle-1} by passing to left adjoints. 
Finally, \eqref{Phim-Phip-relation} follows from 
Lemma~\ref{lemma-kernel-relation}\eqref{kernel-equivalence-1} by passing to the 
associated kernel functors and using Lemma~\ref{lemma-pullback-kernel}. 
\end{proof}

\subsection{Semiorthogonal sequences in $\ker \Phi_r^*$ and $\ker \Phi_r$} 
Our next goal is to show that the semiorthogonal sequence to the right of $\Ku_r(\cA)$ in 
the decomposition~\eqref{Cr} is contained in $\ker \Phi_r^*$, and the semiorthogonal 
sequence to the left of $\Ku'_s(\cB)$ in~\eqref{Ds} is contained $\ker \Phi_r$. 
Later we will combine this result with the fact that the $\Phi_r$ are left splitting 
(Proposition~\ref{proposition-Phi-splitting}) to reduce Theorem~\ref{key-theorem} 
to a statement about the generation of $\bL_r(\cA)$ and $\bLv_s(\cB)$ by 
certain semiorthogonal sequences. 

\begin{lemma}
\label{lemma-resolution-pr-Phir} 
For $0 \leq i \leq s-1$ there are kernels 
\begin{equation*}
\cF_{r,i}, \, \cK_{r,i}  \in \FM{\cB}{\bPv}{\cA}{\bP}{\bLv_s}{\bP}{S} 
\end{equation*}
such that: 
\begin{enumerate}
\item $\Phi_{\cF_{r,0}} \simeq p_{r*} \circ \Phi_r$. 
\item For all $i$ there is an exact triangle 
\begin{equation*}
\cF_{r,i+1} \to \cK_{r,i} \to \cF_{r,i} 
\end{equation*}
where for $i = s-1$ we set $\cF_{r,s} = 0$. 
\item 
\label{Phi-Ki}
There is an equivalence 
\begin{equation*}
\Phi_{\cK_{r,i}} \simeq  
 (- \otimes \cO(-iH)) \circ p_* \circ \Phi_\cE \circ q_{s*} \circ (- \otimes \wedge^i(\cW_s/\cO(-H')))  . 
\end{equation*} 
\end{enumerate}
\end{lemma}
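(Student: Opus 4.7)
My plan is to construct the kernels geometrically, from the Koszul resolution of $\bL_r \times_{\bG_r} \bLv_s$ inside $\bH \times_{\bPv} \bLv_s$ provided by Lemma~\ref{lemma-zetar-id}\eqref{lemma-zetar-id-1}, and then translate the resulting derived-category data into properties of kernels using the formalism of \S\ref{section-kernels}. To set up, observe that the closed immersion $\iota_r \colon \bL_r \times_{\bG_r} \bLv_s \hookrightarrow \bP \times \bLv_s$, obtained by base changing $p_r$ along $\bLv_s \to \bG_r$, factors as
\[
\bL_r \times_{\bG_r} \bLv_s \xrightarrow{(\zeta_r,\pr_2)} \bH \times_{\bPv} \bLv_s \xrightarrow{j} \bP \times \bLv_s,
\]
where $j$ is the base change of $\iota \colon \bH \hookrightarrow \bP \times \bPv$ along $\bLv_s \to \bPv$. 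I will set $\cF_{r,0} = \iota_{r*}\cE_r$; by Lemma~\ref{lemma-pushforward-kernel} applied to $p_r \times \id_{\bLv_s} = \iota_r$, this satisfies $\Phi_{\cF_{r,0}} \simeq p_{r*} \circ \Phi_r$, proving~(1).

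Next, by Lemma~\ref{lemma-zetar-id}\eqref{lemma-zetar-id-1}, the embedding $(\zeta_r, \pr_2)$ is the zero locus of a regular section of the rank-$(s-1)$ bundle $(\cW_s/\cO(-H'))^{\svee}(H)$ on $\bH \times_{\bPv} \bLv_s$, giving a Koszul resolution $K^\bullet \xrightarrow{\sim} (\zeta_r, \pr_2)_*\cO$ with $K^{-i} = \wedge^i(\cW_s/\cO(-H'))(-iH)$ placed in cohomological degree $-i$ for $0 \le i \le s-1$. Let $\tilde\cE = \pr_1^*\cE$ on $\bH \times_{\bPv} \bLv_s$; flat base change gives $(\zeta_r,\pr_2)^*\tilde\cE \simeq \cE_r$, whence by the projection formula $j_*(\tilde\cE \otimes K^\bullet) \simeq \cF_{r,0}$ in the derived category. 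I will set $\cK_{r,i} = j_*(\tilde\cE \otimes K^{-i})$. To verify the formula in~(3), write $K^{-i} = \cO(-iH) \otimes \wedge^i(\cW_s/\cO(-H'))$, with the first factor pulled back from $\bP$ and the second from $\bLv_s$. Since $j$ is the base change of $\iota$ along $q_s \colon \bLv_s \to \bPv$, Lemma~\ref{lemma-pushforward-kernel} identifies $j_*(\tilde\cE)$ with a kernel for the composition $p_* \circ \Phi_\cE \circ q_{s*}$, and then Lemmas~\ref{lemma-projection-formula} and~\ref{lemma-twist-kernel} pull the two tensor factors of $K^{-i}$ outside as the stated post- and pre-composition with $(-\otimes \cO(-iH))$ and $(-\otimes \wedge^i(\cW_s/\cO(-H')))$.

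Finally, I will define the intermediate kernels using the stupid filtration: for $0 \le i \le s$, set $\cF_{r,i} = j_*(\tilde\cE \otimes \sigma^{\le -i} K^\bullet)[-i]$, where $\sigma^{\le -i} K^\bullet$ is the quotient complex concentrated in degrees $\le -i$. This is consistent with the definition of $\cF_{r,0}$ above and gives $\cF_{r,s} = 0$. The short exact sequences of complexes $0 \to K^{-i}[i] \to \sigma^{\le -i} K^\bullet \to \sigma^{\le -(i+1)} K^\bullet \to 0$, after tensoring with $\tilde\cE$, applying $j_*$, and shifting by $[-i]$, yield exact triangles which, upon rotation, take the stated form $\cF_{r,i+1} \to \cK_{r,i} \to \cF_{r,i}$ of assertion~(2). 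The principal obstacle in this approach is the bookkeeping in the second step: one must carefully match the pushforward $j_*(\tilde\cE)$ with the composition $p_* \circ \Phi_\cE \circ q_{s*}$, separating the $\bP$- and $\bLv_s$-directions along which the twist bundle $K^{-i}$ decomposes, so that the kernel-level operations of pushforward, pullback, and tensoring translate correctly into the claimed functor-level pre- and post-compositions via Lemmas~\ref{lemma-pushforward-kernel}, \ref{lemma-projection-formula}, and~\ref{lemma-twist-kernel}.
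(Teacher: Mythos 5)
Your proposal is correct and follows essentially the same path as the paper: you invoke the Koszul resolution of $(\zeta_r, \pr_2)_*\cO$ given by Lemma~\ref{lemma-zetar-id}\eqref{lemma-zetar-id-1}, define $\cK_{r,i}$ by tensoring $\pr_1^*\cE$ with the Koszul terms and pushing forward along the closed immersion $\bH\times_{\bPv}\bLv_s\hookrightarrow\bP\times\bLv_s$, and translate everything into functor identities via Lemmas~\ref{lemma-pushforward-kernel}, \ref{lemma-pullback-kernel-T-equal}, \ref{lemma-projection-formula}, and \ref{lemma-twist-kernel}. The only place you are more explicit than the paper is in constructing the intermediate $\cF_{r,i}$ via shifted stupid truncations $j_*(\tilde\cE\otimes\sigma^{\le -i}K^\bullet)[-i]$, where the paper just remarks that part~(2) follows by splitting the Koszul resolution into short exact sequences; your construction is a correct way to carry that out.
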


\begin{proof}
By definition the functor $\Phi_r$ is given by the kernel 
\begin{equation*}
\cE_r = \zeta^*_r \cE \in 
\FM{\cB}{\bPv}{\cA}{\bP}{\bLv_s}{\bL_r}{\bG_r} . 
\end{equation*}
Hence by Lemma~\ref{lemma-pushforward-kernel} the functor $p_{r*} \circ \Phi_r$ 
is given by the pushforward of this kernel along 
\begin{equation*}
p_r \times \id \colon \bL_r \times_{\bG_r} \bLv_s \to \bP \stimes \bLv_s, 
\end{equation*}
i.e. by the kernel $(p_r \times \id)_*\cE_r$. 
To understand this kernel, we consider the commutative diagram
\begin{equation}
\label{sos-kerPhi-diagram1}
\vcenter{
\xymatrix{
\bL_r \times_{\bG_r} \bLv_s \ar[d]_{(\zeta_r, \pr_2)} \ar[dr]^{p_r \times \id} & \\
\bH \times_{\bPv} \bLv_s \ar[d]_{\pr_1} \ar[r]^{p \times \id} & \bP \stimes \bLv_s \ar[d]^{\id \times q_s} \\
\bH \ar[r]^{\iota} & \bP \stimes \bPv
}}
\end{equation}
where the square is cartesian. 
We have 
\begin{align*}
(p_r \times \id)_* \cE_r & \simeq (p \times \id)_*(\zeta_r, \pr_2)_*(\zeta_r, \pr_2)^* \pr_1^*\cE  \\
& \simeq (p \times \id)_*(\pr_1^*\cE \otimes (\zeta_r, \pr_2)_*\cO) . 
\end{align*}
It follows from Lemma~\ref{lemma-zetar-id}\eqref{lemma-zetar-id-1}  
that there is a resolution of $(\zeta_r, \pr_2)_*\cO$
of the form 
\begin{equation}
\label{resolution-zeta-id-O}
0 \to  \wedge^{s-1}(\cW_s/\cO(-H'))(-(s-1)H) \to \cdots \to (\cW_s/\cO(-H'))(-H) \to \cO \to (\zeta_r, \pr_2)_*\cO \to 0. 
\end{equation}
Let 
\begin{equation*}
\cK_{r,i} = (p \times \id)_*(\pr_1^*\cE \otimes  \wedge^i(\cW_s/\cO(-H'))(-iH)) 
\end{equation*}
denote the kernel corresponding to the $i$-th term in this resolution. 
Then 
\begin{align*}
\cK_{r,i}  & \simeq ((p \times \id)_*\pr_1^*\cE) \otimes  \wedge^i(\cW_s/\cO(-H'))(-iH)    \\ 
& \simeq ((\id \times q_s)^* \iota_*\cE) \otimes  \wedge^i(\cW_s/\cO(-H'))(-iH) 
\end{align*}
where the first line holds since $\wedge^i(\cW_s/\cO(-H'))$
is pulled back from $\bLv_s$ and $\cO(-iH)$ from $\bP$, 
and the second since the square in~\eqref{sos-kerPhi-diagram1} is cartesian. 
The kernel formalism then shows that $\cK_i$ satisfies part~\eqref{Phi-Ki} of the lemma, 
and the rest follows by splitting the resolution~\eqref{resolution-zeta-id-O} into 
short exact sequences. 
\end{proof}

\begin{proposition}
\label{proposition-sos-kerPhiadjoint}
There is an inclusion 
\begin{equation*}
\llangle \cA_s(H) \sotimes \Perf(\bG_r), 
\dots, 
\cA_{m-1}((m-s)H) \sotimes \Perf(\bG_r)  \rrangle \subset  \ker \Phi_r^* .
\end{equation*}
\end{proposition}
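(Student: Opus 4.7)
The strategy is to show that for each $s \le k \le m-1$, the subcategory $\cA_k((k-s+1)H) \sotimes \Perf(\bG_r)$ of $\bL_r(\cA)$ is contained in $\ker \Phi_r^* = (\im \Phi_r)^{\perp}$; since $\ker \Phi_r^*$ is a stable subcategory (as $\Phi_r^*$ is exact), the whole semiorthogonal sequence will then lie in it. Writing $\ell = k - s + 1$, so $1 \le \ell \le m - s$, and using the $\Perf(\bG_r)$-linearity of $\iota_r^*$ together with the identity $\iota_r^*(C(\ell H) \boxtimes F) \simeq p_r^* C(\ell H) \otimes f_r^* F$, it suffices to establish the vanishing
\begin{equation*}
\cHom_{\bG_r}(p_r^* C(\ell H) \otimes f_r^* F, \Phi_r(D)) \simeq 0
\end{equation*}
for all $C \in \cA_k$, all $F \in \Perf(\bG_r)$, and all $D \in \bLv_s(\cB)$.

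Next I reduce this to a Hom computation in $\cA$. By Lemma~\ref{lemma-box-tensor-generation}, $\bLv_s(\cB)$ is thickly generated by exterior products $B \boxtimes G$ with $B \in \cB$ and $G \in \Perf(\bLv_s)$, and the $\bG_r$-linearity of $\Phi_r$ (which is built into the kernel formulation) lets us absorb the factor $f_r^* F$ into such a test object. Combined with the $\bP$-linear adjunction $p_r^* \dashv p_{r*}$, the target vanishing is implied by
\begin{equation*}
\cHom_S(C(\ell H), p_{r*} \Phi_r(D')) \simeq 0
\end{equation*}
for $D'$ in a generating family of $\bLv_s(\cB)$. By Lemma~\ref{lemma-resolution-pr-Phir} we have $p_{r*} \Phi_r \simeq \Phi_{\cF_{r,0}}$, and the triangles $\cF_{r,i+1} \to \cK_{r,i} \to \cF_{r,i}$ with $\cF_{r,s} = 0$ produce an $s$-step exact-triangle d\'evissage expressing $\Phi_{\cF_{r,0}}(D')$ in terms of the objects $\Phi_{\cK_{r,i}}(D')$. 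It is therefore enough to prove $\cHom_S(C(\ell H), \Phi_{\cK_{r,i}}(D')) \simeq 0$ for every $0 \le i \le s-1$.

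By Lemma~\ref{lemma-resolution-pr-Phir}\eqref{Phi-Ki} together with the identification $\Phi_\cE \simeq \gamma \circ \phi$ from \eqref{Phigammaphi}, the object $\Phi_{\cK_{r,i}}(D')$ has the form $E \otimes \cO(-iH)$ with $E = p_* \gamma \phi(q_{s*}(D' \otimes \wedge^i(\cW_s/\cO(-H'))))$. The image condition of Setup~\ref{setup-HPD}\eqref{setup-image-A0} then gives $E \in \cA_0$, so the required vanishing becomes
\begin{equation*}
\cHom_S(C((\ell+i)H), E) \simeq 0 \quad \text{for every } E \in \cA_0.
\end{equation*}
Since $i$ ranges in $[0, s-1]$ and $\ell = k - s + 1$, the shift $\ell + i$ lies in $[1, k] \subset [1, m-1]$, and as $k \ge \ell + i$ we have $C \in \cA_k \subset \cA_{\ell+i}$; thus $C((\ell+i)H) \in \cA_{\ell+i}((\ell+i)H)$, a component sitting strictly to the right of $\cA_0$ in the right Lefschetz decomposition of $\cA$. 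The required vanishing then follows from $S$-linear semiorthogonality.

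The main technical obstacle is the reduction carried out in the second paragraph: one must carefully track the $\bG_r$-linearity in order to pass from a $\bG_r$-valued mapping space to a plain $\QCoh(S)$-valued one without losing information, and to legitimately absorb $F$ and the tensor factor $\wedge^i(\cW_s/\cO(-H'))$ into the test object $D'$. This will rest on the kernel-theoretic base-change formulas of Lemma~\ref{lemma-pullback-kernel} and Lemma~\ref{lemma-pullback-kernel-T-equal}, together with the K\"unneth formulas of Lemmas~\ref{lemma-kunneth} and~\ref{lemma-homs-base-change}, used in the same spirit as in the geometric proof of the corresponding statement in~\cite[\S6]{kuznetsov-HPD}.
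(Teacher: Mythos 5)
Your proof is correct and follows essentially the same route as the paper's: reduce by adjunction and $\bG_r$-linearity to a statement about $p_{r*} \circ \Phi_r$, d\'evissage through Lemma~\ref{lemma-resolution-pr-Phir}, land in $\cA_0$ twisted by $\cO(-iH)$ via Setup~\ref{setup-HPD}\eqref{setup-image-A0} (the paper cites Lemma~\ref{lemma-im-ker-p-gamma} for the same fact), and conclude by Lefschetz semiorthogonality. The only cosmetic difference is that you check Hom-vanishing term by term, whereas the paper packages the final step as the containment $\im(p_{r*}\circ\Phi_r)\subset\llangle \cA_0(-(s-1)H),\dots,\cA_0\rrangle$ and invokes Lemma~\ref{lemma-A0-sequence}.
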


\begin{proof}
By adjunction, the assertion is equivalent to 
\begin{equation*}
\im \Phi_r \subset 
\llangle \cA_s(H) \sotimes \Perf(\bG_r), 
\dots, 
\cA_{m-1}((m-s)H) \sotimes \Perf(\bG_r)  \rrangle^{\perp} ,  
\end{equation*}
where the orthogonal is taken inside $\bL_r(\cA)$. 
By $\bG_r$-linearity of $\Phi_r$, this is equivalent to 
\begin{equation*}
\im(p_{r*} \circ \Phi_r) \subset 
\llangle \cA_s(H) , 
\dots, 
\cA_{m-1}((m-s)H) \rrangle^{\perp} 
\end{equation*}
where the orthogonal is taken inside $\cA$.  
The functor $\Phi_{\cK_{r,i}}$ from Lemma~\ref{lemma-resolution-pr-Phir} 
satisfies
\begin{equation}
\label{claim-Phi-Ki}
\im \Phi_{\cK_{r,i}} \subset \cA_0(-iH). 
\end{equation}
Indeed, using the description of this functor from Lemma~\ref{lemma-resolution-pr-Phir}\eqref{Phi-Ki}, 
the claim follows from Lemma~\ref{lemma-im-ker-p-gamma}. 
Hence we find 
\begin{equation*}
\im(p_{r*} \circ \Phi_r) \subset 
\llangle 
\cA_0(-(s-1)H), \cA_0(-(s-2))\dots, \cA_{0} 
\rrangle, 
\end{equation*}
so we are done by Lemma~\ref{lemma-A0-sequence} below. 
\end{proof}

\begin{lemma}
\label{lemma-A0-sequence}
For any $i \geq 0$ the following subcategories of $\cA$ coincide: 
\begin{align*}
& \llangle 
\cA_0, \cA_0(H), \dots, \cA_0((i-1)H)
\rrangle , \\
& \llangle \cA_0, \cA_1(H), \dots, \cA_{i-1}((i-1)H) 
\rrangle , \\
& \llangle \cA_i(iH), \cA_{i+1}((i+1)H), \dots, \cA_{m-1}((m-1)H) 
\rrangle^{\perp} .
\end{align*}
\end{lemma}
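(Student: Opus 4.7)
The plan is to prove the three subcategories (call them $L_i$, $M_i$, $N_i$, in order) all coincide by establishing $M_i = N_i$, $M_i \subset L_i$, and $L_i \subset N_i$, which together force equality.

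First, the equality $M_i = N_i$ is essentially immediate from the Lefschetz decomposition. Rewrite the right Lefschetz decomposition as $\cA = \llangle M_i, P_i \rrangle$ where $P_i = \llangle \cA_i(iH), \dots, \cA_{m-1}((m-1)H) \rrangle$. Each $\cA_j \subset \cA$ with $j \geq 0$ is right admissible by the definition of a Lefschetz center, so twisting and Lemma~\ref{lemma-admissible-sequence} show $P_i \subset \cA$ is right admissible. Then Lemma~\ref{lemma-admissible-sod} gives $M_i = P_i^{\perp} = N_i$.

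Next, $M_i \subset L_i$ is trivial: the Lefschetz chain property $\cA_j \subset \cA_0$ implies $\cA_j(jH) \subset \cA_0(jH) \subset L_i$ for each $0 \leq j \leq i-1$, so all generators of $M_i$ lie in $L_i$.

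For the remaining inclusion $L_i \subset N_i$, since $N_i = P_i^{\perp}$ is a stable subcategory, it suffices to check each generator $\cA_0(kH)$ of $L_i$ is right orthogonal to the generators $\cA_j(jH)$ of $P_i$, i.e. $\cHom(\cA_j(jH), \cA_0(kH)) = 0$ for $0 \leq k \leq i-1 \leq j-1$. Twisting by the autoequivalence $- \otimes \cO(-kH)$ reduces this to showing $\cHom(\cA_j((j-k)H), \cA_0) = 0$. Since $k \geq 0$ gives $j \geq j-k$, the Lefschetz chain yields $\cA_j \subset \cA_{j-k}$, hence $\cA_j((j-k)H) \subset \cA_{j-k}((j-k)H)$. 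Now $\cA_{j-k}((j-k)H)$ is the $(j-k)$-th component of the right Lefschetz decomposition, and $j-k \geq 1$ positions it strictly to the right of the $0$-th component $\cA_0$; Lefschetz semiorthogonality $\cHom(\cA_p(pH), \cA_q(qH)) = 0$ for $p > q$ then gives the desired vanishing.

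The main subtlety is getting the direction of the semiorthogonality right: we must show $\cHom(P_i, L_i) = 0$, not $\cHom(L_i, P_i) = 0$. The latter direction would fail — it is precisely the "allowed" direction for nonzero $\cHom$ in the Lefschetz decomposition — which is why the trick of passing from $\cA_j((j-k)H)$ into the larger Lefschetz component $\cA_{j-k}((j-k)H)$ (using the decreasing chain $\cA_j \subset \cA_{j-k}$) is essential. Combining the three steps yields $L_i \subset N_i = M_i \subset L_i$, so all three coincide.
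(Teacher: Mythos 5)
Your proposal is correct and follows exactly the same three-step structure as the paper's (very terse) proof: $M_i = N_i$ from the right Lefschetz decomposition, $M_i \subset L_i$ trivially from $\cA_j \subset \cA_0$, and $L_i \subset N_i$ by a semiorthogonality check; you simply fill in the details the paper leaves to the reader, and the twisting trick $\cA_j((j-k)H) \subset \cA_{j-k}((j-k)H)$ for the last inclusion is exactly the right one.
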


\begin{proof}
The semiorthogonal decomposition 
\begin{equation*}
\cA = \llangle \cA_0, \cA_1(H), \dots, \cA_{m-1}((m-1)H) \rrangle
\end{equation*}
implies the last two categories coincide, and 
the first category contains the second and is contained in the third. 
\end{proof}

\begin{proposition}
\label{proposition-sos-kerPhi}
There is an inclusion 
\begin{equation*}
\llangle 
\cB_{1-n}((r-n)H') \sotimes \Perf(\bGv_s), 
\dots , \cB_{-r}(-H')\sotimes \Perf(\bGv_s)  
\rrangle 
\subset \ker \Phi_r. 
\end{equation*}
\end{proposition}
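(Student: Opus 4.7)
The plan is to dualize the argument of Proposition~\ref{proposition-sos-kerPhiadjoint}: instead of analyzing the image of $\Phi_r$ via $p_{r*}$, I will analyze $\Phi_r$ precomposed with $q_s^*$. By the $\bG_r$-linearity of $\Phi_r$ under the identification $\bG_r \cong \bGv_s$, and by Lemma~\ref{lemma-box-tensor-generation}, it suffices to show
\begin{equation*}
\Phi_r(q_s^*(C)((j+r-1)H')) = 0
\end{equation*}
for every $C \in \cB_j$ and every $1-n \leq j \leq -r$.

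To do this, I will construct a Koszul-type filtration on $\Phi_r \circ q_s^*$ paralleling Lemma~\ref{lemma-resolution-pr-Phir}, using Lemma~\ref{lemma-zetar-id}\eqref{lemma-zetar-id-2} in place of \eqref{lemma-zetar-id-1}. Namely, the closed immersion $(\pr_1, \zeta_r) \colon \bL_r \times_{\bG_r} \bLv_s \hookrightarrow \bL_r \times_\bP \bH$ is cut out by a regular section of $(\cU_r/\cO(-H))^\svee(H')$, giving a Koszul resolution of $(\pr_1, \zeta_r)_*\cO$ with terms $\wedge^i(\cU_r/\cO(-H))(-iH')$ for $0 \leq i \leq r-1$. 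Splitting this resolution and applying the kernel formalism, I will produce kernels $\cF'_{r,i}, \cK'_{r,i} \in \FM{\cB}{\bPv}{\cA}{\bP}{\bPv}{\bL_r}{S}$ for $0 \leq i \leq r-1$, fitting into exact triangles $\cF'_{r,i+1} \to \cK'_{r,i} \to \cF'_{r,i}$ with $\cF'_{r,r} = 0$, such that $\Phi_{\cF'_{r,0}} \simeq \Phi_r \circ q_s^*$, and so that, using the projection formula together with base change along the cartesian square relating $\bL_r \times_\bP \bH$, $\bL_r$, $\bH$, and $\bP$,
\begin{equation*}
\Phi_{\cK'_{r,i}}(C) \simeq \wedge^i(\cU_r/\cO(-H)) \otimes p_r^*\bigl(p_*(\gamma\phi(C)(-iH'))\bigr)
\end{equation*}
up to a cohomological shift. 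Evaluated at $C((j+r-1)H')$ for $C \in \cB_j$, it therefore suffices to verify that $p_*(\gamma\phi(C)(tH')) = 0$ for all $t = j+r-1-i \in [j, -1]$.

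This final vanishing will reduce, via the inclusion $\phi(\cB_j) \subset \cAd_j$, to Lemma~\ref{lemma-lls-Cd}\eqref{lls-orthogonality}: for any $D \in \cAd_j = \llangle \gamma^*p^*(\fatw_\ell) \st \ell \leq N-2+j \rrangle$, adjunction transforms that lemma into $p_*(\gamma D(tH')) = 0$ for $t \in [j, -1]$, which is exactly the needed range. The hard part will be proving the inclusion $\phi(\cB_j) \subset \cAd_j$: by Setup~\ref{setup-HPD}\eqref{setup-Bj} and the full faithfulness of $\phi$ one has $\cB_j = \phi^*\cAd_j$, so this inclusion is equivalent to $\cAd_j$ being stable under the colocalization $\phi\phi^* \colon \cAd \to \cAd$. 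I expect this stability to follow by induction on $|j|$ from the strong Lefschetz chain assumption in Setup~\ref{setup-HPD}\eqref{setup-Bj} and Lemma~\ref{lemma-base-change-admissible} on the preservation of admissibility, exploiting that $\phi$ and $\phi^*$ should intertwine the primitive components $\fb_j \subset \cB$ and $\fad_j \subset \cAd$; the base case $\phi(\cB_0) \subset \cAd_0$ will follow from Setup~\ref{setup-HPD}\eqref{setup-image-A0} together with the characterization $\cAd_0 = \gamma^*p^*(\cA_0)$.
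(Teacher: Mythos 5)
Your overall strategy is the intended one: dualize Proposition~\ref{proposition-sos-kerPhiadjoint} by studying $\Phi_r \circ q_s^*$ via the Koszul resolution coming from Lemma~\ref{lemma-zetar-id}\eqref{lemma-zetar-id-2}. Steps 1--4 of your plan are correct. In particular the formula
\begin{equation*}
\Phi_{\cK'_{r,i}} \simeq (- \otimes \wedge^i(\cU_r/\cO(-H))) \circ p_r^* \circ p_* \circ \Phi_\cE \circ (- \otimes \cO(-iH'))
\end{equation*}
does follow from the projection formula together with base change along the cartesian square
\begin{equation*}
\xymatrix{\bL_r \times_\bP \bH \ar[r] \ar[d] & \bL_r \times_S \bPv \ar[d]^{p_r \times \id} \\ \bH \ar[r]^-{\iota} & \bP \times_S \bPv}
\end{equation*}
(the pullback square in Lemma~\ref{lemma-pullback-kernel}\eqref{pullback-kernel-1}), and the reduction to showing $p_*(\gamma\phi(C)(tH')) = 0$ for $C \in \cB_j$, $1-n \leq j \leq -r$, and $t = j + r - 1 - i \in [j, -1]$ is correct.

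The gap is in your final step. You want to deduce the vanishing from Lemma~\ref{lemma-lls-Cd}\eqref{lls-orthogonality} via the inclusion $\phi(\cB_j) \subset \cAd_j$, i.e. the stability of $\cAd_j$ under the colocalization $\phi\phi^*$. This is not clearly provable at this point: a priori we only know $\cAd = \llangle \im\phi, \ker\phi^* \rrangle$, and there is no reason the truncation of an object of $\cAd_j$ onto $\ker\phi^*$ should again lie in $\cAd_j$. (It becomes trivially true once $\phi$ is shown to be an equivalence in Corollary~\ref{corollary-Ad-B}, but that is downstream of the present proposition.) Your sketch of an induction on $|j|$ does not obviously close, and in any case this detour is unnecessary.

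The intended finish does not pass through $\cAd_j$ at all: one should adjoint all the way down into $\cB$. For $E \in \cA$,
\begin{equation*}
\cHom_S(E, p_*\gamma\phi(C)(tH')) \simeq \cHom_S(\gamma^*p^*(E), \phi(C)(tH')) \simeq \cHom_S(\phi^*\gamma^*p^*(E)(-tH'), C),
\end{equation*}
using the adjunctions $(p^*, p_*)$, $(\gamma^*, \gamma)$, $(\phi^*, \phi)$ and $\bPv$-linearity of $\gamma^*, \phi^*$. By Lemma~\ref{lemma-im-ker-p-gamma} and Setup~\ref{setup-HPD}\eqref{setup-Bj}, $\phi^*\gamma^*p^*(E) \in \phi^*(\cAd_0) = \cB_0$, so it suffices to show $\cHom_S(\cB_0, \cB_j(wH')) \simeq 0$ for $j \leq w \leq -1$. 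Since $\cB_j \subset \cB_w$ for $j \leq w \leq 0$, this follows from the semiorthogonality of $\cB_{1-n}((1-n)H'), \dots, \cB_{-1}(-H'), \cB_0$, which is exactly the left Lefschetz chain condition of Setup~\ref{setup-HPD}\eqref{setup-Bj}. This completes the argument without any claim about $\phi(\cB_j)$.
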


\begin{proof}
Analogous to Proposition~\ref{proposition-sos-kerPhiadjoint} and 
left to the reader. 
\end{proof}

\subsection{Technical results about the functor $\Phi_{\iota_{r*} \tcE_r \otimes \cO(D_r)}$} 
Here we prove two related results about the (left adjoint of the) 
functor $\Phi_{\iota_{r*} \tcE_r \otimes \cO(D_r)}$ appearing in Proposition~\ref{proposition-Phi-relations}, 
which will be needed later in our induction arguments. 
Namely, we describe the image of a certain subcategory under 
$(\Phi_{\iota_{r*} \tcE_r \otimes \cO(D_r)})^*$ (Proposition~\ref{proposition-im-Phi-tE}) 
and show that certain composite functors 
involving $(\Phi_{\iota_{r*} \tcE_r \otimes \cO(D_r)})^*$ vanish (Proposition~\ref{proposition-Phi-compositions}). 

\begin{proposition}
\label{proposition-im-Phi-tE}
The image  
\begin{equation*}
(\Phi_{\iota_{r*} \tcE_r \otimes \cO(D_r)})^*
\left ( \left ( \llangle \cA_{s}(H), \dots, \cA_{m-1}((m-s)H) \rrangle 
\otimes \Perf(\Fl{r-1}{r}) \right )^{\perp} \right )  
\end{equation*}
lies inside the subcategory 
\begin{equation*}
\cB_{1-r}(-H') \sotimes \Perf(\Fl{r-1}{r}) \subset \bLvm_{s+1}(\cB) . 
\end{equation*}
\end{proposition}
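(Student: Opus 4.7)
The proof will pivot on the exact triangle of Proposition~\ref{proposition-Phi-relations}\eqref{functor-triangle-2}:
\begin{equation*}
(\Phi_{\iota_{r*}\tcE_r \otimes \cO(D_r)})^* \to (\Phip_{r-1})^* \circ a_r^* \to (b_{s+1})_* \circ (\Phim_r)^*.
\end{equation*}
Since $\cB_{1-r}(-H') \sotimes \Perf(\Fl{r-1}{r})$ is a stable subcategory of $\bLvm_{s+1}(\cB)$, it is enough to prove that both the middle and right-hand functors send the given right orthogonal into this subcategory. By Lemma~\ref{lemma-linear-section} combined with the base change result Lemma~\ref{lemma-sod-base-change}, the right orthogonal in question is precisely $\Ku_r(\cA) \sotimes \Perf(\Fl{r-1}{r}) \subset \bLm_r(\cA)$, so we are reduced to analyzing where the two other functors in the triangle send objects of the form $(\pim_r)^*(C) \otimes \pi_r^*(F)$ with $C \in \Ku_r(\cA)$ and $F \in \Perf(\Fl{r-1}{r})$.

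For the right-hand term, I would use the base change identity
\begin{equation*}
(\Phim_r)^* \circ (\pim_r)^* \simeq (\pivp_s)^* \circ \Phi_r^*,
\end{equation*}
obtained by passing to left adjoints in the fourth row of Lemma~\ref{lemma-base-change-Phi}. Combined with $\Fl{r-1}{r}$-linearity of the functors involved, this reduces the question to computing $(b_{s+1})_* \circ (\pivp_s)^* \circ \Phi_r^*(C)$. Using the divisorial description of $b_{s+1}$ from Lemma~\ref{lemma-L-divisor}\eqref{lemma-L-divisor-b} and the projection formula, this rewrites as a two-step complex built from $(\pivm_{s+1})^*\Phi_r^*(C)$ and its twist by $\cO(-H') \otimes (\cU_r/\cU_{r-1})$. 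By Proposition~\ref{proposition-sos-kerPhi} together with Lemma~\ref{lemma-left-linear-section}, $\Phi_r^*(C)$ lands in $\Ku'_s(\cB) \subset \bLv_s(\cB)$, and one checks that the composition of pullback by $\pivm_{s+1}$ with the Koszul twist sends this controlled subcategory inside $\cB_{1-r}(-H') \sotimes \Perf(\Fl{r-1}{r})$.

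For the middle functor $(\Phip_{r-1})^* \circ a_r^*$, I would proceed analogously: $a_r$ is a divisorial embedding (Lemma~\ref{lemma-L-divisor}), so restriction along $a_r^*$ produces a two-term Koszul complex in terms of $(\pim_r)^*(C)$ and its twist by $\cO(-H) \otimes (\cU_r/\cU_{r-1})^\svee$. Composing with the base change identity for $\Phip_{r-1}$ reduces to $(\Phi_{r-1})^*$ applied to objects in $\bL_{r-1}(\cA)$ coming from $C \in \Ku_r(\cA)$; a further application of Proposition~\ref{proposition-sos-kerPhi} (now for the pair $(r-1, s+1)$) places the image in the right orthogonal to $\llangle \cB_{1-n}((r-1-n)H'), \dots, \cB_{-(r-1)}(-H') \rrangle \sotimes \Perf(\bGv_{s+1})$, which after the Koszul twist and base change along $\pi_{r-1}$ again lands inside $\cB_{1-r}(-H') \sotimes \Perf(\Fl{r-1}{r})$.

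The main obstacle I anticipate is this last matching step: carefully tracking the line bundle twists arising from the two distinct Koszul resolutions and showing that both contributions collapse into the single Lefschetz component $\cB_{1-r}(-H')$ rather than some larger subcategory of $\bLvm_{s+1}(\cB)$. This will require using the strength of the Lefschetz chain of $\cB$ (Setup~\ref{setup-HPD}\eqref{setup-Bj}), which ensures admissibility of the components, together with the defining characterization of $\cB_{1-r}$ as the intersection $\cB_{2-r} \cap {}^\perp(\cB_0((r-1)H'))$, so that the twists by $\cO(-H')$ and $(\cU_r/\cU_{r-1})$ line up with the degree shift distinguishing $\cB_{1-r}$ from $\cB_{2-r}$.
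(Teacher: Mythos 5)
Your approach is genuinely different from the paper's, but I believe it contains a fundamental gap. You propose to use the exact triangle from Proposition~\ref{proposition-Phi-relations}\eqref{functor-triangle-2},
\begin{equation*}
(\Phi_{\iota_{r*}\tcE_r \otimes \cO(D_r)})^* \to (\Phip_{r-1})^* \circ a_r^* \to (b_{s+1})_* \circ (\Phim_r)^*,
\end{equation*}
and to conclude by showing both the middle and right-hand terms carry the right orthogonal into $\cB_{1-r}(-H') \sotimes \Perf(\Fl{r-1}{r})$. The logic is sound if the premise holds, but the premise fails. The target $\cB_{1-r}(-H') \sotimes \Perf(\Fl{r-1}{r})$ is the image inside $\bLvm_{s+1}(\cB)$ of $\cB_{1-r}(-H') \otimes \Perf(\Fl{r-1}{r})$ under the restriction functor from the ambient product $\cB \otimes \Perf(\Fl{r-1}{r})$. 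On the other hand, $(b_{s+1})_* \circ (\Phim_r)^*$ first lands in $\bLvp_s(\cB)$ and then pushes forward along the divisorial embedding $b_{s+1}$; its essential image is therefore supported on the divisor $\bLvp_s \subset \bLvm_{s+1}$, and in general is not contained in any restriction of a box-product subcategory. In fact, on the right orthogonal, $(\Phim_r)^*$ is essentially surjective onto $\im(\Phim_r)^*$ (since $\Phim_r$ is left splitting and its image lies in the orthogonal, by Lemma~\ref{lemma-im-Phim-Phip} and Proposition~\ref{proposition-base-change-splitting-functor}), so the right-hand term has a sizeable image that has no reason to collapse to a single Lefschetz component. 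The triangle can only tell you that the first term is controlled once you already control the other two, so the argument does not close.

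There is also a smaller technical slip: the identity $(\Phim_r)^* \circ (\pim_r)^* \simeq (\pivp_s)^* \circ \Phi_r^*$ is obtained by passing to left adjoints in the \emph{second} row of Lemma~\ref{lemma-base-change-Phi}, not the fourth; taking left adjoints of the fourth row is not legal since $(\pivp_s)_!$ has no further left adjoint.

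The paper's proof is more direct. Rather than sandwiching the functor in a triangle, it computes the kernel $\iota_{r*}\tcE_r$ on the nose: by Lemma~\ref{lemma-kernel-relation}\eqref{kernel-equivalence-2}, $\iota_{r*}\tcE_r \simeq \nu_r^*\iota_*\cE$, which after factoring $\nu_r = (\pminus_r \times \pr_2)\circ(\id\times j)$ and applying Lemma~\ref{lemma-pullback-kernel} gives $(\Phi_{\iota_{r*}\tcE_r})^* \simeq j^*\circ(\Phi_{(\pminus_r\times\pr_2)^*\iota_*\cE})^*$. Then $\Fl{r-1}{r}$-linearity, the adjoint identity $(\pr_2)_! \circ (\Phi_{(\pminus_r \times \pr_2)^*\iota_*\cE})^* \simeq (\Phi_{\iota_*\cE})^* \circ (\pminus_r)_!$, and explicit dualizing complex computations (using Lemma~\ref{lemma-pminusr-fmr}) reduce the statement to showing $(\Phi_{\iota_*\cE})^* = \phi^*\circ\gamma^*\circ p^*$ carries $\llangle \cA_0(H),\dots,\cA_{s-1}(sH)\rrangle$ into $\cB_{1-r}$; this is precisely Lemma~\ref{lemma-pgamma-adjoint}. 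Your proposal never invokes either Lemma~\ref{lemma-kernel-relation}\eqref{kernel-equivalence-2} or Lemma~\ref{lemma-pgamma-adjoint}, which are the load-bearing inputs here; without a replacement for them, I do not see how to salvage the triangle-based strategy.
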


\begin{proof}
By Lemma~\ref{lemma-twist-kernel} we have 
\begin{equation*}
(\Phi_{\iota_{r*} \tcE_r \otimes \cO(D_r)})^* \simeq (- \otimes \cO(-D_r)) \circ (\Phi_{\iota_{r*} \tcE_r})^*  . 
\end{equation*}
Since by definition $\cO(D_r) = (\cU_r/\cU_{r-1})^{\svee}(H')$, 
it therefore suffices to show 
\begin{equation*}
(\Phi_{\iota_{r*} \tcE_r })^*
\left ( \left ( \llangle \cA_{s}(H), \dots, \cA_{m-1}((m-s)H) \rrangle 
\otimes \Perf(\Fl{r-1}{r}) \right )^{\perp} \right ) 
\subset \cB_{1-r} \sotimes \Perf(\Fl{r-1}{r}) .
\end{equation*}
For this, we consider the diagram 
\begin{equation*}
\xymatrix{
& \bLm_r \times_{\Fl{r-1}{r}} \bLvm_{s+1} \ar[d]_{\id \times j} \ar[r] & \bLvm_{s+1} \ar[d]^j \\
\bLm_r \ar[d]_{\pminus_r} & \ar[l] \bLm_r \times_{\Fl{r-1}{r}} (\Fl{r-1}{r} \stimes \bPv) \ar[d]_{\pminus_r \times \pr_2} \ar[r] & \Fl{r-1}{r} \stimes \bPv \\ 
\bP & \ar[l] \bP \stimes \bPv & 
}
\end{equation*}
where $j \colon \bLvm_{s+1} \to \Fl{r-1}{r} \stimes \bPv$ is the embedding and the 
squares are cartesian. 
By definition the morphism 
$\nu_r  \colon  \bLm_r \times_{\Fl{r-1}{r}} \bLvm_{s+1}  \to \bP \stimes \bPv$ 
defined by~\eqref{nur} satisfies $\nu_r = (\pminus_r \times \pr_2) \circ (\id \times j)$. 
Hence by Lemma~\ref{lemma-kernel-relation}\eqref{kernel-equivalence-2} 
there is an equivalence
\begin{equation*}
\iota_{r*} \tcE_r  \simeq  \nu_r^* \iota_* \cE \simeq (\id \times j)^* (\pminus_r \times \pr_2)^* \iota_* \cE. 
\end{equation*}
It follows from Lemma~\ref{lemma-pullback-kernel} that 
\begin{equation*}
\Phi_{\iota_{r*} \tcE_r} \simeq \Phi_{(\pminus_r \times \pr_2)^* \iota_* \cE} \circ j_* 
\end{equation*}
and hence 
\begin{equation*}
(\Phi_{\iota_{r*} \tcE_r})^* \simeq j^* \circ  (\Phi_{(\pminus_r \times \pr_2)^* \iota_* \cE})^*
\end{equation*}
To prove the lemma, it thus suffices to show the image 
\begin{equation*}
(\Phi_{(\pminus_r \times \pr_2)^* \iota_* \cE})^*
\left ( \left ( \llangle \cA_{s}(H), \dots, \cA_{m-1}((m-s)H) \rrangle 
\otimes \Perf(\Fl{r-1}{r}) \right )^{\perp} \right ) 
\end{equation*}
is contained in 
\begin{equation*}
\cB_{1-r} \sotimes \Perf(\Fl{r-1}{r})  \subset \cB \otimes_{\Perf(\bPv)} \Perf(\Fl{r-1}{r} \stimes \bPv). 
\end{equation*}
Since the image in question is $\Fl{r-1}{r}$-linear, it suffices to show 
\begin{equation}
\label{im-Phi-tE-reduction-1}
\pr_{2*}(\Phi_{(\pminus_r \times \pr_2)^* \iota_* \cE})^*
\left ( \left ( \llangle \cA_{s}(H), \dots, \cA_{m-1}((m-s)H) \rrangle 
\otimes \Perf(\Fl{r-1}{r}) \right )^{\perp} \right ) \subset \cB_{1-r} 
\end{equation}
as subcategories of $\cB$. 
By the above diagram and Lemma~\ref{lemma-pullback-kernel}, there is an equivalence 
\begin{equation*}
\Phi_{(\pminus_r \times \pr_2)^* \iota_* \cE} \circ \pr_2^* \simeq (\pminus_r)^* \circ \Phi_{\iota_*\cE}  .
\end{equation*}
Hence, if $(\pr_2)_{!}$ and $(\pminus_r)_!$ denote the left adjoints of 
the pullback functors of the morphisms $\pr_2 \colon \Fl{r-1}{r} \stimes \bPv \to \bPv$ and 
$\pminus_r \colon \bLm_r \to \bP$, then we have 
\begin{equation}
\label{pr2-Phi-pminusr}
(\pr_2)_! \circ (\Phi_{(\pminus_r \times \pr_2)^* \iota_* \cE})^* \simeq (\Phi_{\iota_*\cE})^* \circ (\pminus_r)_!. 
\end{equation}
It is easy to see
\begin{align}
\label{adjoint-pr2}
(\pr_2)_! & \simeq \pr_{2*} \circ (- \otimes \omega_{\Fl{r-1}{r}/S})  \\
\label{adjoint-pminusr}
(\pminus_r)_! & \simeq (\pminus_r)_* \circ (- \otimes \cO(sH) \otimes \det(\cU_r)^{-1} \otimes 
 \omega_{\Fl{r-1}{r}/S}[-s] ) 
\end{align}
where we have used Lemma~\ref{lemma-pminusr-fmr} in deriving the second equivalence. 
Combining \eqref{pr2-Phi-pminusr}, \eqref{adjoint-pr2}, \eqref{adjoint-pminusr}, 
and using that $\omega_{\Fl{r-1}{r}/S}$ and $\det(\cU_r)^{-1}$ are pulled back from 
$\Fl{r-1}{r}$, 
the claim~\eqref{im-Phi-tE-reduction-1} reduces to showing  
\begin{equation*}
(\Phi_{\iota_*\cE})^*\left( \llangle \cA_{s}((s+1)H), \dots, \cA_{m-1}(mH) \rrangle^\perp \right) 
\subset \cB_{1-r}. 
\end{equation*}
But $(\Phi_{\iota_*\cE})^* \simeq \phi^* \circ \gamma^* \circ p^*$ and 
\begin{equation*}
\llangle \cA_{s}((s+1)H), \dots, \cA_{m-1}(mH) \rrangle^\perp = 
\llangle \cA_{0}(H), \dots, \cA_{s-1}(sH) \rrangle, 
\end{equation*}
so the desired inclusion follows from Lemma~\ref{lemma-pgamma-adjoint} below and the definition 
of $\cB_{1-r}$. 
\end{proof}

\begin{lemma}
\label{lemma-pgamma-adjoint}
For any $i \geq 0$ the functor $\gamma^* \circ p^* \colon \cA \to \cAd$ satisfies
\begin{equation*}
\gamma^*p^* \left( 
\llangle \cA_0(H), \dots, \cA_{i-1}(i) \rrangle \right) \subset \cAd_{i+1-N}. 
\end{equation*}
\end{lemma}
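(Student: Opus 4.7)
\medskip

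\noindent\emph{Plan.} First I would reduce the statement to a simpler inclusion. By Lemma~\ref{lemma-A0-sequence} (applied after twisting by $H$), the subcategory
$\llangle \cA_0(H), \dots, \cA_{i-1}(iH) \rrangle$ coincides with
$\llangle \cA_0(H), \cA_0(2H), \ldots, \cA_0(iH) \rrangle$.
Since $\cAd_j \subset \cAd_{j+1}$ for the left Lefschetz chain constructed in Proposition~\ref{proposition-Ad-lefschetz-sequence}, it therefore suffices to prove $\gamma^*p^*(\cA_0(jH)) \subset \cAd_{j+1-N}$ for each $1 \leq j \leq i$. By Proposition~\ref{proposition-Ad-lefschetz-sequence} (using moderateness and right strongness of $\cA$) together with Lemma~\ref{lemma-lc-from-ld}\eqref{lc-from-ld-2}, $X \in \cAd_{j+1-N}$ is equivalent to $X \in \cAd_0$ along with $\cHom_{\bPv}(\cAd_0(tH'), X) = 0$ for $1 \leq t \leq N-1-j$; the $\cAd_0$ condition is automatic from $\im(\gamma^*p^*) = \cAd_0$ (Lemma~\ref{lemma-im-ker-p-gamma}), so everything reduces to the orthogonality
$\cHom_{\bPv}(\gamma^*p^*(Y')(tH'), \gamma^*p^*(C(jH))) = 0$
for arbitrary $Y', C \in \cA_0$ and $1 \leq t \leq N-1-j$.

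The natural way to exploit Lemma~\ref{lemma-lls-Cd}\eqref{lls-orthogonality} is via the splitting property. By Lemma~\ref{lemma-im-ker-p-gamma}, $\gamma^*p^*$ is right splitting, so $\gamma^*p^*(C(jH)) \simeq \gamma^*p^*(C'')$ with $C'' := \alpha_0^!(C(jH)) \in \cA_0$. If I can show $C'' \in \llangle \fatw_0, \ldots, \fatw_{j-1}\rrangle$, then Lemma~\ref{lemma-lls-Cd}\eqref{lls-orthogonality} applied with $i' = j-1$ and $D = Y'$ yields precisely the required vanishing, since $N-2-i' = N-1-j$. Using the semiorthogonal decomposition $\cA_0 = \llangle \fatw_0, \ldots, \fatw_{N-2}\rrangle$ from Lemma~\ref{lemma-twisted-sod-A0}, admissibility of its components (from right strongness), and the adjunction $\alpha_0 \dashv \alpha_0^!$, this containment is equivalent to the $\Map$-vanishing
\begin{equation*}
\Map_{\cA}(\fatw_l,\, \cA_0(jH)) \;=\; 0 \qquad \text{for all } l \geq j \geq 1.
\end{equation*}

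The main obstacle is verifying this $\Map$-vanishing in $\cA$. I would represent $Y \in \fatw_l$ as $Y = \alpha_0^*(\tilde{Y})$ for $\tilde{Y} \in \fa_l((l+1)H)$, and use the mutation triangle $Z \to \tilde{Y} \to Y$ from the proof of Lemma~\ref{lemma-twisted-sod-A0}, in which $Z = \pi_{\cB}(\tilde{Y}) \in \cB := \llangle \cA_1(H), \ldots, \cA_l(lH)\rrangle$. Applying $\cHom(-, X)$ for $X \in \cA_0(jH)$, the problem splits into $\Map(\tilde{Y}, X) = 0$ and $\Map(Z, X) = 0$. The first is immediate from semiorthogonality of the twisted right Lefschetz decomposition $\cA = \llangle \cA_0(H), \cA_1(2H), \ldots, \cA_{m-1}(mH)\rrangle$, in which $\tilde{Y}$ lies in the $l$-th piece while $\cA_0(jH)$ is contained in $\llangle \cA_0(H), \ldots, \cA_{j-1}(jH)\rrangle$ by Lemma~\ref{lemma-A0-sequence}; for $l \geq j$ all relevant pieces are strictly earlier, so the vanishing is automatic. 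The second vanishing is the technical heart: a crude application of Lefschetz semiorthogonality is insufficient since, for instance, $\Map(\cA_1(H), \cA_0(H)) \neq 0$ in general. One must instead exploit the fact that $\tilde{Y}$ lies in the primitive $\fa_l((l+1)H) = \cA_l((l+1)H) \cap \cA_{l+1}((l+1)H)^\perp$, which constrains $Z = \pi_\cB(\tilde{Y})$ to a specific subcategory obtained by iterated mutation through the Lefschetz pieces; this refined control --- parallel to the inductive mutation argument in the proof of Lemma~\ref{lemma-pullback-A0-decomposition} --- is the step where the real work lies, and where the bound $l \geq j$ enters decisively.
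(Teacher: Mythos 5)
Your proposal does not close and it rests on an incorrect use of the splitting property. Since $\ker(\gamma^*p^*) = {}^\perp\cA_0$ (Lemma~\ref{lemma-im-ker-p-gamma}), the decomposition through which $\gamma^*p^*$ factors is $\cA = \llangle \cA_0, {}^\perp\cA_0 \rrangle$, whose projection onto $\cA_0$ is the \emph{left} adjoint $\alpha_0^*$ of the inclusion, not the right adjoint $\alpha_0^!$. So the correct reduction is $\gamma^*p^*(C) \simeq \gamma^*p^*\alpha_0^*(C)$; your $C'' = \alpha_0^!(C(jH))$ is not what $\gamma^*p^*$ sees, and applying $\gamma^*p^*$ to the truncation triangle for $\cA = \llangle \cA_0^\perp, \cA_0\rrangle$ does not kill the first term. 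Once you swap in $\alpha_0^*$, the adjunction $\alpha_0 \dashv \alpha_0^!$ you use to pass to the $\Map$-vanishing no longer applies (you would need the left adjoint of $\alpha_0^*$, which is $\rR_{{}^\perp\cA_0} \circ \alpha_0$), so the target inclusion $\alpha_0^*(C(jH)) \in \llangle \fatw_0,\dots,\fatw_{j-1}\rrangle$ is not equivalent to the vanishing $\Map_\cA(\fatw_l, \cA_0(jH)) = 0$ you state. Finally, and independently, your writeup explicitly leaves the ``technical heart'' --- the vanishing of $\Map(Z, X)$ --- unproved, deferring it to a mutation argument you describe but do not carry out; as written this is a gap, not a proof.

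The paper's argument is much shorter and avoids the $\Map$-vanishing entirely. After the (corrected) reduction $\gamma^*p^*(C) \simeq \gamma^*p^*\alpha_0^*(C)$, one observes directly from the definitions that
$\alpha_0^*\bigl(\llangle \cA_0(H),\dots,\cA_{i-1}(iH)\rrangle\bigr) = \llangle \fatw_0,\dots,\fatw_{i-1}\rrangle$:
each $\cA_j((j+1)H)$ decomposes as $\llangle \fa_j((j+1)H), \cA_{j+1}((j+1)H)\rrangle$, the functor $\alpha_0^*$ kills $\cA_{j+1}((j+1)H) \subset {}^\perp\cA_0$ (it is one of the components of the right Lefschetz decomposition with index $\geq 1$), and by definition $\alpha_0^*(\fa_j((j+1)H)) = \fatw_j$. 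The inclusion $\gamma^*p^*(\llangle \fatw_0,\dots,\fatw_{i-1}\rrangle) \subset \cAd_{i+1-N}$ is then immediate from the definition of $\cAd_{i+1-N}$ as $\llangle \fad_{1-n},\dots,\fad_{i+1-N}\rrangle$ with $\fad_j = \gamma^*p^*(\fatw_{N-2+j})$.
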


\begin{proof}
By Lemma~\ref{lemma-im-ker-p-gamma} the functor $\gamma^*\circ p^*$ kills 
${^\perp}\cA_0$. 
Hence
$\gamma^*p^*(C) \simeq \gamma^*p^*\alpha_0^*(C)$ for any \mbox{$C \in \cA$}, 
where recall $\alpha_0 \colon \cA_0 \to \cA$ is the inclusion. 
But it follows immediately from the definitions that 
\begin{equation*}
\alpha_0^*\left( \llangle \cA_0(H), \dots, \cA_{i-1}(iH) \rrangle \right) = 
\llangle \fatw_0, \dots, \fatw_{i-1} \rrangle,  
\end{equation*}
and then the result follows from the definition of $\cAd_{i+1-N}$. 
\end{proof}

We will need the following observation. 
\begin{lemma}
\label{lemma-im-Phim-Phip}
There are inclusions 
\begin{align*}
& \im \Phim_r \subset 
\left( \llangle \cA_s(H), 
\dots, 
\cA_{m-1}((m-s)H) \rrangle \sotimes \Perf(\Fl{r-1}{r})  \right)^{\perp} \\ 
& \llangle 
\cB_{1-n}((r-n-1)H') , 
\dots , \cB_{1-r}(-H') \rrangle \sotimes \Perf(\Fl{r-1}{r}) 
\subset \ker \Phip_{r-1}. 
\end{align*}
\end{lemma}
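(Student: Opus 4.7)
The plan is to deduce both inclusions from their $\bG$-linear analogues, namely Proposition~\ref{proposition-sos-kerPhiadjoint} and Proposition~\ref{proposition-sos-kerPhi}, by base change along the projection morphisms $\pi_r \colon \Fl{r-1}{r} \to \bG_r$ and $\pi_{r-1} \colon \Fl{r-1}{r} \to \bG_{r-1}$ from diagram~\eqref{Fl}. The key preliminary observation is that under the canonical identifications
\begin{equation*}
\bLvp_s(\cB) \simeq \bLv_s(\cB) \otimes_{\Perf(\bG_r)} \Perf(\Fl{r-1}{r}), \quad
\bLm_r(\cA)  \simeq \bL_r(\cA) \otimes_{\Perf(\bG_r)} \Perf(\Fl{r-1}{r})
\end{equation*}
(and similarly for $\bLvm_{s+1}$, $\bLp_{r-1}$ over $\bG_{r-1}$) coming from Theorem~\ref{theorem-FM-functors}\eqref{fiber-product}, the functor $\Phim_r$ is identified with $\Phi_r \otimes \id_{\Perf(\Fl{r-1}{r})}$ and $\Phip_{r-1}$ with $\Phi_{r-1} \otimes \id_{\Perf(\Fl{r-1}{r})}$. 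This follows from the kernel formulas $\cEm_r = (\pim_r \times \pivp_s)^*\cE_r$ and $\cEp_{r-1} = (\pip_{r-1} \times \pivm_{s+1})^*\cE_{r-1}$ together with Lemma~\ref{lemma-pullback-kernel} and Remark~\ref{remark-pullback-kernel}, since the squares defining $\bLvp_s, \bLm_r, \bLvm_{s+1}, \bLp_{r-1}$ over the respective Grassmannians are cartesian.

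For the first inclusion, Proposition~\ref{proposition-sos-kerPhiadjoint} says $\im \Phi_r \subset \Ku_r(\cA)$, i.e.\ $\im \Phi_r$ is contained in the right orthogonal in $\bL_r(\cA)$ to $\cD := \llangle \cA_s(H), \dots, \cA_{m-1}((m-s)H) \rrangle \sotimes \Perf(\bG_r)$. Base-changing the semiorthogonal decomposition $\bL_r(\cA) = \llangle \Ku_r(\cA), \cD \rrangle$ along $\pi_r$ via Lemma~\ref{lemma-sod-base-change} yields
\begin{equation*}
\bLm_r(\cA) = \llangle \Ku_r(\cA) \otimes_{\Perf(\bG_r)} \Perf(\Fl{r-1}{r}), \ \llangle \cA_s(H), \dots, \cA_{m-1}((m-s)H)\rrangle \sotimes \Perf(\Fl{r-1}{r}) \rrangle,
\end{equation*}
where one uses $\cA_i(iH) \sotimes \Perf(\bG_r) \otimes_{\Perf(\bG_r)} \Perf(\Fl{r-1}{r}) \simeq \cA_i(iH) \sotimes \Perf(\Fl{r-1}{r})$. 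Since $\Phim_r = \Phi_r \otimes \id$, the image of $\Phim_r$ is contained in $\Ku_r(\cA) \otimes_{\Perf(\bG_r)} \Perf(\Fl{r-1}{r})$, which is precisely the right orthogonal to the stated category.

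For the second inclusion, applying Proposition~\ref{proposition-sos-kerPhi} with $r$ replaced by $r-1$ and $s$ by $s+1$ gives
\begin{equation*}
\llangle \cB_{1-n}((r-1-n)H') \sotimes \Perf(\bG_{r-1}), \dots, \cB_{1-r}(-H') \sotimes \Perf(\bG_{r-1}) \rrangle \subset \ker \Phi_{r-1}.
\end{equation*}
By Lemma~\ref{lemma-box-tensor-generation}, the base-changed subcategory in the statement of the lemma is thickly generated by pure tensors $X \boxtimes F$ with $X$ in the corresponding subcategory of $\bLv_{s+1}(\cB)$ and $F \in \Perf(\Fl{r-1}{r})$, and on such tensors $\Phip_{r-1}(X \boxtimes F) = \Phi_{r-1}(X) \otimes F = 0$. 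I don't foresee a serious obstacle: the only care needed is in making precise the identification of $\Phim_r$ and $\Phip_{r-1}$ with tensor products of functors, which is a routine application of the Fourier--Mukai kernel formalism developed in \S\ref{section-kernels}, and in matching the notation $\cA_i(iH) \sotimes \Perf(\Fl{r-1}{r})$ with the base change of $\cA_i(iH) \sotimes \Perf(\bG_r)$.
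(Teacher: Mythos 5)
Your proposal is correct and takes essentially the same route as the paper's proof. The paper invokes Lemma~\ref{lemma-base-change-Phi} for the relations $(\pim_{r})_* \circ \Phim_r \simeq \Phi_r \circ (\pivp_s)_*$ and $\Phip_{r-1} \circ (\pivm_{s+1})^* \simeq (\pip_{r-1})^* \circ \Phi_{r-1}$ and then concludes via $\Fl{r-1}{r}$-linearity and Propositions~\ref{proposition-sos-kerPhiadjoint} and~\ref{proposition-sos-kerPhi}; these two relations, together with $\Fl{r-1}{r}$-linearity and generation by pure tensors, are exactly what identify $\Phim_r$ and $\Phip_{r-1}$ as the base changes $\Phi_r \otimes \id$ and $\Phi_{r-1} \otimes \id$, which is the picture you state explicitly. (Lemma~\ref{lemma-base-change-Phi} is itself derived from Lemma~\ref{lemma-pullback-kernel} and Remark~\ref{remark-pullback-kernel}, so you and the paper are using the same underlying kernel-formalism facts.)
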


\begin{proof}
By Lemma~\ref{lemma-base-change-Phi} we have 
\begin{align*}
(\pim_{r})_* \circ \Phim_r & \simeq \Phi_r \circ (\pivp_s)_* , \\ 
\Phip_{r-1} \circ (\pivm_{s+1})^* & \simeq (\pip_{r-1})^* \circ \Phi_{r-1}, 
\end{align*}
from which the result follows using $\Fl{r-1}{r}$-linearity and 
Propositions~\ref{proposition-sos-kerPhiadjoint} and~\ref{proposition-sos-kerPhi}. 
\end{proof}

\begin{proposition}
\label{proposition-Phi-compositions}
We have 
\begin{equation}
\label{Phi-composition-vanish-1}
\Phip_{r-1} \circ (\Phi_{\iota_{r*} \tcE_r \otimes \cO(D_r)})^* \circ \Phim_r \simeq 0 . 
\end{equation}
Moreover, we have the stronger vanishings 
\begin{align}
\label{Phi-composition-vanish-2}
(\Phi_{\iota_{r*} \tcE_r \otimes \cO(D_r)})^* \circ \Phim_r & \simeq 0  \, \,  \text{ if } r-1 \geq n , \\ 
\label{Phi-composition-vanish-3}
\Phip_{r-1} \circ  (\Phi_{\iota_{r*} \tcE_r \otimes \cO(D_r)})^* & \simeq 0  \, \, \text{ if } r \leq N-m . 
\end{align} 
\end{proposition}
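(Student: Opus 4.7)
The proof plan is to combine Lemma~\ref{lemma-im-Phim-Phip} (describing the image of $\Phim_r$ and a kernel subcategory of $\Phip_{r-1}$) with Proposition~\ref{proposition-im-Phi-tE} (describing the image of $(\Phi_{\iota_{r*}\tcE_r \otimes \cO(D_r)})^*$ on a specific subcategory). The key observation is that the three functors chain together exactly: the target of $\Phim_r$ sits inside the source subcategory for the image statement of Proposition~\ref{proposition-im-Phi-tE}, whose image target sits inside the kernel subcategory for $\Phip_{r-1}$.

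More concretely, for vanishing \eqref{Phi-composition-vanish-1}: By Lemma~\ref{lemma-im-Phim-Phip}, we have
\begin{equation*}
\im \Phim_r \subset \bigl( \llangle \cA_s(H), \dots, \cA_{m-1}((m-s)H) \rrangle \sotimes \Perf(\Fl{r-1}{r}) \bigr)^{\perp}.
\end{equation*}
Applying Proposition~\ref{proposition-im-Phi-tE}, the image of $(\Phi_{\iota_{r*}\tcE_r \otimes \cO(D_r)})^* \circ \Phim_r$ lies in $\cB_{1-r}(-H') \sotimes \Perf(\Fl{r-1}{r})$. When $r \leq n$, this sits as the rightmost piece of the subcategory $\llangle \cB_{1-n}((r-n-1)H'), \dots, \cB_{1-r}(-H') \rrangle \sotimes \Perf(\Fl{r-1}{r})$, which Lemma~\ref{lemma-im-Phim-Phip} identifies as contained in $\ker \Phip_{r-1}$; when $r > n$, the target $\cB_{1-r}(-H')$ vanishes by the convention that $\cB_j = 0$ for $j < 1-n$. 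Either way, composing with $\Phip_{r-1}$ yields zero.

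For the stronger vanishing \eqref{Phi-composition-vanish-2}, the hypothesis $r - 1 \geq n$ is equivalent to $1 - r < 1 - n$, so $\cB_{1-r} = 0$. Combining the containment $\im \Phim_r \subset (\cdots)^{\perp}$ from Lemma~\ref{lemma-im-Phim-Phip} with Proposition~\ref{proposition-im-Phi-tE}, we see that $(\Phi_{\iota_{r*}\tcE_r \otimes \cO(D_r)})^* \circ \Phim_r$ factors through the zero subcategory $\cB_{1-r}(-H') \sotimes \Perf(\Fl{r-1}{r}) = 0$. For vanishing \eqref{Phi-composition-vanish-3}, the hypothesis $r \leq N - m$, i.e.\ $s \geq m$, makes the semiorthogonal sequence $\llangle \cA_s(H), \dots, \cA_{m-1}((m-s)H) \rrangle$ empty, so its orthogonal in Proposition~\ref{proposition-im-Phi-tE} is the whole target category. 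Thus the image of $(\Phi_{\iota_{r*}\tcE_r \otimes \cO(D_r)})^*$ on \emph{all} of $\bLm_r(\cA)$ lies in $\cB_{1-r}(-H') \sotimes \Perf(\Fl{r-1}{r})$, and the same dichotomy on $r$ versus $n$ used for \eqref{Phi-composition-vanish-1} then produces the vanishing upon composing with $\Phip_{r-1}$.

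The main obstacle is essentially bookkeeping: one must verify that $\cB_{1-r}(-H')$ really is the rightmost component of the kernel subcategory appearing in Lemma~\ref{lemma-im-Phim-Phip} (by matching the twist convention $(j + r - 2)H'$ of the decomposition of $\bLv_{s+1}(\cB)$ at $j = 1-r$), and that the edge cases $r > n$ and $r \leq N-m$ are handled uniformly. No deeper input — in particular, no splitting property of $\Phim_r$ or $\Phip_{r-1}$ — is needed, which is important since Proposition~\ref{proposition-Phi-splitting} relies on Proposition~\ref{proposition-Phi-compositions}.
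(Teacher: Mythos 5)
Your proposal is correct and follows essentially the same route as the paper's (very terse) proof: combine Lemma~\ref{lemma-im-Phim-Phip} with Proposition~\ref{proposition-im-Phi-tE}, then use that $\cB_{1-r}=0$ when $r-1\geq n$ and that the orthogonal subcategory in Proposition~\ref{proposition-im-Phi-tE} is all of $\bLm_r(\cA)$ when $s\geq m$. Your explicit handling of the $r\leq n$ versus $r>n$ dichotomy and your check that the twist $(j+r-2)H'$ at $j=1-r$ indeed gives $-H'$ are exactly the bookkeeping the paper's ``Follows from\dots'' leaves implicit, and your observation that no appeal to the splitting property is made (avoiding circularity with Proposition~\ref{proposition-Phi-splitting}) is accurate.
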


\begin{proof}
Follows from Lemmas \ref{lemma-im-Phim-Phip} and \ref{proposition-im-Phi-tE} and 
the observations that $\cB_{1-r} = 0$ if $r-1 \geq n$ and 
\begin{equation*}
\left( \llangle \cA_s(H), 
\dots, 
\cA_{m-1}((m-s)H) \rrangle \sotimes \Perf(\Fl{r-1}{r})  \right)^{\perp} = \bLm_r(\cA)
\end{equation*} 
if $s \geq m$ or equivalently $r \leq N - m$. 
\end{proof}

\subsection{The functors $\Phi_r$ are left splitting} 

\begin{proposition}
\label{proposition-Phi-splitting}
The functor $\Phi_r \colon \bLv_s(\cB) \to \bL_r(\cA)$ is left splitting for any $r$, 
and hence 
\begin{align*}
\bL_r(\cA)  & = \llangle \im \Phi_r, \ker \Phi_r^* \rrangle, \\
\bLv_s(\cB) & = \llangle \ker \Phi_r , \im \Phi_r^* \rrangle. 
\end{align*}
\end{proposition}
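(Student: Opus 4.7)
The plan is to prove the first assertion — that $\Phi_r \colon \bLv_s(\cB) \to \bL_r(\cA)$ is left splitting; the two semiorthogonal decompositions will then follow immediately from Theorem~\ref{theorem-splitting-functors}\ref{splitting-functor-3}, since $\Phi_r$ has left and right adjoints by Lemma~\ref{Phi-adjoints}. I will verify left splitting via condition \ref{splitting-functor-2} of Theorem~\ref{theorem-splitting-functors}, namely that the canonical morphism $\Phi_r \Phi_r^* \Phi_r \to \Phi_r$ is an equivalence, and I will proceed by induction on $r$.

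For the base case, I will take $r = N-1$. Under the identifications $\bLv_1 = \bPv$ and $\bL_{N-1} = \bH$, the functor $\Phi_{N-1}$ coincides with $\gamma \circ \phi \colon \cB \to \bH(\cA)$. Both $\gamma$ (the inclusion of an admissible subcategory, by Lemma~\ref{lemma-HPD-admissible}) and $\phi$ (by Setup~\ref{setup-HPD}\eqref{setup-fully-faithful}) are fully faithful with left adjoints, so $\Phi_{N-1}$ is fully faithful with a left adjoint and hence automatically left splitting (the counit $\Phi_{N-1}^* \Phi_{N-1} \to \id$ is an equivalence, whence so is $\Phi_{N-1} \Phi_{N-1}^* \Phi_{N-1} \to \Phi_{N-1}$). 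At the other extreme, $r = N$ gives $\bLv_0(\cB) = 0$, so $\Phi_N$ is the zero functor and trivially left splitting.

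For the inductive step, I will assume $\Phi_r$ is left splitting and deduce the same for $\Phi_{r-1}$ in three stages. First, since the squares $\bLm_r = \bL_r \times_{\bG_r} \Fl{r-1}{r}$ and $\bLvp_s \cong \bLv_s \times_{\bG_r} \Fl{r-1}{r}$ are cartesian and $\Phim_r$ admits a left adjoint by Lemma~\ref{Phi-adjoints}, Proposition~\ref{proposition-base-change-splitting-functor} will give that $\Phim_r$ is left splitting. Second, I will propagate splitting from $\Phim_r$ to $\Phip_{r-1}$ by composing the exact triangle of Proposition~\ref{proposition-Phi-relations}\eqref{functor-triangle-2} with $\Phim_r$ on the right and $\Phip_{r-1}$ on the left; the vanishing $\Phip_{r-1} \circ (\Phi_{\iota_{r*}\tcE_r \otimes \cO(D_r)})^* \circ \Phim_r \simeq 0$ from Proposition~\ref{proposition-Phi-compositions} kills the first term of the resulting triangle, and the identification $a_r^* \circ \Phim_r \simeq \Phip_{r-1} \circ (b_{s+1})_*$ of Proposition~\ref{proposition-Phi-relations}\eqref{Phim-Phip-relation} together with the splitting of $\Phim_r$ then allows one to identify the canonical splitting morphism for $\Phip_{r-1}$ with an equivalence. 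Third, since $\pip_{r-1}$ and $\pivm_{s+1}$ are base changes of the projective bundle $\pi_{r-1} \colon \Fl{r-1}{r} \to \bG_{r-1}$, their $*$-pullbacks are fully faithful; the base change identity $\Phip_{r-1} \circ (\pivm_{s+1})^* \simeq (\pip_{r-1})^* \circ \Phi_{r-1}$ of Lemma~\ref{lemma-base-change-Phi} allows me to rewrite $(\pip_{r-1})^*$ applied to the canonical splitting morphism for $\Phi_{r-1}$ as the canonical splitting morphism for $\Phip_{r-1}$ (an equivalence by stage two); cancellation via $(\pip_{r-1})_* \circ (\pip_{r-1})^* \simeq \id$ then yields that $\Phi_{r-1}$ is left splitting.

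The main obstacle will be the second stage of the inductive step. Chasing the exact triangle and the vanishings produces abstract equivalences between various functors, but verifying that these equivalences coincide with the canonical splitting morphism for $\Phip_{r-1}$ requires careful bookkeeping of natural transformations and a precise understanding of how the connecting map in the triangle of Proposition~\ref{proposition-Phi-relations}\eqref{functor-triangle-2} interacts with the divisorial embeddings $a_r$ and $b_{s+1}$ from Lemma~\ref{lemma-L-divisor}.
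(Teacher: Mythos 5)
Your base case and overall strategy (descending induction, checking criterion~\ref{splitting-functor-2} of Theorem~\ref{theorem-splitting-functors}, using Proposition~\ref{proposition-base-change-splitting-functor} to get splitting of $\Phim_r$, then the triangle and vanishings) all agree with the paper. The genuine problem is the handoff between your stages two and three. In stage two, the argument you sketch — composing the triangle of Proposition~\ref{proposition-Phi-relations}\eqref{functor-triangle-2} with $\Phim_r$ on the right and $\Phip_{r-1}$ on the left, killing the first term via~\eqref{Phi-composition-vanish-1}, and using~\eqref{Phim-Phip-relation} and the splitting of $\Phim_r$ — produces an equivalence $\Phip_{r-1}(\Phip_{r-1})^*\Phip_{r-1}(b_{s+1})_* \simeq \Phip_{r-1}(b_{s+1})_*$ of functors $\bLvp_s(\cB) \to \bLp_{r-1}(\cA)$. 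This is the splitting morphism of $\Phip_{r-1}$ \emph{pre-composed with} $(b_{s+1})_*$, not the statement that $\Phip_{r-1}$ itself is left splitting (and there is no reason to expect that during the induction, since $\Phip_{r-1}$ is a base change of the functor $\Phi_{r-1}$ whose splitting you are still trying to establish). But your stage three, after pushing $(\pip_{r-1})^*$ through $\Phi_{r-1}\Phi_{r-1}^*\Phi_{r-1}$ via Lemma~\ref{lemma-base-change-Phi}, lands on the splitting morphism of $\Phip_{r-1}$ pre-composed with $(\pivm_{s+1})^*$. These two restrictions — to the image of $(b_{s+1})_*$ and to the image of $(\pivm_{s+1})^*$ — are not the same, so stage two does not deliver what stage three consumes.

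The paper's resolution of this mismatch is the missing ingredient. Instead of pulling back via $(\pip_{r-1})^*$ and cancelling with $(\pip_{r-1})_*$, one inserts the identity on the source category: by Lemma~\ref{lemma-xi} the morphism $\beta_{s+1} = \pivm_{s+1} \circ b_{s+1}$ is a projective bundle, so $\beta_{s+1}^*$ is fully faithful and $(\pivm_{s+1})_*(b_{s+1})_*\beta_{s+1}^* \simeq (\beta_{s+1})_*\beta_{s+1}^* \simeq \id$ on $\bLv_{s+1}(\cB)$. Writing $\Phi_{r-1}\Phi_{r-1}^*\Phi_{r-1} \simeq \Phi_{r-1}\Phi_{r-1}^*\Phi_{r-1}(\pivm_{s+1})_*(b_{s+1})_*\beta_{s+1}^*$ and then commuting $(\pivm_{s+1})_*$ through the three $\Phi$'s via Lemma~\ref{lemma-base-change-Phi} reduces the whole expression to $(\pip_{r-1})_*\Phip_{r-1}(\Phip_{r-1})^*\Phip_{r-1}(b_{s+1})_*\beta_{s+1}^*$, and now the inner term is exactly the one your stage two controls. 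Applying the equivalence there and reversing the commutations gives $\Phi_{r-1}\Phi_{r-1}^*\Phi_{r-1} \simeq \Phi_{r-1}$ directly, with no intermediate claim that $\Phip_{r-1}$ is splitting. Your worry in the last paragraph about bookkeeping of natural transformations is real, but it is secondary to this structural gap: the precise objects being compared in stages two and three do not line up without the $\beta_{s+1}$ projective-bundle trick.
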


\begin{proof}
We argue by descending induction on $r$. 
For $r = N$ the result is trivial as $\bL_0(\cB) = 0$, and 
for $r = N-1$ the result holds since by assumption the 
functor $\Phi_{N-1}$ is fully faithful with admissible image. 
Now assume the result holds for $\Phi_{r}$. We will prove 
that $\Phi_{r-1}$ satisfies criterion~\ref{splitting-functor-2} 
of Theorem~\ref{theorem-splitting-functors}. 

Note that it follows from Lemma~\ref{lemma-xi} that the functor 
\begin{equation*}
\beta_{s+1}^* \colon \bLv_{s+1}(\cB) \to \bLvp_s(\cB) 
\end{equation*}
is fully faithful, and hence 
\begin{equation*}
(\pivm_{s+1})_*(b_{s+1})_*\beta_{s+1}^* \simeq (\beta_{s+1})_*\beta_{s+1}^* \simeq \id.
\end{equation*}
Thus applying Lemma~\ref{lemma-base-change-Phi} we find 
\begin{align}
\nonumber
\Phi_{r-1} \Phi_{r-1}^* \Phi_{r-1} 
& \simeq \Phi_{r-1} \Phi_{r-1}^* \Phi_{r-1} (\pivm_{s+1})_*(b_{s+1})_*\beta_{s+1}^*  \\
\label{splitting-induction-1}
& \simeq (\pip_{r-1})_*\Phip_{r-1}(\Phip_{r-1})^*\Phip_{r-1}(b_{s+1})_*\beta_{s+1}^*. 
\end{align}

Now we examine the inner term $\Phip_{r-1}(\Phip_{r-1})^*\Phip_{r-1}(b_{s+1})_*$. 
First from Proposition~\ref{proposition-Phi-relations}\eqref{Phim-Phip-relation} we 
deduce 
\begin{equation*}
\Phip_{r-1}(\Phip_{r-1})^*\Phip_{r-1}(b_{s+1})_* \simeq 
\Phip_{r-1}(\Phip_{r-1})^*a_r^*\Phim_r. 
\end{equation*}
By~\eqref{Phi-composition-vanish-1} if we compose the 
the triangle of Proposition~\ref{proposition-Phi-relations}\eqref{functor-triangle-2} 
on the left with $\Phip_{r-1}$ and on the right with $\Phim_r$, 
then the first term vanishes and we obtain an equivalence 
\begin{equation*}
\Phip_{r-1}(\Phip_{r-1})^*a_r^*\Phim_r \simeq 
\Phip_{r-1} (b_{s+1})_* (\Phim_r)^* \Phim_r. 
\end{equation*}
Applying Proposition~\ref{proposition-Phi-relations}\eqref{Phim-Phip-relation} again, 
we find
\begin{equation*}
\Phip_{r-1} (b_{s+1})_* (\Phim_r)^* \Phim_r \simeq 
a_r^*\Phim_r(\Phim_r)^* \Phim_r
\end{equation*}
Since by assumption $\Phi_r$ is left splitting, 
by Proposition~\ref{proposition-base-change-splitting-functor} so is 
$\Phim_r$, and hence 
\begin{equation*}
a_r^*\Phim_r(\Phim_r)^* \Phim_r \simeq a_r^*\Phim_r . 
\end{equation*}
Combining the above and applying Proposition~\ref{proposition-Phi-relations}\eqref{Phim-Phip-relation} 
one more time, all together we have shown
\begin{equation*}
\Phip_{r-1}(\Phip_{r-1})^*\Phip_{r-1}(b_{s+1})_* \simeq \Phip_{r-1}(b_{s+1})_*. 
\end{equation*}

Returning to~\eqref{splitting-induction-1}, we have thus shown 
\begin{equation*}
\Phi_{r-1} \Phi_{r-1}^* \Phi_{r-1}  \simeq 
(\pip_{r-1})_*\Phip_{r-1}(b_{s+1})_*\beta_{s+1}^*.
\end{equation*}
So using Lemma~\ref{lemma-base-change-Phi} we conclude 
\begin{equation*}
\Phi_{r-1} \Phi_{r-1}^* \Phi_{r-1} \simeq \Phi_{r-1} (\pivm_{s+1})_*(b_{s+1})_*\beta_{s+1}^* \simeq 
\Phi_{r-1} . \qedhere
\end{equation*}
\end{proof}

\subsection{Proof of Theorem~\ref{key-theorem}}
By combining Proposition~\ref{proposition-Phi-splitting}, 
Theorem~\ref{theorem-splitting-functors}, and Propositions~\ref{proposition-sos-kerPhiadjoint} and~\ref{proposition-sos-kerPhi}, 
we obtain semiorthogonal sequences 
\begin{align*}
\bL_r(\cA) & \supset
\llangle \im \Phi_r, 
\cA_s(H) \sotimes \Perf(\bG_r), 
\dots, 
\cA_{m-1}((m-s)H) \sotimes \Perf(\bG_r)  \rrangle , 
\\ 
\bLv_{s}(\cB) & \supset  
\llangle 
\cB_{1-n}((r-n)H') \sotimes \Perf(\bGv_s), 
\dots , \cB_{-r}(-H')\sotimes \Perf(\bGv_s)  , \im \Phi_r^* 
\rrangle , 
\end{align*}
such that the functors $\Phi_r$ and $\Phi_r^*$ induce mutually inverse equivalences $\im \Phi_r^* \simeq \im \Phi_r$. 
Hence to prove Theorem~\ref{key-theorem}, it remains to prove that the 
above semiorthogonal sequences generate $\bL_r(\cA)$ and $\bLv_s(\cB)$. 
We do this in Propositions~\ref{proposition-LvsD-generation} and~\ref{proposition-LrC-generation} 
via an inductive argument. 
We start with $\bLv_s(\cB)$, where 
the base case of the induction holds by our assumptions on $\cB$.

\begin{proposition}
\label{proposition-LvsD-generation}
We have 
\begin{equation*}
\label{LvsD-generation}
\bLv_{s}(\cB) =  
\llangle 
\cB_{1-n}((r-n)H') \sotimes \Perf(\bGv_s), 
\dots , \cB_{-r}(-H')\sotimes \Perf(\bGv_s)  , \im \Phi_r^* 
\rrangle  . 
\end{equation*} 
\end{proposition}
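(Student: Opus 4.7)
The plan is to argue by descending induction on $r$ (equivalently, ascending induction on $s = N - r$). Semiorthogonality of the claimed decomposition is not in question: Proposition~\ref{proposition-sos-kerPhi} places each $\cB_j((r-j)H') \sotimes \Perf(\bGv_s)$ inside $\ker \Phi_r$, and $\im \Phi_r^*$ coincides with the right orthogonal of $\ker \Phi_r$ by the splitting $\bLv_s(\cB) = \llangle \ker \Phi_r, \im \Phi_r^* \rrangle$ of Proposition~\ref{proposition-Phi-splitting}. So the task is purely one of generation.

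For the base case $r = N-1$, $s = 1$: here $\bLv_1(\cB) = \cB$. Because $i = 0$ always lies in $\{i \geq 0 \mid \cA_i = \cA_0\}$, we have $n \leq N - 1$, and hence the index range $1 - n \leq j \leq -(N-1)$ for the $\cB_j$-terms is empty. Thus it suffices to check that $\cB = \im \Phi_{N-1}^*$. Since $\Phi_{N-1} = \gamma \circ \phi$ by~\eqref{Phigammaphi} is a composition of fully faithful functors admitting left adjoints (by Setup~\ref{setup-HPD}\eqref{setup-fully-faithful} and Lemma~\ref{lemma-HPD-admissible}), its left adjoint $\Phi_{N-1}^* = \phi^* \circ \gamma^*$ is essentially surjective onto $\cB$, yielding the claim.

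For the inductive step, fix $r \leq N-2$ and assume the statement for $r+1$, hence for $\bLv_{s-1}(\cB)$. Given $X \in \bLv_s(\cB)$, the strategy is to transport $X$ through the flag diagram associated to $\Flv{s-1}{s} \cong \Fl{r}{r+1}$, using the morphisms $\pivm_s$, $\pivp_{s-1}$, $b_s$, $\beta_s$, to produce an auxiliary object in $\bLv_{s-1}(\cB)$ to which the inductive hypothesis applies, and then to pull the resulting decomposition back to $\bLv_s(\cB)$. The key compatibility tools are the base-change equivalences of Lemma~\ref{lemma-base-change-Phi} (with the index shift $r \mapsto r+1$) and the exact triangle of Proposition~\ref{proposition-Phi-relations}\eqref{functor-triangle-2} relating $(\Phi_{\iota_{r+1*}\tcE_{r+1} \otimes \cO(D_{r+1})})^*$, $(\Phip_r)^* \circ a_{r+1}^*$, and $(b_s)_* \circ (\Phim_{r+1})^*$; together these translate each piece of the inductive decomposition at level $s-1$ into a piece of the sought decomposition at level $s$.

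The hard part will be controlling the image of the $\im \Phi_{r+1}^*$-summand under this transport. Proposition~\ref{proposition-im-Phi-tE}, applied with the same shift, localizes the ``error term'' $(\Phi_{\iota_{r+1*}\tcE_{r+1} \otimes \cO(D_{r+1})})^*$ inside $\cB_{-r}(-H') \sotimes \Perf(\Fl{r}{r+1})$, while the vanishings of Proposition~\ref{proposition-Phi-compositions} ensure the other terms produced by transport land in $\im \Phi_r^*$ rather than in some uncontrolled extension of $\ker \Phi_r$. Piecing this together, every $X \in \bLv_s(\cB)$ is seen to be an iterated extension of objects in the $\cB_j((r-j)H') \sotimes \Perf(\bGv_s)$-subcategories together with an object of $\im \Phi_r^*$, closing the induction.
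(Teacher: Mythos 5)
Your overall strategy — descending induction on $r$, passing through the flag variety $\Fl{r-1}{r}$ via the correspondence $\bLv_s \leftarrow \bLvp_{s-1} \rightarrow \bLvm_s \leftarrow \bLv_s$ and base changing the inductive hypothesis, controlled by Lemma~\ref{lemma-base-change-Phi}, Proposition~\ref{proposition-Phi-relations}\eqref{functor-triangle-2}, Proposition~\ref{proposition-im-Phi-tE}, and the sheaf exact sequence from Lemma~\ref{lemma-L-divisor}\eqref{lemma-L-divisor-b} — is the paper's strategy, and your base case and indexing are right. But several details are under-specified, and one of them is a genuine gap.

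The genuine gap is the engine of the inductive step. You describe transporting an object through the flag diagram and then appealing to the inductive hypothesis, but you never explain \emph{why this transport loses no information}. The paper's proof takes an object $D$ in the right orthogonal of the claimed generating sequence and reduces to showing $b_{s+1}^!(\pivm_{s+1})^*D \simeq 0$. The reduction works precisely because, by Lemma~\ref{lemma-xi}, the morphism $\beta_{s+1} \colon \bLvp_s \to \bLv_{s+1}$ is a projective bundle, so $\beta_{s+1}^* = b_{s+1}^*(\pivm_{s+1})^*$ is fully faithful, and $b_{s+1}^!(\pivm_{s+1})^*$ differs from $\beta_{s+1}^*$ only by a line-bundle twist (Lemma~\ref{lemma-L-divisor}\eqref{lemma-L-divisor-b}). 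Without this observation, showing the transported object vanishes gives you nothing about $D$ itself. Your plan has no substitute for this step; as written, one could transport, apply the hypothesis, and then be stuck.

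Two further points worth noting, though they are more stylistic than fatal. First, you phrase the goal as exhibiting every object as an \emph{iterated extension}; the paper instead shows that the right orthogonal of the claimed sequence vanishes. These are equivalent given admissibility of the pieces, but the orthogonality formulation is what actually interfaces with the adjunction identities of Lemma~\ref{lemma-base-change-Phi} (one moves right orthogonality, not filtrations, across $(\pivm_{s+1})^*$ via the adjoint $(\pivm_{s+1})_!$). Second, the paper separates $r \geq n$ (where the $\cB_j$-terms are vacuous and the claim is just full faithfulness of $\Phi_r$, proven by a shorter induction using the stronger vanishing~\eqref{Phi-composition-vanish-2}) from $r \leq n$. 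Your single uniform induction from $r = N-1$ can plausibly be made to work — for $r > n$ the error term from Proposition~\ref{proposition-im-Phi-tE} lands in $\cB_{1-r} = 0$ and the argument degenerates — but the role you assign to Proposition~\ref{proposition-Phi-compositions} (``ensure the other terms land in $\im \Phi_r^*$'') doesn't match how the paper actually uses it: for $r \leq n$, the step that isolates $\im(\Phim_r)^*$ is Lemma~\ref{lemma-im-Phim-Phip}, not the vanishings.
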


\begin{proof}
If $r \geq n$ the right side 
is simply $\im \Phi_r^*$. Since $\Phi_r$ is left splitting by Proposition~\ref{proposition-Phi-splitting}, 
the result amounts to the claim that $\ker \Phi_r = 0$, i.e. that $\Phi_r$ is fully faithful. 
This can be proved by descending induction in $r$. 
The base case $r = N-1$ holds by Setup~\ref{setup-HPD}\eqref{setup-fully-faithful}. 
We leave it to the reader to check that the same induction argument from Proposition~\ref{proposition-Phi-splitting}, but 
using~\eqref{Phi-composition-vanish-2} in place of~\eqref{Phi-composition-vanish-1} 
and Proposition~\ref{proposition-base-change-ff} in place of Proposition~\ref{proposition-base-change-splitting-functor}, 
shows that $\Phi_r^*\Phi_r \simeq \id$, i.e. $\Phi_r$ is fully faithful, for all $r \geq n$. 

For the case $r \leq n$, we again use descending induction. 
Assume the result holds for some $r \leq n$. 
To prove the result for $r-1$, suppose 
$D \in \bLv_{s+1}(\cB)$ lies in 
\begin{equation}
\label{assumption-D}
\llangle 
\cB_{1-n}((r-n-1)H') \sotimes \Perf(\bGv_s), 
\dots , \cB_{1-r}(-H')\sotimes \Perf(\bGv_s)  , \im \Phi_{r-1}^* 
\rrangle^{\perp} .
\end{equation}
We must show $D \simeq 0$. 
For this, it is enough to show $b_{s+1}^!(\pivm_{s+1})^*D \simeq 0$. 
Indeed, the functor $b_{s+1}^! \circ (\pivm_{s+1})^*$ differs from $\beta_{s+1}^* = b_{s+1}^* \circ (\pivm_{s+1})^*$
by the twist by a line bundle by Lemma~\ref{lemma-L-divisor}\eqref{lemma-L-divisor-b}, 
and the functor $\beta_{s+1}^*$ is fully faithful by Lemma~\ref{lemma-xi}. 
By base change the induction hypothesis implies
\begin{multline}
\label{goal-D}
\bLvm_s(\cB) =  \langle 
\cB_{1-n}((r-n)H') \sotimes \Perf(\Fl{r-1}{r}), \dots \\ 
\dots , \cB_{-r}(-H')\sotimes \Perf(\Fl{r-1}{r})  , \im \, (\Phim_r)^* 
 \rangle .
\end{multline} 
Hence it suffices to prove $b_{s+1}^!(\pivm_{s+1})^*D$ lies in the right orthogonal 
to the right side. 
 
First we claim 
\begin{multline}
\label{generation-pi-pullback-D}
(\pivm_{s+1})^*D \in  \langle 
\cB_{1-n}((r-n-1)H') \sotimes \Perf(\Fl{r-1}{r}),  \dots \\
\dots , \cB_{1-r}(-H')\sotimes \Perf(\Fl{r-1}{r})  , \im \, (\Phip_{r-1})^* 
 \rangle^{\perp} .
\end{multline}
If $(\pivm_{s+1})_!$ denotes the left adjoint of $(\pivm_{s+1})^*$,  
then by Lemma~\ref{lemma-base-change-Phi} we have 
\begin{equation*}
(\pivm_{s+1})_! \circ (\Phip_{r-1})^*  \simeq \Phi_{r-1}^* \circ (\pip_{r-1})_!. 
\end{equation*}
Now the claim~\eqref{generation-pi-pullback-D} follows by adjunction and 
our assumption that $D$ lies in~\eqref{assumption-D}. 

It follows from~\eqref{generation-pi-pullback-D} and Proposition~\ref{proposition-im-Phi-tE} 
that $(\pivm_{s+1})^*D$ lies in the right 
orthogonal to the images of the subcategory
\begin{equation*}
\left ( \llangle \cA_{s}(H), \dots, \cA_{m-1}((m-s)H) \rrangle 
\otimes \Perf(\Fl{r-1}{r}) \right )^{\perp} 
\end{equation*}
under the first two terms of the triangle from 
Proposition~\ref{proposition-Phi-relations}\eqref{functor-triangle-2}, and hence 
$(\pivm_{s+1})^*D$ lies in the right orthogonal to 
\begin{equation*}
(b_{s+1})_* (\Phim_r)^* \left( 
\left ( \llangle \cA_{s}(H), \dots, \cA_{m-1}((m-s)H) \rrangle 
\otimes \Perf(\Fl{r-1}{r}) \right )^{\perp} 
\right ). 
\end{equation*}
It then follows from Lemma~\ref{lemma-im-Phim-Phip} that in fact 
$(\pivm_{s+1})^*D$ lies in the right orthogonal to the entire image 
of the functor $(b_{s+1})_* (\Phim_r)^*$. 
Hence 
\begin{equation}
\label{generation-1}
b_{s+1}^!(\pivm_{s+1})^*D \in (\im \, (\Phim_r)^*)^{\perp} . 
\end{equation}

Next note that by Lemma~\ref{lemma-L-divisor}\eqref{lemma-L-divisor-b} 
there is an exact sequence of sheaves on $\bLvm_{s+1}$
\begin{equation*}
0 \to (\cU_r/\cU_{r-1})(-H') \to \cO_{\bLvm_{s+1}} \to (b_{s+1})_*\cO_{\bLvp_{s}} \to 0 . 
\end{equation*}
This implies that the image of the functor $(b_{s+1})_* \colon \bLvp_s(\cB) \to \bLvm_{s+1}(\cB)$ 
applied to 
\begin{equation*}
\llangle 
\cB_{1-n}((r-n)H') \sotimes \Perf(\Fl{r-1}{r}),  
\dots , \cB_{-r}(-H')\sotimes \Perf(\Fl{r-1}{r}) \rrangle 
\end{equation*}
is contained in 
\begin{equation*}
\llangle 
\cB_{1-n}((r-n-1)H') \sotimes \Perf(\Fl{r-1}{r}), 
\dots , \cB_{1-r}(-H')\sotimes \Perf(\Fl{r-1}{r}) \rrangle . 
\end{equation*}
By~\eqref{generation-pi-pullback-D} it thus follows that $b_{s+1}^!(\pivm_{s+1})^*D$ 
lies in 
\begin{equation*}
\llangle 
\cB_{1-n}((r-n)H') \sotimes \Perf(\Fl{r-1}{r}), \dots \\ 
\dots , \cB_{-r}(-H')\sotimes \Perf(\Fl{r-1}{r}) \rrangle^{\perp}. 
\end{equation*}
Together with~\eqref{generation-1}, this shows $b_{s+1}^!(\pivm_{s+1})^*D$ lies 
in the right orthogonal to~\eqref{goal-D}, as required. 
\end{proof}

Up to now, we have not used the condition \eqref{setup-image-A0} of Setup~\ref{setup-HPD}. 
We pause here to prove the claim of Remark~\ref{remark-2c} that this condition can be 
replaced with~\ref{2cn}. 

\begin{lemma}
\label{lemma-2c} 
The conditions \eqref{setup-A}, \eqref{setup-fully-faithful}, \eqref{setup-Bj} of Setup~\ref{setup-HPD} together 
with the condition \textup{(2c$'$)} of Remark~\ref{remark-2c} imply \eqref{setup-image-A0}. 
\end{lemma}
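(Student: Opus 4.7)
The plan is to establish the nontrivial inclusion $\cA_0 \subseteq \im(p_* \circ \gamma \circ \phi)$; the reverse inclusion is automatic from Lemma~\ref{lemma-im-ker-p-gamma}, since $\im(p_* \circ \gamma) = \cA_0$. Using Lemma~\ref{lemma-twisted-sod-A0} to decompose $\cA_0 = \llangle \fatw_0, \dots, \fatw_{m-1} \rrangle$ and the fact that the essential image of an exact functor between stable $\infty$-categories is closed under cofibers, it suffices to show $\fatw_i \subset \im(p_* \gamma \phi)$ for each $i \geq 0$. Given $C \in \fatw_i$, the natural candidate is $B := F(C) := \phi^* \gamma^* p^*(C)$, which by (2b) lies in $\phi^*(\cAd_0) = \cB_0$. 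Writing $G := p_* \gamma \phi$, which is the right adjoint of $F$, the task reduces to proving $G(B) \simeq C$, equivalently that the unit $\eta_C \colon C \to GF(C)$ of $F \dashv G$ is an equivalence.

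To analyze $\eta_C$, I apply $p_* \gamma$ to the triangle $Z \to \gamma^* p^*(C) \to \phi \phi^* \gamma^* p^*(C)$ associated with the SOD $\cAd = \llangle \im \phi, {}^\perp(\im \phi) \rrangle$ (which exists because $\phi$ is fully faithful with a left adjoint by (2a)), so that $Z \in {}^\perp(\im \phi)$. The splitting identity $p_* \gamma \circ \gamma^* p^*|_{\cA_0} \simeq \id$ furnished by Lemma~\ref{lemma-im-ker-p-gamma} then produces a triangle $p_* \gamma(Z) \to C \to G(B)$, reducing the claim to $p_* \gamma(Z) \simeq 0$, i.e.\ $Z \in \ker(p_* \gamma) = \cAd_0^\perp$ (also by Lemma~\ref{lemma-im-ker-p-gamma}). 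Testing this orthogonality against the generators $\gamma^* p^*(D)$ of $\cAd_0$ for $D \in \cA_0$, and unwinding via the triangle together with the adjunction $\phi^* \dashv \phi$, the problem becomes equivalent to the assertion that the restriction $F|_{\cA_0} \colon \cA_0 \to \cB_0$ is fully faithful, i.e.\ that the natural maps $\cHomS(D, C) \to \cHomS(F(D), F(C))$ are equivalences for all $D \in \fatw_j$ with $j \geq 0$.

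The cases $j > i$ and $j = i$ fall immediately to (2c$'$): for $j > i$ both sides vanish by semiorthogonality of $\cA_0 = \llangle \fatw_0, \dots, \fatw_{m-1} \rrangle$ and of $\cB_0 = \llangle F\fatw_0, \dots, F\fatw_{m-1} \rrangle$ respectively, while for $j = i$ the map is an isomorphism by the fully faithfulness of $F|_{\fatw_i}$ built into (2c$'$). The main obstacle is the case $j < i$, where the relevant Hom-spaces are generically nontrivial on both sides and one must genuinely match the inter-component glueing data between the two semiorthogonal decompositions of $\cA_0$ and $\cB_0$. My expectation is to resolve this by descending induction on $i - j$, using (2c$'$) applied uniformly to all $\cB_j = \phi^* \cAd_j$ for $j \leq 0$: the resulting compatibility of $F$ with the full Lefschetz chains (not merely their top pieces) together with the strongness condition in (2b) allows one to mutate each $j < i$ instance across the chain into a combination of the already handled $j \geq i$ cases. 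This matching of the entire Lefschetz chain is precisely what makes (2c$'$) strong enough to recover (2c); all remaining steps are formal manipulations of the results of Sections~\ref{section-sod}--\ref{section-HPD-category}.
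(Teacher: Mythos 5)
Your initial reduction is sound: you correctly reduce the claim to showing that $F = \phi^* \circ \gamma^* \circ p^*$ is fully faithful on $\cA_0$, by analyzing the unit of the adjunction $F \dashv G$ where $G = p_* \circ \gamma \circ \phi$ and invoking the splitting identities of Lemma~\ref{lemma-im-ker-p-gamma}. But the argument breaks down exactly where you acknowledge it does. Condition~\ref{2cn} only provides semiorthogonality of the images $F(\fatw_i)$ inside $\cB_0$ together with full faithfulness of $F$ on each individual block $\fatw_i$; neither of these controls the comparison map $\cHom_S(D, C) \to \cHom_S(FD, FC)$ for $D \in \fatw_j$, $C \in \fatw_i$ with $j < i$, which is precisely the glueing data between distinct Lefschetz blocks. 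Semiorthogonality is vacuous in this direction, and your proposed ``descending induction via mutation'' is never spelled out: mutations move objects between pieces of a decomposition, but you do not identify any new ingredient beyond~\ref{2cn} that would force the inter-block Hom spaces to match. Note also that full faithfulness of $F|_{\cA_0}$ is, after factoring through the known fully faithful embedding $\gamma^* p^*|_{\cA_0}$ from Lemma~\ref{lemma-lls-Cd}\eqref{A0-fully-faithful}, equivalent to full faithfulness of $\phi^*$ on $\cAd_0$ — and that statement, in the paper's development, is only extracted \emph{after} one knows $\phi$ is an equivalence. So the local route you take does not circumvent the real difficulty; it simply relocates it.

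The paper's proof sidesteps this entirely by reversing the order of deduction. It first proves that $\phi$ is an equivalence, and then observes that condition~\eqref{setup-image-A0} follows immediately from Lemma~\ref{lemma-im-ker-p-gamma}, since $\im(p_* \circ \gamma) = \cA_0$. The crucial observation enabling this is that Proposition~\ref{proposition-LvsD-generation} — the hard generation statement — is proved using only conditions \eqref{setup-A}, \eqref{setup-fully-faithful}, \eqref{setup-Bj}, and never \eqref{setup-image-A0}. Evaluating it at $s = N$ shows the left Lefschetz chain on $\cB$ is full. Applying the same corollary with $\cB = \cAd$ (where \eqref{setup-image-A0} holds automatically by Remark~\ref{remark-HPD-criteria}) shows the chain on $\cAd$ is full as well. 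Since the primitive components $\fad_j$ of $\cAd$ are mapped by $\phi^*$ fully faithfully onto the primitive components $\phi^*(\fad_j)$ of $\cB$ — this is where~\ref{2cn} is actually used, as a pointwise statement on primitives — it follows that $\ker \phi^* = 0$, hence $\phi$ is an equivalence. The moral is that the heavy machinery of Section~\ref{section-main-theorem} is already in hand by this point and should be invoked, rather than trying to rebuild the required compatibility of $F$ with the Lefschetz structure by a bare-hands Hom computation.
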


\begin{proof}
Assume \eqref{setup-A}, \eqref{setup-fully-faithful}, \eqref{setup-Bj}, and~\ref{2cn} hold. 
We show the claim of Corollary~\ref{corollary-Ad-B} holds. 
Then by Lemma~\ref{lemma-im-ker-p-gamma}, 
the condition \eqref{setup-image-A0} follows. 

We may freely use everything proved so far, since we only invoked \eqref{setup-A}, \eqref{setup-fully-faithful}, 
and \eqref{setup-Bj} up to now. 
First note that $\bGv_N = S$, $\bLv_N(\cB) = \cB$, and $\bL_0(\cA) = 0$. 
Hence the claim of Corollary~\ref{corollary-B-lc-full} holds by Proposition~\ref{LvsD-generation}. 

To prove the claim of Corollary~\ref{corollary-Ad-B}, note that by~\eqref{setup-fully-faithful} 
the functor $\phi \colon \cB \to \cAd$ is fully faithful and admits a left adjoint $\phi^*$. 
Thus there is a semiorthogonal decomposition $\cAd = \llangle \im \phi, \ker \phi^* \rrangle$, 
so it suffices to show $\ker \phi^* = 0$. 
By the above, we may apply Corollary~\ref{corollary-B-lc-full} in the case $\cB = \cAd$ to conclude 
that the categories $\cAd_j \subset \cAd$ defined by~\eqref{Adj} form a full Lefschetz chain. 
By definition, the primitive components of this Lefschetz chain are given by the 
categories $\fad_j = \gamma^*p^*(\fatw_{N-2+j})$, $2-N \leq j \leq 0$. 
On the other hand, by~\eqref{setup-Bj},~\ref{2cn}, and the claim of Corollary~\ref{corollary-B-lc-full}, the 
category $\cB$ has a full Lefschetz chain with primitive components given by the images of the 
$\fad_j$ under the functor $\phi^*$. 
It follows that the functor $\phi^*$ has trivial kernel if and only if its restriction to $\fad_j$ has 
trivial kernel for all~$2-N \leq j \leq 0$. 
But by Lemma~\ref{lemma-lls-Cd}\eqref{A0-fully-faithful} combined with~\ref{2cn}, 
the restriction of $\phi^*$ to $\fad_j$ is fully faithful. 
\end{proof} 

The following gives the base case for the induction in the proof of generation of $\bL_r(\cA)$.  

\begin{lemma}
We have $\im \Phi_1 = \bL_1(\cA)$. 
\end{lemma}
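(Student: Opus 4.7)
The plan is to show $\im \Phi_1 \supset \cA_0$; combined with the $\bP$-linearity of $\Phi_1$ and the right Lefschetz decomposition of $\cA$, this will force $\im \Phi_1 = \cA$.

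I would first compute $\Phi_1$ on objects pulled back from $\cB$. For $r=1$ we have $s = N-1$, and under the canonical identifications $\bG_1 = \bP$, $\bL_1 = \bP$, and $\bLv_{N-1} \cong \bH$, the morphism $\zeta_1 \colon \bL_1 \times_{\bG_1} \bLv_{N-1} \to \bH$ is an isomorphism, so the kernel $\cE_1 = \zeta_1^* \cE$ coincides with $\cE$, the kernel representing $\gamma \circ \phi \colon \cB \to \bH(\cA)$. Unwinding the construction of $\Phi_{\cE_1}$ from Proposition~\ref{proposition-kernels} --- and using that under the classical Fourier--Mukai equivalence $\Perf(\bP \times_\bP \bH) = \Perf(\bH) \simeq \Func{\Perf(\bP)}{\Perf(\bH)}{\Perf(\bP)}$ the structure sheaf corresponds to $p_*$ --- I obtain an equivalence of functors
\begin{equation*}
\Phi_1 \circ f^* \simeq p_* \circ \gamma \circ \phi \colon \cB \to \cA,
\end{equation*}
where $f^* \colon \cB \to \bLv_{N-1}(\cB) = \cB \otimes_{\Perf(\bPv)} \Perf(\bH)$ is the pullback along $f \colon \bH \to \bPv$. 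By condition~\eqref{setup-image-A0} of Setup~\ref{setup-HPD}, the image of the right-hand side is exactly $\cA_0$, so $\im \Phi_1 \supset \cA_0$.

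Next, I would exploit the $\bP$-linearity of $\Phi_1$. Since $\Phi_1$ is $\bG_1$-linear and $\bG_1 = \bP$, the subcategory $\im \Phi_1 \subset \cA$ is stable under tensoring with any line bundle $\cO_{\bP}(kH)$. Combined with the nesting $\cA_i \subset \cA_0$ for $i \geq 0$, this yields
\begin{equation*}
\cA_i(iH) \subset \cA_0(iH) \subset \im \Phi_1 \quad \text{for all } 0 \le i \le m-1.
\end{equation*}
By Proposition~\ref{proposition-Phi-splitting}, $\Phi_1$ is left splitting, so $\im \Phi_1 \subset \cA$ is a right admissible (in particular stable) subcategory. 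Since it contains every component of the right Lefschetz decomposition $\cA = \llangle \cA_0, \cA_1(H), \ldots, \cA_{m-1}((m-1)H) \rrangle$, we conclude $\im \Phi_1 = \cA$.

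The only nontrivial step is the verification of the formula $\Phi_1 \circ f^* \simeq p_* \circ \gamma \circ \phi$, which amounts to a direct computation with the kernel formalism of \S\ref{section-kernels}; once this is in hand, condition~\eqref{setup-image-A0} together with $\bP$-linearity yields the conclusion mechanically.
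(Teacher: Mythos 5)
Your proof is correct, and it takes a genuinely different route from the paper's. The paper proves the lemma by invoking Lemma~\ref{lemma-resolution-pr-Phir}, identifying $q_{N-1} \colon \bLv_{N-1} \to \bPv$ as the projective bundle $\bP(\cS) \to \bPv$, computing $(q_{N-1})_*(\wedge^i(\cW_{N-1}/\cO(-H'))(-tH))$ explicitly, and from this deriving $\Phi_1 \circ (- \otimes \cO(-iH)) \circ q_{N-1}^* \simeq (- \otimes \cO(-iH)) \circ p_* \circ \Phi_\cE$ for \emph{each} $0 \leq i \leq N-2$. You instead prove only the case $i=0$ (i.e.\ $\Phi_1 \circ f^* \simeq p_* \circ \gamma \circ \phi$) and then observe that the $\bG_1 = \bP$-linearity of $\Phi_1$ automatically propagates the containment $\cA_0 \subset \im \Phi_1$ to $\cA_0(kH) \subset \im \Phi_1$ for every $k$. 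This is shorter and does not need the Koszul-resolution bookkeeping, nor does it actually need moderateness at this step (though moderateness is still used elsewhere, e.g.\ to establish that $\Phi_1$ is left splitting).

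Two remarks on the details. First, your justification of the key identity $\Phi_1 \circ f^* \simeq p_* \circ \gamma \circ \phi$ via ``the structure sheaf corresponds to $p_*$'' is imprecise and should be replaced by an application of Lemma~\ref{lemma-pushforward-kernel}: from the commutative triangle
\begin{equation*}
\xymatrix{
\bP \times_{\bP} \bH \ar[r]^-{\zeta_1}_-{\sim} \ar[d]_{\id \times f} & \bH \times_{\bPv} \bPv \ar[dl]^{p \times \id} \\
\bP \times \bPv &
}
\end{equation*}
one has $(\id \times f)_*(\cE_1) \simeq (p \times \id)_*(\cE)$, and Lemma~\ref{lemma-pushforward-kernel} applied to both sides yields $\Phi_1 \circ f^* \simeq p_* \circ \Phi_{\cE}$. (This is precisely the computation the paper performs later, in the proof that HPD is a duality, in the special case $\phi = \id$ with the roles of $\cA$ and $\cAd$ interchanged.) Second, a small slip: ``left splitting'' implies $\im \Phi_1$ is \emph{left} admissible, not right admissible, by condition~\ref{splitting-functor-1} of Theorem~\ref{theorem-splitting-functors}; but this is harmless since all you need is that $\im \Phi_1$ is a stable subcategory of $\cA$.
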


\begin{proof}
By definition $\bL_1 = \bP$ and $p_1 \colon \bL_1 \to \bP$ is the identity morphism.  
In particular, $\Phi_1$ coincides with the functor $p_{1*} \circ \Phi_1$, 
which we will analyze using Lemma~\ref{lemma-resolution-pr-Phir}. 
Note that $q_{N-1} \colon \bLv_{N-1} \to \bPv$ is isomorphic to the 
projective bundle $\bP(\cS) \to \bPv$, where $\cS$ is the vector bundle 
on $\bPv$ defined by the short exact sequence 
\begin{equation*}
0 \to \cS \to V \otimes \cO \to \cO(H') \to 0. 
\end{equation*}
Under this identification, the bundle $\cW_{N-1}/\cO(-H')$ on $\bLv_{N-1}$ corresponds to 
$\Omega_{\bP(\cS)/\bPv}(H)$. 
By a computation on the projective bundle $\bP(\cS) \to \bPv$, we thus obtain 
for $0 \leq i, t \leq N-2$ an isomorphism 
\begin{equation*}
(q_{N-1})_*(\wedge^i(\cW_{N-1}/\cO(-H'))(-tH)) \simeq 
\begin{cases}
\cO_{\bPv}[-i] & \text{ if } i = t , \\
0 & \text{ else.}
\end{cases}
\end{equation*}
It follows that for $0 \leq i \leq N-2$ the functor $\Phi_{\cK_{1,i}}$ from Lemma~\ref{lemma-resolution-pr-Phir} satisfies 
\begin{equation*}
\Phi_{\cK_{1,i}} \circ  (-\otimes \cO(-tH)) \circ q_{N-1}^* \simeq 
\begin{cases}
[-i] \circ (- \otimes \cO(-iH)) \circ p_* \circ \Phi_\cE  & \text{ if } i = t, \\
0 & \text{ else},  
\end{cases}
\end{equation*}
and hence 
\begin{equation*}
\Phi_{1} \circ (-\otimes \cO(-iH)) \circ q_{N-1}^*  
\simeq (- \otimes \cO(-iH)) \circ p_* \circ \Phi_\cE. 
\end{equation*}
Since $\Phi_{\cE} = \gamma \circ \phi$ it follows from Setup~\ref{setup-HPD}\eqref{setup-image-A0}  
that $\im \Phi_1$ contains the categories $\cA_{0}(-iH)$ for $0 \leq i \leq N-2$. 
The stable subcategory of $\cA$ generated by these categories is all of $\cA$ 
by the assumption that $\cA$ is a moderate Lefschetz category. 
But $\im \Phi_1 \subset \cA$ is a stable subcategory since $\Phi_1$ is left splitting, so we 
conclude $\im \Phi_1 = \cA$. 
\end{proof} 

\begin{proposition}
\label{proposition-LrC-generation}
We have 
\begin{equation*}
\bL_{r}(\cA) =  
\llangle \im \Phi_r, 
\cA_s(H) \sotimes \Perf(\bG_r), 
\dots, 
\cA_{m-1}((m-s)H) \sotimes \Perf(\bG_r)  \rrangle  . 
\end{equation*} 
\end{proposition}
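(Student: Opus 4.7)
The plan is to argue by ascending induction on $r$, with base case $r = 1$ handled by the preceding lemma: when $r = 1$ we have $s = N-1$, and by moderateness $m < N$ the range $s \leq i \leq m-1$ is empty, so the only requirement is $\im \Phi_1 = \bL_1(\cA) = \cA$. The inductive step is a close dual of the proof of Proposition~\ref{proposition-LvsD-generation}: given $C \in \bL_r(\cA)$ in the asserted right orthogonal, the strategy is to embed a suitable modification of $C$ into $\bLp_{r-1}(\cA)$ via a fully faithful functor and then apply the inductive hypothesis.

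The key geometric observation is that the composite $\alpha_r = \pim_r \circ a_r \colon \bLp_{r-1} \to \bL_r$ is a $\bP^{r-2}$-bundle (its fiber over $(L_r, x)$ is the Grassmannian of rank $r-1$ subspaces of $L_r$ containing $\langle x \rangle$), so $\alpha_r^*$ is fully faithful. Since by Lemma~\ref{lemma-L-divisor} the embedding $a_r$ is divisorial with $a_r^! \simeq a_r^*(-) \otimes \cO(D_r)[-1]$, one has $a_r^!(\pim_r)^* C \simeq \alpha_r^* C \otimes \cO(D_r)[-1]$, so it suffices to show $a_r^!(\pim_r)^* C \simeq 0$. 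By the inductive hypothesis for $r-1$, base-changed along $\pi_{r-1} \colon \Fl{r-1}{r} \to \bG_{r-1}$, $\bLp_{r-1}(\cA)$ is generated by $\im \Phip_{r-1}$ together with $\cA_{s+k}(kH) \otimes \Perf(\Fl{r-1}{r})$ for $1 \leq k \leq m-s-1$, so it remains to verify that $a_r^!(\pim_r)^* C$ lies in the right orthogonal to each of these components.

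Two preliminary orthogonalities are established by base change via $\pim_r$: using Lemma~\ref{lemma-base-change-Phi} and adjunction with $(\pim_r)_!$, one shows $(\pim_r)^* C$ is in the right orthogonal to $\im \Phim_r$ and to the admissible subcategory $\cX := \llangle \cA_s(H), \dots, \cA_{m-1}((m-s)H) \rrangle \otimes \Perf(\Fl{r-1}{r})$ in $\bLm_r(\cA)$. Orthogonality of $a_r^!(\pim_r)^* C$ to each $\cA_{s+k}(kH) \otimes \Perf(\Fl{r-1}{r})$ is then transferred via $a_r^! \dashv (a_r)_*$: the divisor triangle $F'(-D_r) \to F' \to (a_r)_* Y$ together with the right Lefschetz chain inclusions $\cA_{s+k} \subset \cA_{s+k-1}$ (and $\cA_{s+k} \subset \cA_{s+k-2}$ for the divisor piece) places both $F'$ and $F'(-D_r)$ inside $\cX$, giving the vanishing. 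Orthogonality to $\im \Phip_{r-1}$ is the main step: for $X \in \bLvm_{s+1}(\cB)$, adjunction reduces to showing $(a_r)_* \Phip_{r-1}(X) \perp (\pim_r)^* C$, which is approached via the exact triangle of Proposition~\ref{proposition-Phi-relations}\eqref{functor-triangle-1}. The first vertex $\Phim_r b_{s+1}^!(X)$ is handled by $(\pim_r)^* C \perp \im \Phim_r$; the third, $\Phi_{\iota_{r*}\tcE_r \otimes \cO(D_r)}(X)$, by the adjoint form of Proposition~\ref{proposition-im-Phi-tE}, namely $\Phi_{\iota_{r*}\tcE_r \otimes \cO(D_r)}({^\perp}(\cB_{1-r}(-H') \otimes \Perf(\Fl{r-1}{r}))) \subset \cX$ (which follows by adjunction from the original statement together with admissibility of $\cX$, ensuring $(\cX^\perp)^\perp = \cX$). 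To apply this to a general $X$, decompose $X$ via Proposition~\ref{proposition-LvsD-generation} at index $r-1$ base-changed to $\Fl{r-1}{r}$: on components other than $\im(\Phip_{r-1})^*$ one has $\Phip_{r-1}(X) = 0$ by Lemma~\ref{lemma-im-Phim-Phip}, while on $\im(\Phip_{r-1})^*$ the semiorthogonality of the decomposition forces $X \in {^\perp}(\cB_{1-r}(-H') \otimes \Perf(\Fl{r-1}{r}))$, so the adjoint form applies.

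The main obstacle will be the careful bookkeeping of twist and index shifts induced by the divisor class $D_r = (\cU_r/\cU_{r-1}) + H$ and the relative dualizing/adjoint data for $\pim_r$, especially near the endpoints of the Lefschetz index ranges, where some arising pieces fit into $\cX$ only after chasing through the Lefschetz chain inclusions; this is the direct analog of the edge-case handling in the proof of Proposition~\ref{proposition-LvsD-generation} and mirrors Kuznetsov's treatment in the geometric setting.
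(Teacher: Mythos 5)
Your overall plan — ascending induction on $r$ with base case the preceding lemma $\im\Phi_1 = \cA$, the observation that $\alpha_r = \pim_r\circ a_r$ is a projective bundle so $\alpha_r^*$ is fully faithful, the reduction to showing $a_r^!(\pim_r)^*C$ is right orthogonal to the base-changed generating sequence of $\bLp_{r-1}(\cA)$, and the use of the triangle from Proposition~\ref{proposition-Phi-relations}\eqref{functor-triangle-1} together with the adjoint form of Proposition~\ref{proposition-im-Phi-tE} for the $\im\Phip_{r-1}$ part — is the right dualization of the proof of Proposition~\ref{proposition-LvsD-generation}, and those parts are essentially correct.

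However, there is a genuine gap in the divisor-triangle step for the Lefschetz part, and it is not the ``bookkeeping near endpoints'' you describe. For $Y$ at the bottom level $j = s+1$ of the Lefschetz sequence of $\bLp_{r-1}(\cA)$, i.e.\ $Y$ in $\cA_{s+1}(H)\otimes\Perf(\Fl{r-1}{r})$, a lift to $\bLm_r$ is $F' \in \cA_{s+1}(H)\otimes\Perf(\Fl{r-1}{r})$, which does sit in $\cX = \llangle\cA_s(H),\dots,\cA_{m-1}((m-s)H)\rrangle\sotimes\Perf(\Fl{r-1}{r})$ at level $s$ via $\cA_{s+1}\subset\cA_s$. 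But the twisted piece $F'\otimes(\cU_r/\cU_{r-1})^{\svee}(-H)$ lies in $\cA_{s+1}\cdot\cO\otimes\Perf(\Fl{r-1}{r})$, and the only candidate level of $\cX$ with the matching twist $\cO$ is $s-1$, which is \emph{not} in the range $[s,m-1]$. Saying ``$\cA_{s+k}\subset\cA_{s+k-2}$ for the divisor piece'' does not help, because the inclusion of subcategories of $\cA$ does not place the object in a component of $\cX$ — the twist is wrong. In fact one can check directly (e.g.\ for $\cA=\Perf(\bP^2)$ over $\bP^3$, $r=3$, $s=1$: $\cHom(\cA_2(3H),\cA_2)\neq 0$) that $\cA_{s+1}\cdot\cO\otimes\Perf(\bG_r)$ is \emph{not} contained in $\Ku_r(\cA)$, so one cannot argue that this piece is automatically orthogonal to $(\pim_r)^*C$; appealing to $\langle\im\Phim_r,\cX\rangle$ to absorb it would be circular, since generation by $\langle\im\Phi_r,\text{Lefschetz}\rangle$ is exactly what is being proved.

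Moreover, your claim that this is ``the direct analog of the edge-case handling in the proof of Proposition~\ref{proposition-LvsD-generation}'' is misleading: that proof has \emph{no} edge case. There, the left Lefschetz chain $\cB_{1-n}\subset\cdots\subset\cB_0$ is increasing and its twist increases with the index, so the pieces $F'$ and $F'\otimes(\cU_r/\cU_{r-1})(-H')$ land at levels $j+1$ and $j$ of the target Lefschetz sequence, which extends one step higher — the shift direction and the range extension direction match. For the right chain $\cA_0\supset\cdots\supset\cA_{m-1}$ the categories decrease while twists increase, so $F'$ and its twist land at levels $j-1$ and $j-2$, while the target range only extends one step lower, to $s$; the directions do not match, which is precisely what produces the gap at $j=s+1$. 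This asymmetry means the two propositions are not as symmetric as the paper's ``analogous'' suggests, and a careful proof of Proposition~\ref{proposition-LrC-generation} must handle the bottom index by a different device — e.g.\ adjusting the choice of adjoint (replacing $a_r^!$ with $a_r^*$ so that the divisor triangle produces levels $j$ and $j-1$), at the cost of having to re-normalize the kernel triangle of Proposition~\ref{proposition-Phi-relations} by a twist of $\cO(H)$, which then needs its own justification. Your proposal does not supply this, so as written it does not constitute a proof. (There is also a minor slip: the divisor class cutting out $a_r\colon\bLp_{r-1}\hookrightarrow\bLm_r$ is $(\cU_r/\cU_{r-1})(H)$, not the paper's $D_r = (\cU_r/\cU_{r-1})^{\svee}(H')$.)
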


\begin{proof}
Analogous to Proposition~\ref{proposition-LvsD-generation} and left to the reader. 
\end{proof}


\newpage
\addtocontents{toc}{\vspace{\normalbaselineskip}}

\providecommand{\bysame}{\leavevmode\hbox to3em{\hrulefill}\thinspace}
\providecommand{\MR}{\relax\ifhmode\unskip\space\fi MR }
\providecommand{\MRhref}[2]{%
  \href{http://www.ams.org/mathscinet-getitem?mr=#1}{#2}
}
\providecommand{\href}[2]{#2}


\end{document}